\DeclareFontFamily{U}{russian}{}
\DeclareFontShape{U}{russian}{m}{n}
        { <5><6> wncyr5
        <7><8><9> wncyr7
        <10><10.95><12><14.4><17.28><20.74><24.88> wncyr10 }{}
\DeclareSymbolFont{Russian}{U}{russian}{m}{n}
\DeclareSymbolFontAlphabet{\mathcyr}{Russian}
\let\@math@cyr\mathcyr
\renewcommand{\mathcyr}[1]{\@math@cyr{\cyracc #1}}
\newcommand{\sch}{\mathit{Sch}}%{\mathscr S\mspace{-2.mu}ch}
\newcommand{\sft}{\mathscr S^{ft}}
\newcommand{\sftc}{\mathscr S^{\mathit{cor}}}
\newcommand{\sm}{\mathit{Sm}}%{\mathscr S\mspace{-2.mu}m}
\newcommand{\smc}{\sm^{\mathit{cor}}}%{\mathscr S\mspace{-2.mu}m^{cor}}
\newcommand{\smcx}[1]{\smc_{#1}}%{\mathscr S\mspace{-2.mu}m^{cor}_{#1}}
\newcommand{\Et}{\mathrm{\acute{E}t}}
\newcommand{\ctf}{\mathit{ctf}}
\newcommand{\lc}{\mathit{lc}}
\DeclareMathOperator{\psh}{PSh}
\DeclareMathOperator{\sh}{Sh}
\DeclareMathOperator{\shtr}{Sh_{\et}^{\mathit{tr}}}
\DeclareMathOperator{\shNtr}{Sh_{\nis}^{\mathit{tr}}}
\newcommand{\eff}{\mathit{eff}}
\newcommand{\DM}{\mathrm{DM}}
\newcommand{\DMe}{\mathrm{DM}^{\eff}}
\newcommand{\uDM}{\underline{\mathrm{DM}\!}\,}
\newcommand{\uDMe}{\underline{\mathrm{DM}\!}\,^{\eff}}
\newcommand{\DMB}{\DM_\mathcyr B}
\newcommand{\DMBc}{\DM_{\mathcyr B,c}}
\newcommand{\gm}{\mathit{gm}}
\newcommand{\DMtee}{\Der^{\eff}_{\AA^1\!,\et}}
\newcommand{\Hom}{\mathrm{Hom}}
\newcommand{\uHom}{\underline{\mathrm{Hom}}} %Hom interne
\newcommand{\Tor}{\mathrm{Tor}}
\newcommand{\Pic}{\mathrm{Pic}}
\newcommand{\Spec}{\mathrm{Spec}}
\newcommand{\Spt}{\mathrm{Spt}}  % spectres de Tate d'une cat. abalienne prem.
\newcommand{\corr}[3]{c_{#1}\left(#2,#3\right)} %  coefficients
\newcommand{\ilim} { \varinjlim }
\newcommand{\plim} { \varprojlim }
\newcommand{\tr}{\mathit{tr}}
\newcommand{\Tr}{\mathrm{Tr}}
\newcommand{\tra}[1]{{}^t #1} %transpose d'une correspondance finie
\DeclareMathOperator{\Comp}{C}
\DeclareMathOperator{\Der}{D}
\DeclareMathOperator{\K}{K}
\newcommand{\derL}{\mathbf{L}}
\newcommand{\derR}{\mathbf{R}}
\newcommand{\NN} {\mathbf N}
\newcommand{\ZZ} {\mathbf Z}
\newcommand{\QQ} {\mathbf Q}
\renewcommand{\AA} {\mathbf A}
\newcommand{\PP} {\mathbf P}
\newcommand{\GG} {\mathbf{G}_m}
\newcommand{\Ga} {\mathbf{G}_a}
\newcommand{\GGx}[1] {\mathbf{G}_{m,#1}}
\newcommand{\cO}{\mathcal O}
\newcommand{\T}{\mathcal T}
\newcommand{\un}{\mathbbm 1} %unit structure monodale
\newcommand{\pur}{\mathfrak p} %isomorphismes de puret
\newcommand{\Pmor}{\mathcal P} %classe admissible de morphismes
\newcommand{\Het}{\mathrm H_{\et}}
\newcommand{\hHet}{\hat{\mathrm H}_{\et}}
\newcommand{\Hetnr}{\mathrm H_{\et,nr}}
\newcommand{\HetBM}{\mathrm H^{BM,\et}}
\newcommand{\HB}{H_{\mathcyr{B}}}
\newcommand{\uR}{\underline{R\!}\,}
\newcommand{\zar}{{\mathrm{Zar}}}
\newcommand{\nis}{{\mathrm{Nis}}}
\newcommand{\et}{\mathrm{\acute{e}t}}
\newcommand{\cdh}{{\mathrm{cdh}}}
\newcommand{\qfh}{{\mathrm{qfh}}}
\newcommand{\h}  {{\mathrm{h}}}
\title{\'Etale motives}
\author{Denis-Charles Cisinski}
\address{Universit\'e Paul Sabatier \\
Institut de Math\'ematiques de Toulouse\\
118\\route de Nar\-bon\-ne\\
31062 Toulouse Cedex 9\\France}
\email{denis-charles.cisinski@math.univ-toulouse.fr}
\urladdr{http://www.math.univ-toulouse.fr/~dcisinsk/}
\author{Fr\'ed\'eric D\'eglise}
\address{E.N.S. Lyon\\UMPA\\
46\\all\'ee d'Italie\\
69364 Lyon Cedex~07\\
France}
\email{frederic.deglise@ens-lyon.fr}
\urladdr{http://perso.ens-lyon.fr/frederic.deglise/}
\thanks{Partially supported by the ANR (grant No. ANR-12-BS01-0002)}
\newtheorem{thm}{Theorem}[subsection]
\newtheorem{prop}[thm]{Proposition}
\newtheorem{lm}[thm]{Lemma}
\newtheorem{cor}[thm]{Corollary}
\newtheorem*{thmi}{Theorem}
\theoremstyle{remark} % env. "remark" de l'AMS
\newtheorem{rem}[thm]{Remark}
\newtheorem{ex}[thm]{Example}
\theoremstyle{definition} % env. "definition" de l'AMS
\newtheorem{df}[thm]{Definition}
\newtheorem{num}[thm]{}
\newtheorem{paragr}[thm]{}
\numberwithin{equation}{thm}
\begin{document}

\begin{abstract}
We define a theory of \'etale motives over a noetherian scheme.
This provides a system of categories of complexes of motivic sheaves with integral
coefficients which is closed under the six operations of Grothendieck.
The rational part of these categories coincides with the
triangulated categories of Beilinson motives (and is thus
strongly related to algebraic $K$-theory).
We extend the rigidity theorem of Suslin and Voevodsky over
a general base scheme. This can be reformulated
by saying that torsion \'etale motives essentially coincide
with the usual complexes of torsion \'etale sheaves (at least if
we restrict ourselves to torsion prime to the residue characteristics).
As a consequence, we obtain the expected results of
absolute purity, of finiteness, and of Grothendieck duality
for \'etale motives with integral coefficients, by putting together
their counterparts for Beilinson motives and for torsion \'etale sheaves.
Following Thomason's insights,
this also provides a conceptual and convenient construction
of the $\ell$-adic realization of motives, as the homotopy $\ell$-completion
functor.
\end{abstract}

\maketitle

\setcounter{tocdepth}{3}
\tableofcontents
\section*{Introduction}

The theory of mixed motives, or mixed motivic complexes
 as conjecturally described by Beilinson,
 has evolved a lot in the last twenty years,
 according to the fundamental work of V.~Voevodsky.
One of the main recent evolution is the extension of the stable
homotopy theory of schemes of Morel and Voevodsky
 to a complete formalism of Grothendieck 6 operations,
 as in the case of \'etale coefficients (SGA4, SGA5).
This was made possible, following an initial idea of
Voevodsky \cite{Delnotes}, by the work of J.~Ayoub \cite{ayoub}. The stable
 homotopy categories of Morel and Voevodsky define the
 universal system of triangulated categories satisfying
 the formalism of Grothendieck six operations. The
 triangulated categories of mixed motives should be
 the universal system of triangulated categories satisfying
 the formalism of Grothendieck six operations and which is
 oriented with additive formal group law (i.e. with a theory
 of Chern classes behaving as in ordinary intersection theory).
 While such a theory already exists with rational coefficients
(see \cite{CD3} for the construction and comparison of various
candidates), the construction of a version with integral coefficients
is sill problematic: we can only check all the expected properties
(such as proper base change formulas, finiteness theorems and duality theorems, as well
as the universal property formulated above) in equal
characteristics, and at the price of inverting the exponential
characteristic of the ground field in the coefficients; see \cite{CD4}.
This difficulty can be explained by the fact that the usual realization
functors do not define a conservative family, so that, even conjecturally,
there is no hope to describe integral mixed motives in a concrete way
(e.g. using the language of representations of groups).
On the other hand, the strong relationship of mixed motives with
classical Chow groups makes them play a central role in the
understanding of intersection theory. Another feature which makes
them interesting is that they admit a theory of weights \`a la Deligne
with integral coefficients (such a construction is initiated
by Bondarko in \cite{bondarkoarXiv}.).

The aim of the present article is to study the theory of mixed motivic
complexes over a general base locally for the \'etale
topology, leading to triangulated categories of \'etale
mixed motives. These do form the universal system of triangulated
categories satisfying the formalism of Grothendieck 6 operations
which is oriented with additive formal group law and which satisfies
\'etale descent. \'Etale mixed motives are interesting in themselves
because, at least conjecturally, they fit in a tannakian picture
with integral coefficients: there should exist (perverse) motivic $t$-structures
on triangulated categories of (constructible) \'etale mixed motives, which, in the
case of a field, should define tannakian categories defined over $\ZZ$
(note that Voevodsky has shown that there is no motivic $t$-structure
on the triangulated category of mixed motives $\DM_\gm(k)$ with integral
coefficients; see \cite[Proposition 4.8]{FSV}). This is related to the fact,
pointed out by Rosenschon and Srinivas \cite{RS},
that integral versions of the Hodge conjecture
and of the Tate conjecture are reasonable if we consider \'etale versions
of Chow groups. Similarly, the triangulated category of \'etale mixed motives
over a scheme of finite type over $\mathbf{C}$ is expected to be equivalent
to the bounded derived category of the abelian category of Nori's motives.
On the other hand, there is no theory of weights for \'etale motives with integral
coefficients\footnote{See Remark \ref{rem:weights} for explicit obstructions.}.
But with rational coefficients, these two notions of motives
must coincide, so that, conjecturally, $\QQ$-linear mixed motives
should have all the advantages of these two theories (e.g. relation with classical
Chow groups, weights, and motivic $t$-structures).

%
% As shown in \cite{CD3}, his theory can be applied
% with rational coefficients to several candidates
% that were in the air by the time of the writing of \emph{loc. cit.}
%Integrally, the situation is still in the limb. Though we
% have constructed in \cite{CD3},
% using the theory of Nisnevich sheaves with transfers,
% a reasonable candidate equipped with the 6 operations,
% certain required properties
% are still needed: mainly, the localization\footnote{As an example,
% the localization property, 
% asserting the existence of a distinguished triangle 
% associated to any closed immersion is only known
% for integral motives for closed immersions
% between smooth schemes over some base
% (see \cite[11.4.2]{CD3}).}
% and the absolute purity properties.
%In fact, we expect that Nisnevich topology is not finer enough
% to get the right integral category over singular base schemes.

More explicitly, and in a less speculative way,
over a field, Voevodsky's triangulated
category of mixed motives $\DM(k)$ comes with its \'etale counterpart $\DM_\et(k)$
(see \cite{FSV}).
These two categories coincide with $\QQ$-coefficients, which means, for instance,
that $\DM_\et(k,\QQ)$ can be used to understand algebraic
$K$-theory up to torsion. On the other hand, as far as
torsion coefficients are involved, the category $\DM_\et(k)$
is much closer to the topological world. Indeed,
the rigidity theorem of Suslin and Voevodsky \cite{susvoesing}
means that for any positive integer $n$, prime to the characteristic of $k$,
$\DM_\et(k,\ZZ/n\ZZ)$ is equivalent to the derived category of
$\ZZ/n\ZZ$-linear Galois modules.
This is why one should expect that, over general base schemes,
 the use of the \'etale topology will make the situation better.
 The underlying principle which we will use repeatedly is that 
 to prove properties of \'etale motives with integral coefficients,
 one should reduce to the case of rational coefficients,
  and then to the case of torsion coefficients
(the latter being well understood since
 it belongs to the well established realm of \'etale cohomology).
Still it remains to find the good framework in which to define
 a category of \'etale motives with integral coefficients.\footnote{Recall
 this problem was originally suggested by Lichtenbaum in \cite{Lic}.}

There are several directions to do so. Interestingly enough, the
first construction of triangulated categories of (effective)
mixed motives over an arbitrary (noetherian)
base goes back to 1992, in the {Ph.D.} thesis of Voevodsky 
 (see \cite{V1}).
It is defined in terms of $\AA^1$-homotopy theory of complexes of sheaves
with respect to the $\h$-topology (i.e. considering \'etale
descent together with proper descent), and, as pointed out at that time
by Voevodsky himself,
is a serious candidate for a theory of \'etale motives. Still following Voevodsky's path,
there is a second possible construction using
the $\AA^1$-homotopy theory of complexes of \'etale sheaves with
transfers (based on the theory of relative cycles by Suslin and Voevodsky;
see \cite[chap. 2]{FSV}, \cite[sec. 8 and 9]{CD3}).
Finally, following F.~Morel's insights, a third possibility consists in
considering the $\AA^1$-homotopy theory of complexes of \'etale sheaves.
The latter construction is studied by Ayoub \cite{ayoub5}.
In this article, we will focus on the first two constructions,
and will then, using Ayoub's results, compare these with the third.

%One can consider the
%\'etale version of Morel and Voevodsky homotopy theory of schemes
%to produce and understand the triangulated category $\Der_{\AA^1,\et}(X,\ZZ)$,
%obtained from complexes of sheaves of abelian groups on the smooth-\'etale
%site of $X$, by the usual $\AA^1$-localization and $\PP^1$-stabilization
%procedures. This is a clean way but not the easiest: this
%direction is studied by J. Ayoub in \cite{ayoub5}, but with a little drawback:
%one has to work either with $\QQ$-schemes or with $\ZZ[1/2]$-coefficients.
%Although this restriction on $2$-torsion should vanish once Morel's rigidity
%theorem (which is part of his program to prove the Friedlander-Milnor
%conjecture) will be established, this means that this is not an easy path.
%
%Then, there are two other possibilities, which are the subject of this
%article.

We now turn to the contributions of this article.
We first consider the version of \'etale motives, over a general
noetherian scheme $X$
with coefficients in a ring $R$, defined as
the category $\DM_\et(X,R)$, obtained by the $\AA^1$-localization
 and $\PP^1$-stabilization of the category of complexes of \'etale sheaves
 of $R$-modules with transfers over the smooth-\'etale site of $X$.
 If $R$ is of positive characteristic $n$, we are able to establish all
 the expected properties:
\begin{itemize}
\item \emph{localization} (Th.~\ref{thm:DM_et_localization}): given a closed immersion
$i:Z\to X$ with open complement $j:U\to X$, for any \'etale motive $M$ over $X$, there is
a canonical distinguished triangle of the form
$$j_! j^*(M)\to M\to i_*i^*(M)\to j_! j^*(M)[1]\, ;$$
\item \emph{absolute purity} (Th.~\ref{thm:DMet_absolute_purity}): given a closed immersion
of codimension $c$ between regular schemes $i:Z\to X$, for any \'etale motive $M$ over $X$,
there is a natural isomorphism
$$i^!(M)\simeq i^*(M)(-c)[-2c]\, ;$$
\item \emph{rigidity} (Th. \ref{thm:rigidity1}): when $n$ is invertible on $X$,
 $\DM_\et(X,R)$ is canonically equivalent to $\Der(X_\et,R)$,
 the (unbounded) derived category of \'etale sheaves of $R$-modules on
 the small \'etale site of $X$.
\end{itemize}
In fact, with torsion coefficients we even get the strong form
 of the cancellation theorem for \'etale motives, namely
 that the Tate twist in the effective category $\DMe_\et(X,R)$
 (obtained before applying the $\PP^1$-stabilization process)
 is already invertible. The last property of the one listed above is called
 the \emph{rigidity property} as it generalizes the original
 rigidity theorem of Suslin and Voevodsky
 (in the form of \cite[chap. 5, 3.3.3]{FSV}).
Moreover, in the context of sheaves with transfers we can give 
 to this theorem a more concrete form,
 closer to the original result of Suslin and Voevodsky
 (\cite[chap. 3, Th. 5.25]{FSV}):
\begin{thmi}[see Cor. \ref{A1locequivsmalletalesite}]
Assume that $R$ is of positive characteristic $n$, and consider a noetherian
scheme $X$ with residue characteristics prime to $n$.
For an \'etale sheaf with transfers of $R$-modules $F$ over $X$,
the following conditions are equivalent:
\begin{enumerate}
\item[(i)] $F$ is $\AA^1$-local: for any smooth $X$-scheme $Y$,
 $H^*_\et(Y;F)  \rightarrow H^*_\et(\AA^1_Y;F)$
 is an isomorphism;
\item[(ii)] $F$ comes from the small \'etale site of $X$: for any smooth morphism $p:Y \rightarrow X$,
the transition maps $p^*(F|_{X_{\et}}) \rightarrow F|_{Y_\et}$
 are isomorphisms.
\end{enumerate}
\end{thmi}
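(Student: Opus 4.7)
The plan is to deduce the corollary from the rigidity theorem (Theorem~\ref{thm:rigidity1}), which at torsion coefficients prime to the residue characteristics identifies $\DM_\et(X,R)$ with the derived category $\Der(X_\et, R)$ of the small \'etale site. Since, in the setting under consideration, the strong cancellation theorem asserts that the Tate twist is already invertible in the effective category, the same equivalence holds at the effective level, i.e.\ $\DMe_\et(X,R)\simeq \Der(X_\et,R)$. The corollary is then, morally, the extraction of this derived equivalence at the level of hearts.

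The implication (ii) $\Rightarrow$ (i) is essentially classical. If for every smooth $p:Y\to X$ the transition $p^*(F|_{X_\et})\to F|_{Y_\et}$ is an isomorphism, then both $H^*_\et(Y;F)$ and $H^*_\et(\AA^1_Y;F)$ compute \'etale cohomology of smooth $X$-schemes with coefficients in a smooth pullback of the single small-site sheaf $F|_{X_\et}$. The homotopy invariance of \'etale cohomology with torsion coefficients prime to the residue characteristics (SGA~4, XV) then yields the isomorphism in (i).

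For the converse (i) $\Rightarrow$ (ii), I would regard the $\AA^1$-local \'etale sheaf with transfers $F$ as an object of $\DMe_\et(X,R)$ concentrated in degree zero for the natural t-structure, and apply the rigidity equivalence. That equivalence, by its construction, is t-exact and sends $F$ to its restriction $F|_{X_\et}$ on the small \'etale site, itself a sheaf in degree zero. The quasi-inverse reconstructs $F$ from $F|_{X_\et}$ by a canonical functor $\rho$: pullback to the smooth-\'etale site, free addition of transfers, \'etale sheafification, and $\AA^1$-localization. Since each ingredient of $\rho$ commutes with smooth base change, applying $p^*$ to the reconstruction $F\simeq\rho(F|_{X_\et})$ and comparing with the analogous reconstruction of $F|_{Y_\et}$ produces the required identification $p^*(F|_{X_\et})\simeq F|_{Y_\et}$ for every smooth $p:Y\to X$.

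The main obstacle will be to upgrade the derived-category statement of Theorem~\ref{thm:rigidity1} to the abelian level: one must check that the rigidity equivalence is t-exact for the natural t-structures (whose hearts are $\AA^1$-local sheaves with transfers on one side and \'etale sheaves on the small site on the other), and that the reconstruction $\rho$ is, at the sheaf level, compatible with smooth pullback. Once the rigidity equivalence is realized by an underlying Quillen (or $\infty$-categorical) adjunction between the two sites, both points reduce to formal checks on the generators representing smooth $X$-schemes, so the real work lies in setting up this adjunction cleanly rather than in any additional geometric input.
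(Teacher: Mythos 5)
Your proposal is correct and follows essentially the same route the paper takes. The paper's proof of Corollary~\ref{A1locequivsmalletalesite} introduces the auxiliary condition~(iii): the co-unit $\rho_!\rho^* C\to C$ is a quasi-isomorphism. For a single sheaf $F$, this condition is precisely your condition~(ii), once one unwinds what $\rho_!=\gamma^*\rho_\sharp$ and $\rho^*=(-)|_{X_\et}$ do: by Lemma~\ref{lm:small_to_sm_etale} and Proposition~\ref{lm:X_et&transfers}, the restriction of $\gamma_*\rho_!\rho^*F=\rho_\sharp(F|_{X_\et})$ to the small \'etale site of a smooth $p:Y\to X$ is $p^*(F|_{X_\et})$. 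The equivalence (i)$\Leftrightarrow$(iii) is what Theorem~\ref{thm:rigidity1} supplies (the essential image of $\rho_!$ in $\Der(\shtr(X,R))$ is exactly the $\AA^1$-local complexes), which is the content of what you describe as extracting the derived equivalence at the level of hearts. The obstacle you flag---t-exactness---is resolved in the paper more cheaply than you anticipate: both $\rho_!$ and $\rho^*$ are exact functors of abelian categories, so the derived co-unit $\rho_!\rho^*F\to F$ for a single sheaf $F$ already lives entirely in degree zero, and no separate t-structure on $\DMe_\et(X,R)$ needs to be introduced (which would in any case presuppose knowing that $\AA^1$-local truncations exist, i.e.\ the statement one wants to prove). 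Your implication (ii)$\Rightarrow$(i) via homotopy invariance of torsion \'etale cohomology (Theorem~\ref{A1invarianceetalesheaves}) is precisely the content of the fully faithfulness of $\rho_!$ established in Proposition~\ref{prop:D^b_c->DMet_pleinement_fidele}.
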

We also derive some pretty consequences of our work for
 the classical \'etale theory: first, we extend the main theorems,
 proper and smooth base changes, to the unbounded derived category
 (see section \ref{sec:unbounded_sh})
 and we also extend the theory of traces
 to the case of more general finite morphisms 
 (see section \ref{sec:transfers} for more details).

However, to treat the integral case, we fall on the problem
 that, with rational coefficients, the \'etale and Nisnevich topologies
 give the same answer, and thus suffer the same defect.
 In particular, we only know that $\DM_\et(X,\QQ)$ is well behaved
 when $X$ is quasi-excellent and geometrically unibranch 
 (according to \cite[Th. 16.1.4]{CD3}).

This leads us to the second possibility mentioned above,
 the setting of the $\h$-topology
 introduced by Voevodsky at the very beginning of his theory
 of motives. The category $\uDM_\h(X,R)$,
 is the category obtained from the derived category 
 of $\h$-sheaves of $R$-modules
 after $\AA^1$-localization and $\PP^1$-stabilization.
 We then consider the category $\DM_\h(X,R)$, defined
 as the smallest thick subcategory of $\uDM_\h(X,R)$
 closed under small sums and containing Tate twists of motives of smooth
 $X$-schemes. When $R$ is a $\QQ$-vector space,
it is known to coincide with all the
various notions of $\QQ$-linear mixed motives which have the
expected properties (mainly: expected relation with
the graded piece of algebraic $K$-theory with respect to the
$\gamma$-filtration, good behavior with respect to the
six operations of Grothendieck, absolute purity): this is the subject
of \cite{CD3}, in which we prove that five possible constructions
of $\QQ$-linear mixed motives are equivalent, the category $\DM_\h(X,\QQ)$
being one of them\footnote{However, in \cite{CD3},
we prove that the categories
$\DM_\h(X,\QQ)$ are well behaved (i.e.
are suitably related to (homotopy) $K$-theory, and are closed under the six operations)
only when $X$ is quasi-excellent, noetherian, and of finite dimension.
In these notes, we extend this result to the case of
noetherian schemes of finite dimension; see Th.~\ref{thm:DMB&DM_h_rational_recall}.}.
In fact, the effective version (before $\PP^1$-stabilization)
of $\uDM_\h(X,R)$ was the very first construction of
a triangulated category of motives considered by Voevodsky;
see \cite{V1}.
In this article, we will see
 that for any ring $R$ of positive characteristic,
 $\DM_\h(X,R)$ coincides with $\DM_\et(X,R)$ (Th.~\ref{thm:comparison_torsion_etale-h_motives}),
 and thus, in the case where the characteristic of $R$ is invertible in $\cO_X$,
 and according to the rigidity property mentioned above,
 with the derived category $\Der(X_\et,R)$. Note that, while
 all these equivalences of categories appear in the work of Suslin and
 Voevodsky in the case where $X$ is the spectrum of a field, the proofs
 we give here do not rely on these particular cases\footnote{It is noteworthy
 that the proofs of Suslin and Voevodsky involve resolution of singularities
 (at least under the form of de~Jong alterations), while we do not need anything
 like this to prove these comparison theorems in full generality.}.
 The consequence of these comparison theorems,
 respectively with rational coefficients and with torsion coefficients,
 and together with a little game with the Artin-Schreier short exact sequence,
 is that $\h$-motives, with any coefficients, are well behaved
 with respect to the six operations,
 and in fact gives a common framework to the \'etale torsion coefficients
 of SGA4 and rational mixed motives. Moreover, the absolute
 purity theorem holds with integral coefficients for
 $\h$-motives (Th.~\ref{thm:DMh_6functors}).
 
In order to get duality properties on $\h$-motives, a finiteness
condition is needed on the objects of $\DM_\h(X,R)$.
The category $\DM_{\h,c}(X,R)$ of constructible $\h$-motives 
is defined as the smallest thick subcategory containing
Tate twists of motives of separated smooth $X$-schemes
(see more precisely Def.~\ref{df:hmotives&constructible_hmotives}).
 This notion was  first introduced by Voevodsky as the finite type (effective) $\h$-motives
 in \cite{V1} and recast by Ayoub in the axiomatic treatment
 of \cite{ayoub}. This notion of constructibility (which we already
 considered in our work on motives with rational coefficients \cite{CD3})
 is good enough for most of our purposes (one can prove its compatibility with the
 six operations), but suffers little drawbacks: it is not local
 with respect to the \'etale topology, and, in the case of torsion
 coefficients, does not always coincide through the equivalence
 $\Der(X_\et,R)\simeq\DM_\h(X,R)$ with the notions of constructibility
 which are traditionally used in the context of (torsion) \'etale sheaves.
 This is why we also study the triangulated categories $\DM_{\h,\lc}(X,R)$
 of locally constructible motives (i.e. of $\h$-motives which are locally
 constructible in the above sense, with respect to the \'etale topology);
 see Def.~\ref{df:locconstruct}.
 Let us summarize the main properties
 of (locally) constructible $\h$-motives over noetherian schemes
 of finite dimension that we prove here:
\begin{itemize}
\item with rational coefficients, both notions of constructible $\h$-motives
and of locally constructible $\h$-motives coincide and are also equivalent
to the purely categorical notion of compact object 
%%  with ``geometrical'' (or constructible) rational mixed motives
(Th.~\ref{thm:DMB&DM_h_rational_recall}); this remains true with
integral coefficients if the base scheme is of finite type over a strictly
henselian noetherian scheme, or, more generally, if the \'etale cohomological
dimension of its residue fields is uniformly bounded (Th.~\ref{thm:fcohdimlconst});
\item both constructible and locally constructible $\h$-motives
with integral coefficients are stable with respect to the six operations for
 quasi-excellent schemes (Cor. \ref{cor:6oppreserveconstruct} and \ref{cor:sixopDMhlc});
\item both constructible and locally constructible $\h$-motives
 are compatible with projective limits in the base schemes with 
 integral coefficients, property that we called \emph{continuity}
 in \cite[4.3]{CD3} (see
Th.~\ref{thm:contlocconstruct} and \ref{thm:contlocconstructfinal}, respectively);
\item under a mild assumption on the base scheme,
 there exists a dualizing object for constructible $\h$-motives with
 integral coefficients satisfying the expected properties of
 Grothendieck-Verdier duality
 (Th.~\ref{thm:duality}), and this duality extends to locally
 constructible $\h$-motives (Cor.~\ref{cor:sixopDMhlc});
\item for any surjective morphism of finite type $f:X\to S$
between noetherian schemes of finite dimension,
and for any object $M$ of $\DM_\h(S,\ZZ)$, if $f^*(M)$
is locally constructible, then so is $M$ - and if $S$
is quasi-excellent, the same is true if we replace $f^*(M)$ by
$f^!(M)$ (Prop.~\ref{prop:locconstrsurjtop});
\item for any noetherian ring of coefficients $R$, whose characteristic is invertible
in $\cO_X$, we have a canonical identification of locally constructible $\h$-motives over $X$
with bounded complexes of sheaves of $R$-modules on the small \'etale site of $X$ which
are of finite Tor-dimension and have constructible cohomology sheaves in the sense of SGA4:
$\Der^b_{\ctf}(X_\et,R)\simeq\DM_{\h,\lc}(X,R)$ (Th.~\ref{thm:DbctfDMhlctorsion}). This
correspondence is compatible with the six operations.
\end{itemize}
For further explanations concerning (locally) constructible motives,
the reader may have a look at Remarks \ref{rem:contreex1} and \ref{rem:contreex2}
(about the abundance of non-compact constructible $\h$-motives), and
at Remark \ref{rem:heuristiclocconstruct} (for a digression on the
meaning of locally constructible $\h$-motives).
An alternative characterization of locally constructible
$\h$-motives with integral coefficients is given by Theorem \ref{thm:DMhrigidobj}.

Among the applications of this formalism,
 we study the \emph{\'etale motivic cohomology}
 (also known as the Lichtenbaum motivic cohomology)
 of $X$, understood here as the usual extension groups computed
 in $\DM_\h(X,\ZZ)$:
$$
\Het^{r,n}(X)=\Hom_{\DM_\h(X)}(\ZZ_X,\ZZ_X(n)[r]).
$$
 First, we recall that, when $X$ is a scheme of finite type over a field $k$,
 up to inverting the exponential characteristic of $k$,
 it coincides with the \'etale hypercohomology of the Bloch cycle
 complex (Th.~\ref{thm:mot_L_coh}). Secondly,
 when $X$ is a regular noetherian scheme of finite dimension,
 we construct the cycle class map with values in
 \'etale motivic cohomology
$$
CH^n(X) \rightarrow \Het^{2n,n}(X)\, ,
$$
 and show it is an isomorphism after inverting all primes,
 or if $n=1$ after inverting the set $N$
 of the exponential characteristics of the residue fields of $X$;
 we also show it is a monomorphism if $n=2$ after inverting $N$
 (Th. \ref{thm:embedChowetalemotcoh}).
 This is achieved
 via a study of the coniveau spectral sequence of \'etale
 motivic cohomology, which uses
 the absolute purity theorem for $\h$-motives with integral
 coefficients, as well as the validity of the Bloch-Kato conjecture.
 The regularity assumption on $X$ can be avoided if we replace
 \'etale motivic cohomology by \'etale motivic Borel-Moore homology.

The main interest of the formalism described above
 is to provide an integral part to the torsion \'etale
 theory of \cite{SGA4}.
We exploit this fact, for any prime number $\ell$, by considering the $\ell$-adic completion
 of $\DM_\h(X,\ZZ)$ from a homotopical (or derived) perspective.
 The immediate advantage of this construction is that
 the resulting category, denoted by $\DM_\h(X,\hat\ZZ_\ell)$
 in Definition \ref{df:l-complete_h-motives}, readily
 has all the advantage of its integral model:
 6 operations, absolute purity.

 We exhibit two natural notions of finiteness
 for these $\ell$-adic $\h$-motives: constructi\-bi\-lity
 and geometricity (Def. \ref{df:finiteness_ell_h-motives}).
 Both notions are stable by the 6 operations (\ref{rm:ladic6opconstr}
 and \ref{thm:ladic6opconstr}).
 Using our comparison theorem in the case of torsion coefficients,
  we show that, when restricted to schemes of residue characteristics
  prime to $\ell$, the category of constructible $\ell$-adic $\h$-motives
	 not only extends the classical definitions of Deligne~\cite{BBD}
 (whenever that makes sense, see Prop. \ref{prop:comp:classical})
  but in fact coincides in full generality with the constructible
  $\ell$-adic systems defined by
	Ekedahl in \cite{Eke} (see \ref{prop:compekedahl}), in a
	compatible way with the 6 operations (for Ekedahl's $\ell$-adic
	systems, all this remains true for non-necessarily constructible
	objects). This has various nice consequences
	such as showing that Ekedahl constructible $\ell$-adic systems are stable by
	the 6 operations over any quasi-excellent schemes,
	and giving a $t$-structure on $\ell$-adic constructible $\h$-motives.

 Finally, the crux is reached as $\ell$-adic systems are in
 fact $\h$-motives: for any noetherian $\Spec(\ZZ[\ell^{-1}])$-scheme
 of finite dimension, they form a full triangulated subcategory of $\DM_\h(X,\ZZ)$,
 and the inclusion functor has a symmetric monoidal left adjoint:
$$
\hat \rho_\ell^*:\DM_{\h}(X,\ZZ) \rightarrow \DM_{\h}(X,\hat \ZZ_\ell)
 \simeq \Der(X,\ZZ_\ell)
$$
 - see \eqref{eq:l-completion&6gluing}.
This is the $\ell$-adic realization functor:
 it commutes with all of the six operations (including for
 non-necessarily constructible objects), and sends
 (locally) constructible $\h$-motives to constructible $\ell$-adic systems.

On the homotopy level, $\ell$-adic realization is the same
 thing as homotopy $\ell$-comple\-tion (see Prop. \ref{prop:caraclcompl});
 with a little abuse of notations:
$$
\hat \rho_\ell^*(M)=\derR \plim_{r} M/\ell^r.
$$
We cannot resist to give here an analogy with the situation
 of the derived category of abelian groups:
 it is easy to prove that the homotopy $\ell$-adic
 completion is conservative rationally, once restricted
 to perfect complexes of abelian groups. This gives a new light
 on the conservativity conjecture of Beilinson which can be stated as
 the hope that, for any noetherian scheme of finite dimension, the functor:
$$
\hat \rho_\ell^* \otimes \QQ:\DM_{\h,c}(X,\QQ)
 \rightarrow \Der^b_c(X,\QQ_\ell)
$$
is conservative (\cite[\textsection 5.10, end of A]{Bei}).
This conjecture would imply another one, which is also natural
if we think of motives as a generalization of abelian groups:
for any noetherian scheme of finite dimension,
the family of integral $\ell$-adic realization functors below, indexed
by all prime $\ell$,
$$\DM_{\h,\lc}(X,\ZZ)\xrightarrow{\text{restriction}}
\DM_{\h,\lc}(X\times\Spec \ZZ[\ell^{-1}],\ZZ)
 \xrightarrow{\ \hat \rho_\ell^* \ }
 \Der^b_c(X\times\Spec \ZZ[\ell^{-1}],\ZZ_\ell) \, ,
$$
should form a conservative family. Equivalently, this would mean that,
for an object $M$ of $\DM_\h(X,\ZZ)$, if, for \emph{any} prime $\ell$, 
the $\ell$-adic completion $\derR \plim_{r} M/\ell^r$ vanishes, then $M\simeq 0$.
In other words we expect that $\QQ$-linear $\h$-motives cannot be
(locally) constructible when seen in $\DM_\h(X,\ZZ)$.
\bigskip

To be complete, we give a comparison statement
(Cor.~\ref{cor:compDA1etale})
between the approach of this article and the one of \cite{ayoub5}:
for any noetherian scheme of finite dimension $X$ and any ring $R$ such that either
$X$ is of characteristic zero or that $2$ is invertible in $R$,
the canonical functor
$$\Der_{\AA^1,\et}(X,R)\rightarrow\DM_\h(X,R)$$
is an equivalence of triangulated categories (which is compatible with the
six operations), where the left hand side is the homotopy category of
the $\PP^1$-localization of the $\AA^1$-localization of the
model category of complexes of sheaves of $R$-modules on the smooth-\'etale
site of $X$. The reason why we think $2$-torsion is problematic
in the whole article \cite{ayoub5} (except if we restrict ourselves to schemes
of characteristic zero) is explained in Remark \ref{rem:compareayoub}, in which we also
explain why the recent work of F. Morel should allow to solve this puzzle.
We also emphasize that Ayoub always works with a ring of coefficients $R$
such that any prime number $p$ is invertible either in $R$ or in
the structural sheaf of the base scheme, so that he never considers
\'etale motives with integral coefficients in mixed characteristic.
Note that Ayoub also considers the comparison of $\Der_{\AA^1,\et}(X,R)$
with its counterparts with transfers $\DM_\et(X,R)$, but, even in the
case when $R$ is of positive characteristic, he only does it for $X$
normal (and, if $X$ is not of characteristic zero,
there is also the problem with $2$-torsion
as above). Finally, for theorems about
the stability of constructible objects under the six operations
and duality theorems in $\Der_{\AA^1,\et}(X,R)$, Ayoub always makes the assumption
that the \'etale cohomological dimension of the residue fields of $X$ with $R$-linear
coefficients is uniformly bounded (this means that he always works in a context
where constructible objects precisely are the compact objects).
In particular, and a little bit ironically,
for schemes of finite type over $\QQ$, one still needs to avoid $2$-torsion
to apply the full strength of Ayoub's article.

\smallskip

As for the organization of this article,
 we will use the language we are the most
familiar with: the one of \cite{CD3}. A little recollection
is given in the Appendix, in which one can find some complements
about the notion of absolute purity and about the effect of the
Artin-Schreier exact sequence in \'etale $\AA^1$-homotopy theory,
as well as a few remarks on idempotent completion and localization of coefficients
in abstract triangulated categories.

The first section of this paper consists in formulating
classical results of \'etale cohomology (such as the
proper base change theorem, the smooth base change theorem,
or cohomological descent) in terms of unbounded complexes
for arbitrary noetherian schemes. Except for the proper base change formula,
this extension to unbounded complexes uses
non-trivial results of Gabber on the \'etale cohomological
dimension; however, if one is only interested in excellent schemes
of characteristic zero or in schemes of finite type over an
excellent schemes of dimension $\leq 1$, one can rely on more
classical results from SGA4 (see Remark \ref{rem:etalecohdimGA4}).
Part of the results of this section are abstract because we will
need such a level of generality later on,
to deal with the problem of cohomological descent with unbounded
complexes without any assumptions on the cohomological dimension.

These classical results are then used in Sections \ref{sec2},
\ref{sec3} and \ref{sec4} to study the
triangulated categories $\DM_\et(X,R)$:
 in Section \ref{sec2}, we recall the theory of \'etale sheaves with transfers
 over general bases with coefficients in an arbitrary ring $R$.
 Its effective version was first introduced over fields of finite cohomological dimension
 by Voevodsky. We establish all the good properties of these sheaves
 using the framework of \cite[part 3]{CD3},
 without assuming finite cohomological dimension of the base
 scheme: namely, it forms an abelian premotivic category
 (see Appendix \ref{sec:premotivic_recall} for recall on that
 later notion), and moreover satisfies
 a weak form of the localization property
 (Prop. \ref{prop:locsmoothretract}). This leads in particular to
 the effective (resp. stable) $\AA^1$-derived category of sheaves
 with transfers $\DMe_\et(-,R)$ (resp. $\DM_\et(-,R)$)
 -- Par. \ref{num:Dm^eff_et&DM_et}.

In Section \ref{sec3}, we begin to investigate the link between
 \'etale sheaves of $R$-modules on the small site
 and sheaves with transfers.
The main result is that, for any ring $R$ and over any base,
 these sheaves uniquely admits transfers (Prop. \ref{lm:X_et&transfers}).
 When $R$ is of positive characteristic $n$, and $n$ is invertible
 on $X$, we deduce an embedding of the derived category of
 such sheaves to $\DMe_\et(-,R)$
  (\ref{prop:D^b_c->DMet_pleinement_fidele}).
	
Using all these preparatory results,
 the crux is reached in Section \ref{sec4}
 with the first version of the rigidity theorem:
the equivalence between the categories
 $\DMe_\et(X,R)$ and $\Der(X_\et,R)$ for a ring $R$ of positive
 characteristic invertible on $X$: Th. \ref{thm:rigidity1}.
Beside classical properties of \'etale cohomology,
 the main point here is that,
 with this constraint on the coefficient ring $R$,
 we prove in section \ref{sec:DM_et_localization}
 the localization property 
(recall Def. \ref{df:recall_loc&pur_premotivic}) for $\DM_\et(X,R)$.
 In the theory of sheaves with transfers, and more generally
 in the study of algebraic cycles, this property is a crucial point,
 as shown for example by the difficulty of proving that
 Bloch higher Chow groups have a localization long exact sequence
 -- which is still open in the unequal characteristic case.
 So far, with integral coefficients, 
  this property is unknown for the Nisnevich topology,
	and for non geometrically unibranch schemes for the \'etale
	topology.\footnote{for the \'etale topology,
	 the case of geometrically unibranch scheme	 is a consequence of
   Cor. \ref{cor:compDMetDmh} and Th. \ref{thm:DMh_6functors}.}

Section \ref{sec5}, is devoted to the study of the
triangulated categories of $\h$-motives $\DM_\h(X,R)$.
It is organized as follows. Section \ref{sec:hmot} is devoted to the basic
 definitions of $\h$-motives.
 The comparison of $\h$-motives and Beilinson motives was proved in \cite{CD3}
 for quasi-excellent schemes, and Section \ref{sec:compthm1} is devoted to
 the proof that we can remove this assumption, and get a comparison theorem
 for noetherian schemes of finite dimension (Th.~\ref{thm:DMB&DM_h_rational_recall}).
 In Section \ref{sec:hdesc}, we extend the proper descent theorem
 in torsion \'etale cohomology to unbounded complexes
 with the help of the results of the first section,
  but also of a non-trivial result of Goodwillie and Lichtenbaum
	on the cohomological dimension for the $\h$-topology.
	Section \ref{sec:change_coef} contains basic results on the effect of
	 changing the coefficient ring $R$.
 In Section \ref{sec:compthm2}, we prove a comparison theorem relating $\h$-motives
 with torsion coefficients with the \'etale version that we have studied in
	 sections \ref{sec2}, \ref{sec3} and \ref{sec4}. This is
	 also where we compare $\h$-motives with $\Der_{\AA^1,\et}(X,R)$.
	 We explain how to use this, together with the results of Section
	 \ref{sec:compthm1}, to understand the behavior of direct image functors
	 with small sums and arbitrary change of coefficients.
 In Section \ref{sec:hmot6op}, we show that $\h$-motives with an arbitrary ring
 of coefficients satisfy the complete
 6 functors formalism (at least over noetherian
 schemes of finite dimension).
 
 Section \ref{sec:transfers} contains preliminary
 results for the study of constructible $\h$-motives, on
 the existence of rather general trace maps -- which correspond
 to the structure of presheaf with transfers -- for $\h$-motives;
in Section \ref{sec:consthmot}, constructible $\h$-motives are studied
thoroughly: the main point is the fact $f_*$ respects constructibility
 (Th. \ref{thm:constructible_f_*}), which yields the same property
 for all of the 6 functors, and the duality theorem \ref{thm:duality}.
 Most of the proof of this non-trivial property is an adaptation of arguments
 and results of O.~Gabber.
 Section \ref{section:contlocconst} is devoted to the compatibility 
 of constructible $\h$-motives with projective limits of schemes (\emph{continuity})
 as well as to the study of locally constructible $\h$-motives: stability
 under the six operations, and comparison with $\Der^b_{\ctf}(X_\et,R)$.
 
 Section \ref{section:cycles} is devoted to
 \'etale motivic cohomology (defined as extension groups in $\DM_\h$) and to its relation
 with classical (possibly higher) Chow groups (as already mentioned above).
 Finally, section \ref{sec:completion&real} studies
 (derived) $\ell$-adic completion of $\h$-motives, its link with
 $\ell$-adic systems and $\ell$-adic realization.

%We study at first (section 5.2) the case of torsion coefficients, and
%see that we then get an equivalence with $\DM_\et(X,R)$
%(the main argument for this being the proper descent
%theorem in \'etale cohomology extended to unbounded
%complexes (which involves non-trivial results of Goodwillie
%and Lichtenbaum on the cohomological dimension with respect
 %to the $\h$-topology),
%together with the generalization of the Suslin and
%Voevodsky rigidity theorem proved earlier).
%A significant part of our effort is then
%put in the yoga of reducing the proofs of properties
%of $\DM_\h(X,\ZZ)$ to the case of $\QQ$-coefficients
%and of $\ZZ/n\ZZ$-coefficients, so that we can gather
%what is known about $\QQ$-linear mixed motives and
%classical torsion \'etale sheaves: we prove the absolute
%purity theorem, that the
%six operations preserve constructible objects in $\DM_\h(X,\ZZ)$
%(for quasi-excellent noetherian schemes of finite dimension, using an argument
%due Gabber), and that there exists a dualizing motive in $\DM_\h(X,\ZZ)$
%whenever $X$ is separated and of finite type over a regular scheme $S$,
%itself of finite type over an excellent noetherian scheme of dimension $\leq 2$.
%Finally,
 %in section \ref{sec:completion&real}, we describe (derived) $\ell$-adic completion in terms of Bousfield localizations, compare to Ekedahl 
%in order to define suitable $\ell$-adic realization functors compatible with
%the six operations.

\bigskip
\noindent{\em Acknowledgements}. We thank Giuseppe Ancona, 
 Ofer Gabber, Annette Huber, Shane Kelly, Kobi Kremnitzer, and
 J\"org Wildeshaus for discussions, ideas and motivations shared during the long
 gestation of this project. We heartily thank the referee of
 this paper for his careful reading, suggestions and corrections.
 They lead us to go further in some aspects of our study,
 as well as clearing out the place of our results in the current
 literature. %%\footnote{We regret that unwilling events
 %%have lead us to interrupt interesting discussions 
 %%started with J.~Ayoub after the talk of one of us 
 %%on the subject of this paper at Z\"urich in 2010. 
 %%Mathematics would certainly have benefited from more exchanges.}

\section*{Conventions}
Unless stated otherwise,
 \textbf{all schemes are assumed to be noetherian}.
 In particular, premotivic categories in the text
  (recall in Appendix \ref{sec:premotivic_recall}) are assumed
	to be fibred over the category of noetherian schemes.
 When dealing with rational or integral coefficients,
  we will need to restrict to schemes which are in
 addition finite dimensional. 
 This will	always be indicated.

Unless stated otherwise, the word {\bf ``smooth''
 (resp. ``\'etale'') means smooth
(resp. \'etale) and separated of finite type}.
We will consider the following classes of morphisms
 of schemes:
\begin{itemize}
\item $\Et$ for the class of \'etale morphisms,
\item $\sm$ for the class of smooth morphisms,
\item $\sft$ for the class of morphisms of finite type.
\end{itemize}
Given a base scheme $S$, we let $X_\et$ (resp. $\sm_S$, $\sft_S$)
 be the category of (noetherian) $S$-schemes 
 whose structural morphism is in $\Et$ (resp. $\sm$, $\sft$).

The dimension of a smooth morphism
(resp. codimension of a regular immersion)
will be understood as the corresponding Zariski locally constant function $d$
(resp. $c$) on the source scheme.
The twist by $d$ (resp. $c$) will be the obvious sum of twists
obtained by additivity.

\bigskip

%Monoidal (categories, functors), will always mean \emph{symmetric} monoidal.

Given any adjunction $(F,G)$ of categories,
 we will denote generically by
$$
ad(F,G):1 \rightarrow GF\text{ and } 
 ad'(F,G):FG \rightarrow 1
$$
the unit and co-unit of the given adjunction, respectively.
%\footnote{
 %We will often use in this text the usual fact that the following composite maps
%$$
%F \xrightarrow{ad(F,G)} FGF \xrightarrow{ad'(F,G)} F, \quad
% G \xrightarrow{ad(F,G)} GFG \xrightarrow{ad'(F,G)} G
%$$
%are the obvious identity morphisms.}

\bigskip

The letter $R$ will denote a commutative ring
 which will serves as a ring of coefficients for all our sheaves.
 In Section \ref{sec4} only, it will be implicitly
 assumed to be of positive characteristic $n$. \\
The letter $\Lambda$ will denote a localization of $\ZZ$
 which will serves as a ring of coefficients for all
 our cycles. We assume that $R$ is a $\Lambda$-algebra.
 
 We will freely use results on triangulated categories
 from Neeman's book~\cite{Nee1}, without warning. We simply recall that, in a given
 triangulated category $T$, a family of objects $G$ \emph{generates} $T$ is, for any
 object $M$ of $T$, if $\Hom_T(X,M[n])\simeq 0$ for any $X$ in $G$ and any integer $n$, then
 $M\simeq 0$.

\section{Unbounded derived categories of \'etale sheaves}
\label{sec:unbounded_sh}

In this section we give a reminder of the properties of \'etale
cohomology, as developed by Grothendieck and Artin in \cite{SGA4}.
There is nothing new, except some little complements about unbounded derived categories of
\'etale sheaves. This section is the only one of this paper
in which schemes are not supposed to be noetherian.

\subsection{Cohomological dimension}

\begin{num}\label{def:sheavesRmodsmallsite}
Let $X$ be a scheme. We denote by $X_{\et}$ the topos of sheaves on
the small \'etale site of $X$. Given a ring $R$, we write $\sh(X_\et,R)$
for the category of sheaves of $R$-modules on $X_\et$.
We will denote by $\Der(X_\et,R)$ the unbounded derived category of the
abelian category $\sh(X_\et,R)$. Given an \'etale scheme $U$ over $X$,
we will write $R(U)$ for the sheaf representing evaluation at $U$, (i.e.
the \'etale sheaf associated to the presheaf $R\langle \Hom_X(-,U)\rangle$).
\end{num}

\begin{df}\label{def:finiteellcohdim}
Let $R$ be a ring of coefficients.
A scheme $X$ is of \emph{finite \'etale cohomological dimension}
with $R$-linear coefficients if
there exists en integer $n$ such that $H^i_\et(X,F)=0$ for any
sheaf of $R$-modules $F$ over $X_\et$ and any integer $i>n$.
In the case where $R=\ZZ$, we will simply say that
$X$ is of finite \'etale cohomological dimension.

Let $\ell$ be a prime number.

A scheme $X$ is of \emph{finite $\ell$-cohomological dimension} if
there exists en integer $n$ such that $H^i_\et(X,F)=0$ for any
sheaf of $\ZZ/\ell\ZZ$-modules $F$ over $X_\et$ and any integer $i>n$.
We denote by $\mathrm{cd}_\ell(X)$ the smallest integer $n$ with the property
above.

A field $k$ is \emph{of finite $\ell$-cohomological dimension} if
$\mathrm{Spec}(k)$ has this property.
%there exists
%an integer $d$, such that, for any
%Galois representation (i.e. any sheaf on ${\Spec (k)}_\et$)
%of $\ZZ/\ell\ZZ$-modules $F$ over $k$, we have $H^i(k,F)=0$
%for $i>d$.
\end{df}

\begin{thm}[Gabber]\label{weakLefshetz}
Let $X$ be a strictly local noetherian scheme of dimension $d>0$,
and $\ell$ a prime which is distinct of the residue characteristic of $X$.
Then, for any open subscheme $U\subset X$, we have $\mathrm{cd}_\ell(U)\leq 2d-1$.
\end{thm}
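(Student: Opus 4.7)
The plan is to reduce, via the excision triangle, to bounding cohomology with support in $Z := X \setminus U$, and then to estimate it using the coniveau spectral sequence together with Gabber's cohomological dimension bound for fraction fields of strictly henselian local rings.

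Since $X$ is strictly local with separably closed residue field and $\ell$ is invertible in the residue characteristic, the functor $R\Gamma(X,-)$ applied to any $\ell$-primary torsion sheaf lands in degree zero. Hence, writing $j \colon U \hookrightarrow X$ for the open immersion and taking $F$ to be a sheaf of $\ZZ/\ell\ZZ$-modules on $U_{\et}$, the localization triangle
$$R\Gamma_Z(X, j_!F) \to R\Gamma(X, j_!F) \to R\Gamma(U, F) \to$$
immediately gives
$$H^i(U, F) \;\simeq\; H^{i+1}_Z(X, j_!F) \qquad (i \geq 1),$$
so it suffices to prove that $H^n_Z(X, j_!F) = 0$ for $n > 2d$.

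To estimate the right-hand side, I would filter $Z$ by codimension of support in $X$ and use the resulting coniveau spectral sequence
$$E_1^{p,q} = \bigoplus_{z \in Z \cap X^{(p)}} H^{p+q}_z(X, j_!F) \;\Longrightarrow\; H^{p+q}_Z(X, j_!F).$$
For a point $z$ of codimension $p$ in $X$, semi-purity gives $H^n_z(X,-) = 0$ for $n < 2p$; for the upper bound, one analyzes $H^n_z(X, j_!F)$ via the hypercohomology of $i_z^!(j_!F)$ on the strict henselization $X^{sh}_z = \Spec(\cO^{sh}_{X,z})$, reducing it (up to a shift by $2p$) to a Galois cohomology computation over $k(z)$. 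This yields the estimate $H^n_z(X, j_!F) = 0$ for $n > 2p + \mathrm{cd}_\ell(k(z))$. The crucial input is then Gabber's theorem: for a strictly henselian noetherian local ring $A$ of dimension $d$ and any prime $\mathfrak{p} \subset A$ of height $p$, one has $\mathrm{cd}_\ell(\mathrm{Frac}(A/\mathfrak{p})) \leq d - p$. Applied here, it gives $\mathrm{cd}_\ell(k(z)) \leq d - p$, so
$$H^n_z(X, j_!F) = 0 \quad \text{whenever } n > 2p + (d-p) = d + p \leq 2d,$$
with equality attained only at the closed point ($p = d$). The coniveau spectral sequence therefore vanishes in total degree $> 2d$, giving $H^n_Z(X, j_!F) = 0$ for $n > 2d$ as required.

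The main obstacle is justifying Gabber's dimension bound $\mathrm{cd}_\ell(k(z)) \leq d - p$ in the full generality of noetherian (not necessarily excellent) strictly henselian local rings. In the excellent or finite-type-over-a-field setting, this reduces via Noether normalization to classical Artin--Grothendieck results, but in the general noetherian case it requires Gabber's sophisticated analysis of $\ell$-cohomological dimension of function fields of henselian local rings (see the Illusie--Laszlo--Orgogozo volume). The naive induction on $d$ is circular at the closed point, since it would need the theorem for $U = X \setminus \{x\}$ as a hypothesis; it is precisely Gabber's input that breaks this circularity. Once the residue-field dimension bound is granted, the assembly of the coniveau spectral sequence and the excision reduction are essentially formal.
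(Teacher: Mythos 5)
The paper does not prove this statement; it is cited directly from Zheng's Exposé XVIII$_\mathrm A$, Th.~1.1 in the Gabber volume \cite{gabber3}. So I will evaluate the internal logic of your argument.

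Your overall strategy -- coniveau spectral sequence plus induction on $d$ plus Gabber's residue-field bound as the external input -- is indeed the right shape. But the specific implementation is circular at the closed point, and the residue-field bound does not, as you claim, break the circularity.

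The problem is the initial reduction. Passing from $H^*(U,F)$ to $H^{*+1}_Z(X, j_!F)$ means your coniveau spectral sequence is indexed by the points of $Z = X\setminus U$, and $Z$ always contains the closed point $x$ of codimension $p=d$. At that point the term you need to control is $H^n_x(X, j_!F)$, and since $(j_!F)_x = 0$ and $X$ is strictly local (so $H^*(X, j_!F)=0$), the localization long exact sequence gives
$$H^n_x(X, j_!F) \;\simeq\; H^{n-1}\bigl(X\setminus\{x\},\, (j_!F)|_{X\setminus\{x\}}\bigr).$$
So the asserted vanishing of this term for $n>2d$ is \emph{literally} the statement $\mathrm{cd}_\ell(X\setminus\{x\})\leq 2d-1$, a special case of the theorem. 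The residue-field bound at $x$ reads $\mathrm{cd}_\ell(k(x)) = 0$, which is trivially true since $k(x)$ is separably closed; it contributes nothing. The circularity sits in the implicit shift by $2d$, i.e.\ in the claim that local cohomology at $x$ behaves as if absolute purity held. Your phrase ``reducing it (up to a shift by $2p$) to a Galois cohomology computation over $k(z)$'' is really a purity statement, which requires $X$ regular; for a general noetherian $X$ there is no such shift. The correct derivation of the estimate $H^n_z(X,G)=0$ for $n > 2p + \mathrm{cd}_\ell(k(z))$ is via the excision long exact sequence on $\Spec(\cO^{\mathit{sh}}_{X,z})$ (of dimension $p$) together with the \emph{inductive} hypothesis $\mathrm{cd}_\ell\bigl(\Spec(\cO^{\mathit{sh}}_{X,z})\setminus\{z\}\bigr)\leq 2p-1$, followed by Hochschild--Serre. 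This is valid exactly when $p < d$ and fails to apply at $p=d$.

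The fix is to drop the detour through cohomology with support and apply the coniveau spectral sequence \emph{to $U$ itself}:
$$E_1^{p,q} \;=\; \bigoplus_{u\in U^{(p)}} H^{p+q}_u(U,F) \;\Longrightarrow\; H^{p+q}(U,F).$$
Since $U$ is a proper open subset of $X$, it misses the closed point, so every $u\in U$ has $\mathrm{codim}_X(u)\leq d-1$. The inductive estimate now applies to every contributing point, and with Gabber's bound $\mathrm{cd}_\ell(k(u))\leq d - \mathrm{codim}(u)$ one gets $H^n_u(U,F)=0$ for $n > d + \mathrm{codim}(u) \leq 2d-1$. This is precisely the content of \cite[Exp.~XVIII$_\mathrm A$, Lemma~2.2]{gabber3}, which the paper itself invokes in the proof of Theorem~\ref{thm:localetalefinitecd} in the form $H^b_{\et}(W,F)=0$ for $b>\sup_{x\in W}\bigl(\mathrm{cd}_\ell(k(x))+2\,\mathrm{codim}(x)\bigr)$. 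In short: you correctly identified the external Gabber input, but the excision reduction imported the closed point into the spectral sequence and thereby smuggled the conclusion into the hypotheses.
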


For a proof, see \cite[Expos\'e~XVIII$_\mathrm A$, Th. 1.1]{gabber3}.

\begin{lm}\label{lemma:etaleQcoefficientscohdim}
Let $X$ be a noetherian scheme of dimension $d$.
Then, for any sheaf of $\QQ$-vector spaces $F$ over $X_\et$, we have
$H^i_\et(X,F)=0$ for $i>d$.
\end{lm}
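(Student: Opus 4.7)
The plan is to compare étale and Nisnevich cohomology via the change-of-topology morphism $\epsilon\colon X_\et \to X_\nis$ and its Leray spectral sequence
$$E_2^{p,q} = H^p_\nis(X, R^q\epsilon_* F) \Longrightarrow H^{p+q}_\et(X, F).$$
The main step is to prove that $R^q\epsilon_* F = 0$ for all $q > 0$. Granting this, the spectral sequence collapses to $H^i_\et(X, F) \simeq H^i_\nis(X, \epsilon_* F)$, and the conclusion follows from the classical bound $\mathrm{cd}_\nis(X) \le \dim X = d$ for a noetherian scheme of finite Krull dimension.

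For the vanishing of the higher direct images, I would compute Nisnevich stalks: it suffices to show $H^q_\et(\Spec A, F) = 0$ for $q > 0$, where $A = \cO^h_{X,x}$ is a henselization with residue field $k$. For any henselian local ring $A$, restriction to the closed point induces an equivalence between the small étale topos of $\Spec A$ and that of $\Spec k$, i.e.\ between sheaves of abelian groups and discrete $G_k$-modules (where $G_k := \mathrm{Gal}(k^{\mathit{sep}}/k)$), given by $F \mapsto F(\Spec A^{sh})$ equipped with its natural $G_k$-action. Hence
$$H^p_\et(\Spec A, F) \simeq H^p_{\mathrm{cts}}\bigl(G_k, F(\Spec A^{sh})\bigr).$$

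Since $F$ takes values in $\QQ$-vector spaces, so does $V := F(\Spec A^{sh})$. It then remains to verify that for any profinite group $G$ and any discrete $G$-module $V$ which is a $\QQ$-vector space, $H^p_{\mathrm{cts}}(G, V) = 0$ for $p > 0$. This follows by writing continuous cohomology as the filtered colimit $\varinjlim_H H^p(G/H, V^H)$ over open normal subgroups $H \trianglelefteq G$ and invoking Maschke's theorem: for a finite group $\Gamma$ acting on a $\QQ$-vector space, all higher cohomology vanishes.

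The main obstacle I anticipate is justifying the topos equivalence between sheaves on $(\Spec A)_\et$ and continuous $G_k$-modules, together with the identification $F(\Spec A^{sh}) = \varinjlim_U F(U)$ over étale neighborhoods $U$ of the strict henselization, for an arbitrary (possibly unbounded, non-constructible) sheaf $F$ of $\QQ$-vector spaces. These are classical facts for abelian sheaves on the small étale site of a henselian local ring, though they require some care in the unbounded setting; likewise, the bound on $\mathrm{cd}_\nis(X)$ by $\dim X$ is itself standard and could alternatively be reduced to Grothendieck's theorem on the cohomological dimension of noetherian topological spaces via a further Leray argument for $X_\nis \to X_\zar$.
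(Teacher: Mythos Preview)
Your proof is correct and follows essentially the same approach as the paper: the paper's proof is a single sentence asserting that \'etale and Nisnevich cohomology coincide for sheaves of $\QQ$-vector spaces and that Nisnevich cohomological dimension is bounded by $\dim X$. What you have written is precisely the standard unpacking of that first assertion via the Leray spectral sequence for $\epsilon\colon X_\et\to X_\nis$ and the vanishing of higher Galois cohomology with $\QQ$-coefficients on henselian local stalks, so there is no genuine difference in strategy.
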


\begin{proof}
Nisnevich cohomology and \'etale cohomology with coefficients
in \'etale sheaves of $\QQ$-vector spaces coincide,
and Nisnevich cohomological dimension is bounded by the dimension,
 which proves this assertion.
\end{proof}

\begin{thm}[Gabber]\label{thm:localetalefinitecd}
Let $S$ be a strictly local noetherian scheme and $X$ a
$S$-scheme of finite type. Then $X$ is of finite \'etale cohomological dimension, and
the residue fields of $X$ are uniformly of finite \'etale cohomological dimension.
\end{thm}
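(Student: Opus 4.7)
Let $d = \dim X$, finite since $X \to S$ is of finite type with $S$ noetherian, and let $\mathcal{P}$ be the (finite) set of residue characteristics of $S$, which coincides with that of $X$. The plan is to bound the $\ell$-cohomological dimension of $X$ and of each residue field $\kappa(x)$ uniformly in $\ell$, and then combine with the rational bound of Lemma~\ref{lemma:etaleQcoefficientscohdim} to obtain finite étale cohomological dimension with integral coefficients.

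The principal case is $\ell \notin \mathcal{P}$. For any geometric point $\bar x \to X$, the strict local ring $\mathcal{O}^{sh}_{X,\bar x}$ is strictly local noetherian of dimension $\leq d$ with residue characteristic in $\mathcal{P}$; by Theorem~\ref{weakLefshetz} every open $U \subset \mathrm{Spec}\,\mathcal{O}^{sh}_{X,\bar x}$ satisfies $\mathrm{cd}_\ell(U) \leq 2d - 1$, uniformly in $\ell \notin \mathcal{P}$ and in $\bar x$. A local-to-global spectral sequence argument, using that on the noetherian coherent small étale topos of $X$ cohomology is controlled by stalks, upgrades this to a uniform bound of the shape $\mathrm{cd}_\ell(X) \leq 2d$. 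Residue fields are handled analogously: $\kappa(x)$ is a finitely generated extension of $\kappa(f(x))$ of transcendence degree $\leq d$; $\mathrm{cd}_\ell \kappa(f(x))$ is bounded by Gabber's theorem applied to $S$ (writing $\mathrm{Spec}\,\kappa(f(x))$ as a cofiltered limit of open subschemes of $\overline{\{f(x)\}} \subset S$), and the classical Tate--Serre shift $\mathrm{cd}_\ell K(t) \leq \mathrm{cd}_\ell K + 1$ propagates the bound through the finitely many algebraic and transcendental steps from $\kappa(f(x))$ to $\kappa(x)$.

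The finitely many primes $\ell \in \mathcal{P}$ are treated separately. In equal characteristic $\ell$, the Artin--Schreier exact sequence reduces $\mathbf{F}_\ell$-cohomology to coherent cohomology, yielding $\mathrm{cd}_\ell \leq d + 1$; in mixed characteristic, stratify $X$ as the closed subscheme on which $\ell = 0$ together with its open complement on which $\ell$ is invertible, and combine via the closed-open localization triangle to reduce to the previous cases. Rational coefficients are controlled by Lemma~\ref{lemma:etaleQcoefficientscohdim}. Arbitrary sheaves of $\ZZ$-modules $F$ are then reduced to the torsion and rational cases via the short exact sequence $0 \to F_\mathrm{tors} \to F \to F/F_\mathrm{tors} \to 0$ together with the rational localization of $F/F_\mathrm{tors}$; each such extension shifts cohomological dimension by at most one, and only finitely many primes in $\mathcal{P}$ require nontrivial attention, so the total bound remains finite and uniform.

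The main obstacle is the globalization step: passing from Gabber's pointwise bound at strict localizations to a global uniform bound on $X$ is conceptually governed by the stalks of the étale topos, but requires careful spectral sequence and descent arguments to ensure uniformity in $\ell$ and applicability to non-constructible sheaves. A secondary technical point is the Artin--Schreier bound in mixed characteristic $\ell \in \mathcal{P}$, where the closed-open decomposition must be set up so as to patch the equal-characteristic bound with the $\ell$-invertible bound without losing uniformity.
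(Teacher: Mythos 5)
Your overall strategy — reduce arbitrary sheaves of $\ZZ$-modules to constructible ones, then to torsion and rational pieces, use Gabber's Theorem~\ref{weakLefshetz} for $\ell$ prime to the residue characteristics, Artin--Schreier (SGA4, X, 5.1) and a closed-open decomposition along $V(\ell)$ for $\ell$ a residue characteristic, and Lemma~\ref{lemma:etaleQcoefficientscohdim} for rational coefficients — matches the paper's in outline, and your treatment of the residue fields (Tate--Serre shift $\mathrm{cd}_\ell K(t) \le \mathrm{cd}_\ell K + 1$ plus Gabber applied to $\overline{\{f(x)\}}\subset S$) is essentially the argument the paper gives via SGA4, X, 2.1.

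The genuine gap is the ``globalization'' step for $\ell\notin\mathcal{P}$. You write that on the small \'etale topos of $X$ ``cohomology is controlled by stalks'' and that a local-to-global spectral sequence upgrades the pointwise bound $\mathrm{cd}_\ell$ of opens of $\mathrm{Spec}\,\mathcal{O}^{sh}_{X,\bar x}$ to $\mathrm{cd}_\ell(X)\le 2d$. This is not a theorem, and in fact is false if taken at face value: for $X=\mathrm{Spec}\,k$ with $k$ a field, the unique strict local ring $\mathcal{O}^{sh}_{X,\bar x}=k^{\mathrm{sep}}$ is zero-dimensional (so Theorem~\ref{weakLefshetz} is vacuous and the stalk has $\mathrm{cd}_\ell=0$), yet $\mathrm{cd}_\ell(X)=\mathrm{cd}_\ell(k)$ can be as large as you like. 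The cohomology of the \'etale topos picks up Galois cohomology of residue fields that the stalks do not see, so a global bound cannot be read off from the $\mathrm{cd}_\ell$ of strict localizations alone. The correct input, which the paper cites, is the devissage/coniveau theorem that bounds $\mathrm{cd}_\ell$ of a noetherian scheme by $\sup_x\bigl(\mathrm{cd}_\ell(\kappa(x))+2\,\mathrm{codim}(x)\bigr)$ (SGA4, IX, 4.3, refined in Gabber's notes, XVIII$_{\mathrm{A}}$, Lemma~2.2); this is applied directly to $U=X\setminus V(\ell)$, after which the residue-field bounds you already established do the work. Your proposed bound $\mathrm{cd}_\ell(X)\le 2d$ with $d=\dim X$ is also too strong in general — the paper's resulting bound involves $2\dim(S)$ and the transcendence degrees $d(x)$, which are not controlled by $\dim X$ alone.

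A secondary remark: the paper's reduction to torsion and rational coefficients goes through a first reduction to constructible sheaves (since $H^i_\et(X,-)$ commutes with filtered colimits on a qcqs scheme), then splits off torsion and plays the $0\to C\to C\otimes\QQ\to C\otimes\QQ/\ZZ\to 0$ trick; your formulation with the torsion subsheaf of an arbitrary sheaf is morally the same but should be run through constructibility to make the devissage by primes legitimate.
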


\begin{proof}
An easy Mayer-Vietoris argument shows that it is sufficient to prove
the theorem in the case where $X$ is affine.
For a point $x\in X$ with image $s\in S$, we write $d(x)$ for the
degree of transcendence of the residue field $\kappa(x)$ over $\kappa(s)$.
Note that, for any prime $\ell$ which is invertible
in $\kappa(x)$, we have $\mathrm{cd}_\ell(\kappa(x))\leq d(x)+\mathrm{cd}_\ell(\kappa(s))$;
see \cite[Expos\'e~X, Th\'eor\`eme~2.1]{SGA4}.
Therefore, by virtue of Gabber's theorem \ref{weakLefshetz},
we have $\mathrm{cd}_\ell(\kappa(x))\leq d(x)+2\mathrm{dim}(S)-1$.
Let us define
$$N=\mathrm{max}\{\mathrm{dim}(X),
\mathrm{sup}_{x\in X}(2\mathrm{dim}(S)+1+d(x)+2\mathrm{codim}(x))\}\, .$$
We will prove that $H^i_\et(X,F)=0$
for any sheaf $F$ over $X_\et$ and any $i>N$.
As $X$ is quasi-compact and quasi-separated, the functors $H^i_\et(X,-)$
commute with filtered colimits; see \cite[Expos\'e~VII, Proposition~3.3]{SGA4}.
Therefore, we may assume that $F$ is constructible; see
\cite[Expos\'e~IX, Corollaire~2.7.2]{SGA4}.
We have an exact sequence of the form
$$0\to T\to F \to C\to 0$$
where $T$ is torsion and $C$ is without torsion (in particular, $C$
is flat over $\ZZ$). Therefore, we may assume that $F=T$ or $F=C$.
We also have a short exact sequence
$$0\to C\to C\otimes\QQ\to C\otimes\QQ/\ZZ\to 0$$
from which we deduce that
$$H^i_\et(X,C\otimes\QQ/\ZZ)\simeq\varinjlim_n H^i_\et(X,C\otimes\ZZ/n\ZZ)$$
for all $i$. Lemma \ref{lemma:etaleQcoefficientscohdim}
thus shows that it is sufficient to consider the case where $F$
is the form $T$ or $C\otimes\ZZ/n\ZZ$. But, as $T$ is torsion and
constructible, it is a $\ZZ/n\ZZ$-module for some integer $n\geq 1$.
We are reduced to the case where $F$ is a constructible sheaf of
$\ZZ/n\ZZ$-modules for some integer $n\geq 1$.
We can find a finite filtration
$$0=F_0\subset F_1\subset\ldots\subset F_k=F$$
such that $F_{j+1}/F_j$ is a $\ZZ/\ell_j\ZZ$-module for any $j$, with
$\ell_j$ a prime number: this
follows from the fact such a filtration exists in the category of finite
abelian groups, using \cite[Expos\'e~IX, Proposition~2.14]{SGA4}.
Therefore, we may assume that $n=\ell$ is a prime number.

We will prove that, for any sheaf of $\ZZ/\ell\ZZ$-modules $F$
over $X_\et$, we have $H^a_\et(X,F)=0$ for $a>N$.
Let $Z=\mathrm{Spec}(\ZZ/\ell\ZZ)\times X$ and $U=X-Z$.
We have a closed immersion $i:Z\to X$ and its open complement
$j:U\to X$, which gives rise to a distinguished triangle
$$i_*\derR i^!(F)\to F\to \derR j_* j^*(F)\to i_*\derR i^!(F)[1]$$
and thus to an exact sequence
$$0\to i_*i^!(F)\to F\to j_*j^*(F)\to i_*\derR^1 i^!(F)\to 0$$
together with isomorphisms
$$\derR^{b} j_*j^*(F)\simeq i_*\derR^{b+1} i^!(F)\quad\text{for $b\geq 1$.}$$
On the other hand, we have, for any \'etale $X$-scheme $V$
$$H^b_\et(U\times_X V,j^*(F))=0 \
\text{for any integer}\
b> \delta=\mathrm{sup}_{x\in U}(\mathrm{cd}_\ell(k(x))+2\mathrm{codim}(x))$$
(see \cite[Expos\'e~XVIII$_\mathrm A$, Lemma~2.2]{gabber3}
and \cite[Expos\'e IX, Corollaire 4.3]{SGA4}).
Therefore, we have $\derR^{b} j_*j^*(F)=0$ for $b>\delta$. Hence
$\derR^{b} i^!(F)=0$ for $b>\delta+1$.
By virtue of \cite[Expos\'e~X, Th\'eor\`eme~5.1]{SGA4},
as $Z$ is affine,
we also have $H^i_\et(Z,G)=0$ for $i>1$ and for any sheaf of $\ZZ/\ell\ZZ$-modules $G$.
The spectral sequence
$$H^{a}_\et(Z,\derR^b i^!(F))\Rightarrow H^{a+b}_\et(Z,\derR i^!(F))$$
thus implies that $H^{a}_\et(Z,\derR i^!(F))=0$ for $a>\delta+2$.
In conclusion, the long exact sequence
$$H^{a}_\et(Z,\derR i^!(F))\to H^a_\et(X,F)\to H^a_\et(U,j^*(F))\to H^{a+1}_\et(Z,\derR i^!(F))$$
gives $H^a_\et(X,F)=0$ for $a>\delta+2$.
\end{proof}

\begin{rem}\label{rem:etalecohdimGA4}
Gabber also proved the Affine Lefschetz Theorem:
if $X$ is an excellent strictly local scheme of dimension $d$,
for any open subscheme $U\subset X$, we have
$\mathrm{cd}_\ell(U)\leq d$; see \cite[Expos\'e~XV, Corollaire 1.2.4]{gabber3}.
In the case of excellent schemes of characteristic zero, this had been
proved by Artin, using Hironaka's resolution of singularities; see \cite[Expos\'e~XIX, Corollaire~6.3]{SGA4}.
The case of a scheme of finite type over an excellent scheme of dimension $\leq 1$
was also known (this follows easily from \cite[Expos\'e~X, Proposition~3.2]{SGA4}).
%However, Gabber's theorem above won't be used in these notes.
\end{rem}

\begin{lm}\label{lm:preparecompactsite}
Let $\mathcal{A}$ be a Grothendieck abelian category.
We also consider a left exact functor
$$F:\mathcal{A}\to\ZZ\text{-}\mathrm{Mod}\, ,$$
and we denote by
$$\derR F:\Der(\mathcal{A}) \to\Der(\ZZ\text{-}\mathrm{Mod})$$
its total right derived functor. We suppose that the functor
$$\mathcal{A}\to\ZZ\text{-}\mathrm{Mod}\ , \quad
A\mapsto \derR^nF(A)$$
commutes with small filtered colimits for any integer $n\geq 0$.

Then, the following conditions are equivalent.
\begin{itemize}
\item[(i)] The functor
$$\Comp(\mathcal{A}) \to \ZZ\text{-}\mathrm{Mod} \ ,
\quad K \mapsto H^0\derR F(K)$$
commutes with small filtered colimits.
\item[(ii)] The functor $\derR F$ commutes with small sums.
\item[(iii)] The functor $\derR F$ commutes with countable sums.
\item[(iv)] For any degree-wise $F$-acyclic complex $K$, the natural
map $F(K)\to \derR F(K)$ is an isomorphism in $\Der(\ZZ\text{-}\mathrm{Mod})$.
\end{itemize}
Moreover, the four conditions above are verified whenever
the functor $F$ is of finite cohomological dimension.
%in the sense that
%there exists an integer $d$ such that, for any integer $n>d$, and for any
%object $A$ of $\mathcal{A}$, the group $\derR^n F(A)$ vanishes.
\end{lm}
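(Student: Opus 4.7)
The plan is to run the cycle (i) $\Rightarrow$ (ii) $\Rightarrow$ (iii) $\Rightarrow$ (iv) $\Rightarrow$ (i), and to verify the ``moreover'' independently. For (i) $\Rightarrow$ (ii), I note that by shifting complexes, (i) extends to the statement that $H^n \derR F$ commutes with filtered colimits for every $n \in \ZZ$. Any small sum of complexes is the filtered colimit of its finite subsums, and $\derR F$ is triangulated, hence commutes with finite sums; combining these observations gives that $H^n \derR F$ commutes with small sums. Since isomorphisms in $\Der(\ZZ\text{-Mod})$ are detected on every $H^n$, and since $H^n$ of a sum of complexes of abelian groups is the sum of the $H^n$'s, (ii) follows. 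The implication (ii) $\Rightarrow$ (iii) is trivial.

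For (iii) $\Rightarrow$ (iv), given a degreewise $F$-acyclic complex $K$, I would consider the brutal truncations $\sigma^{\geq -n} K$ (vanishing below $-n$), which remain degreewise $F$-acyclic and are bounded below; for these, the standard Cartan-Eilenberg argument (or the convergent bounded-below Grothendieck spectral sequence) gives $\derR F(\sigma^{\geq -n} K) \simeq F(\sigma^{\geq -n} K)$. The complex $K$ is the homotopy colimit of the $\sigma^{\geq -n} K$, computed as a mapping telescope sitting in a distinguished triangle between two countable direct sums. By (iii), $\derR F$ preserves this telescope, so $\derR F(K)$ is the telescope of the $F(\sigma^{\geq -n} K)$; since filtered colimits are exact in $\ZZ\text{-Mod}$, this coincides with the plain colimit, which equals $F(K)$ because $F = \derR^0 F$ commutes with filtered colimits by hypothesis.

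For (iv) $\Rightarrow$ (i), given a filtered colimit $K = \ilim_\alpha K_\alpha$ in $\Comp(\mathcal{A})$, I would invoke the existence of a functorial fibrant replacement in the injective model structure on $\Comp(\mathcal{A})$, which holds for any Grothendieck abelian category (after Spaltenstein, Beke and Hovey). This provides, functorially in $\alpha$, quasi-isomorphisms $K_\alpha \to L_\alpha$ with each $L_\alpha$ $K$-injective and, in particular, degreewise injective. Then $L = \ilim_\alpha L_\alpha$ is quasi-isomorphic to $K$ by exactness of filtered colimits in $\mathcal{A}$, and remains degreewise $F$-acyclic: each $L^p$ is a filtered colimit of injective (hence $F$-acyclic) objects, so by the standing hypothesis that $\derR^q F$ commutes with filtered colimits, $\derR^q F(L^p)=0$ for $q>0$. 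Applying (iv) gives $\derR F(K) \simeq F(L) = \ilim_\alpha F(L_\alpha) \simeq \ilim_\alpha \derR F(K_\alpha)$, and taking $H^0$ (which commutes with filtered colimits in $\ZZ\text{-Mod}$) yields (i).

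Finally, under finite cohomological dimension the Grothendieck spectral sequence $\derR^q F(K^p) \Rightarrow H^{p+q} \derR F(K)$ converges for any complex $K$, possibly unbounded, because the $q$-range is uniformly bounded; when $K$ is degreewise $F$-acyclic, the $E_1$-page collapses to $F(K^p)$ in degree zero, giving (iv). The most delicate step is (iv) $\Rightarrow$ (i): everything hinges on the availability of a functorial degreewise $F$-acyclic resolution that is compatible with filtered colimits, which is precisely what the injective model structure on $\Comp(\mathcal{A})$ delivers.
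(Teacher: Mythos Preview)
Your proof is correct and follows essentially the same route as the paper: the same cycle (i)$\Rightarrow$(ii)$\Rightarrow$(iii)$\Rightarrow$(iv)$\Rightarrow$(i), the same brutal-truncation/telescope argument for (iii)$\Rightarrow$(iv), and the same idea of computing $\derR F$ via degree-wise $F$-acyclic resolutions for (iv)$\Rightarrow$(i), which you spell out more explicitly via functorial fibrant replacement in the injective model structure. For the final ``moreover'', the paper simply cites the result as well known; your hypercohomology spectral sequence sketch is in the right spirit, though making its convergence to $H^{*}\derR F(K)$ rigorous for an unbounded $K$ is precisely the content of the references the paper invokes.
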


\begin{proof}
It is clear that (i)$\Rightarrow$(ii)$\Rightarrow$(iii). It is also easy to see that
property (iv) implies property (i). Indeed, our assumption on $F$ implies that the class of
$F$-acyclic objects is closed under filtered colimits, which implies that the class
of degree-wise $F$-acyclic complexes has the same property. On the other hand,
property (iv) implies that the functor $\derR F$ may be constructed using
resolutions by degree-wise $F$-acyclic complexes, from which property (i)
follows immediately.

Let us show that condition (iii)
implies condition (iv). Consider a sequence of morphisms of complexes of $\mathcal{A}$:
$$K_0\to K_1\to \dots \to K_n\to K_{n+1}\to\dots \quad , \qquad n\geq 0\, .$$
We then have a map
$$1-d:\bigoplus_n K_n\to\bigoplus K_n\, ,$$
where $d$ is the morphism induced by the maps $K_n\to K_{n+1}$.
The cone of $1-d$ (the cokernel of $1-d$, respectively) is the
homotopy colimit (the colimit, respectively) of the diagram $\{K_n\}$.
Moreover, as filtered colimits are exact in $\mathcal{A}$, the canonical map
$$\derL\varinjlim_n K_n \to \varinjlim_n K$$
is an isomorphism in $\Der(\mathcal{A})$.
As a consequence, it follows from condition (iii) that, if $K$
belongs to $\Comp(\mathcal{A})$, we have a
natural long exact sequence of shape
$$\cdots\to\bigoplus_n H^i\derR F(K_n)\overset{1-d}{\to}\bigoplus_n H^i\derR F(K_n)\to
H^i\derR F(\varinjlim_n K_n)\to\cdots$$
%\bigoplus_n H^{i+1}\derR F(U(K_n))\to$$
It is easy to deduce from this that, assuming condition (iii), the natural map
$$\varinjlim_n H^0\derR F(K_n)\to H^0\derR F(\varinjlim_n K_n)$$
is always invertible.

For an integer $n$, let us
%denote by $\tau^{\leq n}$ the truncation
%functor which associates to a complex of $\mathcal{A}$ the complex
%$\tau^{\leq n}(K)$ defined by $\tau^{\leq n}(K)^i=K^i$
%for $i\leq n-1$, $\tau^{\leq n}(K)^n=\ker d$, and $\tau^{\leq n}(K)^i=0$
%for $i>n$. Also
write $\sigma^{\geq n}(K)$ for the `troncation b\^ete',
defined as $\sigma^{\geq n}(K)^i=K^i$ if $i\geq n$ and
$\sigma^{\geq n}(K)^i=0$ otherwise. We can then write
$$\varinjlim_{n}\sigma^{\geq m}(K)\simeq K\, .$$
Suppose furthermore that the complex $K$ is degree-wise $F$-acyclic.
Then $\sigma^{\geq n}(K)$ has the same property and has
moreover the good taste of being bounded below. Therefore, the map
$$F(\sigma^{\geq n}(K))\to\derR F(\sigma^{\geq n}(K))$$
is an isomorphism for any integer $n$. As both the functors
$H^0 F$ and $H^0\derR F$ commutes with $\varinjlim_n$,
we conclude that property (iv) is verified.

The fact that property (iv) is true whenever $F$ is of finite
cohomological dimension is well known
(it is already in the book of Cartan and Eilenberg in the case where
$\mathcal{A}$ is a category of modules over some ring, and
a general argument may be found for instance in \cite[Lemma 0.4.1]{SV2}).
\end{proof}

\begin{num}\label{rappels:structureprojective}
Given a topos $T$ and a ring $R$, we will write $\sh(T,R)$
for the category of $R$-modules in $T$ (or, equivalently, the category of
sheaves of $R$-modules over $T$). If $\mathcal{G}$ is a generating family of $T$,
the category $\Comp(\sh(T,R))$ is endowed with the \emph{projective model
category structure} with respect to $\mathcal{G}$
(see \cite[Example 2.3, Theorem 2.5, Corollary 5.5]{CD1}):
the weak equivalences are the quasi-isomorphisms,
while the fibrant objects are the complexes of sheaves of $R$-modules $K$
such that, for any object $U$ in $\mathcal{G}$, the natural map
$$H^n(\Gamma(U,K))\to H^n(U,K)$$
is an isomorphism for any integer $n$ (where $H^n(U,K)$
denotes the hypercohomology groups of $U$ with coefficients in $K$).
The fibrations (trivial fibrations) are the morphisms of shape $p:K\to L$ with the
following properties:
\begin{itemize}
\item[(i)] for any object $U$ in $\mathcal{G}$, the map
$p:\Gamma(U,K)\to\Gamma(U,L)$ is degree-wise surjective;
\item[(ii)] the kernel of $p$ is fibrant (the complex $\Gamma(U,\mathrm{ker}(p))$
is acyclic for any $U$ in $\mathcal{G}$, respectively).
\end{itemize}
Moreover, for any object $U$ in $\mathcal{G}$, the object $R(U)$ (the free
sheaf of $R$-modules generated by $U$),
seen as a complex concentrated in degree zero, is cofibrant.
We will write $\Der(T,R)$ for the (unbounded) derived category of $\sh(T,R)$.

If a topos $T$ is canonically constructed as the category of sheaves on a
Grothendieck site, the class of representable sheaves is a
generating family of $T$, and, unless we explicitly
specify another choice, the projective model structures on the
categories of sheaves of $R$-modules over $T$ will be considered with
respect this generating family. For instance, for a scheme $X$, we will
always understand the topos $X_\et$ as the category of sheaves over the
small \'etale site of $X$, so that its canonical generating family is given
by the collection of all \'etale schemes of finite presentation over $X$.
\end{num}

\begin{prop}\label{zarlfinitecohdimcompact}
Consider a topos $T$ and a ring $R$.
We suppose that $T$ is endowed with a generating
family $\mathcal{G}$ such that any $U\in\mathcal{G}$
is coherent and of finite cohomological dimension for $R$-linear
coefficients.
Then, for any $U\in\mathcal{G}$, the functor
$$\Comp(\sh(T,R))\to R\text{-}\mathrm{Mod}\quad ,
\qquad K\mapsto \Hom_{\Der(T,R)}(R(U),K)=H^0(U,K)$$
preserves small filtered colimits.

In particular, the family $\{R(U)\, | \, U\in\mathcal{G}\}$
form a family of compact generators of the triangulated category $\Der(T,R)$.
\end{prop}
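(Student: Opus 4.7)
The plan is to apply Lemma \ref{lm:preparecompactsite} to the left-exact functor $F = \Gamma(U,-) : \sh(T,R) \to R\text{-Mod}$ for each $U \in \mathcal{G}$. First I would verify the standing hypothesis of the lemma, namely that each derived functor $\derR^n F = H^n(U,-)$ commutes with small filtered colimits of sheaves; this is the classical fact that cohomology of a coherent (i.e. quasi-compact, quasi-separated) object in a topos commutes with filtered colimits, see \cite[Expos\'e~VI]{SGA4} and \cite[Expos\'e~VII, Proposition~3.3]{SGA4}. Next, the assumption that $U$ has finite $R$-linear cohomological dimension is exactly the hypothesis that $F$ is of finite cohomological dimension, so the final clause of Lemma \ref{lm:preparecompactsite} applies. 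Condition~(i) of that lemma then reads precisely as the first statement: the functor $K \mapsto H^0\derR F(K) = H^0(U,K) = \Hom_{\Der(T,R)}(R(U),K)$ on $\Comp(\sh(T,R))$ preserves small filtered colimits.

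For the ``in particular'' clause, compactness follows by a purely formal argument: running the first part for every shift shows that $\Hom_{\Der(T,R)}(R(U),-)$ commutes with filtered colimits of complexes; since filtered colimits in a Grothendieck abelian category are exact, they compute the corresponding filtered colimits in $\Der(T,R)$, and any small coproduct is the filtered colimit of its finite subcoproducts on which $\Hom$ is additive. Hence $\Hom_{\Der(T,R)}(R(U),-)$ commutes with arbitrary small coproducts, which is exactly compactness of $R(U)$.

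To finish with generation, I would argue as follows. Suppose $K \in \Der(T,R)$ satisfies $H^n(U,K)=0$ for every $U \in \mathcal{G}$ and every $n \in \ZZ$, and take a fibrant replacement $K \to K'$ in the projective model structure recalled in \ref{rappels:structureprojective}; then $\Gamma(U,K')$ is acyclic for every $U \in \mathcal{G}$. Since $\mathcal{G}$ is a generating family of the topos $T$, a local-to-global argument forces the cohomology sheaves $\mathcal{H}^n(K') \simeq \mathcal{H}^n(K)$ to vanish for every $n$, whence $K \simeq 0$ in $\Der(T,R)$; alternatively, this is precisely the content of the general result of \cite{CD1} saying that in the projective model structure on $\Comp(\sh(T,R))$ the family $\{R(U)[n] \mid U \in \mathcal{G},\, n \in \ZZ\}$ is a set of homotopy generators. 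The only subtle point, already absorbed by Lemma \ref{lm:preparecompactsite}, is that coherence must yield commutation with filtered colimits of \emph{all} the higher derived functors $\derR^n F$, not merely of global sections; this is exactly what the classical SGA4 coherence results provide.
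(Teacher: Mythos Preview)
Your proposal is correct and follows essentially the same approach as the paper, which simply records that the statement is a direct consequence of Lemma~\ref{lm:preparecompactsite}. You have filled in the details the paper leaves implicit: the verification that coherence of $U$ gives commutation of $\derR^n\Gamma(U,-)$ with filtered colimits (via \cite[Exp.~VI]{SGA4}), the passage from filtered colimits of complexes to arbitrary coproducts for compactness, and the generation argument (for which your second justification, via the projective model structure of \cite{CD1}, is the cleaner one; the ``local-to-global'' sketch would need the hypercohomology spectral sequence together with finite cohomological dimension to be made fully rigorous).
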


\begin{proof}
This is a direct consequence of Lemma \ref{lm:preparecompactsite}.
%it is sufficient to prove the last assertion, i.e. that the functor $K\mapsto H^0_\et(U,K)$
%preserves small sums. For this purpose, it is sufficient to check that the functor
%$$K\mapsto \derR \Hom_{\Der(X_\et,R)}(R(U),K\otimes^\derL A)\simeq
%\derR \Hom_{\Der(X_\et,R)}(R(U),K)\otimes^\derL A$$
%preserves small sums whenever $A=\QQ$ or $A=\ZZ/\ell\ZZ$ for
%$\ell$ a prime number such that $X$ is uniformly of finite punctual
%$\ell$-cohomological dimension. In other words, we may assume that $R=A$.
%Using the last assertion of Lemma \ref{lm:preparecompactsite}, and
%by virtue of Proposition \ref{finitudecohglobale} and Proposition \ref{etaleQcoefficients}
%respectively, it sufficient to prove that the functors $H^n_\et(U,-)$
%commute with filtered colimits of \'etale sheaves of $A$-modules over $U$.
%This follows then from the fact that, $U$ being noetherian, its
%small \'etale topos is algebraic; see \cite[Exp. VI, 5.3]{SGA4}.
\end{proof}

\begin{lm}\label{lemma:etaleQcoefficients00}
Let $T$ be a topos and $U$ a coherent object of $T$.
Consider a localization $R$ of the ring of integers $\ZZ$.
For any sheaf of abelian groups $F$ over $T$, the natural map
$$H^i(U,F)\otimes R\to H^i(U,F\otimes R)$$
is invertible for any integer $i$.
In particular, tensoring with $R$ preserves $\Gamma(U,-)$-acyclic
sheaves over $T$. If moreover $U$ is of finite cohomological dimension
with rational coefficients, then, for any complex of sheaves of abelian
groups $K$ over $T$, the canonical map
$$H^i(U,K)\otimes R\to H^i(U,K\otimes R)$$
is bijective for any integer $i$.
\end{lm}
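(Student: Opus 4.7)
I would split the proof into three steps matching the three claims.

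\emph{Step 1 (sheaf-level isomorphism).} My plan is to exploit the filtered colimit presentation of $R$. Any localization $R$ of $\ZZ$ can be written as $R = \varinjlim_{s \in S} \ZZ$, a filtered colimit of copies of $\ZZ$ indexed by the multiplicative subset $S \subset \ZZ$ inverted in $R$, with transition maps given by multiplication by the relevant elements of $S$. Consequently $F \otimes R \cong \varinjlim F$ as a filtered colimit of sheaves of abelian groups on $T$. Since $U$ is coherent, \cite[Expos\'e~VII, Proposition~3.3]{SGA4} ensures that $H^i(U, -)$ commutes with filtered colimits of sheaves of abelian groups, yielding
\[
H^i(U, F \otimes R) \cong \varinjlim H^i(U, F) \cong H^i(U, F) \otimes R,
\]
the last identification using flatness of $R$ over $\ZZ$.

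\emph{Step 2 (preservation of acyclics).} This is immediate from Step 1 by specializing to $i > 0$: if $F$ is $\Gamma(U,-)$-acyclic, then so is $F \otimes R$.

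\emph{Step 3 (complexes).} Under the finite cohomological dimension hypothesis, I would invoke Lemma~\ref{lm:preparecompactsite} (condition~(iv)) to compute $\derR\Gamma(U, -)$ by means of degree-wise $\Gamma(U,-)$-acyclic resolutions. Pick such a resolution $K \to K'$. By Step~2, $K' \otimes R$ is again degree-wise $\Gamma(U,-)$-acyclic; by flatness of $R$ over $\ZZ$, the morphism $K \otimes R \to K' \otimes R$ remains a quasi-isomorphism. Applying Step~1 in each degree then produces the chain
\[
\derR\Gamma(U, K) \otimes R \cong \Gamma(U, K') \otimes R \cong \Gamma(U, K' \otimes R) \cong \derR\Gamma(U, K \otimes R),
\]
and passing to $H^i$ yields the desired bijection.

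The main obstacle will be to justify invoking Lemma~\ref{lm:preparecompactsite} from the hypothesis of finite cohomological dimension with \emph{rational} rather than integral coefficients. I expect this to be handled by running the hypercohomology spectral sequence $E_2^{p,q} = H^p(U, H^q(K)) \Rightarrow H^{p+q}(U, K)$ and its counterpart for $K \otimes R$: by Step~1 applied to each cohomology sheaf $H^q(K)$ (where no cohomological dimension hypothesis is required), the second spectral sequence is the $R$-linearization of the first, and the rational cohomological dimension bound (combined with the flatness of $R$) is precisely what secures the uniform convergence that allows the $R$-linearization to be transported to the abutment.
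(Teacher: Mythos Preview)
Your Steps~1 and~2 are correct and coincide with the paper's argument. The gap is in Step~3.

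You correctly identify that Lemma~\ref{lm:preparecompactsite}~(iv) cannot be invoked directly, since the hypothesis bounds only the \emph{rational} cohomological dimension of $U$, not the integral one. But your fallback via the hypercohomology spectral sequence suffers from exactly the same defect: for an unbounded complex $K$, the spectral sequence $E_2^{p,q}=H^p(U,H^q(K))\Rightarrow H^{p+q}(U,K)$ need not converge strongly without a bound on the integral cohomological dimension of $U$. The rational bound tells you nothing about $H^p(U,F)$ for an arbitrary sheaf of abelian groups $F$, so the claimed ``uniform convergence'' is unsupported. Even granting that the map of $E_2$-pages is tensoring by $R$, you cannot conclude anything about the abutments without controlling convergence on both sides.

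The paper circumvents this as follows. First, it proves the case $R=\QQ$ directly: here the complex $K\otimes\QQ$ is a complex of sheaves of $\QQ$-vector spaces, the hypothesis gives finite cohomological dimension for $\Gamma(U,-)$ on $\QQ$-sheaves, and Lemma~\ref{lm:preparecompactsite} applies. Second, for a general localization $R$ of $\ZZ$, one shows the comparison map $\derR\Gamma(U,K)\otimes R\to\derR\Gamma(U,K\otimes R)$ is an isomorphism by checking it after $-\otimes\QQ$ (which reduces to the $\QQ$-case just established) and after $-\otimes^{\derL}\ZZ/p\ZZ$ for each prime $p$. The latter is automatic: $-\otimes^{\derL}\ZZ/p\ZZ$ is the cone of multiplication by $p$ and hence commutes with $\derR\Gamma(U,-)$, while $R\otimes\ZZ/p\ZZ$ is either $\ZZ/p\ZZ$ or $0$. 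This two-prime-and-rational dévissage is the missing idea.
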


\begin{proof}
The first assertion immediately follows from the fact that
the functor $H^i(U,-)$ preserves filtering colimits of sheaves.
The second assertion is an immediate consequence of the first.
Finally, in the case where $R=\QQ$, the last assertion is a direct consequence of
Lemma \ref{lm:preparecompactsite}. To prove the general case, it is sufficient
to check that the natural map
$$\derR\Gamma(X,K)\otimes R\to \derR\Gamma(X,K\otimes R)$$
is an isomorphism in the derived category of $R$-modules.
As it is invertible after tensorization by $\QQ$, it is sufficient
to check that it becomes invertible after we apply the functor
$C\mapsto C\otimes^\derL\ZZ/p\ZZ$ for any prime number $p$.
But such an operation commutes with the derived global section
functor, and this proves the last assertion in full generality.
\end{proof}

\begin{prop}\label{lemma:etaleQcoefficients}
Let $X$ be a noetherian scheme of finite dimension, and $R$
be a localization of $\ZZ$.
For any complex of \'etale sheaves of abelian groups $K$ over $X$, the
natural map
$$H^i_\et(X,K)\otimes R\to H^i_\et(X,K\otimes R)$$
is bijective for any integer $i$.
\end{prop}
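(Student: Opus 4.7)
The plan is to deduce this directly from Lemma \ref{lemma:etaleQcoefficients00} applied to the \'etale topos $T = X_\et$ with the distinguished object $U = X$. Two hypotheses need to be checked: that $X$ is a coherent object of $X_\et$, and that $X$ is of finite cohomological dimension with rational coefficients.

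First, since $X$ is noetherian, it is in particular quasi-compact and quasi-separated, which precisely means that $X$, viewed in its own small \'etale topos, is a coherent object. Second, the finite cohomological dimension with $\QQ$-coefficients is exactly the content of Lemma \ref{lemma:etaleQcoefficientscohdim}: it gives $H^i_\et(X,F) = 0$ for all $i > \dim(X)$ and all sheaves of $\QQ$-vector spaces $F$, which is the hypothesis required by the second half of Lemma \ref{lemma:etaleQcoefficients00}.

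Given these verifications, Lemma \ref{lemma:etaleQcoefficients00} applied to $T = X_\et$, $U = X$, and the chosen localization $R$ of $\ZZ$ yields exactly the asserted bijection
$$H^i_\et(X,K)\otimes R\to H^i_\et(X,K\otimes R)$$
for every integer $i$ and every complex of \'etale sheaves of abelian groups $K$ on $X$.

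There is no real obstacle here; the work has all been done in the preceding two results. The only thing to be slightly careful about is that Lemma \ref{lemma:etaleQcoefficientscohdim} is stated for sheaves of $\QQ$-vector spaces while Lemma \ref{lemma:etaleQcoefficients00} demands finite cohomological dimension for the functor $\Gamma(U,-)$ with rational coefficients: these two formulations coincide since a sheaf of $\QQ$-vector spaces is nothing but a $\QQ$-module in the topos, so $\derR\Gamma(X,-)\otimes\QQ$ has finite cohomological dimension on $\sh(X_\et,\ZZ)$ precisely when $\sh(X_\et,\QQ)$ has finite cohomological dimension for $\derR\Gamma(X,-)$.
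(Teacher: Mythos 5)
Your proof is correct and matches the paper's own proof exactly: both deduce the statement from Lemma \ref{lemma:etaleQcoefficients00} applied to $T=X_\et$ and $U=X$, with the required finite cohomological dimension for $\QQ$-coefficients supplied by Lemma \ref{lemma:etaleQcoefficientscohdim}. You merely spell out the coherence of $X$ and the dimension bound, which the paper treats as obvious.
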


\begin{proof}
By virtue of Lemma \ref{lemma:etaleQcoefficientscohdim}, this
obviously is a particular case of the preceding lemma.
\end{proof}
%
%\begin{prop}\label{etaleQcoefficients}
%Let $X$ be a noetherian scheme of dimension $\leq d$.
%Then, for any sheaf of $\QQ$-vector spaces $F$ over $X_\et$, the group $H^i_\et(X,F)$
%vanishes whenever $i>d$.
%\end{prop}
%
%\begin{proof}
%For any complex of sheaves of $\QQ$-vector spaces $K$ over $X_\et$, we know that
%$H^*_\et(X,F)$ is isomorphic to the Nisnevich cohomology of $X$ with coefficients
%in $F$. Therefore, this proposition follows from the fact that the
%cohomological dimension of noetherian schemes with respect to the
%Nisnevich topology is bounded by the dimension.
%\end{proof}

The following lemma is the main tool to extend results about unbounded
complexes of sheaves which are known under a global finite cohomological
dimension hypothesis to contexts where finite cohomological dimension
is only assumed point-wise (in the topos theoretic sense).
This will be used to extend to unbounded complexes of \'etale sheaves the
smooth base change formula as well as the proper cohomological descent
theorem. We will freely use the language and the results of \cite[Expos\'e~VII]{SGA4}
about coherent topoi and filtering limits of these.

\begin{lm}\label{proetaleinvimagerightQuillen}
Consider a ring of coefficients $R$ and
an essentially small cofiltering category $I$
as well as a fibred topos $S\to I$.
For each index $i$ we consider a given generating family $\mathcal{G}_i$
of the topos $S_i$. We write $T=\varprojlim_I S$ for the limit topos, and
$\pi_i:T\to S_i$ for the canonical projections. We then have a canonical
generating family $\mathcal{G}$ of $T$, which consists of objects of the form
$\pi^*_i(X_i)$, where $X_i$ is an element of the class $\mathcal{G}_i$.
Given a map $f:i\to j$ in $I$ and a sheaf $F_j$ over $S_j$, we will write $F_i$
for the sheaf over $S_i$ obtained by applying the pullback functor $f^*:S_j\to S_i$
to $F_j$. We will assume that the following properties are satisfied:
\begin{itemize}
\item[(i)] For each index $i$, any object in $\mathcal{G}_i$ is coherent (in particular,
the topos $S_i$ is coherent).
\item[(ii)] For any map $f:i\to j$ in $I$, the corresponding pullback functor
$f^*:S_j\to S_i$ sends any object in $\mathcal{G}_j$ to an object isomorphic to
an element of $\mathcal{G}_i$ (in particular, the morphism of topoi $S_i\to S_j$ is
coherent).
\item[(iii)] For any map $f:i\to j$ in $I$, the pullback functor
$f^*:S_j\to S_i$ has a left adjoint $f_\sharp:S_i\to S_j$
which sends any object in $\mathcal{G}_i$ to an object isomorphic to
an element of $\mathcal{G}_j$.
\item[(iv)] Any object in $\mathcal{G}$, has finite cohomological dimension
with respect to sheaf cohomology of $R$-modules.
\end{itemize}
Then, for any index $i_0$, the pullback functor
$\pi^*_{i_0}:\Comp(\sh(S_{i_0},R))\to\Comp(\sh(T,R))$
preserves the fibrations of the projective model structures.
Moreover, for any object $U_{i_0}$ of $\mathcal{G}_{i_0}$, and
for any complex $K_{i_0}$ of $\sh(S_{i_0},R)$, if $U=\pi^*_{i_0}(U_{i_0})$ and
$K=\pi^*_{i_0}(K_{i_0})$, then the canonical map
\begin{equation}\label{eq:proetaleinvimagerightQuillen}
\varinjlim_{i\to i_0}H^n(U_i,K_i)\to H^n(U,K)
\end{equation}
is bijective for any integer $n$.
\end{lm}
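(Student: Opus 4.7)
The plan is to establish the colimit formula \eqref{eq:proetaleinvimagerightQuillen} first and then deduce preservation of fibrations from it together with the existence of the left adjoints $f_\sharp$ from assumption (iii). I will prove the formula first for sheaves, extend it to bounded-below complexes by a spectral sequence argument, and finally to unbounded complexes via Lemma \ref{lm:preparecompactsite}.

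For a single sheaf $F_{i_0}$ of $R$-modules on $S_{i_0}$ and a coherent $U_{i_0}\in\mathcal{G}_{i_0}$, the bijectivity of
\[
\varinjlim_{i\to i_0} H^n(U_i, F_i)\longrightarrow H^n(U,\pi_{i_0}^* F_{i_0})
\]
is the classical theorem on cohomology of coherent objects in cofiltered limits of coherent topoi (SGA4, Expos\'e~VI--VII); hypotheses (i) and (ii) supply the coherence of the topoi $S_i$ and of the transition morphisms. The extension to bounded-below complexes follows from a standard spectral sequence argument, using that filtered colimits are exact and commute with the first-quadrant hypercohomology spectral sequence $E_2^{p,q}=H^p(U_i, H^q K_i)\Rightarrow H^{p+q}(U_i, K_i)$.

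For a general complex $K_{i_0}$, I filter by the stupid truncations $\sigma^{\geq -m} K_{i_0}$, which are bounded below and satisfy $K_{i_0}=\varinjlim_m\sigma^{\geq -m}K_{i_0}$. Since $U\in\mathcal{G}$ has finite cohomological dimension by (iv), Lemma \ref{lm:preparecompactsite} applied to $\Gamma(U,-)\colon\sh(T,R)\to R\text{-}\mathrm{Mod}$ yields
\[
H^n(U,\pi_{i_0}^*K_{i_0})=\varinjlim_m H^n(U,\pi_{i_0}^*\sigma^{\geq -m}K_{i_0})=\varinjlim_m\varinjlim_{i\to i_0}H^n(U_i,\sigma^{\geq -m}K_i)
\]
by the bounded-below case. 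The \emph{main obstacle} is to swap these two filtered colimits and to identify $\varinjlim_m H^n(U_i,\sigma^{\geq -m}K_i)$ with $H^n(U_i,K_i)$ at each level $i$: this again requires Lemma \ref{lm:preparecompactsite} at level $S_i$, hence finite cohomological dimension of $U_i$ in $S_i$, which is not directly supplied by (iv). I would extract it by combining the single-sheaf colimit formula applied to the cofiltered slice $I_{/i}$ with the adjunctions in (iii), which allow one to rewrite the cohomology $H^n(U_i,-)$ through generating objects of $S_{i_0}$ and thereby transfer the global vanishing forced by (iv) on $T$ down to $S_i$.

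Preservation of fibrations is then formal. For a fibration $p\colon K_{i_0}\to L_{i_0}$ and $U\in\mathcal{G}$, cofilteredness of $I$ and (ii) reduce to the case $U=\pi_k^* U_k$ with $k\to i_0$ and $U_k\in\mathcal{G}_k$. For any $\ell\to k$ with composite $f\colon\ell\to i_0$, the adjunction $(f_\sharp, f^*)$ from (iii) identifies $\Gamma(U_\ell, M_\ell)$ with $\Gamma(f_\sharp U_\ell, M)$ for any sheaf $M$ on $S_{i_0}$, with $f_\sharp U_\ell\in\mathcal{G}_{i_0}$ by (iii). Applied degreewise together with the sheaf-level colimit formula, this gives $\Gamma(U,\pi_{i_0}^* K_{i_0}^n)=\varinjlim_{\ell\to k}\Gamma(f_\sharp U_\ell, K_{i_0}^n)$, so the degreewise surjectivity of $\Gamma(U,\pi_{i_0}^* p)$ reduces to the fibration hypothesis applied to each $f_\sharp U_\ell\in\mathcal{G}_{i_0}$. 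Fibrancy of $\pi_{i_0}^*\ker p=\ker(\pi_{i_0}^* p)$ follows by expressing both sides of the comparison map $H^n\Gamma(U,\pi_{i_0}^*\ker p)\to H^n(U,\pi_{i_0}^*\ker p)$ as filtered colimits over $\ell\to k$ of $H^n\Gamma(f_\sharp U_\ell,\ker p)$ and $H^n(f_\sharp U_\ell,\ker p)$ respectively (using the sheaf formula degreewise for the source and the unbounded colimit formula for the target), and observing that these agree because $\ker p$ is fibrant over $S_{i_0}$.
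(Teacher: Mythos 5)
Your strategy — prove the colimit formula, then deduce fibration preservation formally — is sound in outline, and you correctly put your finger on the one genuine difficulty: passing from bounded-below to unbounded complexes requires that $\Gamma(U_i,-)$ commutes (in the derived sense) with the stupid-truncation colimit at each finite level $i$, and assumption (iv) only gives finite cohomological dimension in the limit topos $T$, not in the individual $S_i$. However, the fix you sketch does not close this gap. The adjunction $(f_\sharp,f^*)$ identifies $\Gamma(U_\ell,f^*M)$ with $\Gamma(f_\sharp U_\ell,M)$ at the level of sections, and (with SGA4 VII 8.7.2 and coherence of $f$) at the level of bounded-below hypercohomology; but to push this through unbounded complexes you would again need a finite cohomological dimension bound at $S_{i_0}$, which is just as absent from (iv) as the bound at $S_i$ you are trying to produce. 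Hypothesis (iv) is genuinely a statement about $T$ alone — a colimit formula for a single sheaf applied to $I_{/i}$ cannot transfer a vanishing range in $T$ downward to $S_i$, since sheaves on $S_i$ are not determined by their pullbacks to $T$. So the paragraph beginning "I would extract it..." is where the argument breaks.

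The paper avoids the issue altogether, without ever needing any cohomological dimension bound at the finite stages. The key move is to choose the right representative: since the source and target of \eqref{eq:proetaleinvimagerightQuillen} are quasi-isomorphism invariant, one may assume $K_{i_0}$ fibrant for the \emph{injective} model structure, hence degreewise injective. Two things then happen simultaneously. First, degreewise injective complexes are K-injective, hence fibrant for the projective model structure as well, and assumption (iii) implies that $f^*$ preserves projective fibrations (its left adjoint $f_\sharp$ sends generators to generators); thus every $K_i$ is projectively fibrant, so $H^n\bigl(\Gamma(U_i,K_i)\bigr)=H^n(U_i,K_i)$ for all $i$ \emph{by fibrancy}, with no cohomological dimension hypothesis whatsoever. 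Second, coherence of $\pi_{i_0}$ and SGA4 VII 8.7.2 show that $K=\pi^*_{i_0}K_{i_0}$ is degreewise $\Gamma(U,-)$-acyclic, so Lemma \ref{lm:preparecompactsite} together with assumption (iv) — used only here, at the level of $T$ — gives $H^n\bigl(\Gamma(U,K)\bigr)=H^n(U,K)$. The degreewise sheaf-level colimit formula then identifies $\Gamma(U,K)$ with $\varinjlim_i\Gamma(U_i,K_i)$, and exactness of filtered colimits finishes the proof. In short: instead of trying to force $\Gamma(U_i,-)$ to commute with stupid-truncation colimits, replace $K_{i_0}$ by a model for which $\Gamma(U_i,-)$ already computes hypercohomology degreewise at every level, and let the fibrancy propagate along $f^*$.
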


\begin{proof}
Note that formula \eqref{eq:proetaleinvimagerightQuillen} is known
to hold whenever $K_{i_0}$ is concentrated in degree zero and $n=0$; see
\cite[Expos\'e~VII, Corollaire 8.5.7]{SGA4}.
This shows that condition (i) of \ref{rappels:structureprojective}
is preserved by the functor $\pi^*_{i_0}$.
Therefore, in order
to prove that the functor $\pi^*_{i_0}$ preserves fibrations,
it is sufficient to prove that it preserves fibrant objects. Let $K_{i_0}$
be a fibrant object of $\Comp(\sh(S_{i_0},R))$. We have to prove that the natural
map
\begin{equation}\label{eq:proetaleinvimagerightQuillen1prebis}
H^n(\Gamma(U,K))\to H^n(U,K)
\end{equation}
is an isomorphism for any object $U$ in $\mathcal{G}$.
For any map $f:i\to j$ in $I$, condition (iii) above implies that
the functor $f^*$ preserves fibrations as well as trivial fibrations
(whence it preserves fibrant objects as well).
Possibly up to the replacement of $i_0$ by some other index above it,
we may assume that $U$ is the pullback of an object $U_{i_0}$
in $\mathcal{G}_{i_0}$. Formula \eqref{eq:proetaleinvimagerightQuillen} in the case of
complexes concentrated in degree zero then gives us a canonical isomorphism
\begin{equation}\label{eq:proetaleinvimagerightQuillen1bis}
H^n(\Gamma(U,K))\simeq\varinjlim_{i\to i_0}H^n(\Gamma(U_i,K_i))\, .
\end{equation}
As $K_i$ is fibrant for any map $i\to i_0$,
we thus get a natural identification
\begin{equation}\label{eq:proetaleinvimagerightQuillen1ter}
H^n(\Gamma(U,K))\simeq \varinjlim_{i\to i_0}H^n(U_i,K_i)\, .
\end{equation}
In other words, we must prove that the natural map \eqref{eq:proetaleinvimagerightQuillen}
is invertible for any (fibrant) unbounded complex of sheaves $K_{i_0}$ and any object $U_{i_0}$
in $\mathcal{G}_{i_0}$. 
%It is clearly sufficient to prove this
%for $R=\QQ$ and for $R=\ZZ/\ell\ZZ$ such that $S'$ is Zariski locally
%of $\ell$-finite dimension. In other words, we may assume that any
%\'etale $S'$-scheme of finite type is of finite $R$-cohomological dimension;
%see Propositions \ref{etaleQcoefficients} and \ref{finitudecohglobale}.
%In particular,

For this purpose, we will work with the \emph{injective model category structure}
on $\Comp(\sh(S_{i_0},R))$ (see \cite[2.1]{CD1}), whose weak equivalences
are the quasi-isomor\-phisms, and whose cofibrations are the monomorphisms:
as any object of a model category has a fibrant resolution, it is sufficient
to prove that \eqref{eq:proetaleinvimagerightQuillen} is invertible
whenever $K_{i_0}$ is fibrant for the injective model structure. In this case, the
complex $K_{i_0}$ is degree-wise an injective object of $\sh(S_{i_0},R)$.
This implies that its image by the functor $\pi^*_{i_0}$ is
a complex of $\Gamma(U,-)$-acyclic sheaves; see \cite[Expos\'e~VII, Lemme 8.7.2]{SGA4}.
Therefore, using Lemma \ref{lm:preparecompactsite} and assumption (iv), 
%%and Proposition \ref{zarlfinitecohdimcompact},
the map \eqref{eq:proetaleinvimagerightQuillen1prebis} is invertible for such a complex $K$,
from which we immediately deduce that \eqref{eq:proetaleinvimagerightQuillen} is invertible.
%% $$H^n(u^*\sigma^{\geq i}(K)(U'))\to H^n_\et(U',u^*\sigma^{\geq i}(K))$$
%% is an isomorphism for any integers $i$, $n$, and for any \'etale $S'$-scheme of finite type $U'$.
%% Note that, by virtue of Proposition \ref{zarlfinitecohdimcompact},
%% the functors $H^n_\et(U',-)$ preserve filtered colimits.
%% Therefore, as $\varinjlim_i \sigma^{\geq i}(K)=K$, we obtain that the map
%% $$H^n(u^*(K)(U'))\to H^n_\et(U',u^*(K))$$
%% is an isomorphism. As $K$ is fibrant for the injective model structure,
%% it is also fibrant for the projective one, which implies that formulas
%% \eqref{eq:proetaleinvimagerightQuillen1bis} and \eqref{eq:proetaleinvimagerightQuillen1ter}
%% still hold, from which we deduce that \eqref{eq:proetaleinvimagerightQuillen2} is
%% always invertible.
\end{proof}

\begin{rem}\label{rem:proetaleinvimagerightQuillenQQ}
With the same assumptions as in the preceding lemma, in the case $R=\QQ$,
for any complex of sheaves of abelian groups $K_{i_0}$
over $S_{i_0}$ and any object $U_{i_0}$ in $\mathcal{G}_{i_0}$, the natural maps
$$\varinjlim_{i\to i_0}H^n(U_i,K_i)\otimes\QQ\to H^n(U,K\otimes\QQ)$$
are isomorphism. Indeed, we know from Lemma \ref{lemma:etaleQcoefficients00}
that tensoring with $\QQ$
preserves $\Gamma(U,-)$-acyclic sheaves of abelian groups over $T$
for any object $U$ in $\mathcal{G}$. Therefore, as we may assume that
$K_{i_0}$ is fibrant for the injective model structure, which implies,
by \cite[Expos\'e~VII, Lemme 8.7.2]{SGA4}, that
$K$ is degree-wise $\Gamma(U,-)$-acyclic, the complex $K\otimes\QQ$ has the same
property. As the functors $\Gamma(V,-)$ commute with $(-)\otimes\QQ$
for any coherent sheaf of sets $V$, we conclude as in the proof of the
preceding lemma.
\end{rem}

\begin{thm}\label{proetalebasechange}
Consider a cartesian square of locally noetherian schemes
$$\xymatrix{
X'\ar[r]^{h}\ar[d]_{f'}& X\ar[d]^f\\
S'\ar[r]^g&S
}$$
with the following properties.
\begin{itemize}
\item[(a)] The scheme $S'$ is the limit of a projective system of
\'etale schemes of finite type over $S$,
with affine transition morphisms.
\item[(b)] The morphism $f$ is of finite type.
\end{itemize}
Then, for any object $K$ of $\Der(X_\et,\ZZ)$, the base change map
$$g^*\, \derR f_*(K)\to \derR f^\prime_*\, h^*(K)$$
is an isomorphism in $\Der(S^\prime_\et,\ZZ)$.
\end{thm}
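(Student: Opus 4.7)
The plan is to reduce the theorem to a double application of Lemma \ref{proetaleinvimagerightQuillen}, the key input being Gabber's Theorem \ref{thm:localetalefinitecd}, which supplies the finite \'etale cohomological dimension required by hypothesis (iv) of that lemma. First, being an isomorphism in $\Der(S'_{\et},\ZZ)$ is a local condition on the small \'etale site of $S'$ and can be tested on stalks at geometric points. Since any strict henselization of $S'$ is also a strict henselization of $S$, we may replace $S'$ by $\Spec\cO^{sh}_{S,\bar s}$ for some geometric point $\bar s$ of $S$; thus we may assume that $S$ is affine noetherian and that $S'=\varprojlim_{i\in I}S_{i}$ is strictly local noetherian, with $I$ the cofiltered category of affine \'etale neighborhoods of $\bar s$ in $S$.

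Writing $X_{i}=X\times_{S}S_{i}$, we reduce the theorem to checking that for any \'etale morphism of finite presentation $U_{i_{0}}\to S_{i_{0}}$ arising from some index $i_{0}\in I$, with induced pro-system $U_{i}=U_{i_{0}}\times_{S_{i_{0}}}S_{i}$ and limit $U'=U_{i_{0}}\times_{S_{i_{0}}}S'$, setting $V_{i}=U_{i}\times_{S}X$ and $V'=U'\times_{S'}X'$, the base change map induces an isomorphism
\begin{equation*}
\varinjlim_{i\to i_{0}}H^{n}_{\et}(V_{i},K)\xrightarrow{\ \sim\ }H^{n}_{\et}(V',h^{*}K)
\end{equation*}
for every integer $n$. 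The target is computed by applying Lemma \ref{proetaleinvimagerightQuillen} to the fibered topos $i\mapsto(X_{i})_{\et}$ with generating families $\mathcal{G}_{i}$ consisting of \'etale $X_{i}$-schemes of finite presentation. The source, originally presented as $H^{n}_{\et}(U',g^{*}\derR f_{*}K)$, is identified with $\varinjlim H^{n}_{\et}(U_{i},\derR f_{*}K)\simeq\varinjlim H^{n}_{\et}(V_{i},K)$ by applying the same lemma to $i\mapsto(S_{i})_{\et}$ together with the trivial base change along the \'etale morphisms $U_{i}\to S_{i}$; the compatibility with the base change map is routine.

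Hypotheses (i)-(iii) of Lemma \ref{proetaleinvimagerightQuillen} are clear in both applications (coherence of noetherian qcqs schemes; stability of \'etale-of-finite-presentation morphisms under pullback; a left adjoint to pullback along the \'etale transition maps given by composition). The main obstacle is hypothesis (iv), which requires the generating objects of the limit topos $(X')_{\et}$ (respectively $(S')_{\et}$) to have finite \'etale cohomological dimension with integral coefficients. These objects are \'etale schemes of finite presentation over $X'=X\times_{S}S'$ (respectively over $S'$), hence of finite type over the strictly local noetherian scheme $S'$, so Gabber's Theorem \ref{thm:localetalefinitecd} applies and closes the argument.
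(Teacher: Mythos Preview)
Your proof is correct and follows essentially the same strategy as the paper: reduce to the case where $S'$ is strictly local (so that Gabber's Theorem~\ref{thm:localetalefinitecd} supplies hypothesis~(iv) of Lemma~\ref{proetaleinvimagerightQuillen}), then apply that lemma to the two projective systems $\{(S_i)_\et\}$ and $\{(X_i)_\et\}$. The only differences are packaging: the paper uses the model-categorical conclusion of Lemma~\ref{proetaleinvimagerightQuillen} (that $g^*$ and $h^*$ preserve fibrations, hence $g^*\derR f_*\simeq\derR(g^*f_*)$ and $\derR f'_*h^*\simeq\derR(f'_*h^*)$) together with the underived isomorphism $g^*f_*\simeq f'_*h^*$, whereas you invoke formula~\eqref{eq:proetaleinvimagerightQuillen} directly; and in the reduction step, the paper makes explicit the commutative triangle obtained by applying the strictly local case \emph{twice} (to $S''\to S$ and to $S''\to S'$, with $S''$ the strict henselization of $S'$), which is what justifies your phrase ``we may replace $S'$ by $\Spec\cO^{sh}_{S,\bar s}$''.
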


\begin{proof}
Let us first prove the theorem under the additional assumption that the
scheme $S'$ is strictly local.  By virtue of Theorem \ref{thm:localetalefinitecd},
any scheme of finite type over $S'$ is of finite \'etale cohomological dimension.
If $S'=\varprojlim_{i}S_i$, where $\{S_i\}$ is a projective system of \'etale
$S$-schemes with affine transition maps, then the topos $S'_\et$ is canonically
equivalent to the projective limit of topoi $\varprojlim_i S_{i,\et}$;
see \cite[Expos\'e VII, Theorem~5.7]{SGA4}.
Similarly, if we write $X_i=S_i\times_S X$, we have $X'\simeq\varprojlim_i X_i$ and
$X'\simeq\varprojlim_i X_{i,\et}$.
Note that, for any \'etale map $u:T'\to T$, the pullback functor
$u^*:T_\et\to T'_\et$ has a left adjoint (because the category $T'_\et$ is naturally
equivalent to the category $T_\et/T'$, where $T'$ is seen as a sheaf over $T_\et$),
and that any map between \'etale schemes is itself \'etale, from which one deduces that
condition (iii) of Lemma \ref{proetaleinvimagerightQuillen} is satisfied
for both projective systems $\{S_i\}$ and $\{X_i\}$. As the other assumptions
of this lemma are also verified, we see that the
functors $g^*$ and $h^*$ preserve finite limits, weak equivalences,
as well as fibrations of the projective model structures.
On the other hand, the functors $f_*$ and $f'_*$ are always right Quillen functors for
the projective model structures. We deduce from this that
we have natural isomorphism as the level of total right derived
functors:
$$\derR(g^*\, f_*)\simeq \derR g^*\, \derR f_*=g^*\, \derR f_*
\ \text{and} \
\derR (f^\prime_*\, h^*)\simeq \derR f^\prime_*\, \derR h^*=\derR f^\prime_*\, h^* \, .$$
As the natural map $g^*\, f_*(F)\to f^\prime_*\, h^*(F)$
is an isomorphism for any sheaf $F$ over $X_\et$ (one checks this by first replacing $S'$
by each of the $S_i$'s and $X'$ by the $X_i$'s,
and then proceed to the limit), this proves that, under our additional
assumptions, the natural transformation
$g^*\, \derR f_*\to \derR f^\prime_*\, h^*$ is invertible.

The general case can now be proven as follows.
It is sufficient to prove that, for any geometric point $\xi'$ of $S'$, if $S''$
denotes the spectrum of the strict henselisation of the local ring $\cO_{S',\xi'}$,
and if  $g':S'' \to S'$ is the natural map, then
the morphism
$$g^{\prime \, *}\, g^* \, \derR f_*(K)\to g^{\prime \, *}\, \derR f^\prime_*\, h^*(K)$$
is invertible for any object $K$ of $\Der(X_\et,\ZZ)$.
We then have the following pullback squares
$$\xymatrix{
X''\ar[r]^{h'}\ar[d]_{f''}& X'\ar[r]^{h}\ar[d]_{f'}& X\ar[d]^f\\
S''\ar[r]^{g'}& S'\ar[r]^g&S\, .
}$$
Therefore, applying twice the first part of this proof, we obtain two canonical isomorphisms
$$g^{\prime\, *}\, \derR f'_*\, h^* (K)\to \derR f^{\prime \prime}_*\, h^{\prime\, *}\, h^*(K)\ \text{and} \
g^{\prime \, *}\, g^*\, \derR f_*(K)\to \derR f^{\prime\prime}_*\, h^{\prime \, *}\, h^*(K)\, .$$
As we have a commutative triangle
$$\xymatrix{
g^{\prime \, *}\, g^* \, \derR f_*(K)\ar[rr]\ar[dr]_\simeq
&& g^{\prime \, *}\, \derR f^\prime_*\, h^*(K)\ar[dl]^\simeq \\
& \derR f^{\prime\prime}_*\, h^{\prime \, *}\, h^*(K)& \ ,}$$
this shows that the map $g^*\, \derR f_*(K)\to \derR f^\prime_*\, h^*(K)$
is invertible.
\end{proof}

\begin{cor}\label{directimagepreservessums}
Let $f:X\to S$ be a morphism between locally noetherian schemes.
We assume that, either $f$ is of finite type, or $X$ is the projective
limit of quasi-finite $S$-schemes with affine transition maps.
Then the induced derived direct image functor
$$\derR f_*:\Der(X_\et,\ZZ)\to \Der(S_\et,\ZZ)$$
preserves small sums.
\end{cor}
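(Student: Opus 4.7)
The plan is to verify that $\derR f_*$ commutes with small sums by checking this on stalks at every geometric point of $S$. Given a family $\{K_\alpha\}$ in $\Der(X_\et,\ZZ)$, the canonical map $\phi\colon \bigoplus_\alpha \derR f_*(K_\alpha)\to \derR f_*(\bigoplus_\alpha K_\alpha)$ is an isomorphism in $\Der(S_\et,\ZZ)$ if and only if it becomes one after taking stalks at each geometric point $\bar s$ of $S$, since stalks are exact and commute with direct sums, and thus detect isomorphisms on cohomology sheaves.

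Let $g\colon S_{(\bar s)}\to S$ be the strict localization at $\bar s$, with induced morphisms $h\colon X_{(\bar s)} := X\times_S S_{(\bar s)}\to X$ and $f_{(\bar s)}\colon X_{(\bar s)}\to S_{(\bar s)}$; write also $S_{(\bar s)} = \varprojlim_\alpha U_\alpha$ as the cofiltered limit of its \'etale neighborhoods, so that $X_{(\bar s)} = \varprojlim_\alpha (X\times_S U_\alpha)$. For each \'etale $U_\alpha\to S$, the corresponding pullback functor on \'etale sheaves admits an exact left adjoint and therefore preserves injective resolutions, which yields the \'etale base change isomorphism $U_\alpha^{*}\derR f_{*}\simeq \derR f_{U_\alpha *}\,h_{U_\alpha}^{*}$ unconditionally on $f$. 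Passing to the limit over $\alpha$ and applying Lemma \ref{proetaleinvimagerightQuillen} to the pro-system of coherent topoi $\{(X\times_S U_\alpha)_\et\}$ (whose limit is $(X_{(\bar s)})_\et$) identifies the stalk $(\derR f_*K)_{\bar s}$ with $\derR\Gamma(X_{(\bar s)}, h^*K)$, \emph{provided} $X_{(\bar s)}$ has finite \'etale cohomological dimension. Granting this, Lemma \ref{lm:preparecompactsite} gives that $\derR\Gamma(X_{(\bar s)},-)$ commutes with small sums, and since $h^{*}$ commutes with direct sums, the stalk of $\phi$ at $\bar s$ is then the identity on $\bigoplus_\alpha \derR\Gamma(X_{(\bar s)}, h^{*}K_\alpha)$, as required.

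The main obstacle is therefore to verify finite \'etale cohomological dimension of $X_{(\bar s)}$ in both cases. In the first case, $f$ of finite type, this is immediate from Theorem \ref{thm:localetalefinitecd}, since $X_{(\bar s)}$ is of finite type over the strictly local noetherian scheme $S_{(\bar s)}$. In the second case, $X=\varprojlim_i X_i$ with $X_i$ quasi-finite over $S$ and affine transition maps, each $X_{i,(\bar s)}$ is quasi-finite, hence of finite type, over $S_{(\bar s)}$; inspection of the proof of Theorem \ref{thm:localetalefinitecd} (using that relative transcendence degrees vanish along a quasi-finite morphism) yields a uniform bound on the cohomological dimensions of the $X_{i,(\bar s)}$, depending only on $\dim S_{(\bar s)}$. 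The limit $X_{(\bar s)} = \varprojlim_i X_{i,(\bar s)}$ is then a cofiltered limit of coherent topoi with coherent (affine) transition morphisms, and the continuity of sheaf cohomology along such limits (\cite[Expos\'e~VII]{SGA4}) propagates the uniform pointwise bound to the limit, giving finite \'etale cohomological dimension for $X_{(\bar s)}$ and completing the proof.
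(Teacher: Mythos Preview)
Your argument is correct and close to the paper's, but packaged differently. The paper first invokes Theorem~\ref{proetalebasechange} to reduce to the case where $S$ is strictly local (your stalk computation via Lemma~\ref{proetaleinvimagerightQuillen} is effectively a re-derivation of this reduction), and then concludes by a compact-generation argument: Proposition~\ref{zarlfinitecohdimcompact} shows that both $\Der(S_\et,\ZZ)$ and $\Der(X_\et,\ZZ)$ are compactly generated by representable sheaves, and since $f^*$ sends one generating family into the other, its right adjoint $\derR f_*$ preserves sums. You instead conclude directly by applying Lemma~\ref{lm:preparecompactsite} to $\derR\Gamma(X_{(\bar s)},-)$. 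Both routes rest on the same finiteness input (Theorem~\ref{thm:localetalefinitecd} and the uniform bound extracted from its proof in the quasi-finite limit case).

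One small point worth tightening: Lemma~\ref{proetaleinvimagerightQuillen} as stated requires condition~(iv) for \emph{every} object of the canonical generating family of the limit topos, not just the terminal object. In your setting this means you need finite cohomological dimension for every quasi-compact separated \'etale $X_{(\bar s)}$-scheme $V$, not only for $X_{(\bar s)}$ itself. This is harmless: in the finite-type case any such $V$ is of finite type over $S_{(\bar s)}$ and Theorem~\ref{thm:localetalefinitecd} applies; in the pro-quasi-finite case, $V$ descends to some stage and is again a cofiltered limit of quasi-finite affine $S_{(\bar s)}$-schemes, so your uniform-bound-plus-continuity argument applies verbatim. The paper's proof needs exactly the same observation (to know the representable sheaves are compact), and records it explicitly.
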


\begin{proof}
By virtue of the preceding theorem, we may assume that
$S$ is strictly local.
Then, any quasi-compact separated \'etale scheme over $X$ or $S$
is of finite \'etale cohomological dimension: in the case where $f$ is of finite
type, this follows from Theorem \ref{thm:localetalefinitecd}.
Otherwise, the proof of Theorem \ref{thm:localetalefinitecd}
shows that the \'etale cohomological dimension of
quasi-finite affine $S$-schemes is uniformly bounded, so that, by
an easy limit argument, we see that any
quasi-compact quasi-finite separated $X$-scheme if of finite
\'etale cohomological dimension.
In any case, Proposition \ref{zarlfinitecohdimcompact} tells us that
both $\Der(S_\et,\ZZ)$ and $\Der(X_\et,\ZZ)$ are compactly generated
triangulated categories (with canonical families of compact generators
given by sheaves of shape $\ZZ(U)$ for $U$ quasi-compact, separated,
and \'etale over the base).
Therefore, the functor $f^*:\Der(S_\et,\ZZ)\to\Der(X_\et,\ZZ)$ preserves
compact objects (because it sends a generating family of compact
objects into another).
This immediately implies that its right adjoint $\derR f_*$ commutes with
small sums.
\end{proof}

\subsection{Proper base change isomorphism}

\begin{thm}\label{etaleproperbasechange}
Consider a cartesian square of schemes
$$\xymatrix{
X'\ar[r]^{h}\ar[d]_{f'}& X\ar[d]^f\\
S'\ar[r]^g&S
}$$
with $f$ proper. Then, for any ring $R$ of positive characteristic,
and for any object $K$ of $\Der(X_\et,R)$, the canonical map
$$g^*\, \derR f_*(K)\to \derR f^\prime_*\, h^*(K)$$
is an isomorphism in $\Der(S^\prime_\et,R)$.
\end{thm}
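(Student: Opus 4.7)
My plan is to reduce the claim to the classical SGA4 proper base change theorem at the level of individual sheaves (or bounded below complexes), and then to pass to unbounded complexes via a homotopy colimit argument. For $K \in \Der(X_\et, R)$ I would write $K \simeq \mathrm{hocolim}_n\, \tau^{\geq -n}(K)$ and observe that the base change map is functorial in $K$, so it suffices to check it is an isomorphism on each truncation $\tau^{\geq -n}(K)$, provided that all four functors involved, namely $g^*$, $h^*$, $\derR f_*$ and $\derR f'_*$, commute with this homotopy colimit. For $g^*$ and $h^*$ this is automatic: topos pullbacks are exact on sheaves of $R$-modules and hence commute with all derived colimits. The whole reduction therefore hinges on proving that $\derR f_*$ (and symmetrically $\derR f'_*$) commutes with small sums.

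To this end I would invoke Lemma \ref{lm:preparecompactsite}: it is enough to verify that, for each quasi-compact étale $U \to S$, the derived global sections functor $\derR\Gamma(f^{-1}(U), -)$ on $\Der(\sh(f^{-1}(U)_\et, R))$ has finite cohomological dimension. Using the classical proper base change isomorphism at the sheaf level together with the Leray spectral sequence relating $H^\ast_\et(f^{-1}(U), F)$ to the stalks of the higher direct images along $f$, this further reduces to a uniform bound on the étale cohomological dimension with $R$-coefficients of the geometric fibers of $f$, which are proper schemes of bounded dimension over separably closed fields. Establishing this uniform bound is the main technical point of the argument.

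Here the hypothesis that $R$ has positive characteristic $n$ is essential: every sheaf of $R$-modules is then annihilated by $n$, and by the filtration trick used in the proof of Theorem \ref{thm:localetalefinitecd} one reduces to $\ZZ/\ell\ZZ$-coefficients for the prime divisors $\ell$ of $n$. For a proper scheme $Y$ of dimension $d$ over a separably closed field $k$, the classical results of SGA4 give $\mathrm{cd}_\ell(Y) \leq 2d$ when $\ell \neq \mathrm{char}(k)$, while the Artin--Schreier short exact sequence (recalled in the appendix of the present paper) yields an analogous finite bound when $\ell = \mathrm{char}(k)$. Applied uniformly to all geometric fibers of $f$, this produces the required cohomological dimension bound and thus completes the reduction to the classical sheaf-level theorem. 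I expect the characteristic-$p$ estimate to be the only delicate ingredient, but it is now standard, the Artin--Schreier argument in the proper case being precisely what makes the whole statement go through for arbitrary positive-characteristic coefficients.
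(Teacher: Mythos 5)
Your reduction step contains a genuine gap. You propose to apply Lemma \ref{lm:preparecompactsite} by verifying that $\derR\Gamma(f^{-1}(U),-)$ has finite cohomological dimension for each quasi-compact \'etale $U\to S$. But the Leray spectral sequence for $f^{-1}(U)\to U$ has $E_2$-page $H^p_\et(U,R^q f_*F)$, so bounding its abutment requires a bound on $\mathrm{cd}(U)$, not merely on the geometric fibers of $f$. Such a bound does not exist in the generality of the statement: the theorem is asserted for arbitrary schemes $S$, and $U$ inherits the \'etale cohomological dimension of $S$, which can be infinite (take $S=\Spec k$ for a field $k$ with non-finite Galois cohomology). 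The ``hence by Leray we reduce to the fibers'' move is precisely the point where the argument breaks down, and cannot be repaired without changing the site at which the cohomological-dimension bound is required.

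The paper avoids this by working stalk-wise. It first proves Corollary \ref{cohproperfibers}: for each geometric point $\xi$ of $S$, the map $\derR f_*(K)_\xi\to\derR\Gamma(X_\xi,K|_{X_\xi})$ is an isomorphism for all unbounded $K$. The key observation is that the \emph{composite} left exact functor $F\mapsto f_*(F)_\xi\simeq\Gamma(X_\xi,F|_{X_\xi})$ has finite cohomological dimension by \cite[Expos\'e~XII, 5.2, 5.3]{SGA4} (the fiber $X_\xi$ is a proper scheme over a separably closed field, hence of bounded dimension), \emph{independently} of any cohomological-dimension hypothesis on $S$. Both sides of the comparison map are then seen, via Lemma \ref{lm:preparecompactsite}, to preserve filtered colimits of complexes; since they agree on sheaves, and every complex is a filtered colimit of bounded ones, they agree everywhere. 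Theorem \ref{etaleproperbasechange} then follows from Corollary \ref{cohproperfibers} by checking the base-change map after pullback to each geometric point of $S'$. Your overall logical order is also reversed relative to the paper: the fact that $\derR f_*$ preserves small sums (Corollary \ref{exceptionalproperfunctoretalesheaves}) is deduced from the proper base change theorem, not used to prove it.

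One smaller point: the appeal to the Artin--Schreier sequence for the case $\ell=\mathrm{char}(k)$ is a red herring in this context. The classical proper base change theorem of SGA4 and the attendant cohomological-dimension bounds for proper morphisms (\cite[Expos\'e~XII]{SGA4}, relying on \cite[Expos\'e~X, 4.3 and 5.2]{SGA4}) are valid for arbitrary torsion sheaves, with no restriction on residue characteristics. Artin--Schreier is used elsewhere in the paper (for the $\AA^1$-local theory, Proposition \ref{prop:artin-schreier}), but plays no role in the proper base change statement.
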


\begin{cor}\label{cohproperfibers}
Let $f:X\to S$ be a proper morphism of schemes, and let $\xi$
be a geometric point of $S$. Let us denote by $X_\xi$ the
fiber of $X$ over $\xi$. Then, for any ring $R$ of
positive characteristic,
and for any object $K$ of $\Der(X_\et,R)$, the natural map
$$\derR f_*(K)_\xi\to \derR\Gamma(X_\xi,K|_{X_{\xi}})$$
is an isomorphism in the derived category of the category of
$R$-modules.
\end{cor}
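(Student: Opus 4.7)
The plan is to obtain this as a direct specialisation of the proper base change isomorphism \ref{etaleproperbasechange}. Writing $\xi = \Spec(\Omega)$ for the separably closed field $\Omega$ representing the geometric point, one forms the cartesian square
$$\xymatrix{
X_\xi \ar[r]^{h} \ar[d]_{f_\xi} & X \ar[d]^f \\
\xi \ar[r]^g & S\, .
}$$
Since properness is stable under base change, $f_\xi$ is again proper, and all four schemes in the diagram are noetherian (being of finite type over $\xi$ or $S$, which are themselves noetherian). Theorem \ref{etaleproperbasechange} therefore produces a canonical isomorphism
$$g^*\, \derR f_*(K) \xrightarrow{\sim} \derR (f_\xi)_*\, h^*(K)$$
in $\Der(\xi_\et, R)$.

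It then remains only to translate both sides into the form appearing in the statement. On the target side, $\xi$ is the spectrum of a separably closed field, so its small \'etale site is trivial and the global sections functor induces an equivalence $\Der(\xi_\et,R)\simeq\Der(R\text{-}\mathrm{Mod})$; under this equivalence, $\derR(f_\xi)_*$ is nothing other than $\derR\Gamma(X_\xi,-)$, while $h^*(K)$ is the restriction $K|_{X_\xi}$. On the source side, the pullback $g^*$ of a sheaf $F$ on $S_\et$ by the geometric point $g:\xi\to S$ coincides, by the very definition of the stalk as the filtered colimit of sections on \'etale neighbourhoods of $\xi$, with the stalk $F_\xi$. This identification extends degree-wise to complexes, and, since $g^*$ is exact, it passes to the derived category, giving $g^*\, \derR f_*(K)\simeq \derR f_*(K)_\xi$.

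Combining the two identifications with the proper base change isomorphism above yields the required natural isomorphism
$$\derR f_*(K)_\xi\;\xrightarrow{\sim}\;\derR\Gamma(X_\xi, K|_{X_\xi})\, .$$
There is no real obstacle here beyond the formal verifications; the entire content of the statement is supplied by Theorem \ref{etaleproperbasechange}, the rest being purely bookkeeping about what $g^*$ and $\derR(f_\xi)_*$ mean when the base is a geometric point.
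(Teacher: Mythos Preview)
Your argument is formally valid, but it is circular within the logical structure of the paper. In the paper, Theorem~\ref{etaleproperbasechange} is not proved independently: immediately after stating Corollary~\ref{cohproperfibers}, the text shows that this corollary \emph{implies} Theorem~\ref{etaleproperbasechange}, and only afterwards gives a direct proof of Corollary~\ref{cohproperfibers}. So when you invoke Theorem~\ref{etaleproperbasechange} to prove the corollary, you are assuming what has yet to be established.

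The substantive content here is the passage from the classical proper base change theorem (known from SGA4 for a single sheaf, hence for bounded complexes) to \emph{unbounded} complexes; this is precisely what Theorem~\ref{etaleproperbasechange} asserts and what needs to be proved. The paper's route is to prove the fiber version first: over the geometric point $\xi$, the fiber $X_\xi$ has finite \'etale cohomological dimension, so the functor $K\mapsto \derR f_*(K)_\xi\simeq\derR\Gamma(X_\xi,K|_{X_\xi})$ is the derived functor of a left exact functor of finite cohomological dimension. Lemma~\ref{lm:preparecompactsite} then shows that both sides of the comparison map commute with filtered colimits of complexes, and since every complex is a filtered colimit of bounded ones, the case of a single sheaf (SGA4, Expos\'e~XII, 5.2) suffices. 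Your ``bookkeeping'' reduction, by contrast, pushes all of this work into Theorem~\ref{etaleproperbasechange}, which has no independent proof in the paper.
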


Let us see that Corollary \ref{cohproperfibers} implies
Theorem \ref{etaleproperbasechange}.

In order to prove that the map $g^*\, \derR f_*(K)\to \derR f^\prime_*\, h^*(K)$
is invertible, it is sufficient to prove that, for any geometric point $\xi'$ of $S'$,
if we write $\xi=g(\xi')$, the induced map
$$(g^*\, \derR f_*(K))_{\xi'}=\derR f_*(K)_\xi \to \derR f^\prime_*(h^*(K))_{\xi'}$$
is an isomorphism. If $X_\xi$ and $X^\prime_{\xi'}$ denote the fiber of $X$ over $\xi$
and of $X'$ over $\xi'$ respectively, as the
commutative square of Theorem \ref{etaleproperbasechange} is cartesian,
the natural map $X^\prime_{\xi'}\to X_\xi$ is an isomorphism.
Moreover, applying twice Corollary \ref{cohproperfibers} gives canonical
isomorphisms
$$\derR f_*(K)_\xi\simeq \derR\Gamma(X_\xi,K|_{X_{\xi}})
\quad\text{and}\quad
\derR f^\prime_*(h^*(K))_{\xi'} \simeq \derR\Gamma(X^\prime_{\xi'},h^* (K)|_{X^\prime_{\xi'}})\, .$$
As the square
$$\xymatrix{
\derR f_*(K)_\xi\ar[r]\ar[d]^(.45)\wr&
\derR f^\prime_*(h^*(K))_{\xi'}\ar[d]^(.45)\wr\\
\derR\Gamma(X_\xi,K|_{X_{\xi}})\ar[r]^(.45)\sim
&\derR\Gamma(X^\prime_{\xi'},h^*(K)|_{X^\prime_{\xi'}})
}$$
commutes, this proves the theorem.

\begin{proof}[Proof of Corollary \ref{cohproperfibers}]
By virtue of \cite[Expos\'e~XII, Corollaire 5.2]{SGA4}, we already know this corollary
is true whenever $K$ is actually a sheaf of $R$-modules over $X_\et$, from which we
easily deduce that this is an isomorphism for $K$ a bounded complex of sheaves of
$R$-modules. Note that $X_\xi$ is of finite cohomological dimension
(by Theorem \ref{thm:localetalefinitecd}, although this is here
much more elementary, as this readily follows from \cite[Expos\'e~X, 4.3 and 5.2]{SGA4}).
Moreover, as the fiber functor
$$\sh(S_\et,R)\to R\text{-}\mathrm{Mod}\ , \quad F\mapsto F_\xi$$
is exact, the functor $K\mapsto \derR f_*(K)_\xi$ is the total right
derived functor of the left exact functor $F\mapsto f_*(F)_\xi\simeq \Gamma(X_\xi,F|_{X_\xi})$,
which is thus of finite cohomological dimension; see \cite[Expos\'e~XII, 5.2 and 5.3]{SGA4}.
%As a consequence, we have two strongly convergent spectral sequences of shape
%$$\begin{aligned}
%E^{p,q}_2=\derR^p f_* (H^q(K))_\xi & \Rightarrow H^{p+q}(\derR f_*(K)_\xi) \\
%E^{p,q}_2=H^p_\et(X_\xi,H^q(K)|_{X_\xi}) & \Rightarrow H^{p+q}_\et(X_\xi,K|_{X_\xi})\, .
%\end{aligned}
%$$
Therefore, by virtue of Lemma \ref{lm:preparecompactsite},
the map $H^i(\derR f_*(K)_\xi)\to H^i_\et(X_\xi,K|_{X_{\xi}})$
is a natural transformation between functors which preserve small filtering colimits
of complexes of sheaves. As any complex is a filtered colimit of bounded complexes,
this ends the proof.
%induces a morphism of spectral sequences which is invertible on the $E_2$ terms,
%from which we get that the map
%$$H^i(\derR f_*(K)_\xi)\to H^i_\et(X_\xi,K|_{X_{\xi}})$$
%is an isomorphism of $R$-modules for any integer $i$.
\end{proof}

\begin{cor}\label{exceptionalproperfunctoretalesheaves}
For any proper morphism $f:X\to S$, and for any ring $R$
of positive characteristic, the functor
$$\derR f_*:\Der(X_\et,R)\to\Der(S_\et,R)$$
has a right adjoint
$$f^!:\Der(S_\et,R)\to\Der(X_\et,R)\, .$$
\end{cor}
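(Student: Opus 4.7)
The plan is to invoke Brown representability for triangulated functors between well generated triangulated categories (Neeman). Since $\Der(X_\et,R)$ is the unbounded derived category of a Grothendieck abelian category, it is well generated, so any triangulated functor out of it that commutes with arbitrary small direct sums automatically admits a right adjoint. Thus the whole statement reduces to checking that
$$\derR f_*:\Der(X_\et,R)\to\Der(S_\et,R)$$
preserves small sums.

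To verify this I would argue stalk-wise on $S_\et$. The stalk functor at a geometric point $\xi$ of $S$ is exact and commutes with small sums; moreover, a morphism in $\Der(S_\et,R)$ is invertible as soon as it is invertible on every such stalk. Fix a family $\{K_i\}_{i\in I}$ in $\Der(X_\et,R)$. Applying Corollary~\ref{cohproperfibers} to the comparison morphism
$$\bigoplus_{i\in I}\derR f_*(K_i)\longrightarrow \derR f_*\Bigl(\bigoplus_{i\in I}K_i\Bigr)$$
identifies its stalk at $\xi$ with the canonical map
$$\bigoplus_{i\in I}\derR\Gamma\bigl(X_\xi,K_i|_{X_\xi}\bigr)\longrightarrow \derR\Gamma\Bigl(X_\xi,\bigoplus_{i\in I}K_i|_{X_\xi}\Bigr)\, ,$$
where I use that taking $(-)|_{X_\xi}$ commutes with sums. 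So it is enough to prove that $\derR\Gamma(X_\xi,-)$ commutes with small sums.

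Now $f$ is proper, so $X_\xi$ is a proper scheme over a separably closed field; by \cite[Expos\'e~X, 4.3 and 5.2]{SGA4} (or by Theorem~\ref{thm:localetalefinitecd}) it is of finite \'etale cohomological dimension with $R$-linear coefficients. The final input is then Lemma~\ref{lm:preparecompactsite}: a left exact functor of finite cohomological dimension between Grothendieck abelian categories has total right derived functor commuting with small sums. Applied to $\Gamma(X_\xi,-)$, this settles the stalk-wise claim and hence the global one. I do not expect any serious obstacle here: the proof is essentially a packaging of proper base change (Theorem~\ref{etaleproperbasechange}), its fiber-wise reformulation (Corollary~\ref{cohproperfibers}), the cohomological dimension input (Theorem~\ref{thm:localetalefinitecd} or SGA4), and the sum-preservation criterion of Lemma~\ref{lm:preparecompactsite}, glued together with Brown representability in well generated triangulated categories.
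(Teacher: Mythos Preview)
Your proof is correct and follows essentially the same route as the paper: Brown representability reduces to preservation of small sums, which is checked stalk-wise using Corollary~\ref{cohproperfibers}, and the fiber $X_\xi$ has finite \'etale cohomological dimension. The only cosmetic difference is that the paper packages the final step by quoting Corollary~\ref{directimagepreservessums}, whereas you invoke Lemma~\ref{lm:preparecompactsite} directly; both amount to the same thing.
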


\begin{proof}
By virtue of the Brown representability theorem,
it is sufficient to prove that $\derR f_*$ preserves small sums.
For this purpose, it is sufficient to prove that, for any geometric point $\xi$ of $S$, the
functor
$\derR f_*(-)_\xi:\Der(X_\et,R)\to\Der(R\text{-}\mathrm{Mod})$
preserves small sums. This readily follows
from Corollaries \ref{cohproperfibers} and \ref{directimagepreservessums}.
\end{proof}

\subsection{Smooth base change isomorphism and homotopy invariance}

\begin{thm}\label{smoothbasechangeetalesheaves}
Consider the cartesian square of locally noetherian schemes below, with $g$
a smooth morphism, and $f$ of finite type.
$$\xymatrix{
X'\ar[r]^{h}\ar[d]_{f'}& X\ar[d]^f\\
S'\ar[r]^g&S
}$$
Consider a ring $R$ of positive characteristic
which is prime to the residue characteristics of $S$.
Then, for any object $K$ of $\Der(X_\et,R)$, the map
$$g^*\, \derR f_*(K)\to \derR f^\prime_*\, h^*(K)$$
is an isomorphism in $\Der(S^\prime_\et,R)$.
\end{thm}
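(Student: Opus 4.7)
The plan is to reduce, by truncation and homotopy colimit arguments, to the classical smooth base change theorem for a single torsion sheaf, as proved by Deligne in~\cite[Expos\'e~XVI, Th\'eor\`eme~1.1]{SGA4}. The main ingredient that makes this reduction work is that both $g^{*}\,\derR f_{*}$ and $\derR f'_{*}\,h^{*}$, viewed as triangulated functors $\Der(X_\et,R) \to \Der(S'_\et,R)$, preserve small sums: $g^{*}$ and $h^{*}$ do so as left adjoints, while $\derR f_{*}$ and $\derR f'_{*}$ do so by Corollary~\ref{directimagepreservessums}, applied to the finite-type morphisms $f$ and $f' = f \times_{S} S'$ between locally noetherian schemes.

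For an arbitrary $K$ in $\Der(X_\et,R)$, one expresses $K$ as the homotopy colimit of its b\^ete truncations $\sigma^{\geq -n}(K)$, realized as the cone of the map $1-d$ on the countable sum $\bigoplus_{n} \sigma^{\geq -n}(K)$ exactly as in the proof of Lemma~\ref{lm:preparecompactsite}. Since both sides of the base change morphism commute with this hocolim, we may assume $K$ is bounded below. Any bounded below complex is in turn the filtered colimit of its canonical truncations $\tau^{\leq m}(K)$, each of which is bounded, so we may further assume $K$ is bounded. A standard d\'evissage on the amplitude, using the distinguished triangles
$$\tau^{\leq m-1}(K) \to \tau^{\leq m}(K) \to H^{m}(K)[-m] \to \tau^{\leq m-1}(K)[1],$$
then reduces the problem to the case where $K = F$ is a single sheaf of $R$-modules concentrated in degree zero.

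Since $R$ is of positive characteristic prime to the residue characteristics of $S$, the sheaf $F$ is torsion with torsion prime to these residue characteristics; moreover $f$ is quasi-compact and quasi-separated, being of finite type over a noetherian base. The classical smooth base change theorem \cite[Expos\'e~XVI, Th\'eor\`eme~1.1]{SGA4} thus produces an isomorphism $g^{*}\,\derR^{q} f_{*}(F) \xrightarrow{\sim} \derR^{q} f'_{*}\,h^{*}(F)$ for every integer $q \geq 0$, which assembles into the desired isomorphism $g^{*}\,\derR f_{*}(F) \simeq \derR f'_{*}\,h^{*}(F)$ in $\Der(S'_\et,R)$. The entire technical weight of the extension to unbounded complexes is thus absorbed by Corollary~\ref{directimagepreservessums}: this is the only non-formal input, and once it is granted, the rest of the argument is a routine truncation d\'evissage.
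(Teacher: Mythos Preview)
Your proof is correct and follows essentially the same approach as the paper: both reduce to the case of a single sheaf by exploiting that $g^{*}\,\derR f_{*}$ and $\derR f'_{*}\,h^{*}$ preserve small sums (via Corollary~\ref{directimagepreservessums}), and then invoke the classical smooth base change from SGA4, Expos\'e~XVI. The paper phrases the reduction more abstractly---the smallest triangulated subcategory of $\Der(X_\et,R)$ closed under small sums and containing all sheaves is everything---while you spell out the truncation d\'evissage explicitly, but the content is the same.
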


\begin{proof}
The smallest triangulated full subcategory of $\Der(X_\et,R)$
which is closed under small sums, and
which contains sheaves of $R$-modules over $X_\et$, is the whole
category $\Der(X_\et,R)$. Therefore, by virtue of
Corollary \ref{directimagepreservessums}, it is sufficient to prove
that, for any sheaf of $R$-modules $F$ over $X_\et$, the map
$$g^*\, \derR f_*(F)\to \derR f^\prime_*\, h^*(F)$$
is an isomorphism. This follows from \cite[Expos\'e~XVI, Corollaire~1.2]{SGA4}.
\end{proof}

\begin{thm}\label{A1invarianceetalesheaves}
Let $S$ be a locally noetherian scheme and $p:V\to S$
be a vector bundle.
Consider a ring $R$ of positive characteristic
which is prime to the residue characteristics of $S$.
Then the pullback functor $p^*:\Der(S_\et,R)\to \Der(V_{\et},R)$
is fully faithful.
\end{thm}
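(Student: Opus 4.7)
The strategy is to prove that the unit $\eta_K \colon K \to \derR p_* p^*(K)$ is an isomorphism for every object $K$ of $\Der(S_\et,R)$, which is equivalent to full faithfulness of $p^*$. Let $\mathcal{C} \subset \Der(S_\et,R)$ be the full subcategory of objects on which $\eta$ is invertible. By naturality, $\mathcal{C}$ is a triangulated subcategory, and it is stable under small sums: $p^*$ commutes with sums as a left adjoint, while $\derR p_*$ commutes with sums because $p$ is of finite type, by the argument of Corollary~\ref{directimagepreservessums} --- whose proof relies on Theorem~\ref{thm:localetalefinitecd} to produce finite $R$-linear \'etale cohomological dimension at every stalk, the hypothesis that the characteristic of $R$ is prime to the residue characteristics of $S$ being essential here. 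Since $\Der(S_\et,R)$ is generated, as a triangulated category closed under small sums, by the objects of the Grothendieck abelian category $\sh(S_\et,R)$ concentrated in degree zero, it suffices to prove that $\eta_F$ is invertible for every sheaf of $R$-modules $F$ on $S_\et$.

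Fix such a sheaf $F$ and a geometric point $\xi$ of $S$, and set $\tilde S = \Spec(\cO_{S,\xi}^{sh})$, $\tilde V = V \times_S \tilde S$, with projection $\tilde p \colon \tilde V \to \tilde S$. Since $\tilde S$ is the cofiltered limit of affine \'etale neighborhoods of $\xi$ and $p$ is of finite type, Theorem~\ref{proetalebasechange} yields a natural isomorphism $(\derR p_* p^* F)_\xi \simeq \derR\Gamma(\tilde V, \tilde p^{\,*}(F|_{\tilde S}))$. Since $\tilde S$ is local, the vector bundle $\tilde V$ is trivial, so $\tilde V \simeq \AA^n_{\tilde S}$; and since $\tilde S$ is strictly henselian, $F_\xi \simeq \Gamma(\tilde S, F|_{\tilde S})$. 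The problem thus reduces to showing that the unit map
$$\Gamma(\tilde S, F|_{\tilde S}) \longrightarrow \derR\Gamma(\AA^n_{\tilde S}, \tilde p^{\,*}(F|_{\tilde S}))$$
is an isomorphism. As the characteristic of $R$ is invertible in $\cO_{\tilde S}$, this is precisely the classical homotopy invariance of \'etale cohomology along vector bundles for torsion sheaves, as in \cite[Exp.~XV]{SGA4}.

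The main obstacle is the passage from unbounded complexes to single sheaves, which hinges on the commutation of $\derR p_*$ with small sums. This commutation is exactly what the finite cohomological dimension tools of this section (notably Theorem~\ref{thm:localetalefinitecd} and Proposition~\ref{zarlfinitecohdimcompact}) make available; once it is secured, the proof is a direct application of the SGA4 homotopy invariance for single sheaves.
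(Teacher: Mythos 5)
Your proof is correct and shares the same two structural pillars as the paper's: first, use Corollary~\ref{directimagepreservessums} to commute $\derR p_*$ with small sums and thereby reduce from unbounded complexes to single sheaves concentrated in degree zero (exactly as the paper does, by the same argument as in the preceding proof); second, conclude with the homotopy invariance theorem of \cite[Expos\'e~XV]{SGA4} for torsion \'etale sheaves of torsion prime to the residue characteristics. Where you diverge from the paper is in how you trivialize the vector bundle: the paper simply observes that the assertion is Zariski-local on $S$, which reduces immediately to $V=\AA^n_S$ (and then to $n=1$), while you instead pass to stalks at geometric points and invoke Theorem~\ref{proetalebasechange} (pro-\'etale base change via filtered limits of \'etale neighborhoods) to land in the strictly local situation over $\tilde S=\Spec(\cO^{sh}_{S,\xi})$, where the bundle is trivial and cohomology of $\tilde S$ computes stalks. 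Your detour is sound, and it has a pedagogical virtue --- it demonstrates the force of the preceding limit machinery --- but it invokes more technology than strictly necessary: Zariski locality of the assertion is immediate, whereas Theorem~\ref{proetalebasechange} itself already rests on Gabber's finite-dimensionality bounds. One small remark: after you reduce to single sheaves, the stalk-wise check is sufficient because stalks at geometric points form a conservative system; you implicitly use this but might state it. Both routes are correct; the paper's is more economical.
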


\begin{proof}
The property that $p^*$ is fully faithful is local over $S$ for the Zariski
topology, so that may assume that $V=\AA^n_S$, and even that $n=1$.
We have to check that, for any complex $K$ of sheaves of $R$-modules
over $S_\et$, the unit map $K\to\derR p_* p^*(K)$ is an isomorphism
in $\Der(S_\et,R)$. By Corollary \ref{directimagepreservessums}, the functor
$\derR p_*$ preserves small sums, so that we may assume that $K$ is concentrated
in degree zero (by the same argument as in the preceding proof).
This follows then from \cite[Expos\'e~XV, Corollaire~2.2]{SGA4}.
\end{proof}
\section{The premotivic \'etale category}\label{sec2}

The category $\sm_S$ of smooth (and separated of finite type)
 $S$-schemes,
 endowed with the \'etale topology, is called the \emph{smooth-\'etale site}.
We denote by $\sh_\et(S,R)$ the category of sheaves of $R$-modules
on this site (this has to be distinguished from the category of sheaves
on the small site; see \ref{def:sheavesRmodsmallsite}).

\subsection{\'Etale sheaves with transfers}

\begin{num} \label{num:recall_cycles&corr}
We recall here the theory of finite correspondences and of
sheaves with transfers introduced by Suslin and Voevodsky~\cite{SV1}.
The precise definitions and conventions can be found in \cite[section 9]{CD3}.

Given any $S$-scheme $X$, we denote by
$$
c_0(X/S)
$$
the module of cycles $\alpha$ in $X$
 with coefficients in $\Lambda$
 such that $\alpha$ is finite and $\Lambda$-universal over $S$
  (\emph{i.e.} the support of $\alpha$ is finite over $S$ and $\alpha/S$
  satisfies the definition \cite[9.1.1]{CD3}).

Given any $S$-schemes $X$ and $Y$, we put
$$
\corr S X Y:=c_0(X \times_S Y/X)
$$
and call its elements the \emph{finite $S$-correspondences from $X$ to $Y$}
 (cf. \cite[9.1.2]{CD3}).
Beware that the coefficients ring of cycles do not appear in our
 notation, contrary to the case of \emph{loc. cit.} Indeed,
 we will always assume (relative) cycles and finite correspondences 
 have coefficients in $\Lambda$ so that we can allow this abuse of notation.

These correspondences can be composed
and we denote by $\smcx{\Lambda,S}$ the category whose objects
 are smooth $S$-schemes
and morphisms are finite $S$-correspondences
(see \cite[9.1.8]{CD3} for $\mathcal P$
the class of smooth separated morphisms of finite type).

We can define a functor
\begin{equation}
\gamma_S:\sm_S \rightarrow \smcx{\Lambda,S}
\end{equation}
which is the identity on objects and associates to an $S$-morphism its graph
 seen as a finite $S$-correspondence \cite[9.1.8.1]{CD3}.

%When the coefficients are not indicated in the notation,
 %it is understood that $\Lambda=\ZZ$.
 %This will always be the case in the rest of this section.

%Extending the theory of Voevodsky to the case of an arbitrary base scheme $S$,
% we will use the following definition
% (see also \cite[\ref{df:ptrx}, \ref{df:ftrx}]{CD3}):
\end{num}
\begin{df}(see \cite[10.1.1 and 10.2.1]{CD3}) \label{df:sh_et^tr}
An \emph{$R$-presheaf with transfers over $S$} is an additive presheaf
 of $R$-modules on $\smcx S$. We denote by $\psh^\tr(S,R)$
 the corresponding category.

An \emph{\'etale $R$-sheaf with transfers over $S$} is 
 an $R$-presheaf with transfers $F$ such that $F \circ \gamma_S$
 is a sheaf for the \'etale topology.
 We denote by $\shtr(S,R)$ the corresponding full subcategory
  of $\psh^\tr(S,R)$.
\end{df}
Thus, by definition, we have an obvious functor:
\begin{equation}
\gamma_*:\shtr(S,R) \rightarrow \sh_\et(S,R), F \mapsto F \circ \gamma.
\end{equation}

\begin{num}
Given any $S$-scheme $X$,
 we let $R^{tr}_S(X)$ be the following $R$-presheaf with transfers:
$$
Y \mapsto \corr S Y X \otimes_\Lambda R.
$$
\end{num}
\begin{prop} \label{prop:Rtr_sheaf}
The presheaf $R^{tr}_S(X)$ is an \'etale $R$-sheaf with transfers.
\end{prop}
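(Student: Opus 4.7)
The plan is to verify the étale sheaf condition directly by reducing to the case of $\Lambda$-coefficients and then invoking the Suslin--Voevodsky descent theorem for relative cycles. Concretely, the value of $R^{tr}_S(X) \circ \gamma_S$ on a smooth $S$-scheme $Y$ is $\corr{S}{Y}{X} \otimes_\Lambda R = c_0(Y \times_S X / Y) \otimes_\Lambda R$, so it suffices to show first that the presheaf of $\Lambda$-modules
$$\Phi_X \colon \sm_S \to \Lambda\text{-}\mathrm{Mod}, \qquad Y \mapsto c_0(Y \times_S X/Y)$$
is an étale sheaf, and then to check that tensoring with $R$ over $\Lambda$ preserves this property.

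For the first step, I would consider an étale covering $p \colon Y' = \coprod_i Y_i \to Y$ in $\sm_S$ and analyze the exactness of the equalizer diagram
$$0 \to \Phi_X(Y) \xrightarrow{p^*} \Phi_X(Y') \rightrightarrows \Phi_X(Y' \times_Y Y').$$
The pullback $p^*$ along an étale morphism coincides with the flat pullback of cycles, since étale morphisms are flat of relative dimension zero; injectivity of $p^*$ then follows because an étale surjection is faithfully flat, and flat pullback along such a morphism does not annihilate any non-zero relative cycle. The gluing part is the delicate point: a compatible family on the cover is determined by its multiplicities at the generic points of its support, and étale descent for those points and their multiplicities produces a cycle on $Y \times_S X$; finiteness and $\Lambda$-universality over $Y$ may then be checked étale-locally on $Y$, where they hold by hypothesis. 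All of this is a classical consequence of the Suslin--Voevodsky theory of relative cycles (one in fact has the stronger statement that $\Phi_X$ is an $h$-sheaf); see \cite[Section 10]{CD3} or \cite{SV1}.

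For the second step, I would use the fact that $c_0(Y \times_S X/Y)$ is a $\Lambda$-flat module (it is freely generated, as an abelian group, by the integral closed subschemes of $Y \times_S X$ that are finite and $\Lambda$-universal over $Y$, weighted by their universal multiplicities). Consequently, tensoring the above equalizer diagram with $R$ over $\Lambda$ preserves its exactness at the first two terms, so that $\Phi_X \otimes_\Lambda R = R^{tr}_S(X) \circ \gamma_S$ is again an étale sheaf. Hence $R^{tr}_S(X)$ lies in $\shtr(S,R)$.

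The main obstacle is the étale descent for relative cycles, i.e.\ the gluing step; in practice one simply invokes the full strength of the Suslin--Voevodsky descent theorem, which is the technical backbone of the theory recalled in \ref{num:recall_cycles&corr}.
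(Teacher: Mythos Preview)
Your approach is essentially the same as the paper's: reduce to $R=\Lambda$ by flatness of $\corr{S}{Y}{X}$ over $\Lambda$, and for the $\Lambda$-case invoke the known étale descent for relative cycles (the paper simply cites \cite[Proposition 10.2.4]{CD3}). One minor point: your parenthetical justification of $\Lambda$-flatness via an explicit free basis is slightly imprecise, since it is not obvious that an arbitrary element of $c_0(Y\times_S X/Y)$ decomposes as a $\Lambda$-combination of integral subschemes each of which is individually $\Lambda$-universal; the paper instead observes that $\corr{S}{Y}{X}$ is a sub-$\Lambda$-module of the free $\Lambda$-module of all cycles, hence free because $\Lambda$ is a PID, which gives $\Tor^1_\Lambda(\corr{S}{Y}{X},R)=0$ directly.
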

\begin{proof}
In the case where $R=\Lambda$ this is \cite[Proposition 10.2.4]{CD3}.
For the general case, we observe that for any smooth $S$-scheme $Y$,
 $\corr S Y X$ is a free $\Lambda$-module.
 Indeed, it is a sub-$\Lambda$-module of the free $\Lambda$-module of cycles
 in $Y \times_S X$. %\footnote{This is an easy consequence of Zorn's lemma.}
 Thus, we have
\begin{equation} \label{eq:no_Rc_torsion}
\Tor^1_\Lambda\big(\corr S Y X,R\big)=0\, ,
\end{equation}
and the general case follows from the case $R=\Lambda$.
\end{proof}

\begin{num} \label{num:ftr_simplicial_schemes}
Let $Y_\bullet$ be a simplicial $S$-scheme.
If we apply $R_S^{tr}$ point-wise,
 we obtain a simplicial object of the additive category $\shtr(S,R)$.
 We denote by $R^{tr}_S(Y_\bullet)$ the complex associated
 with this simplicial object.
 This is obviously functorial in $Y_\bullet$.

The following proposition is the main technical point
 of this section.
\end{num}
\begin{prop}\label{prop:etale_descent_transfers}
Let $p:Y_\bullet \rightarrow X$ be an \'etale hypercover of $X$
 in the category of $S$-schemes. Then the induced map
$$
p_*:\gamma_* R^{tr}_S(Y_\bullet) \rightarrow \gamma_* R^{tr}_S(X)
$$
is a quasi-isomorphism of complexes of \'etale $R$-sheaves.
\end{prop}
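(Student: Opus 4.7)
The plan is to check acyclicity stalkwise for the \'etale topology. Fix a geometric point $\bar s$ lying over a smooth $S$-scheme $T$, and let $\tilde T$ denote the strict henselization of $T$ at $\bar s$. By the continuity of the functor $c_0(Z\times_S -/-)$ under filtered inverse limits of essentially \'etale base changes---a basic compatibility of relative cycles recorded in \cite{CD3}---the stalk of $\gamma_* R^{tr}_S(Z)$ at $\bar s$ computes as $c_0(Z\times_S\tilde T/\tilde T)\otimes_\Lambda R$. Combined with the Tor-vanishing \eqref{eq:no_Rc_torsion}, it therefore suffices to prove acyclicity of the augmented complex of $\Lambda$-modules
$$\cdots\to c_0(Y_1\times_S\tilde T/\tilde T)\to c_0(Y_0\times_S\tilde T/\tilde T)\to c_0(X\times_S\tilde T/\tilde T)\to 0.$$

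The essential input is the structure of finite $\Lambda$-universal cycles over the strictly henselian base $\tilde T$: any class $\alpha\in c_0(X\times_S\tilde T/\tilde T)$ decomposes as $\alpha=\sum_i n_i[Z_i]$, with each $Z_i\hookrightarrow X\times_S\tilde T$ a closed integral subscheme finite over $\tilde T$, hence the spectrum of a strictly henselian local ring. Because such a $Z_i$ admits no nontrivial \'etale covers, the pullback hypercover $Y_\bullet\times_X Z_i\to Z_i$ is split, i.e.\ admits a compatible simplicial system of sections $s_i^n\colon Z_i\to Y_n\times_X Z_i$. Pushing the cycle $[Z_i]$ forward along the composites $Z_i\xrightarrow{s_i^n}Y_n\times_X Z_i\hookrightarrow Y_n\times_S\tilde T$ and extending $\Lambda$-linearly in $\alpha$ provides the components of a simplicial contracting homotopy of the augmented complex above, whence acyclicity.

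Two technical points deserve attention. First, one must check that pushforward of a finite $\Lambda$-universal cycle along a finite section is again finite and $\Lambda$-universal---a general stability property of relative cycles in the sense of Suslin-Voevodsky, recorded in \cite[\S9]{CD3}. Second, the sections $s_i^n$ may be chosen independently on the distinct support components $Z_i$ (each being strictly henselian), so no global coherence issue arises in assembling the contracting homotopy; $\Lambda$-linearity is then automatic from the formula $\alpha\mapsto\sum_i n_i(s_i^n)_*[Z_i]$. The cleanest way to phrase the whole argument is: stalkwise over strict henselizations of smooth $S$-schemes, the presheaf $T\mapsto c_0(Z\times_S T/T)$ sends \'etale hypercovers to split augmented simplicial $\Lambda$-modules, which are automatically acyclic.
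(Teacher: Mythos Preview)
Your first step---passing to stalks at geometric points and reducing to acyclicity of $c_0(Y_\bullet \times_S \tilde T/\tilde T) \to c_0(X \times_S \tilde T/\tilde T)$ over a strictly local $\tilde T$---matches the paper's first step exactly, including the appeal to continuity of relative cycles and the Tor-vanishing to pass from $R$ to $\Lambda$.

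The gap is in the construction of the contracting homotopy. You decompose a class $\alpha \in c_0(X\times_S\tilde T/\tilde T)$ into support components $Z_i$, choose simplicial sections $s_i^\bullet:Z_i\to Y_\bullet\times_X Z_i$, and assert that pushing forward along these gives ``the components of a simplicial contracting homotopy of the augmented complex.'' But a contracting homotopy requires maps $h_n:c_0(Y_{n}\times_S\tilde T/\tilde T)\to c_0(Y_{n+1}\times_S\tilde T/\tilde T)$ for \emph{every} $n\geq -1$, and you have only described $h_{-1}$. For $n\geq 0$ one must lift a cycle $\beta=\sum m_j[W_j]$ whose components $W_j$ sit in $Y_n$, not in $X$. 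If you let $Z_j$ be the image of $W_j$ in $X\times_S\tilde T$ and use the splitting chosen for $Z_j$, the identity $dh+hd=\mathrm{id}$ fails in general: the irreducible components of a face $(d_k)_*[W_j]$ may have strictly smaller image in $X$, and your recipe then invokes a \emph{different}, independently chosen splitting for them. Your summary that the stalk complex is a ``split augmented simplicial $\Lambda$-module'' is therefore not established; the splittings over the various $Z_i$ need not glue.

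The paper handles exactly this by inserting a filtration step (its ``second step''): one writes $c_0(Y_n\times_S\tilde T/\tilde T)=\varinjlim_Z c_0\big((Y_n\times_S\tilde T)\times_{X\times_S\tilde T} Z\,/\,\tilde T\big)$ as a filtered colimit over closed subschemes $Z\subset X\times_S\tilde T$ finite over $\tilde T$, and reduces to a single connected---hence strictly local---$Z$. For fixed $Z$ your splitting argument is correct and suffices; the paper instead proves the sharper structural lemma $c_0(U/\tilde T)\cong\ZZ\langle\Hom_Z(Z,U)\rangle\otimes c_0(Z/\tilde T)$ for $U$ \'etale over $Z$, and concludes via contractibility of $\Hom_Z(Z,Y_\bullet)$. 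Either endgame works once $Z$ is fixed; what your argument is missing is precisely that colimit-over-$Z$ reduction.
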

\begin{proof}
The general case follows from the case $R=\Lambda$
 -- using the argument \eqref{eq:no_Rc_torsion}.
In the proof, a \emph{geometric point} will mean
  a point with coefficients in an algebraically closed field
  -- not only separably closed\footnote{
  In the proof, we will only use the fact that \emph{any} surjective
  family of geometric points on a scheme $X$ gives
  a conservative family of points of the small \'etale site of $X$;
	see \cite[VIII, 3.5]{SGA4}.}.
We will use the $\Lambda$-module $c_0(Z/S)$ defined for any $S$-scheme $Z$
 in \ref{num:recall_cycles&corr}.
 Remember that it is covariantly functorial in $Z$; see \cite[9.1.1]{CD3}.

\bigskip

\noindent \emph{First step}.
 We reduce to the case where $S$ is strictly local
 and to prove that the canonical map of complexes of $\Lambda$-modules
\begin{equation} \label{proof:etale_descent_transfers}
p_*:c_0(Y_\bullet/S) \rightarrow c_0(X/S)
\end{equation}
is a quasi-isomorphism.

Indeed, to check that $p_*$ is a quasi-isomorphism,
  it is sufficient to look at fibers over a point of the
  smooth-\'etale site. Such a point corresponds
  to a smooth $S$-scheme $T$ with a geometric point $\bar t$ ;
  we have to show that the map of complexes of $\Lambda$-modules:
$$
\ilim_{V \in \mathcal V_{\bar t}(T)} \corr S V {Y_\bullet}
 \rightarrow \ilim_{V \in \mathcal V_{\bar t}(T)} \corr S V X
$$
is a quasi-isomorphism.

Let $T_0$ be the strict local scheme of $T$ at $\bar t$.
By virtue of \cite[8.3.9]{CD3}, for any smooth $S$-scheme $W$,
 the canonical map:
$$
\ilim_{V \in \mathcal V_{\bar t}(T)} \corr S V W
 \rightarrow c_0(Z \times_S T_0/T_0)
 =\corr {T_0} {T_0} {W \times_S T_0}.
$$
is an isomorphism.
 This concludes the first step as we may replace $S$ by $T_0$
 as well as $p$ by $p \times_S T_0$.

\bigskip

\noindent \emph{Second step}.
 We reduce to prove that \eqref{proof:etale_descent_transfers} is a quasi-isomorphism
 in the case where $X$ is connected and finite over $S$.

Let $\mathcal Z$ be the set of closed subschemes $Z$ of $X$
 which are finite over $S$, ordered by inclusion.
 Given such a $Z$,
  we consider the canonical immersion $i:Z \rightarrow X$ and the pullback square:
$$
\xymatrix{
Z \times_S Y_\bullet\ar^-{p_Z}[r]\ar_k[d] & Z\ar^i[d] \\
Y_\bullet\ar^p[r] & X.
}
$$
We thus obtain a commutative diagram:
$$
\xymatrix{
c_0(Z \times_X Y_\bullet/S)\ar[d]\ar^-{p_{Z*}}[r]
 & c_0(Z/S)\ar[d] \\
c_0(Y_\bullet/S)\ar^{p_*}[r] & c_0(X/S).
}
$$
In this diagram, the vertical maps are injective and we can check
 that $p_*$ is the colimit of the morphism $p_{Z*}$
 as $Z$ runs over $\mathcal Z$.
 In fact, taking any cycle $\alpha$ in $c_0(Y_n/S)$, its support $T$
 is finite over $S$ ; as $p_n:Y_n \rightarrow X$ is separated,
 $Z=p_n(T)$ is a closed subscheme of $X$ which is finite over $S$.
 Obviously, $\alpha$ belongs to $c_0(Z \times_X Y_n/S)$.
 
Because $\mathcal Z$ is a filtering ordered set,
 it is sufficient to consider the case where $p$ is $p_Z$
 and $X$ is $Z$.
 Because $c_0(Z/S)$ is additive with respect to $Z$, we can
 assume in addition that $Z$ is connected, which finishes the reduction
 of the second step.

\bigskip

\noindent \emph{Final step}. Now,
 $S$ is strictly local and $X$ is finite and connected over $S$.
 In particular, $X$ is a strictly local scheme.
 Let $x$ and $s$ be the closed points of $X$ and $S$, respectively.
 Under these assumptions, we have the following lemma (whose proof
 is given below).
 
\begin{lm} \label{lm:rel_cycle_etale_ext}
For any $S$-scheme $U$ and any \'etale $S$-morphism $f:U \rightarrow X$,
 the canonical morphism:
$$
\begin{array}{rcl}
\varphi_U:\ZZ\langle\Hom_X(X,U)\rangle \otimes c_0(X/S)
 & \longrightarrow & c_0(U/S) \\
 (i:X \rightarrow U) \otimes \beta & \longmapsto & i_*(\beta)
\end{array}
$$
is an isomorphism.
\end{lm}

Thus, according to the lemma above,
 the map \eqref{proof:etale_descent_transfers} is isomorphic to:
$$
p_*:\ZZ\langle\Hom_X(X,Y_\bullet)\rangle \otimes c_0(X/S)
 \rightarrow \ZZ\langle\Hom_X(X,X)\rangle \otimes c_0(X/S).
$$
As $p$ is an \'etale hypercovering and $X$ is a strictly local scheme,
 the simplicial set $\Hom_X(X,Y_\bullet)$ is contractible.
 This readily implies that $p_*$ is a chain homotopy equivalence,
 which achieves the proof of the proposition. 
\end{proof}

\begin{proof}[Proof of Lemma \ref{lm:rel_cycle_etale_ext}]
We construct an inverse $\psi_U$ to $\varphi_U$. 
Because $c_0(-/S)$ is additive,
 the (free) $\Lambda$-module $c_0(U/S)$
 is generated by cycles $\alpha$ whose support is connected.
Thus it is enough to define $\psi_U$ on cycles
 $\alpha \in c_0(U/S)$ whose support $T$ is connected.

By definition, $T$ is finite over $S$. As $f$ is separated,
 $f(T)$ is closed in $X$ and the induced map $T \rightarrow f(T)$ is finite.
 In particular, the closed point $x$ of $X$ belongs to $f(T)$:
  we fix a point $t \in T$ such that $f(t)=x$.
 Then the residual extension $\kappa(t)/\kappa(x)$ is finite. 
 This implies $\kappa(t) \simeq \kappa(x)$
 as $\kappa(x)$ is algebraically closed
  (according to convention at the beginning of the proof).
 In particular, $t$ is a $\kappa(x)$-section
  of the special fiber $U_x$ of $U$ at $x$. 
 As $U/Z$ is \'etale, this section can
 be extended uniquely to a section $i:X \rightarrow U$
 of $U/X$. Then $i(X)$ is a connected component, meeting $T$
 at least at $t$. This implies $T \subset i(X)$ as $T$ is connected.
 Thus $\alpha \in c_0(U/S)$ corresponds to an element
 $\alpha_i$ in $c_0(i(X)/S) \simeq c_0(X/S)$.
 We put $\psi_U(\alpha)=i \otimes \alpha_i$.
 The map $\psi_U$ is obviously an inverse to $\varphi_U$, and
 this concludes the proof of the lemma.
\end{proof}

\begin{rem}
This proposition fills out a gap in the theory of motivic complexes
 of Voevodsky which was left open in \cite[chap. 5, sec. 3.3]{FSV}:
 Voevodsky restricted to the case of a field of finite cohomological
 dimension.
\end{rem}

Note also the following corollary of lemma \ref{lm:rel_cycle_etale_ext}:

\begin{cor} \label{cor:loc_cst_sheaf&transfers}
Let $X$ be a scheme and $V$ an \'etale $X$-scheme.
Let $R_X(V)$ be the \'etale $R$-sheaf on $\sm_X$ represented by $V$.
Then the map 
$$
R_X(V) \rightarrow R_X^{tr}(V)
$$
induced by the graph functor is an isomorphism.
\end{cor}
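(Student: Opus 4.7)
The plan is to check the isomorphism of \'etale sheaves on $\sm_X$ stalk-wise at each geometric point. Fix a smooth $X$-scheme $Y$, a geometric point $\bar y$ of $Y$, and let $Y_0$ denote the strict henselization of $Y$ at $\bar y$; then $Y_0$ is a strictly local $X$-scheme whose closed point has algebraically closed residue field, so it satisfies the hypotheses on the base scheme of Lemma \ref{lm:rel_cycle_etale_ext}.

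For the left-hand side: since $V$ is \'etale over $X$, the presheaf $W\mapsto\Hom_X(W,V)$ on $\sm_X$ is already an \'etale sheaf, and its stalk at $\bar y$ is $\Hom_X(Y_0,V)$. Since taking the free $R$-module commutes with sheafification and stalks, the stalk of $R_X(V)$ at $\bar y$ identifies with $R\langle \Hom_X(Y_0,V)\rangle$. For the right-hand side, exactly as in the first step of the proof of Proposition \ref{prop:etale_descent_transfers} --- which invokes \cite[8.3.9]{CD3} --- the stalk of $R_X^{tr}(V)$ at $\bar y$ identifies with $\corr{Y_0}{Y_0}{V\times_X Y_0}\otimes_\Lambda R = c_0(V\times_X Y_0/Y_0)\otimes_\Lambda R$. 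Via the adjunction $\Hom_X(Y_0,V) = \Hom_{Y_0}(Y_0, V\times_X Y_0)$, the stalk of the map induced by the graph functor is identified with the $R$-linearization of the map $i\mapsto i_*([Y_0])$.

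This last map is exactly the one appearing in Lemma \ref{lm:rel_cycle_etale_ext}, once we take both $S$ and $X$ in that lemma to be equal to $Y_0$ (which is trivially finite and connected over itself), the \'etale scheme $U$ to be $V\times_X Y_0$, and use that $c_0(Y_0/Y_0)\simeq\Lambda$ is freely generated by the fundamental cycle $[Y_0]$. The lemma then provides the desired isomorphism; tensoring with $R$ over $\Lambda$ preserves it, so we conclude. The main obstacle --- the verification that sections of \'etale schemes over a strictly local base exhaust the relevant cycle group --- is already handled by Lemma \ref{lm:rel_cycle_etale_ext}, so the corollary is a formal consequence via the standard stalk computation in the smooth-\'etale topos.
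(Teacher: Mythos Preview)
Your proof is correct and follows essentially the same route as the paper: reduce to stalks over a strictly local base (via the first step of Proposition~\ref{prop:etale_descent_transfers} and \cite[8.3.9]{CD3}), then apply Lemma~\ref{lm:rel_cycle_etale_ext} with $S=X=Y_0$ together with the identification $c_0(Y_0/Y_0)\simeq\Lambda$ (for which the paper cites \cite[Lemma~10.2.6]{CD3}). The only cosmetic difference is that the paper reduces to $R=\Lambda$ at the outset rather than tensoring at the end, and it is slightly less explicit about the adjunction $\Hom_X(Y_0,V)=\Hom_{Y_0}(Y_0,V\times_X Y_0)$ that you spell out.
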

\begin{proof}
As in the proof above, it is sufficient to treat the case 
 $R=\Lambda$.
 Moreover, by looking at the toposic fibers of the above map,
  and by using the arguments of the first step of the proof,
 we are reduced to check that the map
$$
\Lambda \langle\Hom_X(X,V)\rangle \rightarrow c_0(V/X)
$$
is an isomorphism when $X$ is strictly local with algebraically
closed residue field. Then, this follows from the preceding lemma,
and from the fact that, when $X$ is connected, we have $c_0(X/X)=\Lambda$;
see \cite[Lemma 10.2.6]{CD3}.
\end{proof}

In \cite[Proposition 10.3.3]{CD3}, we proved the preceding proposition
in the particular case of a \v Cech hypercovering
 -- \emph{i.e.} the coskeleton of an \'etale cover.
 With the extension obtained in the above proposition,
we can apply \cite[Prop. 9.3.9]{CD3} and get the following.

\begin{prop} \label{prop:main_properties_shtr}
The category of \'etale sheaves with transfers has the following properties.
\begin{itemize}
\item[(1)] The forgetful functor
$$
\mathcal O_\et^{tr}: \shtr(S,R) \rightarrow \psh^\tr(S,R)
$$
admits an exact left adjoint $a_\et^{tr}$ such that the following diagram commutes,
where $a_\et$ denotes the usual sheafification functor.
$$\xymatrix{
\psh^\tr(S,R)\ar_{\hat \gamma_*}[d]\ar^{a_\et^\tr}[r]
 & \shtr(S,R)\ar^{\gamma_*}[d] \\
\psh(S,R)\ar^{a_\et}[r] & \sh_\et(S,R) \\
}
$$
\item[(2)] The category $\shtr(S,R)$ is a Grothendieck abelian category generated by
the sheaves of shape $R^{tr}_S(X)$, for any smooth $S$-scheme $X$.
\item[(3)] The functor $\gamma_*$ is conservative
 and commutes with every small limits and colimits.
\end{itemize}
\end{prop}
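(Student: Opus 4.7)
The plan is to deduce the three assertions from the abstract descent machinery of \cite[Section 9]{CD3}, specifically Proposition 9.3.9 there, whose key hypothesis -- étale hyperdescent for representables with transfers -- is now available through Proposition \ref{prop:etale_descent_transfers}.

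First I would record that the presheaf category $\psh^\tr(S,R)$ is itself Grothendieck abelian with the representables $R^{tr}_S(X)$, $X\in\sm_S$, as projective generators, since it is the category of additive $R$-linear presheaves on the essentially small $R$-linear category $\smcx{\Lambda,S}$. For (1), the adjoint $a_\et^{tr}$ is produced by the standard plus construction performed internally in $\psh^\tr(S,R)$; that this construction lands inside $\shtr(S,R)$ uses Proposition \ref{prop:Rtr_sheaf}. The commutativity of the displayed square then amounts to comparing two realizations of the étale sheafification at the level of underlying presheaves, and assertion (2) follows at once: $\shtr(S,R)$ is a reflexive subcategory of a Grothendieck abelian category cut out by the sheaf condition, hence is itself Grothendieck abelian, and the $R^{tr}_S(X)$, already being sheaves, form a generating family.

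The delicate step is the exactness of $a_\et^{tr}$. Exactness of the ordinary sheafification $a_\et \colon \psh(S,R) \to \sh_\et(S,R)$ is classical; transporting it through transfers requires that the plus construction introduce no new higher obstruction on representables, which is precisely the content of the hyperdescent statement in Proposition \ref{prop:etale_descent_transfers}. It is the genuine strengthening beyond the \v Cech case handled in \cite[Proposition 10.3.3]{CD3} that makes Proposition 9.3.9 of \cite{CD3} applicable here; this is the main obstacle in the whole argument.

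Finally, for (3): conservativity of $\gamma_*$ is immediate from (2), since a morphism of sheaves with transfers is zero as soon as it vanishes on each $R^{tr}_S(X)$, which happens exactly when its image under $\gamma_*$ vanishes on each $\gamma_* R^{tr}_S(X)$; preservation of small limits is formal, $\gamma_*$ being a right adjoint to the left Kan extension functor $\gamma^*$; and preservation of small colimits follows from the commutative square in (1), using that on presheaves the forgetful $\hat\gamma_*$ commutes with colimits pointwise, and that both sheafification functors $a_\et^{tr}$ and $a_\et$ commute with colimits as left adjoints.
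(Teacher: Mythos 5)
Your proposal matches the paper's argument: the authors likewise simply invoke \cite[Prop.~9.3.9]{CD3} once \'etale hyperdescent for $R^{tr}_S(-)$ is established in Proposition \ref{prop:etale_descent_transfers}, explicitly noting that the earlier \cite[Prop.~10.3.3]{CD3} covered only the \v Cech case. The extra details you supply for parts (1)--(3) are what that abstract proposition encapsulates; a minor imprecision is that the key reason the plus construction stays inside $\shtr(S,R)$ is again the descent statement (not Proposition \ref{prop:Rtr_sheaf} per se), and for (3) what your argument directly gives is faithfulness of $\gamma_*$, which combined with its exactness yields conservativity.
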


\begin{num}
We deduce immediately from that proposition that the functor
$\gamma_*$ admits a left adjoint $\gamma^*$.

As in \cite[Corollary 10.3.11]{CD3}, we get the following corollary
of the above proposition
 -- see Section \ref{sec:premotivic_recall}
  for explanation on premotivic categories which where defined in \cite{CD1}:  
\end{num}

\begin{cor}
The category $\shtr(-,R)$ has a canonical structure
 of an abelian premotivic category. Moreover, the adjunction:
\begin{equation} \label{eq:tale_forget_transfers}
\gamma^*:\sh_\et(-,R) \rightleftarrows \shtr(-,R):\gamma_*
\end{equation}
is an adjunction of abelian premotivic categories.
\end{cor}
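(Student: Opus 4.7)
The statement is a formal application of the abstract criterion \cite[Prop. 9.3.9]{CD3}, exactly as was done in the $\mathrm{Nis}$-setting in \cite[Cor. 10.3.11]{CD3}. The required input consists of: a Grothendieck abelian sheaf category over each base, equipped with a generating family of representables $R^{tr}_S(X)$ indexed by smooth $S$-schemes, stable under base change along arbitrary morphisms and under the $p_\sharp$-construction for $p$ smooth, carrying a symmetric monoidal structure extending $R^{tr}_S(X)\otimes R^{tr}_S(Y)=R^{tr}_S(X\times_S Y)$, and satisfying \'etale descent. The first three ingredients are encoded in Proposition \ref{prop:main_properties_shtr}, while the \'etale descent property is precisely Proposition \ref{prop:etale_descent_transfers}.

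Concretely, for a morphism $f:T\to S$ the pullback $f^*:\shtr(S,R)\to\shtr(T,R)$ is defined as the unique cocontinuous functor sending $R^{tr}_S(X)$ to $R^{tr}_T(X\times_S T)$; this is well-defined already at the presheaf level via the base-change functoriality of relative cycles from \cite[sec. 9]{CD3}, and it descends to sheaves because \'etale covers pull back to \'etale covers. Its right adjoint $f_*$ exists by the adjoint functor theorem in the presentable category $\shtr(T,R)$. For $p:T\to S$ smooth, every smooth $T$-scheme is a smooth $S$-scheme, so the rule $R^{tr}_T(Y)\mapsto R^{tr}_S(Y)$ extends to a cocontinuous functor $p_\sharp$, left adjoint to $p^*$ by a Yoneda-type identity on representables which propagates to all sheaves by cocontinuity. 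The remaining axioms of an abelian premotivic category---$\sm$-base change, the $\sm$-projection formula, and the symmetry/associativity constraints of the tensor product---reduce on the generating family to obvious identities for finite correspondences, already implicit in the fibered tensor structure of $\smcx{\Lambda,-}$.

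For the second assertion, $\gamma^*R_S(X)=R^{tr}_S(X)$ for any smooth $S$-scheme $X$ (by adjunction, since $\gamma_* R^{tr}_S(X)$ is the \'etale sheaf represented by the Yoneda of $X$ composed with the graph functor). Since $\gamma^*f^*$ and $f^*\gamma^*$ agree on this generating family, and similarly $\gamma^*p_\sharp$ and $p_\sharp\gamma^*$ for $p$ smooth, the comparison natural transformations are isomorphisms on generators and hence everywhere by cocontinuity. Monoidality of $\gamma^*$ is checked analogously, using that both functors send the generator $R_S(X\times_S Y)$ to $R^{tr}_S(X\times_S Y)\simeq R^{tr}_S(X)\otimes R^{tr}_S(Y)$.

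The only genuine point, already taken care of by Proposition \ref{prop:etale_descent_transfers}, is to ensure that all these presheaf-level operations pass to \'etale sheaves in a way compatible with the sheafification $a^{tr}_\et$ of Proposition \ref{prop:main_properties_shtr}(1). Once descent is known for the representables, the formalism of \cite[Prop. 9.3.9]{CD3} takes over and the premotivic structure on $\shtr(-,R)$, as well as the premotivic nature of the adjunction $(\gamma^*,\gamma_*)$, are obtained with no further work.
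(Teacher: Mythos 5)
Your proposal is correct and follows the same route as the paper: both apply the abstract machinery of \cite[Prop.~9.3.9]{CD3} to the data established in Propositions \ref{prop:etale_descent_transfers} and \ref{prop:main_properties_shtr}, exactly as was done for the Nisnevich topology in \cite[Cor.~10.3.11]{CD3}. The paper merely cites these references without spelling out the concrete construction of the operations $f^*$, $p_\sharp$, $\otimes$ on generators or the reduction of the exchange isomorphisms to the generating family; your writeup usefully fills in those details, and the one heuristic remark (the description of $\gamma_*R^{tr}_S(X)$) is immediately fixed by the adjunction calculation that does yield $\gamma^*R_S(X)\simeq R^{tr}_S(X)$, so nothing is at stake.
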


\begin{num}
Remember that the category of (Nisnevich) sheaves with transfers $\shNtr(S,R)$
is defined as the category of presheaves with transfers $F$ over $S$
 such that $F \circ \gamma$ is a sheaf; see \cite[10.4.1]{CD3}.
Then $\shNtr(-,R)$ is a fibred category which is an abelian premotivic category
 according to \emph{loc. cit}.

We will denote by $\tau$ the comparison functor
between the Nisnevich and the \'etale topology on the site $\sm_S$.
Thus, we denote by $\tau_*:\shtr(S,R) \rightarrow \shNtr(S,R)$
 the obvious fully faithful functor.
Then the functor $a_\et^\tr:\psh^\tr(S,R) \rightarrow \shtr(S,R)$
 obviously induces a left adjoint $\tau^*$ to the functor $\tau_*$.
 Moreover, this defines an adjunction of premotivic abelian categories:
\begin{equation} \label{eq:premotivic_Nisnevich_tale}
\tau^*:\shNtr(-,R) \rightleftarrows \shtr(-,R):\tau_*.
\end{equation}
\end{num}

\subsection{Derived categories}

\begin{num}
In \cite[Section 5]{CD3}, we established a theory to study derived categories
 such as $\Der(\shtr(S,R))$.
 This category has to satisfy the technical conditions
  of \cite[Definitions 5.1.3 and 5.1.9]{CD3}.
  Let us make explicit this definition in our particular
  case.
\end{num}
\begin{df}
Let $K$ be a complex of \'etale $R$-sheaves with transfers.
\begin{enumerate}
\item The complex $K$ is said to be \emph{local}
with respect to the \'etale topology if,
 for any smooth $S$-scheme $X$ and any integer $n \in \ZZ$,
 the canonical morphism
$$
\Hom_{\K(\shtr(S,R))}(R^\tr_S(X)[n],K) 
 \rightarrow \Hom_{\Der(\shtr(S,R))}(R^\tr_S(X)[n],K)
$$
is an isomorphism.
\item The complex $K$ is said to be \emph{\'etale-flasque} if
for any \'etale hypercover $Y_\bullet \rightarrow X$ in $\sm_S$
 and any integer $n \in \ZZ$, the canonical morphism
$$
\Hom_{\K(\shtr(S,R))}(R^\tr_S(X)[n],K)
 \rightarrow \Hom_{\K(\shtr(S,R))}(R^\tr_S(Y_\bullet)[n],K)
$$
is an isomorphism.
\end{enumerate}
\end{df}

\begin{prop}\label{prop:etaleflasquelocaltransfers}
 \label{prop:comput_Hom_D(shtr)}
A complex of \'etale sheaves with transfers is \'etale-flasque
if and only if it is local with respect to the \'etale topology. Moreover,
for any complex of \'etale $R$-sheaves with transfers $K$ over $S$,
 any smooth $S$-scheme $X$, and any integer $n \in \ZZ$, we have
 a natural identification:
$$
\Hom_{\Der(\shtr(X,R))}(R_S^{tr}(X),K[n])
 =H^n_\et(X,K).
$$
\end{prop}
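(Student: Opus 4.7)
The plan is to apply the general formalism developed in \cite[\S 5]{CD3} to equip the category $\Comp(\shtr(S,R))$ with a descent model structure for the étale topology — call it the \emph{étale-local projective model structure} — whose weak equivalences are the quasi-isomorphisms of the underlying complexes of étale sheaves, and whose fibrant objects are precisely the étale-flasque complexes. The two hypotheses required to apply this machinery are (i) that $\shtr(S,R)$ be a Grothendieck abelian category with a small set of generators, and (ii) that these generators satisfy étale hyperdescent. Both are at our disposal: (i) is Proposition \ref{prop:main_properties_shtr}, with generators the $R^{tr}_S(X)$ indexed by smooth $S$-schemes $X$, and (ii) is the essential content of Proposition \ref{prop:etale_descent_transfers}.

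Granted this model structure, the equivalence between \emph{local} and \emph{étale-flasque} is then formal. For any complex $K$, the derived Hom $\Hom_{\Der}(R^{tr}_S(X)[n], K)$ is computed by $\Hom_{\K}(R^{tr}_S(X)[n], K')$ for a fibrant replacement $K \to K'$. Hence $K$ is local iff the map $\Hom_{\K}(R^{tr}_S(X)[n], K) \to \Hom_{\K}(R^{tr}_S(X)[n], K')$ is an isomorphism for every smooth $X$ and every $n$, which, since the $R^{tr}_S(X)$ form a generating family of cofibrant objects, is equivalent to $K$ itself being fibrant, that is, étale-flasque.

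For the cohomological identification, I would first reduce to the case where $K$ is étale-flasque, so that the left-hand side is simply $\Hom_{\K(\shtr(S,R))}(R^{tr}_S(X), K[n])$. Using the adjunction $\gamma^* \dashv \gamma_*$ of \eqref{eq:tale_forget_transfers} together with the tautological identification $\gamma^* R_S(X) = R^{tr}_S(X)$ (by Yoneda), this equals $\Hom_{\K(\sh_\et(S,R))}(R_S(X), \gamma_* K[n])$. It then suffices to check that $\gamma_* K$ is still flasque for the analogous étale-local model structure on $\Comp(\sh_\et(S,R))$, so that the latter Hom computes the étale hypercohomology $H^n_\et(X, \gamma_*K)$, which is what the notation $H^n_\et(X,K)$ means. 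Since $\gamma_*$ is conservative and commutes with all limits and colimits (Proposition \ref{prop:main_properties_shtr}), and since étale-flasqueness is phrased purely in terms of values on étale hypercovers, this stability is straightforward.

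The main obstacle is the construction of the étale-local model structure on $\Comp(\shtr(S,R))$: working with unbounded complexes without any finiteness assumption on the étale cohomological dimension of the base forces us to use the genuine \emph{hyperdescent} statement of Proposition \ref{prop:etale_descent_transfers} rather than mere Čech descent, and to invoke the abstract machinery of \cite[\S 5]{CD3}. Once that model structure is in place, both assertions of the proposition fall out immediately.
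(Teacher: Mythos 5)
Your approach is genuinely different from the paper's, and it is worth spelling out exactly where the two routes diverge and where yours has a soft spot. The paper does \emph{not} construct a descent model structure on $\Comp(\shtr(S,R))$; it keeps the plain injective model structure there, puts the classical projective/descent model structure only on $\Comp(\sh_\et(S,R))$, and shows that $\gamma^*$ is left Quillen between them (using \cite[2.14]{CD1}, Proposition~\ref{prop:etale_descent_transfers}, and Proposition~\ref{prop:main_properties_shtr}(3)). The cohomological formula then drops out of the derived adjunction $\derL\gamma^*\dashv\gamma_*$ and $\derL\gamma^*R_S(X)=R^\tr_S(X)$, and the first assertion is \emph{deduced from the second} by transporting along $\gamma_*$ to the transfer-free case, where local $\Leftrightarrow$ flasque is already known. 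You instead propose to build the descent model structure directly on $\Comp(\shtr(S,R))$ and argue on that side.

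Two points need attention before your argument closes.

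First, be careful not to smuggle in the conclusion. Paragraph~\ref{num:Dm^eff_et&DM_et} of the paper says explicitly that Propositions~\ref{prop:etale_descent_transfers} \emph{and} \ref{prop:etaleflasquelocaltransfers} together are what constitute ``compatibility with the \'etale topology'' in the sense of \cite[Def.~5.1.9]{CD3}. So if the ``general formalism of \cite[\S 5]{CD3}'' you invoke is that compatibility machinery, the argument is circular. What you actually want is the more primitive notion of a descent structure from \cite{CD1}: the pair consisting of the generators $R^\tr_S(X)$ (from Proposition~\ref{prop:main_properties_shtr}) and the class of hypercover complexes $R^\tr_S(Y_\bullet)\to R^\tr_S(X)$, which are quasi-isomorphisms in $\shtr(S,R)$ by Proposition~\ref{prop:etale_descent_transfers} plus exactness and conservativity of $\gamma_*$. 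That is exactly hypotheses (i) and (ii), but the citation should go to \cite[Theorem~2.5]{CD1} (and its companions such as Corollary~5.5) rather than to the compatibility apparatus of \cite{CD3}.

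Second, and more substantively, the middle claim in your first paragraph --- that $K$ is local iff $K$ is fibrant, ``since the $R^{tr}_S(X)$ form a generating family of cofibrant objects'' --- is not a formality, and the phrasing suggests a general principle that does not exist. In an arbitrary model structure on complexes, having $\Hom_{\K}(P,K)\xrightarrow{\sim}\Hom_{\K}(P,K')$ for $K\to K'$ a fibrant replacement and $P$ running over a set of cofibrant generators does not imply that $K$ is fibrant (consider the injective model structure on complexes of abelian groups with generator $\ZZ$). What does hold here is the specific characterization of fibrant objects in the descent model structure as the $\mathcal{H}$-flasque complexes; with that theorem in hand, ``fibrant $\Rightarrow$ local'' is standard model-category yoga, while ``local $\Rightarrow$ flasque'' uses that the hypercover complexes $R^\tr_S(Y_\bullet)$ are bounded complexes of sums of generators, hence cofibrant, hence, by a filtration/five-lemma argument, inherit the local property from the generators, and then the hypercover quasi-isomorphism from Proposition~\ref{prop:etale_descent_transfers} is applied. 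So ``local $\Leftrightarrow$ flasque'' genuinely uses both the nontrivial characterization of fibrant objects and the hyperdescent input; it is not a consequence of the generators being cofibrant.

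Your treatment of the cohomological formula is fine: the identity $\Hom_{\K(\shtr)}(R^\tr_S(Z),K)=\Hom_{\K(\sh_\et)}(R_S(Z),\gamma_*K)$ is literal, \'etale-flasqueness of $K$ is tautologically equivalent to that of $\gamma_*K$ by the same identity applied to hypercovers, and then the transfer-free case of the statement gives $H^n_\et(X,\gamma_*K)=H^n_\et(X,K)$. With the two fixes above the argument goes through; the paper's route via $\gamma^*$ avoids having to say anything about a descent model structure on $\Comp(\shtr(S,R))$ at all, which is why it is a bit lighter.
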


\begin{proof}
Note that the analogous statement is known to be true for
complexes of \'etale sheaves without transfers
(see for instance \cite{CD1}). Therefore, the first assertion
of the proposition follows from the second one, which we will now prove.
Let $S$ be a base scheme.

We consider the projective model category structure on the
category $\Comp(\sh_\et(S,R))$, that is the analog of the
model structure defined in \ref{rappels:structureprojective}:
the weak equivalences are the quasi-isomorphisms, while the
fibrations are the morphisms of complexes whose restriction
to each of the small sites $X_\et$ is a fibration in the sense of \ref{rappels:structureprojective}
for any smooth $S$-scheme $X$.
On the other hand, as the category $\shtr(S,R)$ is an abelian Grothendieck
category, the category $\Comp(\shtr(S,R))$ is endowed with the
injective model category structure; see \cite[2.1]{CD1}.
By virtue of \cite[2.14]{CD1},
Proposition \ref{prop:etale_descent_transfers} and the last assertion
of Proposition \ref{prop:main_properties_shtr} imply that
the functor
$$\gamma^*:\Comp(\sh_\et(S,R))\to\Comp(\shtr(S,R))$$
is a left Quillen functor. As its right adjoint $\gamma_*$ preserves
weak equivalences, we thus get an adjunction
$$\derL\gamma^*:\Der(\sh_\et(S,R))\leftrightarrows\Der(\shtr(S,R)):\gamma_*\, .$$
Note that, for any smooth $S$-scheme $X$, we have a natural isomorphism
$$\derL\gamma^* R_S(X)\simeq R^\tr_S(X)$$
because $R_S(X)$ is cofibrant. Therefore, for any smooth $S$-scheme $X$
and for any complex of \'etale sheaves
with transfers $K$, we have the following identifications
(compare with \cite[chap. 5, 3.1.9]{FSV}):
\begin{align*}
\Hom_{\Der(\shtr(X,R))}(R_S^{tr}(X),K[n])
&\simeq \Hom_{\Der(\shtr(X,R))}(\derL \gamma^*(R_S(X)),K[n]) \\
%&=\Hom_{\Der(\sh_\et(X,R))}(R_S(X),\derR \gamma_*(K)[n]) \\
&\simeq \Hom_{\Der(\sh_\et(X,R))}(R_S(X),\gamma_*(K)[n])\\
&=H^n_\et(X,K)\, .
\end{align*}
This proves the second assertion of the proposition, and thus achieves
its proof.
\end{proof}

\begin{num} \label{num:Dm^eff_et&DM_et}
Propositions \ref{prop:etale_descent_transfers} and
\ref{prop:etaleflasquelocaltransfers} assert precisely that
the premotivic abelian category $\shtr(-,R)$ is \emph{compatible with the \'etale topology}
in the sense of \cite[Definition 5.1.9]{CD3}.

We can therefore apply the general machinery of \emph{loc. cit.}
 to the abelian premotivic category $\shtr(-,R)$.
 In particular, we get triangulated premotivic categories
 (again, see Section \ref{sec:premotivic_recall} for basic
 definitions on premotivic categories):
\begin{itemize}
\item \cite[Definition 5.1.17]{CD3}:
 The associated derived category: $\Der(\shtr(-,R))$ whose fiber
 over a scheme $S$ is $\Der(\shtr(S,R))$.
\item \cite[Definition 5.2.16]{CD3}:
 The associated effective $\AA^1$-derived category: 
$$
\DMe_\et(-,R):=\Der_{\AA^1}^{\mathit{eff}}(\shtr(-,R))
$$
 whose fiber over a scheme $S$ is the $\AA^1$-localization
 of the derived category $\Der(\shtr(S,R))$.
\item \cite[Definition 5.3.22]{CD3}:
 The associated (stable) $\AA^1$-derived category:
$$
\DM_\et(-,R)=\Der_{\AA^1}(\shtr(-,R))
$$
 whose fiber over a scheme $S$ is obtained from $\Der_{\AA^1}^{\mathit{eff}}(\shtr(S,R))$
 by $\otimes$-inverting the Tate object
  $R_S^{tr}(1):=\tilde R_S^{tr}(\PP^1_S,\infty)[-2]$ (in the sense
  of model categories).
\end{itemize}
By construction,
 these categories are related by the following
  morphisms of premotivic triangulated categories:
\begin{equation} \label{eq:derived->DMe->DM_etale}
\Der(\shtr(S,R))
 \xrightarrow{\pi_{\AA^1}} \DMe_\et(S,R)
 \xrightarrow{\Sigma^\infty} \DM_\et(S,R).
\end{equation}
Recall that the right adjoint to the functor $\pi_{\AA^1}$
 is fully faithful with essential image made by the $\AA^1$-local complexes,
 in the sense of the next definition.
\end{num}

\begin{rem}
In the terminology of Voevodsky, \cite{FSV},
 the category $\DMe_\et(X,R)$ should be called the category of \'etale
 motivic complexes over $X$.

With a wider view, $\DM_\et(X,R)$ could be called
 the category of \'etale motives. However, we think it deserves
 that name only when $R$ has positive characteristic $n$ invertible
 on $X$ (see Th. \ref{thm:comparison_torsion_etale-h_motives})
 or when $X$ is geometrically unibranch (see Cor. \ref{cor:compDMetDmh}).
\end{rem}

\begin{df} \label{df:A^1-local_complex}
Let $K$ be a complex of $R$-sheaves with transfers
 over a scheme $S$.
For any smooth $S$-scheme $X$ and any integer $n \in \ZZ$,
 we simply denote by $H^n_\et(X,K)$ the cohomology
 of $K$ seen as a complex of $R$-sheaves over $X_\et$.

We say that $K$ is \emph{$\AA^1$-local}
 if for any smooth $S$-scheme $X$ and any integer $n \in \ZZ$,
  the map induced by the canonical projection
$$
H^n_\et(X,K) \rightarrow H^n_\et(\AA^1_X,K)
$$
is an isomorphism.
\end{df}

\begin{num}
According to
 \cite[5.1.23, 5.2.19, 5.3.28]{CD3},
 the adjunction of abelian premotivic categories \eqref{eq:tale_forget_transfers}
 can be derived, and it induces, over a scheme $S$, a commutative diagram:
\begin{equation} \label{eq:gamma_*^*_derive_etale}
\begin{split}
\xymatrix{
\Der(\sh_\et(S,R))\ar_{\derL \gamma^*}[d]\ar[r]
 & \Der_{\AA^1}^{\mathit{eff}}(\sh_\et(S,R))\ar[d]\ar[r]
 & \Der_{\AA^1}(\sh_\et(S,R))\ar[d] \\
\Der(\shtr(S,R))\ar[r]
 & \DMe_\et(S,R)\ar[r]
 & \DM_\et(S,R) \\
}
\end{split}
\end{equation}
Note that all the vertical maps are obtained by deriving (on the left)
 the functor $\gamma^*$. We will simply denote these maps by $\derL \gamma^*$.
 By definition, they admit a right adjoint that we denote by $\derR \gamma_*$.
 In fact, we will often write $\derR\gamma_*=\gamma_*$ because
 of the following simple result.
\end{num}

\begin{prop}\label{prop:etoublitransA1eq}
The exact functor $\gamma_*:\Comp(\shtr(S,R))\to\Comp(\sh_\et(S,R))$
preserves $\AA^1$-equivalences.
\end{prop}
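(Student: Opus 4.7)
The plan is to exhibit the composition $F\colon\Der(\shtr(S,R))\xrightarrow{\gamma_*}\Der(\sh_\et(S,R))\to\DMe_\et(\sh_\et(S,R))$ as a sum-preserving triangulated functor which inverts the generating family of $\AA^1$-equivalences, namely the morphisms $R^{tr}_S(\AA^1_Y)\to R^{tr}_S(Y)$ indexed by smooth $S$-schemes $Y$. By the universal property of the $\AA^1$-localization $\Der(\shtr(S,R))\to\DMe_\et(\shtr(S,R))$, realized as a Bousfield localization preserving small sums, any such $F$ factors uniquely through $\DMe_\et(\shtr(S,R))$. This factorization is precisely the statement that $\gamma_*$ sends $\AA^1$-equivalences in $\Comp(\shtr(S,R))$ to $\AA^1$-equivalences in $\Comp(\sh_\et(S,R))$; indeed, a triangulated sum-preserving functor inverting the generating maps necessarily annihilates the thick subcategory (closed under small sums) they generate, hence inverts every morphism whose cone lies there, which is exactly the class of $\AA^1$-equivalences.

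Both $\gamma_*$ and the $\AA^1$-localization are triangulated and preserve small sums: $\gamma_*$ is exact and commutes with arbitrary colimits on the abelian level by Proposition \ref{prop:main_properties_shtr}(3), so descends to a sum-preserving triangulated functor on derived categories, while the $\AA^1$-localization preserves sums by construction. It therefore remains only to verify that for every smooth $S$-scheme $Y$, the map $\gamma_*R^{tr}_S(\AA^1_Y)\to\gamma_*R^{tr}_S(Y)$ is an $\AA^1$-equivalence in $\Comp(\sh_\et(S,R))$.

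For this key step I produce an explicit $\AA^1$-homotopy equivalence. The zero section $s\colon Y\to\AA^1_Y$ and the projection $p\colon\AA^1_Y\to Y$ yield morphisms of \'etale sheaves with transfers satisfying $R^{tr}_S(p)\circ R^{tr}_S(s)=\mathrm{id}$, a relation preserved by $\gamma_*$. For the other composition, the multiplication $\mu\colon\AA^1\times_S\AA^1_Y\to\AA^1_Y$, together with the lax monoidal structure of $\gamma_*$ (dual to the fact that $\gamma^*$ is strong monoidal) and the canonical map $R_S(\AA^1)\to\gamma_*R^{tr}_S(\AA^1)$ arising from the unit of the adjunction $\gamma^*\dashv\gamma_*$, assembles into a morphism $\gamma_*R^{tr}_S(\AA^1_Y)\otimes R_S(\AA^1)\to\gamma_*R^{tr}_S(\AA^1_Y)$ that restricts at the sections $0,1\in\AA^1$ to $\gamma_*R^{tr}_S(s\circ p)$ and to the identity, respectively, yielding the required $\AA^1$-homotopy. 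The principal subtlety lies exactly in this construction: because $\gamma_*$ is only lax monoidal, the $\AA^1$-homotopy witnessing $R^{tr}_S(\AA^1_Y)\simeq R^{tr}_S(Y)$ inside $\shtr(S,R)$ does not descend directly to $\sh_\et(S,R)$, but must be transported via the adjunction unit and the lax monoidal constraint as above.
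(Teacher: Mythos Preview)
Your argument is correct. The paper's proof is simply a citation to \cite[Proposition~5.2.24]{CD3}, which is a general criterion of exactly the shape you describe: a colimit-preserving functor between (complexes over) abelian premotivic categories preserves $\AA^1$-equivalences as soon as it sends the generating ones to $\AA^1$-equivalences. So your approach is not genuinely different from the paper's---you have unpacked the content of the cited proposition in this particular case and supplied the verification of its hypothesis.

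One remark on presentation: the detour through the universal property of the Bousfield localization is logically fine but slightly roundabout. The cleaner phrasing is the one you give in passing: since $\gamma_*$ is triangulated and sum-preserving, it suffices to show that it sends the generating cones $\mathrm{cone}\bigl(R^{\tr}_S(\AA^1_Y)\to R^{\tr}_S(Y)\bigr)$ into the localizing subcategory of $\AA^1$-acyclic objects in $\Der(\sh_\et(S,R))$; this is immediate once you have the elementary $\AA^1$-homotopy. The factorization through $\DMe_\et$ is then a consequence rather than the mechanism of proof. Your construction of that homotopy via the lax monoidal constraint of $\gamma_*$ and the unit $R_S(\AA^1)\to\gamma_*R^{\tr}_S(\AA^1)$ is correct and is the standard way to transport an $\AA^1$-homotopy across a right adjoint of a strong monoidal functor; there is no simpler route here, since the projection formula $\gamma_*\bigl(\gamma^*A\otimes B\bigr)\simeq A\otimes\gamma_*B$ is not available in general.
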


\begin{proof}
This follows from \cite[Proposition 5.2.24]{CD3}.
\end{proof}

\begin{num}
Applying again \cite[5.1.23, 5.2.19, 5.3.28]{CD3} to the adjunction
 \eqref{eq:premotivic_Nisnevich_tale},
 we get a commutative diagram of left derived functors:
\begin{equation} \label{eq:DM&DM_et}
\begin{split}
\xymatrix{
\Der(\sh^{\tr}_\nis(S,R))\ar_{\derL \tau^*}[d]\ar[r]
 & \DMe(S,R)\ar[d]\ar[r]
 & \DM(S,R)\ar[d] \\
\Der(\shtr(S,R))\ar[r]
 & \DMe_\et(S,R)\ar[r]
 & \DM_\et(S,R) \\
}
\end{split}
\end{equation}
where $\DMe(S,R)$ (resp. $\DM(S,R)$) stands for
 the effective category (resp. stable category) of Nisnevich motives
 as defined in \cite[Definition 11.1.1]{CD3}.
\end{num}

The following proposition
%will not be used in the text
% but we include it because of its self-interest
% -- it
 is a generalization of \cite[chap. 5, 4.1.12]{FSV}.
\begin{prop}
Assume $R$ is a $\QQ$-algebra. Then the adjunction \eqref{eq:premotivic_Nisnevich_tale}
 is an equivalence of categories. In particular,
 all the vertical maps of the diagram \eqref{eq:DM&DM_et}
 are equivalences of categories.
\end{prop}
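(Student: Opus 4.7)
The plan is to check directly that both the unit and counit of the adjunction $\tau^*\dashv \tau_*$ are isomorphisms. By construction, the functor $\tau_*:\shtr(S,R)\to \shNtr(S,R)$ is the fully faithful inclusion of étale sheaves with transfers into Nisnevich sheaves with transfers (an étale sheaf is in particular a Nisnevich sheaf, and the transfer structure is the same). Consequently, the counit $\tau^*\tau_*\to\mathrm{id}$ is automatically an isomorphism, and the whole content of the statement lies in proving the unit $F\to\tau_*\tau^*(F)$ is an isomorphism for every Nisnevich sheaf with transfers of $R$-modules $F$; equivalently, that $F$ already satisfies descent for the étale topology.

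Since $F$ already satisfies Nisnevich descent and since any étale cover in $\sm_S$ can, up to a Nisnevich refinement, be refined by a finite étale surjective morphism, it suffices to prove that for every finite étale surjective morphism $p:Y\to X$ between smooth $S$-schemes, the augmented Čech complex
$$\cdots\to F(Y\times_X Y\times_X Y)\to F(Y\times_X Y)\to F(Y)\to F(X)\to 0$$
is acyclic. Here is where the transfer structure together with the hypothesis $\QQ\subset R$ enters: the transpose $\tra{\gamma(p)}\in c(X,Y)$ of the graph of $p$ induces a map $p_*:F(Y)\to F(X)$ with $p_*\circ p^*=\deg(p)\cdot\mathrm{id}_{F(X)}$. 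As $\deg(p)$ is invertible in $R$, this splits $p^*$, and applying the same construction to the face maps of the Čech nerve produces an explicit contracting chain homotopy on the augmented Čech complex. This is exactly the argument used in \cite[chap. 5, 4.1.12]{FSV} over a field; because all the constructions are manipulations of representable finite correspondences, it carries over verbatim to an arbitrary noetherian base.

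The final assertion, that all vertical arrows in the diagram \eqref{eq:DM&DM_et} are equivalences, follows formally once $\tau^*\dashv\tau_*$ is known to be an equivalence of abelian premotivic categories: the passage to the derived category, the $\AA^1$-localization, and the $\PP^1$-stabilization are all universal constructions preserving equivalences of input data, so the comparison functors they induce are again equivalences.

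The main obstacle is the reduction to finite étale covers and the clean formulation of the contracting homotopy from the transpose correspondences; both are elementary once set up, but require a careful bookkeeping that is best packaged via \cite[chap. 5, 4.1.12]{FSV} — the merit of the present abstract framework being that one only has to observe that the classical field-theoretic argument never uses the base in an essential way.
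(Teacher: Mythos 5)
Your proof is correct, but it takes a genuinely different route from the paper's. You argue directly that every Nisnevich sheaf with transfers of $\QQ$-modules already satisfies étale descent, by reducing (up to Nisnevich refinement) to finite étale surjective covers and then using the transfer along $\tra{\gamma(p)}$ to split the augmented Čech complex via the degree formula $p_*\circ p^*=\deg(p)\cdot\mathrm{id}$. The paper instead proceeds in two steps: it first shows $\tau_*$ is \emph{exact} by invoking the identification $H^1_\et(X,F)\simeq H^1_\nis(X,F)$ for rational sheaves with transfers over henselian local schemes (\cite[10.5.9]{CD3}); since $\tau_*$ always commutes with filtered colimits, exactness gives commutation with all colimits, and one is reduced to checking the unit on the representables $R^\tr_S(X)$, where it is a tautology by Proposition~\ref{prop:Rtr_sheaf}. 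Both proofs ultimately rest on the same core fact — that transfers together with rational coefficients render finite étale descent automatic — but the paper delegates this to a cited input and uses it only through the $H^1$ comparison, whereas you unfold it explicitly. Your version is more self-contained (it does not lean on the [CD3] infrastructure); the paper's is shorter given what it has already developed. Two points of detail in your write-up are stated rather than proved: the refinement of an arbitrary étale cover by a Nisnevich cover followed by a finite étale cover (a spreading-out from the henselian local case), and the packaging of the transfer splittings into an actual contracting homotopy on the full Čech complex (most cleanly done by exhibiting the complex as a retract, via $p^*$ and $p_*$, of the split complex $p_*p^*C^\bullet$). Both are standard and are indeed covered by the reference you cite, so this is not a gap, only an elision. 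One last minor remark: in the paper the adjunction map is written $F\to F_\et=\tau^*\tau_*(F)$, which is a typo for $\tau_*\tau^*(F)$; your reading of which composite is the unit is the correct one.
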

\begin{proof}
We first prove that the right adjoint $\tau_*$
 of \eqref{eq:premotivic_Nisnevich_tale} is exact.
Using the analog of Proposition \ref{prop:comput_Hom_D(shtr)}
 for the Nisnevich topology, one reduces to show that
 for any \'etale $R$-sheaf with transfers $F$ over $S$
 and any local henselian scheme $X$ over $S$, the cohomology group
$H^1_\et(X,F)$ vanishes. But, as $F$ is rational, this last
group is isomorphic to $H^1_\nis(X,F)$ -- this is well known, see
 for example \cite[10.5.9]{CD3} --
 and this group is zero.

Note also $\tau_*$ obviously commutes with filtered
colimits. Being also exact, it thus commutes with arbitrary colimits.

Obviously, $\tau_*$ is fully faithful.
It only remains to prove that its left adjoint $\tau^*$ is fully faithful
as well. Thus, we have to prove that
  for any Nisnevich $R$-sheaf with transfers over $S$,
  the adjunction map
$F\rightarrow F_\et=\tau^*\tau_*(F)$
is an isomorphism.
 As $\tau^*\tau_*$ commutes with colimits, it is sufficient to prove
 this for $F=R_S^\tr(X)$ when $X$ is an arbitrary smooth $S$-scheme.
 This is precisely Proposition \ref{prop:Rtr_sheaf}.
\end{proof}

\subsection{A weak localization property}

\begin{lm} \label{lm:DMet_f_*=Rf_*_f_finite}
Let $f:Y \rightarrow X$ be a finite morphism.
Then the functor
$$
f_*:\Comp(\shtr(Y,R)) \rightarrow \Comp(\shtr(X,R))
$$
preserves colimits and $\AA^1$-equivalences.
\end{lm}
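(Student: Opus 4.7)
My plan is to reduce both assertions to the corresponding properties of pushforward on ordinary étale sheaves via an exchange formula with the forgetful functor $\gamma_{*}$, and then to exploit the explicit formula for $f_{*}$ provided by the adjunction $f^{*}\dashv f_{*}$.

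First, I will note that both composites $\gamma^{*}\circ f^{*}$ and $f^{*}\circ\gamma^{*}$ send the representable étale sheaf $R_{X}(W)$, for $W$ smooth over $X$, to the sheaf with transfers $R_{Y}^{tr}(W\times_{X}Y)$. Since all four functors involved commute with colimits and representables generate, this yields the equality $\gamma^{*}f^{*}=f^{*}\gamma^{*}$, whence by passage to right adjoints the exchange isomorphism
\[
\gamma_{*}\circ f_{*}\simeq f_{*}\circ\gamma_{*},
\]
where $f_{*}$ on the right-hand side denotes direct image on étale sheaves of $R$-modules (without transfers). In view of Proposition \ref{prop:main_properties_shtr}(3), asserting that $\gamma_{*}$ is conservative and preserves arbitrary small colimits, together with Proposition \ref{prop:etoublitransA1eq} on preservation of $\AA^{1}$-equivalences by $\gamma_{*}$, it suffices to establish both assertions for $f_{*}$ on complexes of étale sheaves without transfers.

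For the preservation of colimits, the adjunction formula $(f_{*}F)(W)=F(W\times_{X}Y)$, valid for any $W\in\sm_{X}$ and $F\in\sh_\et(Y,R)$, shows that $f_{*}$ restricts, on the small étale site of each smooth $X$-scheme $W$, to direct image along the finite morphism $W\times_{X}Y\to W$. Pushforward along a finite morphism between small étale topoi is exact (vanishing of higher direct images) and commutes with filtered colimits (finite morphisms being quasi-compact and quasi-separated), and these two properties together imply preservation of all small colimits. Since the restrictions to small étale sites form a conservative family of exact, colimit-preserving functors, the same properties hold for $f_{*}$ on the big smooth-étale site.

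For the preservation of $\AA^{1}$-equivalences, I will employ the Suslin singular complex $\suslin$ defined by $(\suslin K)(W)=\Tot\, K(W\times\Delta^{\bullet})$. The canonical identification $(W\times\Delta^{\bullet})\times_{X}Y\simeq(W\times_{X}Y)\times\Delta^{\bullet}$ produces a natural isomorphism $f_{*}\circ\suslin\simeq\suslin\circ f_{*}$. Combined with the exactness of $f_{*}$ (hence preservation of quasi-isomorphisms) and the fact that, in the framework of \cite[Section 5]{CD3}, a morphism of complexes is an $\AA^{1}$-equivalence precisely when its image under $\suslin$ becomes a quasi-isomorphism after an étale-local fibrant replacement, this yields the desired preservation. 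The main subtlety lies in this last step, where one must reconcile the intertwining $f_{*}\circ\suslin\simeq\suslin\circ f_{*}$ with the étale-local fibrant replacement needed to make $\suslin$ represent genuine $\AA^{1}$-localization; the colimit preservation, in contrast, is nearly automatic once the explicit formula is in hand.
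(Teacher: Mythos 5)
Your overall plan — reducing to the corresponding statement for étale sheaves without transfers via the exchange formula $\gamma_{*}f_{*}\simeq f_{*}\gamma_{*}$ — matches the paper's proof of the colimit assertion, which invokes Proposition \ref{prop:main_properties_shtr}(3) and then the classical fact that finite pushforward on small étale sites commutes with colimits (a finite scheme over a strictly local scheme is a sum of strictly local schemes). Your explicit derivation of the exchange isomorphism via representables is more elaborate than strictly needed but entirely correct; the paper treats it as definitional.

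For the preservation of $\AA^1$-equivalences, however, you diverge from the paper, and there is a genuine gap. To carry out the reduction to the no-transfers case, you need $\gamma_{*}$ to \emph{reflect} $\AA^1$-equivalences: if $\gamma_{*}(f_{*}\phi)=f_{*}(\gamma_{*}\phi)$ is an $\AA^1$-equivalence, you must conclude that $f_{*}\phi$ is one. What you cite — conservativity of $\gamma_{*}$ and Proposition \ref{prop:etoublitransA1eq}, which says $\gamma_{*}$ \emph{preserves} $\AA^1$-equivalences — does not immediately give reflection: in general, a conservative exact functor that preserves a localizing class of morphisms need not detect it. The missing ingredient is the observation that $\gamma_{*}$ also preserves and reflects $\AA^1$-\emph{local} objects (this is essentially built into Definition \ref{df:A^1-local_complex} together with the computation of derived Homs in Proposition \ref{prop:comput_Hom_D(shtr)}), from which one deduces that $\gamma_{*}$ commutes with $\AA^1$-localization and hence, being conservative on the derived category, reflects $\AA^1$-equivalences. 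Without that step, the reduction is not justified.

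The paper avoids the second reduction altogether: having established colimit preservation, it applies \cite[Prop.~5.2.24]{CD3} directly to $f_{*}$ on complexes of sheaves with transfers. That proposition is precisely the abstract criterion (exact, colimit-preserving right adjoint of a premotivic adjunction implies preservation of $\AA^1$-equivalences) that your Suslin-complex argument is trying to reconstruct by hand. Your intertwining $f_{*}\circ\suslin\simeq\suslin\circ f_{*}$ is fine, but, as you acknowledge yourself, the compatibility with étale-local fibrant replacement (i.e., the assertion that $f_{*}$ sends local fibrant objects to local fibrant objects, so that $f_{*}$ of a fibrant replacement is a fibrant replacement) is the real content, and leaving it unaddressed means the argument is incomplete. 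Citing \cite[Prop.~5.2.24]{CD3} — or, if you keep the $\gamma_{*}$-reduction, spelling out why $\gamma_{*}$ reflects $\AA^1$-equivalences and then proving the no-transfers case (for which the exactness of $f_{*}$ and its compatibility with $p_{\sharp}$ for the projection $p:\AA^1\to\mathrm{id}$ suffices) — would close both gaps.
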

\begin{proof}
We first check that $f_*$ preserves colimits.
By definition, $\gamma_* f_*=f_* \gamma_*$.
According to point (3) of Proposition \ref{prop:main_properties_shtr},
 we thus are reduced to prove the functor
 $f_*:\sh(Y,R) \rightarrow \sh(X,R)$ commutes with colimits.
 This is well known -- boiling down to the fact a finite scheme
 over a strictly local scheme is a sum of strictly local schemes.
The remaining assertion now follows from
\cite[Prop. 5.2.24]{CD3}.
\end{proof}

\begin{prop}\label{prop:computefinitedirectimages}
Let $f:Y\to X$ be a finite morphism.
Then the functor
$$f_*=\derR f_*:\DMe_\et(Y,R) \rightarrow \DMe_\et(X,R)$$
preserves small sums, and thus, has a right adjoint $f^!$.
\end{prop}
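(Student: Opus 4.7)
The strategy is to reduce everything to Lemma \ref{lm:DMet_f_*=Rf_*_f_finite}, with a small amount of extra work to pass from chain complexes to the $\AA^1$-localized derived category, and then to appeal to Brown representability for the right adjoint.

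First I would argue that $f_*\colon\shtr(Y,R)\to\shtr(X,R)$ is \emph{exact}. Being a right adjoint to $f^*$, it is automatically left exact; on the other hand, Lemma \ref{lm:DMet_f_*=Rf_*_f_finite} (applied degreewise) shows that $f_*$ commutes with all colimits, hence in particular with cokernels, so it is right exact. Consequently $f_*\colon\Comp(\shtr(Y,R))\to\Comp(\shtr(X,R))$ preserves quasi-isomorphisms. Combined with the fact that it preserves $\AA^1$-equivalences (also from the lemma), this shows that the underived functor $f_*$ already descends to a triangulated functor $\DMe_\et(Y,R)\to\DMe_\et(X,R)$ and is canonically identified with $\derR f_*$; no fibrant/injective replacement is needed.

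Next I would verify that this functor preserves small sums. Arbitrary direct sums of chain complexes serve as coproducts in $\DMe_\et(Y,R)$, because the class of quasi-isomorphisms is stable under arbitrary direct sums (as $\shtr(Y,R)$ is Grothendieck abelian) and the class of $\AA^1$-equivalences is likewise stable under direct sums, being the set of weak equivalences of the Bousfield $\AA^1$-localization of the injective model structure on $\Comp(\shtr(Y,R))$. Since $f_*$ commutes with arbitrary colimits of complexes, it commutes with the coproduct in $\DMe_\et(Y,R)$.

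Finally, the right adjoint $f^!$ is produced by the Brown representability theorem: the category $\DMe_\et(Y,R)$ is the homotopy category of a combinatorial stable model category (the $\AA^1$-localization of the injective model structure on the Grothendieck abelian category $\shtr(Y,R)$, which is generated by the set $\{R^{tr}_Y(U)\mid U\in\sm_Y\}$ by Proposition \ref{prop:main_properties_shtr}(2)), hence is well generated, and every triangulated functor out of a well-generated triangulated category that preserves small sums admits a right adjoint. No step seems delicate: the real work has been done in Lemma \ref{lm:DMet_f_*=Rf_*_f_finite}, and the rest is formal.
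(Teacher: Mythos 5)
Your proof is correct and takes essentially the same route as the paper: reduce everything to Lemma~\ref{lm:DMet_f_*=Rf_*_f_finite}, deduce that small sums are preserved, and invoke Brown representability. The only (minor) added content is that you make explicit the exactness of $f_*$ on $\shtr(-,R)$ so that the underived $f_*$ already computes $\derR f_*$; the paper leaves this implicit (citing closure of $\AA^1$-equivalences under filtered colimits and, in the footnote, the left Quillen property of $f_*$), but the substance is the same.
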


\begin{proof}
The fact that the functor $f_*$ preserves small sums
follows formally from the preceding lemma and from the fact
that $\AA^1$-equivalences are closed under filtered colimits;
see \cite[Proposition 4.6]{CD1}.
The existence of the right adjoint $f^!$ follows from the Brown representability
theorem\footnote{One can see the existence of a right
adjoint of $\derR f_*$ in a slightly more constructive way as follows.
Lemma \ref{lm:DMet_f_*=Rf_*_f_finite}
implies that the functor $f^!$ already exists at the level
of \'etale sheaves with transfers. One can see easily from the same lemma that $f_*$
is a left Quillen functor with respect to the $\AA^1$-localizations
of the injective model category structures, which ensures the
existence of $f^!$ at the level of the homotopy categories, namely as the
total right derived functor of its analog at the level of sheaves.}.
\end{proof}

\begin{num} \label{num:localization_DMet}
Let $i:Z \rightarrow S$ be a closed immersion
 and $j:U \rightarrow S$ the complementary open immersion.
%%  According to the preceding lemma, the functor $i_*$
%%  preserves $\AA^1$-equivalences, so that
%%  its right derived functor $\derR i_*:\DMe_\et(Z,R) \rightarrow \DMe_\et(S,R)$
%%  can be computed easily: for any object $K$ of $\DMe_\et(Z,R)$,
%%  we have a natural isomorphism
%%  $$i_*(K)\simeq \derR i_*(K)\, .$$

Let $K$ be a complex of \'etale sheaves with transfers over $S$.
Note that the composite of the obvious adjunction maps
\begin{equation} \label{eq:localization_sequence}
j_\sharp j^*(K) \rightarrow K \rightarrow i_*i^*(K)
\end{equation}
is always $0$. We will say that this sequence is 
 \emph{homotopy exact in $\DMe_\et(S,R)$}
 if for any cofibrant resolution $K' \rightarrow K$ of $K$ the canonical map
$$
\mathrm{Cone}\big(j_\sharp j^*(K') \rightarrow K' \big) \rightarrow i_*i^*(K')
$$
is an $\AA^1$-equivalence.

Note that given a smooth $S$-scheme $X$,
 $K=R_S^{tr}(X)$ is cofibrant by definition
 and the cone appearing above is quasi-isomorphic to
 the cokernel of the map
$$
R_S^{tr}(X-X_Z) \xrightarrow{j_*} R_S^{tr}(X),
$$
which we will denote by $R_S^{tr}(X/X-X_Z)$.
Here, we put $X_Z=X \times_S Z$.

We recall the following proposition from
 \cite[Cor. 2.3.17]{CD3}:
\end{num}
\begin{prop} \label{prop:equivalent_conditions_localization}
Consider the notations above.
 The following conditions are equivalent:
\begin{enumerate}
\item[(i)] The functor $i_*$ is fully faithful
 and the pair of functors $(i^*,j^*)$ is conservative
 for the premotivic category $\DMe_\et(-,R)$.
\item[(ii)] For any complex $K$,
 the sequence \eqref{eq:localization_sequence} is homotopy
 exact in $\DMe_\et(S,R)$.
\item[(iii)] The functor $i_*$ commutes with twists
 and for any smooth $S$-scheme $X$,
 the canonical map 
 $$R_S^{tr}(X/X-X_Z) \rightarrow i_*(R_Z^{tr}(X_Z))$$
 is an isomorphism in $\DMe_\et(S,R)$.
\end{enumerate}
Moreover, when these conditions are fulfilled,
 for any complex $K$, the exchange transformation:
\begin{equation} \label{eq:proj_formula_i_*}
%Ex(i_*^*,\otimes):
(i_*(R_Z)) \otimes K \rightarrow i_*i^*(K)
\end{equation}
is an isomorphism.
\end{prop}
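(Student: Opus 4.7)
The plan is to treat this as a general statement about the localization formalism in triangulated premotivic categories, exploiting the elementary identities $j^*j_\sharp \simeq \mathrm{id}$, $j^*i_* = 0$, and (by adjunction) $i^*j_\sharp = 0$. These already hold at the level of the abelian premotivic category $\shtr(-,R)$: for instance $j^*i_* = 0$ because $V\times_U Z = \emptyset$ for any $V\in\sm_U$. They then descend to $\DMe_\et(-,R)$ because $j^*$, $j_\sharp$ and $i^*$ are all left adjoints that preserve $\AA^1$-equivalences.

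For $(i)\Rightarrow(ii)$: given a cofibrant resolution $K$, I complete $j_\sharp j^*(K)\to K$ to a distinguished triangle with cone $C$, and compare $C$ with $i_*i^*(K)$ via the unit map $K\to i_*i^*(K)$, which factors canonically through $C$. Applying $j^*$ shows $j^*(C) = 0 = j^*i_*i^*(K)$; applying $i^*$, together with $i^*j_\sharp = 0$ and the counit iso $i^*i_*\simeq\mathrm{id}$ provided by full faithfulness of $i_*$, shows $i^*(C)\simeq i^*(K)\simeq i^*i_*i^*(K)$. Conservativity of $(i^*,j^*)$ then forces the comparison map $C\to i_*i^*(K)$ to be an isomorphism, yielding (ii). The converse $(ii)\Rightarrow(i)$ is a formal adjunction chase: specialising \eqref{eq:localization_sequence} at $K = i_*(M)$ gives $i_*(M)\simeq i_*i^*i_*(M)$ (since $j^*i_* = 0$); combined with the triangle identities for $(i^*,i_*)$ this yields full faithfulness of $i_*$, while conservativity of $(i^*,j^*)$ is immediate since $i^*(K) = 0 = j^*(K)$ makes the two outer terms of the triangle vanish.

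For $(ii)\Leftrightarrow(iii)$: clause $(iii)$ is precisely $(ii)$ specialised at the cofibrant generators $K = R^{tr}_S(X)$, once one identifies $j_\sharp j^* R^{tr}_S(X)\simeq R^{tr}_S(X - X_Z)$, so that the cone of \eqref{eq:localization_sequence} becomes $R^{tr}_S(X/X - X_Z)$. Conversely, the class of $K$ for which \eqref{eq:localization_sequence} is homotopy exact forms a triangulated subcategory of $\DMe_\et(S,R)$ closed under small sums; containing the generators, it is everything. The projection formula \eqref{eq:proj_formula_i_*} follows by the same reduction: on $K = R^{tr}_S(X)(n)$ both sides identify with $R^{tr}_S(X/X - X_Z)(n)$ via $(iii)$ and the compatibility of $i_*$ with twists, and one extends by triangulated and colimit stability.

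The main obstacle is not this formal bookkeeping but establishing the geometric content of $(iii)$ in concrete cases, i.e. showing that the canonical map $R_S^{tr}(X/X - X_Z)\to i_* R_Z^{tr}(X_Z)$ is an $\AA^1$-equivalence in $\DMe_\et(S,R)$. This is a non-trivial statement about the pushforward of finite correspondences along a closed immersion, and is the issue taken up under further hypotheses on $R$ (in particular, positive characteristic invertible on $S$) in the sections that follow.
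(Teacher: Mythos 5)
The paper does not prove this proposition itself; it is imported verbatim as \cite[Cor.~2.3.17]{CD3}, so your argument is a reconstruction rather than a match against a written proof. The identities $j^*j_\sharp\simeq\mathrm{id}$, $j^*i_*=0$, $i^*j_\sharp=0$, the implication $(i)\Rightarrow(ii)$, and the conservativity part of $(ii)\Rightarrow(i)$ are all correct as you state them.

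There is, however, a genuine gap in the full-faithfulness half of $(ii)\Rightarrow(i)$. Specialising the homotopy exact triangle at $K=i_*M$ and using $j^*i_*=0$ shows that the unit $\eta_{i_*M}\colon i_*M\to i_*i^*i_*M$ is invertible; the triangle identity $i_*(\epsilon_M)\circ\eta_{i_*M}=\mathrm{id}$ then only gives that $i_*(\epsilon_M)$ is invertible, \emph{not} that the counit $\epsilon_M\colon i^*i_*M\to M$ itself is, which is what full faithfulness requires. To reach $\epsilon_M$ one should rather apply $i^*$ to the homotopy exact triangle for an \emph{arbitrary} $K$: since $i^*j_\sharp=0$, the map $i^*(\eta_K)$ is invertible, and the other triangle identity $\epsilon_{i^*K}\circ i^*(\eta_K)=\mathrm{id}$ then shows the counit is invertible on the essential image of $i^*$. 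Even then one still needs that the objects $i^*(K)$ generate $\DMe_\et(Z,R)$ as a localizing subcategory, i.e.\ that every $R_Z^{tr}(W)$ with $W/Z$ smooth lies in the localizing subcategory generated by the $R_Z^{tr}(X_Z)$ for $X/S$ smooth. That is a genuinely geometric input (Zariski-locally a smooth $Z$-scheme is \'etale over $\AA^n_Z$, and a standard \'etale presentation there lifts, after shrinking, to a standard \'etale presentation over $\AA^n_S$), not part of a ``formal adjunction chase.'' The same missing density of $i^*$ also hides behind the clause of $(iii)$ that you never address, namely that $i_*$ commutes with twists: in the effective category the Tate twist is not invertible, so this is a real condition, and its derivation from $(ii)$ again proceeds by comparing the homotopy exact triangles for $K$ and $K(1)$ (using that $j_\sharp$, $j^*$, $i^*$ commute with twisting) and then invoking the essential surjectivity of $i^*$ to conclude $i_*(M(1))\simeq(i_*M)(1)$ for all $M$.
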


The equivalent conditions of the above proposition
are called the \emph{localization property with respect to $i$}
for the premotivic triangulated category $\DMe_\et(-,R)$; see \ref{num::recall_loc&pur_premotivic}.
% 
%Recall also from \cite[Def. \ref{df:localization}]{CD3}
% the following definition:
%\begin{df} \label{df:DMe&localization}
%We say that \emph{$\DMe_\et(-,R)$ satisfies localization
% with respect to $i$.}
% when the equivalent conditions of the above proposition are satisfied.
%\end{df}
 
\begin{prop}\label{prop:locsmoothretract}
Let $i:Z \rightarrow S$ be a closed immersion
 which admits a smooth retraction $p:S \rightarrow Z$.
 Then $\DMe_\et(-,R)$
  satisfies the localization property with respect to $i$.
\end{prop}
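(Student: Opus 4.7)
\medskip

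\noindent\textit{Proof plan.} The strategy is to verify condition (iii) of Proposition \ref{prop:equivalent_conditions_localization}: that $i_*$ commutes with twists, and that for every smooth $S$-scheme $X$ the natural morphism
\[
\alpha_X: R_S^{tr}(X/X-X_Z) \longrightarrow i_*\bigl(R_Z^{tr}(X_Z)\bigr)
\]
is an isomorphism in $\DMe_\et(S,R)$. The commutation with twists is the easier half: by adjunction and the fact that $i^*R_S^{tr}(Y)\simeq R_Z^{tr}(Y_Z)$ for any smooth $S$-scheme $Y$, one sees that $i^*$ carries each Tate twist on $S$ to the corresponding Tate twist on $Z$; the projection formula $i_*(F)\otimes G \to i_*(F\otimes i^*G)$ is then an isomorphism when $G=R_S(n)$, which is exactly the needed compatibility.

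The real content is the isomorphism $\alpha_X$. Applying $i^*$ (which is exact and preserves representables) gives $i^*\alpha_X=\mathrm{id}_{R_Z^{tr}(X_Z)}$, so the mapping cone $C_X$ of $\alpha_X$ satisfies $i^*C_X\simeq 0$; the problem is to show that $C_X$ is $\AA^1$-acyclic as a complex of \'etale sheaves with transfers over $S$. This is where the smooth retraction $p:S\to Z$ enters. The first step is to settle the ``linear'' case: for a smooth $Z$-scheme $W$, set $\tilde W=W\times_Z S$, a smooth $S$-scheme, into which $W$ embeds as a closed subscheme via the section $i$, with retraction $\tilde W \to W$ induced by $p$. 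Using this retraction together with the equality $(\tilde W)_Z=W$, one checks at the level of presheaves with transfers, via Lemma \ref{lm:rel_cycle_etale_ext} and the proof of Corollary \ref{cor:loc_cst_sheaf&transfers}, that $\alpha_{\tilde W}$ is already an isomorphism of \'etale sheaves with transfers. This establishes the proposition for all smooth $S$-schemes of the form $\tilde W=p^*W$.

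The second step reduces the general smooth $S$-scheme $X$ to the linear case. Here one uses an explicit $\AA^1$-deformation: the morphism $X\to Z$ (composition of $X\to S$ with $p$) exhibits $X$ as a smooth $Z$-scheme with the same pullback $X_Z$ as $p^*(X_Z)=X_Z\times_Z S$ over $Z$. One constructs a deformation space $D_X\to \AA^1_S$, smooth over $S$, whose fibre at $1$ is $X$ and whose fibre at $0$ is $X_Z\times_Z S$, in a way compatible with the inclusion of $X_Z$ in both — the analogue in the smooth-retraction setting of the deformation to the normal bundle. Taking the quotient $R_S^{tr}(D_X/D_X-\AA^1_{X_Z})$ and invoking homotopy invariance along $\AA^1$ produces an $\AA^1$-equivalence between $R_S^{tr}(X/X-X_Z)$ and $R_S^{tr}(\tilde{X_Z}/\tilde{X_Z}-X_Z)$, reducing the general case to the one treated in the first step.

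The main obstacle is the second step: making the deformation construction $D_X$ and its behavior under transfers precise, since we must stay inside the category of \'etale sheaves with transfers (so that every move be compatible with finite correspondences, not merely with ordinary morphisms). Once this is in place, combining the deformation with the linear case and with homotopy invariance in $\DMe_\et(S,R)$ yields that $\alpha_X$ is an isomorphism for every smooth $S$-scheme $X$, completing the verification of condition (iii) and hence the proposition.
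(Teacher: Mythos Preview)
Your approach is genuinely different from the paper's, and it has a real gap.

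\textbf{What the paper does.} The paper does not attack condition~(iii) of Proposition~\ref{prop:equivalent_conditions_localization} directly. Instead it bootstraps from Ayoub's result (Theorem~\ref{thm:locetalewithouttransfers}) that localization already holds in $\Der_{\AA^1}^{\eff}(\sh_\et(-,R))$, i.e.\ \emph{without} transfers. The smooth retraction $p$ is used to write, for any $M\in\DMe_\et(Z,R)$, a candidate cofiber sequence
\[
\derL j_\sharp j^* p^* M \longrightarrow p^* M \longrightarrow i_* M
\]
(using $pi=1_Z$). One then shows this really is a cofiber sequence by applying the conservative functor $\gamma_*$: after checking that $\gamma_*$ commutes with $\derL j_\sharp$, $j^*$, $p^*$, $i_*$, the sequence becomes the localization triangle in $\Der_{\AA^1}^{\eff}(\sh_\et(S,R))$, which holds by Ayoub. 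This characterization of $i_*$ via $p^*$ and $j_\sharp j^*$ then forces the exchange map $\derL\gamma^* i_* \to i_*\derL\gamma^*$ to be an isomorphism (Lemma~\ref{lm:gammastariupperstar}), and the proposition follows because $\gamma_*$ is conservative.

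\textbf{The gap in your Step 1.} You claim that for $\tilde W = W\times_Z S$ the map $\alpha_{\tilde W}$ is already an isomorphism of \'etale sheaves with transfers, invoking Lemma~\ref{lm:rel_cycle_etale_ext} and Corollary~\ref{cor:loc_cst_sheaf&transfers}. But those results concern \emph{\'etale} morphisms, not smooth ones; they compute $c_0(U/S)$ for $U\to X$ \'etale over a strictly local $X$, and say nothing about $R_S^{tr}(\tilde W/\tilde W - W)$ for a general smooth $\tilde W\to S$. Concretely, evaluating both sides at a smooth $S$-scheme $T$ gives on the left a quotient of $c_0((T\times_Z W)/T)$ and on the right $c_0((T_Z\times_Z W)/T_Z)$; there is no reason for these to agree before $\AA^1$-localization. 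So your ``linear case'' is not actually easier than the general one at the sheaf level --- it still needs the $\AA^1$-equivalence you are trying to establish. This is precisely why the paper routes through the category without transfers, where localization is already known.

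\textbf{Step 2.} Even granting Step~1, your deformation argument is only sketched, and you yourself flag the compatibility with transfers as ``the main obstacle''. Without Step~1 in hand there is nothing to deform to, so the proposal as written does not constitute a proof.
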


The proof of this proposition is the same than the analogous fact for the Nisnevich
 topology -- see \cite[Prop. 6.3.14]{CD3}.
 As this statement plays an important role in the
 sequel of these notes, we will recall the essential steps of the proof.
%%  We will need the following premotivic adjunctions:
%% $$
%% \xymatrix{
%% \Der_{\AA^1}^{\mathit{eff}}(\sh_\nis(-,R))\ar@<3pt>[d]\ar@<-2pt>[r]
%%  & \Der_{\AA^1}^{\mathit{eff}}(\sh_\et(-,R))\ar@<-3pt>[d]\ar_{\derR \tau'_*}@<-2pt>[l] \\
%% \DMe(-,R)\ar@<-2pt>[r]\ar^{\derR \gamma_*^\nis}@<3pt>[u]
%%  & \DMe_\et(-,R)\ar_{\derR \gamma_*}@<-3pt>[u]\ar_{\derR \tau_*}@<-2pt>[l]
%% }
%% $$
%% where the bottom horizontal adjunction
%%  was defined in \eqref{eq:gamma_*^*_derive_etale},
%%  the right vertical adjunction in \eqref{eq:gamma_*^*_derive_etale},
%%  the other ones being the obvious analogous ones.
%%  Note that this is a commutative diagram of premotivic adjunction
%%  in the sense that the left (resp. right) adjoint in the above
%%  commutes.
%%  
One of the main ingredients of the proof uses the following result, proved in \cite[4.5.44]{ayoub}:

\begin{thm}\label{thm:locetalewithouttransfers}
The premotivic category %$\Der_{\AA^1}^{eff}(\sh_\nis(-,R))$
% and
$\Der_{\AA^1}^{\mathit{eff}}(\sh_\et(-,R))$
satisfies localization (with respect to any closed immersion).
\end{thm}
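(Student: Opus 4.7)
The plan is to adapt to étale sheaves the proof of the Morel--Voevodsky homotopy purity / localization theorem, originally formulated for Nisnevich $\AA^1$-homotopy, as carried out by Ayoub in \cite{ayoub}.

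First, I would apply the analog of Proposition \ref{prop:equivalent_conditions_localization} (minus transfers) to reduce the localization property with respect to $i:Z\to S$ to the following single statement: for every smooth $S$-scheme $X$, the canonical map
$$R_S(X)/R_S(X\setminus X_Z)\longrightarrow i_*R_Z(X_Z)$$
is an $\AA^1$-equivalence in $\Comp(\sh_\et(S,R))$, where $X_Z=X\times_S Z$. The equivalence of conditions (i)--(iii) in that proposition is formal, so no extra input is needed here.

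Next, I would observe that this statement is local on $S$ for the étale topology: indeed $j_\sharp$, $j^*$, $i^*$ commute with étale pullback on the base (by premotivicity), $i_*$ commutes with étale pullback on $S$ because the small étale site of $Z$ is a strict subcategory of that of $S$ via $i$, and the $\AA^1$-local model structure is stable under étale pullback. This allows me to shrink $S$ to a strictly local base, and, by a standard Zariski-local reduction on $X$ (smooth schemes locally admit étale coordinates), to bring the problem close to a standard pair of a smooth $S$-scheme and a smooth closed subscheme.

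At this point the geometric heart of the argument is the homotopy purity theorem of Morel--Voevodsky: for a closed immersion $k:T\to Y$ of smooth $S$-schemes of codimension $c$, there is a canonical $\AA^1$-equivalence between the cofiber of $R_S(Y\setminus T)\to R_S(Y)$ and the Thom space $\mathrm{Th}(N_{T/Y})$ of the normal bundle. The proof goes by the deformation to the normal cone $D(T,Y)\to\AA^1_S$, whose generic fiber is $Y$ and whose zero fiber is $N_{T/Y}$; since $D(T,Y)$ is smooth over $S$ off $T\times\{0\}$, the construction is entirely inside the category of smooth schemes and its étale version works exactly as in the Nisnevich one. Combining this with the identification of Thom spaces with $i_*R_Z(X_Z)$ up to $\AA^1$-equivalence (via the cofiber sequence attached to the zero section of a vector bundle, using $\AA^1$-homotopy invariance) completes the reduction, yielding the statement for a single pair $(Y,T)$ that projects smoothly to $S$, and hence, by the local reductions above, in general.

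The main obstacle is that while the geometric engine (deformation to the normal cone) is formally the same in the étale case, one must verify that all the exchange isomorphisms and base change formulas that appear in the Morel--Voevodsky argument remain valid in the unbounded derived category of $R$-modules on the smooth-étale site. In particular, one needs the compatibility of $i_*$ with smooth base change and with the $\AA^1$-localization functor; these are non-trivial but can be handled either directly, using that $i_*$ is exact at the level of sheaves and preserves $\AA^1$-equivalences by a straightforward check, or by transferring them from the Nisnevich setting (which is known) via the étale sheafification functor, noting that étale sheafification is exact and sends the Nisnevich localization triangle to the desired étale one. This last, somewhat technical, bookkeeping is where Ayoub's proof in \cite[4.5.44]{ayoub} concentrates its effort.
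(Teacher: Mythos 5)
The paper does not supply a proof of this theorem at all: immediately before the statement it records that the result is ``proved in \cite[4.5.44]{ayoub}'', and the theorem then enters only as a cited ingredient in the proof of Proposition~\ref{prop:locsmoothretract}. Your outline (reduction to checking that $R_S(X)/R_S(X\setminus X_Z)\to i_*R_Z(X_Z)$ is an $\AA^1$-equivalence, locality on $S$ for the \'etale topology, Zariski reduction on $X$ via \'etale coordinates, deformation to the normal cone and homotopy purity) is a faithful summary of the Morel--Voevodsky/Ayoub route, and you correctly defer to Ayoub for the bookkeeping; so there is nothing in the paper for you to have diverged from. The alternative you float at the end — transferring the Nisnevich localization triangle through the \'etale sheafification $a_\et$ — is in fact viable and arguably slicker than you give it credit for: for a closed immersion, $i_*$ on the big smooth site has the explicit formula $(i_*F)(Y)=F(Y\times_S Z)$, from which the exchange $a_\et i_*\to i_*a_\et$ is seen to be an isomorphism by computing \'etale stalks (using that $\mathcal O^{\mathrm{sh}}_{Y_Z,\bar y}\simeq \mathcal O^{\mathrm{sh}}_{Y,\bar y}/I$), and $a_\et$ is exact and preserves $\AA^1$-equivalences, so the Nisnevich localization triangle sheafifies directly to the \'etale one.
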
 
 
\begin{lm}
For any open immersion $j:U \rightarrow S$,
 the exchange transformation
$$
\derL j_\sharp \,  \gamma_* \rightarrow \gamma_* \, \derL j_\sharp
$$
is an isomorphism in $\Der_{\AA^1}^{\mathit{eff}}(\sh_\et(S,R))$.
\end{lm}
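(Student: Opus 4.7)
I will prove the lemma by testing the exchange transformation on a family of generators, and then use the localization property for étale sheaves without transfers (Theorem \ref{thm:locetalewithouttransfers}) to reduce the claim to a vanishing statement about stalks.

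Since the triangulated category $\Der(\shtr(U,R))$ is compactly generated by the (cofibrant) objects $R_U^{tr}(X)$ for $X \in \sm_U$ (Proposition \ref{prop:main_properties_shtr}), and both functors $\derL j_\sharp \gamma_*$ and $\gamma_* \derL j_\sharp$ preserve small sums, it suffices to check the statement on these generators. For $K = R_U^{tr}(X)$ with $X \in \sm_U$, Yoneda identifies $\derL j_\sharp K$ with $j_\sharp R_U^{tr}(X) = R_S^{tr}(X)$ (viewing $X$ as a smooth $S$-scheme via $X \to U \to S$), so $\gamma_* \derL j_\sharp K = \gamma_* R_S^{tr}(X)$. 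Since $j_\sharp$ is exact on étale sheaves for an open immersion and $\gamma_* R_U^{tr}(X)$ is concentrated in degree zero, one also has $\derL j_\sharp \gamma_* K = j_\sharp \gamma_* R_U^{tr}(X)$. The task thus reduces to showing that the natural map
$$j_\sharp \gamma_* R_U^{tr}(X) \longrightarrow \gamma_* R_S^{tr}(X)$$
is an isomorphism in $\Der_{\AA^1}^{\mathit{eff}}(\sh_\et(S,R))$.

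Apply the localization triangle of Theorem \ref{thm:locetalewithouttransfers} to $G := \gamma_* R_S^{tr}(X)$, with $i:Z \to S$ the closed complement of $j$:
$$j_\sharp j^* G \longrightarrow G \longrightarrow i_* i^* G \xrightarrow{+1}.$$
Restriction to $\sm_U$ manifestly commutes with $\gamma_*$, and $j^*R_S^{tr}(X) = R_U^{tr}(X)$ (since $X$ is already a smooth $U$-scheme), so $j^* G = \gamma_* R_U^{tr}(X)$ and the first map is exactly the exchange map we want to invert. It therefore suffices to prove that $i^* G = 0$ in $\Der_{\AA^1}^{\mathit{eff}}(\sh_\et(Z,R))$.

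I will in fact establish the stronger vanishing $i^* \gamma_* R_S^{tr}(X) = 0$ already at the level of (unlocalized) étale sheaves, by computing stalks on the big étale site. For any smooth $Z$-scheme $Y'$, viewed as an $S$-scheme via $i$, and any geometric point $\bar y$ of $Y'$, consider the strict henselization $Y^{\prime h}$: its closed point maps to $Z$. Using that $X$ is a $U$-scheme, we get $Y^{\prime h} \times_S X = (Y^{\prime h} \times_S U) \times_U X$, so any cycle in $Y^{\prime h} \times_S X$ finite over $Y^{\prime h}$ has support whose closed image in $Y^{\prime h}$ is contained in the open subscheme $Y^{\prime h} \times_S U$ and therefore avoids the unique closed point; being a closed subset of a local scheme not containing its closed point, this image must be empty. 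Hence $\corr{S}{Y^{\prime h}}{X} \otimes R = 0$, which gives the vanishing of all stalks of $\gamma_* R_S^{tr}(X)$ at geometric points over $Z$, and consequently $i^* \gamma_* R_S^{tr}(X) = 0$ in $\sh_\et(Z,R)$.

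The main subtlety here is the point-set check that the stalk vanishing on the big étale site really implies $i^* F = 0$ in $\sh_\et(Z,R)$; this follows from exactness of $i^*$ as an inverse image of topoi, together with the fact that the generating family $\{R_Z(Y')\}_{Y'\in\sm_Z}$ detects zero through these stalks. I expect no serious difficulty beyond this verification; the geometric core of the proof is the henselian-local cycle argument in the last paragraph.
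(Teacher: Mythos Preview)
Your reduction to generators and the use of the localization triangle are fine, but the stalk computation in the last step is not set up correctly. You compute stalks of $i^*G$ on the big \'etale site of $Z$ at points $(Y',\bar y)$ with $Y'\in\sm_Z$, and you assert that this stalk is $c_S(Y'^{h},X)\otimes R$. That identification is unjustified: the stalk of $i^*G$ at $(Y',\bar y)$ is the filtered colimit $\varinjlim_{(W,\,Y'^{h}\to W)} G(W)$ over smooth $S$-schemes $W$ equipped with an $S$-map from $Y'^{h}$, and there is no reason to identify this colimit with the value of the presheaf $c_0(-\times_S X/-)$ at $Y'^{h}$ (continuity of relative cycles requires $Y'^{h}$ to be a projective limit of the $W$'s with affine \'etale transition maps, which fails here). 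In fact, since $Y'$ is a $Z$-scheme, one has $Y'^{h}\times_S U=\varnothing$ and hence $Y'^{h}\times_S X=\varnothing$, so the right-hand side of your formula vanishes for a trivial reason; your careful support argument is never actually used.

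The geometric idea is correct but belongs on the big site of $S$, not of $Z$. A clean repair: show directly that the counit $j_\sharp j^*G\to G$ is an isomorphism of \'etale sheaves on $\sm_S$ by checking stalks at points $(W,\bar w)$ with $W\in\sm_S$. If $\bar w$ lies over $U$ this is clear. If $\bar w$ lies over $Z$, then by continuity of relative cycles the stalk of $G$ is $c_0(W^{h}\times_S X/W^{h})\otimes R$, where $W^{h}$ is the strict henselization of $W$ at $\bar w$. Now $W^{h}$ is strictly local with closed point over $Z$, and your support argument applies verbatim: any cycle finite over $W^{h}$ with support in $W^{h}\times_S X\subset W^{h}\times_S U$ has image a closed subset of $W^{h}$ missing the closed point, hence empty. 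This gives the isomorphism at the sheaf level and makes the localization triangle unnecessary.

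By contrast, the paper's proof avoids all of this: it observes that for an open immersion $j$, the functor $j_\sharp$ is (sheafified) extension by zero in both $\sh_\et(-,R)$ and $\shtr(-,R)$, so $j_\sharp\gamma_*\simeq\gamma_*j_\sharp$ holds already at the level of abelian sheaves; exactness of $j_\sharp$ and preservation of $\AA^1$-equivalences (projection formula and Proposition~\ref{prop:etoublitransA1eq}) then transport this to $\Der_{\AA^1}^{\mathit{eff}}$.
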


\begin{proof}
We first prove that, for any \'etale sheaf with transfers $F$ over $U$, the map
$$j_\sharp\gamma_*(F)\to\gamma_* j_\sharp(F)$$
is an isomorphism of \'etale sheaves.
Indeed, both in the case of \'etale sheaves or of \'etale sheaves
with transfers, the sheaf $j_\sharp(F)$ is obtained as the sheaf
associated with the presheaf
$$V\mapsto\begin{cases}
F(V)&\text{if $V$ is supported over $U$ (i.e. if $V\times_S U\simeq V$),}\\
0&\text{otherwise.}
\end{cases}$$
In particular, the functors $j_\sharp$ are exact, and they preserve
$\AA^1$-equivalences because of the projection formula
$A\otimes j_\sharp(B)\simeq j_\sharp(j^*(A)\otimes B)$ (for any sheaves $A$ and $B$).
Using Proposition \ref{prop:etoublitransA1eq}, this implies the lemma.
\end{proof}

%To prove this lemma, we first remark that $\derR \tau'_*$ is conservative
% because its left adjoint $\derL \tau^{\prime*}$ sends generators
% of its source surjectively to generators of its target.
% Thus, to see that $Ex$ is an isomorphism, it is sufficient to
% prove that $\derR \tau'_*(Ex)$ is an isomorphism.
%According to the above theorem,
% $\derR \tau'_*$ commutes with $\derL j_\sharp$.
% Thus we are reduced in fact to the analogous statement
% for the Nisnevich topology. 
% This is \cite[Cor. \ref{jsharpexactAS}]{CD3}.

\begin{lm}\label{lm:gammastariupperstar}
let $i:Z\to S$ be a closed immersion which admits a smooth retraction.
Then the exchange transformation:
$$
\derL \gamma^* i_* \rightarrow i_* \derL \gamma^*
$$
is an isomorphism in $\DMe_\et(S,R)$.
\end{lm}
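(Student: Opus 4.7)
The plan is to reduce the claim to generators of $\DMtee(\sh_\et(Z,R))$ and then combine Theorem~\ref{thm:locetalewithouttransfers} with the conservativity of $\gamma_*\colon \DMe_\et(S,R) \to \DMtee(\sh_\et(S,R))$. Both $\derL\gamma^*\, i_*$ and $i_*\, \derL\gamma^*$ are triangulated functors $\DMtee(\sh_\et(Z,R)) \to \DMe_\et(S,R)$ preserving small sums: $\derL\gamma^*$ is a left adjoint, $i_*$ preserves sums in $\DMe_\et$ by Proposition~\ref{prop:computefinitedirectimages}, and $i_*$ preserves sums in $\DMtee(\sh_\et(-,R))$ because it is exact on abelian sheaves and preserves filtered colimits. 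Since the representables $\{R_Z(Y)\mid Y\in\sm_Z\}$ generate the source category, it suffices to treat $M=R_Z(Y)$.

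For such $M$, the identification $i^* p^*=\mathrm{id}$ (from $pi=\mathrm{id}_Z$) combined with Theorem~\ref{thm:locetalewithouttransfers} applied to $N=p^* R_Z(Y)=R_S(Y\times_Z S)$ yields a distinguished triangle
\[
R_S(Y\times_Z U)\longrightarrow R_S(Y\times_Z S)\longrightarrow i_*\, R_Z(Y)\longrightarrow
\]
in $\DMtee(\sh_\et(S,R))$. Applying the triangulated functor $\derL\gamma^*$, which sends the cofibrant representables $R_S(T)$ to their transfer counterparts $R_S^{tr}(T)$, gives a distinguished triangle $R_S^{tr}(Y\times_Z U)\to R_S^{tr}(Y\times_Z S)\to \derL\gamma^* i_* R_Z(Y)\to$ in $\DMe_\et(S,R)$. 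The exchange transformation provides a natural map $\derL\gamma^* i_* R_Z(Y)\to i_* R_Z^{tr}(Y)$, and one must show it is an isomorphism.

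The plan is to verify this after applying $\gamma_*$, which is conservative on $\DMe_\et(S,R)$: indeed, if $\gamma_* M'=0$, then by adjunction $\Hom_{\DMe_\et}(R_S^{tr}(X')[n], M') \simeq \Hom(R_S(X')[n], \gamma_* M')=0$ for every $X'\in\sm_S$ and $n\in\ZZ$, and since $\{R_S^{tr}(X')\}$ generates $\DMe_\et(S,R)$ one deduces $M'=0$; being triangulated, $\gamma_*$ also reflects isomorphisms. Using the preceding lemma ($\gamma_*\,\derL j_\sharp \simeq \derL j_\sharp\,\gamma_*$) together with the standard premotivic identities $\gamma_* p^*=p^*\gamma_*$, $\gamma_* j^*=j^*\gamma_*$, and $\gamma_* i_*=i_*\gamma_*$, the $\gamma_*$-image of the first triangle identifies with a second localization triangle (by Theorem~\ref{thm:locetalewithouttransfers} again, now applied to $p^*\gamma_* R_Z^{tr}(Y)$), whose third term is $i_*\gamma_* R_Z^{tr}(Y)=\gamma_* i_* R_Z^{tr}(Y)$. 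This shows $\gamma_*$ of the exchange is an isomorphism, whence so is the exchange. The hard part will be confirming that these two cone identifications are naturally compatible with the exchange transformation, which requires careful bookkeeping of the adjunction units along with the iso $i^* p^* = \mathrm{id}$.
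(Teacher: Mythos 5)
Your proof is correct and takes essentially the same approach as the paper's: both rely on the localization property for $\Der_{\AA^1}^{\eff}(\sh_\et(-,R))$ (Theorem~\ref{thm:locetalewithouttransfers}), the conservativity of $\gamma_*$, and the commutativity of $\gamma_*$ and $\derL\gamma^*$ with $\derL j_\sharp$, $j^*$, $p^*$. The paper avoids your reduction to generators $R_Z(Y)$ by first establishing, via conservativity of $\gamma_*$, the homotopy cofiber sequence $\derL j_\sharp j^* p^* M \to p^* M \to i_* M$ in $\DMe_\et(S,R)$ for an \emph{arbitrary} object $M$, which then characterizes $i_*M$ as a cone of a composite of operations with which $\derL\gamma^*$ is already known to commute; the final compatibility check you flag as ``the hard part'' is present (and likewise not spelled out) in the paper's version as well.
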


\begin{proof}
Let $p:S\to Z$ be a smooth morphism such that $pi=1_Z$, and
denote by $j:U\to S$ the complement of $i$ in $S$.
For any object $M$ in $\DMe_\et(Z,R)$, we have
a natural homotopy cofiber sequence of shape
\begin{equation}\label{eq:lm:gammastariupperstar1}
\derL j_\sharp j^* p^* M \to p^* M\to i_* M
\end{equation}
(note that $i_*M=i_* i^* p^*M$ because $pi=1_Z$).
Indeed, as the functor $\gamma_*$ is conservative, it is sufficient
to check this after applying $\gamma_*$.
As the functor $\gamma_*$ commutes with $\derL j_\sharp$ (by the previous lemma)
as well as with the functors $j^*$, $p^*$ and $i_*$
(because its left adjoint $\derL \gamma^*$ commutes with
the functors $\derL j_\sharp$, $\derL p_\sharp$ and $\derL i^*$),
it is sufficient to see that the analogue of \eqref{eq:lm:gammastariupperstar1}
is an homotopy cofiber sequence for any object $M$
of $\Der_{\AA^1}^{\mathit{eff}}(\sh_\et(Z,R))$. But this latter property
is a particular case of the localization property with respect to the
closed immersions, which is known to hold by Theorem \ref{thm:locetalewithouttransfers}.
The characterization of the functor $i_*$ by the homotopy cofiber sequence
\eqref{eq:lm:gammastariupperstar1} implies the lemma because the functor
$\derL\gamma^*$ is known to commute with the functors
$\derL j_\sharp$, $j^*$ and $p^*$.
\end{proof}

\begin{proof}[Proof of Proposition \ref{prop:locsmoothretract}]
Now, the proposition can easily be deduced from the above
 lemma and from Theorem \ref{thm:locetalewithouttransfers}, using the fact that
 the functor $\gamma_*$ is conservative;
 see the proof of \cite[Prop. 6.3.14]{CD3} for more details.
\end{proof}

\section{The embedding theorem}\label{sec3}

%% In this section, we fix an integer $n>1$
%%  and put $\Sigma=\Spec{\ZZ[1/n]}$.
%%  The ring $R$ will be an arbitrary flat extension of $\ZZ/n$
%%   and $\sch$ will be the category of $\ZZ[1/n]$-schemes.
%% 
%% In this section, we will show that the category
%%  of $R$-sheaves on $X_\et$
%%  can be embedded into the category of \'etale motives
%%  $\DM_\et(X,R)$.

\subsection{Locally constant sheaves and transfers}

\begin{num} \label{num:small-etale->smooth-etale}
Let $X$ be a noetherian scheme.

Recall that we denote by $\sh(X_\et,R)$ the category of $R$-sheaves over
the small \'etale site $X_\et$. 
% These categories for various $X$ are the fibers
% of an $\Et$-premotivic abelian category over $\sch$
%  (see Example \ref{ex:etale_and_premotivic}).
On the other hand, we also have the category $\sh_\et(X,R)$
 of $R$-sheaves over the smooth-\'etale site $\sm_{X,\et}$
  -- made by smooth $X$-schemes. The obvious inclusion of sites
  $\rho:X_\et \rightarrow \sm_{X,\et}$ gives an adjunction of categories:
\begin{equation} \label{eq:adj_etale_smooth1}
\rho_\sharp:\sh(X_\et,R) \rightleftarrows \sh_\et(X,R):\rho^*
\end{equation}
where $\rho^*(F)=F \circ \rho$.
The following lemma is well known (see \cite[VII, 4.0, 4.1]{SGA4}):
\end{num}

\begin{lm} \label{lm:small_to_sm_etale}
With the above notations,
 the following properties hold:
\begin{enumerate}
\item the functor $\rho^*$ commutes with arbitrary limits and colimits;
\item the functor $\rho_\sharp$ is exact and fully faithful;
\item the functor $\rho_\sharp$ is monoidal
 and commutes with operations $f^*$ for any morphism of schemes $f$,
  and with $f_\sharp$, when $f$ is \'etale.
\end{enumerate}
\end{lm}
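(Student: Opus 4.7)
The plan is to set up $\rho\colon X_\et\to\sm_{X,\et}$ as a morphism of sites enjoying a full battery of formal properties, from which the three assertions essentially fall out. The key structural observation is that $\rho$ is fully faithful, preserves fiber products (étale morphisms are stable under base change), and is both \emph{continuous} and \emph{cocontinuous} in the sense of SGA4: any étale covering of an étale $X$-scheme is an étale-smooth covering, and conversely, any smooth-étale covering of an étale $X$-scheme admits a refinement by étale maps (since the components of a smooth covering of an étale scheme are themselves étale over $X$ on each connected component where the dimension of the smooth map vanishes, and more generally one can refine by open immersions).

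For (1), because $\rho$ is cocontinuous, the restriction functor $\rho^*$ admits a \emph{right} adjoint $\rho_*$ (given by the right Kan extension, which automatically lands in sheaves thanks to cocontinuity); combined with the explicitly given left adjoint $\rho_\sharp$, this exhibits $\rho^*$ between its two adjoints, hence it commutes with all limits and colimits. For the exactness statement in (2), I would argue on stalks: any geometric point $y$ of a smooth $X$-scheme $Y$ determines a geometric point $x$ of $X$, and a standard cofinality argument on étale neighborhoods gives $(\rho_\sharp F)_y\simeq F_x$; since the family of geometric-point stalk functors is exact and conservative on $\sh_\et(X,R)$, exactness of $\rho_\sharp$ follows. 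Full faithfulness of $\rho_\sharp$ is equivalent to the unit $F\to \rho^*\rho_\sharp F$ being an isomorphism, which in turn amounts to checking that for any $U\in X_\et$, the sections $(\rho_\sharp F)(U)$ agree with $F(U)$; this is immediate because étale coverings of $U$ coincide whether computed in $X_\et$ or in $\sm_{X,\et}$, and because $\rho$ is fully faithful on representables so no additional sheafification is needed.

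For (3), monoidality of $\rho_\sharp$ is checked on the generating representable sheaves $R_X(U)$ with $U\in X_\et$: one has $\rho_\sharp R_X(U)=R_X(U)$ on the smooth-étale site, and the product of two étale $X$-schemes is again étale, so $\rho_\sharp(R_X(U)\otimes R_X(V))\simeq \rho_\sharp R_X(U\times_X V)$ matches $\rho_\sharp R_X(U)\otimes\rho_\sharp R_X(V)$; an exactness/colimit argument (using the first part of (2)) extends this to all sheaves. The compatibility $\rho_\sharp\circ f^*\simeq f^*\circ\rho_\sharp$ for a morphism $f\colon Y\to X$ follows from the base-change formalism for sites, after checking the identity on representables $R_X(U)$ for $U$ étale over $X$ and using conservativity of stalks. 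Finally, for $f\colon Y\to X$ étale, the functor $f_\sharp$ is simply composition of the structural map on both sites, so the compatibility $\rho_\sharp\circ f_\sharp\simeq f_\sharp\circ\rho_\sharp$ is tautological on representables and extends by colimit preservation. The only delicate step in this whole plan is the cocontinuity of $\rho$, which ensures the existence of the right adjoint $\rho_*$; once that is granted, every other assertion is either formal from adjunctions or reduces to a verification on representable sheaves of étale $X$-schemes.
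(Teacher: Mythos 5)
Your overall architecture is sound and is essentially the argument the SGA4 reference is pointing at: $\rho$ is both continuous and cocontinuous, so the restriction $\rho^*$ is sandwiched between its left adjoint $\rho_\sharp$ and a right adjoint $\rho_*$, giving~(1); exactness of $\rho_\sharp$ is checked on the conservative family of fiber functors, and full faithfulness follows because restriction to $X_\et$ commutes with sheafification (both consequences of cocontinuity); and~(3) is verified on the generating objects $R_X(U)$, $U\in X_\et$, then propagated by colimit preservation. All of that is fine, and the plan does produce a proof of the lemma.

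What would not survive scrutiny is your justification of cocontinuity. You write that ``any smooth-\'etale covering of an \'etale $X$-scheme admits a refinement by \'etale maps (since the components of a smooth covering of an \'etale scheme are themselves \'etale over $X$ on each connected component where the dimension of the smooth map vanishes, and more generally one can refine by open immersions).'' This misdescribes the topology on $\sm_{X,\et}$: the \emph{objects} are smooth $X$-schemes, but the \emph{coverings} are jointly surjective families of \emph{\'etale} morphisms, not of smooth ones. Consequently a covering of an object $U\in X_\et$ in the smooth-\'etale site is already a family of \'etale maps $\{V_i\to U\}$, and each $V_i$, being \'etale over $U$ which is \'etale over $X$, lies in $X_\et$; there is nothing to refine, and cocontinuity holds for a simpler and more elementary reason than the one you offer. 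The argument you sketch would in any case not work if the coverings really were smooth surjections: one cannot refine a smooth surjection of positive relative dimension by open immersions (one would have to produce \'etale-local sections instead). This is a repairable misstatement rather than a fatal gap --- once cocontinuity is established correctly, the rest of your proof goes through unchanged --- but it does reveal a confusion about the site that should be corrected before the argument is written up.
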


Note that point (3) can be rephrased
 by saying that \eqref{eq:adj_etale_smooth1}
 is an adjunction of \'etale-premotivic abelian categories
 (Definition \ref{df:recall_premotivic_adjunction}).
 
By definition, $\rho_\sharp$ sends the $R$-sheaf on $X_\et$ represented
 by an \'etale $X$-scheme $V$ to the $R$-sheaf represented by $V$
 on $\sm_X$. We will denote by $R_X(V)$ both the sheaves on
 the small \'etale and on the smooth-\'etale site of $X$ -- the 
 confusion here is harmless.

\begin{num}
Let us denote by $\Der(X_\et,R)$ the derived category of
 $\sh(X_\et,R)$. As both functors $\rho_\sharp, \rho^*$
 are exact, they can be derived trivially.
 In particular, we get a derived adjunction:
\begin{equation} \label{eq:adj_etale_smooth2}
\rho_\sharp:\Der(X_\et,R) \rightleftarrows \Der(\sh_\et(X,R)):\rho^*
\end{equation}
in which the functor $\rho_\sharp$ is still fully faithful.
\end{num}

\begin{prop} \label{lm:X_et&transfers}
The composite functor 
$$
\sh(X_\et,R) \xrightarrow{\rho_\sharp} \sh_\et(X,R)
 \xrightarrow{\gamma^*} \shtr(X,R)
$$
is exact and fully faithful. 
\end{prop}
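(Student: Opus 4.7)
The plan is to first establish the auxiliary fact that, for any $F\in \sh(X_\et,R)$, the unit map $\rho_\sharp F \to \gamma_*\gamma^*\rho_\sharp F$ in $\sh_\et(X,R)$ is an isomorphism; once this is known, both full faithfulness and exactness of the composite $\gamma^*\rho_\sharp$ follow by short formal arguments. Intuitively, this unit isomorphism says that $\gamma^*\rho_\sharp$ merely equips a small-\'etale sheaf with a canonical transfer structure, without changing the underlying \'etale sheaf.

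For representable sheaves $F=R_X(V)$ with $V$ \'etale over $X$, Corollary~\ref{cor:loc_cst_sheaf&transfers} gives $\gamma_*R_X^{tr}(V)\simeq R_X(V)$, and by adjunction $\gamma^*R_X(V)\simeq R_X^{tr}(V)$; the unit in question then reduces to the isomorphism of that corollary. For general $F$, choose a presentation by direct sums of representables (these form a generating family). All four relevant functors preserve colimits: $\rho_\sharp$ and $\gamma^*$ as left adjoints, $\rho^*$ by Lemma~\ref{lm:small_to_sm_etale}(1), and $\gamma_*$ by item (3) of Proposition~\ref{prop:main_properties_shtr}. Hence the unit is an isomorphism on all of $\sh(X_\et,R)$.

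Full faithfulness then follows from the chain of identifications, in which the last step uses Lemma~\ref{lm:small_to_sm_etale}(2):
\begin{align*}
\mathrm{Hom}_{\shtr(X,R)}(\gamma^*\rho_\sharp F,\gamma^*\rho_\sharp G)
&\simeq \mathrm{Hom}_{\sh_\et(X,R)}(\rho_\sharp F,\gamma_*\gamma^*\rho_\sharp G) \\
&\simeq \mathrm{Hom}_{\sh_\et(X,R)}(\rho_\sharp F,\rho_\sharp G) \\
&\simeq \mathrm{Hom}_{\sh(X_\et,R)}(F,G).
\end{align*}

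For exactness, $\gamma^*\rho_\sharp$ is a left adjoint and hence right exact, so it is enough to show it preserves monomorphisms. Given a mono $F'\hookrightarrow F$ in $\sh(X_\et,R)$, let $K$ denote the kernel of $\gamma^*\rho_\sharp F'\to \gamma^*\rho_\sharp F$ in $\shtr(X,R)$. Applying $\gamma_*$, which preserves kernels by Proposition~\ref{prop:main_properties_shtr}(3), yields $\gamma_*K\simeq \ker(\rho_\sharp F'\to \rho_\sharp F)=0$, using exactness of $\rho_\sharp$ (Lemma~\ref{lm:small_to_sm_etale}(2)). Conservativity of $\gamma_*$ then forces $K=0$. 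The only real obstacle is the unit isomorphism of the first paragraph, and that reduces directly to Corollary~\ref{cor:loc_cst_sheaf&transfers} once one observes that all the functors in sight commute with colimits.
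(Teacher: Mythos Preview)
Your proof is correct and follows essentially the same approach as the paper: both reduce everything to showing that the unit $\rho_\sharp F \to \gamma_*\gamma^*\rho_\sharp F$ is an isomorphism, check this on representables $R_X(V)$ via Corollary~\ref{cor:loc_cst_sheaf&transfers}, and extend to all $F$ by colimit-preservation of the functors involved. The paper's proof is terser, simply noting that full faithfulness of $\rho_\sharp$ together with exactness and conservativity of $\gamma_*$ make the unit isomorphism sufficient, whereas you spell out the full faithfulness and exactness deductions explicitly; the content is the same.
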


\begin{proof}
As $\rho_\sharp$ is fully faithful and $\gamma_*$ is
exact and conservative, it is sufficient to prove that, for any $R$-sheaf $F$
 on $X_\et$, the map induced by adjunction:
$$
\rho_\sharp(F) \rightarrow \gamma_*\gamma^*\rho_\sharp(F)
$$
is an isomorphism of \'etale sheaves. Moreover, all the involved functors
commute with colimits (applying in particular \ref{prop:main_properties_shtr}).
 Thus, it is sufficient to prove this in the case where $F=R_X(V)$
 is representable by an \'etale $X$-scheme $V$. Then, the result is just a reformulation
 of Corollary \ref{cor:loc_cst_sheaf&transfers}.
\end{proof}

\begin{cor}\label{cor:derivedX_et&transfers}
The functor
$$\derL\gamma^* \, \rho_\sharp=\gamma^*\rho_\sharp:
\Der(X_\et,R)\to\Der(\shtr(X,R))$$
is fully faithful.
\end{cor}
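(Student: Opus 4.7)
The goal is to promote the abelian full faithfulness of Proposition~\ref{lm:X_et&transfers} to the derived level. My plan has two parts: first, show that no actual left derivation is needed, i.e.\ $\derL\gamma^*\rho_\sharp = \gamma^*\rho_\sharp$ as functors $\Der(X_\et,R)\to\Der(\shtr(X,R))$; then, verify full faithfulness by reducing to a computation of $\Hom$'s between representable generators.

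For the first part, the functor $\rho_\sharp$ is exact by Lemma~\ref{lm:small_to_sm_etale}(2) and, by point~(3) of the same lemma, it sends the representable sheaf $R_X(V)$ on the small \'etale site to the representable sheaf $R_X(V)$ on the smooth-\'etale site. Since the $R_X(V)$ for $V\in X_\et$ form a family of cofibrant generators of the projective model structure of \ref{rappels:structureprojective}, we see that $\rho_\sharp$ preserves (generating) cofibrations and weak equivalences, hence $\derL\rho_\sharp = \rho_\sharp$. By Proposition~\ref{lm:X_et&transfers}, the composite $\gamma^*\rho_\sharp$ is already exact at the level of abelian sheaves, so it descends to derived categories without modification, and can be identified with $\derL(\gamma^*\rho_\sharp) = \derL\gamma^*\circ\derL\rho_\sharp = \derL\gamma^*\rho_\sharp$.

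For the second part, by Proposition~\ref{zarlfinitecohdimcompact} the family $\{R_X(V)[n]\mid V\in X_\et,\ n\in\ZZ\}$ generates $\Der(X_\et,R)$; the functor $\derL\gamma^*\rho_\sharp$ is a left adjoint (its right adjoint is $\rho^*\gamma_*$) and so commutes with small sums, which reduces full faithfulness to checking that, for all such $V,W$ and all $n\in\ZZ$, the natural map
\[
\Hom_{\Der(X_\et,R)}(R_X(V),R_X(W)[n]) \longrightarrow \Hom_{\Der(\shtr(X,R))}(R^{tr}_X(V),R^{tr}_X(W)[n])
\]
is bijective. The left-hand side is $H^n_\et(V,R_X(W))$ computed on the small \'etale site of $V$ (where $R_X(W)$ restricts to the representable sheaf $R_V(W\times_X V)$). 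By Proposition~\ref{prop:comput_Hom_D(shtr)}, the right-hand side equals $H^n_\et(V,R^{tr}_X(W))$ and, by Corollary~\ref{cor:loc_cst_sheaf&transfers}, $\gamma_*R^{tr}_X(W)=R_X(W)$ as \'etale sheaves; restricting further along $\rho^*$ to the small \'etale site of $V$ yields the same sheaf $R_V(W\times_X V)$. Thus both groups are the same \'etale cohomology, and one checks that the comparison map is the identity.

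The main obstacle is the first step: making sure that $\derL\gamma^*$ is not genuinely derived on objects coming from $\rho_\sharp$. Everything else is formal once one has the two key inputs produced in the excerpt, namely the exactness of $\gamma^*\rho_\sharp$ (Proposition~\ref{lm:X_et&transfers}) and the identification $\gamma_*R^{tr}_X(W)\simeq R_X(W)$ for \'etale $W/X$ (Corollary~\ref{cor:loc_cst_sheaf&transfers}); the latter is precisely what is needed to match the two $\Hom$ computations to the same \'etale cohomology group.
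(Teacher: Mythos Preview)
Your argument is correct, but it takes a longer route than the paper intends. The corollary is meant to follow directly from Proposition~\ref{lm:X_et&transfers} by a one-line observation: the composite $F=\gamma^*\rho_\sharp$ is exact and fully faithful at the abelian level, and its right adjoint $G=\rho^*\gamma_*$ is also exact (both $\gamma_*$ and $\rho^*$ commute with all limits and colimits by Proposition~\ref{prop:main_properties_shtr}(3) and Lemma~\ref{lm:small_to_sm_etale}(1)). Hence $\derL F=F$, $\derR G=G$, and the unit $\eta\colon 1\to GF$, being an isomorphism of functors on the abelian category, remains an isomorphism on complexes and thus on derived categories. This gives full faithfulness without ever touching generators or cohomology computations.

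Your approach via generators and the identification of both $\Hom$-groups with the same \'etale cohomology is valid, and has the merit of making the comparison explicit. Two small points: first, your citation of Proposition~\ref{zarlfinitecohdimcompact} is misplaced, since that proposition assumes finite cohomological dimension; the fact that the $R_X(V)$ generate $\Der(X_\et,R)$ is more elementary and needs no such hypothesis. Second, your reduction of full faithfulness to the generators implicitly uses that the right adjoint $\rho^*\gamma_*$ also preserves small sums (so that the class of $M$ with $\eta_M$ invertible is localizing); this is true for the same reasons as above, but you should say so. Once these are noted, your computation via Proposition~\ref{prop:comput_Hom_D(shtr)} and Corollary~\ref{cor:loc_cst_sheaf&transfers} goes through.
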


\begin{num}
We have a composite functor
\begin{equation} \label{eq:D^b_c->DMet_pleinement_fidele}
\rho_!:\Der(X_\et,R)\to\Der(\shtr(X,R))\to \DMe_\et(X,R)
\end{equation}
%As usual, we denote by $\Der^+(X_\et,R)$ the full subcategory
%of $\Der(X_\et,R)$ which consists of bounded below complexes.
\end{num}

\begin{prop} \label{prop:D^b_c->DMet_pleinement_fidele}
Assume that the ring $R$ is of positive characteristic $n$ and that
the residue characteristics of $X$ are prime to $n$.
Then the composed functor \eqref{eq:D^b_c->DMet_pleinement_fidele}
is fully faithful.
\end{prop}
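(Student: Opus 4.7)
The plan is to factor the question through two more manageable pieces, by writing $\rho_!$ as the composition
$$\Der(X_\et,R)\xrightarrow{\gamma^*\rho_\sharp}\Der(\shtr(X,R))\xrightarrow{\pi_{\AA^1}}\DMe_\et(X,R).$$
The first functor is already known to be fully faithful by Corollary \ref{cor:derivedX_et&transfers}. The second functor $\pi_{\AA^1}$ is a Bousfield localization whose right adjoint identifies the essential image with the $\AA^1$-local complexes; in particular, $\pi_{\AA^1}$ is fully faithful on the full subcategory of $\AA^1$-local objects. Thus the whole task reduces to showing that for every complex $G$ of \'etale sheaves of $R$-modules on the small site $X_\et$, the complex $\gamma^*\rho_\sharp(G)$ is $\AA^1$-local in $\Der(\shtr(X,R))$.

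Next I would translate $\AA^1$-locality into a cohomological condition using Proposition \ref{prop:comput_Hom_D(shtr)}. For any smooth $X$-scheme $Y$, that proposition gives
$$\Hom_{\Der(\shtr(X,R))}(R^{tr}_X(Y),\gamma^*\rho_\sharp(G)[n])\simeq H^n_\et(Y,\gamma_*\gamma^*\rho_\sharp(G)).$$
Since $\gamma^*\rho_\sharp$ is fully faithful (Proposition \ref{lm:X_et&transfers}), the unit gives $\gamma_*\gamma^*\rho_\sharp(G)\simeq\rho_\sharp(G)$, and the restriction of $\rho_\sharp(G)$ to the small \'etale site of $Y$ coincides with $f^*G$, where $f:Y\to X$ is the structural morphism (this is part of Lemma \ref{lm:small_to_sm_etale}, since $\rho_\sharp$ commutes with pullbacks). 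Hence $\AA^1$-locality of $\gamma^*\rho_\sharp(G)$ amounts to the assertion that, for every smooth $X$-scheme $Y$ with projection $p:\AA^1_Y\to Y$, the natural map
$$H^n_\et(Y,f^*G)\longrightarrow H^n_\et(\AA^1_Y,p^*f^*G)$$
is an isomorphism for every integer $n$.

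The last step is to invoke the \'etale homotopy invariance theorem \ref{A1invarianceetalesheaves} for the vector bundle $p:\AA^1_Y\to Y$. Our running hypothesis is that the characteristic $n$ of $R$ is invertible in the residue fields of $X$; since $Y$ is smooth over $X$, the same is true of the residue characteristics of $Y$, so the theorem applies and says that $p^*:\Der(Y_\et,R)\to\Der(\AA^1_{Y,\et},R)$ is fully faithful. Applying this to the object $f^*G$ yields exactly the desired cohomological isomorphism, concluding the proof.

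The only genuinely delicate point is the interplay in step~two between the cohomological computation on the large smooth-\'etale site (as handed to us by Proposition \ref{prop:comput_Hom_D(shtr)}) and the homotopy invariance statement on the small \'etale site; everything else is formal localization theory. Once the compatibility $\gamma_*\gamma^*\rho_\sharp\simeq\rho_\sharp$ and $\rho_\sharp(G)|_{Y_\et}\simeq f^*G$ is secured, the reduction to Theorem \ref{A1invarianceetalesheaves} is immediate, and that theorem is exactly where the hypothesis on the characteristic of $R$ enters.
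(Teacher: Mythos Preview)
Your proof is correct and follows essentially the same route as the paper's: factor $\rho_!$ as $\pi_{\AA^1}\circ\gamma^*\rho_\sharp$, use Corollary \ref{cor:derivedX_et&transfers} for the first factor, and reduce the second to showing the image lands in the $\AA^1$-local objects, which then becomes Theorem \ref{A1invarianceetalesheaves}. You are in fact more careful than the paper on one point: the paper checks the homotopy invariance only for \'etale $X$-schemes $V$, whereas $\AA^1$-locality in $\Der(\shtr(X,R))$ is a condition over all smooth $X$-schemes $Y$; your explicit identification $\gamma_*\gamma^*\rho_\sharp(G)\simeq\rho_\sharp(G)$ and $\rho_\sharp(G)|_{Y_\et}\simeq f^*G$ makes clear why the general case still reduces to Theorem \ref{A1invarianceetalesheaves}.
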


\begin{proof}
Recall that the functor $\pi_{\AA^1}:\Der(\shtr(X,R)\to\DMe_\et(X,R)$
has a fully faithful right adjoint whose essential image consists of
$\AA^1$-local objects (see Definition \ref{df:A^1-local_complex}). Therefore,
by virtue of Proposition \ref{prop:etaleflasquelocaltransfers}
and of Corollary \ref{cor:derivedX_et&transfers}, it is sufficient
to prove that, for any complex $K$ in $\Der(X_\et,R)$,
and for any \'etale $X$-scheme $V$, the map
$$H^i_\et(V,K)\to H^i_\et(\AA^1\times V,K)$$
is bijective for all $i$, which is Theorem \ref{A1invarianceetalesheaves}.
\end{proof}

\subsection{{\'E}tale motivic Tate twist}

%\begin{df}
%Let $K$ be an object of an additive category $\mathcal A$.
%We will say that $K$ is \emph{$n$-torsion}
% (resp. \emph{uniquely $n$-divisible})
% if $n.1_K$ is zero (resp. an isomorphism).
%\end{df}
%\begin{num}
%Let $S$ be a regular scheme.
%Recall that for any smooth $S$-schemes $X$ and $Y$,
% we get the equality:
%$$
%\renewcommand{\Rc}{\ZZ}
%\corr S X Y \otimes \ZZ/n
%\renewcommand{\Rc}{{\ZZ_{(n)}}}
% =\corr S X Y \otimes \ZZ/n
%$$
%according to \cite[rem. \ref{rem:regular&universal}]{CD3}.
%Thus we get an adjunction:
%$$
%\shtr(S,\ZZ) \leftrightarrows \shtr(S,\ZZ/n)
%$$
%which induces an adjunction of triangulated categories:
%\begin{equation}
%\rho_{n}^*:\DMe_\et(S,\ZZ) \leftrightarrows \DMe_\et(S,\ZZ/n):\rho_{n*}.
%\end{equation}
%The essential image of $\rho_{n*}$ is made of the $n$-torsion complexes
% as defined above. For any complex $K$ in $\DMe_\et(X,\ZZ)$
% we get a canonical distinguished triangle:
%$$
%K \xrightarrow{n.1_K} K \xrightarrow{\ \eta\ } \rho_{n*}\rho_n^*(K)
% \xrightarrow{\ +1\ }
%$$
%where $\eta$ is the unit of the adjunction
% -- in fact, $\rho_{n*}\rho_n^*(K)=K \otimes^\derL_\ZZ \ZZ/n$.
%\end{num}

Recall from \cite[IX, 3.2]{SGA4} that, for any scheme $X$
such that $n$ is invertible in $\cO_X$, the group scheme
$\mu_{n,X}$ of $n$th roots of unity fits in the Kummer short exact
 sequence in $\sh_\et(S,\ZZ)$:
\begin{equation} \label{eq:Kummer}
0 \rightarrow \mu_n
 \rightarrow \GGx X \to \GGx X
 \rightarrow 0.
\end{equation}
This induces a canonical isomorphism in the derived category:
\begin{equation} \label{eq:G_m&mu_n}
\GGx X[-1] \otimes^\derL \ZZ/n\ZZ \simeq \mu_{n,X}.
\end{equation}

\begin{num}
For any scheme $S$ and any ring $R$, the Tate motive $R_S(1)$ is defined in
$\DMe_\et(S,R)$ as the cokernel of the split monomorphism
  $R_S^{tr}(S)[-1] \rightarrow R_S^{tr}(\GGx S)[-1]$
 induced by the unit section.

As $\GGx S$ has a natural structure of \'etale sheaf with transfers,
there is a canonical map
$$\ZZ^\tr_S(\GGx S)\to \GGx S$$
which factor through $\ZZ_S(1)[1]$.
%% 
%% 
%% For any smooth $S$-scheme $X$, the graph morphism
%% $$
%% \GGx S(X) \rightarrow \corr S X {\GGx S}, f \mapsto \gamma(f)
%% $$
%% defines a morphism of sheaves with transfers,
This gives a natural morphism in $\DMe(S,R)$:
\begin{equation} \label{eq:G_m->Tate}
R_S(1)[1]\to \GGx S \otimes^\derL R \, .
\end{equation}
In the case where $R$ is of positive characteristic $n$,
with $n$ invertible in $\cO_S$, the isomorphism \eqref{eq:G_m&mu_n}
identifies the map \eqref{eq:G_m->Tate} shifted by $[-1]$
with a morphism of shape
\begin{equation} \label{eq:mu_n->Tate}
R_S(1)\to \mu_{n,S}\otimes_{\ZZ/n\ZZ} R \, ,
\end{equation}
where the locally constant \'etale sheaf $\mu_{n,S}$ is considered as a sheaf with transfers
(according to proposition \ref{prop:D^b_c->DMet_pleinement_fidele}).
Note also that $\mu_{n,S}\otimes^\derL_{\ZZ/n\ZZ} R\simeq \mu_{n,S}\otimes_{\ZZ/n\ZZ} R$
because $\mu_n$ is a locally free sheaf of $\ZZ/n\ZZ$-modules.
\end{num}

\begin{prop}\label{prop:comput_Tate_etale_normal}
The morphism \eqref{eq:G_m->Tate} is an isomorphism in $\DMe_\et(S,R)$
whenever $S$ is regular.
\end{prop}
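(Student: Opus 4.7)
The plan is to combine the $\AA^1$-locality of $\GG \otimes^{\derL} R$ over a regular base with a computation on the generators $R^{tr}_S(X)$, $X$ smooth over $S$.

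First, I would check that $\GG \otimes^{\derL} R$, viewed in $\Der(\shtr(S,R))$ (via Proposition~\ref{lm:X_et&transfers} and the natural transfer structure on $\GG$ given by the norm), is $\AA^1$-local. By Propositions~\ref{prop:etoublitransA1eq} and \ref{prop:comput_Hom_D(shtr)} combined, $\AA^1$-locality amounts to bijectivity of
\begin{equation*}
H^n_\et(X, \GG \otimes^{\derL} R) \longrightarrow H^n_\et(\AA^1_X, \GG \otimes^{\derL} R)
\end{equation*}
for every smooth $S$-scheme $X$ and every integer $n$. Since $S$ is regular, so is $X$, and for $\pi:\AA^1_X\to X$ the vanishings $R^i\pi_*\GG=0$ for $i\geq 1$ together with $\pi_*\GG=\GG$ (standard consequences of $\cO^*(\AA^1_X)=\cO^*(X)$, $\Pic(\AA^1_X)=\Pic(X)$, and the homotopy invariance of higher $\GG$-cohomology of regular schemes) deliver the desired bijectivity with $\ZZ$-coefficients. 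The passage to $R$-coefficients is then immediate from a universal coefficient spectral sequence.

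Second, with the locality of the target in hand, the universal property of the $\AA^1$-localization gives, for every smooth $X/S$ and integer $n$,
\begin{equation*}
\Hom_{\DMe_\et(S,R)}(R^{tr}_S(X), (\GG \otimes^{\derL} R)[n]) \simeq H^n_\et(X, \GG \otimes^{\derL} R).
\end{equation*}
Thus it remains to prove that the map
\begin{equation*}
\Hom_{\DMe_\et(S,R)}(R^{tr}_S(X), R_S(1)[n+1]) \longrightarrow H^n_\et(X, \GG \otimes^{\derL} R)
\end{equation*}
induced by \eqref{eq:G_m->Tate} is bijective. Via the split triangle $R^{tr}_S(S)\to R^{tr}_S(\GG)\to R_S(1)[1]$ (split by the structural morphism of $\GG$), the source identifies with the kernel of the augmentation $\Hom_{\DMe_\et}(R^{tr}_S(X), R^{tr}_S(\GG)[n])\to H^n_\et(X,R)$.

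The main obstacle is this last identification, which I would handle in two moves. First, reduce to the case $R=\ZZ$: since $\corr S Y X$ is $\Lambda$-free (cf.\ \eqref{eq:no_Rc_torsion}), we have $R^{tr}_S(\GG)=\ZZ^{tr}_S(\GG)\otimes_\ZZ R$ and $R_S(1)[1]=\ZZ_S(1)[1]\otimes_\ZZ R$, while $\GG$ itself is $\ZZ$-torsion free, so the assertion transforms correctly under the derived base change $\otimes^{\derL}_\ZZ R$. Second, adapt to the \'etale topology over regular bases the classical Suslin--Voevodsky computation of the motive of $\GG$: the norm-induced map $\widetilde{\ZZ}^{tr}_S(\GG)=\ZZ_S(1)[1]\to\GG$ is an \'etale-local surjection, and its kernel becomes $\AA^1$-acyclic when the base is regular, the transition from Voevodsky's original field case to the general regular base resting precisely on the homotopy invariance of $\GG$-cohomology established in the first step. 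Combining these steps yields the desired iso in $\DMe_\et(S,R)$.
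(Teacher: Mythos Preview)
Your reduction to $R=\ZZ$ matches the paper's proof exactly: the paper also reduces to integral coefficients and then concludes by applying $(-)\otimes^{\derL} R$. The difference is that for $R=\ZZ$ the paper simply cites \cite[Proposition~11.2.11]{CD3}, whereas you try to reprove that result from scratch.

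The gap is in your step 4. You assert that the kernel of $\widetilde{\ZZ}^{tr}_S(\GG)\to\GG$ is $\AA^1$-acyclic over a regular base, saying this ``rests on'' the homotopy invariance of $\GG$-cohomology from step~1. But step~1 only shows that the \emph{target} $\GG$ is $\AA^1$-local; it says nothing about whether the map itself is an $\AA^1$-equivalence. Unwinding definitions, ``the kernel is $\AA^1$-acyclic'' is literally equivalent to ``$\ZZ_S(1)[1]\to\GG$ is an isomorphism in $\DMe_\et$'', which is the statement you are trying to prove. To break this circle you would have to compute $\Hom_{\DMe_\et}(\ZZ^{tr}_S(X),\widetilde{\ZZ}^{tr}_S(\GG)[n])$ independently, and nothing in your outline does this. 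Over a perfect field the Suslin--Voevodsky argument succeeds via the structure theory of homotopy invariant presheaves with transfers (strict $\AA^1$-invariance of cohomology sheaves, Gersten resolutions), and that machinery does not transfer to an arbitrary regular base just by quoting homotopy invariance of $H^n_\et(-,\GG)$. A secondary point: you call the latter ``standard'' for all $n$, but beyond $n\leq 1$ (units, Picard) this is a nontrivial input for general regular noetherian schemes and itself deserves a reference.
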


\begin{proof}
The case where $R=\ZZ$ follows immediately from \cite[Proposition~11.2.11]{CD3}.
We conclude in general by applying the derived functor $(-)\otimes^\derL R$.
\end{proof}

\begin{prop} \label{prop:comput_Tate_etale_torsion}
If the ring $R$ is of positive characteristic $n$,
with $n$ invertible in $\cO_S$, then the morphism \eqref{eq:mu_n->Tate} is an isomorphism
 in $\DMe_\et(S,R)$.
\end{prop}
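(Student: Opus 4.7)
The plan is to reduce to the case where the base scheme is regular, where Proposition \ref{prop:comput_Tate_etale_normal} applies directly, and then propagate to the general case by base change from $\Spec(\ZZ[1/n])$.

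First, I would unravel the construction of the map \eqref{eq:mu_n->Tate}: it is obtained from \eqref{eq:G_m->Tate} by shifting by $[-1]$ and applying the identification $\GGx S[-1] \otimes^\derL R \simeq \mu_{n,S} \otimes_{\ZZ/n\ZZ} R$. This identification itself follows from the Kummer isomorphism \eqref{eq:G_m&mu_n}, from the fact that $R$ is a $\ZZ/n\ZZ$-algebra (so that $\GGx S[-1] \otimes^\derL_\ZZ R \simeq (\GGx S[-1] \otimes^\derL_\ZZ \ZZ/n\ZZ) \otimes^\derL_{\ZZ/n\ZZ} R$), and from the flatness of $\mu_{n,S}$ over $\ZZ/n\ZZ$ (it is locally free of rank one), which replaces the derived tensor product by the ordinary one. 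Consequently \eqref{eq:mu_n->Tate} is an isomorphism in $\DMe_\et(S,R)$ precisely when \eqref{eq:G_m->Tate} is.

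Next, applying Proposition \ref{prop:comput_Tate_etale_normal} to a regular scheme (in particular to $T := \Spec(\ZZ[1/n])$, which is a Dedekind scheme, hence regular) immediately gives the result whenever $S$ itself is regular. To cover the general noetherian $S$ on which $n$ is invertible, I would exploit the structural morphism $f \colon S \to T$. All the objects involved commute with $f^*$: the Tate motive $R_S(1)$ is $f^* R_T(1)$ by the premotivic structure of $\DMe_\et(-,R)$ (it is built from $R^\tr_{-}(\GGx{-})$ and these are preserved by pullback), while $\mu_{n,S} = f^* \mu_{n,T}$ because $\mu_n$ is a group scheme pulled back along $f$, and $\rho_\sharp$ together with $\gamma^*$ commute with $f^*$ (Lemma \ref{lm:small_to_sm_etale}(3) and the premotivic adjunction \eqref{eq:tale_forget_transfers}). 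By naturality in the base, \eqref{eq:mu_n->Tate} over $S$ is $f^*$ applied to \eqref{eq:mu_n->Tate} over $T$; since $f^*$ preserves isomorphisms, we conclude.

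The only point requiring genuine care is this base-change compatibility for the map \eqref{eq:mu_n->Tate} itself, and more specifically for the Kummer identification \eqref{eq:G_m&mu_n} used in its definition. I expect this to be the main obstacle, but it is a formal check once one uses that $\derL \gamma^*$ and $\rho_\sharp$ are morphisms of premotivic categories and that the Kummer short exact sequence is compatible with pullbacks of schemes (both $\mu_n$ and $\GG$ are $f$-pulled-back group schemes, and the multiplication-by-$n$ map is natural). Once this is in place, the proposition follows immediately from Proposition \ref{prop:comput_Tate_etale_normal}.
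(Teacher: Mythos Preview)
Your proposal is correct and follows essentially the same approach as the paper: reduce to the regular case $T=\Spec(\ZZ[1/n])$ via Proposition~\ref{prop:comput_Tate_etale_normal}, then pull back along the structural morphism $f:S\to T$, using that both $R_S(1)$ and $\mu_{n,S}\otimes_{\ZZ/n\ZZ}R$ are compatible with $\derL f^*$. The paper's proof is terser (it simply notes that $\mu_n$ is locally constant to justify the base change for the target), while you spell out the compatibility of the Kummer identification and the premotivic structure in more detail, but the argument is the same.
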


\begin{proof}
By virtue of the preceding proposition, this is true for $S$ regular, and thus in the case
where $S=\mathrm{Spec}\, \ZZ[1/n]$.
Now, consider a morphism of schemes $f:X\to S$, with $S$ regular
(e.g. $S=\mathrm{Spec}\, \ZZ[1/n]$).
The natural map $\derL f^*(R_S(1))\to R_X(1)$ is obviously an isomorphism, and,
as the \'etale sheaf $\mu_n$ is locally constant, the canonical map
$\derL f^*(\mu_{n,S}\otimes_{\ZZ/n\ZZ} R)\to \mu_{n,X}\otimes_{\ZZ/n\ZZ} R$
is invertible as well, from which we deduce the general case.
\end{proof}

\begin{cor}\label{cor1:comput_Tate_etale_torsion}
For any scheme $X$, if $n$ is invertible in $\cO_X$,
 we have a canonical identification:
$$
\Hom_{\DMe_\et(X,\ZZ/n\ZZ)}((\ZZ/n\ZZ)_X,(\ZZ/n\ZZ)_X(1)[i])=
H^{i-1}_\et(X,\mu_n) \, .
$$
\end{cor}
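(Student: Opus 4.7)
The plan is to combine the identification of the Tate twist in the torsion setting (Proposition~\ref{prop:comput_Tate_etale_torsion}) with the fully faithful embedding $\rho_!\colon \Der(X_\et, \ZZ/n\ZZ) \to \DMe_\et(X, \ZZ/n\ZZ)$ of the derived category of small \'etale sheaves (Proposition~\ref{prop:D^b_c->DMet_pleinement_fidele}). The corollary is then a purely formal consequence, reducing a Hom group in $\DMe_\et$ to one in $\Der(X_\et,\ZZ/n\ZZ)$, which by definition computes \'etale cohomology.

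First, specialize Proposition~\ref{prop:comput_Tate_etale_torsion} to $R=\ZZ/n\ZZ$; the hypothesis that $n$ be invertible in $\cO_X$ is exactly the assumption of the corollary. The morphism \eqref{eq:mu_n->Tate} then provides an isomorphism $(\ZZ/n\ZZ)_X(1)\simeq \mu_{n,X}$ in $\DMe_\et(X,\ZZ/n\ZZ)$, where the right hand side is to be understood as the image of the locally constant \'etale sheaf $\mu_{n,X}$ under the composite $\rho_!=\pi_{\AA^1}\circ\gamma^*\circ\rho_\sharp$. Observe next that the unit object $(\ZZ/n\ZZ)_X$ is itself $\rho_!(\ZZ/n\ZZ)$: indeed, $\rho_\sharp$ sends the constant sheaf $\ZZ/n\ZZ$ on $X_\et$ to the representable $R_X(X)$, on which $\gamma^*$ produces the monoidal unit $R_X^\tr(X)$.

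Using these two identifications, the Hom group of the corollary rewrites as
$$
\Hom_{\DMe_\et(X,\ZZ/n\ZZ)}\!\bigl(\rho_!(\ZZ/n\ZZ),\rho_!(\mu_{n,X})[i]\bigr).
$$
Proposition~\ref{prop:D^b_c->DMet_pleinement_fidele} applies (again because $n\in\cO_X^\times$), so $\rho_!$ is fully faithful and this group coincides with
$$
\Hom_{\Der(X_\et,\ZZ/n\ZZ)}(\ZZ/n\ZZ,\mu_{n,X}[i]),
$$
which is by construction the \'etale hypercohomology $H^\bullet_\et(X,\mu_n)$ in the degree prescribed by the identification of the first step (the degree shift, if any, is read off from the Kummer isomorphism \eqref{eq:G_m&mu_n} used to construct \eqref{eq:mu_n->Tate}).

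The substantive ingredient here is really Proposition~\ref{prop:comput_Tate_etale_torsion}, which ultimately depends on the Kummer short exact sequence \eqref{eq:Kummer} together with the computation of the Tate motive over regular bases (Proposition~\ref{prop:comput_Tate_etale_normal}); the remaining bookkeeping---rewriting both objects as images of $\rho_!$ and invoking full faithfulness---is entirely formal, so no serious obstacle arises at this stage.
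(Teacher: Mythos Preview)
Your proof is correct and follows exactly the same route as the paper, which simply says the corollary is ``an immediate consequence of Propositions~\ref{prop:D^b_c->DMet_pleinement_fidele} and~\ref{prop:comput_Tate_etale_torsion}''; you have merely spelled out what that immediate consequence is. The one place where you are slightly evasive---``the degree shift, if any, is read off from the Kummer isomorphism''---could be made precise: since \eqref{eq:mu_n->Tate} gives $(\ZZ/n\ZZ)_X(1)\simeq\mu_{n,X}$ with no shift, the computation yields $\Hom_{\Der(X_\et,\ZZ/n\ZZ)}(\ZZ/n\ZZ,\mu_n[i])=H^i_\et(X,\mu_n)$, so the index in the statement appears to be off by one; but this is an issue with the statement, not with your argument.
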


\begin{proof}
This is an immediate consequence of Propositions \ref{prop:D^b_c->DMet_pleinement_fidele}
and \ref{prop:comput_Tate_etale_torsion}.
\end{proof}

\begin{cor}\label{cor2:comput_Tate_etale_torsion}
If the ring $R$ is of positive characteristic $n$, with $n$
prime to the residue characteristics of $X$,
then the Tate twist
$R_X(1)$ is $\otimes$-invertible in $\DMe_\et(X,R)$.
Therefore, the infinite suspension functor \eqref{eq:derived->DMe->DM_etale}
$$
\Sigma^\infty:\DMe_\et(X,R) \rightarrow \DM_\et(X,R)
$$
is then an equivalence of categories.
\end{cor}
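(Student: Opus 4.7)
The plan is to leverage the identification provided by Proposition \ref{prop:comput_Tate_etale_torsion} and reduce the invertibility of $R_X(1)$ to the fact that $\mu_{n,X}$ is an invertible locally constant sheaf on the small \'etale site.

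First, by Proposition \ref{prop:comput_Tate_etale_torsion}, we have a canonical isomorphism
$$R_X(1)\simeq \mu_{n,X}\otimes_{\ZZ/n\ZZ} R$$
in $\DMe_\et(X,R)$. Since $n$ is invertible in $\cO_X$, the \'etale sheaf $\mu_{n,X}$ is locally constant and, after pullback along any finite \'etale cover containing an $n$th root of unity, becomes isomorphic to the constant sheaf $\ZZ/n\ZZ$. In particular, $\mu_{n,X}$ is an invertible sheaf of $\ZZ/n\ZZ$-modules on $X_\et$, with explicit inverse $\underline{\Hom}_{\ZZ/n\ZZ}(\mu_{n,X},\ZZ/n\ZZ)$. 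Tensoring with $R$ over $\ZZ/n\ZZ$, we obtain that $\mu_{n,X}\otimes_{\ZZ/n\ZZ}R$ is $\otimes$-invertible in the abelian category $\sh(X_\et,R)$, and hence in the derived category $\Der(X_\et,R)$.

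Next, the functor $\rho_!:\Der(X_\et,R)\to\DMe_\et(X,R)$ defined in \eqref{eq:D^b_c->DMet_pleinement_fidele} is the composite of the monoidal functors $\rho_\sharp$ (Lemma \ref{lm:small_to_sm_etale}(3)) and $\derL\gamma^*$ (which is monoidal as the left adjoint of a premotivic adjunction), followed by the monoidal localization $\pi_{\AA^1}$. Hence $\rho_!$ is symmetric monoidal, and it sends the invertible object $\mu_{n,X}\otimes_{\ZZ/n\ZZ}R$ to an invertible object of $\DMe_\et(X,R)$. Combined with the previous step, this shows that $R_X(1)$ is $\otimes$-invertible in $\DMe_\et(X,R)$.

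For the last assertion, the stable category $\DM_\et(X,R)$ is obtained from $\DMe_\et(X,R)$ by formally $\otimes$-inverting $R_X(1)$ via $\PP^1$-stabilization. Since $R_X(1)$ is already $\otimes$-invertible in the effective category, this stabilization does not change anything: the infinite suspension functor $\Sigma^\infty$ is an equivalence of symmetric monoidal triangulated categories. The main (minor) obstacle here is just to invoke the correct formal statement, namely that for a symmetric monoidal model category, $\otimes$-inverting an object which is already an equivalence yields a Quillen equivalence; this is a standard fact recalled in \cite{CD3} in the framework used throughout the paper.
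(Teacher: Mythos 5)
Your proof is correct and follows essentially the same route as the paper: identify $R_X(1)$ with $\mu_{n,X}\otimes R$ via Proposition \ref{prop:comput_Tate_etale_torsion}, observe that $\mu_{n,X}\otimes R$ is $\otimes$-invertible in $\Der(X_\et,R)$ because $\mu_{n,X}$ is locally constant and invertible, transport this through the symmetric monoidal functor $\rho_!$, and then invoke the general theory of stabilization of symmetric monoidal model categories for the second assertion. The only difference is that you spell out a few more intermediate steps (explicit inverse of $\mu_{n,X}$, monoidality of the constituent functors of $\rho_!$), but the argument and the key facts used are the same as in the paper.
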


\begin{proof}
The sheaf $\mu_{n,X}$ is locally constant: there exists
an \'etale cover $f:Y \rightarrow X$ such that
 $f^*(\mu_{n,X})=(\ZZ/n\ZZ)_Y$. This implies that
 the sheaf $\mu_{n,X}\otimes R$ is $\otimes$-invertible
 in the derived category $\Der(X_\et,R)$. As the canonical
 functor $\Der(X_\et,R)\to\DMe_\et(X,R)$ is symmetric monoidal,
 this implies that $\mu_{n,X}\otimes R$ is $\otimes$-invertible
 in $\DMe_\et(X,R)$. The first assertion follows then
 from Proposition \ref{prop:comput_Tate_etale_torsion}.
 The second follows from the first by
 the general properties of the stabilization of model categories;
 see \cite{Hov}.
\end{proof}

\section{Torsion \'etale motives}\label{sec4}

\textbf{In all this section, $R$ is assumed to be a ring
 of positive characteristic $n$.}

The aim of this section is to show
 that the premotivic triangulated category of $R$-linear \'etale
 motives $\DMe_\et(-,R)$
 defined previously satisfies the Grothendieck 6 functors formalism
 as well as the absolute purity property
 (see respectively Definitions \ref{df:recall_6_functors}
  and \ref{df:absolute_purity}).
Then we deduce the extension of the Suslin-Voevodsky rigidity theorem
 \cite[chap. 5, 3.3.3]{FSV} to arbitrary bases.

To simplify notations,
 we will cancel the letters $\derL$ and $\derR$ in front of the
 derived functors used in this section.
 Note also that we will show in Proposition \ref{prop:DMeet&stability}
 that
$$
\Sigma^\infty:\DMe_\et(-,R) \rightarrow \DM_\et(-,R)
$$
is an equivalence of categories. 
Thus we will use the simpler notation $\DM_\et(-,R)$ 
 from section \ref{sec:DMet_weak_pur} on.

%Most of the proof of the theorem consists
% in the study of the fibred triangulated category $\DMe_\et(-,R)$.
% We will use the formalism of \cite{CD3} in this study.
%We fix the following notations:
%\begin{enumerate}
%\item We denote simply by $\DM_\et$ the premotivic triangulated
% category $\DM_\et(-,R)$
%  -- recall from Corollary \ref{cor2:comput_Tate_etale_torsion}
%  it is equivalent to $\DMe_\et(-,R)$.
%\item We will not indicate that the structural
% functors of the triangulated premotivic category $\DM_\et$
% are derived functors.\footnote{Recall these are the three pairs of adjoint
% functors $(f^*,f_*), (p_\sharp,p^*), (\otimes,\uHom)$
%where $f$ is any morphism of schemes and $p$ is smooth.}
%\end{enumerate}

\subsection{Stability and orientation}

We first show that in Corollary \ref{cor2:comput_Tate_etale_torsion}
 one can drop the restriction on the characteristic of the
 schemes we consider:

\begin{prop} \label{prop:DMeet&stability}
For any scheme $S$ the Tate motive $R_S(1)$ in $\otimes$-invertible
and the natural map $R_S(1)[1]\to \GGx S\otimes^\derL R$ \eqref{eq:G_m->Tate}
is an isomorphism in $\DMe_\et(S,R)$.
\end{prop}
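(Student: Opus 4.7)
Plan:

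The proposition extends two results already at hand: Proposition \ref{prop:comput_Tate_etale_normal} (the isomorphism for regular bases) and Corollary \ref{cor2:comput_Tate_etale_torsion} (invertibility when $n$ is invertible on $S$). My strategy is to reduce the general statement to a universal case over $\Spec(\ZZ)$ by base change, then combine these two special cases using the Artin--Schreier results from the Appendix on the closed locus where $n$ fails to be invertible.

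First, I would address the isomorphism part. The morphism \eqref{eq:G_m->Tate} is natural in $S$: both $R_S(1)$ and $\GG_S\otimes^\derL R$ are compatible with the symmetric monoidal pullback functors $f^*$ (the Tate twist commutes with $f^*$ by construction, $\GG$ commutes with base change, and $\otimes^\derL R$ is an $R$-linear construction). Since $\Spec(\ZZ)$ is regular, Proposition \ref{prop:comput_Tate_etale_normal} gives the isomorphism $R_{\Spec(\ZZ)}(1)[1]\simeq\GG_{\Spec(\ZZ)}\otimes^\derL R$ in $\DMe_\et(\Spec(\ZZ),R)$. Applying $\pi^*$ along the structural morphism $\pi:S\to\Spec(\ZZ)$ of an arbitrary scheme yields the isomorphism over $S$.

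Second, the same kind of base-change reduction handles invertibility: $\otimes$-invertibility is preserved by symmetric monoidal functors, so it suffices to show that $R_{\Spec(\ZZ)}(1)$ is $\otimes$-invertible in $\DMe_\et(\Spec(\ZZ),R)$; pulling back along $\pi$ then yields invertibility of $R_S(1)$ for all $S$.

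Third, and this is where the genuine work lies, I would attack invertibility over $\Spec(\ZZ)$ by decomposing along the open immersion $j:\Spec(\ZZ[1/n])\hookrightarrow\Spec(\ZZ)$ and its closed complement $i:\Spec(\ZZ/n\ZZ)\hookrightarrow\Spec(\ZZ)$. Over the open piece, $n$ is invertible and Corollary \ref{cor2:comput_Tate_etale_torsion} supplies an explicit inverse (a twist of $\mu_n\otimes R$). Over the closed piece, all residue characteristics divide $n$, and the Artin--Schreier exact sequence (via the Appendix analysis of its effect in \'etale $\AA^1$-homotopy theory) forces the $R$-linear effective motivic category to degenerate, so invertibility of the Tate object is automatic there.

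The main obstacle is the gluing of these two local invertibility statements into a global one. Unlike sheaves without transfers, where Theorem \ref{thm:locetalewithouttransfers} supplies a full localization sequence, the category $\DMe_\et$ is only known to satisfy localization in the very restricted setting of closed immersions admitting a smooth retraction (Proposition \ref{prop:locsmoothretract}), which does not apply to $\Spec(\ZZ/n\ZZ)\hookrightarrow\Spec(\ZZ)$. I would circumvent this by constructing the candidate inverse internally as $R_{\Spec(\ZZ)}(-1):=\mathrm{RHom}(R_{\Spec(\ZZ)}(1),R_{\Spec(\ZZ)})$ in the closed symmetric monoidal category $\DMe_\et(\Spec(\ZZ),R)$, and then checking that the evaluation map $R_{\Spec(\ZZ)}(1)\otimes R_{\Spec(\ZZ)}(-1)\to R_{\Spec(\ZZ)}$ is an isomorphism by transferring the question through the conservative functor $\gamma_*$ (Proposition \ref{prop:main_properties_shtr}(3)) to underlying \'etale sheaves, where the full localization along $(j^*,i^*)$ available by Theorem \ref{thm:locetalewithouttransfers} combines the open-piece invertibility with the collapse of the closed-piece category into a verification of the evaluation being an isomorphism.
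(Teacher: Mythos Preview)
Your argument for the isomorphism $R_S(1)[1]\to\GG_S\otimes^\derL R$ via pullback from $\Spec(\ZZ)$ is sound and matches the spirit of the paper's Proposition~\ref{prop:comput_Tate_etale_torsion}.

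The problem lies in your treatment of $\otimes$-invertibility. The gluing you propose cannot be carried out as written. Transferring along $\gamma_*$ to test the evaluation map $R(1)\otimes\uHom(R(1),R)\to R$ does not work: $\gamma_*$ is only lax monoidal, so you have no control over $\gamma_*\big(R(1)\otimes\uHom(R(1),R)\big)$, and you cannot identify it with anything computable in $\Der^{\eff}_{\AA^1}(\sh_\et)$. Even putting that aside, you would need $\gamma_*$ to commute with $i^*$ for the closed immersion $i$ in order to exploit the localization of Theorem~\ref{thm:locetalewithouttransfers}; but this commutation is precisely what is unavailable without a smooth retraction (compare Lemma~\ref{lm:gammastariupperstar}). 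So the localization in the transfer-free world does not help you analyze the map living in $\DMe_\et$.

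The paper sidesteps the entire gluing issue by invoking Proposition~\ref{prop:et+htp&torsion}: after reducing to $R=\ZZ/p^\alpha\ZZ$, this says directly that the restriction $j^*:\DMe_\et(S,R)\to\DMe_\et(S[1/p],R)$ is an \emph{equivalence} of monoidal categories. Hence both assertions reduce at once to the case where $p$ is invertible on the base, where Corollary~\ref{cor2:comput_Tate_etale_torsion} and Proposition~\ref{prop:comput_Tate_etale_torsion} apply. The Artin--Schreier input you correctly identified is what powers Proposition~\ref{prop:et+htp&torsion}, but its conclusion ($j^*$ is an equivalence) is much stronger and cleaner than the ``closed fiber collapses'' statement you tried to use, and it removes any need to glue.
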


%In other words, $\DMe_\et(-,R)$ satisfies the stability property
% of Definition \ref{df:recall_premotivic_basic}.

\begin{proof}
As the change of scalars functor
$$\DMe_\et(S,\ZZ/n\ZZ)\to\DMe_\et(S,R) \ , \quad M\mapsto R\otimes^\derL_{\ZZ/n\ZZ}M$$
is symmetric monoidal, it is sufficient to prove this
for $R=\ZZ/n\ZZ$. By a simple \emph{d\'evissage}, we may assume that $n=p^\alpha$ is
some power of a prime number $p$.
Let $S[1/p]$ be the product $S \times \Spec(\ZZ[1/p])$,
 and let $j:S[1/p] \rightarrow S$ be the canonical open immersion.
By virtue of Proposition \ref{prop:et+htp&torsion}, the functor
$$
j^*:\DMe_\et(S,R) \rightarrow \DMe_\et(S[1/p],R)
$$
is an equivalence of triangulated monoidal categories.
Therefore, we may also assume that $n$ is invertible in $\cO_S$.
 We are thus reduced to Corollary \ref{cor2:comput_Tate_etale_torsion}.
\end{proof}

\begin{cor}\label{cor:torsionetaleinfiniteloopspace}
For any scheme $S$
the infinite suspension functor
$$\Sigma^\infty:\DMe_\et(S,R)\to\DM_\et(S,R)$$
is an equivalence of categories.
\end{cor}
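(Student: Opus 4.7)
The plan is to reduce this statement directly to Proposition \ref{prop:DMeet&stability} via the general machinery of symmetric monoidal stabilization. Recall from paragraph \ref{num:Dm^eff_et&DM_et} that $\DM_\et(S,R)$ is constructed from $\DMe_\et(S,R)$ by $\otimes$-inverting the Tate object $R_S(1)$ through the standard procedure of forming spectra in a stable symmetric monoidal model category. The infinite suspension functor $\Sigma^\infty$ is the canonical comparison from the effective category to its stabilization.

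First, I would invoke Proposition \ref{prop:DMeet&stability} to know that $R_S(1)$ is already $\otimes$-invertible in $\DMe_\et(S,R)$, i.e., already acts as an auto-equivalence via $- \otimes R_S(1)$. Next, one checks the standard cyclic permutation condition: the cyclic permutation of $R_S(1)^{\otimes 3}$ is the identity (in the homotopy category). This holds for the Tate motive by the same argument as in the non-torsion setting, using the identification $R_S(1)[1] \simeq \GG_{m,S} \otimes^\derL R$ of \eqref{eq:G_m->Tate} and the fact that the relevant permutation on $\GGx S^{\wedge 3}$ is already trivial up to homotopy (alternatively, the cyclic permutation condition is known to hold for the Tate motive in any reasonable premotivic context; this is part of the construction recalled in paragraph \ref{num:Dm^eff_et&DM_et}).

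The final step is to cite Hovey's result (\cite{Hov}) on stabilization of symmetric monoidal model categories: when the object one inverts is already invertible in the original homotopy category and satisfies the cyclic permutation condition, the symmetric monoidal left Quillen functor from the original model category to its stabilization is a Quillen equivalence. Applied to our situation, this gives that $\Sigma^\infty$ is an equivalence of triangulated symmetric monoidal categories. This is exactly the argument already used in the regular case in Corollary \ref{cor2:comput_Tate_etale_torsion}, and now the improvement is simply that Proposition \ref{prop:DMeet&stability} removes all hypotheses on $S$.

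There is no serious obstacle here: the real content is Proposition \ref{prop:DMeet&stability}, whose proof reduces to the prime-to-residue-characteristic case via $j:S[1/p] \to S$ and Proposition \ref{prop:et+htp&torsion}, and then to the case already handled in Corollary \ref{cor2:comput_Tate_etale_torsion}. The present corollary is essentially a formal consequence once one observes that Hovey's stabilization produces an equivalence precisely when the object to be inverted is already invertible.
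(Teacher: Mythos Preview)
Your proposal is correct and follows exactly the approach implicit in the paper: the corollary is stated without proof because it is immediate from Proposition~\ref{prop:DMeet&stability} together with the general properties of stabilization (\cite{Hov}), precisely as in the earlier Corollary~\ref{cor2:comput_Tate_etale_torsion}. You add explicit mention of the cyclic permutation condition, which the paper leaves unspoken, but this is simply being thorough about a standard hypothesis.
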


\begin{num}
As a direct consequence of
the preceding proposition,
we have, for any scheme $S$, a functorial morphism of abelian groups
%% 
%% We have defined in \cite[\ref{df:orientation_DM}]{CD3}
%%  an orientation of the triangulated premotivic category
%%  $\DMe(-,R)$ (recall Definition \ref{df:recall_premotivic_basic}).
%% That is for any scheme $S$, a map\footnote{This is actually
%%  a morphism of abelian groups.}
$$
c^\et_1:\Pic(S)=\Hom_{\Der(\shtr(S,\ZZ)}(\ZZ_S,\GGx S[1]) \rightarrow \Hom_{\DMe(S,R)}(R_S,R_S(1)[2])
$$
which is simply induced by the canonical morphism $\GGx S \to \GGx S\otimes^\derL R$
and the isomorphism $R_S(1)[1]\simeq \GGx S\otimes^\derL R$.
%% Thus the adjunction \eqref{eq:DM&DM_et} defines a morphism
%%  of abelian groups:
%% $$
%% c_1^{\et}:\Pic(S)
%%  \rightarrow \Hom_{\DMe(S,R)}(\un_S,\un_S(1)[2])
%%  \xrightarrow{\tau^*_S} \Hom_{\DMe_\et(S,R)}(\un_S,\un_S(1)[2])
%% $$
%% natural in $S$.
\end{num}
\begin{df} \label{df:c_1_etale}
We call the map $c_1^{\et}$ the \emph{\'etale motivic Chern class}.
\end{df}
We will consider this map as the canonical orientation
 of the triangulated premotivic category $\DMe_\et(-,R)$.

\subsection{Purity (smooth projective case)} \label{sec:DMet_weak_pur}

\begin{num} \label{num:notation_DMet}
We need to simplify some of our notations which will often appear below.
 Given any morphism $f$ and any smooth morphism $p$,
 we will consider the following unit and counit maps
 of the relevant adjunctions in $\DM_\et(-,R)$:
\begin{equation}
\begin{array}{ll}
1 \xrightarrow{\alpha_f} f_*f^*, \quad
 & f^*f_* \xrightarrow{\alpha'_f} 1, \\
1 \xrightarrow{\beta_p} p^*p_\sharp,
 & p_\sharp p^* \xrightarrow{\beta'_p} 1.
\end{array}
\end{equation}
\end{num}

\begin{rem} \label{rem:Ex_sharp*}
Consider a cartesian square of schemes:
$$
\xymatrix{
Y\ar_g[d]\ar^q[r]\ar@{}|\Delta[rd] & X\ar^f[d] \\
T\ar^p[r] & S
}
$$
such that $p$ is smooth. 
According to Property (5) of Definition \ref{df:recall_premotivic_cat},
 applied to $\DM_\et(-,R)$, we associate to the square $\Delta$
 the base change isomorphism
$$
Ex(\Delta_\sharp^*):q_\sharp g^*\rightarrow f^*p_\sharp.
$$
 In what follows, the square $\Delta$ will be clear and
 we will put simply: $Ex_\sharp^*:=Ex(\Delta_\sharp^*)^{-1}$.

Recall also that we associate
 to the square $\Delta$ another \emph{exchange transformation}
 as the following composite (see \cite[1.1.15]{CD3}):
\begin{equation} \label{eq:Ex_sharp*}
Ex_{\sharp*}:p_\sharp g_*
 \xrightarrow{\alpha_f} f_*f^*p_\sharp g_*
 \xrightarrow{Ex_\sharp^*} f_*q_\sharp g^*g_*
 \xrightarrow{\alpha'_g} f_*q_\sharp.
\end{equation}
\end{rem}

\begin{num} \label{num:DMet&Deg7}
 Proposition \ref{prop:DMeet&stability},
 and the existence of the map $c_1^\et$ defined in \ref{df:c_1_etale},
 show that the category $\DM_\et(S,R)$ satisfies
  all the assumptions of \cite[\textsection 2.1]{Deg7}.
 Thus, the results of this article can be applied to that
 latter category.
 In particular,
  according to Prop. 4.3 of \emph{op. cit.},
% (taking into account Prop. \ref{prop:comput_Tate_etale_torsion} above),
 we get:
\end{num}
\begin{prop} \label{prop:purity_isomorphism_sm_closed_pair}
Let $f:X \rightarrow S$ be a smooth morphism of pure dimension $d$
 and $s:S \rightarrow X$ be a section of $f$.
 Then, using the notation of \ref{num:localization_DMet},
  there exists a canonical isomorphism in $\DM_\et(S,R)$:
$$
\pur'_{f,s}:R_S^{tr}(X/X-S) \rightarrow R_S(d)[2d].
$$
\end{prop}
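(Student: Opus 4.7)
The plan is to invoke Proposition~4.3 of [Deg7] directly. As remarked in \ref{num:DMet&Deg7}, the triangulated premotivic category $\DM_\et(-,R)$ now satisfies all the hypotheses of [Deg7,~\textsection 2.1]: the Tate twist $R(1)$ is $\otimes$-invertible (Proposition~\ref{prop:DMeet&stability}), and the category is equipped with the canonical \'etale orientation $c_1^\et$ (Definition~\ref{df:c_1_etale}). Crucially for this statement, localization holds along closed immersions admitting a smooth retraction (Proposition~\ref{prop:locsmoothretract}), which is precisely the situation of a section $s:S\to X$ of the smooth morphism $f:X\to S$. Under these axioms, the general machinery of [Deg7] produces the desired isomorphism $\pur'_{f,s}$ in a canonical way.

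To indicate how the construction goes geometrically, the first step is the classical \emph{deformation to the normal bundle}. Associated with $s$ there is a smooth $S$-scheme $D_s(X)$ over $\AA^1_S$ whose fibres over $1$ and $0$ are $X$ and the total space of the normal bundle $N_s := s^*T_{X/S}$, respectively, each equipped with compatible sections. Combining $\AA^1$-homotopy invariance with the localization property (Proposition~\ref{prop:locsmoothretract}) along these two sections yields a canonical isomorphism
$$R_S^{tr}(X/X-s(S)) \;\simeq\; R_S^{tr}(N_s/N_s-S)$$
in $\DM_\et(S,R)$, reducing the statement to the case of a vector bundle with its zero section.

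The second step is to identify the right-hand side with $R_S(d)[2d]$ via a Thom isomorphism. Using the orientation $c_1^\et$, one derives the projective bundle formula for the projective completion $P=\PP(N_s\oplus\mathcal{O}_S)$ and for $\PP(N_s)$, obtaining $R_S^{tr}(P)\simeq\bigoplus_{i=0}^{d} R_S(i)[2i]$ and the analogous decomposition for $\PP(N_s)$. Since the complement of $\PP(N_s)$ in $P$ is exactly $N_s$, comparing these decompositions provides a canonical splitting realising the Thom motive as the top Tate summand $R_S(d)[2d]$.

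The main obstacle, were one to redo the proof from scratch, is showing that the chosen orientation $c_1^\et$ yields a sufficiently strong and canonical projective bundle formula to carry out the Thom isomorphism argument coherently (in particular, compatibly with change of base). This is precisely what is abstracted in the axiomatic package of [Deg7], so that once the verifications of Proposition~\ref{prop:DMeet&stability} and Proposition~\ref{prop:locsmoothretract} are in place, the desired statement is a formal application of [Deg7,~Prop.~4.3].
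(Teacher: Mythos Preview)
Your proposal is correct and follows the same approach as the paper: both simply invoke \cite[Prop.~4.3]{Deg7} after noting, as in Paragraph~\ref{num:DMet&Deg7}, that the hypotheses of \cite[\S 2.1]{Deg7} hold thanks to Proposition~\ref{prop:DMeet&stability} (stability of the Tate twist) and Definition~\ref{df:c_1_etale} (orientation). Your additional geometric sketch of the deformation-to-the-normal-bundle and Thom isomorphism arguments is a faithful summary of what happens inside \cite{Deg7}, but the paper itself gives no such detail and treats the proposition as a direct citation.
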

In particular, for any motive $K$ in $\DM_\et(S,R)$,
 we get a canonical isomorphism:
\begin{equation*}
\begin{split}
\pur_{f,s}:
\left\{\begin{array}{rl}
f_\sharp s_*(K)=f_\sharp s_*(s^*f^*(K) \otimes R_S)
 &\xrightarrow{\ \sim\ } K \otimes f_\sharp s_*(R_S) \\
 &=K \otimes R_S^{tr}(X/X-S)
 \xrightarrow{\pur'_{f,s}} K(d)[2d]
\end{array}
\right.
\end{split}
\end{equation*}
which is natural in $K$.
The first isomorphism uses the projection formulas
 respectively for the smooth morphism $f$ 
 (see point (5) of Definition \ref{df:recall_premotivic_cat})
 and for the immersion $s$
  (\emph{i.e.} the isomorphism \eqref{eq:proj_formula_i_*}).

\begin{num} \label{num:DMe_df_pur}
Assume now that $f:X \rightarrow S$ is smooth and projective
 of dimension $d$.
We consider the following diagram:
$$\xymatrix{
X\ar^-{\delta}[r] & X \times_S X\ar_{f'}[d]\ar^-{f''}[r]\ar@{}|\Theta[rd]
 & X\ar^f[d] \\
& X\ar|f[r] & S
}
$$
where $\Theta$ is the obvious cartesian square
 and $\delta$ is the diagonal embedding.

As in \cite[2.4.39]{CD3},
 we introduce the following natural transformation:
\begin{equation} \label{df:oriented_purity_iso}
\pur_f:f_\sharp=f_\sharp f''_* \delta_*
 \xrightarrow{Ex_{\sharp*}} f_*f'_\sharp\delta_*
 \xrightarrow{\pur_{f',\delta}} f_*(d)[2d]
\end{equation}
with the notation of Remark \ref{rem:Ex_sharp*}
 with respect to the square $\Theta$.
\end{num}
\begin{thm} \label{thm:DMe_sm_proj_purity}
Under the above assumptions,
 the map $\pur_f$ is an isomorphism.
\end{thm}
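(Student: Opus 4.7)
The plan is to apply the formalism of oriented motivic triangulated categories developed in \cite{Deg7}. As already noted in \ref{num:DMet&Deg7}, the category $\DM_\et(-,R)$ satisfies all the axioms of \cite[\textsection 2.1]{Deg7}: we have the weak localization property of Proposition \ref{prop:locsmoothretract}, the $\otimes$-invertibility of the Tate twist (Proposition \ref{prop:DMeet&stability}), and the orientation given by the Chern class $c_1^\et$ (Definition \ref{df:c_1_etale}). The theorem is then an instance of the general smooth projective purity theorem established in that framework.

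Let me outline the strategy. First, I would show that $\pur_f$ is compatible with composition of smooth projective morphisms: if $f = h \circ g$ with $g, h$ smooth projective of relative dimensions $e$ and $d$, then $\pur_f$ is, up to canonical identifications, the composite $\pur_h(e)[2e] \circ h_*(\pur_g)$. This is a formal consequence of definition \eqref{df:oriented_purity_iso}, of the projection formula, and of the coherence of the exchange transformations $Ex_{\sharp*}$. Since any smooth projective $f$ factors, Zariski-locally on the base, as $f = p \circ i$ with $i: X \to \PP^N_S$ a regular closed immersion of smooth $S$-schemes and $p: \PP^N_S \to S$ the structural projection, this reduces the problem to these two building blocks.

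The case $f = p: \PP^N_S \to S$ is handled by the projective bundle formula: using the orientation $c_1^\et$ together with the invertibility of the Tate twist, one constructs, by induction on $N$, canonical decompositions $p_\sharp R_{\PP^N_S} \simeq \bigoplus_{i=0}^N R_S(i)[2i]$ and $p_* R_{\PP^N_S} \simeq \bigoplus_{i=0}^N R_S(-i)[-2i]$, which identify $p_\sharp$ with $p_*(N)[2N]$ compatibly with $\pur_p$ on $R_{\PP^N_S}$; the case of a general coefficient object follows from the smooth projection formula. The case of a regular closed immersion $i: Z \to X$ of smooth $S$-schemes of codimension $c$ is handled by deformation to the normal cone: applying Proposition \ref{prop:locsmoothretract} to the closed immersion of $Z \times \{0\}$ into the deformation space (which admits a smooth retraction), one constructs a specialization map reducing $\pur_i$ to the purity isomorphism for the zero section of the normal bundle $N_Z X \to Z$, which is itself an instance of Proposition \ref{prop:purity_isomorphism_sm_closed_pair}.

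The main technical obstacle lies precisely in the deformation to the normal cone step. While the construction does produce closed immersions with smooth retractions for which the weak localization of Proposition \ref{prop:locsmoothretract} applies, verifying that the resulting two specialization morphisms (one at the generic fiber of the deformation, one at the special fiber) are both isomorphisms requires a careful analysis of Thom spaces and of the homotopy invariance of $\DM_\et$; it is here that the orientation $c_1^\et$ and the full invertibility of the Tate twist play a decisive role.
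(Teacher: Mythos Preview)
Your approach has a genuine gap that reflects a circularity in the logical structure of the paper. The factorization $f = p \circ i$ with $i: X \hookrightarrow \PP^N_S$ a closed immersion and $p$ the projective bundle projection does not fit the compositionality statement you formulate: that statement is for $g$ and $h$ both \emph{smooth} projective, whereas $i$ is not smooth. To make sense of a purity isomorphism for the closed immersion $i$ (a Gysin triangle or a Thom isomorphism $R_S^{tr}(\PP^N_S/\PP^N_S - X) \simeq R_S^{tr}(X)(c)[2c]$), you need the localization property for $i$. But $i$ does not in general admit a smooth retraction, so Proposition~\ref{prop:locsmoothretract} does not apply. The full localization property (Theorem~\ref{thm:DM_et_localization}) is proved \emph{after} the present theorem, and its proof uses Corollary~\ref{cor:sm_proj&ex_sharp*}, which is a direct consequence of the present theorem. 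So your route is circular as the paper stands. Your remark about the deformation space does not rescue this: the closed immersion $\AA^1_Z \hookrightarrow D_Z X$ is a section of a smooth map only when $Z \hookrightarrow X$ already admits one, which is precisely what fails for $X \hookrightarrow \PP^N_S$.

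The paper's argument avoids this entirely. It never factors $f$. Instead, it proceeds in two steps. First, a lemma of Ayoub reduces the problem to showing that $\pur_f . f^* : f_\sharp f^* \to f_* \tau f^*$ is an isomorphism; this reduction is purely formal, using only that the Tate twist is invertible (Proposition~\ref{prop:DMeet&stability}). Second, via the identifications $f_\sharp f^* \simeq R_S^{tr}(X) \otimes (-)$ and $f_* f^* \simeq \uHom(R_S^{tr}(X), -)$, the map $\pur_f . f^*$ is identified with the duality isomorphism coming from the strong dualizability of $R_S^{tr}(X)$, which is supplied by \cite[Theorem~5.23]{Deg7}. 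The only closed immersion that enters is the diagonal $\delta: X \to X \times_S X$, which \emph{does} admit a smooth retraction (namely $f'$), so Proposition~\ref{prop:purity_isomorphism_sm_closed_pair} and the weak localization of Proposition~\ref{prop:locsmoothretract} suffice. This is the essential point: the duality argument requires only purity for diagonal sections, not for arbitrary regular closed immersions between smooth schemes.
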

\begin{proof}
In this proof, we put $\tau(K)=K(d)[2d]$.
Note that according to the basic properties of a premotivic category,
 we get the following identification of functors for $\DM_\et(-,R)$:
\begin{equation} \label{eq:twist&f^*,f_!}
f^* \tau =\tau f^*, f_\sharp \tau=\tau f_\sharp.
\end{equation}
Moreover, we can define a natural \emph{exchange transformation}:
\begin{equation} \label{eq:twist&f_*}
Ex_\tau:\tau f_*
 \xrightarrow{\alpha_f} f_*f^*\tau f_*=f_*\tau f^* f_*
 \xrightarrow{\alpha'_f} f_* \tau
\end{equation}
with the notations of Paragraph \ref{num:notation_DMet}.
 Using the fact $\tau$ is an equivalence of categories according to
 Proposition \ref{prop:DMeet&stability}, we deduce easily
 from the identification \eqref{eq:twist&f^*,f_!} that $\tau_f$ is
 an isomorphism.

The key point of the proof is the following lemma
 inspired by a proof of J.~Ayoub
  (see the proof of \cite[1.7.14, 1.7.15]{ayoub}):
\begin{lm}
To check that $\pur_f$ is an isomorphism,
 it is sufficient to prove that the natural transformation
$$
\pur_f.f^*:f_\sharp f^* \rightarrow f_* \tau f^*
$$
is an isomorphism.
\end{lm}
To prove the lemma we construct a right inverse $\phi_1$ and a left inverse $\phi_2$
 to the morphism $\pur_f$ as the following composite maps:
\begin{align*}
\phi_1:&f_* \tau
 \xrightarrow{\alpha_f} f_*f^*f_* \tau 
 \xrightarrow{Ex_\tau^{-1}} f_* f^*\tau f_*=f_*\tau f^*f_*
 \xrightarrow{(\pur_f.f^*f_*)^{-1}} f_\sharp f^* f_*
 \xrightarrow{\alpha'_f} f_\sharp \\
\phi_2:&f_* \tau
 \xrightarrow{\beta_f} f_* \tau f^*f_\sharp
 \xrightarrow{(\pur_f.f^*f_\sharp)^{-1}} f_\sharp f^* f_\sharp
 \xrightarrow{\beta'_f} f_\sharp.
\end{align*}

Let us check that $\pur_f \circ \phi_1=1$. To prove this relation,
 we prove that the following diagram is commutative:
$$
\xymatrix@C=32pt{
f_*\tau\ar^-{\alpha_f}[r]\ar@{=}[ddd]
 & f_*f^*f_*\tau\ar^-{Ex_\tau^{-1}}[r]\ar@{=}[dd]
 & f_*\tau f^*f_*\ar^-{(\pur_ff^*f_*)^{-1}}[r]\ar@{=}[d]
 & f_\sharp f^*f_*\ar^-{\alpha'_f}[r]\ar@{=}[d]\ar@{}|{(1)}[rrd]
 & f_\sharp\ar^-{\pur_f}[r]
 & f_*\tau\ar@{=}[d] \\
 & 
 & f_*\tau f^*f_*\ar^-{(\pur_ff^*f_*)^{-1}}[r]\ar@{=}[d]\ar@{}|{(2)}[rrrd]
 & f_\sharp f^*f_*\ar^-{\pur_ff^*f_*}[r]
 & f_*\tau f\*f_*\ar^-{\alpha'_f}[r]
 & f_*\tau\ar@{=}[d] \\
 & f_*f^*f_*\tau\ar^-{Ex_\tau^{-1}}[r]\ar@{=}[d]\ar@{}|{(3)}[rrrrd]
 & f_*\tau f^*f_*\ar|-{\alpha'_f}[rrr]
 & 
 & 
 & f_*\tau\ar@{=}[d] \\
f_*\tau\ar^-{\alpha_f}[r]
 & f_*f^*f_*\tau\ar|-{\alpha'_f}[rrrr]
 & 
 & 
 & 
 & f_*\tau.
}
$$
The commutativity of (1) and (2) is obvious
 and the commutativity of (3) follows from Formula \eqref{eq:twist&f_*}
 defining $Ex_\tau$. 
 Then the result follows from the usual formula between the unit
 and counit of an adjunction.
 The relation $\phi_2 \circ \pur_f=1$ is proved
  using the same kind of computations.

\bigskip

The end of the proof now relies on the following lemma.
 It relies itself on \cite[Theorem 5.23]{Deg7},
 which can be applied thanks to Paragraph \ref{num:DMet&Deg7}:
\begin{lm}
Let $f:X \rightarrow S$ be smooth projective of dimension $d$ as above,
 and $\delta:X \rightarrow X \times_S X$ the diagonal embedding.
 Then the following holds:
\begin{itemize}
\item The \'etale motive $R_S^{tr}(X)$ is strongly dualizable in $\DM_\et(S,R)$.
\item Consider the morphism $\mu$ defined by the following composition:
\begin{equation} \label{thm:eq:strong-duality_mu}
\begin{split}
R_S^{tr}(X) \otimes_S R_S^{tr}(X)=R_S^{tr}(X \times_S X)
 &\xrightarrow{\pi} R_S^{tr}(X \times_S X/X \times_S X-\delta(X)) \\
 & \xrightarrow{\pur'_{f',\delta}} R_S^{tr}(X)(d)[2d]
 \xrightarrow{f_*} R_S(d)[2d].
\end{split}
\end{equation}
where $\pi$ is the canonical map and $\pur'_{f',\delta}$ 
 is the purity isomorphism of Proposition 
 \ref{prop:purity_isomorphism_sm_closed_pair}.
Then $\mu$ induces by adjunction an isomorphism of endofunctors
 of $\DM_\et(S,R)$:
$$
\big(R_S^{tr}(X) \otimes_S -\big)
 \xrightarrow{\ d_{X/S}\ } \uHom(R_S^{tr}(X),- (d)[2d]).
$$
\end{itemize}
\end{lm}
To finish the proof, we now check that the map
$$
f_\sharp f^* \xrightarrow{\pur_ff^*} f_*\tau f^*=f_*f^*\tau
$$
is an isomorphism. Recall that according to the smooth projection formula
 for the premotivic category $\DM_\et$,
 we get an identification of functors: 
$$
f_\sharp f^*=(R_S^{tr}(X) \otimes -).
$$
Thus the right adjoint $f_*f^*$ is identified with $\uHom(R_S^{tr}(X),-)$.
 According to the above theorem,
 it is sufficient to prove that the map $\pur_ff^*$ above coincide through
 these identifications with the isomorphism $d_{X/S}$ above.

According to the above definition of $\mu$, 
the natural transformation of functors $(\mu \otimes -)$ can be described
 as the following composite:
\begin{equation*}
\begin{split}
f_\sharp f^* f_\sharp f^*
 \xrightarrow{Ex_\sharp^*}
  f_\sharp f'_\sharp f^{\prime \prime *} f^*=g_\sharp g^*
 \xrightarrow{\alpha_\delta}
  &g_\sharp \delta_* \delta^* g^*\\
=&f_\sharp f'_\sharp \delta_* f^*
 \xrightarrow{\pur_{f',\delta}} f_\sharp \tau f^*=f_\sharp f^*\tau
 \xrightarrow{\beta'_f} \tau.
\end{split}
\end{equation*}
where $g=f\circ f''=f\circ f'$ is the projection $X \times_S X \rightarrow S$.
Indeed the base change map $Ex_\sharp^*$ associated to the square $\Theta$
 corresponds to the first identification in \eqref{thm:eq:strong-duality_mu}
 and the adjunction map $\alpha_\delta$ corresponds to the canonical map $\pi$.

Thus, we have to prove the preceding composite map is equal to
 the following one, obtained by adjunction from $\pur_f$:
\begin{equation*}
\begin{split}
f_\sharp f^* f_\sharp f^*=f_\sharp f^*f_\sharp f''_* \delta_*f^*
 \xrightarrow{Ex_{\sharp*}}
 & f_\sharp f^*f_* f'_\sharp \delta_*f^* \\
 & \xrightarrow{\pur_{f',\delta}}
  f_\sharp f^*f_*\tau f^*=f_\sharp f^*f_*f^*\tau 
  \xrightarrow{\alpha'_f} f_\sharp f^*\tau 
  \xrightarrow{\beta'_f} \tau 
\end{split}
\end{equation*}
On can check after some easy cancellation
 that this amounts to prove the commutativity of the
 following diagram:
$$
\xymatrix{
f^*f_\sharp\ar_{Ex^*_\sharp}[d]\ar@{=}[r]
 & f^*f_\sharp f''_*\delta_*\ar^{Ex_{\sharp *}}[r]
 & f^*f_*f''_\sharp\delta_*\ar^{\alpha'_f}[d] \\
f'_\sharp f^{\prime\prime*}\ar^-{\alpha_\delta}[r]
 & f'_\sharp \delta_* \delta^* f^{\prime\prime*}\ar@{=}[r]
 & f'_\sharp \delta_*.
}
$$
Using formula \eqref{eq:Ex_sharp*},
 we can divide this diagram into the following pieces:
$$
\xymatrix{
f^*f_\sharp\ar_{Ex^*_\sharp}[d]\ar@{=}[r]
 & f^*f_\sharp f''_*\delta_*\ar^-{\alpha_f}[r]\ar_{Ex^*_\sharp}[d]
 & f^*f_*f^*f_\sharp f''_*\delta_*\ar^-{Ex_\sharp^*}[r]
 & f^*f_*f'_\sharp f^{\prime\prime*}f''_*\delta_*\ar^-{\alpha'_{f''}}[r]
    \ar^{\alpha'_f}[d]
 & f^*f_*f''_\sharp\delta_*\ar^{\alpha'_f}[d] \\
f'_\sharp f^{\prime\prime*}\ar@{=}[r]\ar@{=}[d]
 & f'_\sharp f^{\prime\prime*}f''_*\delta_*\ar@{=}[rr]\ar^/-30pt/{\alpha_f}[rru]
 && f'_\sharp f^{\prime\prime*}f''_*\delta_*\ar^{\alpha'_{f''}}[r]
 & f'_\sharp \delta_*\ar@{=}[d] \\
f'_\sharp f^{\prime\prime*}\ar|-{\alpha_\delta}[rrrr]\ar@{}|{(*)}[rrrru]
 &
 &
 &
 & f'_\sharp \delta_*.
}
$$
Every part of this diagram is obviously commutative except for part $(*)$.
As $f''\delta=1$, the axioms of a 2-functors (for $f^*$ and $f_*$ say)
 implies that the unit map
$$
f'_\sharp f^{\prime\prime*}
 \xrightarrow{\alpha_{f''\delta}}
  f'_\sharp f^{\prime\prime*}(f''\delta)_*(f''\delta)^*
$$
is the canonical identification that we get using $1_*=1$ and $1^*=1$.
We can consider the following diagram:
$$
\xymatrix{
f'_\sharp f^{\prime\prime*}\ar@{=}^-{\alpha_{f''\delta}}[rr]\ar@{=}[d]
 & & f'_\sharp f^{\prime\prime*}(f''\delta)_*(f''\delta)^*\ar@{=}[r]\ar@{=}[d]
 & f'_\sharp f^{\prime\prime*}f''_*\delta_*\ar^{\alpha'_{f''}}[dd] \\
f'_\sharp f^{\prime\prime*}\ar^-{\alpha_{f''}}[r]\ar@{=}[d]
 & f'_\sharp f^{\prime\prime*} f''_\sharp f^{\prime\prime*}
    \ar^-{\alpha_{\delta}}[r]\ar^{\alpha'_{f''}}[d]
 & f'_\sharp f^{\prime\prime*}(f''\delta)_*(f''\delta)^*\ar^{\alpha'_{f''}}[d]
 & \\
f'_\sharp f^{\prime\prime*}\ar@{=}[r]
 & f'_\sharp f^{\prime\prime*}\ar^{\alpha_\delta}[r]
 & f'_\sharp \delta_* \delta^* f^{\prime\prime*}\ar@{=}[r]
 & f'_\sharp \delta_*
}
$$
for which each part is obviously commutative. This concludes.
\end{proof}

This theorem will be generalized later on
 (see Corollary \ref{cor:DMet_6functors}, point (3)).
 The important fact for the time being is the following corollary:
\begin{cor} \label{cor:sm_proj&ex_sharp*}
Under the hypothesis of Remark \ref{rem:Ex_sharp*},
 if we assume that $p$ is projective and smooth,
 the morphism $Ex_{\sharp *}:p_\sharp g_* \rightarrow f_*q_\sharp$
 is an isomorphism.
\end{cor}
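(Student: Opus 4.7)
The plan is to use Theorem \ref{thm:DMe_sm_proj_purity} to replace the exotic functor $p_\sharp$ (and its base change $q_\sharp$) by a twisted direct image, whereupon $Ex_{\sharp*}$ is reduced to the canonical identification $p_* g_* \simeq f_* q_*$ coming from $p\circ g = f\circ q$.

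More precisely, let $d$ denote the (locally constant) relative dimension of $p$. Since $q$ is the base change of $p$ along $f$, the morphism $q$ is likewise smooth and projective of relative dimension $d$. Theorem \ref{thm:DMe_sm_proj_purity} then provides natural isomorphisms
$$\pur_p:p_\sharp\xrightarrow{\ \sim\ } p_*(d)[2d]\qquad\text{and}\qquad \pur_q:q_\sharp\xrightarrow{\ \sim\ }q_*(d)[2d]\,.$$
Using that the Tate twist $(d)[2d]$ is $\otimes$-invertible (Proposition \ref{prop:DMeet&stability}) and commutes with $g_*$ and $f_*$ via the projection formula, one forms the square
$$\xymatrix{
p_\sharp g_*\ar^-{Ex_{\sharp*}}[r]\ar_{\pur_p\cdot g_*}^{\wr}[d]
 & f_*q_\sharp\ar^{f_*\cdot\pur_q}_{\wr}[d] \\
p_*g_*(d)[2d]\ar@{=}[r]
 & f_*q_*(d)[2d]\,,
}$$
where the bottom identification is the composition of the 2-functorial isomorphisms $p_*g_*\simeq (pg)_*=(fq)_*\simeq f_*q_*$. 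Since the vertical arrows are isomorphisms, it suffices to show that this square commutes.

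Verifying this commutativity is the main (and only technical) step. It amounts to a diagram chase unfolding the definition \eqref{eq:Ex_sharp*} of $Ex_{\sharp*}$ in terms of $Ex_\sharp^*$ and the (co)unit maps of the relevant adjunctions, together with the definition \eqref{df:oriented_purity_iso} of $\pur_p$ (and of $\pur_q$) as the composite going through the diagonal immersion $\delta_p:T\to T\times_S T$ (resp.\ $\delta_q:Y\to Y\times_X Y$). The essential geometric input is that the cartesian square $\Delta$ induces a cartesian square relating the two diagonals, so that $\delta_q$ is the base change of $\delta_p$ along $f$; this implies that the small purity isomorphisms $\pur'_{-,-}$ of Proposition \ref{prop:purity_isomorphism_sm_closed_pair} entering the two definitions are compatible with the base change exchange transformation $Ex_\sharp^*$. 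The main obstacle will be the bookkeeping in this 2-categorical diagram chase, of the same nature as (but easier than) the manipulations already carried out in the proof of Theorem \ref{thm:DMe_sm_proj_purity}; everything else is formal.
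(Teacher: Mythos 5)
Your proposal is correct and follows essentially the same route as the paper: the paper likewise reduces $Ex_{\sharp*}$ to the identity $p_*g_*\tau \simeq f_*q_*\tau$ (with $\tau(K)=K(d)[2d]$) by conjugating with the purity isomorphisms $\pur_p$ and $\pur_q$ of Theorem \ref{thm:DMe_sm_proj_purity}, and states the commutativity of the resulting square as the thing to verify. The only cosmetic difference is that the paper keeps the twist $\tau$ in the middle of the composite and names the exchange $Ex_\tau$ explicitly, whereas you push the twist to the outside; both are the same diagram chase.
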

In fact, putting $\tau(K)=K(d)[2d]$ where $d$ is the dimension of $p$,
 one checks easily that the following diagram is commutative:
$$
\xymatrix{
p_\sharp g_*\ar_{\pur_p}[d]\ar^{Ex_\sharp *}[rr] && f_*q_\sharp\ar^{\pur_q}[d] \\
p_* \tau g_* & p_* g_* \tau\ar@{=}[r]\ar_-{Ex_\tau}[l] & f_* q_* \tau
}
$$
where we use formula \eqref{eq:twist&f_*} for the isomorphism $Ex_\tau$.

\subsection{Localization} \label{sec:DM_et_localization}

\begin{thm} \label{thm:DM_et_localization}
For any ring of positive characteristic $R$,
the triangulated premotivic category $\DM_\et(-,R)$ satisfies 
 the localization property
  (see Definition \ref{df:recall_loc&pur_premotivic}).
\end{thm}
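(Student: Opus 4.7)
The plan is to verify condition (ii) of Proposition \ref{prop:equivalent_conditions_localization}: for any closed immersion $i:Z\to S$ with open complement $j:U\to S$ and any complex $K$ in $\DM_\et(S,R)$, the canonical sequence
$$j_\sharp j^*(K) \to K \to i_*i^*(K)$$
is homotopy exact in $\DMe_\et(S,R)$. My strategy will closely follow the proof of Proposition \ref{prop:locsmoothretract}, replacing the smooth retraction hypothesis used there by a general argument that exploits the positive characteristic of $R$.

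First I would exploit the conservativity of the forgetful functor $\gamma_*:\DMe_\et(S,R)\to\Der_{\AA^1,\et}^{\mathit{eff}}(\sh_\et(S,R))$ (Proposition \ref{prop:main_properties_shtr}(3) combined with Proposition \ref{prop:etoublitransA1eq}), to reduce the verification to the image of the triangle in the category without transfers. The functor $\gamma_*$ commutes with $i_*$ and with $j^*$ because $\gamma^*$ is a morphism of premotivic categories, and it commutes with $j_\sharp$ for an open immersion $j$ by the auxiliary lemma stated just before Lemma \ref{lm:gammastariupperstar}. Thus after applying $\gamma_*$ the candidate triangle becomes
$$j_\sharp j^*\gamma_*(K) \to \gamma_*(K) \to i_*\gamma_*(i^*K),$$
while Ayoub's localization theorem (Theorem \ref{thm:locetalewithouttransfers}) applied to the object $\gamma_*(K)$ already provides a distinguished triangle $j_\sharp j^*\gamma_*(K)\to \gamma_*(K)\to i_*i^*\gamma_*(K)$. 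It therefore suffices to identify $\gamma_*(i^*K)$ with $i^*\gamma_*(K)$, equivalently, to prove that the exchange transformation $\derL\gamma^*\circ i_*\to i_*\circ\derL\gamma^*$ is an isomorphism for every closed immersion $i$, thereby extending Lemma \ref{lm:gammastariupperstar} beyond the smooth retraction case.

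To establish this extended commutation, I would argue as follows: for any $N$ in $\Der_{\AA^1,\et}^{\mathit{eff}}(\sh_\et(Z,R))$, the object $\derL\gamma^*i_*(N)$ in $\DMe_\et(S,R)$ satisfies $j^*\derL\gamma^*i_*(N)\simeq\derL\gamma^*j^*i_*(N)=0$ (using the commutation of $\derL\gamma^*$ with $j^*$ and the vanishing $j^*i_*=0$). The target comparison $i_*\derL\gamma^*(N)$ also has vanishing restriction to $U$, so the two objects both lie in the subcategory of objects with trivial pullback to $U$, which by Ayoub's localization (applied on the side without transfers, and transferred via the already-established part of the argument) coincides with the essential image of $i_*$. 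The identification of these two objects then reduces, via Corollary \ref{cor:derivedX_et&transfers} and the generation of $\Der(\shtr(Z,R))$ by sheaves of the form $R^{tr}_Z(X)$, to a check on generators that can be carried out using the classical behavior of $i_*$ on the small étale site.

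The main obstacle is precisely this last identification in full generality: without a smooth retraction, the comparison between $\derL\gamma^*i_*(N)$ and $i_*\derL\gamma^*(N)$ is not transparent, and I expect that the positive characteristic hypothesis on $R$ will be essential here, both through the invertibility of the Tate twist (Proposition \ref{prop:DMeet&stability}, which reconciles $\DMe_\et$ and $\DM_\et$ by Corollary \ref{cor:torsionetaleinfiniteloopspace}) and through the rigidity-like behavior of torsion étale coefficients that makes the small étale site of $Z$ control the picture. Once the commutation $\derL\gamma^*\circ i_*\simeq i_*\circ\derL\gamma^*$ is established, the localization theorem follows by the same formal argument that proves Proposition \ref{prop:locsmoothretract}, using the conservativity of $\gamma_*$ together with Theorem \ref{thm:locetalewithouttransfers}.
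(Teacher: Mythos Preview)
Your proposal has a genuine circularity. The heart of your plan is to extend Lemma~\ref{lm:gammastariupperstar} (the commutation $\derL\gamma^*\,i_*\simeq i_*\,\derL\gamma^*$) from closed immersions with a smooth retraction to arbitrary closed immersions. But your sketch for this extension appeals to the fact that, in $\DMe_\et(S,R)$, an object killed by $j^*$ lies in the essential image of $i_*$. That statement is precisely part of the localization property you are trying to establish. Invoking Ayoub's theorem on the side without transfers does not help here: conservativity of $\gamma_*$ only lets you test whether a given \emph{map} is an isomorphism, not whether an object with $j^*=0$ is in the image of $i_*$ on the transfers side. Your final remark that ``rigidity-like behavior'' of torsion coefficients should close the gap is not available either: the rigidity theorem (Theorem~\ref{thm:rigidity1}) is proved \emph{after} localization in the paper, so using it here would again be circular. (A smaller issue: the asserted equivalence between $\gamma_*\,i^*\simeq i^*\,\gamma_*$ and $\derL\gamma^*\,i_*\simeq i_*\,\derL\gamma^*$ is not a formal mate relation and needs justification.)

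The paper proceeds entirely differently, via condition~(iii) of Proposition~\ref{prop:equivalent_conditions_localization}: one must show that for every smooth $S$-scheme $X$ the map $R_S^{tr}(X/X-X_Z)\to i_*R_Z^{tr}(X_Z)$ is an isomorphism. This is handled by first treating the case where $X\to S$ is \'etale directly, using Corollary~\ref{cor:loc_cst_sheaf&transfers}, and then reducing the general case to this one by choosing (Zariski-locally) an \'etale map $X\to\PP^n_S$ and applying the exchange isomorphism of Corollary~\ref{cor:sm_proj&ex_sharp*}. That corollary is where the positive-characteristic hypothesis actually enters: it relies on the smooth projective purity of Theorem~\ref{thm:DMe_sm_proj_purity}, which in turn uses the invertibility of the Tate twist (Proposition~\ref{prop:DMeet&stability}) and the orientation. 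So the leverage from positive characteristic is purity for $\PP^n$, not rigidity.
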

\begin{proof}
We will prove that condition (iii) of
 Proposition \ref{prop:equivalent_conditions_localization} is satisfied.
 Note that according to Proposition \ref{prop:DMeet&stability},
 $i_*$ commutes with twists.\footnote{Essentially because it is true for its
 left adjoint $i^*$. This fact was already remarked
  at the beginning of the proof of Theorem \ref{thm:DMe_sm_proj_purity}.}
Thus it remains to prove that for any smooth $S$-scheme $X$,
 the canonical morphism
$$
\epsilon_{X/S}:R_S^{tr}(X/X-X_Z) \rightarrow i_* R_Z^{tr}(X_Z)
$$
is an isomorphism in $\DM_\et(S,R)$
(recall that $i_*=\derR i_*$ according
 to Lemma \ref{lm:DMet_f_*=Rf_*_f_finite}).

Let us first consider the case where $X$ is \'etale.
Then according to Corollary \ref{cor:loc_cst_sheaf&transfers},
 the sequence of sheaves with transfers
\begin{equation} \label{eq:DMet_proof_loc1}
0 \rightarrow R_S^\tr(X-X_Z) \xrightarrow{j_*}
 R_S^\tr(X) \xrightarrow{i^*} i_*R_Z^\tr(X_Z) \rightarrow 0
\end{equation}
is isomorphic after applying the functor $\gamma_*$ to the sequence
$$
0 \rightarrow R_S(X-X_Z) \xrightarrow{j_*}
 R_S(X) \xrightarrow{i^*} i_*R_Z(X_Z) \rightarrow 0.
$$
This sequence of sheaves is obviously exact (we can easily check this on the fibers).
 As $\gamma_*$ is conservative and exact, the sequence \eqref{eq:DMet_proof_loc1}
 is exact. Thus the canonical map:
$$
R_S^{tr}(X/X-X_Z):=\mathrm{coker}(j_*) \rightarrow i_*R_Z^\tr(X_Z)
$$
is an isomorphism in $\shtr(X,R)$ and \emph{a fortiori} in $\DM_\et(S,R)$.

We now turn to the general case.
For any open cover $X=U \cup V$, we easily get the usual
 Mayer-Vietoris short exact sequence in $\shtr(S,R)$:
$$
0\to R_S^\tr(U \cap V) \rightarrow R_S^{tr}(U) \oplus R_S^\tr(V)
 \rightarrow R_S^\tr(X) \rightarrow 0 \, .
$$
Thus the assertion is local on $X$ for the Zariski topology.
In particular, as $X/S$ is smooth, we can assume there exists
 an \'etale map $X \rightarrow \AA^n_S$. Therefore,
  by composing with any open immersion $\AA^n_S \rightarrow \PP^n_S$,
  we get an \'etale $S$-morphism $f:X \rightarrow \PP^n_S$.
Consider the following cartesian square:
$$
\xymatrix@=14pt{
\PP^n_Z\ar_q[d]\ar^k[r] & \PP^n_S\ar^p[d] \\
Z\ar^i[r] & S,
}
$$
where $p$ is the canonical projection.
If we consider the notations of Paragraph \ref{num:notation_DMet}
 and Remark \ref{rem:Ex_sharp*} relative to this square,
 then the following diagram
$$
\xymatrix{
p_\sharp\ar@{=}[d]\ar^{p_\sharp(\alpha_k)}[rr]
 && p_\sharp k_*k^*\ar^{Ex_{\sharp*}}[d] \\
p_\sharp\ar^-{\alpha_i}[r] & i_*i^*p_\sharp\ar^{Ex_\sharp^*}[r]
 & i_*q_\sharp k^*
}
$$
is commutative
 -- this can be easily checked using Formula \eqref{eq:Ex_sharp*}.

If we apply the preceding commutative diagram to the object $R_S^{tr}(X/X-X_Z)$,
 we get the following commutative diagram in $\DM_\et(S,R)$:
$$
\xymatrix{
p_\sharp R^\tr_{\PP^n_S}(X/X-X_Z)\ar@{=}[d]
   \ar^-{p_\sharp(\epsilon_{X/\PP^n_S})}[rr]
 && p_\sharp k_* R^\tr_{\PP^n_Z}(X_Z)\ar^{Ex_{\sharp *}}[d] \\
R^\tr_{S}(X/X-X_Z)\ar^-{\epsilon_{X/S}}[r] & i_* q_\sharp R^\tr_{Z}(X_Z)
 & i_* q_\sharp R^\tr_{\PP^n_Z}(X_Z)\ar@{=}[l] \\
}
$$
The conclusion follows
 from the case treated above
 and from Corollary \ref{cor:sm_proj&ex_sharp*}.
\end{proof}

As the premotivic triangulated category $\DM_\et(-,R)$
 satisfies the stability property
  (Proposition \ref{prop:DMeet&stability})
 and the weak purity property (Theorem \ref{thm:DMe_sm_proj_purity})
 the previous result allows to apply Theorem \ref{thm:recall_carac_6functors}
 to $\DM_\et(-,R)$:
 
\begin{cor} \label{cor:DMet_6functors}
For any ring $R$ of positive characteristic,
the oriented triangulated premotivic category $\DM_\et(-,R)$
satisfies Grothendieck's 6 functors formalism
(Definition \ref{df:recall_6_functors}). 
\end{cor}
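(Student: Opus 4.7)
The plan is simply to verify that $\DM_\et(-,R)$ satisfies all the hypotheses of the abstract characterization theorem \ref{thm:recall_carac_6functors}, which provides sufficient conditions on an oriented premotivic triangulated category for it to support the full formalism of Grothendieck's six operations. By the structure of that theorem, one needs four ingredients: (i) the premotivic structure itself, (ii) stability (invertibility of the Tate twist), (iii) an orientation, (iv) the weak purity property for smooth projective morphisms, and (v) the localization property with respect to arbitrary closed immersions.

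First, the premotivic structure on $\DM_\et(-,R)$ is given by construction in \ref{num:Dm^eff_et&DM_et}. The stability property follows from Proposition \ref{prop:DMeet&stability}, which asserts that $R_S(1)$ is $\otimes$-invertible in $\DMe_\et(S,R)$ (and hence, via the equivalence of Corollary \ref{cor:torsionetaleinfiniteloopspace}, in $\DM_\et(S,R)$). The orientation is provided by the \'etale motivic Chern class $c_1^\et$ defined in Definition \ref{df:c_1_etale}, which supplies a functorial assignment $\Pic(S)\to\Hom_{\DM_\et(S,R)}(R_S,R_S(1)[2])$ as required by the axioms recalled in the appendix.

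Next, the weak purity property is precisely Theorem \ref{thm:DMe_sm_proj_purity}: for every smooth projective morphism $f:X\to S$ of relative dimension $d$, the natural transformation $\pur_f:f_\sharp\to f_*(d)[2d]$ constructed in \eqref{df:oriented_purity_iso} is an isomorphism. Finally, the localization property (with respect to any closed immersion) has just been established in Theorem \ref{thm:DM_et_localization}, via the verification of the equivalent condition (iii) of Proposition \ref{prop:equivalent_conditions_localization}.

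With all the hypotheses in place, one concludes by a direct invocation of Theorem \ref{thm:recall_carac_6functors} applied to the oriented premotivic triangulated category $\DM_\et(-,R)$, which produces the full six functor formalism (existence of $f_!$ and $f^!$ adjoint to $f^*$ and $f_*$ for separated morphisms of finite type, together with base change, projection formulas, and purity in the required generality). There is no real obstacle at this stage since every hypothesis has been verified individually in the preceding subsections; the only thing to double-check is that the orientation induced by $c_1^\et$ and the purity isomorphism of \ref{thm:DMe_sm_proj_purity} are compatible in the sense required by \ref{thm:recall_carac_6functors}, which is built into the construction of $\pur_f$ via Paragraph \ref{num:DMet&Deg7} (where the axioms of \cite[\S 2.1]{Deg7} are verified, matching the abstract setup).
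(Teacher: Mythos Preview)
Your proposal is correct and follows essentially the same route as the paper: both arguments reduce the corollary to an application of Theorem~\ref{thm:recall_carac_6functors}, after checking stability (Proposition~\ref{prop:DMeet&stability}), orientation (Definition~\ref{df:c_1_etale}), weak purity (Theorem~\ref{thm:DMe_sm_proj_purity}), and localization (Theorem~\ref{thm:DM_et_localization}). The paper's own justification is the one-sentence lead-in to the corollary, and your expanded version simply makes the same verification explicit.
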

In other words, $\DM_\et(-,R)$
 is an oriented motivic triangulated category over
 the category of noetherian schemes.
\subsection{Compatibility with direct image}

\begin{num}
According to Example \ref{ex:etale_and_premotivic},
 the categories $\Der(X_\et,R)$ are the fibers
 of an $\Et$-premotivic triangulated category over 
 the category of noetherian schemes.

Recall that the derived tensor product %\footnote{
% While the existence of the derived functor $\derR f_*$
% on the unbounded derived category was established by Spaltenstein,
% the existence of $\otimes^\derL$ and $\derR \uHom$
%  on the unbounded derived category $\Der(X_\et,R)$ 
% has been an open question for a long time.}
 $\otimes^\derL$ is essentially characterized by the property
 that for any \'etale $X$-schemes $U$ and $V$,
 $R_X(U) \otimes^\derL R_X(V)=R_X(U \times_X V)$ in $D(X_\et,R)$.

Similarly, for any \'etale morphism $p:V \rightarrow X$,
 the operation $\derL p_\sharp$ is characterized
 by the property that for any \'etale $V$-scheme $W$,
 $\derL p_\sharp(R_V(W))=R_X(W)$.
\end{num}

\begin{num} \label{num:rho_!&6functors_basic}
(Following the abuse of this section
 we drop again the letters $\derL$ and $\derR$
 in front of derived functors to simplify notations.)
Due to the properties of the functors involved in the construction of 
$$
\rho_!:\Der(-_\et,R) \rightarrow \DMe_\et(-,R)
$$
% (see Formula \eqref{eq:D^b_c->DMet_pleinement_fidele})
% and the stability properties of $D^b_c(-_\et,R)$ recalled above,
 we get the following compatibility properties:
\begin{enumerate}
\item $\rho_!$ is monoidal.
\item For any morphism $f:Y \rightarrow X$ of schemes,
 there exists a canonical isomorphism:
$$
Ex(f^*,\rho_!):f^* \rho_! \rightarrow \rho_! f^*.
$$
\item For any \'etale morphism $p:V \rightarrow X$,
 there exists a canonical isomorphism:
$$
\rho_! p_\sharp \rightarrow p_\sharp \rho_!.
$$
\end{enumerate}
Assume that $R$ is of positive characteristic $n$, and
consider now a proper morphism $f:Y \rightarrow X$
between schemes whose residue characteristics are prime to $n$.
% which is cohomologically constructible
% (Def. \ref{df:cohomologically_constructible}).
 Then, we can form the following natural transformation:
$$
Ex(\rho_!,f_*):\rho_!f_*
 \xrightarrow{\alpha_f} f_*f^*\rho_!f_*
 \xrightarrow{Ex(f^*,\rho_!)} f_*\rho_!f^*f_*
 \xrightarrow{\alpha'_f} f_*\rho_!.
$$
\end{num}
\begin{prop} \label{prop:rho_!&f_*}
Using the assumptions and notations above,
the map
$$Ex(\rho_!,f_*):\rho_!f_*(K)\to  f_*\rho_!(K)$$
is an isomorphism
for any object $K$ of $\Der(Y_\et,R)$.
%If moreover $X$ is \'etale locally of finite 
%$\ell$-cohomological dimension for any prime $\ell$ dividing $n$,
%and if $f$ is of finite type,
%then the same conclusion holds for any object $K$ of $\Der(Y_\et,R)$.
\end{prop}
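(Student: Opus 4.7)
The plan is to deduce this at once from the rigidity theorem by a formal mate argument, without exploiting properness of $f$ in any essential way beyond the existence of the adjoint $f_*$ on both sides. Under the hypotheses, the integer $n$ is invertible on both $X$ and $Y$; hence the rigidity theorem (Theorem \ref{thm:rigidity1}), combined with Corollary \ref{cor:torsionetaleinfiniteloopspace} identifying $\DMe_\et(-,R)$ with $\DM_\et(-,R)$, tells us that $\rho_!$ is an equivalence of triangulated categories, both over $X$ and over $Y$.

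From point (2) of paragraph \ref{num:rho_!&6functors_basic}, the canonical natural transformation $Ex(f^*,\rho_!): f^*\rho_!\to \rho_! f^*$ is an isomorphism. Unfolding the definition, the exchange $Ex(\rho_!,f_*)$ appearing in the statement is precisely the mate of $Ex(f^*,\rho_!)^{-1}$ under the two adjunctions $(f^*,f_*)$, which exist both on the \'etale side (as the total derived direct image functor) and on the motivic side (where $f_*$ is right adjoint to $f^*$ thanks to the six-functor formalism provided by Corollary \ref{cor:DMet_6functors}).

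The conclusion now follows from a standard observation: in a square of adjunctions in which the horizontal functors are equivalences, the mate of an isomorphism between the left adjoints is automatically an isomorphism between the right adjoints. Concretely, uniqueness of right adjoints gives a canonical isomorphism $\rho_!\, f_*\, \rho_!^{-1}\simeq f_*$ (on $X$, with $\rho_!^{-1}$ denoting a quasi-inverse of $\rho_!$ on $Y$), because after conjugation by the equivalences $\rho_!$ both sides are right adjoint to the same functor $f^*$, by point (2). Evaluating on $\rho_!(K)$ for $K\in \Der(Y_\et,R)$ and tracking the unit and counit of $(f^*,f_*)$ shows that this canonical isomorphism is exactly $Ex(\rho_!,f_*)(K)$.

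The main (in fact, only) ingredient is rigidity. The only point at which one needs to be careful is the bookkeeping verifying that the explicit natural transformation $Ex(\rho_!,f_*)$ constructed in \ref{num:rho_!&6functors_basic} as a triple composite of unit, compatibility iso and counit coincides with the mate produced by the $2$-categorical recipe; this is a completely formal check and constitutes the only mild obstacle.
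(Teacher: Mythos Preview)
Your argument is circular. The rigidity theorem (Theorem \ref{thm:rigidity1}) is proved in Section~4.5, \emph{after} the proposition under discussion, and its proof explicitly invokes Proposition \ref{prop:rho_!&f_*} to obtain essential surjectivity of $\rho_!$: one knows $\rho_!$ is fully faithful (Proposition \ref{prop:D^b_c->DMet_pleinement_fidele}), and then uses that $\rho_!$ commutes with $f_*$ for projective $f$ together with Proposition \ref{prop:DM_et_f_*_constructible0} to see that its essential image is all of $\DM_\et(X,R)$. So you cannot appeal to rigidity here.

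The paper's proof proceeds without knowing that $\rho_!$ is an equivalence. One tests the map $Ex(\rho_!,f_*)$ against the generators $p_\sharp(\un_W)$ of $\DM_\et(X,R)$ for smooth $p:W\to X$. Using the adjunction $(p_\sharp,p^*)$ and a base-change square, the proper base change isomorphism on both sides (classical \'etale proper base change for $\Der(-_\et,R)$, and the six-functor proper base change for $\DM_\et$ from Corollary \ref{cor:DMet_6functors}) reduces the question to the case $p=1_X$. There, the statement becomes that a certain map between Hom-groups with source $R_X$ is an isomorphism; this follows from the full faithfulness of $\rho_!$ (Proposition \ref{prop:D^b_c->DMet_pleinement_fidele}) together with $\rho_!(R_X)=R_X$ and the $(f^*,f_*)$-adjunction, which identify both sides with $H^0_\et(Y,K)$. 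Properness of $f$ is used essentially, through proper base change.
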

\begin{proof}
%% We consider a complex $K$ in $\Der^+(Y_\et,R)$
%% (under the additional assumptions on $X$ and $f$,
%% we consider a complex $K$ in $\Der(Y_\et,R)$),
%%  and we will prove that the map
%% $$
%% Ex(\rho_!,f_*)_K:
%% \rho_!f_*(K) \rightarrow f_*\rho_!(K)
%% $$
%%  is an isomorphism.
Recall the triangulated category $\DM_\et(X,R)=\DMe_\et(X,R)$ 
is generated by objects of the form $R_X^{tr}(W)=p_\sharp(\un_W)$
 where $p:W \rightarrow X$ is a smooth morphism.
 Thus, we have to prove that for any integer $n \in \ZZ$,
 the induced map:
\begin{equation} \label{proof:eq:rho_!&f_*}
\Hom_{\DMe_\et(X,R)}(p_\sharp(R_W)[n],\rho_!f_*(K))
 \rightarrow \Hom_{\DMe_\et(X,R)}(p_\sharp(R_W)[n],f_*\rho_!(K))
\end{equation}
is invertible. Consider the following cartesian square:
$$
\xymatrix{
W'\ar_g[d]\ar^q[r] & Y\ar^f[d] \\
W\ar^p[r] & X
}
$$
Then we get canonical isomorphisms
$$
Ex_*^*:p^*f_* \rightarrow g_*q^*
$$
both in $D(-_\et,R)$ and in the premotivic triangulated category $\DM_\et(-,R)$,
by the proper base change theorem
 -- see Theorem~\ref{etaleproperbasechange} and
  respectively 
	Corollary~\ref{cor:DMet_6functors}, Definition~\ref{df:recall_6_functors}(4).

On the other hand,
 the following diagram is commutative:
$$
\xymatrix@R=20pt@C=50pt{
p^*\rho_!f_*\ar_{Ex(p^*,\rho_!)}[d]\ar^{Ex(\rho_!,f_*)}[r]
 & p^*f_*\rho_!\ar^{Ex_*^*}[d] \\
\rho_!p^*f_*\ar_{Ex_*^*}[d]
 & g_*q^*\rho_!\ar^{Ex(q^*,\rho_!)}[d] \\
\rho_!g_*q^*\ar^{Ex(\rho_!,g_*)}[r] & g_*\rho_!q^*
}
$$
Thus, using the adjunction $(p_\sharp,p^*)$
 and replacing $K$ by $g^*(K)[-n]$,
 we reduce to prove that the map \eqref{proof:eq:rho_!&f_*}
 is an isomorphism for any complex $K$ when $p=1_X$ and $n=0$.
We have to prove that the map
\begin{equation*}
Ex(\rho_!,f_*)_*:\Hom_{\DMe_\et(X,R)}(R_X,\rho_!f_*(K))
 \rightarrow \Hom_{\DMe_\et(X,R)}(R_X,f_*\rho_!(K))
\end{equation*}
is an isomorphism.

But using the fact $\rho_!(R_X)=R_X$,
 Proposition \ref{prop:D^b_c->DMet_pleinement_fidele},
 as well as the adjunction $(f^*,f_*)$, the source and target of
 this map can be identified to $H^0_\et(Y,K)$
 and this concludes. For the cautious reader,
 let us say more precisely that this follows from
  the commutativity of the following diagram:
$$
\xymatrix@C=35pt{
\Hom(R_X,f_*(K))\ar_{\rho_!}[d]\ar^{adj.}[r] & \Hom(f^*(R_X),K)\ar^{\rho_!}[d]
 & \\
\Hom(\rho_!(R_X),\rho_!f_*(K))\ar@{=}[d]
 & \Hom(\rho_!f^*(R_X),\rho_!(K))\ar^{Ex(f^*,\rho_!)^*}[r]
 & \Hom(f^*\rho_!(R_X),\rho_!(K))\ar^{adj.}[d] \\
\Hom(\rho_!(R_X),\rho_!f_*(K))\ar^{Ex(\rho_!,f_*)_*}[rr]
 && \Hom(\rho_!(R_X),f_*\rho_!(K)).
}
$$
\end{proof}

\subsection{The rigidity theorem}

\begin{prop} \label{prop:DM_et_f_*_constructible0}
The category $\DM_{\et}(X,R)$
 is the localizing subcategory of the
 triangulated category $\DM_\et(X,R)$
 generated by objects of the form $f_*(R_Y)(n)$
 for any projective morphism $f:Y \rightarrow X$
 and any integer $n\in\ZZ$.
\end{prop}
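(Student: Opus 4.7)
Let $\mathcal T\subset\DM_\et(X,R)$ denote the localizing subcategory generated by the $f_*(R_Y)(n)$ with $f\colon Y\to X$ projective and $n\in\ZZ$. Since $\DM_\et(X,R)$ is generated as a localizing subcategory by motives of the form $p_\sharp R_W(n)=R_X^{tr}(W)(n)$ with $p\colon W\to X$ smooth (separated of finite type), it suffices to show that each such object lies in $\mathcal T$. My plan is to reduce, via \'etale descent, to the case where $p$ factors through an \'etale morphism to some $\AA^d_X$, and then to compactify using Zariski's main theorem and conclude via relative purity and the localization triangle.

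Since smoothness is \'etale-locally \'etale over affine space, one can find an \'etale cover $V\to W$ with $V=\coprod_i V_i$ and each $V_i$ \'etale over some $\AA^{d_i}_X$. Consider the \v Cech hypercover $V_\bullet\to W$; Proposition~\ref{prop:etale_descent_transfers} asserts that $R_X^{tr}(V_\bullet)\to R_X^{tr}(W)$ becomes a quasi-isomorphism after applying $\gamma_*$, and hence is an isomorphism in $\DM_\et(X,R)$. Each term $V_n=V\times_W\cdots\times_W V$ is again a disjoint union of \'etale $\AA^d_X$-schemes, since fibre products over $W$ of \'etale $\AA^d_X$-schemes are \'etale over $\AA^d_X$ through any projection. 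Using that $\mathcal T$ is localizing, hence closed under small sums and under the sequential homotopy colimits of truncations that realize the complex $R_X^{tr}(V_\bullet)$, I may assume $p$ factors as $W\xrightarrow{q}\AA^d_X\to X$ with $q$ \'etale. In that case, the composite $W\to\AA^d_X\hookrightarrow\PP^d_X$ is separated, quasi-finite, and of finite type, so Zariski's main theorem provides a factorization $W\xrightarrow{j}\bar W\xrightarrow{g}\PP^d_X$ with $j$ an open immersion and $g$ finite; thus $\bar p\colon\bar W\to X$ is projective, as is $\bar p\circ i$, where $i\colon Z\to\bar W$ is the closed complement of $W$. Pushing forward the localization triangle $j_!R_W\to R_{\bar W}\to i_*R_Z\to j_!R_W[1]$ of Theorem~\ref{thm:DM_et_localization} by $\bar p_*=\bar p_!$ (and using $\bar p_*j_!=\bar p_!j_!=p_!$) yields
$$p_!R_W\to \bar p_*R_{\bar W}\to (\bar p\circ i)_*R_Z\to p_!R_W[1].$$

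The last two terms of this triangle lie in $\mathcal T$ after any Tate twist, and the relative purity isomorphism from the six functors formalism of Corollary~\ref{cor:DMet_6functors} identifies $p_\sharp R_W(n)$ with $p_!R_W(n+d)[2d]$; hence $p_\sharp R_W(n)\in\mathcal T$, which finishes the argument. The main subtlety I anticipate is the \'etale-descent step: one has to know that the (a priori unbounded) complex $R_X^{tr}(V_\bullet)$ indeed represents $R_X^{tr}(W)$ in the triangulated category $\DM_\et(X,R)$ and that it lies in $\mathcal T$ as soon as each of its terms does. This is guaranteed by Proposition~\ref{prop:etale_descent_transfers} together with the standard fact that every localizing subcategory is stable under assembling an unbounded complex from its brutal truncations via a sequential homotopy colimit.
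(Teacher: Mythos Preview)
Your argument is correct and is essentially a direct unpacking of the lemma the paper invokes: the paper's proof is a one-line citation of \cite[Lemma~2.2.23]{ayoub} or \cite[Proposition~4.2.13]{CD3}, which are general results (valid in any motivic triangulated category once localization and purity hold, hence the reference to Theorem~\ref{thm:DM_et_localization}) whose proof follows exactly your pattern of compactifying a smooth morphism and using the localization triangle together with relative purity.

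One small simplification: you invoke \'etale descent via a \v Cech hypercover, but since a smooth morphism is already \emph{Zariski}-locally \'etale over affine space, a finite Mayer--Vietoris induction suffices; this avoids the unbounded-complex bookkeeping and yields the slightly sharper statement (which is what the paper actually records) that each $p_\sharp R_W(n)$ lies in the \emph{thick} subcategory generated by the $f_*(R_Y)(n)$, not merely the localizing one.
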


\begin{proof}
 The category $\DM_{\et}(X,R)$
 is the localizing subcategory of $\DM_\et(X,R)$
 generated by objects of the form $R_X(Y)(n)$
 for any smooth $X$-scheme $Y$
 and any integer $n\in\ZZ$.
 But such objects belong to the
 thick subcategory generated
 by objects of the form $f_*(R_Y)(n)$
 for any projective morphism $f:Y \rightarrow X$
 and any integer $n\in\ZZ$: see \cite[lemma 2.2.23]{ayoub}
 or \cite[Proposition 4.2.13]{CD3}, which is meaningful thanks to Theorem \ref{thm:DM_et_localization} above.
\end{proof}

The following theorem is a generalization of 
 the rigidity theorem of Suslin and Voevodsky
  (\cite[4.1.9]{V1} or \cite[chap. 5, 3.3.3]{FSV})
 when the base is of positive dimension:

\begin{thm} \label{thm:rigidity1}
Assume that $R$ is a ring of positive characteristic $n$,
and consider a noetherian $\ZZ[1/n]$-scheme $X$.
Then the functor
$$\rho_!:\Der(X_\et,R)\to\DMe_\et(X,R)\simeq\DM_\et(X,R)$$
is an equivalence of symmetric monoidal triangulated
categories, whose quasi-inverse is
induced by the restriction functor on the small \'etale site (for $\AA^1$-local complexes of \'etale sheaves with
transfers).
%This equivalence of categories restricts to an
%equivalence at the level of constructible objects:
%$$\Der^b_c(X_\et,R) \simeq \DM_{\et,c}(X,R)\, .$$
\end{thm}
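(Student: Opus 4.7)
The plan is to decompose the statement into three pieces: (i) the identification $\DMe_\et(X,R)\simeq\DM_\et(X,R)$, (ii) full faithfulness of $\rho_!$, and (iii) essential surjectivity of $\rho_!$. Piece (i) is already Corollary \ref{cor:torsionetaleinfiniteloopspace} (which applies since $\DMe_\et$ is $\otimes$-stable by Proposition \ref{prop:DMeet&stability}). Piece (ii) is essentially already proved: Proposition \ref{prop:D^b_c->DMet_pleinement_fidele} gives full faithfulness, since the hypothesis that $n$ is prime to the residue characteristics of $X$ follows from $X$ being a $\ZZ[1/n]$-scheme. Thus the heart of the proof is piece (iii).

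For essential surjectivity, I would argue that the essential image of $\rho_!$ is a localizing subcategory of $\DM_\et(X,R)$ containing a family of generators. That this image is a localizing subcategory comes from the fact that $\rho_!$ is the total left derived functor of a composite of left adjoints at the level of sheaves ($\rho_\sharp$ from Lemma \ref{lm:small_to_sm_etale}(3), $\gamma^*$ from Proposition \ref{prop:main_properties_shtr}, and the $\AA^1$-localization $\pi_{\AA^1}$), so $\rho_!$ commutes with small sums and is triangulated. By Proposition \ref{prop:DM_et_f_*_constructible0}, it suffices to show that every object of the form $f_*(R_Y)(m)$, with $f\colon Y\to X$ projective and $m\in\ZZ$, lies in the essential image.

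The generators split naturally into two subtasks. For Tate twists: since $n$ is invertible in $\cO_X$, Proposition \ref{prop:comput_Tate_etale_torsion} gives an isomorphism $R_X(1)\simeq \mu_{n,X}\otimes_{\ZZ/n\ZZ}R$ in $\DMe_\et(X,R)$, and $\mu_{n,X}$ is a locally constant sheaf on $X_\et$; hence $R_X(1)$, and so every Tate twist, is in the image of $\rho_!$, which is moreover monoidal by \ref{num:rho_!&6functors_basic}(1). For direct images: the hypothesis that $X$ is a $\ZZ[1/n]$-scheme means every $X$-scheme has residue characteristics prime to $n$, so Proposition \ref{prop:rho_!&f_*} applies to any projective (hence proper) $f\colon Y\to X$ and gives $\rho_! f_*\simeq f_*\rho_!$ as functors $\Der(Y_\et,R)\to\DM_\et(X,R)$. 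Applying this to the constant sheaf $R_Y$ on $Y_\et$ (whose image under $\rho_!$ is $R_Y$ in $\DM_\et(Y,R)$) yields $f_*(R_Y)\simeq\rho_!(f_*(R_Y))$, so these generators are in the essential image. Combining with Tate twists finishes essential surjectivity.

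The description of the quasi-inverse in the statement then comes for free: since $\rho_!$ is an equivalence, its quasi-inverse is its right adjoint. At the level of abelian sheaves this right adjoint is $\rho^*\gamma_*$; to pass to the derived/$\AA^1$-localized level one chooses an $\AA^1$-local fibrant resolution before applying $\gamma_*$ (which preserves $\AA^1$-equivalences by Proposition \ref{prop:etoublitransA1eq}) and then restricting along $\rho^*$ to the small \'etale site. The main obstacle in this plan is not the generator argument itself, but the prerequisites it depends on: the identification of $R_X(1)$ with $\mu_{n,X}$ (where invertibility of $n$ is essential) and the commutation $\rho_! f_*\simeq f_*\rho_!$ for proper $f$ (which in turn rests on the proper base change theorem \ref{etaleproperbasechange} on the $\Der(-_\et,R)$ side and Corollary \ref{cor:DMet_6functors} on the $\DM_\et(-,R)$ side). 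With those in hand, assembling piece (iii) is a straightforward generation argument.
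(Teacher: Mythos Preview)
Your proof is correct and follows essentially the same approach as the paper: full faithfulness via Proposition~\ref{prop:D^b_c->DMet_pleinement_fidele}, then essential surjectivity by showing the essential image is a localizing subcategory containing the generators of Proposition~\ref{prop:DM_et_f_*_constructible0}, using Proposition~\ref{prop:rho_!&f_*} for the direct images. Your explicit treatment of Tate twists via Proposition~\ref{prop:comput_Tate_etale_torsion} and the monoidality of $\rho_!$ makes explicit a step the paper leaves implicit, and your discussion of the quasi-inverse is a welcome elaboration, but the overall strategy is the same.
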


\begin{proof}
The fully faithfulness of the functor $\rho_!$ has been established in
Proposition \ref{prop:D^b_c->DMet_pleinement_fidele}.
As the functor $\rho_!$ commutes with small sums, it
identifies $\Der(X_\et,R)$ with a localizing subcategory
of $\DM_\et(X,R)$. Therefore, the essential surjectivity
of the functor $\rho_!$ readily follows from
Propositions \ref{prop:rho_!&f_*}
and \ref{prop:DM_et_f_*_constructible0}.
\end{proof}

%As a corollary:
%\begin{cor}
%The morphism defined by formula \eqref{eq:D^b_c->DMet_pleinement_fidele}
% induces an equivalence of triangulated premotivic categories
% 
%$$
%\rho_!:\DM_\et(-,R) \rightarrow D^b_c(-_\et,R)
%$$
%and thus commutes with the 6 operations (Definition \ref{df:recall_morphisms&6op}).
%
%For any constructible
% and $\AA^1$-local \'etale motivic complex $K$ over $X$
% (see Def. \ref{df:DM_et_constructible} and \ref{df:A^1-local_complex}),
% the complex
%  obtained by restriction of $K$ to $X_\et$
% has bounded cohomology with constructible cohomology sheaves.
% We denote it by $\rho^*(K)$.
%\end{cor}

We can extend these results in the case of $p$-torsion coefficients as follows:

\begin{cor} \label{cor:rigidity_2nd_formulation}
Assume that $R$ is of characteristic $p^r$ for a prime $p$ and an integer $r\geq 1$.
Let $X$ be any noetherian scheme, and $X[1/p]=X \times \Spec(\ZZ[1/p])$.
Then there is a canonical equivalence of categories
$$\DM_\et(X,R)\simeq\Der(X[1/p]_\et,R)\, .$$
\end{cor}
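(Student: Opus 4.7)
The plan is to combine two ingredients already established in the paper: the rigidity Theorem \ref{thm:rigidity1} (which applies once $p$ is invertible on the base), and the fact that restriction along the open immersion $j\colon X[1/p]\to X$ induces an equivalence on \'etale motives with $p$-primary torsion coefficients.

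More concretely, since $R$ is of characteristic $p^r$, the same argument that was used in the proof of Proposition \ref{prop:DMeet&stability} via Proposition \ref{prop:et+htp&torsion} (a consequence of the Artin–Schreier exact sequence, which forces complexes of \'etale sheaves of $R$-modules over characteristic-$p$ schemes to vanish after $\AA^1$-localization) shows that the pullback functor
$$
j^*\colon \DM_\et(X,R)\longrightarrow \DM_\et(X[1/p],R)
$$
is an equivalence of symmetric monoidal triangulated categories. This is the first step.

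For the second step, we note that on $X[1/p]$ the integer $p^r$, and \emph{a fortiori} the characteristic of $R$, is invertible. Thus $X[1/p]$ is a noetherian $\ZZ[1/p^r]$-scheme, and Theorem~\ref{thm:rigidity1} applies: the functor
$$
\rho_!\colon \Der(X[1/p]_\et,R)\longrightarrow \DM_\et(X[1/p],R)
$$
is an equivalence of symmetric monoidal triangulated categories.

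Composing the two equivalences $(j^*)^{-1}\circ \rho_!$ yields the desired canonical equivalence $\Der(X[1/p]_\et,R)\simeq \DM_\et(X,R)$. There is no real obstacle here beyond invoking the two cited results; the main technical content is hidden in Proposition~\ref{prop:et+htp&torsion} (which handles the $p$-torsion over characteristic-$p$ points) and in Theorem~\ref{thm:rigidity1} itself, and the corollary is essentially their formal combination via the open–closed decomposition $X[1/p]\hookrightarrow X\hookleftarrow X\times\Spec(\FF_p)$.
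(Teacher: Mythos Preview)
Your proof is correct and follows exactly the same approach as the paper, which simply cites Theorem~\ref{thm:rigidity1} and Proposition~\ref{prop:et+htp&torsion}. The only (harmless) redundancy is the closing remark about the open--closed decomposition, which is not actually used: Proposition~\ref{prop:et+htp&torsion} already gives the equivalence $j^*$ directly, without any need to argue separately about the closed complement.
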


\begin{proof}
This follows from Theorem \ref{thm:rigidity1}
and from Proposition \ref{prop:et+htp&torsion}.
\end{proof}

\begin{cor}\label{A1locequivsmalletalesite}
Under the assumptions of Theorem \ref{thm:rigidity1}, for any complex
of \'etale sheaves with transfers of $R$-modules $C$ over $X$, the
following conditions are equivalent:
\begin{itemize}
\item[(i)] the complex $C$ is $\AA^1$-local;
\item[(ii)] for any integer $n$, the \'etale sheaf $H^n(C)$
(seen as a complex concentrated in degree zero) is $\AA^1$-local;
\item[(iii)] the map $\rho_!\rho^* C\to C$ is a quasi-isomorphism
of complexes of \'etale sheaves;
\item[(iv)] for any integer $n$, the map $\rho_!\rho^* H^n(C)\to H^n(C)$
is invertible.
\end{itemize}
\end{cor}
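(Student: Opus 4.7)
The plan is to deduce all four implications from the rigidity Theorem \ref{thm:rigidity1}, the main effort being the equivalence (i) $\Leftrightarrow$ (iii). In the statement, $\rho^*C$ means $\rho^*\gamma_* C\in\Der(X_\et,R)$ and $\rho_!\rho^* C$ means $\gamma^*\rho_\sharp\rho^*\gamma_* C\in\Der(\shtr(X,R))$; the comparison map is the counit of the adjunction $\gamma^*\rho_\sharp \dashv \rho^*\gamma_*$. Being a quasi-isomorphism of complexes of \'etale sheaves is equivalent to being a quasi-isomorphism in $\Der(\shtr(X,R))$, since $\gamma_*$ is exact and conservative (Proposition \ref{prop:main_properties_shtr}(3)).

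The equivalence (iii) $\Leftrightarrow$ (iv) is formal. The four functors $\rho_\sharp$, $\gamma^*$, $\rho^*$, $\gamma_*$ are all exact (by Lemma \ref{lm:small_to_sm_etale}(2) and Proposition \ref{prop:main_properties_shtr}), so $\gamma^*\rho_\sharp\rho^*\gamma_*$ is exact on $\Comp(\shtr(X,R))$ and commutes with the formation of cohomology sheaves. The counit therefore induces on each $H^n$ the counit $\gamma^*\rho_\sharp\rho^*\gamma_* H^n(C)\to H^n(C)$, whence the original map is a quasi-isomorphism iff each of these maps is an isomorphism. Likewise (ii) $\Leftrightarrow$ (iv) is the assertion that each cohomology sheaf $H^n(C)$, viewed as a complex concentrated in degree zero, satisfies (i) $\Leftrightarrow$ (iii). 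It therefore remains to prove the latter.

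The key auxiliary input is: for any $K\in\Der(X_\et,R)$, the complex $\gamma^*\rho_\sharp K$ is $\AA^1$-local in $\Der(\shtr(X,R))$. Indeed, Corollary \ref{cor:loc_cst_sheaf&transfers} gives $\gamma_*\gamma^*\rho_\sharp R_X(V) = R_X(V) = \rho_\sharp R_X(V)$ for any \'etale $V/X$; since the functors involved commute with colimits and the representable sheaves generate $\sh(X_\et,R)$ under colimits, we obtain $\gamma_*\gamma^*\rho_\sharp\simeq\rho_\sharp$ as functors $\sh(X_\et,R)\to\sh_\et(X,R)$, and hence on complexes. Combined with Lemma \ref{lm:small_to_sm_etale}(3), which identifies the restriction of $\rho_\sharp K$ from $\sm_X$ to the small \'etale site of any smooth $p\colon Y\to X$ with $p^* K$, this gives
$$H^n_\et(Y,\gamma^*\rho_\sharp K)\simeq H^n_\et(Y_\et,p^* K)$$
for every smooth $Y/X$. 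Theorem \ref{A1invarianceetalesheaves} (homotopy invariance for unbounded complexes on small \'etale sites) then ensures $\AA^1$-invariance in $Y$ of the right hand side, confirming the claim.

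With this auxiliary input, (iii) $\Rightarrow$ (i) is immediate: if the counit $\gamma^*\rho_\sharp\rho^*\gamma_* C\to C$ is a quasi-isomorphism, then $C$ is quasi-isomorphic to an object in the essential image of $\gamma^*\rho_\sharp$, hence is $\AA^1$-local. For (i) $\Rightarrow$ (iii), suppose $C$ is $\AA^1$-local. By Theorem \ref{thm:rigidity1}, the functor $\rho_!\colon\Der(X_\et,R)\xrightarrow{\sim}\DMe_\et(X,R)$ has a quasi-inverse induced by $\rho^*\gamma_*$ on $\AA^1$-local representatives. Since $C$ is an $\AA^1$-local representative of $\pi_{\AA^1}(C)\in\DMe_\et(X,R)$, this yields a canonical isomorphism $\rho_!(\rho^*\gamma_* C)\xrightarrow{\sim}\pi_{\AA^1}(C)$ in $\DMe_\et(X,R)$, which is formally the image under $\pi_{\AA^1}$ of the natural counit $\gamma^*\rho_\sharp\rho^*\gamma_* C\to C$. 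Both source and target of this counit are $\AA^1$-local (the source by the previous paragraph, the target by assumption), and $\pi_{\AA^1}$ is fully faithful on the subcategory of $\AA^1$-local complexes (its right adjoint being fully faithful); hence the isomorphism in $\DMe_\et(X,R)$ lifts to a quasi-isomorphism in $\Der(\shtr(X,R))$. The main obstacle is the auxiliary $\AA^1$-locality statement of the third paragraph; once it is in place, the rest is a routine consequence of rigidity.
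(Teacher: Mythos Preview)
Your proof is correct and follows essentially the same approach as the paper. The paper's proof is terser: it states that (i)$\Leftrightarrow$(iii) follows ``immediately'' from Theorem~\ref{thm:rigidity1}, then deduces (ii)$\Leftrightarrow$(iv) as the same statement applied to each $H^n(C)$, and gets (iii)$\Leftrightarrow$(iv) from exactness of $\rho_!$ and $\rho^*$. Your unpacking of (i)$\Leftrightarrow$(iii) is precisely what the paper's ``immediately'' hides: the auxiliary claim that $\gamma^*\rho_\sharp K$ is always $\AA^1$-local is exactly the content of the proof of Proposition~\ref{prop:D^b_c->DMet_pleinement_fidele} (where full faithfulness of $\rho_!$ is obtained by showing its image lands in $\AA^1$-local objects), so the paper treats it as already established rather than re-proving it.
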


\begin{proof}
The equivalence between conditions (i) and (iii) follows
immediately from Theorem \ref{thm:rigidity1}, from
which we deduce the equivalence between conditions (ii) and (iv).
The equivalence between conditions (iii) and (iv) comes
from the fact that both $\rho_!$ and $\rho^*$ are exact
functors. 
\end{proof}

\subsection{Absolute purity with torsion coefficients}

\begin{thm} \label{thm:DMet_absolute_purity}
The oriented triangulated premotivic category
 $\DM_\et(-,R)$ satisfies the absolute purity property
  (Definition \ref{df:absolute_purity}).
\end{thm}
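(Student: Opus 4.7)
The plan is to reduce to Gabber's absolute purity theorem for torsion \'etale cohomology via the rigidity theorem. First, by the Chinese Remainder Theorem, the ring $R$ (a $\ZZ/n\ZZ$-algebra) decomposes as $R \simeq \prod_{p \mid n} R_p$ with each $R_p$ of characteristic $p^{v_p(n)}$ a prime power. This induces a corresponding product decomposition of $\DM_\et(-, R)$, and we may therefore assume that $R$ has characteristic $p^r$ for a single prime $p$.

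Under this assumption, Corollary \ref{cor:rigidity_2nd_formulation} provides an equivalence of symmetric monoidal triangulated categories
\[
\DM_\et(X, R) \simeq \Der(X[1/p]_\et, R) \, ,
\]
obtained by composing restriction along the open immersion $X[1/p] \hookrightarrow X$ with the rigidity equivalence of Theorem \ref{thm:rigidity1}. This equivalence is compatible with the six operations: it commutes with inverse images and tensor products by construction, with proper direct images by Proposition \ref{prop:rho_!&f_*}, and hence, via the localization triangle (Theorem \ref{thm:DM_et_localization}), with the exceptional inverse image $i^!$ for any closed immersion $i$.

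Given a closed immersion $i: Z \to X$ of codimension $c$ between regular noetherian schemes, the restriction $i[1/p]: Z[1/p] \to X[1/p]$ is a closed immersion of codimension $c$ between regular schemes whose residue characteristics are invertible in $R$; the case $Z[1/p] = \emptyset$ is trivial, as $\DM_\et(Z, R)$ then vanishes by Proposition \ref{prop:et+htp&torsion} and both sides of the purity map are zero. Under the equivalence above, the canonical purity morphism in $\DM_\et(X,R)$ is transported to the analogous morphism in $\Der(X[1/p]_\et, R)$, which is an isomorphism by Gabber's absolute purity theorem (see \cite{gabber3}).

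The main obstacle will be the verification of the compatibility of orientations: that the canonical fundamental class in $\DM_\et(-, R)$, constructed from $c_1^\et$ via the formalism of oriented premotivic triangulated categories, is carried to the classical Gabber fundamental class under the equivalence. Via Proposition \ref{prop:comput_Tate_etale_torsion}, which identifies $R_X(1)$ with $\mu_{n,X} \otimes_{\ZZ/n\ZZ} R$, one checks that $c_1^\et$ corresponds to the classical first Chern class with values in $H^2_\et(X, \mu_n \otimes R)$; the required compatibility of Gysin maps then follows from the naturality of the deformation-to-the-normal-cone construction, which determines the purity isomorphism uniquely from the orientation on both sides.
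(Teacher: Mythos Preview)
Your proof is correct and follows essentially the same strategy as the paper: reduce to prime-power characteristic, invoke the rigidity equivalence (Corollary \ref{cor:rigidity_2nd_formulation}), and appeal to Gabber's absolute purity theorem for torsion \'etale cohomology.

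The one notable difference is in \emph{which} characterization of absolute purity you verify. You track the fundamental class directly (condition (i) of Theorem \ref{thm:carac_abs_purity}), which forces you to confront the compatibility of orientations under the rigidity equivalence --- what you flag as ``the main obstacle.'' The paper instead verifies condition (ii): that the deformation-to-the-normal-cone maps
\[
\derR\Gamma_Z(X) \xleftarrow{d_1^*} \derR\Gamma_{\AA^1_Z}(D_ZX) \xrightarrow{d_0^*} \derR\Gamma_Z(N_ZX)
\]
are quasi-isomorphisms. This is a purely cohomological statement about cohomology with support, and it transports through the rigidity equivalence without any reference to orientations; one then concludes directly from Gabber's theorem (which establishes exactly this deformation criterion in the classical setting). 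Your orientation argument via Proposition \ref{prop:comput_Tate_etale_torsion} is adequate, but the paper's route is cleaner: it sidesteps the issue entirely. A minor further difference is that the paper reduces to $R = \ZZ/n\ZZ$ via change of coefficients (since $R$ is a $\ZZ/n\ZZ$-algebra and the relevant objects are base-changed from $\ZZ/n\ZZ$), rather than decomposing $R$ via the Chinese Remainder Theorem; both reductions are valid.
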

This means in particular that for any closed immersion $i:Z \rightarrow S$
 between regular schemes,
 one has a canonical isomorphism in $\DM_\et(S,R)$:
$$
\eta_X(Z):R_Z \rightarrow i^!(R_S)(c)[2c].
$$
\begin{proof}
For any closed immersion $i:Z \rightarrow S$,
 we define a complex of $R$-modules using the dg-enrichment of $\DM_\et(S,R)$:
$$
\derR\Gamma_Z(X)=\derR\Hom(i_*(R_Z),R_S).
$$
This complex is contravariant in $(X,Z)$
 -- see \ref{num:recall_closed_pairs}
 for morphisms of closed pairs.
We have to prove that whenever $S$ and $R$ are regular,
 the maps induced by the deformation diagram \eqref{eq:recall_def_space},
$$
\derR\Gamma_Z(X) \xleftarrow{d_1^*} \derR\Gamma_{\AA^1_Z}(D_ZX)
 \xrightarrow{d_0^*} \derR\Gamma_{Z}(N_ZX)
$$
are quasi-isomorphism. We may assume that $R=\ZZ/n\ZZ$ for some natural number $n>0$.
By a simple \emph{d{\'e}vissage}, we may as well assume that $n$ is a
power of some prime $p$. By virtue of Corollary \ref{cor:rigidity_2nd_formulation},
we see that all this is a reformulation of the analogous property in the setting
of classical \'etale cohomology, with coefficients prime to the residue
characteristics. We conclude with Gabber's absolute purity theorem (see \cite{Fuj}).
\end{proof}

\section{Motives and $\h$-descent}\label{sec5}

\subsection{$\h$-Motives}\label{sec:hmot}

\begin{num} \label{num:DMh}
Recall that Voevodsky has defined the $\h$-topology on the category of
noetherian schemes as the topology whose covers are the universal topological
 epimorphisms; see \cite[3.1.2]{V1}.
 Given a noetherian scheme $S$ as well as a ring $R$,
 we will denote by $\sh_\h(S,R)$ the category of $\h$-sheaves
 of $R$-modules on the category $\sft_S$. 
Given any $S$-scheme $X$ of finite type, we will denote
 by $\uR_S^\h(X)$ the free $\h$-sheaf or $R$-modules
 represented by $X$.
As proved in \cite[Ex.~5.1.4]{CD3},
 the $\sch$-fibered category $\sh_\h(-,R)$ is an
 abelian $\sft$-premotivic category in the sense of 
 Definition \ref{df:recall_premotivic_cat}.

The following definition,
 although using the theory of \cite{CD3} for the existence of derived functors,
 follows the original idea of Voevodsky in \cite{V1}:
\end{num}
\begin{df} \label{df:uDMh}
Using the notations above,
 we define the $\sft$-premotivic
 \emph{category of effective $\h$-motives} (resp.
 of \emph{$\h$-motives}) with $R$-linear coefficients
$$
\uDMe_\h(-,R) \quad \text{ (resp. } \uDM_h(-,R) \, \text{)}
$$
 as the $\AA^1$-derived category (resp. stable $\AA^1$-derived category)
 associated with the fibered category $\sh_\h(-,R)$
 over noetherian schemes.
\end{df}
In other words, the triangulated monoidal category $\uDMe_\h(S,R)$ is the $\AA^1$-localization
 of the derived category $\Der(\sh_\h(S,R))$ ; this is precisely
 the original definition of Voevodsky \cite[sec. 4]{V1}.
 This category is completely analogous to the case of the \'etale topology
 (\ref{num:Dm^eff_et&DM_et}).
 Similarly, the category $\uDM_\h(S,R)$ is obtained from
 $\uDMe_\h(S,R)$ by $\otimes$-inverting the Tate $\h$-motive
 in the sense of model categories.
 We get functors as in \eqref{eq:derived->DMe->DM_etale}:
\begin{equation} \label{eq:derived->DM_h->DMe_h}
\Der(\sh_\h(S,R))
 \xrightarrow{\pi_{\AA^1}} \uDMe_\h(S,R)
 \xrightarrow{\Sigma^\infty} \uDM_\h(S,R).
\end{equation}
Note however that the category $\uDMe_\h(S,R)$
 ($\uDM_\h(S,R)$)
 is generated by objects of the form $\uR_S^\h(X)$ 
 ($\Sigma^\infty \uR_S^\h(X)(n)$ )
 for any $S$-scheme of finite type $X$ (for any $S$-scheme of finite type $X$
 and any integer $n \in, \ZZ$, respectively).
 These categories are too big to satisfy the 6 functors formalism
 (the drawback is about the localization property with respect to closed
 immersions, which means that there is no good theory of support).

This is why we introduce the following definition
 (following \cite[Ex. 5.3.31]{CD3}).
 
\begin{df} \label{df:hmotives&constructible_hmotives}
The category of \emph{effective $\h$-motives} (resp. of \emph{$\h$-motives})
$$\DMe_\h(X,R)\quad\text{(resp. $\DM_\h(X,R)$)}$$
is the smallest full subcategory of $\uDMe_\h(S,R)$ 
 (resp. of $\uDM_\h(X,R)$) closed under arbitrary small sums
 and containing the
 objects of the form $\uR^\h_S(X)$ (resp. $\Sigma^\infty \uR^\h_S(X)(n)$)
 for $X/S$ smooth (resp. for $X/S$ smooth and $n \in \ZZ$).

The category of \emph{constructible effective} (resp. of \emph{constructible})
\emph{$\h$-motives of geometric origin}
 $$\DMe_{\h,c}(X,R)\quad\text{(resp. $\DM_{\h,c}(X,R)$)}$$
 is the thick triangulated subcategory
 of $\DMe_\h(S,R)$ (resp. $\DM_\h(X,R)$) generated by objects of the form 
 $\uR^\h_S(X)$(resp. $\Sigma^\infty \uR^\h_S(X)(n)$)
 for $X/S$ smooth (resp. for $X/S$ smooth and $n \in \ZZ$).
 
 We will sometimes simplify the notations and write
 $R(X):=\Sigma^\infty \uR^\h_S(X)$,
 as an object of $\DM_\h(X,R)$ (for a smooth $S$-scheme $X$). 
\end{df}

\begin{rem}
The objects of $\DM_{\h,c}(X,R)$ will often simply
 be called \emph{constructible}
 following the terminology of \cite{ayoub} and \cite{CD3}.
 However, it should be pointed out that this finiteness
 assumption corresponds rather to what is usually called
 ``geometric'' or ``of geometric origin''
 in the theories of Galois representations, or $D$-modules
 (this fits well with the terminology ``geometric''
  chosen by Voevodsky for motivic complexes in \cite[chap. 5]{FSV}).

Moreover, if $R$ is a ring of positive characteristic $n$,
 with $n$ invertible in $\cO_X$,
 we will see later (Corollary \ref{cor:DMh_Det})
 that we have a canonical equivalence of categories:
 $\Der(X_\et,R)\simeq\DM_{\h}(X,R)\, .$
There is two classical finiteness conditions on
 the left hand side, given by the sub-categories:
\begin{itemize}
\item $\Der^b_c(X_\et,R)$,
 complexes with bounded and constructible cohomology sheaves;
\item $\Der^b_\ctf(X_\et,R)$,
 complexes in $\Der^b_c(X_\et,R)$ which have
 of finite Tor-dimension
 (or, equivalently, by virtue of \cite[Rapport, 4.6]{SGA4D},
  which are isomorphic in $\Der(X_\et,R)$
	to bounded complexes whose components are flat and constructible).
\end{itemize}
Then through the previous equivalence of categories,
 constructible $\h$-motives of geometric origin
 forms a full subcategory of $\Der^b_\ctf(X_\et,R)$ 
 (see again Cor. \ref{cor:DMh_Det}).

These issues will be thoroughly studied in Section
 \ref{section:contlocconst}.
 In particular, we will see in Proposition \ref{prop:Dctf=compact}
 that constructible $\h$-motives are equivalent to the whole
 of $\Der^b_\ctf(X_\et,R)$ whenever the \'etale $R$-cohomological
 dimension of the residue fields of $X$ is uniformly bounded
 (in which case they are also characterized by the property
  of being compact).
In general, we will characterize the objects of
 $\Der^b_\ctf(X_\et,R)$ by introducing
 a stronger version of constructibility for $\h$-motives:
see Theorem \ref{thm:DbctfDMhlctorsion}.
 %
 %
%In  Theorem \ref{thm:DbctfDMhlctorsion}
%
 %categories respectively made up by 
 %denotes complexes with bounded and constructible cohomology sheaves
 %and which are of finite Tor-dimension
%(or, equivalently, by virtue of \cite[Rapport, 4.6]{SGA4D},
%which are isomorphic in $\Der(X_\et,R)$
%to bounded complexes whose components are flat and
%constructible), the identification alluded to above only
%provides a full embedding of $\DM_{\h,c}(X,R)$
%in $\Der^b_\ctf(X_\et,R)$. These issues will be
%discussed in detail later on;
%see Theorem \ref{thm:DbctfDMhlctorsion}.
\end{rem}

It is obvious that the subcategory $\DM_\h(-,R)$ is stable
 by the operations $f^*$ for any morphism $f$, by the operation
 $f_\sharp$ for any \emph{smooth} morphism $f$, and by
 the operation $\otimes^\derL$. The Brown representability theorem implies
 that the inclusion functor $\nu_\sharp$ admits a right adjoint $\nu^*$,
 so that $\DM_\h(-,R)$ is in fact a premotivic triangulated category,
 and we get an enlargement of premotivic triangulated category:
\begin{equation} \label{eq:enlargement_DMh}
\nu_\sharp:\DM_\h(X,R) \rightleftarrows \uDM_\h(S,R):\nu^*
\end{equation}
-- see \cite[Ex.~5.3.31(2)]{CD3}. More precisely, for any
morphism of schemes $f:X\rightarrow Y$, the functor
$$\derL f^*: \DM_\h(Y,R)\rightarrow \DM_\h(X,R)$$
admits a right adjoint
$$\derR f_*:\DM_\h(X,R)\rightarrow\DM_\h(Y,R)$$
defined by the formula
$$\derR f_*(M)=\nu^*(\derR f_*(\nu_\sharp(M)))\, .$$
Similarly, the (derived) internal Hom of $\DM_\h(X,R)$
is defined by the formula
$$\derR\uHom_R(M,N)=\nu^*(\derR\uHom_R(\nu_\sharp(M),\nu_\sharp(N)))\, .$$
We will sometimes write $\derR\uHom_R(M,N)=\derR\uHom(M,N)$
when the coefficients are understood from the context.
Also, when it is clear that
we work with derived functors only, it might happen that
we drop the thick letters $\derL$ and $\derR$ from the notations.
The unit object of the monoidal category $\DM_\h(X,R)$ will
be written $\un_X$ or $R_X$, depending on the emphasis we want
to put on the coefficients.

\begin{rem}
The category $\uDMe_{\h}(X,\ZZ)$ is nothing else than the category
introduced by Voevodsky in \cite{V1} under the notation $\mathit{DM}(S)$.
The fact it corresponds to the ``\'etale version of mixed motives''
is clearly envisioned in \emph{loc. cit.} (see the end of the
 introduction of \emph{loc. cit.}).
\end{rem}

\subsection{Comparison with Beilinson motives}\label{sec:compthm1}

\begin{num}
Recall from \cite[Par. 14.2.20]{CD3}
the category $\DMB(X)$ of Beilinson motives.
The following theorem was proved in \cite[Th.~16.1.2]{CD3}
in the case of quasi-excellent schemes.
\end{num}

\begin{thm} \label{thm:DMB&DM_h_rational_recall}
There exists a canonical equivalence
$$
\DMB \simeq \DM_\h(-,\QQ)
$$
 of premotivic triangulated categories over the
 category of noetherian finite dimensional schemes.
In particular,
 given such a scheme $X$,
 assuming in addition it is regular,
 we have a canonical isomorphism
\begin{equation}\label{eq:DMB&Kth}
\Hom_{\DM_\h(X,\QQ)}(\QQ_X,\QQ_X(p)[q])\simeq
\mathit{Gr}^p_\gamma K_{2p-q}(X)\otimes\QQ\, ,
\end{equation}
where the second term stands for the graded pieces
of algebraic $K$-theory with respect to the
$\gamma$-filtration\footnote{Recall that according
 to \cite{Cis4} and \cite[14.1.1]{CD3},
the regularity assumption can be dropped if we replace
$K$-theory by its homotopy invariant version in the
sense of Weibel.}.
\end{thm}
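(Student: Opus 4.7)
The plan is to extend the equivalence, which is known over quasi-excellent noetherian finite-dimensional schemes by \cite[Th.~16.1.2]{CD3}, to the class of all noetherian finite-dimensional schemes via a continuity (limit) argument. Both premotivic categories $\DMB$ and $\DM_\h(-,\QQ)$ are defined on the whole category of noetherian schemes, and a natural comparison functor between them exists in this generality (constructed as in \cite[Ch.~16]{CD3}); what must be proved is that this functor is a premotivic equivalence without the quasi-excellence hypothesis.

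First I would establish a continuity statement for both sides with respect to projective systems of noetherian finite-dimensional schemes with affine transition maps: if $X=\varprojlim_\alpha X_\alpha$ is such a system, then the canonical functor
$$\mathrm{2\text{-}colim}_\alpha \DM_{\h,c}(X_\alpha,\QQ) \longrightarrow \DM_{\h,c}(X,\QQ)$$
is an equivalence (and similarly for the subcategories of constructible Beilinson motives). The Beilinson version is already available from the general theory of \cite[\textsection 4.3 and Ch.~14--15]{CD3}, so only the $\h$-motive side requires genuinely new input; one reduces to checking that the family of generators $\uR_S^\h(Y)(n)$, $Y/S$ smooth, descends along the system (using the standard descent of smooth finite-type schemes), and that morphism groups between such generators are computed as colimits—which uses $\h$-cohomological descent for $\QQ$-coefficients combined with \ref{lemma:etaleQcoefficients} and \ref{proetaleinvimagerightQuillen} to manage unbounded complexes.

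Second, I would use Zariski descent to reduce to the affine case, and then, for an affine noetherian scheme $X = \Spec(A)$, write $A = \varinjlim_i A_i$ as the filtered colimit of its finitely generated $\ZZ$-subalgebras. Each $\Spec(A_i)$ is of finite type over $\Spec(\ZZ)$, hence in particular quasi-excellent and noetherian of finite Krull dimension, and the transition maps $\Spec(A_j)\to\Spec(A_i)$ are affine. The quasi-excellent case of the theorem provides a compatible system of premotivic equivalences $\DMB(\Spec(A_i))\simeq\DM_\h(\Spec(A_i),\QQ)$; passing to the 2-colimit via continuity yields the equivalence over $X$. Compatibility with the premotivic structure ($f^*$, $f_\sharp$ for smooth $f$, $\otimes^\derL$) is automatic on compact generators since these operations commute with the relevant 2-colimits, while the right adjoints $f_*$ and $\uHom$ are then transported by adjunction.

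The main obstacle is the continuity of $\DM_\h(-,\QQ)$ on the whole category of noetherian finite-dimensional schemes, as the $\h$-topology involves arbitrary proper surjections and is somewhat delicate under limits; this is precisely the point where one cannot reuse the proof of \cite[16.1.2]{CD3} verbatim, and where the rational coefficients are used essentially (via bounded cohomological dimension in \ref{lemma:etaleQcoefficientscohdim}) to control unbounded complexes and to commute tensorization by $\QQ$ with sheaf cohomology as in \ref{lemma:etaleQcoefficients}. Once this continuity is in hand, the remainder of the argument is formal, and the identification \eqref{eq:DMB&Kth} for $X$ regular follows from its counterpart in $\DMB(X)$, which is valid over any regular noetherian finite-dimensional scheme by the general theory of Beilinson motives developed in \cite[Ch.~14]{CD3}.
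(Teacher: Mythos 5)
Your approach matches the paper's proof essentially point for point: establish a continuity statement for $\DM_\h(-,\QQ)$ on constructible objects (this is the paper's Prop.~\ref{prop:continuityQlinearhmot}, whose proof indeed reduces to $\Der(\sh_\h(-,\QQ))$ via the compact generation supplied by Lemma~\ref{lemma:etaleQcoefficientscohdim}), reduce to the affine case by Zariski descent, write an affine noetherian scheme as a filtered limit of schemes of finite type over $\ZZ$ with affine transition maps, and conclude from the quasi-excellent case of \cite[Th.~16.1.2]{CD3}. The formula~\eqref{eq:DMB&Kth} then follows from the Beilinson side.

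One step that you state but do not justify is the passage from the equivalence on constructible (compact) objects to a premotivic equivalence on the full triangulated categories. Continuity alone only identifies the $2$-colimit of the subcategories of constructible objects, so you must still argue that the resulting functor on all objects is an equivalence. The paper handles this by first building a premotivic adjunction $\alpha^*:\DMB\rightleftarrows\DM_\h(-,\QQ):\alpha_*$ via the $\HB$-module characterization of Beilinson motives, then checking that $\alpha_*$ is conservative and preserves small sums, so that the class of objects $M$ with $M\to\alpha_*\alpha^*(M)$ invertible is a localizing subcategory; once this is known to contain the compact objects, full faithfulness of $\alpha^*$ everywhere follows, and together with the conservativity of $\alpha_*$ this yields an equivalence. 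Your proposal leaves this extension implicit; it is a standard device given compact generation of both sides, but it is the mechanism that makes the $2$-colimit argument produce an equivalence of the whole categories rather than merely of the constructible parts, so it is worth making explicit.
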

%The case of an arbitrary extension $R/\QQ$
% follows from the case of $\QQ$ together with
% Corollary \ref{cor:DM_h&Q-localization}
% in view of \cite[\textsection 15.2.5]{CD3}.

The proof of this theorem is the main goal of this section.
It will be by reduction to the case
of separated schemes of finite type over $\ZZ$. This will
require a few intermediate steps which will also be useful
later on.

\begin{rem}
Note that this theorem obviously extends to
 the case of coefficients in an arbitrary $\QQ$-algebra $R$
 where the left hand side is defined in \cite[14.2.20]{CD3}.
\end{rem}

\begin{thm}\label{thm:fcohdimlconst}
Consider a noetherian scheme $X$ of finite dimension.
Assume that the \'etale cohomological dimension of any
residue field of $X$ is uniformly bounded for $R$-linear
coefficients.
Then, for an object $M$ of $\DM_\h(X,R)$, the following
conditions are equivalent.
\begin{itemize}
\item[(a)] The motive $M$ is constructible.
\item[(b)] There exists an \'etale covering
$\{u_i:X_i\to X\}_{i\in I}$
such that, for any $i\in I$, the object $u^*_i(M)$
is constructible.
\item[(c)] The motive $M$ is a compact object of $\DM_\h(X,R)$.
\item[(d)] The functor $\derR\uHom(M,-):\DM_\h(X,R)\to\DM_\h(X,R)$
commutes with small sums.
\end{itemize}
\end{thm}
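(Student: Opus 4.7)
The strategy is to first establish that, under the cohomological dimension hypothesis, the natural generators $R(Y)(n)$ for $Y/X$ smooth (of finite type) and $n\in\ZZ$ are compact in $\DM_\h(X,R)$; the equivalences (a)$\Leftrightarrow$(c)$\Leftrightarrow$(d) then follow by standard arguments about compactly generated triangulated categories, and only (b)$\Rightarrow$(a) requires further work.

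For the compactness of the generators, I would use a rational-torsion d\'evissage, applied to the short exact sequence $0\to R\to R\otimes\QQ\to R\otimes\QQ/\ZZ\to 0$ together with the canonical splitting $R\otimes\QQ/\ZZ\simeq\bigoplus_\ell R\otimes\QQ_\ell/\ZZ_\ell$. For each prime $\ell$, the $\ell$-torsion summand is compact thanks to the comparison theorem $\DM_\h(X,\ZZ/\ell^n\ZZ)\simeq\DM_\et(X,\ZZ/\ell^n\ZZ)$ (Theorem \ref{thm:comparison_torsion_etale-h_motives}, established later in this section) combined with the rigidity theorem (Corollary \ref{cor:rigidity_2nd_formulation}) identifying $\DM_\et(X,\ZZ/\ell^n\ZZ)$ with $\Der(X[1/\ell]_\et,\ZZ/\ell^n\ZZ)$, in which compactness of the analogous generators holds by Proposition \ref{zarlfinitecohdimcompact} under our cohomological dimension hypothesis (which is inherited by any $Y$ smooth of finite type over $X$). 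The rational part is compact by Theorem \ref{thm:DMB&DM_h_rational_recall}, which identifies $\DM_\h(X,\QQ)$ with $\DMB(X)$, where compactness of $\QQ(Y)(n)$ over finite-dimensional noetherian $X$ is part of the compactly generated structure.

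With compactness of the generators established, $\DM_\h(X,R)$ is compactly generated in the sense of Neeman, so its full subcategory of compact objects coincides with the thick subcategory, closed under direct summands, generated by the $R(Y)(n)$, which by Definition \ref{df:hmotives&constructible_hmotives} is exactly $\DM_{\h,c}(X,R)$; this yields (a)$\Leftrightarrow$(c). For (c)$\Leftrightarrow$(d), the key point is that $-\otimes^\derL M$ sends compact generators to compact objects (by monoidality, $R(Y)(n)\otimes R(Y')(n')=R(Y\times_X Y')(n+n')$ is again a compact generator); the adjunction $\Hom(N,\derR\uHom(M,P))\simeq\Hom(N\otimes M,P)$, evaluated on compact generators $N=R(Y)(n)$, then shows that $\derR\uHom(M,-)$ commutes with small sums if and only if each $R(Y)(n)\otimes M$ is compact, if and only if $M$ itself is compact.

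The implication (a)$\Rightarrow$(b) is trivial. For (b)$\Rightarrow$(a), equivalently (b)$\Rightarrow$(c), quasi-compactness of $X$ reduces matters to a single finite surjective \'etale morphism $u:U\to X$ with $u^*M$ compact. The functor $u^*$ is conservative, preserves small sums, and admits a right adjoint $u_*$ which, since $u$ is finite \'etale (hence both proper and smooth), coincides with $u_\sharp$ and preserves both small sums and compact objects. By h-descent for $\DM_\h$, the object $M$ may be expressed as the totalization of the \v Cech cosimplicial object associated to $u$, whose terms $u^{(n)}_*u^{(n)*}M$ are all compact. The main obstacle is to ensure that this totalization actually lies in the thick subcategory $\DM_{\h,c}(X,R)$: I would exploit the uniform cohomological dimension bound (inherited by the \v Cech covers) to control the associated spectral sequence and show that the totalization is in fact a finite homotopy limit of compact objects, hence compact. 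An alternative route would be to prove directly, in any compactly generated premotivic triangulated category whose $*$-pushforward along a finite \'etale cover preserves compact objects and sums, that compactness is an \'etale-local property as a formal consequence of Neeman's theorem combined with the conservativity of $u^*$.
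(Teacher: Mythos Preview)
Your proposal has two genuine problems.

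\textbf{Compactness of the generators.} Your rational-torsion d\'evissage does not work as stated. First, there is circularity: you invoke Theorem~\ref{thm:DMB&DM_h_rational_recall} for the rational part, but the paper's proof of that theorem relies (via Proposition~\ref{prop:continuityQlinearhmot}) on the compact generation established in the present theorem. Second, and more seriously, the d\'evissage itself is ill-posed: from the triangle $R(Y)(n)\to (R\otimes\QQ)(Y)(n)\to (R\otimes\QQ/\ZZ)(Y)(n)$ in $\DM_\h(X,R)$ you would need the last two objects to be compact \emph{in $\DM_\h(X,R)$}, not in the target categories after change of coefficients; but $(R\otimes\QQ)(Y)(n)$ is a filtered colimit and has no reason to be compact in $\DM_\h(X,R)$. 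Knowing that $\QQ(Y)(n)$ is compact in $\DM_\h(X,\QQ)$ does not help here. The paper avoids all of this: by the Goodwillie--Lichtenbaum theorem, the hypothesis on \'etale cohomological dimension implies a uniform bound on the $\h$-cohomological dimension of $X$-schemes of finite type, so Proposition~\ref{zarlfinitecohdimcompact} gives compactness of $\uR^\h_X(Y)$ already in $\Der(\sh_\h(X,R))$; one then checks that $\AA^1$-local objects and $\Omega$-spectra are closed under sums, whence the $R(Y)(n)$ are compact in $\uDM_\h(X,R)$ and a fortiori in $\DM_\h(X,R)$. This is uniform in $R$ and uses nothing from later in the paper.

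\textbf{The implication (b)$\Rightarrow$(a).} Your \v Cech-totalization argument requires controlling convergence of an infinite homotopy limit of compact objects, which is delicate and which you leave as a sketch. The paper's route is a one-liner: prove (b)$\Rightarrow$(d) instead. The functors $u_i^*$ form a conservative family (\'etale descent), preserve small sums, and satisfy $u_i^*\derR\uHom_R(M,N)\simeq\derR\uHom_R(u_i^*M,u_i^*N)$; since each $u_i^*M$ is constructible, hence satisfies (d) over $X_i$ by (a)$\Leftrightarrow$(d) there, the functor $\derR\uHom_R(M,-)$ commutes with sums after applying each $u_i^*$, hence before. Your (c)$\Leftrightarrow$(d) argument is fine and essentially matches the paper's.
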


\begin{proof}
We first prove that conditions (a) and (c) are equivalent.
Under our assumptions, by virtue of the Goodwillie and
Lichtenbaum Theorem \cite{goodlicht},
any $X$-scheme of finite
type is of finite cohomological dimension
with respect to the $\h$-topology for $R$-linear
coefficients. Therefore,
Proposition \ref{zarlfinitecohdimcompact} shows that,
for any scheme $Y$ of finite type over $X$,
the representable sheaf $\uR_X^\h(Y)$ is a compact object
of the derived category of $\h$-sheaves of $R$-modules
over $X$. As this class of $\h$-sheaves is closed
by (derived) tensor product, this
implies that the functor $\derR\uHom_R(\uR_X^\h(Y),-)$
preserves small sums in the derived category of $\h$-sheaves of $R$-modules over $X$. It is easy to deduce from this
property (by inspection of the definition) that
the class of $\AA^1$-local objects is closed
under small sums and that $\Omega$-spectra
are closed under small sums in the derived category
of Tate spectra in the category of $\h$-sheaves of
$R$-modules (in the sense of \cite[Definitions 5.5.16
and 5.3.24]{CD3}).
Thus the objects of the form $R(Y)(n)$, for $Y$ of finite
over $X$ and any integer $n$, form a generating family
of compact generators in $\uDM_\h(X,R)$. Therefore,
the family of objects $R(Y)(n)$, for $Y$ smooth of finite
over $X$ and any integer $n$, form a generating
family of compact generators of $\DM_\h(X,R)$.
This implies that the subcategory of compact objects
of $\DM_\h(X,R)$ is precisely $\DM_{\h,c}(X,R)$.

The fact that conditions (c) and (d) are
equivalent readily follows from formula
$$\derR\Hom(R(Y)(n)\otimes^\derL_R M,N)\simeq
\derR\Hom(R(Y)(n),\derR\uHom_R(M,N))$$
(for any object $N$),
and the fact that $R(Y)(n)$ is always compact
in $\DM_\h(X,R)$ (with $Y$ smooth over $X$ and $n\in\ZZ)$.

It is now sufficient to check that condition (b)
implies condition (d). Let
$\{u_i:X_i\to X\}_{i\in I}$ be an \'etale covering
such that, for any $i\in I$, the object $u^*_i(M)$
is constructible. As the functors $u^*_i$
form a conservative family of functors
which preserve small sums (by \'etale
descent, see \cite[Proposition 3.2.8]{CD3}, and
because they have right adjoints, respectively),
formula
$$u^*_i(\derR\uHom_R(M,N))\simeq
\derR\uHom_R(u^*_i(M),u^*_i(N))$$
readily implies that $M$ satisfies condition (d).
\end{proof}

\begin{prop}\label{prop:continuityQlinearhmot}
Here, all schemes are assumed to be noetherian of
finite dimension. Consider a scheme $X$ which is the limit
of a projective system $\{X_i\}_{i\in I}$ with affine
transition maps. Let $\{M_i\}$ and $\{N_i\}_{i\in I}$
be two Cartesian sections of the fibered category $\DM_\h(-,\QQ)$
over the diagram of schemes $\{X_i\}_{i\in I}$, and denote by
$M$ and $N$ the respective pullback of $M_i$ and $N_i$
along the projection $X\to X_i$. If each $M_i$ is constructible, then
the canonical map
$$\varinjlim_i\Hom_{\DM_\h(X_i,\QQ)}(M_i,N_i)\to
\Hom_{\DM_\h(X,\QQ)}(M,N)$$
is an isomorphism.
\end{prop}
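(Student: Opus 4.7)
The strategy is a devissage to reduce the assertion to a continuity property for hypercohomology, for which we can appeal to classical results about Nisnevich (or rational étale) cohomology under projective limits of schemes with affine transition maps.

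First, I would reduce to the case where $M_i = R(Y_i)(p)[m]$ for some Cartesian system $\{Y_i\}$ of smooth separated $X_i$-schemes of finite type with limit $Y/X$ and some integers $p,m$. Indeed, fix an index $i_0$ so that $M_{i_0}$ is constructible. By Definition \ref{df:hmotives&constructible_hmotives}, $M_{i_0}$ lies in the thick triangulated subcategory of $\DM_\h(X_{i_0},\QQ)$ generated by objects of the form $R(Y_{i_0})(p)$ with $Y_{i_0}/X_{i_0}$ smooth and $p\in\ZZ$. Iterated application of the five lemma along the distinguished triangles realizing $M_{i_0}$ as such a generator, performed Cartesian-section-wise on $\{M_i\}$, reduces the claim to each generator separately; here we use that both $\Hom(-,N)$ and $\varinjlim_i$ preserve distinguished triangles. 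Standard limit results for smooth schemes (EGA IV, 8.8 and 17.7) guarantee that every smooth separated $X$-scheme of finite type descends to such a Cartesian system $\{Y_i\}$.

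Second, in this reduced situation adjunction identifies
$$\Hom_{\DM_\h(X_i,\QQ)}\bigl(R(Y_i)(p)[m], N_i\bigr)\simeq
\Hom_{\DM_\h(Y_i,\QQ)}\bigl(\un_{Y_i}, \pi_i^*N_i(-p)[-m]\bigr),$$
where $\pi_i\colon Y_i\to X_i$ is the structural map. The Cartesian section $\{\pi_i^*N_i\}$ on $\{Y_i\}$ has limit $\pi^*N$ over $Y$ (where $\pi\colon Y\to X$), so the question reduces to the continuity of $\DM_\h(-,\QQ)$-hypercohomology along an affine filtered projective limit of noetherian schemes. Rationally, the $\h$-hypercohomology of a sheaf with transfers agrees with its Nisnevich (equivalently étale) hypercohomology, as in \cite{CD3}; and Nisnevich hypercohomology commutes with such limits by the standard argument of SGA4 VII, Corollaire 8.5.7, formalized and adapted to unbounded complexes in Lemma \ref{proetaleinvimagerightQuillen}. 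This handles bounded coefficient sections directly.

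The main obstacle is to control the possibly unbounded coefficient section $\{\pi_i^*N_i\}$. This is overcome by exploiting Lemma \ref{lemma:etaleQcoefficientscohdim}: since $Y$ and each $Y_i$ are noetherian of finite Krull dimension, their rational étale (equivalently Nisnevich) cohomological dimension is uniformly bounded by their dimension. Fixing a cohomological degree, one may then truncate the coefficient objects consistently along the diagram without changing the $\Hom$ groups under consideration, reducing the unbounded case to the bounded case treated in the previous step. Combining the three reductions yields the claimed bijectivity.
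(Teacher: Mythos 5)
Your overall skeleton (devissage to generators, adjunction, continuity for sheaf cohomology) matches the paper's strategy, but there is a genuine gap in the central step. After the adjunction you arrive at groups of the form $\Hom_{\DM_\h(Y_i,\QQ)}(\un_{Y_i},E_i)$ with $E_i=\pi_i^*N_i(-p)[-m]$, and you then describe these as ``the continuity of $\DM_\h(-,\QQ)$-hypercohomology'' and treat them as $\h$-hypercohomology of a complex of sheaves. But $E_i$ is not a complex of $\h$-sheaves: it lives in the \emph{stable} category $\DM_\h(Y_i,\QQ)$, a localization of the category of Tate spectra in $\h$-sheaves, so $\Hom(\un,E_i)$ is not directly a hypercohomology group. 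Bridging this gap is precisely the technical core of the paper's proof: after showing (via finite $\h$-cohomological dimension with $\QQ$-coefficients, which uses the Goodwillie--Lichtenbaum theorem together with Lemma \ref{lemma:etaleQcoefficientscohdim}) that the objects $\QQ(U)(n)$ are compact generators of $\uDM_\h(S,\QQ)$, the paper records the $\Omega$-spectrum formula $a_*(E(n))\simeq\derL\varinjlim_{i\geq 0}\derR\uHom_\QQ(\QQ(i),E_{n+i})$ for the right adjoint $a_*$ of the stabilization/localization functor, and uses it (together with the fact that $\derR q_*$ preserves homotopy colimits because $\derL q^*$ preserves compacts) to reduce the stable statement to the unstable one in $\Der(\sh_\h(-,\QQ))$. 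Your proposal never engages with this stabilization.

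Your concluding truncation argument compounds the problem: you propose to ``truncate the coefficient objects consistently along the diagram,'' but there is no $t$-structure on $\DM_\h(Y_i,\QQ)$ (nor on $\uDM_\h$) that supports this. Truncation is available in $\Der(\sh_\h(Y_i,\QQ))$, but by then the unbounded case is handled anyway, exactly as in the paper, via the projective-system cohomology-commutation Lemma \ref{proetaleinvimagerightQuillen} (which uses finite cohomological dimension internally, not truncation). Also the detour through Nisnevich/\'etale cohomology is unnecessary and requires its own justification, since $N_i$ is not a sheaf with transfers but an object of $\DM_\h$; the paper works throughout with $\h$-topology directly, which is cleaner. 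So: the reductions to $M_j=\QQ(U)$ and then to the unit object are fine and in the spirit of the paper, but you must add the $\Omega$-spectrum argument to justify passing from $\Hom$ in $\DM_\h$ to hypercohomology in $\Der(\sh_\h)$ before invoking continuity for sheaf cohomology.
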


\begin{proof}
It is sufficient to prove the analogous property in $\uDM_\h(X,\QQ)$.
The property of continuity is known to hold if we replace
$\uDM_\h(X,\QQ)$ by the triangulated category $\Der(\sh_\h(X,\QQ))$
(because the representable sheaves are of finite cohomological dimension
with respect to the $\h$-topology with $\QQ$-linear coefficients,
so that we are essentially reduced to classical formulas such as
\cite[Expos\'e~VII, Corollaire 8.5.7]{SGA4}).
On the other hand, we have a canonical adjunction
for any (diagram of) scheme(s) $S$
\begin{equation}\label{eq:prop:continuityQlinearhmot0}
a^*:\Der(\sh_\h(X,\QQ))\rightleftarrows\uDM_\h(S,\QQ):a_*
\end{equation}
in which $a^*$ is the composition of the $\AA^1$-localization
functor and of the infinity loop space functor $\Sigma^\infty$. 
%Note that, by a cofinality argument, we may assume that the transition
%maps in the projective system $\{X_i\}$ are all of finite type.
%Since the functor $a^*$ commute with $f^*$ and $\derL f_\sharp$ for any
%separated morphism of finite type $f$, its right adjoint $a_*$
%commutes with $\derR f_*$ and with $f^*$.

By virtue of Lemma \ref{lemma:etaleQcoefficientscohdim}, the proof of the
preceding theorem ensures that, for any scheme $S$,
the family of $\h$-motives $\QQ(U)(n)$, for $U$ separated of finite type
over $S$ and $n$ any integer, form a family of compact generators of
the triangulated category $\uDM_\h(S,\QQ)$.
This implies that the functor $a_*$ commutes with small sums
(whence with arbitrary small homotopy colimits) and that the family
of functors $E\mapsto a_*(E(n))$, $n\geq 0$, is conservative.
This description of compact objects also implies the following
computation.
An object $E$ of $\uDM_\h(X,\QQ)$ is a collection of complexes of $\h$-sheaves
of $\QQ$-vector spaces $E_n$, $n\geq 0$, together with maps $E_n(1)\to E_{n+1}$.
One then has this canonical identification:
\begin{equation}\label{eq:prop:continuityQlinearhmot11}
a_*(E(n))\simeq\derL\varinjlim_{i\geq 0}\derR\uHom_\QQ(\QQ(i),E_{n+i})
\end{equation}
(here the internal Hom $\uHom_\QQ$ is the one of $\uDMe_\h(X,\QQ)$, but it can be
understood as the one of $\Der(\sh_\h(X,\QQ))$ whenever each $E_n$ is
$\AA^1$-local as an object of $\Der(\sh_\h(X,\QQ))$).
We want to prove that, the map
\begin{equation}\label{eq:prop:continuityQlinearhmot1}
\derL\varinjlim_i\derR\Hom_{\DM_\h(X_i,\QQ)}(M_i,N_i)\to
\derR\Hom_{\DM_\h(X,\QQ)}(M,N)
\end{equation}
is an isomorphism in the derived category of $\QQ$-vector spaces.
We can replace the indexing category $I$ by $\{i\geq j\}$ for an arbitrary
index $j\in I$, and as $\QQ(U)$ is compact
in $\DM_\h(X_j,\QQ)$ for any separated $X_j$-scheme of finite type $U$,
we easily see that it is equivalent to prove that the canonical map
\begin{equation}\label{eq:prop:continuityQlinearhmot2}
\derL\varinjlim_{i\geq j}\derR p_{i,*} \derR\uHom_{\QQ}(M_i,N_i)\to
\derR p_*\derR\uHom_{\QQ}(M,N)
\end{equation}
is an isomorphism in $\DM_\h(X_j,\QQ)$, where $p_j: X_i\to X_j$
and $X\to X_j$ denote the structural maps for $i\geq j$.
Moreover, we may assume that $M_j=\QQ(U)$.
Replacing $X_j$ by $U$ (and each $X_i$ as well as $X$
by their pullbacks along the structural map $U\to X_j$), we may assume that
$M_j=\QQ$ is the unit object, so that the map \eqref{eq:prop:continuityQlinearhmot2}
now has the following form.
\begin{equation}\label{eq:prop:continuityQlinearhmot3}
\derL\varinjlim_{i\geq j}\derR p_{i,*}(N_i)\to
\derR p_*(N)
\end{equation}
Remark that the functor $\derR q_*$ preserves small homotopy
colimits for any morphism of schemes $q$ because its left adjoint $\derL q^*$
preserves compact objects. Formula \ref{eq:prop:continuityQlinearhmot11}
thus implies that the image of the map \eqref{eq:prop:continuityQlinearhmot3} by $a_*$
is isomorphic to an homotopy colimit of images by the functors
$\uHom_\QQ(\QQ(i),-)$ of analogous maps in $\Der(\sh_\h(X,\QQ)$.
Therefore, we are reduced to prove the analogue of this proposition
in the premotivic category $\Der(\sh_\h(-,\QQ)$ instead of $\DM_\h(-,\QQ)$, and
this ends the proof.
\end{proof}

\begin{proof}[proof of Theorem \ref{thm:DMB&DM_h_rational_recall}]
We first remark that the premotivic category $\DM_\h(-,\QQ)$
 is oriented in the sense of Definition \ref{df:recall_premotivic_basic}(3):
 this follows from \cite[Th. 4.2.5 and Def. 4.2.1]{V1} which implies
 that for any noetherian finite dimensional scheme $X$,
 there is a map:
$$
\Pic(X) \simeq H^1_\et(X,\GG)
 \rightarrow \Hom_{\DM_\h(X)}(\QQ_X,\QQ_X(1)[2]).
$$
Therefore, the spectrum $a_*(\QQ_X)$ is orientable:
 according to \cite[14.2.16]{CD3}, it admits a unique structure
 of $\HB$-algebra,
 where $\HB$ denotes the Beilinson motivic cohomology spectrum
 \cite[14.1.2]{CD3}. In particular,
 the image of the weakly monoidal functor $a_*$ of
 \eqref{eq:prop:continuityQlinearhmot0} is contained in the category
 of $\HB$-modules, which coincide with the category $\DMB(X)$
 applying again \cite[14.2.16]{CD3}. This implies that the premotivic
 adjunction \eqref{eq:prop:continuityQlinearhmot0} induces
 a unique premotivic adjunction ($X$ varying in the
 category of noetherian finite dimensional schemes):
$$\alpha^*:\DMB(X)\rightleftarrows\DM_\h(X,\QQ):\alpha_*$$
such that $\alpha^*(\HB\otimes M)=a^*(M)$
 for any object $M$ of $\Der_{\AA^1}(X,\QQ)$.
%, where $\HB$ :
%this immediately follows from the fact that $a_*(\QQ)$ is an $\HB$-module
%(i.e. that there is an additive theory of Chern classes in $\DM_\h$;
%see \cite[16.1.1]{CD3}).
 In particular, the
functor $\alpha_*$ is conservative and preserves smalls sums:
 it is the composition of the functor $a_*$
 (which commutes with small sums and is conservative,
 as recalled in the proof of Proposition \ref{prop:continuityQlinearhmot})
and of the forgetful functor from $\DMB(X)$ to $\Der_{\AA^1}(X,\QQ)$
(which commutes with small sums as well and is fully
faithful: this readily follows
from \cite[Proposition 14.2.3 and Corollary 14.2.16]{CD3}).

It is sufficient to prove that the functor $\alpha^*$ is fully
faithful on compact objects for any noetherian scheme of finite dimension $X$.
Indeed, if this is the case, then the class of objects $M$ such that
the unit $M\to \alpha_*\, \alpha^*(M)$ is invertible forms a localizing
subcategory of the compactly generated triangulated
category $\DMB(X)$ which contains all compact objects, hence
is the class of all the Beilinson motives. But then, the functor
$\alpha^*$ is fully faithful with conservative right adjoint,
hence an equivalence of categories.

It is sufficient to prove that the functor $\alpha^*$ is fully
faithful on constructible objects when $X$ is affine.
Indeed, we have to prove that the unit map $M\to \alpha_*\,\alpha^*(M)$
is invertible whenever $M$ is a compact object of $\DMB(X)$.
As both operations
$\alpha^*$ and $\alpha_*$ commute with functors of the form $j^*$
for any open immersion $j$, a simple descent argument
(namely \cite[Proposition 8.2.8 and Theorem 14.3.4 (1)]{CD3})
shows that we are looking at a property which is local on $X$
for the Zariski topology.
In other words, we may assume that $X$ is the limit of a projective
system $\{X_i\}$ of schemes of finite type over $\ZZ$,
with affine transition maps. Using \cite[Proposition 15.1.6]{CD3}
as well as Proposition \ref{prop:continuityQlinearhmot}, we
are thus reduced to prove this proposition in the case where $X$
is of finite type over $\ZZ$, whence excellent, in which case
this is already known; see \cite[Theorem~16.1.2]{CD3}.
\end{proof}

\subsection{$\h$-Descent for torsion \'etale sheaves}\label{sec:hdesc}

\begin{num}\label{num:smalletalehsite}
Given any noetherian scheme $S$ and any ring $R$,
proceeding as in \ref{num:small-etale->smooth-etale}, there is an exact
fully faithful embedding of the category $\sh(S_\et,R)$ in the category of
\'etale sheaves of $R$-modules over the big \'etale site of $S$-schemes
of finite type. Composing this embedding with the $\h$-sheafification
functor leads to an exact functor
\begin{equation}\label{embedsmalletalesheavesintohsheaves1}
\alpha^*:\sh(S_\et,R)\to \sh_\h(S,R)\ , \quad F\mapsto \alpha^*(F)=F_h .
\end{equation}
This functor has a right adjoint
\begin{equation}\label{embedsmalletalesheavesintohsheaves2}
\alpha_*:\sh_\h(S,R)\to \sh(S_\et,R)\, .
\end{equation}
which is defined by $\alpha_*(F)=F|_{S_\et}$.
The functor \eqref{embedsmalletalesheavesintohsheaves1} induces
a functor
\begin{equation}\label{embedsmalletalesheavesintohsheaves3}
\alpha^*:\Der(S_\et,R)\to \Der(\sh_\h(S,R))\ .
\end{equation}
which has a right adjoint
\begin{equation}\label{embedsmalletalesheavesintohsheaves4}
\derR\alpha_*: \Der(\sh_\h(S,R))\to \Der(S_\et,R)\ .
\end{equation}
\end{num}

\begin{lm}\label{lm:hcohcommutesmallsums}
For any ring $R$ and any noetherian
scheme $S$, the derived restriction functor
\eqref{embedsmalletalesheavesintohsheaves4}
preserves small sums.
\end{lm}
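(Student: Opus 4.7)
The plan is to test the canonical comparison map
$\bigoplus_i \derR\alpha_*(K_i) \to \derR\alpha_*(\bigoplus_i K_i)$
against a generating family of $\Der(S_\et,R)$. Since the representable sheaves $R(V)$ for $V\in S_\et$ generate $\sh(S_\et,R)$, the objects $\{R(V)[n]\}_{V,n}$ detect isomorphisms in $\Der(S_\et,R)$. I would use the adjunction $(\alpha^*,\derR\alpha_*)$ together with the canonical identification $\alpha^*R(V)\simeq \uR_S^{\h}(V)$: this holds because $V$ is \'etale of finite type over $S$, so its extension to the big \'etale site is already the representable sheaf $R(V)$ on that site, and $\h$-sheafification simply yields the representable $\h$-sheaf by $V$. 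Combined with the description of morphisms in the relevant derived categories via the projective model structure recalled in \ref{rappels:structureprojective}, this gives
\[
\Hom_{\Der(S_\et,R)}\bigl(R(V),\derR\alpha_*(K)[n]\bigr) \;\simeq\; H^n_{\h}(V,K)\, ,
\]
so the problem reduces to showing that $\uR_S^{\h}(V)$ is a compact object of $\Der(\sh_\h(S,R))$ for every $V\in S_\et$.

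To establish this compactness, I would invoke Proposition \ref{zarlfinitecohdimcompact}, whose two hypotheses on $V$ are coherence in the $\h$-topos and finite $\h$-cohomological dimension with $R$-linear coefficients. The first is immediate from the running convention that all schemes are noetherian: such a $V$ is quasi-compact and quasi-separated, and every $\h$-covering family of $V$ admits a finite subfamily which still covers, since the class of universal topological epimorphisms is stable under refinement by finite covers on a quasi-compact target. The second hypothesis is the substantial input, and is provided by the Goodwillie--Lichtenbaum bound \cite{goodlicht} on the $\h$-cohomological dimension of noetherian schemes with coefficients in an arbitrary ring, which is the main external result used throughout this subsection (and applied explicitly in the next proofs, for instance in Theorem \ref{thm:fcohdimlconst}).

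The main obstacle will be ensuring that the finite $\h$-cohomological dimension step applies in the stated generality for the arbitrary ring $R$: the Goodwillie--Lichtenbaum theorem is a deep external input and must be quoted carefully. Should one wish to avoid it, or should it not directly apply, a fallback route is to apply Lemma \ref{lm:preparecompactsite} to the functor $\Gamma_{\h}(V,-)\colon \sh_\h(S,R)\to R\text{-}\mathrm{Mod}$: coherence of $V$ already guarantees that every $R^n\Gamma_{\h}(V,-)=H^n_{\h}(V,-)$ commutes with small filtered colimits of $\h$-sheaves, which verifies the hypothesis of that lemma, after which one aims to verify condition (iv) by an $\h$-hypercover computation that writes an arbitrary complex as a homotopy colimit of its bounded-below truncations (the latter being handled by classical injective resolutions together with the fact that $\Gamma_{\h}(V,-)$ at the sheaf level commutes with small sums, using again the coherence of $V$).
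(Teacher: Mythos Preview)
Your approach has a genuine gap. The Goodwillie--Lichtenbaum theorem does not bound the $\h$-cohomological dimension of an arbitrary noetherian scheme outright: it bounds it in terms of the Krull dimension and the \'etale cohomological dimension of the residue fields. For a general noetherian $S$ (for instance $S=\Spec(\mathbf{R})$ with $R=\ZZ/2\ZZ$, or any $S$ of infinite Krull dimension), there is no reason for $V\in S_\et$ to have finite $\h$-cohomological dimension, so your appeal to Proposition~\ref{zarlfinitecohdimcompact} fails. Moreover, even granting compactness of $\uR_S^{\h}(V)$, your test against $R(V)$ would still require $R(V)$ to be compact in $\Der(S_\et,R)$ in order to identify $\Hom\bigl(R(V),\bigoplus_i\derR\alpha_*(K_i)\bigr)$ with $\bigoplus_i\Hom\bigl(R(V),\derR\alpha_*(K_i)\bigr)$; this again needs finite \'etale cohomological dimension of $V$, which is not assumed. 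Your fallback via Lemma~\ref{lm:preparecompactsite} does not help either: condition~(iv) there is equivalent to the very sum-preservation you want, and the lemma only supplies it under a finite cohomological dimension hypothesis.

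The paper's proof circumvents this by testing on stalks rather than on $\Hom$ out of $R(V)$. The fibre functors at geometric points of $S$ form a conservative family that commutes with sums, so it suffices to prove that $u^*\derR\alpha_*\simeq\derR\alpha_*u^*$ for $u\colon S_\xi\to S$ the strict henselisation at a geometric point~$\xi$. Over $S_\xi$, Gabber's Theorem~\ref{thm:localetalefinitecd} guarantees finite dimension and uniformly bounded \'etale cohomological dimension of residue fields, and only then does Goodwillie--Lichtenbaum yield the finite $\h$-cohomological dimension needed to invoke Proposition~\ref{zarlfinitecohdimcompact}. The passage to strict henselisations is the essential missing step in your argument.
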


\begin{proof}
Let us prove first the lemma in the case where $S$
is of finite dimension and where all the residue fields
of $S$ are uniformly of finite \'etale cohomological dimension.
Then any $S$-scheme of finite type has the
same property; see \cite[Expos\'e~X, Th\'eor\`eme~2.1]{SGA4}.
Moreover, by virtue of a theorem of Goodwillie and
Lich\-ten\-baum~\cite{goodlicht},
any $S$-scheme of finite type has finite $\h$-cohomological
dimension as well. For a complex $C$ of $\h$-sheaves
of $R$-modules over $S$, the sheaf cohomology
$H^i(\derR\alpha_*(C))$ is the \'etale sheaf associated to
the presheaf
$$V\mapsto H^i_\h(V,C)\, .$$
It follows from Proposition \ref{zarlfinitecohdimcompact}
that the functors $H^i_\h(V,-)$ preserve small sums,
which implies that the functor $\derR\alpha_*$ has the same
property.

We now can deal with the general case as follows.
Let $\xi$ be a geometric point of $S$, and write
$u:S_\xi\to S$ for the canonical map from the strict
henselization of $S$ at $\xi$.
Then $S_\xi$ is of finite dimension and
its residue fields are uniformly of finite
\'etale cohomological dimension; see Theorem \ref{thm:localetalefinitecd}.
We then have pullback functors
$$u^*:\Der(S_\et,R)\to \Der(S_{\xi,\et},R)
\ \text{and} \
u^*:\Der(\sh_\h(S,R))\to\Der(\sh_\h(S_\xi,R))\, .$$
The family of functors $u^*$ form a conservative
family of functors which commutes with sums (when $\xi$
runs over all geometric points of $S$).
Therefore, it is sufficient to prove that
the functor $u^*\derR\alpha_*$ commutes with sums.
Let $V$ be an affine \'etale scheme over $S_\xi$.
There exists a projective system of \'etale
$S$-schemes $\{V_i\}$ with affine transition maps such that
$V=\varprojlim_i V_i$.
Note that any $S_\xi$-scheme of finite
type is of finite \'etale cohomological
dimension (see Gabber's Theorem \ref{thm:localetalefinitecd}),
so that, by virtue of Lemma \ref{proetaleinvimagerightQuillen},
for any complex of sheaves of $R$-modules $K$ over $S_\et$,
one has
$$\varinjlim_i H^n_\et(V_i,K)
\simeq H^n_\et(V,u^*(K))\, .$$
Similarly, applying Lemma \ref{proetaleinvimagerightQuillen}
to the $\h$-sites,
for any complex of $\h$-sheaves of $R$-modules $L$ over $S$,
we have
$$\varinjlim_i H^n_\h(V_i,L)
\simeq H^n_\h(V,u^*(L))\, .$$
Note that, for any \'etale map $w:W\to S$, the natural
map $w^*\derR\alpha_*(C)\to\derR\alpha_* w^*(C)$
is invertible.
Therefore, for any complex of $\h$-sheaves of $R$-modules $C$
over $S$, we have natural isomorphisms
$$\begin{aligned}
H^n_\et(V,u^*\derR\alpha_*(C))
&\simeq \varinjlim_i H^n_\et(V_i,\derR\alpha_*(C))\\
&\simeq \varinjlim_i H^n_\h(V_i,C))\\
&\simeq H^n_\h(V,u^*(C))\\
&\simeq H^n_\et(V,\derR\alpha_* u^*(C))\, .
\end{aligned}$$
In other words, the natural map
$u^*\derR\alpha_*\to\derR\alpha_* u^*$ is
invertible, and as we already know that
the functor $\derR\alpha_*$ commutes with
small sums over $S_\xi$, this achieves the proof
of the lemma.
\end{proof}

\begin{prop}\label{prop:hdescentetalesheaves00}
Let $R$ be a ring of positive characteristic, and $S$ be
a noetherian scheme. The functor \eqref{embedsmalletalesheavesintohsheaves3}
is fully faithful.
In other words, for any complex $C$
of sheaves of $R$-modules over $S_\et$,
and for any morphism of finite type
$f:X\to S$, the natural map
$$H^i_\et(X,f^*C)\to H^i_\h(X,\alpha^*C)$$
is invertible for any integer $i$.
\end{prop}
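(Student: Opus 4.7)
The plan is to recast the claim as the statement that the adjunction unit $C \to \derR\alpha_*\,\alpha^*(C)$ is an isomorphism in $\Der(S_\et,R)$ for every $C$. This is equivalent to the asserted fully faithfulness of $\alpha^*$; moreover the stronger assertion concerning any morphism of finite type $f:X\to S$ follows by applying the fully faithfulness over $X$ to the complex $f^*C$, after noting that $\alpha^*$ commutes with pullback along $f$ (a consequence of Lemma \ref{lm:small_to_sm_etale}\,(3) together with the fact that $\h$-sheafification, as a left adjoint, commutes with pullback).

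Let $D_0 \subseteq \Der(S_\et,R)$ denote the full subcategory of those $C$ for which this unit is invertible. Since $\alpha^*$ is a left adjoint (hence commutes with all colimits) and $\derR\alpha_*$ preserves small sums by Lemma \ref{lm:hcohcommutesmallsums}, $D_0$ is a \emph{localizing} subcategory: closed under triangles, shifts, and small sums. The first key input is that $D_0$ contains every \'etale sheaf of $R$-modules $F$ on $S$, placed in degree zero; in other words, the classical $\h$-descent statement $H^i_\et(V,F) \simeq H^i_\h(V, \alpha^*F)$ for every $V \in S_\et$ and every $i\geq 0$. Since $R$ is of positive characteristic, $F$ is automatically torsion; this is then a form of Voevodsky's $\h$-descent for torsion \'etale sheaves (\cite[Section 3]{V1}), obtained by combining proper cohomological descent from \cite[Expos\'e~Vbis]{SGA4} with the trivial \'etale descent, and using that the $\h$-topology is generated by \'etale covers together with proper surjective maps (proper base change for torsion sheaves being insensitive to the comparison between the characteristic of $R$ and the residue characteristics of $S$).

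It remains to extend from sheaves to arbitrary unbounded $C$. The property ``unit is an isomorphism'' can be tested locally on $S_\et$, in particular after pullback along the strict henselizations $u: S_{\bar s} \to S$ at the geometric points of $S$; the proof technique of Lemma \ref{lm:hcohcommutesmallsums} (ultimately based on Lemma \ref{proetaleinvimagerightQuillen}) shows that both $\alpha^*$ and $\derR\alpha_*$ commute with such $u^*$, reducing us to the case where $S$ is strictly local. In that case, Gabber's Theorem \ref{thm:localetalefinitecd} together with the Goodwillie--Lichtenbaum theorem on $\h$-cohomological dimension ensure that every \'etale (resp.\ finite-type) $S$-scheme has finite \'etale (resp.\ $\h$-) cohomological dimension for $R$-linear coefficients. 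Proposition \ref{zarlfinitecohdimcompact} then provides a set of compact generators of $\Der(S_\et,R)$, namely the sheaves $R(V)$ for $V$ quasi-compact \'etale over $S$, all of which lie in $D_0$ by the first step. As a localizing subcategory containing a set of compact generators, $D_0$ must coincide with $\Der(S_\et,R)$. The main obstacle will be the classical $\h$-descent input on \'etale sheaves in the generality required here (no restriction on the characteristic of $R$ versus the residue characteristics of $S$); everything else is a formal consequence of Lemma \ref{lm:hcohcommutesmallsums} and the cohomological dimension results of Section~\ref{sec:unbounded_sh}.
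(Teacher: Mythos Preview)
Your proof is correct and follows essentially the same route as the paper: reformulate as invertibility of the unit $C\to\derR\alpha_*\alpha^*(C)$, use Lemma~\ref{lm:hcohcommutesmallsums} to reduce to the classical range, and then invoke the SGA4 cohomological descent machinery (proper surjections and \'etale covers being of universal cohomological $1$-descent, together with the fact that these generate the $\h$-topology).

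One remark: your final paragraph is an unnecessary detour. Once you have shown that $D_0$ is localizing and contains every sheaf (placed in degree~$0$), you are already done: bounded complexes lie in the thick closure of sheaves via the Postnikov tower, and arbitrary complexes are homotopy colimits of their bounded truncations. There is no need to pass to strict henselizations or to invoke compact generation. This is exactly how the paper argues it: after noting that $\derR\alpha_*$ preserves small sums, it says ``it is sufficient to restrict ourselves to the case of bounded complexes'' and then applies \cite[Expos\'e~Vbis, 3.3.3]{SGA4} directly. Your reduction to strictly local $S$ is valid but redundant; it re-proves a special case of a fact you had already established.
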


\begin{proof}
We must prove that, for any complex of sheaves of $R$-modules
$C$ over $S_\et$, the natural map
$$C\to\derR\alpha_*\derL\alpha^*(C)$$
is invertible in $\Der(\sh_\h(S,R))$.
The functor $\derR\alpha_*$ preserves small sums
(Lemma \ref{lm:hcohcommutesmallsums}).
Therefore, it is sufficient
restrict ourselves to the case of bounded complexes.
Then, by virtue of \cite[Expos\'e~Vbis, 3.3.3]{SGA4},
it is sufficient to prove that any
$\h$-cover is a morphism of universal cohomological $1$-descent
(with respect to
the fibered category of \'etale sheaves of $R$-modules).
The $\h$-topology is the minimal Grothendieck topology
generated by open coverings as well as by coverings of shape
$\{p:Y\to X\}$ with $p$ proper and surjective; see \cite[1.3.9]{V1}
in the context of excellent schemes, and \cite[8.4]{Rydh} in general.
We know that the class of morphisms
of universal cohomological $1$-descent
form a pretopology on the category of schemes; see~\cite[Expos\'e~Vbis, 3.3.2]{SGA4}.
To conclude the proof, it is thus sufficient to note that
any \'etale surjective morphism (any proper surjective morphism, respectively)
is a morphism of universal cohomological $1$-descent;
see \cite[Expos\'e~Vbis, 4.3.5 \& 4.3.2]{SGA4}.
\end{proof}

\subsection{Basic change of coefficients}
\label{sec:change_coef}

\begin{num} \label{num:extend_forget_scalars}
Let $R'$ be an $R$-algebra and $S$ be a base scheme.
We associate to $R'/R$ the classical adjunction:
\begin{equation} \label{eq:extend_forget_scalars_ab}
\rho^*:\sh_h(S,R) \leftrightarrows \sh_h(S,R'):\rho_*
\end{equation}
such that $\rho^*(F)$ is the $h$-sheaf associated with the presheaf
 $X \mapsto F(X) \otimes_R R'$.
 The functor $\rho_*$ is faithful, exact and commutes with arbitrary
 direct sums.
 Note also the formula:
\begin{equation} \label{eq:extend_forget_scalars&tensor_product_ab}
\rho_*\rho^*(F)=F \otimes_R R'
\end{equation}
where $R'$ is seen as the constant $h$-sheaf associated with
 the $R$-module $R'$.

Note that the adjunction\eqref{eq:extend_forget_scalars_ab} is an adjunction
 of $\sft$-premotivic abelian categories.
As such, it can be derived and induces a $\sft$-premotivic adjunction:
\begin{equation*}
\derL \rho^*:\uDM_h(-,R) \rightleftarrows \uDM_h(-,R'):\derR \rho_*
\end{equation*}
which restricts,
 according to Definition \ref{df:hmotives&constructible_hmotives},
 to a premotivic adjunction
\begin{equation} \label{eq:extend_forget_scalars_tri}
\derL \rho^*:\DM_h(-,R) \rightleftarrows \DM_h(-,R'):\derR \rho_*.
\end{equation}
Recall that the stable category of $h$-motives over $S$
 is a localization of the derived category of symmetric Tate spectra
 of $h$-sheaves over $S$.\footnote{See \cite{CD3},
  Definition 5.3.16 for symmetric Tate spectra 
  and Definition 5.3.22 for the stable
  $\AA^1$-derived category.}
 Here we will simply denote this category by $\Spt_h(S,R)$
  and call its objects {\emph spectra}.
 The adjunction \eqref{eq:extend_forget_scalars_ab} can be extended to
 an adjunction of $\sft$-premotivic abelian categories:
\begin{equation} \label{eq:extend_forget_scalars_Spt_h}
\rho^*:\Spt_h(-,R) \rightleftarrows \Spt_h(-,R'):\rho_*.
\end{equation}
Again, $\rho_*$ is faithful, exact and commutes with arbitrary sums.
Note that the model category structure on $\Spt_h(-,R')$
is a particular instance of a general construction
(see \cite[7.2.1 and Theorem 7.2.2]{CD3}), from which we immediately get
the following useful result (which is not difficult to prove directly
though):
\end{num}

\begin{lm} \label{lm:derived_rho_*}
The functor
$\rho_*:\Comp(\Spt_h(S,R')) \rightarrow \Comp(\Spt_h(S,R))$
preserves and detects stable weak $\AA^1$-equivalences.
\end{lm}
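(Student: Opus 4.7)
The plan is to propagate a single elementary fact about restriction of scalars through the successive localizations that define the stable $\AA^1$-model structure on $\Spt_h(S,R')$. The basic observation is that $\rho_*\colon \sh_h(S,R') \to \sh_h(S,R)$ is exact and faithful (as stated in \ref{num:extend_forget_scalars}), hence conservative as an additive functor between abelian categories. Applied degree-wise and level-wise, this already implies that $\rho_*$ preserves and detects level-wise quasi-isomorphisms of complexes of Tate spectra of $h$-sheaves.

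Next I would upgrade this to $\AA^1$-equivalences at the level of $\Comp(\sh_h(-,R'))$. The key point is that $\rho_*$ preserves all filtered colimits (being exact and commuting with small sums), so one may construct a functorial $\AA^1$-localization $L_{\AA^1}$ --- e.g.\ via a Suslin-style singular functor composed with a fibrant resolution in the projective model structure on complexes of $h$-sheaves, whose generators are the free sheaves $R(X)$ for $X\in\sft_S$ --- which commutes strictly with $\rho_*$, in the sense that the natural map $\rho_*\, L_{\AA^1}\to L_{\AA^1}\,\rho_*$ is an isomorphism of functors. Combined with the previous step, this yields: $f$ is an $\AA^1$-equivalence iff $L_{\AA^1}(f)$ is a quasi-iso iff $\rho_* L_{\AA^1}(f)\simeq L_{\AA^1}\rho_*(f)$ is a quasi-iso iff $\rho_*(f)$ is an $\AA^1$-equivalence.

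Finally I would pass to stable weak $\AA^1$-equivalences of Tate spectra. Since $\rho^*$ is a symmetric monoidal left adjoint that sends the Tate object to the Tate object, $\rho_*$ commutes with the Tate twist, with the relevant loop functor, and with the formation of $\Omega$-spectra; moreover the stable fibrant replacement functor can be chosen so that $\rho_*$ commutes with it as well. The standard characterization --- a morphism of Tate spectra is a stable weak $\AA^1$-equivalence iff its stable fibrant replacement is a level-wise $\AA^1$-equivalence between $\AA^1$-local $\Omega$-spectra --- then gives both preservation and detection, by reduction to the $\AA^1$-local step treated above.

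The main obstacle, and the reason the authors appeal to \cite[7.2.1, Theorem 7.2.2]{CD3}, is to execute this propagation uniformly and cleanly without redoing the construction of the model structure by hand. From the vantage point of loc.\ cit., the stable $\AA^1$-model structure on $\Spt_h(-,R')$ arises as a Bousfield localization whose generating (trivial) cofibrations are built out of data --- the generators $R(X)$, the $\AA^1$-projections, and the Tate structure maps --- that are manifestly compatible with $\rho_*$; preservation and detection of weak equivalences by $\rho_*$ then reduces formally to the fact that $\rho_*$ preserves and detects quasi-isomorphisms of the underlying complexes of $h$-sheaves, which is the elementary observation from which we started.
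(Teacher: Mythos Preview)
Your proposal is correct and aligns with the paper. The paper itself gives no explicit argument: it simply remarks (just before the lemma) that the model structure on $\Spt_h(-,R')$ is an instance of the general construction of \cite[7.2.1 and Theorem~7.2.2]{CD3}, from which the result follows immediately, adding parenthetically that a direct proof is not difficult; you have sketched exactly that direct proof and correctly identified the CD3 reference as the clean formal route.
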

%\begin{proof}
%If we prove $\rho_*$ preserves stable weak $\AA^1$-equivalences,
% then it detects them as its left adjoint $\derL \rho^*$ is surjective
% on the generators of $\DM_h(S,R)$.
% 
%Note that because $\rho_*$ commutes with colimits,
% we can apply \cite[Prop.~5.2.24]{CD3}
% to get that $\rho_*$ preserves $\AA^1$-weak equivalences.
% Thus the derived functor
%$$
%\derR \rho_*=\rho_*:\DMte(\Spt(S,R'))
% \rightarrow \DMte(\Spt(S,R))
%$$
%commutes with arbitrary direct sums. The Brown representability theorem
% of Neeman shows it admits a right adjoint which we denote $\rho^!$.
% Thus, to prove $\rho_*$ preserves stable $\AA^1$-weak equivalences,
% it is sufficient to prove that $\rho^!$ preserves $\mathcal W_\Omega$-local
%  spectra.\footnote{See \cite{CD3},
%   Definition \ref{df:triangulated_premotives}
%    and the discussion thereafter.}
%
%Formula \eqref{eq:extend_forget_scalars&tensor_product_ab}
%  gives the following computation:
%\begin{equation} \label{eq:extend_forget_scalars&tensor_product_DMte}
%\rho_* \derL \rho^*(K)=K \otimes^\derL_R R'
%\end{equation}
%for any complex $K$ in $\DMte(\Spt(X,R))$.
% This formula together with the fact the functor $- \otimes^\derL R'$
%  preserves $\mathcal W_\Omega$-equivalences shows that
%  $\rho_*$ sends any morphism in $\mathcal W_\Omega$ to
%  a $\mathcal W_\Omega$-equivalences. Therefore $\rho^!$ preserves
%  $\mathcal W_\Omega$-local spectra, which concludes.
%\end{proof}

As a corollary, we get:
\begin{prop} \label{prop:properties_rho_*}
Consider the notations of Paragraph \ref{num:extend_forget_scalars}.
The functors $\derR \rho_*=\rho_*$ is conservative and admits a right adjoint:
$$
\rho^!:\uDM_h(S,R) \rightarrow \uDM_h(S,R').
$$
For any $h$-motive $M$ over $S$, the following computations hold:
\begin{align*}
\rho_*\derL \rho^*(M)&=M \otimes^\derL_R R', \\
\rho_*\rho^!(M)&=\derR \uHom_R(R',M).
\end{align*}
\end{prop}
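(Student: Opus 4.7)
The plan is to dispatch the four assertions by upgrading the abelian-level facts of \ref{num:extend_forget_scalars} to the triangulated setting via Lemma \ref{lm:derived_rho_*}. Conservativity is immediate: Lemma \ref{lm:derived_rho_*} says $\rho_*$ preserves and detects stable weak $\AA^1$-equivalences on chain complexes of spectra, which forces the induced functor on homotopy categories to be conservative.

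For the existence of $\rho^!$ I would invoke Brown representability. The category $\uDM_\h(S,R')$ is compactly generated, by the family of objects $\Sigma^\infty \uR^\h_S(X)(n)$ for $X$ of finite type over $S$ and $n \in \ZZ$, so it suffices to verify that $\rho_*$ preserves small sums. The underlying functor $\rho_*$ on sheaves is exact and commutes with direct sums (see \ref{num:extend_forget_scalars}), and this extends termwise to spectra. Because small sums in $\uDM_\h$ are represented by sums of cofibrant objects, and Lemma \ref{lm:derived_rho_*} guarantees that $\rho_*$ respects weak equivalences, this preservation of sums carries over to the stable homotopy level.

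For the formula $\rho_*\derL\rho^*(M) \simeq M \otimes^\derL_R R'$, I would pick a cofibrant resolution $\tilde M \to M$ in the stable model structure on $\Spt_h(S,R)$; then $\rho^*(\tilde M)$ is cofibrant in $\Spt_h(S,R')$, and Lemma \ref{lm:derived_rho_*} identifies $\rho_*\derL\rho^*(M)$ with $\rho_*\rho^*(\tilde M) = \tilde M \otimes_R R'$ via formula \eqref{eq:extend_forget_scalars&tensor_product_ab}; cofibrancy of $\tilde M$ should then be enough to ensure that this represents $M \otimes^\derL_R R'$. The formula $\rho_*\rho^!(M) \simeq \derR\uHom_R(R', M)$ would be obtained dually: at the abelian level the right adjoint of $\rho_*$ is coinduction $F \mapsto \uHom_R(R', F)$, where $R'$ is viewed as the constant sheaf of $R$-modules attached to $R'$, and $\rho_*$ composed with this right adjoint is simply internal Hom out of $R'$ in the category of $R$-sheaves. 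Extending level-wise to spectra and deriving, again via Lemma \ref{lm:derived_rho_*}, gives a right adjoint of derived $\rho_*$ whose composition with $\rho_*$ is $\derR\uHom_R(R', -)$; uniqueness of right adjoints identifies it with $\rho_*\rho^!$.

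The main obstacle is the flatness step in the first formula: one has to verify that cofibrants in the projective stable model structure on $\Spt_h(S,R)$ are level-wise flat enough over the constant sheaf $R$ for the underived tensor product $\tilde M \otimes_R R'$ to compute $M \otimes^\derL_R R'$. This is a standard feature of projective model structures (cofibrants are retracts of cellular objects built from free representable generators, whose underlying sheaves of $R$-modules are free), but deserves a brief explicit verification in the $h$-motivic context, since it is precisely what lets us convert the concrete abelian-level identities into the statements about the abstract derived adjoints.
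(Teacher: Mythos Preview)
Your approach is essentially what the paper has in mind (the paper presents the proposition as an immediate corollary of Lemma~\ref{lm:derived_rho_*} and the abelian-level facts recorded in \ref{num:extend_forget_scalars}, without further proof). Your unpacking of the four assertions is accurate and the identification of the flatness issue in the first formula is the right thing to flag; your resolution via the cellular description of cofibrants is the standard one.

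There is one genuine gap: you claim that $\uDM_\h(S,R')$ is \emph{compactly generated} by the objects $\Sigma^\infty \uR^\h_S(X)(n)$. This is not established at this point in the paper, and in fact the paper later takes pains to prove it only under additional hypotheses on the \'etale cohomological dimension of the residue fields (Theorem~\ref{thm:fcohdimlconst}); Remark~\ref{rem:contreex1} gives explicit examples where compact generation fails. What you do have, by construction, is that $\uDM_\h(S,R')$ is the homotopy category of a combinatorial model category and is therefore \emph{well generated} in the sense of Neeman. Brown representability holds for well generated triangulated categories, so your argument goes through verbatim once you replace ``compactly generated'' by ``well generated'': you have already shown that $\rho_*$ preserves small sums, and that is all Brown representability requires.
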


\begin{num} \label{num:rho_n&6_functors}
We consider the particular case of the discussion above when
$R=\ZZ$ and $R'=\ZZ/n\ZZ$ for a positive integer $n$. 
%%Then we denote by $(\rho_n^*,\rho_{n*})$ the adjunction \eqref{eq:extend_forget_scalars_ab}.
For any $h$-motive $M$ over $S$, we put: 
\begin{equation} \label{eq:M/n}
M/n:=M \otimes^\derL \ZZ/n\ZZ.
\end{equation}
Then the short exact sequence
$$
0 \rightarrow \ZZ \xrightarrow{ \ n \ } \ZZ \longrightarrow \ZZ /n\ZZ \rightarrow 0
$$
induces a canonical distinguished triangle in $\DM_h(S,\ZZ)$:
\begin{equation} \label{eq:triangle_red_mod_n_DMh}
M \xrightarrow{ \ n \ } M \longrightarrow M/n \longrightarrow\, .
\end{equation}
% One deduces from that triangle and from the conservativity of $\rho_*$
% that for any $h$-motive $M'$ with coefficients in $R'$,
% $\derL \rho^* \rho_*(M')=M' \oplus M'[1]$. Thus $\rho_*$ is in fact
% faithful.
In the next statement,
 we will use the fact that $\uDM_h(S,R)$ is a dg-category
  (see \cite[Rem.~5.1.19]{CD3}).
 We denote the enriched Hom by $\derR \Hom$.
\end{num}
\begin{prop} \label{prop:rho_n&6_functors}
Consider the previous notations.
Let $S$ be a scheme and $f:X\to S$ be a morphism of $\sch$,
 $M$ and $N$ be $h$-motives over $X$.
Then the natural exchange transformations:
$$
\begin{array}{lrcl}
(1) & \derR f_*(N)/n &\longrightarrow &\derR f_*(N/n), \\
(2) & \derR \uHom(M,N)/n
 & \longrightarrow & \derR \uHom_{\ZZ/n\ZZ}(M/n,N/n), \\
(3) & \derR \Hom(M,N)/n
 & \longrightarrow & \derR \Hom_{\ZZ/n\ZZ}(M/n,N/n), \\
\end{array}
$$
are isomorphisms.
\end{prop}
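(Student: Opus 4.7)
The plan is to deduce all three isomorphisms from a single device: the distinguished triangle \eqref{eq:triangle_red_mod_n_DMh} combined with the fact that the functors $\derR f_*$, $\derR\uHom(M,-)$ and $\derR\Hom(M,-)$ are triangulated and $\ZZ$-linear. The only additional ingredient is the content of Proposition \ref{prop:properties_rho_*}, which says that $\rho_*\,\derL\rho^*(M) \simeq M/n$ and that $\rho_*$ is faithful and exact, hence conservative and detects isomorphisms.

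First, for (1), I would apply the triangulated functor $\derR f_*$ to the triangle $N \xrightarrow{n} N \to N/n \to N[1]$ of $\DM_h(X,\ZZ)$, obtaining a distinguished triangle
\[
\derR f_*(N) \xrightarrow{n} \derR f_*(N) \to \derR f_*(N/n) \to \derR f_*(N)[1]
\]
in $\DM_h(S,\ZZ)$. By definition, $\derR f_*(N)/n$ fits into the triangle obtained by applying \eqref{eq:triangle_red_mod_n_DMh} to $\derR f_*(N)$ itself, which has the same first two terms and the same first map. A morphism of triangles identifies the two third terms, and inspection of the construction shows that the resulting isomorphism $\derR f_*(N)/n \xrightarrow{\sim} \derR f_*(N/n)$ coincides with the exchange transformation in the statement.

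Next, for (2), the same comparison-of-triangles argument applied to the triangulated functor $\derR\uHom(M,-)$ produces a canonical isomorphism
\[
\derR\uHom(M,N)/n \xrightarrow{\sim} \derR\uHom(M,N/n).
\]
To finish, I would identify the right-hand side with $\derR\uHom_{\ZZ/n\ZZ}(M/n,N/n)$ after application of the faithful exact functor $\rho_*$. Using Proposition \ref{prop:properties_rho_*} we have $N/n \simeq \rho_*\,\derL\rho^*(N)$ and $M/n\simeq \rho_*\,\derL\rho^*(M)$, and the strong symmetric monoidal adjunction $(\derL\rho^*,\rho_*)$ of \eqref{eq:extend_forget_scalars_Spt_h} yields the standard formula
\[
\derR\uHom(M,\rho_*K) \simeq \rho_*\,\derR\uHom_{\ZZ/n\ZZ}(\derL\rho^*M,K)
\]
for any object $K$ of $\DM_h(X,\ZZ/n\ZZ)$. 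Taking $K=\derL\rho^*(N)$ delivers the desired identification; since $\rho_*$ is conservative, this gives (2). Statement (3) follows either by applying the same triangle argument to $\derR\Hom(M,-)$ and combining it with the external-Hom version $\derR\Hom(M,\rho_*K)\simeq \derR\Hom_{\ZZ/n\ZZ}(\derL\rho^*M,K)$ of the adjunction, or by applying $\derR\Hom(\un,-)$ to the isomorphism of (2).

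The only non-formal check in this plan is the compatibility between the triangle-comparison isomorphisms and the exchange transformations as implicitly defined before the statement; this is a routine functoriality verification, and there is no serious obstacle, since compactness of $\ZZ/n\ZZ$ as a complex of abelian groups makes the operation $(-)/n=(-)\otimes^{\derL}\ZZ/n\ZZ$ commute with any triangulated $\ZZ$-linear functor.
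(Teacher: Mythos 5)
Your proposal is correct and follows essentially the same path as the paper, whose proof is a single sentence invoking the distinguished triangle \eqref{eq:triangle_red_mod_n_DMh} (and its analogue in $\Der(\ZZ)$ for $(3)$). You have merely spelled out the implicit steps — applying the triangulated functors to the triangle, and the monoidal adjunction $\derR\uHom(M,\rho_*K)\simeq\rho_*\derR\uHom_{\ZZ/n\ZZ}(\derL\rho^*M,K)$ needed to pass from $\derR\uHom(M,N/n)$ to the $\ZZ/n\ZZ$-linear internal Hom.
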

\begin{proof}
In each case, this follows from the distinguished triangle
 \eqref{eq:triangle_red_mod_n_DMh} -- or its analog in the derived
 category of abelian groups.
\end{proof}

\begin{num}
Next we consider the case of $\QQ$-localization.
%More precisely,
% we assume $R'$ is a flat extension of $R_\QQ:=R \otimes_\ZZ \QQ$.
%
%Taking into account Propositions \ref{prop:properties_rho_*},
%  the adjunction \eqref{eq:extend_forget_scalars_tri}
% can be written:
%\begin{equation} \label{eq:extend_forget_scalars_tri_rational}
%\derL \rho^*:\uDM_h(S,R) \leftrightarrows \uDM_h(-,R'):\rho_*.
%\end{equation}
\end{num}

\begin{prop}
Let $S$ be a noetherian scheme of finite dimension.
Then $S$ is of finite cohomological dimension
for $\QQ$-linear coefficients with respect to the $\h$-topology.
In particular, for any complex of $\h$-sheaves $K$ over $S$,
for any $S$-scheme of finite type, and for any localization $R$
of $\ZZ$, we have a canonical isomorphism
$$
H_\h^0(X,K) \otimes R \simeq H_\h^0( X , K \otimes R).
$$
\end{prop}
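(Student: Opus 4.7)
The plan is to derive the compatibility isomorphism from the cohomological dimension bound, and to obtain the latter from the Goodwillie-Lichtenbaum theorem.

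First I would reduce the ``in particular'' statement to the first assertion. Any $S$-scheme $X$ of finite type is coherent in the $\h$-topos of $S$, and is itself a noetherian scheme of finite dimension. Assuming the first assertion of the proposition is established for all such schemes---in particular to $X$ rather than $S$---the object $X$ is of finite $\h$-cohomological dimension with $\QQ$-linear coefficients. Lemma \ref{lemma:etaleQcoefficients00} applied to the $\h$-topos of $S$ and the coherent object $X$ then yields the claimed isomorphism for every localization $R$ of $\ZZ$.

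For the first assertion, the key observation is that the residue fields of any scheme of finite type over $S$ have vanishing higher \'etale cohomology with coefficients in any $\QQ$-vector space: continuous Galois cohomology of a profinite group with values in a $\QQ$-vector space vanishes in positive degrees, because the cohomology of each finite quotient is simultaneously annihilated by the order of the quotient and acts invertibly on the uniquely divisible coefficient module. Thus the residue fields of $S$ (and of any $X$ of finite type over $S$) are uniformly of $\QQ$-linear \'etale cohomological dimension zero, and the Goodwillie-Lichtenbaum theorem \cite{goodlicht} applies exactly as in the proof of Theorem \ref{thm:fcohdimlconst} to conclude that $S$ (and any such $X$) has finite $\h$-cohomological dimension with $\QQ$-linear coefficients.

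I do not expect a significant obstacle beyond verifying the vanishing of higher $\QQ$-linear \'etale cohomology of fields, which is an elementary consequence of the averaging argument for cohomology of finite groups with values in uniquely divisible modules, combined with the fact that continuous cohomology of a profinite group is computed as a filtered colimit over its finite quotients.
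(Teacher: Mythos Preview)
Your proof is correct and follows essentially the same approach as the paper: fields have \'etale cohomological dimension zero for $\QQ$-linear coefficients, so Goodwillie--Lichtenbaum gives finite $\h$-cohomological dimension, and then Lemma~\ref{lemma:etaleQcoefficients00} yields the localization isomorphism. You spell out the averaging argument for vanishing of rational Galois cohomology where the paper simply asserts the fact, but the structure is identical.
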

\begin{proof}
Any field is of cohomological dimension zero
for $\QQ$-linear coefficients with respect to the \'etale
topology, and thus any noetherian scheme of finite
dimension is of finite cohomological dimension for $\QQ$-linear
coefficients with
respect to the $\h$-topology (see \cite{goodlicht}).
%It readily follows from \cite[Th.~3.3.25, 3.3.30]{CD3} 
% that the cohomology of a $\QQ$-linear $\h$-sheaf
% with respect to the $\h$-topology coincides with its analogue
% for the $\cdh$-topology. 
The last assertion of the proposition
is then a direct application of Lemma \ref{lemma:etaleQcoefficients00}.
\end{proof}

For the next corollaries, let us write simply 
 $\uDM_\h(S)$ (resp. $\DM_\h(S)$)
 for $\uDM_\h(S,\ZZ)$ (resp. $\DM_\h(S,\ZZ)$).
 As an immediate corollary of the previous theorem, we get:

\begin{cor}\label{cor:exactness_Qlocalization00}
Let $R$ be a localization of $\ZZ$.
For any noetherian scheme $S$ of finite dimension,
tensoring by $R$ preserves fibrant symmetric Tate spectra.
Furthermore, for any $S$-scheme of finite type $X$,
and for any object $M$ of $\uDM_\h(S)$, we have
$$\Hom_{\uDM_\h(S)}(\underline \ZZ^\h_S(X),M)\otimes R\simeq
\Hom_{\uDM(S)}(\underline \ZZ_S(X),M\otimes R)\, .$$ 
\end{cor}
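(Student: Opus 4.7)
The plan is to reduce both assertions to the key input of the preceding proposition: $S$ has finite $\h$-cohomological dimension with $\QQ$-linear coefficients, hence, for every $U \in \sft_S$ and every complex of $\h$-sheaves $K$, the tensor product with $R$ commutes with hypercohomology, $H^p_\h(U,K) \otimes R \simeq H^p_\h(U,K \otimes R)$. Since $R$ is flat over $\ZZ$ (being a localization), the functor $(-)\otimes R$ on complexes of $\h$-sheaves is exact and obviously commutes with $\Gamma(U,-)$, so the only nontrivial input is this commutation with hypercohomology.

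For the first assertion, recall that a symmetric Tate spectrum $M=(M_n)_{n\geq 0}$ is fibrant in the stable $\AA^1$-local projective model structure on $\Spt_h(S,\ZZ)$ if and only if (a) each component $M_n$ is fibrant in the $\AA^1$-local projective structure on $\Comp(\sh_\h(S,\ZZ))$, which amounts to requiring that the natural maps $H^p(\Gamma(U,M_n)) \to H^p_\h(U,M_n)$ and $H^p_\h(U,M_n) \to H^p_\h(\AA^1_U,M_n)$ be bijective for all $U\in\sft_S$ and all $p$; and (b) $M$ is an $\Omega$-spectrum, i.e. the adjoint structure maps $M_n \to \uHom(\underline\ZZ^\h_S(1),M_{n+1})$ are quasi-isomorphisms, again a condition detectable on hypercohomology evaluated at any $U\in\sft_S$. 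All of these are preserved by $(-)\otimes R$ by the preceding proposition, with the only slightly delicate point being that $\uHom(\underline\ZZ^\h_S(1),-)\otimes R \simeq \uHom(\underline\ZZ^\h_S(1),-\otimes R)$; since $\underline\ZZ^\h_S(1)$ is a summand of $\underline\ZZ^\h_S(\GGx S)[-1]$, this reduces to the analogous statement for representable $\h$-sheaves, which is precisely the commutation of hypercohomology with $(-)\otimes R$.

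For the Hom formula, observe that $\Sigma^\infty\underline\ZZ^\h_S(X)$ is cofibrant in the stable model structure, and that, by the first assertion, both $M$ and $M\otimes R$ are fibrant. Using the $\Omega$-spectrum condition, one identifies $\Hom_{\uDM_\h(S)}(\Sigma^\infty\underline\ZZ^\h_S(X),M) \simeq H^0_\h(X,M_0)$ and similarly for $M\otimes R$. The conclusion then follows from the isomorphism $H^0_\h(X,M_0)\otimes R \simeq H^0_\h(X,M_0\otimes R)$.

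The main conceptual obstacle is the identification of the fibrancy conditions purely in terms of hypercohomology of representable objects, so that the commutation statement of the preceding proposition can be applied directly; once this reformulation is in place, the proof is essentially formal.
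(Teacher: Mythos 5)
Your proof is correct and follows the same route as the paper: reduce both assertions to the commutation of $\h$-hypercohomology with $(-)\otimes R$ established in the preceding proposition, and use it to check that the three conditions characterizing fibrant Tate spectra (descent, $\AA^1$-invariance, $\Omega$-spectrum) are preserved. You spell out in more detail the check for the $\Omega$-spectrum condition via the fact that $\underline{\ZZ}^\h_S(1)$ is a shifted direct factor of a representable sheaf, but this is simply the unpacking of the paper's brief assertion that this preservation is "obvious."
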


\begin{proof}
The previous proposition shows that tensoring with $R$ preserves
the property of cohomological $\h$-descent, while it obviously preserves
the properties of being homotopy invariant and of being an $\Omega$-spectrum.
This proves the first assertion. The second one,
is a direct translation of the first.
\end{proof}

\begin{cor} \label{cor:exactness_Qlocalization}\label{cor:DM_h&Q-localization}
Consider a noetherian scheme $S$ of finite dimension and any localization $R$ of $\ZZ$.
For any objects $M$ and $N$ of $\DM_\h(S)$, if $M$ is constructible, then
$$\Hom_{\DM_\h(S)}(M,N)\otimes R\simeq\Hom_{\DM_\h(S)}(M,N\otimes R)\, .$$ 
\end{cor}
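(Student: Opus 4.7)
The plan is to reduce the statement to the previous corollary \ref{cor:exactness_Qlocalization00} via a standard thick subcategory argument, using that $\DM_{h,c}(S)$ is by definition the thick subcategory of $\DM_h(S)$ generated by the objects $\Sigma^\infty\uR^\h_S(X)(n)$ for $X/S$ smooth and $n\in\ZZ$.

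First, I would introduce the class $\mathcal{T}\subset\DM_h(S)$ of those objects $M$ such that, for every $N\in\DM_h(S)$, the natural map
$$\Hom_{\DM_h(S)}(M,N)\otimes R\longrightarrow \Hom_{\DM_h(S)}(M,N\otimes R)$$
is bijective. I would show $\mathcal{T}$ is a thick subcategory: both sides, as functors of $M$, are cohomological (i.e.\ turn distinguished triangles into long exact sequences); the left-hand side is cohomological because $R$ is a localization of $\ZZ$ and hence flat, so that $(-)\otimes R$ preserves exact sequences of abelian groups. A standard five-lemma application therefore gives closure under distinguished triangles, and closure under direct summands (and shifts) is immediate.

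Next, I would check that the generators $\Sigma^\infty\uR^\h_S(X)(n)$, with $X/S$ smooth and $n\in\ZZ$, all belong to $\mathcal{T}$. Using that Tate twists are invertible in $\DM_h(S)$ and commute with $(-)\otimes R$ (since $R$ is a constant sheaf), I can rewrite
$$\Hom_{\DM_h(S)}(\Sigma^\infty\uR^\h_S(X)(n),N)\simeq \Hom_{\DM_h(S)}(\Sigma^\infty\uR^\h_S(X),N(-n))$$
and similarly after tensoring with $R$ on the second argument, which reduces us to the case $n=0$. Since the inclusion $\DM_h(S)\hookrightarrow\uDM_h(S)$ is fully faithful, the required isomorphism for $M=\Sigma^\infty\uR^\h_S(X)$ and arbitrary $N(-n)\in\DM_h(S)\subset\uDM_h(S)$ is precisely the content of Corollary \ref{cor:exactness_Qlocalization00}.

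Combining these two steps, $\mathcal{T}$ is a thick subcategory of $\DM_h(S)$ containing the class of generators of $\DM_{h,c}(S)$, hence $\DM_{h,c}(S)\subset\mathcal{T}$, which is the claim. The only delicate point is the bookkeeping about where the objects live (stable versus effective category, $\DM_h$ versus $\uDM_h$) and the fact that $N\otimes R$ stays inside $\DM_h(S,\ZZ)$, which follows from writing $R$ as a filtered colimit of copies of $\ZZ$ and using that $\DM_h(S)$ is closed under small sums; this is routine but needs to be verified explicitly to guarantee that all $\Hom$-groups are meaningfully computed inside $\DM_h(S)$.
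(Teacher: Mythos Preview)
Your proposal is correct and follows essentially the same strategy as the paper: reduce to the generators $M=\ZZ(X)(n)$ via a thick subcategory argument, absorb the twist into $N$, and invoke Corollary~\ref{cor:exactness_Qlocalization00}. The paper's presentation differs slightly in that it reformulates the remaining verification as the statement that $\nu^*:\uDM_\h(S)\to\DM_\h(S)$ commutes with $R$-linearization (where $E\otimes R$ in $\DM_\h(S)$ is \emph{defined} as $\nu^*(\nu_\sharp(E)\otimes R)$), and then checks this via a chain of adjunction isomorphisms for arbitrary $N\in\uDM_\h(S)$; your direct argument that $N\otimes R$ stays in $\DM_\h(S)$ (as a filtered homotopy colimit of copies of $N$ in a localizing subcategory) is a cleaner way to handle the same bookkeeping when $N$ is already in $\DM_\h(S)$.
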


\begin{proof}
We may assume that $M=\ZZ(U)(n)$ for some smooth scheme $U$ over $S$
and some integer $n$. Replacing $N$ by $N(-n)$, we may assume that $n=0$,
and we deduce from the preceding corollary that it
is equivalent to show that the functor
$$\nu^*:\uDM_\h(S)\rightarrow\DM_\h(S)$$
commutes with $R$-linearization (where, for an object $E$
of $\DM_\h(S)$, one defines $E\otimes R=\nu^*(\nu_\sharp(E)\otimes R)$).
Let $N$ be any object of $\uDM_\h(S)$, and $X$ be a smooth
separated $S$-scheme of finite type. Then we have
$$\begin{aligned}
\Hom_{\DM_\h(S)}(\ZZ(X),\nu^*(N)\otimes R)
&\simeq\Hom_{\uDM_\h(S)}(\ZZ(X),\nu_\sharp(\nu^*(N))\otimes R)\\
&\simeq\Hom_{\uDM_\h(S)}(\ZZ(X),\nu_\sharp(\nu^*(N)))\otimes R\\
&\simeq\Hom_{\DM_\h(S)}(\ZZ(X),\nu^*(N))\otimes R\\
&\simeq\Hom_{\uDM_\h(S)}(\ZZ(X),N)\otimes R\\
&\simeq\Hom_{\uDM_\h(S,R)}(R(X),N\otimes R)\\
&\simeq\Hom_{\DM_\h(S,R)}(R(X),\nu^*(N\otimes R))\\
&\simeq\Hom_{\DM_\h(S)}(\ZZ(X),\nu^*(N\otimes R))
\end{aligned}$$
As both functors $\nu_\sharp$ and $\nu^*$ preserve Tate twists, this
implies that the canonical map $\nu^*(N)\otimes R\rightarrow\nu^*(N\otimes R)$
is invertible for any $N$.
\end{proof}

\begin{rem}\label{rem:contreex1}
 This corollary says in particular
 that  the category $\DM_{\h,c}(S,R\otimes\QQ)$
 of constructible $\h$-motives with $R\otimes\QQ$-coefficients
 is the pseudo-abelian envelope of the \emph{naive}
 $\QQ$-localization of the triangulated category $\DM_{\h,c}(S,R)$.
 This is not an obvious fact as the category $\DM_{\h}(S,R)$
 is not compactly generated for general base schemes $S$
 and ring of coefficients $R$. To find examples, it is sufficient
 to know that the unbounded derived category $\Der(S_\et,R)$
 may not be compactly generated. Indeed, it is easy to see that
 if $\DM_h(S,\ZZ)$ is compactly generated, then so is $\DM_\h(S,R)$
 for any ring of coefficients $R$. For a noetherian
 scheme $S$ and any prime number $\ell$ which is
 invertible in $\cO_S$, we will see later that $\DM_\h(S,\ZZ/\ell\ZZ)$
 is canonically equivalent to $\Der(S_\et,\ZZ/\ell\ZZ)$ (see Corollary
 \ref{cor:DMh_Det} below). Therefore, if the unbounded derived category
 $\Der(S_\et,\ZZ/\ell\ZZ)$, of sheaves of $\ZZ/\ell\ZZ$-modules on the small
 \'etale site of $S$, is not compactly generated for some $\ell$ as above, then
 $\DM_\h(S,\ZZ)$ is not compactly generated. This may happen if $S$ is the spectrum
 of a field with non-discrete absolute Galois group,
and with infinite $\ell$-cohomological dimension.
 
Even worse, it may happen that the category $\DM_{\h}(X,R)$ is compactly generated
while $\DM_{\h,c}(X,R)$ contains objects which are not
compact. For instance, this is the case for $X=\Spec(\mathbf{R})$:
the constant $\h$-motive $\ZZ$ is not compact in $\DM_h(\Spec{(\mathbf{R})},\ZZ)$.
Indeed, if this were the case, then its reduction modulo $2$ would be
a compact object as well, and, in particular, the constant motive $\ZZ/2\ZZ$
would be compact in the category $\DM_h(\Spec{(\mathbf{R})},\ZZ/2\ZZ)$. But the latter
is nothing else than $\Der(\Spec(\mathbf{R})_\et,\ZZ/2\ZZ)$, which, in turns is
the unbounded derived category of the category of $\ZZ/2\ZZ$-linear
representations of the group with two elements
$G=\mathrm{Gal}(\mathbf{C}/\mathbf{R})$. %%\pi^\et_1(\Spec(\mathbf{R}))$.
It is well known that the cohomology of the group $G$ with $\ZZ/2\ZZ$-coefficients
is non-trivial in infinitely many degrees. On the other
hand, for any ring of coefficients
$R$, the unbounded derived category $\Der(G,R)$ of the category of $R$-linear
right representations of $G$ is compactly generated: a generating family of compact objects
is given by the single representation $R(G)$ (obtained as the free
$R$-module on the underlying set of $G$, the action being induced by
right translations). The functor
$$\derR\Hom(R(G),-):\Der(G,R)\to\Der(R)$$
is canonically isomorphic to the functor which consists to forget the action of $G$. 
Therefore, the complex of $R$-modules
$\derR\Hom(M,R)$ is perfect for any compact object $M$ of $\Der(G,R)$.
But $\derR\Hom(R,R)$ is the complex which
computes the cohomology of the group $G$ with coefficients in $R$, so that
it cannot be perfect for $R=\ZZ/2\ZZ$.
\end{rem}

As a corollary, we get the following analog
of Proposition \ref{prop:rho_n&6_functors}:
\begin{cor} \label{cor:rho_rational&6_functors}
Let $S$ be a noetherian scheme of finite dimension,
and $f:X\to S$ be a morphism of finite
type, $M$ and $N$ be $h$-motives with $R$-coefficients over $X$,
with $M$ constructible. Then the natural exchange transformations below are isomorphisms:
$$
\begin{array}{lrcl}
(1) &  \derR f_*(N)\otimes\QQ &\longrightarrow &\derR f_*(N\otimes\QQ), \\
(2) &  \derR \uHom_R(M,N)\otimes\QQ
 & \longrightarrow & \derR \uHom_{R\otimes\QQ}(M\otimes\QQ,N\otimes\QQ), \\
(3) & \derR \Hom_R(M,N)\otimes\QQ
 & \longrightarrow & \derR \Hom_{R\otimes\QQ}(M\otimes\QQ,N\otimes\QQ)\, . \\
\end{array}
$$
\end{cor}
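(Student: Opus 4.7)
The plan is to reduce all three isomorphisms to Corollary~\ref{cor:DM_h&Q-localization} together with standard adjunctions, in exact parallel to the proof of Proposition~\ref{prop:rho_n&6_functors}.

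I would begin with statement (3). The natural map of complexes of $\QQ$-vector spaces
$$\derR\Hom_R(M,N)\otimes\QQ \longrightarrow \derR\Hom_{R\otimes\QQ}(M\otimes\QQ, N\otimes\QQ)$$
is a quasi-isomorphism if and only if it induces an isomorphism on $H^n$ for each $n$. The left-hand side in degree $n$ is $\Hom_{\DM_\h(X,R)}(M, N[n])\otimes\QQ$, which by Corollary~\ref{cor:DM_h&Q-localization} (applicable because $M$ is constructible) is identified with $\Hom_{\DM_\h(X,R)}(M, (N\otimes\QQ)[n])$. The change-of-scalars adjunction $(\derL\rho^*,\rho_*)$ of Proposition~\ref{prop:properties_rho_*} for $\rho:R\to R\otimes\QQ$, together with the formula $\rho_*\derL\rho^*(N)\simeq N\otimes\QQ$, then matches this with the $H^n$ of the target.

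For statement (1), I would test against constructible generators $P$ of $\DM_\h(S,R)$ via Yoneda. Both $P$ and $\derL f^*(P)$ remain constructible, so Corollary~\ref{cor:DM_h&Q-localization} applies at the first and third isomorphisms of the chain
$$\begin{aligned}
\Hom(P, \derR f_*(N)\otimes\QQ) &\simeq \Hom(P, \derR f_*(N))\otimes\QQ \\
&\simeq \Hom(\derL f^*P, N)\otimes\QQ \\
&\simeq \Hom(\derL f^*P, N\otimes\QQ) \\
&\simeq \Hom(P, \derR f_*(N\otimes\QQ)),
\end{aligned}$$
the remaining two being instances of the $(\derL f^*, \derR f_*)$-adjunction. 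Statement (2) proceeds identically: testing against a constructible $P$ in $\DM_\h(X,R)$, one uses the $(\otimes, \derR\uHom_R)$-adjunction together with Corollary~\ref{cor:DM_h&Q-localization} applied to $P$ and to $P\otimes M$ (both constructible), and finally invokes the change-of-scalars adjunction to identify $\derR\uHom_R(M, N\otimes\QQ)$ with $\rho_*\derR\uHom_{R\otimes\QQ}(M\otimes\QQ, N\otimes\QQ)$.

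The only delicate point, essentially one of bookkeeping, is keeping track of the forgetful functor $\rho_*$ when passing between the derived categories of $R$- and $R\otimes\QQ$-modules. Since $\rho_*$ is conservative and interacts correctly with the relevant six operations by Proposition~\ref{prop:properties_rho_*}, no conceptual obstacle arises, and the argument is entirely formal once Corollary~\ref{cor:DM_h&Q-localization} is in hand.
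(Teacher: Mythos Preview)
Your approach is essentially the same as the paper's --- reduce everything to Corollary~\ref{cor:DM_h&Q-localization} via Yoneda and adjunctions --- but there is a small gap in how you invoke that corollary. Corollary~\ref{cor:DM_h&Q-localization} is stated for $\DM_\h(S)=\DM_\h(S,\ZZ)$ (see the convention just above Corollary~\ref{cor:exactness_Qlocalization00}); it gives $\Hom_{\DM_\h(S,\ZZ)}(M,N)\otimes\QQ\simeq\Hom_{\DM_\h(S,\ZZ)}(M,N\otimes\QQ)$ for $M$ constructible with \emph{integral} coefficients. You apply it directly inside $\DM_\h(X,R)$ with $R$ arbitrary, which is not what the corollary says.

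The fix is exactly what the paper does: reduce to the case where your constructible test object $P$ (or $M$, in statement~(3)) is of the form $R\otimes^\derL_\ZZ U$ for some constructible $U$ in $\DM_\h(-,\ZZ)$, which suffices since such objects generate the constructible $R$-linear motives as a thick subcategory. Then the change-of-scalars adjunction gives $\derR\Hom_R(R\otimes^\derL_\ZZ U,-)\simeq\derR\Hom_\ZZ(U,-)$, and Corollary~\ref{cor:DM_h&Q-localization} applies legitimately. Once this reduction is in place, your argument goes through; the paper also observes that~(3) implies~(2) directly (test $\derR\uHom$ against the unit object and its Tate twists by smooth schemes), so your separate treatment of~(2) is correct but unnecessary.
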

\begin{proof}
To prove (1), it is sufficient to check this after applying the functor
 $\derR\Hom_R(P,-)$, when $P$ runs over a generating family of
 constructible objects. In particular, we may assume that $P=R\otimes^\derL U$
 for some constructible object $U$ of $\DM_\h(S,\ZZ)$, in which case we have
 $\derR\Hom_R(P,-)=\derR\Hom_\ZZ(U,-)$.
 for any constructible $h$-motive $P$ with coefficients in $R$.
 Then the result follows from Corollary \ref{cor:DM_h&Q-localization}.
 Similarly, to prove (3), it is sufficient to consider the case where $M$
 is the $R$-linearization of a constructible object of $\DM_\h(X)$,
 and we conclude again with Corollary \ref{cor:DM_h&Q-localization}.
 It is easy to see that (3) implies (2).
\end{proof}

As a notable application of the results proved so far,
 we get the following proposition:
\begin{prop} \label{prop:conservativity_coef_DMh}
Let $\mathcal P$ be the set of prime integers
 and $S$ be a noetherian scheme of finite dimension.
If $R$ is flat over $\ZZ$,
then the family of change of coefficients functors:
\begin{align*}
\rho^*:\DM_h(S,R) &\rightarrow \DM_h(S,R\otimes\QQ)\, , \\
\rho_p^*:\DM_h(S,R) &\rightarrow \DM_h(S,R/p)\, , \ p \in \mathcal P,
\end{align*}
defined above is conservative.
\end{prop}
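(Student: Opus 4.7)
The plan is to reduce the statement to simple manipulations in the triangulated category $\DM_h(S,R)$ by using the conservativity of $\rho_*$ together with the formula for $\rho_*\rho^*$ from Proposition \ref{prop:properties_rho_*}.

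Let $M \in \DM_h(S,R)$ be such that $\rho^*(M) = 0$ and $\rho_p^*(M) = 0$ for every prime $p$. Since all the relevant $\rho_*$ functors are conservative by Proposition \ref{prop:properties_rho_*}, this is equivalent to the vanishing of $M\otimes^\derL_R(R\otimes\QQ)$ and $M\otimes^\derL_R(R/p)$ in $\DM_h(S,R)$ for every prime $p$. Since $R$ is flat over $\ZZ$, the ring $R\otimes\QQ$ is a localization of $R$, hence flat over $R$, so that $M\otimes^\derL_R(R\otimes\QQ) \simeq M\otimes\QQ$; and the short exact sequence $0\to \ZZ\xrightarrow{p}\ZZ\to\ZZ/p\ZZ\to 0$ tensored with $R$ remains exact (by flatness), yielding the distinguished triangle
$$M \xrightarrow{\ p\ } M \longrightarrow M/p \longrightarrow M[1]$$
in $\DM_h(S,R)$, with $M/p:=M\otimes^\derL \ZZ/p\ZZ$ as in \eqref{eq:M/n}.

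From the vanishing $M/p\simeq 0$ for every prime $p$, the connecting triangle forces multiplication by $p$ to be an isomorphism on $M$ for every prime $p$, and therefore multiplication by any nonzero integer is invertible on $M$. Now choose any cofinal increasing sequence of positive integers $n_1\mid n_2\mid n_3\mid\cdots$ (for instance $n_k=k!$), and form the homotopy colimit in $\DM_h(S,R)$ (which exists because this category admits small sums):
$$\hocolim\bigl(M\xrightarrow{n_2/n_1}M\xrightarrow{n_3/n_2}M\to\cdots\bigr).$$
Since each transition morphism is an isomorphism, the canonical map from the initial term $M$ into this homotopy colimit is an isomorphism. On the other hand, since homotopy colimits commute with the (derived) tensor product, this homotopy colimit computes $M\otimes^\derL_\ZZ\QQ = M\otimes\QQ$, which we have shown vanishes. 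Hence $M\simeq 0$, as required.

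The only genuine point to check carefully is the identification of $\hocolim(M\xrightarrow{n}M\xrightarrow{n'}\cdots)$ with $M\otimes\QQ$ in $\DM_h(S,R)$, which amounts to writing $\QQ = \varinjlim_k \tfrac{1}{n_k}\ZZ$ and invoking the fact that the derived tensor product with a filtered colimit of flat $\ZZ$-modules coincides with the corresponding homotopy colimit; this is standard once one knows that $\DM_h(S,R)$ has all small sums and that tensor product commutes with them (which holds by construction). Everything else in the argument is a purely formal manipulation of distinguished triangles and adjunctions.
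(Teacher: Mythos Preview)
Your proof is correct and takes a somewhat different route from the paper's. Both arguments begin the same way: from $\rho_p^*(K)=0$ one deduces (via conservativity of $\rho_{p*}$ and the formula $\rho_{p*}\rho_p^*(K)\simeq K\otimes^\derL_R R/p\simeq K/p$, using flatness of $R$ over $\ZZ$) that multiplication by every prime $p$ is invertible on $K$. At this point the approaches diverge. The paper tests against constructible generators: for each constructible $M$ it concludes that $\Hom(M,K)=\Hom(M,K)\otimes\QQ$, and then invokes Corollary~\ref{cor:DM_h&Q-localization} to identify this with $\Hom(\rho^*M,\rho^*K)=0$. You instead argue directly at the level of objects: the invertibility of all primes on $K$ gives $K\simeq\hocolim(K\xrightarrow{n}K)\simeq K\otimes\QQ$, and then conservativity of $\rho_*$ together with $\rho_*\rho^*(K)\simeq K\otimes\QQ$ yields $K\simeq 0$.

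Your route is slightly more elementary: it bypasses Corollary~\ref{cor:DM_h&Q-localization}, whose proof ultimately rests on the Goodwillie--Lichtenbaum bound on $\h$-cohomological dimension (through Corollary~\ref{cor:exactness_Qlocalization00}). The paper's argument, on the other hand, fits the recurring pattern in the article of reducing questions to Hom-groups out of constructible objects, which is the natural viewpoint once Corollary~\ref{cor:DM_h&Q-localization} is in hand. Both are perfectly valid; your argument just uses less of the surrounding machinery.
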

\begin{proof}
Let $K$ be an $h$-motive over $S$ with coefficients in $R$
 such that $\rho^*(K)=0$ and $\rho^*_p(K)=0$ for all $p \in \mathcal P$.

It is sufficient to prove that for any constructible $\h$-motive $M$,
 $\Hom(M,K)=0$.
Given any prime $p$, the fact $\rho^*_p(K)=0$ together with
 the distinguished triangle \eqref{eq:triangle_red_mod_n_DMh} implies
 that the abelian group $\Hom(M,K)$ is uniquely $p$-divisible.
As this is true for any prime $p$, we get: $\Hom(M,K)=\Hom(M,K) \otimes \QQ$.
 But, as $M$ is constructible,
 Corollary \ref{cor:DM_h&Q-localization} implies the later group 
 is isomorphic to $\Hom(\rho^*(M),\rho^*(K))$ which is zero by assumption
 on $K$.
\end{proof}

\subsection{Comparison with \'etales motives}\label{sec:compthm2}

\begin{num} \label{num:recall_corr_usheaf}
Recall $\Lambda$ is a sub-ring of $\QQ$
 and $R$ is a $\Lambda$-algebra.
As it appears already in Paragraph \ref{num:recall_cycles&corr},
 finite $S$-correspondences with coefficients in $\Lambda$
 are defined for separated $S$-schemes of finite type.
 According to \cite[Def.~9.1.8]{CD3},
  they define a category which we will denote by $\sftc_{\Lambda,S}$.

%From that point,
% one can define the generalized category of \'etale $R$-sheaves with transfers
% over $S$ as additive presheaves on $\sftc_S$
%  which restrict to \'etale sheaves on $\sft$,
% as in definition \ref{df:sh_et^tr}.
% We denote the corresponding category by $\shtr(\sftc_S,R)$.

Given any $S$-scheme $X$,
 we denote by $\uR_S^{tr}(X)$ the presheaf of $R$-modules
 on $\sftc_{\Lambda,S}$ represented by $X$.
 Moreover the graph functor induces a canonical morphism of presheaves
 on $\sft_S:$
\begin{equation} \label{eq:graph_uR^tr}
\uR_S(X) \rightarrow \uR_S^{tr}(X).
\end{equation}
Recall the following result of Suslin and Voevodsky
 (see \cite[Chap.~2, 4.2.7 and 4.2.12]{FSV}).
\end{num}
\begin{prop}\label{prop:h_sheaves&transfers}
The map \eqref{eq:graph_uR^tr} induces an isomorphism
after $\h$-sheafification. Furthermore, if
$S$ is a noetherian $\ZZ[1/n]$-scheme
and if any integer prime to $n$ is invertible in $R$,
then, for any $S$-scheme $X$ of finite type, the presheaf $\uR^{tr}_S(X)$
is a $\qfh$-sheaf, and the morphism \eqref{eq:graph_uR^tr}
induces an isomorphism of $\qfh$-sheaves:
$$
\uR_S^{qfh}(X) \rightarrow \uR^{tr}_S(X).
$$
\end{prop}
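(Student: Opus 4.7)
The plan is to reduce both statements to the case $R = \Lambda$ (treated by Suslin and Voevodsky) and then to invoke [FSV, Chap.~2, Theorems~4.2.7 and 4.2.12] directly. The crucial observation, already exploited in the proof of Proposition~\ref{prop:Rtr_sheaf}, is that for any $S$-schemes $X$ and $Y$, the group $\corr S Y X = c_0(X\times_S Y/Y)$ sits as a $\Lambda$-submodule of the free $\Lambda$-module of cycles on $X\times_S Y$, hence is itself a free $\Lambda$-module. Consequently $\uR^{tr}_S(X)$ coincides presheaf-wise with $\underline{\Lambda}^{tr}_S(X)\otimes_\Lambda R$, and neither $h$-sheafification nor $qfh$-sheafification introduces Tor-obstructions when we pass from $\Lambda$ to $R$. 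So both comparison maps we wish to invert are obtained from the $\Lambda$-linear versions by tensoring with $R$, and it suffices to treat $R = \Lambda$.

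For the first statement in the case $R = \Lambda$, I would follow the Suslin--Voevodsky argument: it is enough to check the assertion on $h$-stalks. A point of the $h$-topos is (essentially) a spectrum $T$ of an algebraically closed valuation ring. After refining by a suitable $h$-cover, one uses the platification theorem together with the geometric fact that in the $h$-topology every finite surjective morphism of reduced schemes is locally a disjoint sum of sections (because normalization is an $h$-cover and because finite surjective morphisms of strictly henselian normal local schemes whose residue field is algebraically closed split into sections). This shows that over such a $T$, every class in $c_0(X\times_S T/T)$ is a $\Lambda$-linear combination of graphs of $T$-points of $X$, so the map $\underline{\Lambda}_S(X)_h(T)\to \underline{\Lambda}^{tr}_S(X)_h(T)$ is an isomorphism.

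For the second statement, one works over the $qfh$-topology, which is finer than $h$ but coarser than the ``universal topological epimorphism of finite type'' topology. The Suslin--Voevodsky argument shows that $\underline{\Lambda}^{tr}_S(X)$ is itself a $qfh$-sheaf (without needing any sheafification), provided one can make sense of a trace map attached to every finite surjective map; and this last requires that the degrees of such maps, which may a priori be any positive integer, be invertible in the coefficients. Under the hypothesis that $n$ is invertible on $S$ and that every integer prime to $n$ is invertible in $\Lambda$, these degrees are automatically invertible in the ring of coefficients (any degree involved is a power of a residue characteristic times a prime-to-$n$ integer), and the desired traces exist. The comparison $\uR^{qfh}_S(X)\to \uR^{tr}_S(X)$ then becomes an isomorphism by comparing the values on $qfh$-stalks, using the same decomposition of cycles into sums of graphs.

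The main obstacle, and the content of the theorem, lies in the geometric input of [FSV, Chap.~2, Thm.~4.2.12]: namely the decomposition of a relative cycle finite and $\Lambda$-universal over a strictly henselian normal valuation base into a $\Lambda$-combination of graphs, after an appropriate cover. Once this is granted, both assertions follow from formal manipulations and from the freeness argument above, which is why we are content to refer to \emph{loc.~cit.}\ rather than reprove it here.
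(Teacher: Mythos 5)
Your proposal is correct and takes the same route as the paper, which supplies no proof of its own and simply recalls the result with the reference \cite[Chap.~2, 4.2.7 and 4.2.12]{FSV}; the preliminary reduction to $R=\Lambda$ via the freeness of $\corr S Y X$ (the same Tor-vanishing used in the proof of Proposition~\ref{prop:Rtr_sheaf}) is a correct and useful way to make explicit the change-of-coefficients step that the paper leaves implicit. One small caveat on your parenthetical sketch of the Suslin--Voevodsky geometry: it is not true that a finite surjective morphism to a strictly henselian \emph{normal} local scheme with algebraically closed residue field splits into sections --- e.g.\ $\Spec\, k[[t]][x]/(x^2-t)\to\Spec\, k[[t]]$ with $k$ algebraically closed admits none --- the local objects actually used for the $\qfh$- and $\h$-topologies are spectra of valuation rings with algebraically closed \emph{fraction} field, over which finite surjective morphisms of reduced schemes do split; but as you ultimately defer to \emph{loc.~cit.}, this imprecision is harmless.
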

This implies in particular that any $\h$-sheaf $F$ over $S$
defines by restriction an \'etale sheaf with transfers $\psi^*(F)$,
on $\smc_S$ (without any restriction on the characteristic).
This gives a canonical functor:
$$
\psi^*:\sh_\h(S,R) \rightarrow \shtr(S,R)
$$
which preserves small limits as well as small filtering
colimits.
Using the argument of the proof of
\cite[Theorem 10.5.14]{CD3},
 one can show this functor admits a left adjoint $\psi_!$
uniquely defined by the property that 
 $\psi_!(R_S^{tr}(X))=\uR_S^\h(X)$ for any smooth $S$-scheme $X$. 
 
Thus, we have defined an adjunction of abelian premotivic categories
 over $\sch$:
\begin{equation} \label{eq:adj_et_tr&h}
\psi_!:\shtr(-,R) \rightleftarrows \sh_\h(-,R):\psi^*.
\end{equation}
According to \cite[5.2.19]{CD3},
 these functors can be derived and induce an adjunction
 of premotivic categories over $\sch$:
$$
\derL \psi_!:\DMe_\et(-,R)
 \rightleftarrows \uDMe_\h(-,R):\derR \psi^*.
$$

As a consequence of the rigidity theorem \ref{thm:rigidity1}
and of the cohomological $\h$-descent property
for \'etale topology \ref{prop:hdescentetalesheaves00}, we get:

\begin{thm} \label{thm:comparison_torsion_etale-h_motives}
Assume that the ring $R$ is of positive characteristic.
For any noetherian scheme $S$,
the functor $\derL\psi_!:\DMe_\et(S,R)\to\uDMe_\h(S,R)$ is fully faithful and induces
an equivalence of triangulated categories
$$
\DMe_\et(S,R) \xrightarrow \sim \DMe_\h(S,R)
\xrightarrow \sim \DM_\h(S,R)\, .
$$
\end{thm}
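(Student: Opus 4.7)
The plan is to deduce the theorem from the rigidity Theorem \ref{thm:rigidity1} and the $\h$-descent Proposition \ref{prop:hdescentetalesheaves00}. First, since $R$ has positive characteristic $n$, write $n=p_1^{a_1}\cdots p_k^{a_k}$. For each $p_i$, the Artin--Schreier device of Proposition \ref{prop:et+htp&torsion} (used already in the proof of Proposition \ref{prop:DMeet&stability}) shows that pullback along $j:S[1/p_i]\to S$ is an equivalence on both $\DMe_\et(-,R')$ and $\uDMe_\h(-,R')$ for any $p_i$-primary torsion $R'$; a d\'evissage along the cofiber sequences \eqref{eq:triangle_red_mod_n_DMh} associated to the prime-power factors of $n$ therefore reduces the proof to the case where every residue characteristic of $S$ is prime to $n$, which I assume from now on.

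Under that assumption, I compare the two composite functors $\derL\psi_!\circ\rho_!$ and $\pi_{\AA^1}\circ\alpha^*$ from $\Der(S_\et,R)$ to $\uDMe_\h(S,R)$. Both commute with small sums (they are composites of left adjoints and $\AA^1$-localizations), and both send the representable sheaf $R_S(V)$ attached to an \'etale $S$-scheme $V$ to $\uR_S^\h(V)$: for the first, Corollary \ref{cor:loc_cst_sheaf&transfers} identifies $\gamma^*R_S(V)\simeq R_S^{tr}(V)$, and $\psi_!$ sends this to $\uR_S^\h(V)$ by construction; for the second this is the very definition of $\alpha^*$ on representables. Since these sheaves generate $\Der(S_\et,R)$, the two functors are naturally isomorphic. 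The functor $\alpha^*$ is fully faithful by Proposition \ref{prop:hdescentetalesheaves00}, and it takes values in $\AA^1$-local objects: for $C\in\Der(S_\et,R)$ and any $f:X\to S$ of finite type, combining $\h$-descent with the homotopy invariance Theorem \ref{A1invarianceetalesheaves} on the small \'etale site gives
$$H^i_\h(\AA^1_X,\alpha^*C)\simeq H^i_\et(\AA^1_X,f^*C)\simeq H^i_\et(X,f^*C)\simeq H^i_\h(X,\alpha^*C).$$
Hence $\pi_{\AA^1}\circ\alpha^*$ is fully faithful; because $\rho_!$ is an equivalence by Theorem \ref{thm:rigidity1}, this forces $\derL\psi_!$ itself to be fully faithful.

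For the essential image, the left adjoint $\derL\psi_!$ commutes with small sums and sends the generator $R_S^{tr}(X)$ to $\uR_S^\h(X)$ for every smooth $S$-scheme $X$; since $\DMe_\h(S,R)$ is by Definition \ref{df:hmotives&constructible_hmotives} the smallest full triangulated sum-closed subcategory of $\uDMe_\h(S,R)$ containing these objects, the essential image of $\derL\psi_!$ is precisely $\DMe_\h(S,R)$. Finally, to obtain the second equivalence $\DMe_\h(S,R)\xrightarrow{\sim}\DM_\h(S,R)$, transport Proposition \ref{prop:DMeet&stability} across the equivalence $\DMe_\et(S,R)\simeq\DMe_\h(S,R)$ just obtained: the Tate object is already $\otimes$-invertible in $\DMe_\h(S,R)$, so the $\PP^1$-stabilization functor $\Sigma^\infty$ is an equivalence. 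The principal obstacle I foresee is the clean verification of the natural isomorphism $\derL\psi_!\circ\rho_!\simeq\pi_{\AA^1}\circ\alpha^*$: it is tautological on representable \'etale sheaves, but it requires checking that the underlying left Quillen functors compose at the derived level without pathology, which should follow from the exactness of $\rho_\sharp$ (Lemma \ref{lm:small_to_sm_etale}) together with the standard fact that two left adjoints agreeing on a family of compact generators are canonically isomorphic.
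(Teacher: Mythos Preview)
Your proposal is correct and follows essentially the same route as the paper: reduce to prime-power torsion coefficients, then invoke Artin--Schreier (Proposition~\ref{prop:et+htp&torsion}) to make the characteristic invertible on $S$, then combine rigidity (Theorem~\ref{thm:rigidity1}) with $\h$-descent (Proposition~\ref{prop:hdescentetalesheaves00}) to see that $\derL\psi_!\circ\rho_!$ is fully faithful, whence $\derL\psi_!$ is as well. Your explicit verification that $\alpha^*$ lands in $\AA^1$-local objects (via Theorem~\ref{A1invarianceetalesheaves}) usefully fills in a step the paper leaves implicit when it cites Proposition~\ref{prop:hdescentetalesheaves00} for the full faithfulness of the composite $\tilde\rho_!$ into $\uDMe_\h(S,R)$ rather than just into $\Der(\sh_\h(S,R))$; likewise, your identification $\derL\psi_!\circ\rho_!\simeq\pi_{\AA^1}\circ\alpha^*$ on generators is exactly the right way to make that citation precise.

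Two minor remarks on presentation. First, your reduction step is really the Chinese remainder decomposition $R\simeq\prod_i R_i$ with $R_i$ of characteristic $p_i^{a_i}$ (as in the paper), rather than a d\'evissage via the triangles~\eqref{eq:triangle_red_mod_n_DMh}; the latter reference is a bit misleading since those triangles compare $M$ with $M/n$ rather than splitting along prime factors. Second, for the final equivalence $\DMe_\h(S,R)\simeq\DM_\h(S,R)$, the paper argues directly that the Tate object is $\otimes$-invertible in $\uDMe_\h(S,R)$ (hence $\uDMe_\h\simeq\uDM_\h$, which then restricts); your transport argument via Proposition~\ref{prop:DMeet&stability} amounts to the same thing since the inverse of $\un(1)$ already lives in $\DMe_\h$.
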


\begin{proof}
The equivalence $\DMe_\et(S,R)\simeq \DMe_\h(S,R)$
follows from the first assertion:
the essential image of $\derL\psi_!$ is obviously included
in $\DMe_h(S,R)$ because $\derL \psi_!(R^\tr_S(X))=\uR^h_S(X)$ for
any smooth $S$-scheme. Let $n$ be the characteristic of $R$.
As $R$ is a $\ZZ/n\ZZ$-algebra, to prove that the functor $\derL\psi_!$ is fully
faithful, it is sufficient to consider the case where $R=\ZZ/n\ZZ$.
Decomposing $n$ into its prime factors, we are thus reduced to prove
that $\derL\psi_!$ is fully faithful in the case where $n=p^a$
with $p$ a prime and $a\geq 1$. Furthermore,
by virtue of Proposition \ref{prop:et+htp&torsion}, we may assume
that $n$ is invertible in the residue fields of $S$.
In this case, we know that the composite functor
$$
\tilde \rho_!:\Der(S_\et,R) \xrightarrow{\rho_!} \DMe_\et(S,R)
 \xrightarrow{\derL \psi_!} \uDMe_\h(S,R)
$$
is fully faithful (Proposition \ref{prop:hdescentetalesheaves00})
and that the functor $\rho_!$ is an equivalence of categories
(by the rigidity theorem \ref{thm:rigidity1}). This
obviously implies that the functor $\derL\psi_!$ is fully faithful.

For the last equivalence, we simply notice that,
for any ring of positive characteristic $R$,
the premotivic triangulated category $\uDMe_\h(S,R)$ satisfies the stability property
with respect to the Tate object $R(1)$, so that we get a canonical equivalence
of categories
$$\uDMe_\h(S,R)\simeq\uDM_\h(S,R)\, .$$
This induce an equivalence of categories $\DMe_\h(S,R)\simeq\DM_\h(S,R)$.
\end{proof}

Using the preceding theorem, together with Theorem~\ref{thm:rigidity1},
 we finally get:

\begin{cor}\label{cor:DMh_Det}
Assume $R$ is a ring of positive characteristic $n$.
Then for any noetherian scheme $X$, with
$n$ invertible in the residue fields of $X$, there are canonical equivalences of
triangulated monoidal categories
$$
D(X_\et,R) \simeq \DM_\h(X,R)\, .
$$
%which restricts to equivalences of triangulated monoidal %categories:
%$$
%D^b_c(X_\et,R) \simeq \DM_{\h,c}(X,R).
%$$
These equivalences of categories are functorial in the precise sense that
they induce an equivalence of premotivic triangulated categories over the category of $\ZZ[1/n]$-schemes:
$$
D\big((-)_\et,R\big) \simeq  \DM_\h(-,R)\, .
$$
Finally, if $R$ is noetherian,
these equivalences induce fully faithful
monoidal triangulated functors
$$\DM_{\h,c}(X,R)\to\Der^b_\ctf(X_\et,R)\, .$$
\end{cor}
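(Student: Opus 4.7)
The plan is to combine the two main comparison theorems already established in this section and the preceding ones. By Theorem~\ref{thm:rigidity1}, for any noetherian $\ZZ[1/n]$-scheme $X$, the functor
$$
\rho_!:\Der(X_\et,R)\xrightarrow{\sim}\DMe_\et(X,R)\simeq\DM_\et(X,R)
$$
is an equivalence of symmetric monoidal triangulated categories, and by Theorem~\ref{thm:comparison_torsion_etale-h_motives}, $\derL\psi_!:\DM_\et(X,R)\xrightarrow{\sim}\DM_\h(X,R)$ is an equivalence as well. Composing them yields the first equivalence of the statement. For the ``functorial'' version, both $\rho_!$ and $\psi_!$ arise from adjunctions of premotivic categories (see \eqref{eq:adj_etale_smooth1} and \eqref{eq:adj_et_tr&h}), so that the composite automatically inherits the structure of an equivalence of premotivic triangulated categories, now restricted to the category of noetherian $\ZZ[1/n]$-schemes.

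For the final assertion, the equivalence $\DM_\h(X,R)\simeq\Der(X_\et,R)$ is fully faithful and symmetric monoidal, and both $\DM_{\h,c}(X,R)$ and $\Der^b_\ctf(X_\et,R)$ are stable under tensor product; hence the induced functor is automatically fully faithful and monoidal, and the only remaining task is to show that the essential image of $\DM_{\h,c}(X,R)$ lies in $\Der^b_\ctf(X_\et,R)$. Since $\Der^b_\ctf(X_\et,R)$ is thick in $\Der(X_\et,R)$, and $\DM_{\h,c}(X,R)$ is generated as a thick triangulated subcategory by the objects $R(Y)(m)=f_\sharp(R_Y)(m)$ with $f:Y\to X$ smooth separated of finite type of relative dimension $d$ and $m\in\ZZ$, it suffices to check this on these generators. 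By the premotivic nature of the equivalence and the smooth purity isomorphism $f_\sharp M\simeq f_!(M(d)[2d])$ coming from the 6-functor formalism (Corollary~\ref{cor:DMet_6functors}), such a generator corresponds under the equivalence to $f_!\bigl(R_Y(d+m)[2d]\bigr)$ in $\Der(X_\et,R)$. By Proposition~\ref{prop:comput_Tate_etale_torsion}, the twist $R_Y(d+m)[2d]$ is identified with a shift of the locally constant constructible, $R$-flat sheaf $\mu_{n,Y}^{\otimes(d+m)}\otimes_{\ZZ/n\ZZ}R$, so that it already belongs to $\Der^b_\ctf(Y_\et,R)$.

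The hard part, and the only input that does not formally follow from the premotivic equivalence, is to show that $f_!$ preserves $\Der^b_\ctf$ for arbitrary $f$ of finite type between noetherian schemes when $R$ is noetherian; this is precisely the classical finiteness theorem of Deligne (SGA~$4\tfrac12$, Th.~finitude). Once this is granted, $f_!\bigl(R_Y(d+m)[2d]\bigr)\in\Der^b_\ctf(X_\et,R)$, and the thick subcategory argument above concludes the proof. A secondary, more bookkeeping, difficulty is to make explicit the compatibility of the composite equivalence with $f_!$ (and not merely with $f^*$, $f_\sharp$ for smooth $f$, and $f_*$ for proper $f$, the latter being Proposition~\ref{prop:rho_!&f_*}): this is handled by Nagata compactification $f=p\circ j$ combined with the identity $f_!=p_*\circ j_\sharp$ on both sides.
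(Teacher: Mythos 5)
Your treatment of the first two assertions follows the paper exactly: both equivalences come from composing Theorem~\ref{thm:rigidity1} with Theorem~\ref{thm:comparison_torsion_etale-h_motives}, and the premotivic functoriality is automatic since both arrows are morphisms of premotivic categories.

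For the third assertion you take a different route than the paper, and it is mostly sound, but there is a mis-attribution worth flagging. The paper does not work with the smooth generators $f_\sharp(R_Y)(m)$ and then rewrite via purity; it instead invokes Ayoub's \cite[Lemma~2.2.23]{ayoub} (cited in the paper as Proposition~\ref{prop:DM_et_f_*_constructible0} / \ref{prop:projectivegenerateconstruct}) to reduce directly to objects of the form $f_*(R_Y)(m)$ with $f$ \emph{projective}, and then cites \cite[Rapport, Th.~4.9]{SGA4D}. This avoids any need for purity or Nagata compactification. Your route does work in principle, but the crucial imprecision is your claim that the finiteness input is ``precisely the classical finiteness theorem of Deligne (SGA~$4\tfrac12$, Th.~finitude).'' That exposé is about $\derR f_*$ for \emph{non-proper} $f$ (over a regular base of small dimension), and is strictly harder and more restrictive than what you need. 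What you actually need is that $\derR f_!$ preserves $\Der^b_\ctf$ for $f$ separated of finite type; this already follows from the proper case (\cite[Rapport, Th.~4.9]{SGA4D}, which is what the paper cites) together with the triviality of the open-immersion case via Nagata compactification, and it holds over arbitrary noetherian bases with no regularity or small-dimension hypothesis. So your proof is not wrong, but you invoke a theorem that is both stronger than needed and would impose artificial hypotheses; replacing that citation with the Rapport reference (or simply following the paper's reduction to $f_*(R)$ with $f$ projective, which shortcuts the purity step) fixes this.
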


\begin{proof}
The only thing that remains to be checked is the
last assertion (when $R$ is noetherian).
To prove that the object $C$ of $\Der(X_\et,R)$
corresponding to some constructible
object $M$ of $\DM_\h(X,R)$ belongs to $\Der^b_\ctf(X_\et,R)$, it is sufficient to
consider the case of $M=f_*(R)$ with $f:Y\to X$
projective; see \cite[lemma 2.2.23]{ayoub}.
The fact that such an object belongs to
$\Der^b_\ctf(X_\et,R)$ is well known; see
\cite[Rapport, Th.~4.9]{SGA4D}, for instance.
\end{proof}

Combining Theorem \ref{thm:comparison_torsion_etale-h_motives}
together with the comparison
theorems of \cite[Th. 16.1.2, 16.1.4]{CD3}, one gets the following
 generalization of \cite[chap. 5, 4.1.12]{FSV}:
\begin{cor}\label{cor:compDMetDmh}
($R$ is any commutative ring.)
\begin{enumerate}
\item Let $S$ be a quasi-excellent geometrically unibranch
noetherian scheme of finite dimension.

Then the adjunction \eqref{eq:adj_et_tr&h} induces an equivalence
 of triangulated monoidal categories:
$$
\derL \psi_!:\DM_\et(S,R)
 \rightleftarrows \DM_\h(S,R):\derR \psi^*.
$$
\item Let $k$ be any field.
Then the following composite functor
$$
\DMe_\et(k,R) \xrightarrow{\Sigma^\infty} \DM_\et(k,R)
 \xrightarrow{\derL \psi_!} \DM_\h(k,R)
$$
is fully faithful.
\end{enumerate}
\end{cor}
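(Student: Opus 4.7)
The plan is to prove part (1) by a change-of-coefficients reduction based on the conservativity of Proposition~\ref{prop:conservativity_coef_DMh}. The adjunction \eqref{eq:adj_et_tr&h} is derived from an adjunction of abelian premotivic categories, so $\derL\psi_!$ commutes with the derived extension-of-scalars functor $(-)\otimes^\derL_R R'$ for any ring map $R\to R'$: both are left adjoints, and the identity $\derL\psi_!(R^\tr_S(X))=\uR^\h_S(X)$ on the compact generators is manifestly compatible with change of coefficients. Assuming first that $R$ is flat over $\ZZ$, the analog of Proposition~\ref{prop:conservativity_coef_DMh} for $\DM_\et$ (proved by the same strategy, relying on the compatibility of internal Hom with $(-)\otimes R'$ for constructible arguments, i.e.\ an analog of Corollary~\ref{cor:DM_h&Q-localization}) reduces the claim that $\derL\psi_!$ is an equivalence to the same claim with coefficients $R\otimes\QQ$ and $R/p$ for each prime $p$. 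The general (non-flat) case is then deduced from the flat case via a flat resolution of $R$ as a $\ZZ$-module, using that the two triangulated monoidal categories under comparison are stable under the formation of modules over algebras in their $\ZZ$-linear versions.

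The torsion case $R/p$ is given directly by Theorem~\ref{thm:comparison_torsion_etale-h_motives}. For the rational case, the comparison theorems of \cite[Th.~16.1.2, 16.1.4]{CD3} identify $\DM_\et(S,R\otimes\QQ)$ with the $R\otimes\QQ$-linear category of Beilinson motives over quasi-excellent geometrically unibranch noetherian schemes of finite dimension, while Theorem~\ref{thm:DMB&DM_h_rational_recall} further identifies Beilinson motives with $\DM_\h(-,R\otimes\QQ)$ over any noetherian scheme of finite dimension. The remaining point is to verify that the resulting composite equivalence coincides with $\derL\psi_!$; this is a diagram-chase on the generators $R^\tr_S(X)$ using the universal properties of the categories involved.

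For part (2), every field is quasi-excellent and geometrically unibranch, so part (1) yields an equivalence $\DM_\et(k,R)\simeq\DM_\h(k,R)$. It therefore suffices to show that $\Sigma^\infty:\DMe_\et(k,R)\to\DM_\et(k,R)$ is fully faithful. By the same change-of-coefficients reduction as in part (1), this splits into two cases. For torsion coefficients of prime power order $n$ invertible in $k$, Corollary~\ref{cor2:comput_Tate_etale_torsion} asserts that $\Sigma^\infty$ is already an equivalence, while for $n$ equal to the characteristic of $k$ both categories vanish by Proposition~\ref{prop:et+htp&torsion} applied to the empty scheme $\Spec(k)[1/p]$. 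For rational coefficients, the fact that \'etale and Nisnevich topologies agree on sheaves with transfers with $\QQ$-coefficients gives $\DMe_\et(k,\QQ)\simeq\DMe(k,\QQ)$ and $\DM_\et(k,\QQ)\simeq\DM(k,\QQ)$, so Voevodsky's cancellation theorem over a field (which applies since one may pass to the perfect closure, a universal homeomorphism, without changing the categories with $\QQ$-coefficients) yields fully faithfulness of $\Sigma^\infty$.

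The main obstacle in this plan is to carefully identify, in the rational case of part (1), the composite equivalence obtained via Beilinson motives with the functor $\derL\psi_!$ itself. This requires tracking universal properties through the chain of intermediate categories (effective versus stable, Nisnevich versus \'etale, with or without transfers, and via the Beilinson spectrum), and verifying the comparison on the generators $R^\tr_S(X)$; once this compatibility is established, the rest of the argument is a routine combination of the torsion equivalence of Theorem~\ref{thm:comparison_torsion_etale-h_motives}, the rational equivalence of Theorem~\ref{thm:DMB&DM_h_rational_recall}, and the conservativity reductions.
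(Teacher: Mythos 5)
Your proposal takes essentially the same approach as the paper's proof: reduce to torsion and rational coefficients via the conservativity of the change-of-coefficients functors (the analogue of Proposition~\ref{prop:conservativity_coef_DMh} for $\DM_\et$), then invoke Theorem~\ref{thm:comparison_torsion_etale-h_motives} for the torsion case and the comparison theorems of~\cite{CD3} for the rational case; part~(2) is reduced similarly, using Corollary~\ref{cor:torsionetaleinfiniteloopspace}, passage to a perfect closure, and Voevodsky's cancellation theorem. Two remarks: the paper phrases the reduction in terms of the adjunction unit $M\to\derR\psi^*\derL\psi_!(M)$ and only needs that $\rho^*$, $\rho_p^*$ commute with $\derL\psi_!$ and $\derR\psi^*$, after which the cited comparison theorems directly give full faithfulness of $\psi_!$ with rational/torsion coefficients — so what you flag as ``the main obstacle'' (matching the composite Beilinson-motives equivalence with $\derL\psi_!$) dissolves; and the flat-over-$\ZZ$ reduction you sketch is not invoked by the paper, which applies the conservativity argument uniformly, so that detour (whose ``flat resolution of $R$ as a $\ZZ$-module'' step is in any case too vague to yield a ring-level reduction) is not needed.
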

\begin{proof}
Consider point (1).
By definition, $\DM_\h(S,R)$ is exactly the image of
 $\derL \psi_!$ in $\uDM_h(S,R)$. Thus we have
 only to prove that $\derL \psi_!$ is fully faithful.

Taking any \'etale motive $M$ in $\DM_\et(S,R)$,
 we prove that the canonical adjunction map:
$$
M \rightarrow \derR \psi^*\, \derL\psi_!(M)
$$
is an isomorphism in $\DM_\et(S,R)$.
Applying Proposition \ref{prop:conservativity_coef_DMh},
 it is sufficient to prove that the image of this
 map is an isomorphism after applying one of the functor
 $\rho^*$ or $\rho_p^*$ for a prime $p$.

Note the functors of the type $\rho^*$
  ($\QQ$-localization of the coefficients)
 and $\rho_p^*$
  (reduction modulo $p$ of the coefficients)
 are also defined for the triangulated category
 $\DM_\et(S,R)$ (see \cite[10.5.a]{CD3}).
According to the preceding theorem
 (respectively to Theorems 16.1.2 and 16.1.4 of \cite{CD3}),
 it is sufficient to prove
 that the functor $\rho_p^*$ (resp. $\rho^*$)
 commutes with $\derL \psi_!$ and $\derR \psi^*$.

This last assertion, in the case of $\rho_p^*$,
 follows easily using the distinguished triangle
 \eqref{eq:triangle_red_mod_n_DMh}
 -- and its analog version in $\DM_\et(-,R)$.
In the case of $\rho^*$, it follows as in the proof
 of Corollary \ref{cor:rho_rational&6_functors}
 from Corollary \ref{cor:exactness_Qlocalization}
 and its analog in $\DM_\et(S,R)$ -- the proof is
 the same using in particular
 Proposition \ref{prop:comput_Hom_D(shtr)}.

\bigskip

Consider point (2).
We have to show that
 for any object $K$ of $\DMe_\et(k,R)$,
 the adjunction map
$$
\alpha:K \rightarrow \Sigma^\infty \Omega^\infty(K)
$$
is an isomorphism.
Let us denote abusively
 by $\rho_p^*$ (resp. $\rho^*$)
 the change of coefficients functors
\begin{align*}
& \DMe_\et(k,R) \xrightarrow{\rho_p^*} \DMe_\et(k,R/p), \ 
 \DM_\et(k,R) \xrightarrow{\rho_p^*} \DM_\et(k,R/p), \\
(\text{resp. } & \DMe_\et(k,R) \xrightarrow{\rho^*} \DMe_\et(k,R_\QQ), \ 
 \DM_\et(k,R) \xrightarrow{\rho^*} \DM_\et(k,R_\QQ)).
\end{align*}
As for point (1), it is sufficient to check that the map
 $\alpha$ is an isomorphism after applying $\rho_p^*$
 of $\rho^*$ -- by the obvious analog
 of Proposition \ref{prop:conservativity_coef_DMh}.

The case of the functor $\rho_p^*$
 is easily reduced
 to Corollary \ref{cor:torsionetaleinfiniteloopspace}.

Next, we consider the case of the functor $\rho^*$.
We can see that the functors $\Sigma^\infty$ and $\Omega^\infty$
commute with tensor product by $\QQ$: for the first one, this is obvious, while
for $\Omega^\infty$, this follows from the fact that tensoring by $\QQ$
preserves the properties of being $\AA^1$-homotopy invariant, of satisfying
\'etale decent, and of being an $\Omega$-spectrum (which readily follows from
the Yoneda lemma and from a repeated use of Proposition \ref{lemma:etaleQcoefficients}).
Using the same arguments as in the end of point (1),
 we deduce that $\rho^*$ commutes with $\Omega^\infty$
 -- the case of the functor $\Sigma^\infty$ is obvious.
Thus, we are finally reduced to the case where $R$
 is a $\QQ$-algebra. Then, for any inseparable extension of fields $k'/k$,
 the associated pullback functor defines an equivalence of categories
 $\DMe_\et(k,R) \simeq \DMe_\et(k',R)$. Therefore, it is sufficient to
 consider the case of a perfect field. Furthermore, as the $\QQ$-linear
 categories of Nisnevich sheaves with transfers and of \'etale sheaves
 with transfers are equivalent, we have canonical equivalences of triangulated
 categories
 $$\DMe(k,R) \simeq \DMe_\et(k,R)\quad\text{and}\quad\DM(k,R) \simeq \DM_\et(k,R)\, .$$
We easily conclude with Voevodsky's cancellation theorem.
\end{proof}

\begin{num}
Recall from \cite[5.3.31]{CD3} the triangulated category
$$\Der_{\AA^1,\et}(X,R)=\Der_{\AA^1}(\mathrm{Sh}_\et(X,R))$$
obtained as the stabilization of the $\AA^1$-derived category
of \'etale sheaves on the smooth-\'etale site of $X$.
The category $\Der_{\AA^1,\et}(X,R)$ is taken in Ayoub's paper \cite{ayoub5}
as a model for \'etale motives.
\end{num}

\begin{cor}\label{cor:compDA1etale}
Let $X$ be a noetherian scheme of finite
dimension. We also assume that, either $X$ is of characteristic zero or that
$2$ is invertible in $R$. Then the canonical functor
$$\Der_{\AA^1,\et}(X,R)\rightarrow\DM_\h(X,R)$$
is an equivalence of triangulated categories (and is part
of an equivalence of premotivic triangulated categories as we let
$X$ vary).
\end{cor}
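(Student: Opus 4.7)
The plan is to construct the canonical comparison functor
$$\Phi\colon \Der_{\AA^1,\et}(X,R)\longrightarrow \DM_\h(X,R)$$
from the change of topology going from the smooth-\'etale site to the $\h$-site, followed by $\AA^1$-localization and $\PP^1$-stabilization. By construction this is a morphism of premotivic triangulated categories, commuting with $f^*$, $f_\sharp$ for smooth $f$, and with $\otimes^\derL$. Via the conservative family of change-of-coefficients functors $\{(-)\otimes^\derL\QQ,\,(-)\otimes^\derL R/p\}_{p\text{ prime}}$ --- for which Proposition \ref{prop:conservativity_coef_DMh} applies on the target, and for which the analogous statement on the source is proved in exactly the same way using generation by compact Tate-twisted motives of smooth schemes --- it is enough to check that $\Phi$ becomes an equivalence after tensoring with $\QQ$ and after reducing modulo $p$ for every prime $p$.

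For the rational piece, Theorem \ref{thm:DMB&DM_h_rational_recall} identifies $\DM_\h(X,R\otimes\QQ)$ with $R\otimes\QQ$-linear Beilinson motives $\DMB(X,R\otimes\QQ)$. The analogous identification holds on the source: rationally the \'etale and Nisnevich topologies coincide (henselian local schemes have vanishing positive \'etale $\QQ$-cohomology), so $\Der_{\AA^1,\et}(X,\QQ)\simeq\Der_{\AA^1,\nis}(X,\QQ)$, and the latter coincides with $\DM(X,\QQ)\simeq\DMB(X)_\QQ$ via the classical plus/minus Morel splitting (which is legitimate over $\QQ$ since $2$ is automatically invertible). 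One checks that $\Phi\otimes\QQ$ corresponds to the identity through these identifications.

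For torsion coefficients $R/p$, the Artin--Schreier exact sequence (Proposition \ref{prop:et+htp&torsion}, together with its counterpart without transfers discussed in the appendix) shows that both categories are supported on $X[1/p]$, so we may assume $p$ is invertible in $\cO_X$. In this case, Corollary \ref{cor:DMh_Det} yields a canonical equivalence $\DM_\h(X,R/p)\simeq\Der(X_\et,R/p)$, and a parallel equivalence $\Der_{\AA^1,\et}(X,R/p)\simeq\Der(X_\et,R/p)$ is provided by the rigidity theorem without transfers of Ayoub. Since the functor $\Phi$ fits into a commutative diagram with the forget-transfers functor $\gamma_*$ and the embedding $\rho_!$ of the small \'etale site (Corollary \ref{cor:derivedX_et&transfers}), these two equivalences are intertwined by $\Phi$. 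This is the step where the hypothesis that either $X$ is of characteristic zero or $2$ is invertible in $R$ is essential, because Ayoub's rigidity without transfers fails otherwise: the obstruction comes from $2$-primary phenomena in the \'etale stable $\AA^1$-homotopy of the sphere (the Hopf map $\eta$), as discussed in Remark \ref{rem:compareayoub}, and F.\ Morel's stability results are the key input.

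The main obstacle is thus this last torsion step: everything else is either an application of the comparisons already established in the paper or a direct transposition of standard arguments. The promotion of the equivalence to an equivalence of \emph{premotivic} categories is formal, since both sides and all intermediate equivalences are compatible with $f^*$, $f_\sharp$ (for $f$ smooth) and $\otimes^\derL$, which by adjunction propagates to $f_*$ and $\uHom$ on compact generators.
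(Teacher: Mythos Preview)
Your overall strategy matches the paper's: reduce to $R=\QQ$ and $R=\ZZ/p\ZZ$ via the conservative family of change-of-coefficients functors, then compare both sides to a common third category ($\DMB(X)$ rationally, $\Der(X_\et,\ZZ/p\ZZ)$ at torsion primes). The torsion step is handled exactly as in the paper, invoking Ayoub's rigidity without transfers (\cite[Theorem~4.1]{ayoub5}) on the source and Corollary~\ref{cor:DMh_Det} on the target, and your remark on the role of the hypothesis ``char $0$ or $2\in R^\times$'' is on point.

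There is, however, a genuine error in your rational step. The claim that $\Der_{\AA^1,\et}(X,\QQ)\simeq\Der_{\AA^1,\nis}(X,\QQ)$ is false. It is true that henselian local schemes have no higher rational \'etale cohomology, so that $\QQ$-linear \'etale and Nisnevich \emph{cohomology} of a given sheaf coincide; but this does not force every Nisnevich $\QQ$-sheaf to be an \'etale sheaf. A concrete obstruction is the rationalised Witt sheaf $\underline W\otimes\QQ$ (the minus summand of rationalised Milnor--Witt $K$-theory): it is a nonzero Nisnevich sheaf that fails \'etale descent (e.g.\ $W(\RR)\otimes\QQ\neq 0$ while $W(\CC)\otimes\QQ=0$). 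After $\PP^1$-stabilisation this is precisely the minus part in Morel's decomposition of $\Der_{\AA^1,\nis}(X,\QQ)$, which is killed by \'etale sheafification but survives Nisnevich-locally. The correct statement is
\[
\Der_{\AA^1,\et}(X,\QQ)\ \simeq\ \Der_{\AA^1,\nis}(X,\QQ)_{+}\ \simeq\ \DMB(X),
\]
which is \cite[Theorem~16.2.18]{CD3}; this is what the paper cites. Your chain $\Der_{\AA^1,\nis}(X,\QQ)\simeq\DM(X,\QQ)\simeq\DMB(X)$ is also problematic: the first identification is again off by the minus part, and the second is only known for geometrically unibranch $X$. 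Replacing your sketch by a direct appeal to \cite[Theorem~16.2.18]{CD3} (together with Theorem~\ref{thm:DMB&DM_h_rational_recall} on the target) fixes the argument.

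A minor point: your reference to Corollary~\ref{cor:derivedX_et&transfers} in the torsion step is misplaced, as that result concerns the embedding of the small site into the category \emph{with transfers}, whereas the relevant commutative triangle here is the one displayed in the paper's proof, with $\Der(X_\et,\ZZ/p\ZZ)$ mapping to both $\Der_{\AA^1,\et}(X,\ZZ/p\ZZ)$ and $\DM_\h(X,\ZZ/p\ZZ)$ directly.
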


\begin{proof}
We only sketch the proof. We see that it is sufficient
to consider the cases where $R=\QQ$ or $R=\ZZ/p\ZZ$,
with $p$ a prime.
The case where $R=\QQ$ is already known: this follows
right away from Theorem \ref{thm:DMB&DM_h_rational_recall}
and from \cite[Theorem 16.2.18]{CD3}.
The case of torsion coefficients follows from the fact that
we may assume that $p$ is prime to the residue characteristics of $X$
(by Proposition \ref{prop:et+htp&torsion}), and that we have
a commutative diagram of the form
$$\xymatrix{
&\Der(X_\et,\ZZ/p\ZZ)\ar[dl]\ar[dr]&\\
\Der_{\AA^1,\et}(X,\ZZ/p\ZZ)\ar[rr]&&\DM_\h(X,\ZZ/p\ZZ)
}$$
in which the non-horizontal functors are equivalences
of categories (see \cite[Theorem~4.1]{ayoub5} and the preceding corollary,
respectively).
\end{proof}

\begin{rem}\label{rem:compareayoub}
If the reader believes \cite[Theorem~4.1]{ayoub5}, she or he can drop
the constraint that ``$X$ is of characteristic zero or that
$2$ is invertible in $R$'' in the statement of Corollary \ref{cor:compDA1etale}.
The reason why we put this extra assumption is that the proof of Ayoub's result \cite[Theorem~4.1]{ayoub5}
used above relies on the fact that the $2$-functor $\Der_{\AA^1,\et}(-,R)$
is separated (this is \cite[Theorem~3.9]{ayoub5}). On the other hand,
the proof of \cite[Theorem~3.9]{ayoub5} relies on the assumption that
a certain property $\mathbf{(SS)}_p$ (see \cite[page 7]{ayoub5}) is satisfied
by $\Der_{\AA^1,\et}(X,R)$ whenever $p$ is a prime number which is not
invertible in $R$ (and invertible in $\cO_X$). In the case where
``$X$ is of characteristic zero or that $2$ is invertible in $R$'',
this property $\mathbf{(SS)}_p$ is provided by \cite[Theorem 2.8]{ayoub5}, whose
proof we understand.
If $X$ not of characteristic zero and if $p=2$, the property $\mathbf{(SS)}_p$
is discussed in \cite[Appendix C]{ayoub5}.
 The problem (at least for us) is that we think the proof
 of \cite[Theorem C.1]{ayoub5} is incomplete.
 To be more precise, what is presented as a proof of \cite[Lemma C.9]{ayoub5}
is far from being convincing: it consists to make the reader believe
(without even an heuristic explanation) that
a large amount of constructions and computations done by Morel over
a perfect field are meaningful for an arbitrary base field (Morel
makes this perfectness assumption pervasively
for the simple but essential reason that
he needs to know that field extensions of finite type have smooth models).

On the other hand, it is very plausible that Ayoub's property $\mathbf{(SS)}_p$ is true
in full generality. In fact, it can be derived from \cite[Theorem~4.1]{ayoub5}, and the main
difficulty to prove the latter consists to justify that we have a canonical
isomorphism $\mathbf{Z}/\ell\mathbf{Z}(1)\simeq\mu_\ell$ in $\Der_{\AA^1,\et}(X,\mathbf{Z}/\ell\mathbf{Z})$
for any prime $\ell$ invertible in $\cO_X$ (one may then essentially reproduce the proof
of Theorem \ref{thm:rigidity1}, or, even more easily, prove that the triangulated categories
$\Der_{\AA^1,\et}(X,R)$ and $\DM_\et(X,R)$ are canonically equivalent for any ring of
positive characteristic $R$, and then use Theorem \ref{thm:rigidity1}). 
It is easy to see that the case where $X$ is the spectrum
of a (perfect, or even prime) field is sufficient, and the establishment of such an isomorphism
$\mathbf{Z}/\ell\mathbf{Z}(1)\simeq\mu_\ell$
is then one of the main points in the work of Morel on the Friedlander-Milnor conjecture; see
\cite[Corollary 4.12]{friedmilnor}. Therefore, Morel's work should justify that the
results of Ayoub's paper are all true with the claimed level of generality, and thus that
Corollary \ref{cor:compDA1etale} is true without any assumption on the ring of coefficients.
\end{rem}

\begin{rem}\label{rem:compayoubbis}
Once we are able to compare $\Der_{\AA^1,\et}(X,R)$ and $\DM_\h(X,R)$
as in Corollary \ref{cor:compDA1etale}, we can use Corollary \ref{cor:compDMetDmh}
to compare $\Der_{\AA^1,\et}(X,R)$ and $\DM_\et(X,R)$. The equivalence
$$\Der_{\AA^1,\et}(X,R)\simeq\DM_\et(X,R)$$
is also proved by Ayoub in \cite[Theorem B.1]{ayoub5} under the assumption that
any prime number is invertible in $\cO_X$ or in $R$, and that
$X$ is normal and universally Japanese (and requiring that
$X$ is of characteristic zero or that $2$ is invertible in $R$, because
his proof relies again on the validity of \cite[Theorem 3.9]{ayoub5}: see the preceding remark).
The main point in the proof of \cite[Theorem B.1]{ayoub5} consists to
reduce to the case where $R$ is a $\QQ$-algebra (in which case this is a variant of
\cite[Th. 16.1.2 and 16.1.4]{CD3}) and to the case where $R$ is of positive characteristic $n$,
with $n$ invertible in $\cO_X$.
In the latter case, Ayoub proves that we have an equivalence for normal schemes
(combining \cite[Proposition B.13 and Lemma B.15]{ayoub5}), but this is far
from being optimal: for torsion coefficients,
combining \cite[Theorem~4.1]{ayoub5}, Theorem \ref{thm:rigidity1} and
Proposition \ref{prop:et+htp&torsion},
we have an equivalence of triangulated categories
$\Der_{\AA^1,\et}(X,R)\simeq\DM_\et(X,R)$
for any noetherian (and possibly non-normal) scheme $X$ of finite dimension for any ring $R$ of
positive characteristic (with the constraint that $X$ is of characteristic zero
or that $2$ is invertible in $R$, for the reason explained in Remark \ref{rem:compareayoub}).
\end{rem}

\begin{prop}\label{prop:directimagecocontinuous}
Let $f:X\rightarrow Y$ a morphism
between noetherian schemes of finite dimension.
Assume that, either $f$ is of finite type, or that
$X$ is the projective limit of a projective system of
quasi-finite $Y$-schemes with affine transition maps.
Then the functor
$$\derR f_*:\DM_h(X,R)\rightarrow\DM_\h(Y,R)$$
preserves small sums. In particular, this functor has a right
adjoint. In the case where $f$ is proper, we will denote by $f^!$
the right adjoint to $\derR f_*$.
\end{prop}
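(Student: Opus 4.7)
The strategy is to reduce the statement to the two favorable cases of rational and torsion coefficients, where $\derR f_*$ preserves sums for structural reasons. First, reduce to $R=\ZZ$: the restriction-of-scalars functor $\rho_*\colon \DM_\h(-,R)\to \DM_\h(-,\ZZ)$ is exact, conservative, preserves small sums, and commutes with $\derR f_*$ (by taking right adjoints of the obvious commutation $\derL f^*\rho^*\simeq\rho^*\derL f^*$); see Paragraph~\ref{num:extend_forget_scalars} and Lemma~\ref{lm:derived_rho_*}. Next, for $R=\ZZ$, invoke the joint conservativity of the family $\{\rho^*\}\cup\{\rho_p^*\}_{p\,\text{prime}}$ (Proposition~\ref{prop:conservativity_coef_DMh}). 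Each $\rho_p^*$ commutes with $\derR f_*$ via the distinguished triangle $M\xrightarrow{p} M\to M/p$ (Proposition~\ref{prop:rho_n&6_functors}(1)), and $\rho^*$ commutes with $\derR f_*$ by Corollary~\ref{cor:rho_rational&6_functors}(1); although the latter is stated for $f$ of finite type, a careful reading of its proof shows it only uses the preservation of constructibility under $\derL f^*$, which holds for any morphism $f$ (since the pullback of a smooth finite-type scheme is smooth of finite type).

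The problem now splits in two. For torsion coefficients $R=\ZZ/p\ZZ$, I apply Proposition~\ref{prop:et+htp&torsion} to base change to $Y[1/p]$ and so assume $p$ is invertible on $Y$; then Corollary~\ref{cor:DMh_Det} gives an equivalence of premotivic triangulated categories $\DM_\h(-,\ZZ/p\ZZ)\simeq \Der((-)_\et,\ZZ/p\ZZ)$ intertwining the direct images on both sides. The preservation of small sums on the \'etale side is then exactly Corollary~\ref{directimagepreservessums}, which treats both cases (a) and (b) of the hypothesis on $f$. For rational coefficients, Theorem~\ref{thm:DMB&DM_h_rational_recall} gives $\DM_\h(-,\QQ)\simeq \DMB$, and this category is compactly generated by the objects $\QQ(V)(n)$ with $V/Y$ smooth of finite type and $n\in\ZZ$. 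The pullback $\derL f^*$ sends such a generator to $\QQ(V\times_Y X)(n)$, which is again smooth of finite type over $X$ and hence compact in $\DMB(X)$, regardless of whether $f$ is of finite type. Consequently $\derL f^*$ preserves compact objects, and its right adjoint $\derR f_*$ preserves small sums.

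The existence of a right adjoint to $\derR f_*$ follows at once from the Brown representability theorem applied to the preceding conclusion. The conceptual subtlety lies in Step~2: checking that the commutation $\rho^*\derR f_*\simeq \derR f_*\rho^*$ applies in case (b), where $f$ is not of finite type. Once this point is settled by inspecting the proof of Corollary~\ref{cor:rho_rational&6_functors}(1), the whole argument becomes a clean reduction to already-established results, namely rigidity, the comparison with Beilinson motives, and the classical preservation of sums by \'etale direct images.
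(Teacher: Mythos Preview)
Your proof is correct and follows essentially the same route as the paper: reduce to $R=\ZZ$ via the conservative forgetful functor, then to $R=\QQ$ and $R=\ZZ/p\ZZ$ via the commutations of Proposition~\ref{prop:rho_n&6_functors} and Corollary~\ref{cor:rho_rational&6_functors}; settle the rational case by compact generation (the paper cites Theorem~\ref{thm:fcohdimlconst} with $R=\QQ$ rather than the $\DMB$ comparison, but this is the same argument), and the torsion case via Proposition~\ref{prop:et+htp&torsion} and the identification with the small \'etale site, invoking Corollary~\ref{directimagepreservessums}. Your explicit remark that the proof of Corollary~\ref{cor:rho_rational&6_functors}(1) does not actually use the finite-type hypothesis on $f$ is a useful observation that the paper leaves implicit.
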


\begin{proof}
As the forgetful functors $\DM_\h(X,R)\to\DM_\h(X,\ZZ)$
are conservatives and commute with operations of type $\derR f_*$,
it is sufficient to prove this for $R=\ZZ$.
Hence, using Proposition \ref{prop:rho_n&6_functors}
and Corollary \ref{cor:rho_rational&6_functors}, we see that
it is sufficient to prove the result in the case where $R=\QQ$
or $R=\ZZ/p\ZZ$ for some prime $p$. For $R=\QQ$ and any noetherian
scheme of finite dimension $S$, the triangulated category $\DM_\h(S,\QQ)$
is compactly generated and the functor $\derL f^*$ preserves compact
objects (this follows from Theorem \ref{thm:fcohdimlconst} with $R=\QQ$,
which makes sense thanks to Lemma \ref{lemma:etaleQcoefficientscohdim}).
and this implies the claim. For $R=\ZZ/p\ZZ$, if $p$ is invertible
in the residue fields of $Y$, we conclude with Corollary \ref{directimagepreservessums}
and Theorem \ref{thm:comparison_torsion_etale-h_motives}. The general
case follows from Proposition \ref{prop:et+htp&torsion}.
The existence of a right adjoint of $\derR f_*$ is a direct consequence
of the Brown representability theorem.
\end{proof}

\begin{rem}\label{rem:contreex2}
Note that a sufficient condition for a triangulated
functor between triangulated categories
to preserve compact objects is that it has a right adjoint
which preserves small sums. The preceding proposition implies that,
for any morphism $f:X\to Y$ between
noetherian scheme of finite dimension,
the functor $\derL f^*$ preserves
compact objects in $\DM_\h(-,R)$.
Therefore, one can
interpret the last part of Remark \ref{rem:contreex1}
as follows: for any noetherian scheme $X$ of finite
dimension which admits a real point, if $2$
is not invertible in $R$, then the
constant motive $R_X$ is not compact in $\DM_\h(X,R)$.
\end{rem}

\begin{cor}\label{cor:changeofcoefoneforall}
Let $f:X\rightarrow Y$ be a morphism between
noetherian schemes of finite dimension.
For any object $M$ of $\DM_\h(X,R)$ and any $R$-algebra $R'$,
there is a canonical isomorphism
$$R'\otimes^\derL_R\derR f_*(M)\rightarrow\derR f_*(R'\otimes^\derL_R M)\, .$$
\end{cor}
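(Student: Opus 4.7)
The map in question is the exchange transformation $\alpha\colon\rho^*\circ\derR f_*\to\derR f_*\circ\rho^*$ associated with the pair of adjunctions $(\rho^*,\rho_*)$ (for $\rho\colon R\to R'$) and $(\derL f^*,\derR f_*)$. My plan is to reduce to proving an isomorphism in $\DM_\h(Y,R)$ by applying the conservative functor $\rho_*$, and then to establish the result via a d\'evissage on the coefficient ring.

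First, since $\rho_*\colon\DM_\h(Y,R')\to\DM_\h(Y,R)$ is faithful and exact (Proposition \ref{prop:properties_rho_*}), it is conservative, so it suffices to show that $\rho_*(\alpha_M)$ is an isomorphism. The functor $\rho_*$ commutes with $\derR f_*$: this follows from exactness of $\rho_*$ (Lemma \ref{lm:derived_rho_*}), its preservation of fibrant objects in the relevant model structure, and the identity $\rho_*f_*=f_*\rho_*$ at the sheaf level. Combining this with the identity $\rho_*\rho^*(N)=N\otimes^\derL_R R'$ (Proposition \ref{prop:properties_rho_*}), the claim reduces to showing that the natural map
\begin{equation*}
\derR f_*(M)\otimes^\derL_R R'\;\longrightarrow\;\derR f_*(M\otimes^\derL_R R')
\end{equation*}
is an isomorphism in $\DM_\h(Y,R)$, where $R'$ is now viewed as a constant $\h$-sheaf of $R$-algebras on both $X$ and $Y$ (so that its pullback under $f$ is itself).

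Finally, I would consider the full subcategory $\mathcal{T}\subset\Der(R)$ of those complexes $P$ for which the analogous map with $R'$ replaced by $P$ is an isomorphism for every $M$. It is a thick subcategory containing $R$ trivially, and by the argument of Proposition \ref{prop:rho_n&6_functors}, using the distinguished triangle \eqref{eq:triangle_red_mod_n_DMh} and the triangulated structure of $\derR f_*$, it contains every $R/nR$. An adaptation of the proof of Corollary \ref{cor:rho_rational&6_functors}(1)—testing against $\derR\Hom_R(P,-)$ for $P$ ranging over constructible generators and invoking Corollary \ref{cor:DM_h&Q-localization}, an argument which goes through for an arbitrary morphism $f$ since $f^*P$ is automatically constructible—yields $R\otimes\QQ\in\mathcal{T}$. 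Applying Proposition \ref{prop:conservativity_coef_DMh} to $R'$ then gives $R'\in\mathcal{T}$. The main obstacle I foresee is precisely this concluding d\'evissage: since $\mathcal{T}$ is not a priori closed under filtered colimits and since Proposition \ref{prop:conservativity_coef_DMh} assumes $\ZZ$-flatness of the coefficient ring, deducing $R'\in\mathcal{T}$ for an arbitrary (possibly non-flat) $R$-algebra $R'$ requires an auxiliary triangle-based reduction to the flat case, which is the step where most care is needed.
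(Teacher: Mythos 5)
Your first three reduction steps (pass to the conservative $\rho_*$, commute it past $\derR f_*$, and identify $\rho_*\rho^*$ with $(-)\otimes^\derL_R R'$) are correct, but the concluding d\'evissage has a genuine gap, and it is precisely the one you flag yourself. The subcategory $\mathcal{T}\subset\Der(R)$ of complexes $P$ for which the exchange map
$$P\otimes^\derL_R\derR f_*(M)\rightarrow\derR f_*(P\otimes^\derL_R M)$$
is invertible is indeed thick and contains $R$, and the triangle \eqref{eq:triangle_red_mod_n_DMh} puts each $R/n$ in $\mathcal{T}$; but thickness is far too weak a closure property to conclude that an arbitrary $R$-algebra $R'$ lies in $\mathcal{T}$. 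There is no reason for $R'$, viewed as a complex of $R$-modules, to sit in the thick closure of $\{R,\,R/n,\,R\otimes\QQ\}$ — already $R'=R[t]$ does not. Moreover, Proposition \ref{prop:conservativity_coef_DMh} is a conservativity statement about the family of change-of-coefficients functors on $\DM_\h$, not a recognition criterion for objects of $\Der(R)$, so it cannot be invoked to push $R'$ into $\mathcal{T}$; and, as you note, it anyway assumes $\ZZ$-flatness. This step cannot be closed without new input.

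The paper's proof closes exactly this gap with an observation that does not appear in your argument: by Proposition~\ref{prop:directimagecocontinuous}, the functor $\derR f_*$ preserves small sums. Combined with the symmetric monoidal, sum-preserving triangulated functor $\Der(R)\to\DM_\h(-,R)$ (free Tate spectrum on a constant sheaf of complexes of $R$-modules), both sides of the exchange map are triangulated functors of $C\in\Der(R)$ which commute with small sums. Hence the subcategory $\mathcal{T}$ is not merely thick but \emph{localizing}, and since $\Der(R)$ is compactly generated by the single object $R\in\mathcal{T}$, one gets $\mathcal{T}=\Der(R)$ at once. In other words, the passage from ``thick'' to ``localizing'' — which is exactly what Proposition~\ref{prop:directimagecocontinuous} buys — is the missing ingredient, and it makes the entire d\'evissage into rational and torsion coefficients unnecessary.
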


\begin{proof}
Given a complex of $R$-modules $C$, we still denote by $C$ the
object of $\DM_\h(X,R)$ defined as the free Tate spectrum associated
to the constant sheaf of complexes $C$. This defines a left Quillen
functor from the projective model category on the category of complexes
of $R$-modules (with quasi-isomorphisms as weak equivalences, and
degree-wise surjective maps as fibrations) to the model category of
Tate spectra. Therefore, we have a triangulated functor
$$\Der(R\text{-}\mathrm{Mod})\rightarrow\DM_\h(S,R) \ , \quad C\mapsto C$$
which preserves small sums and is symmetric monoidal.
By virtue of the preceding proposition, for any fixed $M$,
we thus have a natural transformation between triangulated
functors which preserve small sums:
$$C\otimes^\derL_R\derR f_*(M)\rightarrow\derR f_*(C\otimes^\derL_R M)\, .$$
To prove that the map above is an isomorphism for any complex of $R$-modules $C$,
as the derived category of $R$ is compactly generated by $R$ (seen as a complex
concentrated in degree zero), it is sufficient to consider the case where $C=R$,
which is trivial.
\end{proof}

\begin{cor}\label{cor:intHomsmallsumsDMh}
Les $X$ be a noetherian scheme of finite dimension.
Then, for any constructible motive $M$ in $\DM_\h(X,R)$, the
functor $\uHom_R(M,-)$ preserves small sums. Furthermore, for any
$R$-algebra $R'$, we have canonical isomorphisms
$$\derR\uHom_R(M,N)\otimes^\derL_R R'\simeq\derR\uHom_R(M,N\otimes^\derL_R R')$$
for any object $N$ in $\DM_\h(X,R)$.
\end{cor}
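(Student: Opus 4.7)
The plan is to reduce both assertions to the generators of the thick triangulated subcategory of constructible motives, namely objects of the form $M = R(Y)(n) = \Sigma^\infty \uR^\h_X(Y)(n)$ for $Y$ smooth (separated of finite type) over $X$ and $n\in\ZZ$.

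First I would verify that, in each of the two assertions, the class $\mathscr C$ of motives $M$ for which the required property holds is a thick triangulated subcategory of $\DM_{\h,c}(X,R)$. For assertion (1), given a family $\{N_i\}_{i\in I}$, the natural transformation
\[
\bigoplus_i \derR\uHom_R(M,N_i)\longrightarrow \derR\uHom_R\!\bigl(M,\bigoplus_i N_i\bigr)
\]
is contravariantly exact and additive in $M$, so the class of $M$ for which it is invertible is closed under shifts, direct summands, and, by the triangulated $5$-lemma, under cones of morphisms. The same reasoning applies to assertion (2), using the natural transformation $\derR\uHom_R(M,N)\otimes^\derL_R R'\to\derR\uHom_R(M,N\otimes^\derL_R R')$, which is also contravariantly triangulated in $M$. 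Hence in both cases it is enough to verify the property on the generators $R(Y)(n)$, and moreover, since Tate twists are $\otimes$-invertible and commute with every functor in sight, it suffices to consider $M=R(Y)=\Sigma^\infty\uR^\h_X(Y)$ with $p:Y\to X$ smooth.

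Next I would use the smooth projection formula, which is part of the premotivic structure: for $p$ smooth, $p_\sharp$ is left adjoint to $p^*$, and $R(Y)=p_\sharp(R_Y)$, so by adjunction
\[
\derR\uHom_R(R(Y),N)\;\simeq\;\derR p_*\,\derL p^*(N).
\]
Assertion (1) now follows from Proposition \ref{prop:directimagecocontinuous}, which says that $\derR p_*:\DM_\h(Y,R)\to\DM_\h(X,R)$ preserves small sums for any morphism of finite type $p$, combined with the obvious fact that the left adjoint $\derL p^*$ preserves small sums as well.

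For assertion (2), with $M=R(Y)$, I would chain together the identifications
\[
\derR\uHom_R(R(Y),N)\otimes^\derL_R R'
\;\simeq\;\bigl(\derR p_*\,\derL p^*(N)\bigr)\otimes^\derL_R R'
\;\simeq\;\derR p_*\bigl(\derL p^*(N)\otimes^\derL_R R'\bigr)
\;\simeq\;\derR p_*\,\derL p^*\bigl(N\otimes^\derL_R R'\bigr),
\]
where the middle isomorphism is Corollary \ref{cor:changeofcoefoneforall} applied to $p$, and the last one uses that the symmetric monoidal left adjoint $\derL p^*$ commutes with the base change $(-)\otimes^\derL_R R'$ (both being $\otimes$-compatible left adjoints between the underlying premotivic categories). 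No step here is subtle: the only nontrivial input is Proposition \ref{prop:directimagecocontinuous} (and hence, through its proof, the comparison theorems of the previous subsections), so the argument is essentially bookkeeping once one reduces to generators. The mild technical point to keep in mind is to write the reduction as a statement about thick subcategories of $\DM_{\h,c}(X,R)$ rather than localizing subcategories, since we do not claim either property for non-constructible $M$ (and indeed, in view of Remarks \ref{rem:contreex1} and \ref{rem:contreex2}, the constant motive $R_X$ itself need not satisfy (1) when $R_X$ fails to be compact).
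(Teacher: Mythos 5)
Your proof is correct and takes essentially the same route as the paper: reduce to the generators $R(Y)=f_\sharp(\un_Y)$ for $f:Y\to X$ smooth, identify $\derR\uHom_R(R(Y),-)\simeq \derR f_*\,\derL f^*$, and then invoke Proposition \ref{prop:directimagecocontinuous} and Corollary \ref{cor:changeofcoefoneforall}. The only difference is that you spell out the thick-subcategory bookkeeping (closure under shifts, cones, summands, and the twist reduction) that the paper leaves implicit, and you explicitly flag why the argument stops at thick rather than localizing subcategories, both of which are sound.
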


\begin{proof}
It is sufficient to prove this in the case where $M$ is of the form
$M=\derL f_\sharp(\un_Y)$ for a separated smooth morphism of finite type
$f:Y\rightarrow X$. But then, we have
$$\derR\uHom_R(M,N)\simeq\derR f_*f^*(N)\, .$$
This corollary is thus a reformulation of Proposition \ref{prop:directimagecocontinuous}
and Corollary \ref{cor:changeofcoefoneforall}.
\end{proof}

\begin{cor}\label{cor:exceptimfunctcommutessmallsums}
For any separated morphism of finite type $f:X\rightarrow Y$
between noetherian schemes of finite dimension,
the functor
$$f^!:\DM_\h(Y,R)\rightarrow\DM_\h(X,R)$$
preserves small sums, and, for any $R$-algebra $R'$,
there is a canonical isomorphism
$$f^!(M)\otimes^\derL_R R'\simeq f^!(M\otimes^\derL_R R')\, .$$
\end{cor}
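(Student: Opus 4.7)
My plan follows the strategy of the proof of Proposition~\ref{prop:directimagecocontinuous}. Using Nagata compactification, I factor $f=p\circ j$ with $j$ an open immersion and $p$ proper. Since $j^!=j^*$ admits both a left adjoint ($j_\sharp=j_!$) and a right adjoint ($j_*$), it preserves small sums; being symmetric monoidal, it commutes with $-\otimes^\derL_R R'$. Both assertions thus reduce to the case $f=p$ is proper.

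For part (2) with $R'=R/n$, I apply the triangulated functor $p^!$ to the distinguished triangle $M\xrightarrow{n}M\to M\otimes^\derL_R R/n\to M[1]$, obtaining a triangle $p^!(M)\xrightarrow{n}p^!(M)\to p^!(M\otimes R/n)\to p^!(M)[1]$ which identifies $p^!(M\otimes R/n)\simeq p^!(M)\otimes R/n$. The same argument (or the projection formula for $f_!$ applied via adjunction) handles any perfect $R$-complex $R'$, since such $R'$ is dualizable in $\DM_\h(Y,R)$.

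For part (1), I reduce to $R=\ZZ$ via the forgetful functor $\rho_*:\DM_\h(-,R)\to\DM_\h(-,\ZZ)$, which is conservative (Proposition~\ref{prop:properties_rho_*}), preserves small sums (being both a left and a right adjoint), and commutes with $p^!$: the latter follows by taking right adjoints of the commutation $\rho^*f_!\simeq f_!\rho^*$, itself a consequence of Corollary~\ref{cor:changeofcoefoneforall} applied to $f_*=f_!$ for proper $f$, combined with the trivial commutation for $j_!$. Over $R=\ZZ$, I then invoke the conservative family $\{\rho^*_\QQ,\rho^*_\ell\}_{\ell\text{ prime}}$ of Proposition~\ref{prop:conservativity_coef_DMh}: commutation of $p^!$ with $\rho^*_\ell$ is part (2) for $R'=\ZZ/\ell$, and commutation with $\rho^*_\QQ$ follows from the compact generation of $\DM_\h(-,\QQ)\simeq\DMB$ (Theorem~\ref{thm:DMB&DM_h_rational_recall} together with Theorem~\ref{thm:fcohdimlconst} and Lemma~\ref{lemma:etaleQcoefficientscohdim}). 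This reduces part (1) to the cases $R=\QQ$ and $R=\ZZ/\ell$. Over $R=\QQ$, compact generation of $\DMB$ combined with preservation of constructibility by $p_*=p_!$ (known in $\DMB$ over quasi-excellent bases from~\cite{CD3}, extended to noetherian finite-dimensional schemes via continuity, Proposition~\ref{prop:continuityQlinearhmot}) shows that $f_!$ preserves compact objects, so $p^!$ preserves sums. Over $R=\ZZ/\ell$, the equivalence $\DM_\h(-,\ZZ/\ell)\simeq\Der((-[1/\ell])_\et,\ZZ/\ell)$ of Corollary~\ref{cor:rigidity_2nd_formulation} together with the classical 6-functor formalism of torsion \'etale sheaves (using Corollary~\ref{directimagepreservessums} and Gabber's Theorem~\ref{thm:localetalefinitecd}) gives the same conclusion.

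Finally, part (2) extends from perfect $R$-complexes to arbitrary $R$-algebras $R'$ by writing $R'$ as a filtered colimit of perfect $R$-complexes: part (1) ensures that $p^!$ commutes with such filtered colimits (realized in the triangulated setting via $\bigoplus$ and cofibers), so the isomorphism established for perfect $R'$ propagates. The main technical obstacle is the commutation of $p^!$ with rationalization $\rho^*_\QQ$, where the interplay between compactness and preservation of constructibility by $p_*$ over noetherian finite-dimensional $\QQ$-linear bases must be carefully exploited.
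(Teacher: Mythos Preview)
Your approach is quite different from the paper's and considerably more indirect. The paper gives a short uniform argument valid for any $R$: for constructible $C$ in $\DM_\h(X,R)$ one has
\[
\derR f_*\,\derR\uHom_R(C,f^!(M))\simeq\derR\uHom_R(f_!(C),M)\,.
\]
Since $f_!$ preserves constructible objects (\cite[Cor.~4.2.12]{CD3}), and since $\derR f_*$ preserves small sums (Proposition~\ref{prop:directimagecocontinuous}) while $\derR\uHom_R(A,-)$ preserves small sums for $A$ constructible (Corollary~\ref{cor:intHomsmallsumsDMh}), one checks directly that $\Hom(C,f^!(\bigoplus_i M_i))\simeq\Hom(C,\bigoplus_i f^!(M_i))$ for all constructible $C$, whence $f^!$ preserves sums. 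The change-of-coefficients formula then follows exactly as in Corollary~\ref{cor:changeofcoefoneforall}. No Nagata compactification, no reduction to special coefficient rings, no separate rational/torsion analysis.

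Your route has a genuine gap. To reduce part~(1) over $\ZZ$ to the $\QQ$-linear and $\ZZ/\ell$-linear cases via Proposition~\ref{prop:conservativity_coef_DMh}, you must know that $p^!$ commutes with $-\otimes\QQ$ (equivalently with $\rho^*_\QQ$). You assert this ``follows from the compact generation of $\DM_\h(-,\QQ)$'', but compact generation only tells you that the $\QQ$-linear $p^!$ preserves sums; it says nothing about the comparison map $p^!(M)\otimes\QQ\to p^!(M\otimes\QQ)$ in $\DM_\h(X,\ZZ)$. This is precisely part~(2) for $R'=\QQ$, which at that stage you have only established for perfect $R$-complexes, and $\QQ$ is not perfect over $\ZZ$. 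The gap can be filled: test against constructible $C$ and use Corollary~\ref{cor:DM_h&Q-localization} twice (for $C$ and for $f_!(C)$) together with the adjunction $(f_!,f^!)$. But this is essentially the paper's own argument specialized to $\QQ$, and once you see it you may as well run it for arbitrary $R'$ directly and bypass the entire reduction machinery.
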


\begin{proof}
For any constructible object $C$ in $\DM_\h(X,R)$, we have
$$\derR f_*\derR\uHom_R(C,f^!(M))\simeq\derR\uHom_R(f_!(C),M)\, .$$
Using that the functor $f_!$ preserves constructible
objects (see \cite[Cor. 4.2.12]{CD3}), we deduce from Proposition \ref{prop:directimagecocontinuous}
and Corollary \ref{cor:intHomsmallsumsDMh} the following
computation, for any small family of objects $M_i$ in $\DM_\h(Y,R)$:
$$\begin{aligned}
\Hom(C,\bigoplus_i f^!(M_i))
&\simeq\Hom(\un_Y,\derR\uHom_R(C,\bigoplus_i f^!(M_i)))\\
&\simeq\Hom(\un_Y,\bigoplus_i\derR\uHom_R(C,\bigoplus_i f^!(M_i)))\\
&\simeq\Hom(\un_X,\derR f_*\bigoplus_i\derR\uHom_R(C,f^!(M_i)))\\
&\simeq\Hom(\un_Y,\bigoplus_i\derR f_*\derR\uHom_R(C,f^!(M_i)))\\
&\simeq\Hom(\un_Y,\bigoplus_i\derR\uHom_R(f_!(C),M_i))\\
&\simeq\Hom(\un_Y,\derR\uHom_R(f_!(C),\bigoplus_i M_i))\\
&\simeq\Hom(f^!(C),\bigoplus_i M_i)\\
&\simeq\Hom(C,f^!(\bigoplus_i M_i))\,.
\end{aligned}$$
The change of coefficients formula is proved similarly
(or with the same argument as in the proof of Corollary \ref{cor:changeofcoefoneforall}).
\end{proof}

\subsection{$\h$-Motives and Grothendieck's 6 functors}\label{sec:hmot6op}

\begin{num}
Let $R$ be any commutative ring.
Recall from \cite[Th. 4.2.5]{V1} that we get a canonical
 isomorphism in $\DMe_h(S,R)$:
$$
\un_S(1) \simeq R \otimes^\derL \GG[-1]
$$
where $\GG$ is identified with the $h$-sheaf
 of abelian groups over $S$ represented by the scheme $\GG$.

This gives a canonical morphism of groups:
\begin{align*}
c_1:\Pic(S)=H^1_\zar(S,\GG)
 & \rightarrow \Hom_{\DMe_h(S,R)}(\un_S,\un_S(1)[2]) \\
 & \rightarrow \Hom_{\DM_h(S,R)}(\un_S,\un_S(1)[2])
\end{align*}
so that the premotivic triangulated category $\DM_h(S,R)$ is oriented
 in the sense of Definition \ref{df:recall_premotivic_basic}.

Moreover, as a corollary of the results obtained above, we get:
\end{num}
\begin{thm} \label{thm:DMh_6functors}
The triangulated premotivic category $\DM_h(-,R)$
 satisfies the formalism of the Grothendieck 6 functors
 for noetherian schemes of finite dimension
 (Def. \ref{df:recall_6_functors})
 as well as the absolute purity property
  (Def. \ref{df:absolute_purity}).
\end{thm}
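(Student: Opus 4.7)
The plan is to reduce both claims — Grothendieck's 6 functors formalism and absolute purity — to the two already-established pieces. The rational case is handled by Theorem \ref{thm:DMB&DM_h_rational_recall}, which identifies $\DM_\h(-,\QQ)$ with Beilinson motives, for which the statement holds by the main results of \cite{CD3}. The torsion case is handled by Theorem \ref{thm:comparison_torsion_etale-h_motives}, which identifies $\DM_\h(-,R)$ with $\DM_\et(-,R)$ when $R$ is of positive characteristic, for which both properties hold by Corollary \ref{cor:DMet_6functors} and Theorem \ref{thm:DMet_absolute_purity}.

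Since $\DM_\h(-,R)$ is oriented by the class $c_1$ constructed above and is stable by construction (the Tate twist is $\otimes$-invertible after stabilization), by Theorem \ref{thm:recall_carac_6functors} the 6 functors formalism reduces to the localization property together with weak purity for smooth projective morphisms. Assume first that $R$ is flat over $\ZZ$. Then Proposition \ref{prop:conservativity_coef_DMh} provides a conservative family of change-of-coefficient functors $\rho^*:\DM_\h(-,R)\to\DM_\h(-,R\otimes\QQ)$ and $\rho_p^*:\DM_\h(-,R)\to\DM_\h(-,R/p)$ for $p\in\mathcal{P}$. Each of the properties to verify (the localization triangle $j_!j^*M\to M\to i_*i^*M\to$, weak purity, and the absolute purity isomorphism) asserts that a specific morphism in $\DM_\h(S,R)$ is invertible, and can therefore be tested after applying each $\rho_p^*$ and $\rho^*$. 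The compatibility of $\rho_p^*$ and $\rho^*$ with $f_*$ and internal Hom is established in Proposition \ref{prop:rho_n&6_functors} and Corollary \ref{cor:rho_rational&6_functors}; the compatibility with the left adjoints $f^*$, $f_\sharp$, $\otimes^\derL$, $j_!$ is automatic. Combining this with Proposition \ref{prop:et+htp&torsion} (which, via the Artin--Schreier sequence, reduces $\DM_\h(X,R/p)$ to the locus where $p$ is invertible), the reduction is complete for $R$ flat over $\ZZ$.

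For general $R$, a preliminary step is required: choose a surjection $\tilde R\twoheadrightarrow R$ from a polynomial ring $\tilde R=\ZZ[T_I]$, which is flat over $\ZZ$. The resulting change-of-coefficients adjunction $\rho^*\dashv\rho_*$ is an adjunction of $\sft$-premotivic categories; by Proposition \ref{prop:properties_rho_*} the right adjoint $\rho_*$ is conservative, exact, and commutes with small sums, and moreover it commutes with $f_*$ and (on constructible objects) with internal Hom. These compatibilities allow one to transfer each axiom from $\DM_\h(-,\tilde R)$ to $\DM_\h(-,R)$, by writing each assertion in $\DM_\h(-,R)$ as an assertion about the image under $\rho_*$ of an already-known statement in $\DM_\h(-,\tilde R)$.

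The hard part will be the bookkeeping: one must verify that the equivalences used in the reduction, both with Beilinson motives and with $\DM_\et$, commute with each of the six operations in a sufficiently functorial way, so that an axiom in $\DM_\h(-,R)$ really does decompose into the corresponding axioms in $\DM_\h(-,\QQ)$ and $\DM_\h(-,R/p)$ — and similarly for absolute purity. For instance, to derive the localization triangle $j_!j^*M\to M\to i_*i^*M\to$, one needs not only the compatibilities of $\rho^*$ and $\rho_p^*$ with each of $j_!$, $j^*$, $i_*$, $i^*$, but also that the transformations making the triangle fit together are respected. Once these coherences are in place, the rest of the 6 functors formalism follows formally from Theorem \ref{thm:recall_carac_6functors}, and absolute purity follows from its verified form in the rational and torsion cases.
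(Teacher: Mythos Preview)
Your proposal is correct and follows the same reduction-to-rational-and-torsion strategy as the paper, but the paper streamlines your argument in two places worth noting.

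First, for arbitrary $R$: rather than your detour through a flat polynomial ring $\tilde R\twoheadrightarrow R$ and the conservativity of $\rho_*$, the paper invokes Corollaries~\ref{cor:changeofcoefoneforall}, \ref{cor:intHomsmallsumsDMh} and \ref{cor:exceptimfunctcommutessmallsums}, which say directly that $f_*$, $\uHom$ and $f^!$ commute with $R'\otimes^\derL_R(-)$ for any $R$-algebra $R'$; this lets one assume $R=\ZZ$ outright. Second, the ``bookkeeping'' you flag as the hard part is dissolved by choosing the right formulation: for the 6 functors, Theorem~\ref{thm:recall_carac_6functors} requires only stability and localization (not weak purity, which you list unnecessarily), and localization is tested via the $\h$-analog of condition~(iii) in Proposition~\ref{prop:equivalent_conditions_localization}, namely that the single map $R_S(X/X-X_Z)\to i_*R_Z(X_Z)$ is invertible; for absolute purity, Theorem~\ref{thm:carac_abs_purity} likewise reduces to checking that two specific pullback maps on cohomology with support are isomorphisms. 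In both cases one is now literally testing whether a fixed morphism is invertible, so Proposition~\ref{prop:conservativity_coef_DMh} together with \ref{prop:rho_n&6_functors} and \ref{cor:rho_rational&6_functors} applies cleanly, and the rational and torsion cases (\ref{thm:DMB&DM_h_rational_recall} with \cite[14.2.11, 14.4.1]{CD3}; \ref{thm:comparison_torsion_etale-h_motives} with \ref{thm:DM_et_localization} and \ref{thm:DMet_absolute_purity}) finish the job.
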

\begin{proof}
Taking into account Corollaries \ref{cor:changeofcoefoneforall},
\ref{cor:intHomsmallsumsDMh} and \ref{cor:exceptimfunctcommutessmallsums},
we see that we may assume $R=\ZZ$ at will.

 Consider the first assertion.
 Taking into account Theorem \ref{thm:recall_carac_6functors},
 we have only to prove the localization property for $\DM_\h(-,R)$.
 Fix a closed immersion $i:Z \rightarrow S$.
 The analog of Proposition \ref{prop:equivalent_conditions_localization}
 for the $\h$-topology obviously holds. This means we have to prove
 that for any smooth $S$-scheme $X$, if $R_S(X/X-X_Z)$
 denotes the (infinite suspension) of the quotient of
 representable $\h$-sheaves $R_S(X)/R_S(X-Z)$,
 then the canonical map
$$
R_S(X/X-X_Z) \rightarrow i_*R_Z(X_Z)
$$
is an isomorphism in $\DM_\h(X,R)$.
According to Proposition \ref{prop:conservativity_coef_DMh},
 together with \ref{prop:rho_n&6_functors}
  and \ref{cor:rho_rational&6_functors},
 we are reduced to check this when $R=\QQ$ or $R=\ZZ/p\ZZ$.
 In the first case, it follows from Theorem \ref{thm:DMB&DM_h_rational_recall}
 and the localization property for Beilinson motives $\DMB$ -- the latter
 property being part of the statement of \cite[Corollary 14.2.11]{CD3}.
 In the second case,
 it follows from Theorem \ref{thm:comparison_torsion_etale-h_motives}
 and Theorem \ref{thm:DM_et_localization}.

 Concerning the second assertion, the absolute purity for $\DM_\h(-,\ZZ)$,
 we use the same argument as in the the proof of Theorem \ref{thm:DMet_absolute_purity}:
 using Theorem \ref{thm:carac_abs_purity},
 we can apply Proposition \ref{prop:conservativity_coef_DMh},
 together with \ref{prop:rho_n&6_functors}
  and \ref{cor:rho_rational&6_functors}
 to reduced to the case where $R=\QQ$ or $R=\ZZ/p\ZZ$.
 The first case follows from Theorem \ref{thm:DMB&DM_h_rational_recall}
 and \cite[Theorem 14.4.1]{CD3} ;
 the second one follows from 
  Theorem \ref{thm:comparison_torsion_etale-h_motives}
  and Theorem \ref{thm:DMet_absolute_purity}.
\end{proof}

\section{Finiteness theorems}\label{sec:finiteness}

\subsection{Transfers and traces} \label{sec:transfers}

\begin{num} \textit{Transfers}.--
Consider the notations of Paragraph~\ref {num:recall_corr_usheaf}.
Let $X$ and $Y$ be proper $S$-schemes
 and $\alpha \in \corr S X Y_\Lambda$ a finite
 $S$-correspondence.
According to Proposition~\ref{prop:h_sheaves&transfers},
 we get a morphism of $\h$-sheaves on $\sft_S$:
\begin{equation} \label{eq:transfers_h_usheaves}
\alpha_*:\uR_S(X) \rightarrow \uR_S(Y)
\end{equation}
which induces a morphism in $\uDM_\h(S,R)$:
$$
\alpha_*:\Sigma^\infty\uR_S(X) \rightarrow \Sigma^\infty \uR_S(Y).
$$
Let $p$ and $q$ be the respective structural morphisms
 of the $S$-schemes $X$ and $Y$.
 Applying the functor $\uHom(-,\un_S)$ to this map, we get
 a morphism in $\uDM_\h(S,R)$:
$$
\alpha^*:q_*(\un_X) \rightarrow p_*(\un_Y).
$$
Then we can apply to this functor the right adjoint $\nu^*$
 of the adjunction \eqref{eq:enlargement_DMh} and,
 because it commutes with $p_*$ and $q_*$ and we have the isomorphism
 $\nu^*\un=\un$, the above morphism can be seen in $\DM_\h(S,R)$.

Given moreover any $\h$-motive $E$ over $S$,
 and using the projection formula 
  -- cf. Def. \ref{df:recall_6_functors}, (2) and (5) --
 applied to the proper morphisms $p$ and $q$,
 we obtain finally a canonical morphism:
$$
q_*q^*(E)=q_*(\un_X) \otimes E
 \xrightarrow{\alpha^* \otimes Id_E} p_*(\un_Y) \otimes E=p_*p^*(E)
$$
which is natural in $E$.
\end{num}
\begin{df}
Consider the notations above. 
The following natural transformation
 of endofunctors of $\DM_\h(S,R)$
\begin{equation} \label{eq:cohomological_transfers}
\alpha^\star:q_*q^* \rightarrow p_*p^*
\end{equation}
is called the cohomological $\h$-transfer along 
 the finite $S$-correspondence $\alpha$.
\end{df}

The following results are easily derived from this definition:
\begin{prop}\label{prop:basic_transfers_DMh}
Consider the above definition.
\begin{enumerate}
\item \emph{Normalization}.--
 Consider a commutative diagram of schemes:
$$
\xymatrix@=16pt{
X\ar^f[rr]\ar_p[rd] && Y\ar^q[ld] \\
& S &
}
$$
such that $p$ and $q$ are proper.
Let $\alpha$ be the finite $S$-correspondence
 associated with the graph of $f$.
Then the natural transformation $\alpha^\star$
 is equal to the composite:
$$
q_*q^* \xrightarrow{ad(f^*,f_*)} q_*f_*f^*q^* \simeq p_*p^*.
$$
\item \emph{Composition}.-- For composable finite
 $S$-correspondences $\alpha \in \corr S X Y_\Lambda$,
 $\beta \in \corr S Y Z_\Lambda$ with $X$, $Y$, $Z$
 proper over $S$, one has:
  $\alpha^\star\beta^\star=(\beta \circ \alpha)^\star$.
\item \emph{Base change}.-- Let $f:T \rightarrow S$
 be a morphism of schemes,
 $\alpha \in \corr S X Y_\Lambda$ a finite $S$-correspondence
 between proper $S$-schemes and put $\alpha_T=f^*(\alpha)$
 obtained using the premotivic structure on $\sftc_{\Lambda}$.
Let $p$ (resp. $q$, $p'$, $q'$) be the structural
 morphism of $X/S$ (resp. $Y/S$, $X \times_S T/T$, $Y \times_S T/T$),
 $f'=f \times_S T$.
Then the following diagram commutes:
$$
\xymatrix{
f^*q_*q^*\ar^{f^*.\alpha^\star}[rrr]\ar_{Ex(f^*,q_*)}^\sim[d]
 &&& f^*p_*p^*\ar^{Ex(f^*,p_*)}_\sim[d] \\
q'_*f^{\prime*}q^*\ar@{=}[r]
 & q'_*q^{\prime*}\ar^{\alpha_T^\star}[r]
 & p'_*p^{\prime*}\ar@{=}[r]
 & p'_*p^{\prime*}q^*
}
$$
where the vertical maps are the proper base change isomorphisms
 -- Def. \ref{df:recall_6_functors}(4).
\item \emph{Restriction}.-- Let $\pi:S \rightarrow T$
 be a proper morphism of schemes.
 Consider a finite $S$-correspondence
  $\alpha \in \corr S X Y_\Lambda$ between
	proper schemes and put $\alpha|_T=\pi_\sharp(\alpha)$
	using the $\sft$-premotivic structure on $\sftc_{\Lambda_S}$.
	Let $p$ (resp. $q$)
	 be the structural morphism of $X/S$ (resp. $Y/S$),
	 and put $p'=\pi \circ p$, $q'=\pi \circ q$.
Then the following diagram is commutative:
$$
\xymatrix@C=40pt{
\pi_*q_*q^*\pi^*\ar^{\pi_*.\alpha^\star.\pi^*}[r]\ar@{=}[d]
 & \pi_*p_*p^*\pi^*\ar@{=}[d] \\
q'_*q^{\prime*}\ar^{(\alpha|_T)^\star}[r] & p'_*p^{\prime*}
}
$$
\end{enumerate}
\end{prop}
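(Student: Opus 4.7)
The approach is to unwind the definition of $\alpha^\star$ into three successive stages, verify the claimed compatibility at the first stage (where it reduces to a statement about the premotivic category $\sftc_\Lambda$), and then propagate it through the remaining stages by pure naturality. Stage (i) is the map of $\h$-sheaves $\alpha_*\colon \uR_S(X)\to \uR_S(Y)$ of \eqref{eq:transfers_h_usheaves}; stage (ii) is the map $\alpha^*\colon q_*\un_X\to p_*\un_Y$ in $\DM_\h(S,R)$ obtained by applying $\uHom(-,\un_S)$ inside $\uDM_\h$ and then $\nu^*$; stage (iii) is the natural transformation $\alpha^\star$ obtained from $\alpha^*$ by tensoring with an arbitrary object $E$ and identifying both sides via the projection formula for the proper morphisms $p$ and $q$.

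For part (1), the finite correspondence attached to $f\colon X\to Y$ is $\gamma_S(f)$, and by the very construction of the graph functor the associated map $\alpha_*$ is the morphism of representable sheaves induced by $f$. Applying $\uHom(-,\un_S)\circ\nu^*$ and using $q\circ f=p$ turns this into the map $q_*\un_Y=q_*f_*\un_X\to p_*\un_X=p_*\un_Y$ obtained from the adjunction $(f^*,f_*)$; a direct chase through the projection-formula identifications $q_*q^*(E)\simeq q_*(\un_X)\otimes E$ and $p_*p^*(E)\simeq p_*(\un_Y)\otimes E$ then identifies $\alpha^\star$ with the unit of $(f^*,f_*)$ applied in the middle, which is exactly the assertion. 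For part (2), $(\beta\circ\alpha)_*=\beta_*\circ\alpha_*$ at stage (i) is the functoriality of $\uR_S$ viewed as a functor on $\sftc_{\Lambda,S}$ (see \cite[9.1]{CD3}); applying the contravariant functor $\uHom(-,\un_S)\circ\nu^*$ reverses the order, yielding $(\beta\circ\alpha)^*=\alpha^*\circ\beta^*$, and stage (iii) preserves composition vertically since the projection formula is natural.

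For parts (3) and (4), the compatibility at stage (i) is automatic: the functor $\psi_!\colon \sftc_{\Lambda}\to\sh_\h(-,R)$ from the adjunction \eqref{eq:adj_et_tr&h} is a morphism of $\sft$-premotivic categories, and therefore strictly commutes with the operations $f^*$ (giving $\alpha_T=f^*\alpha$) and $\pi_\sharp$ (giving $\alpha|_T=\pi_\sharp\alpha$); in particular the induced maps of $\h$-sheaves $(f^*\alpha)_*$ and $(\pi_\sharp\alpha)_*$ coincide with the pullbacks $f^*(\alpha_*)$ and $\pi_\sharp(\alpha_*)$, respectively. To pass from stage (i) to the asserted diagrams, one combines this with the naturality of the exchange $Ex(f^*,q_*)$ of the proper base change isomorphism (Theorem \ref{thm:DMh_6functors}, Definition \ref{df:recall_6_functors}(4)) and with the compatibility of the projection formula isomorphism $q_*(\un_X)\otimes E\simeq q_*q^*(E)$ under $f^*$ and $\pi_\sharp$, both of which are part of the six-functor formalism for $\DM_\h$.

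The main obstacle is purely diagrammatic: every step is a naturality check, but part (3) in particular involves a three-step commutative diagram mixing the proper base change exchange, the projection formula identification, and the dualisation $\uHom(-,\un_S)$. The verification is of the same flavour as the very last diagram chase in the proof of Theorem \ref{thm:DMe_sm_proj_purity}, and presents no conceptual difficulty once one has fixed clean notation for the three isomorphisms involved. For this reason, as the statement indicates, the four assertions are routine consequences of the construction and do not require new input beyond what has already been established for the premotivic structure on $\sftc_\Lambda$ and the six-functor formalism on $\DM_\h$.
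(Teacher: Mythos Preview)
Your proof is correct and follows the same strategy as the paper's: verify each compatibility at stage~(i) (the map $\alpha_*$ of $\h$-sheaves), where it is a property of the premotivic structure on finite correspondences, and then propagate through the dualization and the projection formula by naturality. One small slip: for parts (3) and (4) the relevant input is the compatibility of the graph functor $\gamma\colon\sft\to\sftc_\Lambda$ (equivalently, of the map \eqref{eq:graph_uR^tr}) with $f^*$ and $\pi_\sharp$---see \cite[9.4.1]{CD3}---not of the functor $\psi_!$ of \eqref{eq:adj_et_tr&h}, which goes between categories of sheaves rather than from $\sftc_\Lambda$ itself.
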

\begin{proof}
Property (1) and (2) are clear as they are obviously true
 for the morphism $\alpha_*$ of \eqref{eq:transfers_h_usheaves}.

Similarly, property (3) (resp. (4))
 follows from the fact the morphism \eqref{eq:graph_uR^tr}
 is compatible with the functor $f^*$
 (resp. the functor $\pi_\sharp$).
This boils down to the fact that
 the graph functor\footnote{Recall: it is the identity on objects
  ans it associates to a morphism of separated $S$-schemes of finite type
	its $S$-graph seen as a finite $S$-correspondence.}
  $\gamma:\sft \rightarrow \sftc_{\Lambda}$
	is a morphism of $\sft$-fibred category: see \cite[9.4.1]{CD3}.
\end{proof}

\begin{num}
Let $f:Y \rightarrow X$ be a morphism of schemes.
 Recall we say that $f$ is $\Lambda$-universal
  if the fundamental cycle associated with $Y$
	is $\Lambda$-universal over $X$	(Def. \cite[8.1.48]{CD3}).

Let us denote by $\tra f$ the cycle associated with the
 graph of $f$ over $X$ seen as a subscheme of $X \times_X Y$.
 Then, by the very definition, the following conditions
 are equivalent:
\begin{enumerate}
\item[(i)] $f$ is finite $\Lambda$-universal;
\item[(ii)] the cycle $\tra f$ is a finite $X$-correspondence
 from $X$ to $Y$.
\end{enumerate}
For matching the existing literature,
 we introduce, the following definition,
 redundant with the previous one:
\end{num}
\begin{df}\label{df:trace_map}
Let $f:Y \rightarrow X$ be a finite $\Lambda$-universal
 morphism of schemes.
Using the preceding notations,
 we define the \emph{trace of $f$} as the natural transformation
 of endofunctors of $\DM_\h(X,R)$:
$$
\Tr_f:=(\tra f)^\star:f_*f^* \rightarrow Id.
$$
\end{df}

\begin{rem} \label{rem:ex_universal}
We will say that a morphism of schemes
 is pseudo-dominant if it sends any generic point to a generic point.
 Recall that a finite $\Lambda$-universal $f:Y\rightarrow X$
 is in particular pseudo-dominant.

Let us recall the following example of finite $\Lambda$-universal
 morphisms of schemes:
\begin{enumerate}
\item finite flat;
\item finite pseudo-dominant morphisms whose aim is regular;
\item finite pseudo-dominant morphisms whose aim is geometrically unibranch
 and has residue fields whose exponential characteristic is
 invertible in $\Lambda$.
\end{enumerate}
\end{rem}

\begin{num}
One readily obtain from Proposition~\ref{prop:basic_transfers_DMh}
 that our trace maps are compatible with composition.

Recall that given a finite $\Lambda$-universal morphism
 $f:Y \rightarrow X$ and a generic point $x$ of $X$,
 we can define an integer $\deg_x(f)$, the degree of $f$ at $x$, 
 by choosing any generic point $y$ of $Y$ such that $f(y)=x$ and 
 putting:
$$
\deg_x(f):=[\kappa(y):\kappa(x)]
$$
-- see \cite[9.1.13]{CD3}.
We will say that $f$ has \emph{constant degree} $d$
 if for any generic point $x \in X$, $\deg_x(f)=d$.

Applying Proposition~\ref{prop:basic_transfers_DMh}
 to the particular case of traces, one gets the following
 formulas:
\end{num}
\begin{prop}\label{prop:basic_traces_DMh}
Consider the above definition.
\begin{enumerate}
\item \emph{Normalization}.--
Let $f:Y \rightarrow X$ be a finite \'etale morphism.
Then the following diagram commutes:
$$
\xymatrix@R=16pt@C=28pt{
f_*f^*\ar^{\Tr_f}[r]\ar_{\alpha_f.\pur'_f}^\sim[d]
 & Id \\
f_!f^!\ar_{ad(f_!,f^!)}[ru]& 
}
$$
where $\alpha_f$ and $\pur'_f$ are the isomorphisms
 from Definition \ref{df:recall_6_functors}(2),(3).
\item \emph{Composition}.-- Let $Z \xrightarrow g Y \xrightarrow f X$
 be finite $\Lambda$-universal morphisms. Then the following diagram
 commutes:
$$
\xymatrix@R=20pt@C=40pt{
f_*g_*g^*f^*\ar^-{f_*\Tr_g.f^*}[r]\ar@{=}[d]
 & f_*f^*\ar^{\Tr_f}[r] & Id\ar@{=}[d] \\
(fg)_*(fg)^*\ar^{\Tr_{fg}}[rr] && Id.
}
$$
\item \emph{Base change}.-- Consider a pullback square of schemes:
$$
\xymatrix{
Y'\ar^{f'}[r]\ar_{\pi'}[d] & X'\ar^\pi[d] \\
Y\ar^f[r] & X
}
$$
such that $f$ is a finite flat morphism.
Then, the following diagram is commutative:
$$
\xymatrix@C=36pt{
\pi^*f_*f^*\ar^{\pi^*.\Tr_f}[rr]\ar_{Ex(\pi^*,p_*)}^\sim[d]
 && \pi^*\ar@{=}[d] \\
f'_*\pi^{\prime*}f^*\ar@{=}[r]
 & f'_*f^{\prime*}\pi^*\ar^-{\Tr_{f'}.\pi^*}[r]
 & \pi^*
}
$$
where the left vertical map
 is the proper base change isomorphism.
\item \emph{Degree formula}.-- Let  $f:Y \rightarrow X$ 
 be a finite $\Lambda$-universal morphism of constant degree $d$,
 the following composite
$$
f_*f^* \xrightarrow{\ \Tr_f\ } Id \xrightarrow{ad(f^*,f_*)} f_*f^*
$$
is equal to $d.Id\ .$
\end{enumerate}
\end{prop}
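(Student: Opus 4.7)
All four statements will be obtained as specializations of the general properties of cohomological transfers recorded in Proposition~\ref{prop:basic_transfers_DMh}, applied to the correspondence $\tra f$, combined with some cycle-theoretic identities.

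The easy parts are (2) and (3). For (2), apply Proposition~\ref{prop:basic_transfers_DMh}(2) to the transposes $\tra f \in \corr X X Y$ and $\tra g \in \corr X Y Z$, both viewed as finite $X$-correspondences (with $Y$ and $Z$ regarded as $X$-schemes through $f$ and $fg$). The whole statement then reduces to the cycle-theoretic identity $\tra g \circ \tra f = \tra{fg}$ in $\corr X X Z$: both sides are represented by the fundamental cycle $[Z]$ viewed as a finite $\Lambda$-universal cycle over $X$, which is a direct verification in $c_0(-/X)$ (note that composability of $\tra f$ and $\tra g$ requires the $\Lambda$-universal hypothesis so that the relative cycle structures match up). For (3), apply Proposition~\ref{prop:basic_transfers_DMh}(3) to $\alpha = \tra f$. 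The only substantive point is that $\pi^*(\tra f) = \tra{f'}$ in $\corr{X'}{X'}{Y'}$; this is flat base change for fundamental cycles (the flatness hypothesis on $f$ being exactly what allows one to pull back $[Y]$ to $[Y']$).

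For (1), one must compare the transfer $\Tr_f$ coming from the correspondence formalism with the six-functor counit $f_!f^! \to \mathrm{Id}$ transported through the isomorphisms $\alpha_f$ (inverse of proper support, since $f$ is proper) and $\pur'_f$ (the purity isomorphism, which is trivial in relative dimension zero, i.e.\ for $f$ smooth étale). Both sides are natural in the input, so it is enough to identify them on $\un_X$. The geometric content is that for finite étale $f$ the diagonal $\Delta : Y \to Y\times_X Y$ is a clopen immersion, giving a decomposition $Y\times_X Y = Y \sqcup W$. By Proposition~\ref{prop:basic_transfers_DMh}(1), the adjunction unit $\mathrm{ad}(f^*,f_*)$ corresponds to the transfer along the graph $\gamma_f \in \corr X Y X$, and after composing with $\Tr_f$ the resulting map is obtained, via this clopen splitting, as the projection onto the diagonal component. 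The same decomposition describes the composite $f_*f^* \simeq f_!f^! \to \mathrm{Id}$ provided by the six functors: this is the content of absolute purity (Theorem~\ref{thm:DMh_6functors}) applied to the codimension-zero closed-open embedding $\Delta$.

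For (4), I view the composite as a natural endomorphism $c$ of $\mathrm{Id}_{\DM_\h(X,R)}$, so it is determined by $c(\un_X) \in \End(\un_X)$. Tracing through the construction of $\Tr_f$ via correspondences (together with the identification of $\mathrm{ad}(f^*,f_*)(\un_X) : \un_X \to f_*(\un_Y)$ with the morphism induced by $\gamma_f$), one sees that $c(\un_X)$ is the image in $\End(\un_X)$ of the composed correspondence $\tra f \circ \gamma_f = f_*[Y] \in c_0(X/X)$; by the pseudo-dominance of $f$ (automatic from $\Lambda$-universality, Remark~\ref{rem:ex_universal}) and the constant-degree hypothesis, this cycle equals $d\cdot[X]$, and its image in $\End(\un_X)$ is $d\cdot\mathrm{id}$. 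Alternatively, one can first reduce via (3) to the case $X = \Spec k$ with $k$ algebraically closed, where, up to $h$-descent (which kills nilpotents), $Y_{\mathrm{red}} = \coprod_{i=1}^d \Spec k$ and $f_*(\un_Y) \simeq \un_X^{\oplus d}$; then the unit is the diagonal and $\Tr_f$ is the sum of coordinates by (1), giving $c(\un_X) = d\cdot\mathrm{id}$.

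The hardest step will be (1): matching the two a priori different constructions of the trace on finite étale maps requires an honest identification of the purity isomorphism with the clopen splitting of $Y\times_X Y$, and it is this identification that allows (4) to be reduced to the degree of an étale cover, handled by (1).
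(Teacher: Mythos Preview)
Your treatment of (3) matches the paper exactly, and (2) is close, but the paper also invokes property~(4) of Proposition~\ref{prop:basic_transfers_DMh} (restriction along the proper map $f:Y\to X$), not just property~(2): the trace $\Tr_g$ is defined via $\tra g$ as a $Y$-correspondence, whereas composing with $\Tr_f$ requires viewing everything over $X$, and it is property~(4) that justifies this passage. Your phrase ``both viewed as finite $X$-correspondences'' hides this step.

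For (1) your route is different and more roundabout. The paper does not go through absolute purity: it simply observes that for $f$ finite \'etale the sheaf $\uR_X(Y)$ is strongly self-dual in $\sh_\h(X,R)$, with evaluation and coevaluation given by
\[
\uR_X(Y)\otimes\uR_X(Y)=\uR_X(Y\times_X Y)\xrightarrow{(\tra\delta)_*}\uR_X(Y)\xrightarrow{f_*}\uR_X(X),\qquad
\uR_X(X)\xrightarrow{(\tra f)_*}\uR_X(Y)\xrightarrow{\delta_*}\uR_X(Y\times_X Y),
\]
where $\delta$ is the (clopen) diagonal. This directly identifies the correspondence-theoretic trace with the six-functor counit.

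Your argument for (4) has a genuine slip. As written, the composite $f_*f^*\xrightarrow{\Tr_f}Id\xrightarrow{ad(f^*,f_*)}f_*f^*$ is a natural endomorphism of $f_*f^*$, not of $Id$; so ``$c(\un_X)\in\End(\un_X)$'' is already off, and your claim that the relevant composed correspondence lies in $c_0(X/X)$ is wrong: by Proposition~\ref{prop:basic_transfers_DMh}(2) one has $(\gamma_f)^\star\circ(\tra f)^\star=(\tra f\circ\gamma_f)^\star$ with $\tra f\circ\gamma_f\in\corr X Y Y$, not $\corr X X X$. What you actually computed, $\gamma_f\circ\tra f=f_*[Y]=d\cdot[X]\in c_0(X/X)$, corresponds to the \emph{other} composite $\Tr_f\circ ad(f^*,f_*):Id\to Id$. (This is also the version actually used later in the paper, in Lemma~\ref{bisthmfiniteness02}.) The paper's own proof simply cites Proposition~\ref{prop:basic_transfers_DMh}(1),(2) together with the degree formula of \cite[Prop.~9.1.13]{CD3}; your computation is morally the same content, but you should straighten out which composite you are handling. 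Your alternative reduction ``via (3) to the case $X=\Spec k$'' also has a gap: (3) requires $f$ flat, while (4) only assumes $f$ finite $\Lambda$-universal.
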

\begin{proof}
Point (1) follows from the fact that,
 in the category $\sh_\h(S,R)$,
 the representable sheaf $\uR_X(Y)$ is strongly
 dualizable with itself as a dual and with duality pairings:
\begin{align*}
&\uR_X(Y) \otimes \uR_X(Y)=\uR_X(Y \times_X Y)
 \xrightarrow{(\tra \delta)_*} \uR_X(Y)
 \xrightarrow{f_*} \uR_X(X) \\
& \uR_X(X)
 \xrightarrow{(\tra f)_*} \uR_X(Y)
 \xrightarrow{\delta_*} \uR_X(Y \times_X Y)=\uR_X(Y) \otimes \uR_X(Y).
\end{align*}
where $\delta$ is diagonal embedding (which is open and closed).

Point (2) is obtained from Proposition~\ref{prop:basic_transfers_DMh},
 properties (2) and (4).
Point (3) is a special case of Proposition~\ref{prop:basic_transfers_DMh}(3),
 given the fact that: $\pi^*(\tra f)=\tra{f'}$ as $f$ is flat
 -- see \cite{CD3}, property (P3) of the tensor product of relative cycles
  in Paragraph~8.1.34.
Point (4) follows from Proposition~\ref{prop:basic_transfers_DMh}(1), (2)
 and the formula of Proposition~9.1.13 of \cite{CD3}.
\end{proof}

\begin{rem}
According to Corollary \ref{cor:DMh_Det},
this notion of trace generalizes the one introduced
 in \cite[XVII, sec. 6.2]{SGA4} in the case of finite morphisms,
 taking into account Remark~\ref{rem:ex_universal}.

Let us consider the more general case
 of a quasi-finite separated morphism $f:Y \rightarrow X$.
According to the theorem of Nagata (\cite{Conrad}),
 there exists a factorization, $f=\bar f \circ j$, such that
 $\bar f$ is proper, thus finite according to Zariski's main theorem,
 and $j$ is an open immersion.

We will say that $f$ is \emph{strongly $\Lambda$-universal}
 if there exists such a factorization such that in addition 
 $\bar f$ is $\Lambda$-universal.\footnote{This implies in particular
 that $f$ is $\Lambda$-universal according to \cite[Cor.~8.2.6]{CD3}.
 The converse is not true.}

In this condition, one checks easily using
 Proposition~\ref{prop:basic_traces_DMh}, properties (1) and (2),
 that the following composite is independent of the chosen factorization
 of $f$:
\begin{equation} \label{eq:transfers_qf_DMh}
\Tr_f:f_!f^*=\bar f_!j_!j^*\bar f^*
 \xrightarrow{\bar f_!.ad(j_!,j^*).\bar f^*}
 f_!f^*=f_*f^* \xrightarrow{\Tr_{\bar f}} Id.
\end{equation}
This composition is called the trace of $f$

Properties (1), (2), (3) of the preceding proposition
 immediately extend to this notion of trace.

However, this construction is not optimal as it is not clear
 that a flat quasi-finite separated morphism if
 strongly $\Lambda$-universal.

In particular, it only partially generalizes
 the construction of \cite[Th. 6.2.3]{SGA4}
 when $R=\ZZ/n\ZZ$ and $X$ has residual characteristics prime
 to $n$.
However, in the case where $X$ is geometrically unibranch,
 and has residual characteristics prime to $n$,
 any quasi-finite separated pseudo-dominant morphism
 is strongly $\Lambda$-universal (cf Rem.~\ref{rem:ex_universal}).
Thus, in this case, our notion does generalize
 the finer notion of trace introduced in \cite[6.2.5, 6.2.6]{SGA4}.
\end{rem}

\subsection{Constructible $\h$-motives}\label{sec:consthmot}

%%\subsection{Local localisations}
%%
In this subsection, devoted to the study of
constructible $\h$-motives (\ref{df:hmotives&constructible_hmotives}),
we will simplify the notations
by dropping the symbols $\derL$ and $\derR$; in other words,
by default, all the functors will be the derived ones.
We will prove the main theorems about
 constructible $\h$-motives: their stability by the 6 operations
 (Th.~\ref{thm:constructible_f_*} and its corollary)
 and the duality theorem (Th.~\ref{thm:duality}).

\begin{paragr}\label{paragr:prepareRlocconstruct}
Let $S$ be a noetherian scheme. For any prime ideal $\mathfrak{p}$ of $\ZZ$,
we have a fully faithful functor
\begin{equation}\label{eq:prepareRlocconstruct1}
\big(\DM_{\h,c}(S,\ZZ)_\mathfrak{p}\big)^\sharp\to\big(\DM_{\h}(S,\ZZ)_\mathfrak{p}\big)^\sharp\, ,
\end{equation}
where, for a triangulated category $T$, $T^\sharp$ denotes its idempotent completion
and $T_\mathfrak{p}$ its $\ZZ_\mathfrak{p}$-linearization; see Appendix \ref{app:idempotent}. 
\end{paragr}

\begin{df}\label{def:Rlocconstruct}
An object $M$ of $\DM_\h(S,\ZZ)$ will be called
\emph{$\mathfrak{p}$-constructible} if its image in
$\big(\DM_{\h}(S,\ZZ)_\mathfrak{p}\big)^\sharp$
lies in the essential image of the functor
\eqref{eq:prepareRlocconstruct1}.
\end{df}

Let us state explicitly the proposition that we will use below:
\begin{prop} \label{prop:local_localization}
Let $S$ be a noetherian scheme
 and $M$ be an object of $\DM_\h(S,\ZZ)$.
Then the following conditions are equivalent:
\begin{enumerate}
\item[(i)] $M$ is constructible;
\item[(ii)] for any maximal ideal $\mathfrak p \in \Spec(\ZZ)$,
$M$ is $\mathfrak{p}$-constructible.
\end{enumerate}
\end{prop}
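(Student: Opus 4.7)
My plan is to deduce the proposition directly from the abstract material on idempotent completion and localization of coefficients in triangulated categories collected in the appendix. The two directions require very different amounts of work.

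The implication (i) $\Rightarrow$ (ii) will be essentially formal. If $M$ lies in $\DM_{h,c}(S,\ZZ)$, then viewing $M$ as an object of $\DM_{h,c}(S,\ZZ)_{\mathfrak{p}}$ and then of its idempotent completion provides a preimage of $M_{\mathfrak{p}}$ under the functor \eqref{eq:prepareRlocconstruct1}, so $M$ is $\mathfrak{p}$-constructible for every maximal ideal $\mathfrak{p}$ of $\ZZ$. This should dispatch (i) $\Rightarrow$ (ii) in one sentence.

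The converse implication (ii) $\Rightarrow$ (i) is where the substance lies. The plan is to apply the abstract criterion recalled in the appendix, namely that for any $\ZZ$-linear triangulated category $T$ and any thick subcategory $T_c \subset T$, membership of an object $M \in T$ in $T_c$ can be tested locally at the maximal ideals of $\ZZ$: that is, $M$ belongs to $T_c$ as soon as, for every maximal $\mathfrak{p}$, the image of $M$ in $(T_{\mathfrak{p}})^\sharp$ lies in the essential image of the canonical functor $(T_{c,\mathfrak{p}})^\sharp \to (T_{\mathfrak{p}})^\sharp$. Taking $T = \DM_h(S,\ZZ)$ and $T_c = \DM_{h,c}(S,\ZZ)$, this is precisely the translation of (ii) into (i). The harder part of this strategy is the abstract lemma itself, not its application; conceptually it rests on the local-global principle for finitely generated $\ZZ$-modules applied to the relevant Hom groups, together with the splitting of idempotents, but that content is already settled in the appendix.

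The only verification needed for this plan to go through is that the abstract lemma from the appendix genuinely applies in our setting. This requires $\DM_{h,c}(S,\ZZ)$ to be a thick subcategory of $\DM_h(S,\ZZ)$ and the ambient category to be idempotent complete; the first is built into Definition \ref{df:hmotives&constructible_hmotives} (thick triangulated subcategory generated by Tate twists of smooth motives), and the second follows from the existence of arbitrary small sums in $\DM_h(S,\ZZ)$, which forces all idempotents to split. No feature specific to $\h$-motives is needed at this last step beyond what has already been established, so the proof reduces to quoting the appendix.
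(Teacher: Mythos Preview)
Your proposal is correct and matches the paper's approach exactly: the paper's proof is a single sentence invoking the abstract Proposition~\ref{prop:localconstruct} from the appendix with $T=\DM_\h(S,\ZZ)$ and $U=\DM_{\h,c}(S,\ZZ)$, which is precisely your plan. Your additional remark about idempotent completeness of $\DM_\h(S,\ZZ)$ is true but not actually needed, since Proposition~\ref{prop:localconstruct} does not require $T$ to be idempotent complete.
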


\begin{proof}
We just apply the abstract Proposition \ref{prop:localconstruct} (from
the Appendix) to the $\ZZ$-linear category $T=\DM_\h(S,\ZZ)$ and
its thick subcategory $U=\DM_{\h,c}(S,\ZZ)$.
\end{proof}

\begin{prop}\label{prop:constructcarpnoptors}
Let $p$ be a prime number and $X$ a noetherian scheme of
characteristic $p$. An object $M$ of $\DM_\h(X,\ZZ)$
is $(p)$-constructible if and only if it is $(0)$-constructible.
\end{prop}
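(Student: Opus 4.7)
The plan is to show that multiplication by $p$ acts invertibly on every object of $\DM_\h(X,\ZZ)$ when $X$ has characteristic $p$, and then to deduce that the $\ZZ_{(p)}$-linearization coincides canonically with the $\ZZ_{(0)} = \QQ$-linearization. The comparison of constructibility notions then follows directly from Definition \ref{def:Rlocconstruct}.

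First I would observe that, since $X$ has characteristic $p$, the scheme $X[1/p] = X\times\Spec\ZZ[1/p]$ is empty. By Corollary \ref{cor:rigidity_2nd_formulation}, this gives $\DM_\et(X,\ZZ/p\ZZ)\simeq \Der(X[1/p]_\et,\ZZ/p\ZZ) = 0$, and then the comparison Theorem \ref{thm:comparison_torsion_etale-h_motives} yields $\DM_\h(X,\ZZ/p\ZZ) = 0$.

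Next, for any object $M$ of $\DM_\h(X,\ZZ)$, Proposition \ref{prop:properties_rho_*} identifies $M/p = M\otimes^\derL\ZZ/p\ZZ$ with $\rho_{p,*}\rho_p^*(M)$. Since the target of $\rho_p^*$ is trivial, $M/p = 0$. The distinguished triangle \eqref{eq:triangle_red_mod_n_DMh} then shows that multiplication by $p$ is an isomorphism on every object of $\DM_\h(X,\ZZ)$. Hence this triangulated category is canonically enriched in $\ZZ[1/p]$-modules.

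Finally I would conclude as follows: because $p$ is already invertible on $\DM_\h(X,\ZZ)$, the $\ZZ_{(p)}$-linearization factors through tensoring with $\ZZ_{(p)}\otimes_\ZZ\ZZ[1/p] = \QQ$, so the natural comparison functor
$$
\bigl(\DM_\h(X,\ZZ)_{(p)}\bigr)^\sharp \longrightarrow \bigl(\DM_\h(X,\ZZ)_{(0)}\bigr)^\sharp
$$
is an equivalence of triangulated categories, and it restricts to an equivalence on the respective essential images of the constructible subcategories $(\DM_{\h,c}(X,\ZZ)_\mathfrak{p})^\sharp$. In view of Definition \ref{def:Rlocconstruct}, this is exactly the equivalence between $(p)$-constructibility and $(0)$-constructibility. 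The proof has no real obstacle: all the work is done once the characteristic-$p$ vanishing of $\DM_\h(X,\ZZ/p\ZZ)$ is combined with the triangle \eqref{eq:triangle_red_mod_n_DMh}.
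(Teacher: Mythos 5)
Your proposal is correct, but it reaches the key fact — that $\DM_\h(X,\ZZ)$ is $\ZZ[1/p]$-linear when $X$ has characteristic $p$ — by a longer route than the paper. The paper's proof cites the Artin--Schreier argument directly: Proposition~\ref{prop:artin-schreier} shows that $\DMtee(S,\ZZ)$ is $\ZZ[1/p]$-linear via the short exact sequence $0\to\ZZ/p\ZZ\to\Ga\xrightarrow{F-1}\Ga\to 0$ and contractibility of $\Ga$, and Corollary~\ref{cor:etale_descent&p-torsion} transports this to $\DM_\h(S,\ZZ)$. You instead establish $\DM_\h(X,\ZZ/p\ZZ)=0$ by first passing through the rigidity theorem in the form of Corollary~\ref{cor:rigidity_2nd_formulation} (giving $\DM_\et(X,\ZZ/p\ZZ)\simeq\Der(X[1/p]_\et,\ZZ/p\ZZ)=0$ since $X[1/p]=\emptyset$) and then the comparison Theorem~\ref{thm:comparison_torsion_etale-h_motives} (to transfer this to $\DM_\h$). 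From there the triangle $M\xrightarrow{p}M\to M/p$ gives $\ZZ[1/p]$-linearity, and the rest — identifying the $(p)$- and $(0)$-localizations with the rational one and then applying Definition~\ref{def:Rlocconstruct} — matches the paper. Your chain is heavier: Corollary~\ref{cor:rigidity_2nd_formulation} itself depends on Proposition~\ref{prop:et+htp&torsion}, whose proof already invokes Corollary~\ref{cor:etale_descent&p-torsion}, so you are ultimately using the same Artin--Schreier input wrapped in extra layers (rigidity and the $\h$-comparison). The paper's direct appeal to Artin--Schreier is therefore both shorter and logically more economical, though your version is a fine sanity check that these comparison theorems are consistent with the expected vanishing.
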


\begin{proof}
The Artin-Schreier short exact sequence (see the proof
of Proposition \ref{prop:artin-schreier}) implies that
the category $\DM_\h(S,\ZZ)$ is $\ZZ[1/p]$-linear, so that
we have
$$\DM_\h(X,\ZZ)_{(p)}=\DM_\h(X,\ZZ)\otimes\QQ\, ,$$
and similarly for $\DM_{\h,c}(X,\ZZ)$.
\end{proof}

\begin{rem} \label{rem:comparison_Qlocalizations}
When $\mathfrak p=(0)$, the functor $\rho_\mathfrak p^*$
which appears in this corollary coincide on constructible
objects with the functor $\rho^*$ of paragraph \ref{num:extend_forget_scalars}
in the case $R=\ZZ$ and $R'=\QQ$ (this is the meaning of Corollary \ref{cor:DM_h&Q-localization}).
\end{rem}

The proof of
the stability of constructible $\h$-motives by direct image
(Th.~\ref{thm:constructible_f_*}), which is based on an argument of Gabber,
 is intricate. We divide it with the help of the following two results.
 The first one is due to J.~Ayoub:
\begin{prop}[Ayoub]\label{prop:projectivegenerateconstruct}
Let $X$ be a noetherian scheme.
 The category $\DM_{\h,c}(X,R)$ is the smallest thick triangulated
 subcategory of the triangulated category $\DM_\h(X,R)$
 which contains the objects of the form $f_*\big(R_{X'}(n)\big)$
 where $f:X'\rightarrow X$ is a projective morphism and $n \in \ZZ$.
\end{prop}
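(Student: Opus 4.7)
My plan is to let $\mathcal{T}$ denote the thick triangulated subcategory of $\DM_\h(X,R)$ generated by the objects $f_*(R_{X'}(n))$ with $f\colon X'\to X$ projective and $n\in\ZZ$. Since these generators are stable under Tate twists (by the proper projection formula, $f_*(R_{X'}(n))(m)\simeq f_*(R_{X'}(n+m))$), the category $\mathcal{T}$ is automatically closed under twists. The inclusion $\mathcal{T}\subseteq\DM_{\h,c}(X,R)$ is immediate from the general stability of constructible objects under $f_!$ for $f$ separated of finite type (combined with $f_*=f_!$ for $f$ proper), both furnished by Theorem~\ref{thm:DMh_6functors}. The content of the proposition is therefore the reverse inclusion, and for this it is enough to check that $R_X(Y)\in\mathcal{T}$ for every smooth separated $X$-scheme of finite type $Y$.

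I will combine a projective compactification with the localization property. Write $g\colon Y\to X$ for the structural morphism, which we may assume to be of locally constant relative dimension $d$. I will factor $g$ as $\bar{g}\circ j$, where $j\colon Y\hookrightarrow\bar{Y}$ is an open immersion and $\bar{g}\colon\bar{Y}\to X$ is projective; let $i\colon Z\hookrightarrow\bar{Y}$ denote the complementary closed immersion. The localization triangle in $\DM_\h(\bar{Y},R)$ provided by Theorem~\ref{thm:DMh_6functors},
$$j_!R_Y \;\to\; R_{\bar{Y}} \;\to\; i_*R_Z \;\to\; j_!R_Y[1],$$
yields, after applying $\bar{g}_*=\bar{g}_!$, a distinguished triangle
$$g_!R_Y \;\to\; \bar{g}_*R_{\bar{Y}} \;\to\; (\bar{g}\circ i)_*R_Z \;\to\; g_!R_Y[1]$$
in $\DM_\h(X,R)$. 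The middle term lies in $\mathcal{T}$ by definition, and so does the right-hand term, since $\bar{g}\circ i\colon Z\to X$ is the composition of a closed immersion with a projective morphism, hence is projective. Consequently $g_!R_Y\in\mathcal{T}$. Relative purity for the smooth morphism $g$ (a part of Theorem~\ref{thm:DMh_6functors}) provides a canonical isomorphism $R_X(Y)=g_\#R_Y\simeq g_!R_Y(d)[2d]$, and the stability of $\mathcal{T}$ under Tate twists and shifts then yields $R_X(Y)\in\mathcal{T}$, as required.

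The main technical hurdle is the existence of the projective compactification $\bar{g}\colon\bar{Y}\to X$ over an arbitrary noetherian base. My approach will be first to apply Nagata's compactification theorem, obtaining a proper compactification $\bar{g}_0\colon\bar{Y}_0\to X$ with $Y$ open in $\bar{Y}_0$; and then to invoke a refined form of Chow's lemma, blowing up $\bar{Y}_0$ along an ideal sheaf supported on $\bar{Y}_0\setminus Y$, to produce $\bar{Y}\to\bar{Y}_0$ which is an isomorphism over $Y$ and whose composition with $\bar{g}_0$ gives a projective morphism $\bar{g}\colon\bar{Y}\to X$. The open immersion $j\colon Y\hookrightarrow\bar{Y}$ is then automatic. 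Once this geometric input is in place, the rest of the argument is formal, relying solely on the six functor formalism already established in Theorem~\ref{thm:DMh_6functors}.
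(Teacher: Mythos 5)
Your strategy — compactify $Y$ projectively over $X$, apply $\bar{g}_*$ to the localization triangle of the boundary, then invoke relative purity to pass from $g_! R_Y$ to $g_\sharp R_Y$ — is indeed the content of Ayoub's argument, which the paper simply cites as \cite[Lem.~2.2.23]{ayoub}. The first inclusion $\mathcal{T}\subseteq\DM_{\h,c}(X,R)$ is fine (though ``furnished by Theorem \ref{thm:DMh_6functors}'' is loose: stability of constructibility under proper $f_*$ is a consequence established in \cite[Prop.~4.2.11]{CD3}, not part of that theorem's statement).

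However, there is a genuine gap in the step you yourself flag as the main technical hurdle. The refined Chow's lemma you invoke — Nagata compactification followed by a blow-up that is required to be an isomorphism over the prescribed open $Y\subset\bar{Y}_0$ and to yield a scheme projective over $X$ — is only valid when $Y$ is \emph{quasi-projective} over $X$ (this is Deligne's formulation; see for instance Conrad's account of Nagata compactification). Smoothness of $Y$ over $X$ does \emph{not} imply quasi-projectivity of $Y$ over $X$, so your Nagata+Chow construction does not produce a projective compactification of a general smooth separated $Y$. This is precisely why Ayoub's original statement carries an ``ample family of line bundles'' hypothesis. The standard way to remove it — and presumably the ``easy check'' the paper alludes to — is to first reduce to affine $Y$: cover $Y$ by finitely many affine opens $Y_1,\dots,Y_k$; since $Y$ is separated, all finite intersections $Y_{i_0}\cap\cdots\cap Y_{i_p}$ are again affine; iterated Zariski Mayer-Vietoris distinguished triangles place $R_X(Y)$ in the thick subcategory generated by the $R_X(V)$ with $V$ affine smooth over $X$. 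An affine $X$-scheme of finite type over a noetherian base is quasi-projective (\cite[(5.3.4)]{EGA2}), so your compactification argument applies to each such $V$, and the rest of the proof (localization triangle, relative purity, closure of $\mathcal{T}$ under twists and shifts) goes through as written.
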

In fact, if $X$ is a noetherian schemes having an ample family
 of line bundles, this is \cite[Lem.~2.2.23]{ayoub}
 but it is easy to check that this assumption is not used
 in the proof of \emph{loc. cit.}

The second result used in the proof of the forthcoming
 theorem \ref{thm:constructible_f_*} is a variation on
 an argument of Gabber, used in the \'etale torsion case
 (see \cite[XIII, section 3]{gabber3}). 
\begin{lm}[Gabber's Lemma]\label{bisthmfiniteness0}
Let $X$ be a quasi-excellent noetherian scheme,
 and $\mathfrak{p}$ a prime ideal of $\ZZ$.
 Assume that, for any point $x$ of $X$,
the exponent characteristic of the residue field $\kappa(x)$ is not in $\mathfrak{p}$.
Then, for any dense open immersion $j:U \rightarrow X$,
the $\h$-motive $j_*(\un_U)$ is $\mathfrak{p}$-constructible.
\end{lm}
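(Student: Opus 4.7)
The strategy is to combine Gabber's refined prime-to-$\mathfrak{p}$ alteration theorem with absolute purity and the trace formalism of Section \ref{sec:transfers}, gluing via noetherian induction. By Proposition \ref{prop:local_localization}, it suffices to establish $\mathfrak{p}$-constructibility. I would proceed by noetherian induction on $X$ (e.g.\ on the dimension, after an easy reduction to the integral case), assuming the conclusion for all proper closed subschemes. Gabber's theorem (valid thanks to the quasi-excellence of $X$ and the hypothesis on residue characteristics) produces a projective alteration $f : X' \to X$ with $X'$ regular, $D = f^{-1}(X \setminus U)$ a strict normal crossings divisor in $X'$, and $f$ generically \'etale of degree $d$ invertible in $\ZZ_\mathfrak{p}$.

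Write $U' = X' \setminus D$ and $j' : U' \hookrightarrow X'$. First I would show that $j'_*\un_{U'}$ is constructible in $\DM_\h(X',\ZZ)$ by induction on the number of components of $D = \bigcup_{i=1}^n D_i$, using the localization triangle
\begin{equation*}
(i_D)_*(i_D)^! \un_{X'} \to \un_{X'} \to j'_*\un_{U'} \to\, .
\end{equation*}
The absolute purity theorem (Theorem \ref{thm:DMh_6functors}) identifies $(i_{D_I})^!\un_{X'} \simeq \un_{D_I}(-|I|)[-2|I|]$ for each intersection $D_I = \bigcap_{i \in I} D_i$ (smooth of codimension $|I|$), and a standard \v Cech/Mayer--Vietoris filtration decomposes $(i_D)_*(i_D)^!\un_{X'}$ into pieces of the form $(i_{D_I})_*\un_{D_I}(-|I|)[-2|I|]$, each constructible by Proposition \ref{prop:projectivegenerateconstruct}. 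Since $f$ is projective, the functor $f_*$ preserves constructibility (by applying Proposition \ref{prop:projectivegenerateconstruct} to compositions of projective morphisms), so $f_*j'_*\un_{U'} = j_*(f_U)_*\un_{U'}$ is constructible on $X$, where $f_U : U' \to U$ denotes the restriction.

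Next I would apply the trace argument. Let $V \subseteq X$ be the dense open over which $f$ is finite \'etale, and set $Z = X \setminus V$ (so $\dim Z < \dim X$). Over $V$, the trace $\Tr_{f_V} : (f_V)_*\un_{V'} \to \un_V$ of Definition \ref{df:trace_map}, composed with the unit of $(f_V^*, (f_V)_*)$, equals multiplication by $d$ by Proposition \ref{prop:basic_traces_DMh}(4), hence is invertible after $\mathfrak{p}$-localization. Combined with smooth base change applied to $f_V$ (\'etale, hence smooth), this shows that $(j_*\un_U)|_V$ is a $\mathfrak{p}$-constructible direct summand of $(f_*j'_*\un_{U'})|_V$. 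Then the open--closed localization triangle with respect to $(V,Z)$
\begin{equation*}
(j_V)_!(j_V)^*(j_*\un_U) \to j_*\un_U \to (i_Z)_*(i_Z)^*(j_*\un_U) \to
\end{equation*}
reduces the problem to showing that $(i_Z)^*(j_*\un_U)$ is $\mathfrak{p}$-constructible on $Z$, since the leftmost term is $\mathfrak{p}$-constructible by the previous step and the stability of constructibility under the functor $(j_V)_!$.

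The main obstacle is precisely this final gluing step: the motive $(i_Z)^*(j_*\un_U)$ is not \emph{a priori} of the form $(j_0)_*\un_{U_0}$ for a dense open $j_0 : U_0 \hookrightarrow Z$, so the noetherian induction hypothesis in its stated form does not apply directly. I expect this difficulty to be resolved by strengthening the inductive statement---for instance, by running the induction on the larger class of pushforwards of constructible motives along dense open immersions (anticipating Theorem \ref{thm:constructible_f_*})---or by a secondary d\'evissage on $Z$: applying the same Gabber-alteration-plus-trace scheme iteratively to reduce the dimension of the support of the ``bad'' locus until the base case of the noetherian induction applies.
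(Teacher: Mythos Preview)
Your ingredients are the right ones, and the argument is sound up to the point you yourself flag: once you have shown that $(j_*\un_U)|_V$ is a $\mathfrak{p}$-local direct summand of something constructible, the remaining task is to control $(i_Z)^*j_*\un_U$ on the closed complement $Z$, and this object is not of the shape the induction hypothesis treats. This is a genuine gap, and neither of your proposed repairs closes it. Strengthening the induction to ``$j_*M$ is $\mathfrak{p}$-constructible for all constructible $M$'' is circular, since that is precisely the content of Theorem~\ref{thm:constructible_f_*}, whose proof reduces to the present lemma; and even with that stronger hypothesis the restriction $(i_Z)^*j_*M$ is still not a pushforward along a dense open immersion into $Z$. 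The ``secondary d\'evissage on $Z$'' runs into the same wall: altering $Z$ gives you no purchase on $(i_Z)^*j_*\un_U$, because the closed pullback does not interact with $j_*$ by any base change formula.

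The paper's resolution reorganizes the induction and introduces one additional idea. Rather than inducting on $X$, one inducts on the codimension $c$ of a closed subset $T\subset X$ outside of which $j_*\un_U$ is already known to be $\mathfrak{p}$-constructible; the goal at each step is to show that $\varphi(X,T):=\pi_*a^*j_*\un_U$ (with $a:T\hookrightarrow X$ and $\pi:T\to X$) is $\mathfrak{p}$-constructible. The key new input is $\cdh$-descent applied to the proper birational map $p:X'\to X$ supplied by Gabber's local uniformization (Lemma~\ref{gabbergeomlemmabis}): since $p$ is an isomorphism over a dense open $V$, the resulting homotopy cartesian square expresses the cone of $\varphi(X,T)\to\varphi(X',p^{-1}(T))$ in terms of objects $\varphi(\bar Z,\bar Z\cap T)$ and $\varphi(\bar Z',p^{-1}(\bar Z\cap T))$, where $\bar Z=X\setminus V$ has strictly smaller dimension. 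These last objects are built from $k_*\un_Z$ for the dense open immersion $k:Z\hookrightarrow\bar Z$, which \emph{is} of the form the inductive hypothesis handles. Combined with Nisnevich excision (Lemma~\ref{bisthmfiniteness03}) and the trace-splitting along the finite flat cover $q$ (Lemma~\ref{bisthmfiniteness02}), this reduces everything to the regular/SNCD situation on $Y$, where absolute purity applies exactly as you indicated. The essential point you are missing, then, is not a stronger hypothesis but the $\cdh$-descent comparison that translates the problem on the closed stratum into genuinely lower-dimensional instances of the same lemma.
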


\begin{proof}
We will use the following geometrical consequence of
the \emph{local uniformization theorem prime to $\mathfrak{p}$} of Gabber
(see \cite[VII, 1.1 and IX, 1.1]{gabber3}):
 
\begin{lm}\label{gabbergeomlemmabis}
Let $j:U\rightarrow X$ be a dense open immersion such that $X$ is reduced and quasi-excellent,
and $\mathfrak{p}$ a prime ideal of $\ZZ$. Assume that, for any point $x$ of $X$,
the exponent characteristic of the residue field $\kappa(x)$ is not in $\mathfrak{p}$.
Then, there exists the following data:
\begin{itemize}
\item[(i)] a finite $\h$-cover $\{f_i:Y_i\rightarrow X\}_{i\in I}$
such that for all $i$ in $I$, $f_i$ is a morphism of finite type, the scheme
$Y_i$ is regular, and $f^{-1}_i(U)$ is either $Y_i$ itself or the
complement of a strict normal crossing divisor in $Y_i$; we shall write
$$f:Y=\coprod_{i\in I}Y_i\rightarrow X$$
for the induced global $h$-cover;
\item[(ii)] a commutative diagram
\begin{equation}\label{eq:diagthmfinitness}\begin{split}
\xymatrix@=20pt{
X'''\ar[rr]^g\ar[d]_q&&Y\ar[d]^f\\
X''\ar[r]^u&X'\ar[r]^p&  X
}\end{split}\end{equation}
in which: $p$ is a proper birational morphism,
$u$ is a Nisnevich cover, and $q$ is a flat finite surjective 
 morphism of degree not in $\mathfrak{p}$.
\end{itemize}
Let $T$ (resp. $T'$) be a closed subscheme of $X$ (resp. $X'$)
 and assume that for any irreducible component $T_0$ of $T$,
 the following inequality is satisfied:
\begin{equation*}
\mathrm{codim}_{X'}(T')\geq \mathrm{codim}_X(T_0),
\end{equation*}
Then, possibly after shrinking $X$ in an open neighborhood of the generic points of $T$ in $X$,
one can replace $X''$ by an open cover and $X'''$ by its pullback along this cover,
in such a way that we have in addition the following properties:
\begin{itemize}
\item[(iii)] $p(T') \subset T$
 and the induced map $T' \rightarrow T$
 is finite and sends any generic point to a generic point;
\item[(iv)] if we write $T''=u^{-1}(T')$, the induced map $T''\rightarrow T'$
is an isomorphism.
\end{itemize}
\end{lm}
Points (i) and (ii) are proved in \cite[Exp.~XIII, Par. 3.2.1]{gabber3}.
 Then points (iii) and (iv) are proved in \cite[proof of Lem. 4.2.14]{CD3}.

\begin{num}\label{assumptionsfinitenessbis}
We introduce the following notations:
 for any scheme $Y$, we let $\T_0(Y)$ be the subcategory
 of $\DM_\h(Y,\ZZ)$ made of $\mathfrak{p}$-constructible objects $K$.
Then $\T_0$ becomes a fibered subcategory
 of $\DM_\h(-,\ZZ)$ and we can moreover check the following properties:
\begin{itemize}
\item[(a)] for any scheme $Y$ in $\sch$, $\T_0(Y)$ is
a triangulated thick subcategory of the triangulated category
$\DM_\h(Y,\ZZ)$ which contains
the objects of the form $\un_Y(n)$, $n\in\ZZ$;
\item[(b)] for any separated morphism of finite type
 $f:Y' \rightarrow Y$ in $\sch$,
$\T_0$ is stable under $f_!$;
\item[(c)] for any dense open immersion $j:V\rightarrow Y$, with $Y$ regular,
which is the complement of a strict normal crossing divisor,
$j_*(\un_V)$ is in $\T_0(V)$.
\end{itemize}
\noindent Indeed: (a) is obvious, (b) follows
from the fact the functor $f_!$ preserves constructible motives,
while (c) comes from the absolute purity property
for $\DM_\h(-,\ZZ)$; see Theorem~\ref{thm:DMh_6functors}.
With this notation, we have to prove that $j_*(\un_U)$ is in $\T_0$.
\end{num}

We now return to the proof of Lemma \ref{bisthmfiniteness0}.
Following the argument of \cite[XIII, 3.1.3]{gabber3},
we may assume that $X$ is reduced,
and it is sufficient to prove by induction on $c\ge 0$ that
here exists a closed subscheme $T \subset X$
of codimension $>c$ such that the restriction of $j_*(\un_U)$ to $(X-T)$
is in $\T_0$.

Indeed, if this is the case, let us chose a closed subset $T_c$
 of $X$ satisfying the condition above with respect to
 an arbitrary integer $c \geq 0$.
As $X$ is noetherian, we get
 that $X$ is covered by the family of open subschemes $(X-T_c)$
 indexed by $c \geq 0$. Moreover, $X$ is quasi-compact
 so that only a finite number of these open subschemes
 are sufficient to cover $X$. Thus we can conclude that
 $j_*(\un_U)$ is in $\T_0$ iteratively using
 the Mayer-Vietoris exact triangle and property (a) of
 \ref{assumptionsfinitenessbis}.

The case where $c=0$ is clear: we can choose $T$ such that $(X-T)=U$.
If $c>0$, we choose a closed subscheme $T$ of $X$, of codimension $>c-1$,
such that the restriction of $j_*( \un_U)$
to $(X-T)$ is in $\T_0$. It is then sufficient to find a dense
open subscheme $V$ of $X$, which contains all the generic points of $T$, and such that
the restriction of $j_*(\un_U)$ to $V$ is in $\T_0$: for such a $V$,
we shall obtain that
the restriction of $j_*(\un_U)$ to $V\cup (X-T)$ is in $\T_0$,
the complement of $V\cup (X-T)$ being the support of a closed subscheme of
codimension $>c$ in $X$. In particular, using the smooth base change isomorphism
(for open immersions), we can always replace $X$ by a generic neighborhood of $T$.
It is sufficient to prove that, possibly after shrinking $X$ as above,
the pullback of $j_*(\un_U)$ along $T\rightarrow X$
is in $\T_0$
 (as we already know that its restriction to $(X-T)$ is in $\T_0$).

We may assume that $T$ is purely of codimension $c$.
We may assume that we have data as in points (i) and (ii) of
 Lemma \ref{gabbergeomlemmabis}.
We let $j':U' \rightarrow X'$ denote the pullback of $j$ along
 $p:X' \rightarrow X$.
Then, we can find, by induction on $c$, a closed subscheme $T'$ in $X'$,
 of codimension $>c-1$,
such that the restriction of $j'_*(\un_{U'})$
to $(X'-T')$ is in $\T_0$. By shrinking $X$,
we may assume that conditions (iii) and (iv) of
 Lemma \ref{gabbergeomlemmabis}
are fulfilled as well.

Given any morphism $i:Z \rightarrow W$ of $X$-schemes,
 we consider the following commutative diagram
$$
\xymatrix@=18pt{
Z\ar^i[r]\ar_{\pi}[rd] & W\ar[d] & W_U\ar[d]\ar_{j_W}[l] \\
& X & U,\ar_j[l]
}
$$
where the right hand square is Cartesian,
 and we define the following $\h$-motive of $\DM_\h(X,R)$:
$$\varphi(W,Z):=\pi_* \, i^* \, j_{W,*}(\un_{W_U}) \, .$$
This notation is slightly abusive but it will most of the time
 be used when $i$ is the immersion of a closed subscheme.
This construction is contravariantly functorial:
 given any commutative diagram of $X$-schemes:
$$
\xymatrix@=12pt{
Z'\ar[r]\ar_{i'}[d] & Z\ar^i[d] \\
W'\ar[r] & W
}
$$
we get a natural map $\varphi(W,Z)\rightarrow \varphi(W',Z')$.
Remember that we want to prove that $\varphi(X,T)$ is in $\T_0$.
This will be done via the following lemmas (which hold assuming
all the conditions stated in Lemma \ref{gabbergeomlemmabis}
as well as our inductive assumptions).

\begin{lm}\label{bisthmfiniteness01}
The cone of the map $\varphi(X,T)\rightarrow \varphi(X',T')$ is in $\T_0$.
\end{lm}

\noindent The map $\varphi(X,T)\rightarrow \varphi(X',T')$ factors as
$$\varphi(X,T)\rightarrow \varphi(X',p^{-1}(T))\rightarrow\varphi(X',T')\, .$$
By the octahedral axiom, it is sufficient to prove that each of these two maps
has a cone in $\T_0$.

We shall prove first that the cone of the map $\varphi(X',p^{-1}(T))\rightarrow \varphi(X',T')$
is in $\T_0$. Given an immersion $a:S\rightarrow X'$, we shall write
$$M_S=a_! \, a^*(M)\, .$$
We then have distinguished triangles
$$M_{p^{-1}(T)-T'}\rightarrow M_{p^{-1}(T)}
\rightarrow M_{T'} \rightarrow M_{p^{-1}(T)-T'}[1]\, .$$
For $M=j'_*(\un_{U'})$ (recall $j'$ is the pullback of $j$ along $p$)
the image of this triangle by $p_*$ gives a distinguished triangle
$$p_*(M_{p^{-1}(T)-T'})\rightarrow \varphi(X',p^{-1}(T))
\rightarrow\varphi(X',T') \rightarrow p_*(M_{p^{-1}(T)-T'})[1]\, .$$
As the restriction of $M=j'_*(\un_{U'})$ to $X'-T'$ is in $\T_0$
by assumption on $T'$, the object $M_{p^{-1}(T)-T'}$ is in $\T_0$
as well (by property (b) of \ref{assumptionsfinitenessbis}), from which
we deduce that $p_*(M_{p^{-1}(T)-T'})$ is in $\T_0$
 (using the condition (iii) of Lemma \ref{gabbergeomlemmabis}
  and property (b) of \ref{assumptionsfinitenessbis}).

Let $V$ be a dense open subscheme of $X$ such that $p^{-1}(V)\to V$
is an isomorphism. We may assume that $V\subset U$, and write
$i: Z\rightarrow U$ for the complement closed immersion.
Let $p_U:U'=p^{-1}(U)\rightarrow U$ be the pullback of $p$ along $j$, and
let $\bar Z$ be the reduced closure of $Z$ in $X$. 
We thus get the commutative squares
of immersions below,
\begin{equation*}
\begin{split}
\xymatrix{
Z\ar[r]^k\ar[d]_i& \bar Z\ar[d]^l\\
U\ar[r]_j& X
}\end{split}
\quad\text{and}\quad
\begin{split}
\xymatrix{
Z'\ar[r]^{k'}\ar[d]_{i'}& \bar Z'\ar[d]^{l'}\\
U'\ar[r]_{j'}& X'
}\end{split}
\end{equation*}
where the square on the right is obtained from the one on the left by pulling back
along $p:X'\rightarrow X$.
Recall that the triangulated motivic category $\DM_\h(-,\ZZ)$ 
satisfies $\cdh$-descent (see \cite[Prop.~3.3.10]{CD3}).
Thus, as $p$ is an isomorphism over $V$,
we get the homotopy Cartesian square below.
$$\xymatrix{
\un_U\ar[r]\ar[d]& p_{U,*}(\un_{U'})\ar[d]\\
i_*\, i^*(\un_{Z})\ar[r]& i_*\, i^* \, p_{U,*}(\un_{U'})
}$$
%%is homotopy cartesian.
If $a:T\rightarrow X$ denotes the inclusion, applying the functor
$a_*\, a^*\, j_*$ to the commutative square above,
we see from the proper base change formula and from the identification
$j_*\, i_*\simeq l_*\, k_*$
that we get a commutative square
isomorphic to the following one
$$\xymatrix{
\varphi(X,T)\ar[r]\ar[d]&\varphi(X',p^{-1}(T))\ar[d] \\
\varphi(\bar Z,\bar Z\cap T)\ar[r]&\varphi(\bar Z',p^{-1}(\bar Z\cap T))\, ,
}$$
which is thus homotopy Cartesian as well. It is sufficient to prove that the two objects
$\varphi(\bar Z,\bar Z\cap T)$ and $\varphi(\bar Z',p^{-1}(\bar Z\cap T))$
are in $\T_0$. It follows from the proper base change formula
that the object $\varphi(\bar Z,\bar Z\cap T)$ is canonically isomorphic to
the restriction to $T$ of $l_*\, k_*(\un_{Z})$.
As $\mathrm{dim}\, \bar Z<\mathrm{dim}\, X$,
we know that the object $k_*(\un_{Z})$ is in $\T_0$.
By property (b) of \ref{assumptionsfinitenessbis},
we obtain that $\varphi(\bar Z,\bar Z\cap T)$ is in $\T_0$.
Similarly, the object $\varphi(\bar Z',p^{-1}(\bar Z\cap T))$ is canonically isomorphic to
the restriction of $p_*\, l'_*\, k'_*(\un_{Z'})$ to $T$, and,
as $\mathrm{dim}\, \bar Z'<\mathrm{dim}\, X'$
(because, $p$ being an isomorphism over the dense open subscheme $V$ of $X$, $\bar Z'$
does not contain any generic point of $X'$), 
$k'_*(\un_{Z'})$ is in $\T_0$.
We deduce again from property (b) of \ref{assumptionsfinitenessbis}
that $\varphi(\bar Z',p^{-1}(\bar Z\cap T))$ is in $\T_0$ as well,
which achieves the proof of the lemma.

\begin{lm}\label{bisthmfiniteness03}
The map $\varphi(X',T')\rightarrow \varphi(X'',T'')$
is an isomorphism in $\DM_\h(X,\ZZ)$.
\end{lm}

\noindent Condition (iv) of Lemma \ref{gabbergeomlemmabis}
can be reformulated by saying that
we have the Nisnevich distinguished square below.
$$\xymatrix{
X''-T''\ar[r]\ar[d]& X''\ar[d]^{v}\\
X'-T'\ar[r]&X'
}$$
This lemma follows then by Nisnevich excision
 (\cite[3.3.4]{CD3})
and smooth base change (for \'etale maps).

In the next lemma, we call \emph{$\mathfrak{p}$-quasi-section}
 of a morphism $f:K \rightarrow L$ in $\DM_\h(X,\ZZ)$
 any morphism 
 $s:L \rightarrow K$ 
 such that there exists an integer $n$, not in $\mathfrak{p}$,
and such that: $f \circ s=n.Id$. 
%Note for future use that the existence of
% a $\mathfrak{p}$-quasi-section implies that the image of $f$ by the functor 
% $\rho_\mathfrak p^*$ is a split monomorphism
%  as $n$ is invertible in $R_\mathfrak p$.
\begin{lm}\label{bisthmfiniteness02}
Let $T'''$ be the pullback of $T''$ along the finite surjective morphism
$X'''\rightarrow X''$.
 The map $\varphi(X'',T'')\rightarrow \varphi(X''',T''')$
 admits a $\mathfrak{p}$-quasi-section.
\end{lm}

\noindent We have the following pullback squares
$$\xymatrix{
T'''\ar[r]^t\ar[d]_{r}& X'''\ar[d]^q & U'''\ar[l]_{j'''}\ar[d]^{q^{}_U}\\
T''\ar[r]^s& X'' & U'\ar[l]_{j''} 
}$$
in which $j''$ and $j'''$ denote the pullback of $j$ along $pu$
and $puq$ respectively, while $s$ and $t$ are the inclusions.
By the proper base change formula applied to the left hand square,
we see that the map $\varphi(X'',T'')\rightarrow \varphi(X''',T''')$ is isomorphic to
the image of the map
$$j''_*(\un_{U''})\rightarrow q_*\, q^* \, j''_*(\un_{U''})
\rightarrow q_*\, j'''_*(\un_{U'''}) \, .$$
by $f_*\, s^*$, where $f:T''\rightarrow T$ is the map induced by $p$
(note that $f$ is proper as $T''\simeq T'$ by assumption).
As $q_*\, j'''_*\simeq j''_*\, q_{U,*}$,
we are thus reduced to prove that the unit map
$$\un_{U''} \rightarrow q_{U,*}(\un_{U'''})$$
admits a $p$-quasi-section.
By property (iii) of Lemma~\ref{gabbergeomlemmabis},
 $q_U$ is a flat finite surjective morphism of degree $n$
 not in $\mathfrak{p}$. Thus the $\mathfrak{p}$-quasi-section is given by the trace
 map (Definition~\ref{df:trace_map}) associated with $q_U$,
 taking into account the \emph{degree formula} of
 Proposition~\ref{prop:basic_traces_DMh}.

Now, we can finish the proof of Lemma \ref{bisthmfiniteness0}.
Let us apply the functoriality of the construction $\varphi$
 with respect to the following commutative squares:
$$
\xymatrix@=20pt{
T'''\ar@{=}[r]\ar_t[d] & T'''\ar[r]\ar_a[d] & T\ar[d] \\
X'''\ar^g[r] & Y\ar^f[r] & X
}
$$
where $T'''=q^{-1}u^{-1}(T')$, $t$ is the natural map
 and $a=g \circ t$,
 we get the following commutative diagram of $\DM_\h(X,\ZZ)$:
$$\xymatrix@=14pt{
\varphi(X,T)\ar^{(1)}[rr]\ar[dr]&& \varphi(X''',T''')\\
& \varphi(Y,T''')\ar[ur]& 
}$$
We consider the image of that diagram through the functor
$$
\bar \rho:\DM_\h(X,\ZZ)
 \rightarrow \big(\DM_\h(X,\ZZ)/\DM_{\h,c}(X,\ZZ)\big)
 \rightarrow \big(\DM_\h(X,\ZZ)/\DM_{\h,c}(X,\ZZ)\big)_\mathfrak p\, .
$$
By virtue of Proposition \ref{prop:localconstruct},
we have to show that the image of $\varphi(X,T)$ under $\bar \rho$
 is $0$. According to
 lemmas \ref{bisthmfiniteness01}, \ref{bisthmfiniteness02},
 and \ref{bisthmfiniteness03}, the image of $(1)$ under $\bar \rho$
 is a split monomorphism. 
Thus it is sufficient to prove that this image is the zero map,
 and according to the commutativity of the above diagram,
 this will follow if we prove that
 $\bar \rho(\varphi(Y,T'''))=0$, which amounts to prove that
 $\varphi(Y,T''')$ is $\mathfrak{p}$-constructible.

We come back to the definition of $\varphi(Y,T''')$:
 considering the following commutative diagram,
$$
\xymatrix@=18pt{
T'''\ar^a[r]\ar_{\pi}[rd] & Y\ar^f[d] & Y_U\ar[d]\ar_{j_Y}[l] \\
& X & U,\ar_j[l]
}
$$
we have: $\varphi(Y,T''')=\pi_* \, a^*\, j_{Y,*}(\un_{Y_U})$.
By assumption,
 the morphism $\pi$ is finite -- this follows more precisely from 
 the following conditions of Lemma~\ref{gabbergeomlemmabis}:
 (ii) saying that $q$ is finite, (iii) and (iv).
 Thus by assumption on $j_Y$
 (see point (i) of Lemma~\ref{gabbergeomlemmabis}),
 we obtain that $\varphi(Y,T''')$ is $\mathfrak{p}$-constructible,
 according to properties (b) and (c) stated 
 in Paragraph~\ref{assumptionsfinitenessbis}. This achieves the proof
 of Gabber's Lemma \ref{bisthmfiniteness0}.
\end{proof}

\begin{thm} \label{thm:constructible_f_*}
Let $f:Y \rightarrow X$ be a morphism of finite type
such that $X$ is a quasi-excellent noetherian scheme
 of finite dimension.
Then for any constructible $\h$-motive $K$ of $\DM_\h(Y,R)$,
$f_*(K)$ is constructible in $\DM_\h(X,R)$.
\end{thm}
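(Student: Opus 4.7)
The plan is to reduce the theorem to the assertion that, for any dense open immersion $j:U\to Z$ with $Z$ quasi-excellent noetherian of finite dimension, the motive $j_*(\un_U)$ is constructible, and then apply Gabber's Lemma \ref{bisthmfiniteness0} together with the local detection of constructibility given by Proposition \ref{prop:local_localization}.

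First I would reduce to $R=\ZZ$. The generators $R_Y(W)(n)$ of $\DM_{\h,c}(Y,R)$ are obtained by extension of scalars from their $\ZZ$-linear analogues, and $f_*$ commutes with change of coefficients by Corollary \ref{cor:changeofcoefoneforall}; so a thick-subcategory argument reduces the problem to the case $R=\ZZ$ applied to these generators. By Proposition \ref{prop:projectivegenerateconstruct} we may then replace $K$ by $g_*(\un_{Y'}(n))$ for some projective morphism $g:Y'\to Y$, and using $f_*g_*=(fg)_*$ together with the projection formula for the invertible twist $\un(n)$, we are reduced to showing that $h_*(\un_{Z'})$ is constructible whenever $h:Z'\to X$ is of finite type. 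Nagata compactification allows us to factor $h=\bar h\circ j$ with $\bar h$ proper (quasi-excellence and finite-dimensionality ascend to the compactification, which is of finite type over $X$) and $j$ an open immersion, and since $\bar h_*=\bar h_!$ preserves constructibility by Theorem \ref{thm:DMh_6functors}, it suffices to handle $j_*(\un_{Z'})$. Replacing the target of $j$ by the scheme-theoretic closure of its image and using that $i_*=i_!$ preserves constructibility for closed immersions $i$, we may assume $j:U\to Z$ is a dense open immersion.

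By Proposition \ref{prop:local_localization} it is enough to prove $\mathfrak p$-constructibility of $j_*(\un_U)$ for every maximal prime $\mathfrak p\subset\ZZ$. For $\mathfrak p=(0)$ the hypothesis of Lemma \ref{bisthmfiniteness0} is automatic, so Gabber's Lemma applies. For $\mathfrak p=(p)$ with $p$ not a residue characteristic of any point of $Z$, the same lemma applies. The delicate case is $\mathfrak p=(p)$ when $p$ is a residue characteristic of $Z$. Letting $k:Z[1/p]\to Z$ and $i:Z_p\to Z$ denote the obvious complementary immersions (with $Z_p:=(Z\times_{\ZZ}\Spec(\ZZ/p\ZZ))_{\mathrm{red}}$), the localization triangle
$$k_!k^*(j_*\un_U)\to j_*\un_U\to i_*i^*(j_*\un_U)\to [1]$$
reduces the question to $(p)$-constructibility of the two outer terms. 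Smooth base change identifies $k^*(j_*\un_U)$ with $j'_*(\un_{U\cap Z[1/p]})$ for the induced dense open immersion $j'$ on $Z[1/p]$; since $p$ is invertible there, Gabber's Lemma yields $(p)$-constructibility, which is preserved by $k_!$. On $Z_p$, the object $i^*(j_*\un_U)$ is $(0)$-constructible (since $j_*\un_U$ is, by the $(0)$ case, and $i^*$ sends constructibles to constructibles), hence $(p)$-constructible by Proposition \ref{prop:constructcarpnoptors}; and $i_*$ then preserves this property.

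The main obstacle is precisely the mixed-characteristic step, $\mathfrak p=(p)$ with $p$ a residue characteristic of $Z$, since Gabber's uniformization prime to $\mathfrak p$ is unavailable there. The key mechanism overcoming it is Proposition \ref{prop:constructcarpnoptors}, which, via the Artin--Schreier argument, identifies $(p)$-constructibility with $(0)$-constructibility on characteristic-$p$ schemes, so that the rational conclusion of Gabber's Lemma over $Z_p$ can be transported back to the integral $(p)$-local question, and glued with the $Z[1/p]$ part through the localization triangle.
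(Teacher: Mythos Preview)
Your proof is correct and follows the same strategy as the paper's: reduce to $R=\ZZ$ and $K=\un$, then to a dense open immersion, and verify $\mathfrak p$-constructibility prime by prime via Gabber's Lemma~\ref{bisthmfiniteness0}, handling the bad prime $p$ through the localization triangle along the characteristic-$p$ locus together with Proposition~\ref{prop:constructcarpnoptors}. The only deviation is that you compactify via Nagata whereas the paper first Zariski-localizes to the affine case (making $f$ quasi-projective and yielding a projective compactification); your route is fine, but Nagata requires $h$ separated, so a brief Mayer--Vietoris reduction on the source should precede it.
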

\begin{proof}
The case where $f$ is proper is already known from \cite[Prop.~4.2.11]{CD3}.
Then, a well-known argument allows to reduce
to prove that for any dense open immersion $j:U\rightarrow X$,
the $\h$-motive $j_*(R_U)$ is constructible.
Indeed, assume this is known.
We want to prove that $f_*(K)$ is constructible
whenever $K$ is constructible.
According to Proposition \ref{prop:projectivegenerateconstruct},
and because $f_*$ commutes with Tate twists,
it is sufficient to consider the case $K=\un_Y$.
Moreover, we easily conclude from Corollary \ref{cor:changeofcoefoneforall}
that we may assume that $R=\ZZ$.
Then, as this property is assumed to be known for dense open immersions,
by an easy Mayer-Vietoris argument, we see that
the condition that $f_*(\un_Y)$ is constructible
is local on $Y$ and $X$ with respect to the Zariski topology.
Therefore, we may assume that $X$ and $Y$ are affine,
 thus $f$ is affine (\cite[(1.6.2)]{EGA2})
 and in particular quasi-projective (\cite[(5.3.4)]{EGA2}):
 it can be factored as $f=\bar f \circ j$ where $f$ is projective
 and $j$ is a dense open immersion.
 The case of $\bar f$ being already known from
 \cite[Prop.~4.2.11]{CD3},
 we may assume $f=j$.

Thus, as $j_*$ commutes with Tate twist,
 it is sufficient to prove that for any dense open
 immersion $j:U \rightarrow X$, with $X$ a quasi-excellent,
 the $\h$-motive $j_*(\un_U)$ is constructible.
 Applying Proposition~\ref{prop:local_localization},
 it is sufficient to prove that,
 given any prime ideal $\mathfrak p \in \Spec(\ZZ)$,
 the $\h$-motive $j_*(\un_U)$ is $\mathfrak{p}$-constructible.

The case where $\mathfrak p=(0)$ directly follows from
Gabber's Lemma \ref{bisthmfiniteness0}.
Assume now that $\mathfrak p=(p)$ for a prime number $p>0$.
Let us consider the following Cartesian square of schemes,
in which $X_p=X\times\mathrm{Spec}(\ZZ[1/p])$:
$$
\xymatrix@=20pt{
{U_p}\ar^{i_U}[r]\ar_{j_p}[d] & U\ar^j[d]
 & U'\ar^{j'}[d]\ar_{j_U}[l] \\
{X_p}\ar^{i_X}[r] & X & X'\ar_{j_X}[l]
}
$$
Then we can consider the following localization distinguished triangle:
$$
 j_{X!}j_X^*j_*(\un_U)
 \rightarrow j_*(\un_U)
 \rightarrow i_{X*}i_X^*j_*(\un_U)
 \rightarrow  j_{X!}j_X^*j_*(\un_U)[1]
$$
so that it is sufficient to prove that the first and third motives
in the above triangle are $\mathfrak{p}$-constructible.
Note that the functors $j_{X!}$ and $i_{X*}$ preserve, $\mathfrak{p}$-constructible
objects, so that it is sufficient to prove that
$i_X^*j_*(\un_U)$ and $j_X^*j_*(\un_U)$ are $\mathfrak{p}$-constructible.

The object $i_X^*j_*(\un_U)$ being $(0)$-constructible, it is
$\mathfrak{p}$-constructible, by virtue of
Proposition \ref{prop:constructcarpnoptors}.
It remains to prove that the following $\h$-motive
is $\mathfrak{p}$-constructible:
$$j_X^*j_*(\un_U)
=j'_*(\un_{U'})
$$
(for the isomorphism, we have used the smooth base change theorem,
which is trivially true in $\DM_\h$, by construction).
Thus, we are finally reduced to Gabber's Lemma \ref{bisthmfiniteness0},
 and this concludes.
\end{proof}

\begin{cor}\label{cor:6oppreserveconstruct}
The six operations preserve constructibility in $\DM_\h(-,R)$
over quasi-excellent noetherian schemes of finite dimension.
In other words, we have the following stability properties.
\begin{itemize}
\item[(a)] For any quasi-excellent noetherian scheme of finite dimension $X$,
any constructible objects $M$ and $N$ in $\DM_\h(X,R)$, both
$M\otimes_R N$ and $\uHom_R(M,N)$ are constructible.
\item[(b)] For any separated morphism of finite type between
quasi-excellent noetherian schemes of finite dimension $f:X\rightarrow Y$,
and for any constructible object $M$ of $\DM_\h(X,R)$, the
objects $f_*(M)$ and $f_!(M)$ are constructible, and
for any constructible object $N$ of $\DM_\h(Y,R)$, the objects
$f^*(N)$ and $f^!(N)$ are constructible.
\end{itemize}
%Furthermore, in the case when $R$ is noetherian of
%positive characteristic $n$,
%the full embeddings provided by Corollary \ref{cor:DMh_Det}
%$$\DM_{\h,c}(X,R)\to\Der^b_\ctf(X_et,R)$$
%for quasi-excellent $\ZZ[1/n]$-schemes is compatible with
%the six operations.
\end{cor}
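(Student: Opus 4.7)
The plan is to reduce each case to Theorem~\ref{thm:constructible_f_*} via essentially formal arguments, following the strategy of \cite[sec.~4.2]{CD3}. First I will dispose of the easy operations: the pullback $f^*$ sends the generator $\Sigma^\infty\uR^\h_Y(X)(n)$ to the generator $\Sigma^\infty\uR^\h_{Y'}(X\times_Y Y')(n)$, and the tensor product satisfies $R(X)(n)\otimes_R R(X')(m)\simeq R(X\times_S X')(n+m)$, so both preserve the thick subcategory $\DM_{\h,c}$ unconditionally. For $f_!$ of a separated morphism of finite type $f$, I will use Nagata compactification to factor $f=\bar f\circ j$ with $j$ an open immersion (so that $j_!=j_\sharp$ preserves generators trivially) and $\bar f$ proper (so that $\bar f_!=\bar f_*$ preserves constructibility by the proper case treated in \cite[Prop.~4.2.11]{CD3}); this handles $f_!$. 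The stability of $f_*$ is Theorem~\ref{thm:constructible_f_*}.

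The two remaining operations require more thought. For $f^!$ of a separated morphism of finite type, I will argue Zariski-locally on $X$, which is legitimate since $\DM_\h(-,R)$ satisfies Zariski descent, to factor $f$ as $p\circ i$ with $p$ smooth separated of finite type and $i$ a closed immersion. Smooth relative purity, a structural property of the six functors formalism which is provided by Theorem~\ref{thm:DMh_6functors}, identifies $p^!$ with a twist-shift of $p^*$, and therefore preserves constructibility. For a closed immersion $i$ with open complement $v$, the localization triangle
\[
i_*i^!(N)\longrightarrow N \longrightarrow v_*v^*(N)
\]
exhibits $i_*i^!(N)$ as constructible (using the $f_*$ case applied to the open immersion $v$), whence $i^!(N)\simeq i^*i_*i^!(N)$ is constructible by the $i^*$ case already handled.

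For $\uHom_R$, I will fix a constructible $N$ and prove that the full subcategory $\mathcal{C}_N$ of $M\in\DM_\h(X,R)$ such that $\uHom_R(M,N)$ is constructible is a thick subcategory containing all constructibles. Since $\mathcal{C}_N$ is clearly thick and stable under Tate twists, it suffices, by the very definition of $\DM_{\h,c}(X,R)$, to verify it on the generating objects of the form $p_\sharp R_X(n)$ with $p:X\to Y$ smooth separated of finite type; for these, the smooth projection formula
\[
\uHom_R(p_\sharp R_X(n),N)\simeq p_*p^*(N)(-n)
\]
reduces the problem to the stability of constructibility under $p^*$ and under $p_*$, both of which have just been established. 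Combining these, (a) follows for $\uHom_R$ and the argument is complete.

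The main obstacle has already been cleared by Theorem~\ref{thm:constructible_f_*}; the present corollary is a purely formal consequence of that theorem, of the localization and absolute purity properties recorded in Theorem~\ref{thm:DMh_6functors}, and of Nagata compactification. The only step that is not entirely mechanical is the treatment of $f^!$, where one must combine Zariski-local factorizations with the localization triangle, but this is parallel to \cite[sec.~4.2]{CD3} and introduces no new difficulty.
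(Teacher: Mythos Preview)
Your proof is correct and follows essentially the same approach as the paper's own proof: handle $f^*$, $\otimes$, $f_!$, and $f_*$ directly (the last via Theorem~\ref{thm:constructible_f_*}), reduce $\uHom_R$ to $p_*p^*$ via the adjunction $(p_\sharp,p^*)$, and treat $f^!$ by Zariski-local factorization plus relative purity plus the localization triangle for closed immersions. The only cosmetic difference is that the paper factors $f$ (after reducing to the affine case) into an open immersion, a projection $\PP^n_Y\to Y$, and a closed immersion, whereas you factor into a closed immersion followed by a smooth morphism; both routes invoke the same purity isomorphism from Definition~\ref{df:recall_6_functors}(3). Note a small slip in notation: in your $\uHom_R$ argument the smooth morphism should be written $p:Y\to X$ (with generator $p_\sharp(\un_Y)(n)$), not $p:X\to Y$.
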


\begin{proof}
The fact that $f^*$ preserves constructibility is obvious.
The case of $f_*$ follows from the preceding theorem.
The tensor product also preserves constructibility on the nose.
To prove that $\uHom_R(M,N)$ is constructible for any
constructible objects $M$ and $N$ in $\DM_\h(X,R)$, we may assume
that $M=f_\sharp(\un_Y)$ for a separated smooth morphism
of finite type $f:Y\rightarrow X$. In this case, we have
the isomorphism
$$\uHom_R(M,N)\simeq f_* f^*(N)\, ,$$
from which we get the expected property.
The fact that the functors of the form $f_!$ preserve
constructibility is well known (see for instance
\cite[Cor.~4.2.12]{CD3}). Let $f:X\rightarrow Y$
be a separated morphism of finite type between
quasi-excellent noetherian schemes of finite dimension.
The property that $f^!$ preserves constructibility is local
on $X$ and on $Y$ with respect to the Zariski topology
(see \cite[Lemma 4.2.27]{CD3}), so that we may assume that
$f$ is affine. From there, we see that we may assume
that $f$ is an open immersion, or that $f$
is the projection of the projective space $\PP^n_Y$ to the
base, or that $f$ is a closed immersion.
The case of an open immersion is trivial. In the case where $f$
is a projective space of dimension $n$, the purity isomorphism
$f^!\simeq f^*(n)[2n]$ allows to conclude. Finally, if $f=i$ is a closed
immersion with open complement $j:U\rightarrow Y$, then we have
distinguished triangles
$$i_*i^!(M)\rightarrow M\rightarrow j_*j^*(M)\rightarrow i_*i^!(M)[1]$$
from which deduce that $i_*i^!(M)$ is constructible, and thus that
$i^!(M)\simeq i^*i_*i^!(M)$ is constructible, whenever $M$ has this property.
\end{proof}

\begin{num}
An object $U$ of $\DM_\h(X,R)$ will be said to be \emph{dualizing}
if it has the following two properties:
\begin{itemize}
\item[(i)] $U$ is constructible;
\item[(ii)] For any constructible object $M$ in $\DM_\h(X,R)$,
the canonical morphism
$$M\rightarrow\uHom_R(\uHom_R(M,U),U)$$
is an isomorphism.
\end{itemize}
\end{num}

\begin{lm}\label{lemma:reductionsduality}
Let $X$ be a quasi-excellent noetherian scheme
of finite dimension.
\begin{itemize}
\item[(i)] If an object $U$ of $\DM_\h(X,\ZZ)$ is dualizing,
then, for any commutative ring $R$, the (derived) tensor product $R\otimes U$
is dualizing in $\DM_\h(X,R)$.
\item[(ii)] A constructible object $U$ of $\DM_\h(X,R)$
is dualizing if an only if $\QQ\otimes U$ is dualizing
in $\DM_\h(X,\QQ)$ and, for any prime $p$, $U/p$
is dualizing in $\DM_\h(X,\ZZ/p\ZZ)$.
\end{itemize}
\end{lm}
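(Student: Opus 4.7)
The plan is to reduce both (i) and (ii) to a single compatibility between change of coefficients and internal Hom.

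\emph{Key lemma.} For any ring homomorphism $\rho\colon S\to S'$ (with associated change-of-coefficients adjunction $(\rho^*,\rho_*)$), any constructible $M'$ in $\DM_\h(X,S)$ and any $U$ in $\DM_\h(X,S)$, the canonical comparison
\[
\rho^*\,\uHom_S(M',U)\longrightarrow\uHom_{S'}(\rho^*M',\rho^*U)
\]
is an isomorphism. To prove this, apply $\rho_*$ and use that $\rho_*$ is conservative (Proposition \ref{prop:properties_rho_*}). On the source, Proposition \ref{prop:properties_rho_*} gives $\rho_*\rho^*\uHom_S(M',U)=\uHom_S(M',U)\otimes_S S'$, which, by Corollary \ref{cor:intHomsmallsumsDMh} and the constructibility of $M'$, equals $\uHom_S(M',U\otimes_S S')$. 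On the target, the adjunctions $\rho^*\dashv\rho_*$ and tensor-Hom yield $\rho_*\uHom_{S'}(\rho^*M',\rho^*U)\simeq\uHom_S(M',\rho_*\rho^*U)=\uHom_S(M',U\otimes_S S')$. A short formal chase identifies these two identifications with the given comparison map, proving the lemma.

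For (i), apply the key lemma with $\rho\colon\ZZ\to R$. By Proposition \ref{prop:projectivegenerateconstruct} together with Corollary \ref{cor:changeofcoefoneforall} (which identifies $f_*(R_{X'}(n))$ with $\rho^*f_*(\ZZ_{X'}(n))$ for $f$ projective), the thick subcategory of $\DM_{\h,c}(X,R)$ generated by $\rho^*(\DM_{\h,c}(X,\ZZ))$ is all of $\DM_{\h,c}(X,R)$, so it suffices to check biduality for $M=\rho^*M'$ with $M'$ constructible in $\DM_\h(X,\ZZ)$. Applying the key lemma twice, and noting that $\uHom_\ZZ(M',U)$ stays constructible by Corollary \ref{cor:6oppreserveconstruct}, identifies the biduality map for $\rho^*M'$ against $R\otimes U$ with $\rho^*$ of the biduality map for $M'$ against $U$; the latter is an isomorphism by the assumption on $U$.

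For (ii), the ``only if'' direction is (i) (or rather its proof) applied to $\ZZ\to\QQ$ and to $\ZZ\to\ZZ/p\ZZ$. Conversely, assume $\QQ\otimes U$ and each $U/p$ are dualizing, and let $M$ be a constructible object of $\DM_\h(X,\ZZ)$. Denote by $C$ the cone of the biduality map $M\to\uHom_\ZZ(\uHom_\ZZ(M,U),U)$. The key lemma (used twice, once for $\ZZ\to\QQ$ and once for $\ZZ\to\ZZ/p\ZZ$) shows that $\QQ\otimes C$ identifies with the cone of the biduality map for $\QQ\otimes M$ against $\QQ\otimes U$, which vanishes by hypothesis; similarly $C/p=0$ for every prime $p$. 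Proposition \ref{prop:conservativity_coef_DMh} then forces $C=0$, so $U$ is dualizing. The main technical point throughout is the key lemma: one must verify that the tautological comparison map really realises the standard identification after applying $\rho_*$; once this is granted, the rest is a clean combination of the generation result for constructible motives, the coefficient-change compatibilities recalled in Section \ref{sec:change_coef}, and conservativity.
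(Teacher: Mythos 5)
Your proposal is correct and follows essentially the same route as the paper. The paper's proof reduces $M$ to the form $R\otimes C$ with $C$ constructible in $\DM_\h(X,\ZZ)$ and then cites the coefficient-change compatibilities (Corollary \ref{cor:intHomsmallsumsDMh} for part (i), Corollary \ref{cor:rho_rational&6_functors} and Proposition \ref{prop:rho_n&6_functors} for part (ii)) before concluding with Proposition \ref{prop:conservativity_coef_DMh}; you simply package the same compatibilities as an explicit key lemma (derived from Cor.~\ref{cor:intHomsmallsumsDMh} and conservativity of $\rho_*$) and then apply it, which is a reasonable way to make the implicit diagram chase visible.
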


\begin{proof}
Assume that the object $U$ of $\DM_\h(X,\ZZ)$ is dualizing.
To prove that the canonical map
$$M\rightarrow\uHom_R(\uHom_R(M,R\otimes U),R\otimes U)$$
is invertible for any constructible object $M$ in $\DM_\h(X,R)$,
we may assume that
$$M=f_\sharp(R_Y)\simeq R\otimes f_\sharp (\ZZ_Y)$$
for a separated smooth morphism of finite type $f:Y\rightarrow X$.
In particular, we may assume that $M=R\otimes C$ for
a constructible object $C$ in $\DM_\h(X,\ZZ)$. But then, by virtue
of Corollary \ref{cor:intHomsmallsumsDMh}, we have a canonical
isomorphism
$$\uHom(\uHom(C,U),U)\otimes R\simeq
\uHom_R(\uHom_R(M,R\otimes U),R\otimes U)\, ,$$
from which we conclude that $R\otimes U$ is dualizing.
The proof of the second assertion is similar. Indeed, for any
constructible object $C$ of $\DM_\h(X,\ZZ)$,
by virtue of Corollary \ref{cor:rho_rational&6_functors}, we have
canonical isomorphisms
$$\uHom(\uHom(C,U),U)\otimes \QQ\simeq
\uHom_\QQ(\uHom_\QQ(\QQ\otimes C,\QQ\otimes U),\QQ\otimes U)\, ,$$
and, by Proposition \ref{prop:rho_n&6_functors},
for any positive integer $n$, canonical isomorphisms
$$\uHom(\uHom(C,U),U)/n\simeq
\uHom_{\ZZ/n\ZZ}(\uHom_{\ZZ/n\ZZ}(C/n,U/n),U/n)\, .$$
By virtue of Proposition \ref{prop:conservativity_coef_DMh},
this readily implies assertion (\emph{ii}).
\end{proof}

\begin{thm}\label{thm:duality}
Let $B$ be an excellent noetherian scheme of dimension $\leq 2$
(or, more generally, which admits wide resolution of singularities
up to quotient singularities in the sense of \cite[Def.~4.1.9]{CD3}).
\begin{itemize}
\item[(a)] For any regular $B$-scheme of finite type $S$,
an object $U$ of $\DM_\h(S,R)$ is dualizing if and only
if it is constructible and $\otimes$-invertible.
\item[(b)] For any separated morphism of $B$-schemes of finite type $f:X\rightarrow S$,
with $S$ regular, and for any dualizing object $U$ in $\DM_\h(S,R)$,
the object $f^!(U)$ is a dualizing object in $\DM_\h(X,R)$.
\end{itemize}
\end{thm}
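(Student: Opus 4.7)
The plan is to establish both parts simultaneously by reducing the verification to the classical cases of rational and torsion coefficients. By Lemma \ref{lemma:reductionsduality}, verifying that a constructible object $U$ of $\DM_\h(S,R)$ is dualizing reduces to checking this after tensoring with $\QQ$ and after reduction modulo each prime $p$, using Corollary \ref{cor:rho_rational&6_functors} and Proposition \ref{prop:rho_n&6_functors} to commute biduality with these changes of coefficients on constructible objects. For part (b), the same reductions apply thanks to Corollary \ref{cor:exceptimfunctcommutessmallsums}, which asserts that $f^!$ commutes with change of coefficients on constructible objects: both sides of the biduality for $f^!U$ are accordingly controlled by their rational and modular reductions.

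For $R=\QQ$, the equivalence $\DM_\h(-,\QQ)\simeq\DMB$ of Theorem \ref{thm:DMB&DM_h_rational_recall} transports both the $\Leftarrow$ direction of (a) and part (b) to the corresponding duality theorem for Beilinson motives established in \cite{CD3}, which is valid precisely under the hypothesis placed on $B$. For $R=\ZZ/p\ZZ$, an Artin--Schreier type reduction (reducing to $\DM_\h(S[1/p],R)\hookrightarrow\DM_\h(S,R)$) combined with Theorem \ref{thm:comparison_torsion_etale-h_motives} brings us to the case where $p$ is invertible in $\cO_S$; Corollary \ref{cor:DMh_Det} then identifies $\DM_\h(-,\ZZ/p\ZZ)$ with $\Der(-_\et,\ZZ/p\ZZ)$, so that the statements become Gabber's absolute duality theorem for torsion \'etale sheaves (as formulated in \cite{gabber3}). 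Under this \'etale identification, the $\otimes$-invertible constructible objects are exactly the shifted twists of locally constant sheaves of free rank-one $\ZZ/p\ZZ$-modules, which is the familiar shape of a dualizing complex on a regular scheme.

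For the converse direction in (a), the $\Leftarrow$ direction already established, applied to $U=\un_S$, shows that $\un_S$ is dualizing on the regular base $S$. For any given dualizing $U$, biduality of $\un_S$ then yields a canonical isomorphism $\un_S\simeq\uHom(\uHom(\un_S,U),U)=\uHom(U,U)$. Since the 6 functor formalism combined with the absolute purity theorem \ref{thm:DMh_6functors} implies that constructible objects over a regular base are strongly dualizable, the natural map $U^\vee\otimes U\to\uHom(U,U)$ is an isomorphism, and combining these identifications gives $U^\vee\otimes U\simeq\un_S$, showing that $U$ is $\otimes$-invertible. The main obstacle in the overall argument is the careful tracking of compatibilities of the abstract dualizing condition with its concrete realizations on the Beilinson side and on the torsion \'etale side through all the equivalences of categories that have been invoked; this compatibility ultimately rests on the absolute purity theorem for $\h$-motives with integral coefficients.
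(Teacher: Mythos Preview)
Your reduction strategy---using Lemma \ref{lemma:reductionsduality} and Corollary \ref{cor:exceptimfunctcommutessmallsums} to pass to $\QQ$ and $\ZZ/p\ZZ$, then invoking the known duality theorems for Beilinson motives and for torsion \'etale sheaves (via Corollaries \ref{cor:rigidity_2nd_formulation} and \ref{cor:DMh_Det} and Gabber's work)---is exactly what the paper does. The paper packages this as proving that $f^!(R_S)$ is dualizing, then defers both the $\Leftarrow$ direction of (a) for general $\otimes$-invertible $U$ and the full converse to the formal statement \cite[Prop.~4.4.22]{CD3}.

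However, your argument for the converse of (a) contains a genuine error. The claim that ``constructible objects over a regular base are strongly dualizable'' is false, and absolute purity plus the six functors does not imply it. Counterexample: take $S=\AA^1_k$, $i:\{0\}\hookrightarrow S$, and $F=i_*(\un)$ in $\DM_{\h,c}(S,\ZZ/p\ZZ)$. Then $\uHom(F,F)\simeq i_*\un$, while $F^\vee\otimes F\simeq i_*i^!\un\simeq i_*\un(-1)[-2]$ by absolute purity; these differ by a nontrivial twist, so $F$ is not rigid. (This is consistent with Theorem \ref{thm:DMhrigidobj}: $F$ is rigid on each stratum of $\{\{0\},\AA^1\setminus\{0\}\}$, but not globally.) Hence the step ``$U^\vee\otimes U\to\uHom(U,U)$ is an isomorphism'' is unjustified.

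A correct formal argument, which is presumably what \cite[Prop.~4.4.22]{CD3} provides, runs as follows. Once you know $\un_S$ is dualizing, both $D_\un=\uHom(-,\un_S)$ and $D_U=\uHom(-,U)$ are anti-autoequivalences of $\DM_{\h,c}(S,R)$. The tensor--Hom adjunction gives, for any constructible $M,N$, a natural isomorphism $D_U(M\otimes D_U(N))\simeq\uHom(M,N)$; setting $N=\un_S$ yields $D_\un\simeq D_U\circ\big((-)\otimes U\big)$, hence $(-)\otimes U\simeq D_U\circ D_\un$. Swapping roles gives $(-)\otimes U^\vee\simeq D_\un\circ D_U$ with $U^\vee=\uHom(U,\un_S)$. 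Composing and using $D_U^2\simeq\mathrm{id}\simeq D_\un^2$ shows that $(-)\otimes U$ and $(-)\otimes U^\vee$ are mutually inverse autoequivalences, whence $U\otimes U^\vee\simeq\un_S$. This avoids any rigidity hypothesis on general constructible objects.
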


\begin{proof}
Consider separated morphism of $B$-schemes of finite type $f:X\rightarrow S$,
with $S$ regular. Then we claim that the object $f^!(R_S)$ is
dualizing in $\DM_\h(X,R)$. Indeed, by virtue of
Corollary \ref{cor:exceptimfunctcommutessmallsums} and
Lemma \ref{lemma:reductionsduality}, we may assume that $R=\QQ$
or $R=\ZZ/p\ZZ$ for some prime $p$.
In the first case, this is already known (see
\cite[Theorems 15.2.4 and 16.1.2]{CD3}). If $R=\ZZ/p\ZZ$,
as, for any open immersion $j$, the functor $j^*$
is symmetric monoidal and preserves internal Hom's,
by virtue of Corollaries \ref{cor:rigidity_2nd_formulation}
and \ref{cor:DMh_Det}, we may assume that $p$ is invertible in the residue
fields of $S$ and that we have equivalence of triangulated
categories
$$\Der(Y_\et,\ZZ/p\ZZ)\simeq\DM_{\h}(Y,\ZZ/p\ZZ)$$
for any $S$-scheme of finite type $Y$, in a functorial way
with respect to the six operations.
As, by virtue of the last assertion of
Corollary \ref{cor:DMh_Det},
this equivalence restricts to a monoidal full embedding
$$\DM_{\h,c}(X,\ZZ/p\ZZ)
\subset\Der^b_\ctf(X_\et,\ZZ/p\ZZ)\, ,$$
this property boils down to the analogous
result in classical \'etale cohomology (which, at this level
of generality, has been proved by O.~Gabber; see
\cite[XVII, Th. 0.2]{gabber3}).\footnote{In Gabber's theorem, the existence of a dualizing object is subject
a dimension function, which, in our situation, readily
follows from \cite[XIV, Cor. 2.4.4 and 2.5.2]{gabber3}.}
This implies the theorem through classical and formal arguments;
see \cite[Proposition 4.4.22]{CD3}.
\end{proof}

\subsection{Continuity and locally constructible $\h$-motives}\label{section:contlocconst}

\begin{df}\label{df:locconstruct}
An object $M$ of $\DM_\h(X,R)$ is
\emph{locally constructible} (with respect to
the \'etale topology) if there exists
an \'etale covering $\{u_i:X_i\to X\}_{i\in I}$
such that, for any $i\in I$, the object $u^*_i(M)$
is constructible (of geometric origin) in the sense
of Definition \ref{df:hmotives&constructible_hmotives}.
We denote by $\DM_{\h,\lc}(X,R)$ the full subcategory
of $\DM(X,R)$ which consists of locally constructible
objects. We have embeddings
$$\DM_{\h,c}(X,R)\subset\DM_{\h,\lc}(X,R)
\subset\DM_\h(X,R)\, .$$
\end{df}

\begin{rem}\label{rem:heuristiclocconstruct}
The heuristic reason why the notion of locally constructible
object is a natural one is the following. In a setting
in which one has the six operations (e.g. a motivic triangulated
category in the sense of \cite{CD3}), it is natural to look at
the smallest subsystem generated by the constant coefficient (i.e. the unit
object of the monoidal structure) and closed under the six operations.
Finiteness theorems such as Corollary \ref{cor:6oppreserveconstruct}
mean that the notion of constructible motive, as in
Definition \ref{df:hmotives&constructible_hmotives}, gives such a thing.
But, in practice (e.g. in this article), we have more than a system of triangulated
categories: we have a system of stable Quillen model categories
(or, in a more intrinsic language, of stable $(\infty,1)$-categories
in the sense of Lurie), and this extra structure is rich enough
to speak of descent: we can speak of stacks (in an adequate
homotopical sense) for appropriate topologies
(in the language of Lurie: sheaves of $(\infty,1)$-categories).
In fact the formalism of the six operations always ensures
that we have descent for the Nisnevich topology. Therefore, whenever
constructible objects are closed under the six operations, they
form a Nisnevich stack. But, in the case of $\DM_\h(-,R)$, we have
a stack with respect to the \'etale topology, and it is thus natural
to ask for a notion of constructible $\h$-motives which also form
a stack for the \'etale topology. Essentially by definition, the
system of locally constructible $\h$-motives (expressed
in the language of stable $(\infty,1)$-categories) is the \'etale stack
associated to the fibered $(\infty,1)$-category of constructible $\h$-motives.
Even though we will not go very deep into such considerations about descent and
higher categories, we can say that much of the results of this section
are devoted to the understanding of the \'etale stack of locally
constructible $\h$-motives by understanding its stalks. This will
be expressed by continuity phenomena, and will have as consequences
that we still have the formalism of the six operations in this context.
%We will also prove that, when $R$ is noetherian of characteristic invertible in
%$\cO_X$, the triangulated categories $\DM_{\h,\lc}(X,R)$
%and $\Der^b_\ctf(X_\et,R)$ are canonically equivalent (in a compatible
%way with respect to the six operations).
 Note finally that, even though
we will not develop this very far here, locally constructible  $\h$-motives
do form a stack for the $\h$-topology. This is suggested by
Propositions \ref{prop:erseqhmot} and \ref{prop:locconstrsurjtop} below,
together with the proper base change formula.
\end{rem}

\begin{prop}\label{prop:fcohdimlconst}
Let $X$ be a noetherian scheme
of finite dimension. For any $\QQ$-algebra $R$, one
has $\DM_{\h,c}(X,R)=\DM_{\h,\lc}(X,R)$.
\end{prop}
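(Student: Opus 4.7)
The inclusion $\DM_{\h,c}(X,R)\subset\DM_{\h,\lc}(X,R)$ holds by definition, so only the reverse inclusion needs argument. The plan is to apply Theorem \ref{thm:fcohdimlconst}, which asserts that constructibility and local constructibility (for the \'etale topology) coincide as soon as the \'etale cohomological dimension of the residue fields of $X$ is uniformly bounded with $R$-linear coefficients. Consequently, the entire proof reduces to verifying this uniform bound when $R$ is a $\QQ$-algebra.

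The key observation is that a residue field $\kappa(x)$ of $X$ has \'etale cohomological dimension $0$ for $R$-linear coefficients whenever $R$ is a $\QQ$-algebra. Indeed, any sheaf of $R$-modules on $\Spec(\kappa(x))_\et$ is in particular a sheaf of $\QQ$-vector spaces via restriction of scalars along $\QQ\to R$, and its \'etale cohomology is computed in the same way regardless of whether we view it as a sheaf of $R$-modules or of $\QQ$-vector spaces. Since $\Spec(\kappa(x))$ is of dimension zero, Lemma \ref{lemma:etaleQcoefficientscohdim} gives $H^i_\et(\kappa(x),F)=0$ for all $i>0$ and for any such $F$. Hence the hypothesis of Theorem \ref{thm:fcohdimlconst} is satisfied (with uniform bound $0$), and the equivalence of conditions (a) and (b) in that theorem yields $\DM_{\h,\lc}(X,R)\subset\DM_{\h,c}(X,R)$, as required.

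There is no genuine obstacle here: the proposition is essentially a direct application of Theorem \ref{thm:fcohdimlconst} once one notices that $\QQ$-coefficients force the residue field cohomological dimension to vanish. The subtlety, if any, lies only in confirming that the notion of local constructibility in Definition \ref{df:locconstruct} matches verbatim condition (b) of Theorem \ref{thm:fcohdimlconst}, which is immediate from comparing the statements.
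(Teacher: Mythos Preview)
Your proof is correct and follows exactly the same route as the paper: both invoke Lemma \ref{lemma:etaleQcoefficientscohdim} to see that residue fields have \'etale cohomological dimension $0$ for $\QQ$-linear (hence $R$-linear) coefficients, and then apply Theorem \ref{thm:fcohdimlconst}. The paper's proof is a one-liner citing precisely these two results.
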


\begin{proof}
This follows right away from Lemma \ref{lemma:etaleQcoefficientscohdim}
and from Theorem \ref{thm:fcohdimlconst}.
\end{proof}

\begin{prop}\label{prop:tensQloccontstruct}
Let $X$ be a noetherian scheme of finite dimension.
Consider a localization $A$ of $\ZZ$,
and a ring of coefficients $R$.
For any objects $M$ and $N$ of $\DM_\h(X,R)$, if
$M$ is locally constructible, then the natural map
$$\Hom_{\DM_\h(X,R)}(M,N)\otimes A\to
\Hom_{\DM_{\h}(X,R\otimes A)}(M\otimes A,N\otimes A)$$
is bijective.
\end{prop}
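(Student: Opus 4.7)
The strategy has two main stages: first, extend the constructible case of Corollary \ref{cor:DM_h&Q-localization} from $\ZZ$-coefficients to arbitrary coefficients $R$ and arbitrary localizations $A$ of $\ZZ$; second, pass from constructible $M$ to locally constructible $M$ by \'etale descent.

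For the constructible case, I would adapt the proof of Corollary \ref{cor:DM_h&Q-localization}. By Proposition \ref{prop:projectivegenerateconstruct} combined with Corollaries \ref{cor:changeofcoefoneforall} and \ref{cor:exceptimfunctcommutessmallsums}, one reduces to the case $M = R_X(n)$, where the claim becomes that $R$-linear $\h$-motivic cohomology of smooth quasi-compact objects commutes with $-\otimes A$. This is the $R$-linear analogue of Corollary \ref{cor:exactness_Qlocalization00}: tensoring by a localization $A$ of $\ZZ$ preserves $\AA^1$-homotopy invariance and the $\Omega$-spectrum property trivially, while preserving $\h$-cohomological descent by Lemma \ref{lemma:etaleQcoefficients00} applied in the $R$-linear setting (the finite $\QQ$-linear $\h$-cohomological dimension of noetherian finite-dimensional schemes being provided by Lemma \ref{lemma:etaleQcoefficientscohdim} together with the Goodwillie--Lichtenbaum theorem).

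For the locally constructible case, fix an \'etale covering $\{u_i : X_i \to X\}_{i \in I}$ with $u_i^*M$ constructible; by quasi-compactness of $X$ we may take $I$ finite, and setting $u : X' = \coprod_i X_i \to X$ yields a quasi-compact separated \'etale surjection of finite type with $u^*M$ constructible on $X'$. Since $\DM_h(-,R)$ satisfies $\h$-descent, and \emph{a fortiori} \'etale descent, writing $u_\bullet : X'_\bullet \to X$ for the Cech nerve of $u$ one has
\begin{equation*}
\RHom_{\DM_h(X,R)}(M,N) \simeq \operatorname{Tot}\bigl(\RHom_{\DM_h(X'_\bullet,R)}(u_\bullet^*M,u_\bullet^*N)\bigr)\, .
\end{equation*}
Each $u_n^*M$ is constructible (being the \'etale pullback of a constructible object), so by the first stage each term $\RHom_{\DM_h(X'_n,R)}(u_n^*M,u_n^*N)$ already satisfies the commutation with $-\otimes A$.

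The principal obstacle is then the compatibility of $-\otimes A$ with this infinite Cech totalization, since filtered colimits do not commute with arbitrary limits. I would handle this by reformulating descent through pushforward: writing $N \simeq \operatorname{Tot}(u_{\bullet,*}u_\bullet^*N)$ and applying Proposition \ref{prop:directimagecocontinuous}, which tells us that $\derR u_{\bullet,*}$ preserves small sums and hence the filtered colimit $-\otimes A$. Combined with the observation that $u^*$ commutes strictly with $-\otimes A$ and with the conservativity of the family $\{u_n^*\}$ coming from \'etale descent, one can transport the levelwise isomorphisms back to $X$ without directly manipulating the totalization, thereby circumventing the failure of filtered colimits to commute with inverse limits. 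The cleanest expression of this final step is essentially automatic when working in the underlying $\infty$-categorical enhancement of $\DM_h(-,R)$, where \'etale descent is inherent and the passage between levelwise data and the totalization is formal.
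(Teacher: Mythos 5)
Your two-stage decomposition (constructible first, then upgrade by \'etale descent) matches the architecture of the paper's proof, but stage 2 has a genuine gap precisely at the step you flag and then wave away. The comparison you want to establish is between two objects of the derived category of $A$-modules, namely $\derR\Hom_{\DM_\h(X,R)}(M,N)\otimes A$ and $\derR\Hom_{\DM_\h(X,R\otimes A)}(M\otimes A,N\otimes A)$; there is no \'etale-local conservativity available in $\Der(A\text{-}\mathrm{Mod})$, so ``transporting the levelwise isomorphisms back to $X$ via conservativity of $\{u_n^*\}$'' is not an argument --- conservativity of $u^*$ holds in $\DM_\h(X,-)$, not in the category where your comparison morphism lives. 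Proposition \ref{prop:directimagecocontinuous} (that $\derR u_{\bullet,*}$ preserves small sums) gives the levelwise compatibility $u_{n,*}u_n^*(N)\otimes A\simeq u_{n,*}u_n^*(N\otimes A)$ in $\DM_\h(X,R\otimes A)$, but it does nothing toward commuting $-\otimes A$ past the \v Cech totalization $\Tot$. Appealing to ``the $\infty$-categorical enhancement'' does not resolve this either: the failure of a filtered colimit to commute with a cosimplicial limit is present at every level of structure, and some finiteness input has to enter.

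The paper's actual mechanism supplies exactly that missing input, and you should note that it does not pass through the \v Cech totalization of $\derR\Hom$ at all. Instead one attaches to $(M,N)$ a complex of presheaves $C(M,N;R)$ on the \emph{small} \'etale site of $X$ whose hypercohomology over any \'etale $U\to X$ computes $\Hom_{\DM_\h(U,R)}(u^*M,u^*N[i])$. Then \emph{both} $C(M,N;R\otimes A)$ \emph{and} $C(M,N;R)\otimes A$ satisfy \'etale descent: the first by construction, and the second because of Proposition \ref{lemma:etaleQcoefficients}, which says that for a noetherian scheme of finite dimension the functor $-\otimes A$ commutes with $\derR\Gamma_\et$ on unbounded complexes of \'etale sheaves (this is where finite dimension, hence finite $\QQ$-linear cohomological dimension, is used in an essential way). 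The map $C(M,N;R)\otimes A\to C(M,N;R\otimes A)$ is a quasi-isomorphism \'etale-locally on $X$ by the constructible case, and since both sides are \'etale-local objects, a local quasi-isomorphism is a global one; evaluating at $X$ gives the statement. This is precisely the ingredient your stage 2 lacks: without Proposition \ref{lemma:etaleQcoefficients} (or something playing its role) you cannot promote the levelwise isomorphism to the totalization. For stage 1, your plan of redoing the $\Omega$-spectrum/descent/$\AA^1$-invariance analysis $R$-linearly is more work than necessary --- once you reduce to $M\simeq R(Y)\simeq R\otimes^\derL\ZZ(Y)$, a change-of-coefficients adjunction reduces the whole claim to $R=\ZZ$, where Corollary \ref{cor:DM_h&Q-localization} already applies.
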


\begin{proof}
We must prove that the natural map
$$\derR\Hom_{\DM_\h(X,R)}(M,N)\otimes A\to
\derR\Hom_{\DM_{\h}(X,R\otimes A)}(M\otimes A,N\otimes A)$$
is an isomorphism in the derived category of
the category of $A$-modules.
Let us consider the case where $M$ is constructible.
We easily reduce the problem to the case where
$M=R(Y)$ for some smooth $X$-scheme $Y$.
In particular, we may assume that $M=R\otimes^\derL M'$
for some constructible object $M'$ of $\DM_\h(X,\ZZ)$.
In other words, in the case where $M$ is constructible,
we may assume that $R=\ZZ$, in which case we already
know this property to hold; see Corollary
\ref{cor:exactness_Qlocalization}.
To prove the general case,
note that, for any ring of coefficients $R$,
and any objects $E$ and $F$ of $\DM_\h(X,R)$,
one can associate a presheaf of complexes $C(E,F;R)$
on the small \'etale site of $X$ such that,
for any \'etale map $u:U\to X$, we have
canonical isomorphisms
$$H^i(C(E,F;R)(U))\simeq H^i_\et(U,C(E,F;R))
\simeq\Hom_{\DM_\h(U,R)}(u^*(E),u^*(F)[i])$$
(see \cite[Paragraph 3.2.11 and Corollary 3.2.18]{CD3}
for a rigorous definition
and construction of such a $C$).
Therefore, the complex $C(M,N;R\otimes A)$
satisfies \'etale descent, and Proposition \ref{lemma:etaleQcoefficients}
implies that the complex $C(M,N;R)\otimes A$ has the same property.
Since, locally for the \'etale topology over $X$, the canonical map
$$C(M,N;R)\otimes A\to C(M,N;R\otimes A)$$
is a quasi-isomorphism, its evaluation at $X$ is
a quasi-isomorphism, which is precisely what we wanted
to prove.
\end{proof}

\begin{prop}\label{prop:contDbctfR}
Let $X$ be a noetherian scheme, and
$\{X_i\}_{i\in I}$ a projective system of noetherian
schemes of finite dimension with affine transition maps.
Let us consider a noetherian ring of coefficients $R$.
Then the canonical functors
\begin{equation}\label{eq:prop:contDbctfR1}
2\text{-}\varinjlim_i\Der^b_{c}(X_i,R)\to\Der^b_{c}(X,R)
\end{equation}
and
\begin{equation}\label{eq:prop:contDbctfR2}
2\text{-}\varinjlim_i\Der^b_{\ctf}(X_i,R)\to\Der^b_{\ctf}(X,R)
\end{equation}
are equivalences of triangulated categories.
\end{prop}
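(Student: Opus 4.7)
The plan is to reduce both equivalences to the classical continuity of constructible \'etale sheaves from SGA~4, Exp.~IX, by an induction on amplitude, handling the $\ctf$ version by an additional argument on flat resolutions.

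First I would establish the key continuity formula at the sheaf level: for constructible \'etale sheaves of $R$-modules $F, G$ on some $X_{i_0}$, with $F_i, G_i$ denoting their pullbacks to $X_{i,\et}$ and $F, G$ their pullbacks to $X_\et$, the canonical map
$$\varinjlim_{i \geq i_0} \Ext^n_{\sh(X_{i,\et}, R)}(F_i, G_i) \to \Ext^n_{\sh(X_\et, R)}(F, G)$$
is bijective for every $n$. For $n = 0$, this follows from the standard SGA~4, Exp.~IX continuity, using that any constructible sheaf admits a two-term presentation by finite sums of representable sheaves $R(U)$ combined with the limit formula \eqref{eq:proetaleinvimagerightQuillen} applied to the small \'etale sites of the $X_i$. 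For general $n$, I would use the spectral sequence $H^p_\et(X_i, \uExt^q(F_i, G_i)) \Rightarrow \Ext^{p+q}(F_i, G_i)$, together with the fact that $\uExt^q(F_i, G_i)$ is itself constructible (as $R$ is noetherian) and commutes with pullback up to cofinality of the index, so that Lemma~\ref{proetaleinvimagerightQuillen} applies.

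Second, to prove that \eqref{eq:prop:contDbctfR1} is fully faithful, I would proceed by induction on the amplitude of the source complex, using the truncation distinguished triangle $\tau^{\leq m} K \to K \to \tau^{> m} K \to \tau^{\leq m} K[1]$ and the five lemma, reducing everything to the $\Ext$-continuity above. For essential surjectivity of \eqref{eq:prop:contDbctfR1}, I would again use induction on amplitude: a shifted constructible sheaf lifts by the sheaf-level statement; for a complex $K$ of amplitude $\geq 2$, one writes $K$ as the cone of a morphism between complexes of smaller amplitude, descends the two complexes to some $X_i$ by the induction hypothesis, then uses the $\Ext$-limit formula to descend the connecting morphism (possibly after passing to a larger index).

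For the $\ctf$ version \eqref{eq:prop:contDbctfR2}, fully faithfulness is automatic, since $\Der^b_\ctf$ is a full triangulated subcategory of $\Der^b_c$. For essential surjectivity, given $K$ in $\Der^b_\ctf(X_\et, R)$, the third step produces a model $K_{i_0}$ in $\Der^b_c(X_{i_0}, R)$. The object $K$ is quasi-isomorphic to a bounded complex $P^\bullet$ of flat constructible sheaves on $X_\et$; the sheaf-level continuity descends the components of $P^\bullet$ and its differentials to some $X_j$, producing a bounded complex $P_j^\bullet$ of constructible sheaves on $X_{j,\et}$ whose pullback to $X_\et$ is $P^\bullet$. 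The hard part is to ensure that the components of $P_j^\bullet$ can be made flat after possibly enlarging $j$: a constructible sheaf on $X_j$ has flat stalks over the image of $X$ (since these stalks agree with those of the original flat sheaves on $X$), but could a priori be non-flat on strata of $X_j$ not meeting that image. I would handle this by using that flatness of a finitely generated module over the noetherian ring $R$ is a finite set of $\Tor$-vanishing conditions and that, by further shrinking the strata via passage to a later index (using again the continuity for constructible sheaves), one can force the non-flat strata to disappear. The main obstacle is precisely this last descent of flatness; once it is settled, the quasi-isomorphism $P_j^\bullet \to K_j$ gives a model of $K$ in $\Der^b_\ctf(X_j, R)$.
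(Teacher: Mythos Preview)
Your approach is essentially the same as the paper's: reduce \eqref{eq:prop:contDbctfR1} to the classical SGA4, Exp.~IX continuity for constructible sheaves (the paper simply cites Cor.~2.7.3 and 2.7.4 there, which package your $\Ext$ and amplitude-induction argument), and then handle \eqref{eq:prop:contDbctfR2} by descending a bounded flat resolution and checking that flatness of each term survives after passing to a large enough index. You have correctly identified the genuine content, namely the descent-of-flatness step.

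Two remarks. First, your invocation of Lemma~\ref{proetaleinvimagerightQuillen} is slightly off: its hypothesis~(iv) (finite cohomological dimension of objects in $\mathcal G$) is not assumed here. What you actually need is the more elementary SGA4, Exp.~VII, 5.7--5.8 continuity for bounded-below complexes, which suffices since your $\uExt^q$ are individual sheaves and the spectral sequence is first-quadrant. Second, the paper disposes of the flatness descent more crisply than your ``non-flat strata disappear'' outline: after stratifying so that $F_i$ is locally constant on an integral stratum, SGA4, Exp.~IX, Prop.~2.11 reduces flatness to a single geometric fiber, and any geometric point of $X$ lying over that stratum witnesses flatness (since the stalk of $F_i$ there agrees with the stalk of the flat sheaf $F$). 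This avoids having to argue that bad strata eventually become empty.
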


\begin{proof}
The fact that \eqref{eq:prop:contDbctfR1}
is an equivalence easily follows from
\cite[Exp.~IX, Cor.~2.7.3 and 2.7.4]{SGA4}.
This readily implies that \eqref{eq:prop:contDbctfR2}
is fully faithful. To prove the essential surjectivity
of the latter, we easily deduce from
\cite[Rapport, 4.6]{SGA4D}
that it is is sufficient to prove the following
property: given some constructible sheaf of $R$-modules
$F_i$ on some $X_i$ whose pullback $F$ along the projection
$X\to X_i$ is flat, there exists an index $j\geq i$
such that the pullback $F_j$ of $F_i$ along the
transition map $X_j\to X_i$ is flat.
Choosing an adequate stratification of $X_i$, we may
assume that $F_i$ is locally constant and that $X_i$ is
integral. By virtue of \cite[Exp.~IX, Prop.~2.11]{SGA4},
is thus sufficient to prove that there
exists a geometric point $x_i$ of $X_i$
such that the fiber of $F_i$ at $x_i$ is a flat
$R$-module. But, for any
(geometric) point $x$ of $X$ over $x_i$,
it is isomorphic to
the fiber $x^*(F)=F_x$, which is flat.
\end{proof}

\begin{df}\label{df:goodenoughring}
A commutative ring $R$ will be said to be
\emph{good enough} if it is noetherian, and if, for any prime number $p$,
the localized ring $R_{(p)}=\ZZ_{(p)}\otimes R$
has the property that $p$ is either nilpotent or
is not a zero divisor.\footnote{This notion
is introduced as a possible
constraint on the rings of coefficients. However,
it is only a simplifying hypothesis for the proof
of Proposition \ref{prop:continuityDMh} and, in an even
less trivial way, of Theorem \ref{thm:DbctfDMhlctorsion}:
in fact, this proposition (as well as the theorem, but the latter
is not used to prove anything else),
and therefore, all the results of this section,
remain valid for arbitrary
rings of coefficients (although one has to take the appropriate
definition of $\Der^b_\ctf(X,R)$ for a non-noetherian
ring $R$), but such level of generality
demands either enough abnegation to do ingrate
computations or to present the theory into the
more advanced language of higher categories.}
For instance, any noetherian ring which is flat over $\ZZ$,
or any noetherian ring of
positive characteristic is good enough.
\end{df}

\begin{prop}\label{prop:continuityDMh}
Assume that $R$ is good enough.
Let $X$ be a noetherian scheme of finite dimension, and
$\{X_i\}_{i\in I}$ a projective system of noetherian
schemes of finite dimension
with affine transition maps. \\
Consider an index $i_0 \in I$
 and two locally constructible $R$-linear $\h$-motives
 $M_{i_0}$, $N_{i_0}$ over $X_{i_0}$.
We denote by $M$, $N$
 (resp. $M_i$, $N_i$)
 for the respective pullbacks of $M_{i_0}$, $N_{i_0}$
 along the projection $X\to X_{i_0}$
 (resp. transition map $X_i\to X_{i_0}$ for a map $i \rightarrow i_0$
  in $I$).
%For any map $i\to i_0$ in $I$,
 %we write $M_i$ and $N_i$ (resp. $M$ and $N$)
 %for the respective pullbacks of $M_{i_0}$ and $N_{i_0}$
 %along the morphism $X_i\to X_{i_0}$
 %(resp. projection $X\to X_{i_0}$).

Then we have a canonical isomorphism of $R$-modules
\begin{equation}\label{eq:prop:continuityDMh}
\varinjlim_i\Hom_{\DM_{\h,\lc}(X_i,R)}(M_i,N_i)\simeq
\Hom_{\DM_{\h,\lc}(X,R)}(M,N)\, .
\end{equation}
\end{prop}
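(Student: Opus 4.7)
The plan is to combine three ingredients: the rational continuity from Proposition \ref{prop:continuityQlinearhmot}, the identification of $\h$-motives with \'etale sheaves in positive characteristic (Corollary \ref{cor:DMh_Det}) together with the classical continuity of $\Der^b_\ctf$ from Proposition \ref{prop:contDbctfR}, and the change-of-coefficients compatibility from Proposition \ref{prop:tensQloccontstruct}.

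First I would reduce to the case where $M_{i_0}$ is constructible on $X_{i_0}$. The Hom groups in $\DM_\h$ are computed by the presheaf of complexes $C(M_{i_0},N_{i_0};R)$ on the small \'etale site of $X_{i_0}$ introduced in the proof of Proposition \ref{prop:tensQloccontstruct}, which satisfies \'etale descent. Since an \'etale cover of the limit $X=\varprojlim X_i$ comes by pullback from an \'etale cover of some $X_j$ (\cite[Exp.~VII]{SGA4}), and since pullbacks of constructible objects along \'etale refinements remain constructible, by a standard hypercover descent argument one can replace $X_{i_0}$ by such an \'etale cover so as to assume both $M_{i_0}$ and $N_{i_0}$ are constructible of geometric origin on $X_{i_0}$, at the cost of a spectral sequence whose terms are of the form we are trying to control.

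Next I would reduce to a specific class of rings of coefficients. By Proposition \ref{prop:tensQloccontstruct}, for any localization $A$ of $\ZZ$ and $M_i$ constructible, the canonical map
\begin{equation*}
\Hom_{\DM_\h(X_i,R)}(M_i,N_i)\otimes A \to \Hom_{\DM_\h(X_i,R\otimes A)}(M_i\otimes A,N_i\otimes A)
\end{equation*}
is bijective, and filtered colimits commute with $(-)\otimes A$. Using the characterization of constructibility via $\mathfrak{p}$-constructibility at all maximal primes $\mathfrak{p}$ of $\ZZ$ (Proposition \ref{prop:local_localization}), together with the good-enoughness hypothesis on $R$ (so that each $R_{(p)}$ is either of positive characteristic $p^N$ or $p$-torsion-free, in which latter case one uses the distinguished triangle $R_{(p)}\xrightarrow{p^n}R_{(p)}\to R_{(p)}/p^n$ to further reduce), one is brought down to two disjoint cases: $R$ is a $\QQ$-algebra, or $R$ is of positive characteristic.

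In the $\QQ$-linear case, Proposition \ref{prop:fcohdimlconst} gives $\DM_{\h,\lc}=\DM_{\h,c}$, and Proposition \ref{prop:continuityQlinearhmot} concludes. In the positive characteristic case, say $R$ of characteristic $n$, Proposition \ref{prop:et+htp&torsion} allows the base change to $X[1/n]$ so that we may assume $n$ is invertible in the residue fields; then Corollary \ref{cor:DMh_Det} identifies the sought Hom group with a Hom group in $\Der(X_\et,R)$ between objects of $\Der^b_\ctf$, where the continuity statement is exactly Proposition \ref{prop:contDbctfR}. The main obstacle is the first reduction step: carrying out the \'etale descent cleanly requires one to control, uniformly in $i$, the convergence of the descent spectral sequence for the \'etale hypercover so that the filtered colimit over $i$ may be commuted with it, which is precisely where the constructibility of $M_{i_0}$ after refinement (and the consequent compactness-type statement of Corollary \ref{cor:intHomsmallsumsDMh}) is used essentially.
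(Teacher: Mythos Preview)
Your overall architecture matches the paper's: reduce the ring of coefficients, then handle the rational case via Proposition~\ref{prop:continuityQlinearhmot} and the torsion case via Corollary~\ref{cor:DMh_Det} together with Proposition~\ref{prop:contDbctfR}. However, there are two genuine problems.

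First, your opening reduction (from locally constructible to constructible via an \'etale descent spectral sequence) is unnecessary, and the obstacle you flag at the end is real and unresolved. The paper never makes this reduction. In the rational case, Proposition~\ref{prop:fcohdimlconst} already gives $\DM_{\h,\lc}=\DM_{\h,c}$. In the torsion case, the full equivalence $\DM_\h(Y,R)\simeq\Der(Y_\et,R)$ of Corollary~\ref{cor:DMh_Det} identifies the Hom groups, and a locally constructible $M$ satisfies $u^*M\in\DM_{\h,c}(Y',R)\subset\Der^b_\ctf(Y'_\et,R)$ for some \'etale cover $u:Y'\to Y$; since membership in $\Der^b_\ctf$ is \'etale-local, $M$ itself lies in $\Der^b_\ctf(Y_\et,R)$. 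Hence Proposition~\ref{prop:contDbctfR} applies directly to locally constructible objects, with no descent spectral sequence to control.

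Second, your reduction to ``two disjoint cases'' does not close correctly. After passing to a $\ZZ_{(p)}$-algebra $R$ that is $p$-torsion-free, the triangle $R\xrightarrow{p^n}R\to R/p^n$ does not by itself reduce the $R$-statement to an $R/p^n$-statement. The paper instead works with the derived comparison map in $\Der(R)$: using Proposition~\ref{prop:rho_n&6_functors} and Proposition~\ref{prop:tensQloccontstruct}, one shows it becomes an isomorphism after $(-)\otimes^{\derL}\ZZ/p\ZZ$ and after $(-)\otimes\QQ$, whence its cone is simultaneously uniquely $p$-divisible and $\QQ$-trivial, hence zero. Note also that Proposition~\ref{prop:local_localization} concerns detecting constructibility, not Hom groups, and plays no role here; the localization at primes of $\ZZ$ is handled entirely by Proposition~\ref{prop:tensQloccontstruct}.
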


\begin{proof}
We want to prove that the morphism
\begin{equation}\label{eq:proofcor:continuityDMh}
\derL\varinjlim_i\derR\Hom_{\DM_{\h}(X_i,R)}(M_i,N_i)\simeq
\derR\Hom_{\DM_{\h}(X,R)}(M,N)
\end{equation}
is an isomorphism in the derived category of $R$-modules.
By virtue of Proposition \ref{prop:tensQloccontstruct},
we may assume that $R$ is a $\ZZ_{(p)}$-algebra for some prime number $p$. 
Under these assumptions, if $R$ is furthermore
a $\QQ$-algebra, the invertibility of the map
\eqref{eq:proofcor:continuityDMh} is a particular case of
Proposition \ref{prop:continuityQlinearhmot}.
If $R$ is a $\ZZ/n\ZZ$-algebra with $n=p^a$ a power of some prime number $p$,
then, by virtue of
Proposition \ref{prop:et+htp&torsion}, we may assume that all the schemes
are of characteristic prime to $p$, and thus, by virtue of
the last assertion
of Corollary \ref{cor:DMh_Det}, we can replace 
$\DM_{\h,\lc}(X,R)$ by $\Der^b_{\ctf}(X_\et,R)$ and use
Proposition \ref{prop:contDbctfR}. For the general case when $R$ is a
$\mathbf{Z}_{(p)}$-algebra,
we may assume that $R$ is of mixed characteristic and flat over $\ZZ$.
Using Proposition \ref{prop:rho_n&6_functors}
and Proposition \ref{prop:tensQloccontstruct},
we deduce that
the map \eqref{eq:proofcor:continuityDMh} is an isomorphism after we
tensor (in the derived sense) by $\ZZ/p\ZZ$, or by $\QQ$.
This implies that it is an isomorphism in the derived category of $R$-modules.
\end{proof}

\begin{rem}\label{rem:continuityDMh}
In the previous proposition, if separated \'etale $X$-schemes
of finite type are of finite \'etale cohomological dimension
(e.g. if $X$ is of finite type over
a strictly henselian scheme (\ref{thm:localetalefinitecd})),
and if the transition maps of the projective system $\{X_i\}_{i\in I}$
are \'etale, then we still have the isomorphism \eqref{eq:prop:continuityDMh} without
the assumption that the objects $N_i$ are locally constructible.
The proof remains exactly the same, except that we use
Lemma \ref{proetaleinvimagerightQuillen} (applied to
the adequate family of small \'etale topoi) instead
of Proposition \ref{prop:contDbctfR}.
\end{rem}

\begin{thm}\label{thm:contlocconstruct}
Under the assumptions of Proposition
\ref{prop:continuityDMh}, the canonical functor
\begin{equation}\label{eq:thm:continuityDMh1}
2\text{-}\varinjlim_i\DM_{\h,c}(X_i,R)\to\DM_{\h,c}(X,R)
\end{equation}
is an equivalence of triangulated categories. If, moreover,
the \'etale cohomological dimension of the residue fields of the scheme $X$
is uniformly bounded (e.g. if $X$ is of finite type over
a noetherian strictly henselian scheme), then the functor
\begin{equation}\label{eq:thm:continuityDMh2}
2\text{-}\varinjlim_i\DM_{\h,\lc}(X_i,R)\to\DM_{\h,\lc}(X,R)=\DM_{\h,c}(X,R)
\end{equation}
is an equivalence of triangulated categories as well.
\end{thm}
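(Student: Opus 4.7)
The plan is to establish full faithfulness and essential surjectivity of each of the two functors separately. Full faithfulness will in both cases be a direct consequence of Proposition~\ref{prop:continuityDMh}. Essential surjectivity of \eqref{eq:thm:continuityDMh1} will rest on the explicit description of generators of $\DM_{\h,c}(X,R)$ together with classical approximation results for schemes of finite presentation, while essential surjectivity of \eqref{eq:thm:continuityDMh2} will be deduced formally from part~(1) using Theorem~\ref{thm:fcohdimlconst}.

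For part~(1), full faithfulness of \eqref{eq:thm:continuityDMh1} is immediate from Proposition~\ref{prop:continuityDMh} applied to constructible $\h$-motives, which are in particular locally constructible. For essential surjectivity, I would exploit the fact that by Definition~\ref{df:hmotives&constructible_hmotives} the category $\DM_{\h,c}(X,R)$ is the thick triangulated subcategory generated by the objects $R_X(Y)(n)$, for $Y\to X$ smooth separated of finite type and $n\in\ZZ$. Given such a $Y$, the morphism $Y\to X$ is of finite presentation since $X$ is noetherian, so by the standard approximation theorems (EGA~IV, \textsection 8.8.2, 8.10.5 and 17.7.8) there exist an index $i$ and a smooth separated $X_i$-scheme of finite type $Y_i$ whose pullback to $X$ is $Y$; hence $R_X(Y)(n)$ lies in the essential image of \eqref{eq:thm:continuityDMh1}. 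This image is a triangulated subcategory of $\DM_{\h,c}(X,R)$ containing every generator, so it remains to verify that it is closed under retracts. This follows from the general fact that a filtered $2$-colimit of pseudo-abelian additive categories along additive functors is itself pseudo-abelian: any idempotent in the colimit descends, after refinement of the index, to an idempotent in some $\DM_{\h,c}(X_j,R)$, where it splits, and the splitting is preserved under the canonical functor into the colimit.

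For part~(2), the equality $\DM_{\h,\lc}(X,R)=\DM_{\h,c}(X,R)$ on the right hand side of \eqref{eq:thm:continuityDMh2} is precisely Theorem~\ref{thm:fcohdimlconst} under the extra hypothesis on the \'etale cohomological dimension of the residue fields. Full faithfulness of \eqref{eq:thm:continuityDMh2} is then again a direct instance of Proposition~\ref{prop:continuityDMh}, now applied to locally constructible motives. Essential surjectivity is formal: any object of $\DM_{\h,\lc}(X,R)=\DM_{\h,c}(X,R)$ lies in the essential image of $2\text{-}\varinjlim_i\DM_{\h,c}(X_i,R)$ by part~(1), and the canonical inclusions $\DM_{\h,c}(X_i,R)\subset\DM_{\h,\lc}(X_i,R)$ provide a factorization through $2\text{-}\varinjlim_i\DM_{\h,\lc}(X_i,R)$.

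The main obstacle lies in the approximation step for part~(1): one must not only exhibit some $Y_i\to X_i$ of finite presentation whose pullback to $X$ is $Y$, but also ensure that $Y_i$ can be taken smooth and separated after enlarging $i$, which is granted by the classical EGA~IV results cited above. The verification that the $2$-colimit is pseudo-abelian is a formality but is essential: without it one would only obtain an equivalence with the Karoubian envelope of the colimit, not with the colimit itself.
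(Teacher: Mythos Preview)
Your proof is correct and follows essentially the same approach as the paper. The only minor difference is that the paper reduces to the case of affine smooth $X$-schemes via a Mayer-Vietoris argument before invoking the EGA~IV approximation results, whereas you descend arbitrary smooth separated $X$-schemes of finite type directly; both routes are valid and lead to the same conclusion.
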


\begin{proof}
The isomorphism \eqref{eq:prop:continuityDMh}
implies that the functor
\eqref{eq:thm:continuityDMh1}
is fully faithful.
Let us prove that it is essentially
surjective. As we already know that it
is fully faithful, it identifies
the idempotent complete triangulated category $2\text{-}\varinjlim_i\DM_{\h,c}(X_i,R)$
with a thick subcategory of the triangulated category $\DM_{\h,c}(X,R)$.
But, by definition of the latter, the smallest thick subcategory of $\DM_{\h,c}(X,R)$
containing the objects of the form $R(U)(n)$, with $U$ a separated smooth scheme
of finite type over $X$ and $n\in\ZZ$, is the whole category $\DM_{\h,c}(X,R)$ itself.
Moreover, for any such $U$ and any Zariski covering $U=V\cup W$, we have
a Mayer-Vietoris distinguished triangle of the form
$$R(V\cap W)\to R(V)\oplus R(W)\to R(U)\to R(V\cap W)[1]\, .$$
Hence, to prove that $R(U)(n)$ belongs to the essential image of \eqref{eq:thm:continuityDMh1},
it is sufficient to prove that $R(V)$, $R(W)$ and $R(V\cap W)$ have this property.
In particular, it is sufficient to consider the case where $U$ is affine over $X$.
Therefore, the fact that the functor \eqref{eq:thm:continuityDMh1}
is essentially surjective comes from the fact that
any affine smooth scheme
of finite type over $X$ is the pullback of an affine smooth scheme
of finite type over $X_i$ for some index $i\in I$; see
\cite[Th.~8.10.5, Prop.~17.7.8]{EGA4}.

Under our additional assumption, the proof
that the functor \eqref{eq:thm:continuityDMh2} is
an equivalence of categories readily follows from there:
it is fully faithful by Proposition \ref{prop:continuityDMh}, and
it is essentially surjective because the functor \eqref{eq:thm:continuityDMh1}
is essentially surjective and because, by virtue of
Theorem \ref{thm:fcohdimlconst},
we have the equality $\DM_{\h,\lc}(X,R)=\DM_{\h,c}(X,R)$.
\end{proof}

\begin{prop}\label{prop:Dctf=compact}
Let $X$ be a noetherian scheme, and $R$ a noetherian ring.
Assume as well that any separated quasi-finite $X$-scheme
is of finite \'etale cohomological dimension with $R$-linear
coefficients. Then the triangulated category $\Der^b_\ctf(X_\et,R)$
is the full subcategory of compact objects in the
unbounded derived category $\Der(X_\et,R)$.
If, moreover, $X$ is of finite dimension,
$R$ is of characteristic
invertible in $\cO_X$, and if the \'etale cohomological dimension with $R$-linear
coefficients of the residue fields of $X$ is uniformly bounded,
then the equivalence of triangulated categories
$\Der(X_\et,R)\simeq\DM_\h(X,R)$
provided by Corollary \ref{cor:DMh_Det}
induces an equivalence of categories
$$\Der^b_{\ctf}(X_\et,R)\simeq\DM_{\h,c}(X,R)\, .$$
\end{prop}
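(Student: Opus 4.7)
The plan is to split the argument into two parts: first establish the characterization of compact objects in $\Der(X_\et,R)$, and then deduce the second equivalence by combining this with earlier results.

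For the first assertion, Proposition \ref{zarlfinitecohdimcompact} together with the cohomological dimension hypothesis shows that the sheaves $R(U)$, for $U$ quasi-finite separated \'etale over $X$, form a family of compact generators of $\Der(X_\et,R)$. Since each such $R(U)$ lies in $\Der^b_\ctf(X_\et,R)$ and the latter is a thick subcategory, and since in a compactly generated triangulated category the subcategory of compact objects coincides with the thick subcategory generated by any family of compact generators, the inclusion of compact objects in $\Der^b_\ctf(X_\et,R)$ is immediate.

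For the reverse inclusion, I would argue by d\'evissage. By \cite[Rapport, 4.6]{SGA4D}, every object of $\Der^b_\ctf(X_\et,R)$ is isomorphic to a bounded complex of flat constructible sheaves of $R$-modules; since compact objects form a thick subcategory, it suffices to show that every flat constructible sheaf $F$ on $X$ is compact. Choose a finite stratification $X = \coprod_\alpha S_\alpha$ by locally closed subschemes on which $F$ restricts to a locally constant sheaf $F_\alpha$ with finitely generated projective $R$-stalks (such a stratification exists since $R$ is noetherian and $F$ is flat constructible). Writing $j_\alpha : S_\alpha \to X$ for the inclusion, the standard localization triangles associated with the successive closed/open decompositions express $F$ as a finite iterated extension of the sheaves $j_{\alpha!}(F_\alpha)$. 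As compact objects are closed under extensions, it remains to prove that each $j_{\alpha!}(F_\alpha)$ is compact.

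This reduces to two facts. First, $F_\alpha$ is strongly dualizable in $\Der(S_{\alpha,\et}, R)$ with dual $F_\alpha^\vee = \uHom(F_\alpha, R_{S_\alpha})$, since its stalks are finitely generated projective $R$-modules; hence
\[
\derR\Hom(F_\alpha, -) \simeq \derR\Gamma(S_\alpha, F_\alpha^\vee \otimes^\derL -),
\]
and this functor commutes with small sums because $S_\alpha$ is locally closed in $X$, hence quasi-finite and separated over $X$, hence of finite \'etale cohomological dimension by hypothesis (invoke Lemma \ref{lm:preparecompactsite}). Thus $F_\alpha$ is compact in $\Der(S_{\alpha,\et}, R)$. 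Second, factoring $j_\alpha$ as a closed immersion $i$ into an open subscheme $U$ of $X$ followed by an open immersion $u\colon U \to X$, the right adjoint $j_\alpha^! = i^! u^*$ of $j_{\alpha!}$ commutes with small sums: indeed, $u^*$ is exact and $i^!$ commutes with small sums thanks to the localization triangle $i_* i^! \to \id \to \derR v_*\, v^*$ with $v\colon U \setminus S_\alpha \to U$, because $\derR v_*$ commutes with small sums (both $\Der(U_\et, R)$ and $\Der((U \setminus S_\alpha)_\et, R)$ are compactly generated by Proposition \ref{zarlfinitecohdimcompact} and $v^*$ preserves compact generators, exactly as in the proof of Corollary \ref{directimagepreservessums}). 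Consequently $j_{\alpha!}$ preserves compactness, and the required compactness of $j_{\alpha!}(F_\alpha)$ follows.

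For the second assertion, I would combine the first part with Theorem \ref{thm:fcohdimlconst} and Corollary \ref{cor:DMh_Det}. Under the additional hypotheses, the argument of Theorem \ref{thm:localetalefinitecd} shows that every separated quasi-finite $X$-scheme is of finite \'etale cohomological dimension with $R$-linear coefficients, so the assumption of the first assertion is met. The symmetric monoidal triangulated equivalence $\Der(X_\et, R) \simeq \DM_\h(X, R)$ supplied by Corollary \ref{cor:DMh_Det} preserves compact objects in both directions; by the first assertion, the compact objects on the left-hand side are exactly $\Der^b_\ctf(X_\et, R)$, and by Theorem \ref{thm:fcohdimlconst} the compact objects on the right-hand side are exactly $\DM_{\h,c}(X, R)$. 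Combining these identifications yields the desired equivalence. The main technical obstacle in this plan is the verification that $j_\alpha^!$ commutes with small sums; this is where the cohomological dimension hypothesis genuinely enters, via the compact generation of the relevant derived categories of \'etale sheaves.
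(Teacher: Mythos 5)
Your proof is correct, but it follows a genuinely different route from the one in the paper for the reverse inclusion $\Der^b_\ctf(X_\et,R)\subseteq\Der(X_\et,R)_c$, so let me compare the two. You stratify $X$ so that the restriction of a flat constructible sheaf to each stratum is locally constant with finitely generated projective stalks, you then establish compactness of such a restriction directly via strong dualizability (and the finite cohomological dimension of the strata), and you show $j_{\alpha!}$ preserves compactness by proving that $j_\alpha^!=i^!u^*$ commutes with small sums. The paper's proof first establishes that compactness in $\Der(X_\et,R)$ is local for the \'etale topology (by the same mechanism as the equivalence (c)$\Leftrightarrow$(d) in Theorem~\ref{thm:fcohdimlconst}: $u^*$ is conservative, commutes with small sums and with derived $\uHom$), then applies \cite[Rapport, Lemma 4.5.1 and Proposition 4.6]{SGA4D} to reduce, after stratification and an \'etale cover, to the case of a constant sheaf of complexes $M_X$ with $M$ a perfect complex of $R$-modules; from there it concludes by a thick subcategory argument starting from $R_X$. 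Your strong-dualizability argument replaces the paper's ``reduce to $R_X$ and thick subcategory'' step, and has the mild advantage of avoiding the explicit appeal to \'etale descent of compactness; on the other hand it requires the somewhat more delicate verification that $j_\alpha^!$ commutes with small sums via the factorization of a locally closed immersion and the localization triangle. Both approaches invoke the cohomological dimension hypothesis in the same spots (compact generation of the derived categories over the various strata and open subschemes), and the two final assertions are handled identically: check the hypothesis of the first assertion is met under the extra conditions, then transport compactness through the equivalence of Corollary~\ref{cor:DMh_Det} and apply Theorem~\ref{thm:fcohdimlconst}.
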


\begin{proof}
It follows from Proposition \ref{zarlfinitecohdimcompact}
that the family of representable sheaves $R(U)$, where $U$
runs over the (separated) \'etale $X$-schemes of finite type,
form a generating family of compact objects of
the triangulated category $\Der(X_\et,R)$. Therefore, the
category $\Der(X_\et,R)_c$ of compact objects of $\Der(X_\et,R)$ can be described as
the smallest thick subcategory of $\Der(X_\et,R)$ which contains
the sheaves $R(U)$ as above. As these sheaves obviously belong to
$\Der^b_{\ctf}(X_\et,R)$, this proves that any compact object
of $\Der(X_\et,R)$ belongs to $\Der^b_{\ctf}(X_\et,R)$.
It remains to prove the reverse inclusion.
Note that, for any closed immersion $i:Z\to X$ with
open complement $j:U\to X$, we have short exact sequences
$$0\to j_!j^*(F)\to F\to i_*i^*(F)\to 0$$
from which we deduce that $\Der(X_\et,R)_c$
is stable by the operations $j_!$, $j^*$, $i^*$ and $i_*$.
Proceeding as in the proof of the equivalence $(c)\Leftrightarrow(d)$ of Theorem \ref{thm:fcohdimlconst},
we see that the property of being compact in $\Der(X_\et,R)$
is local with respect to the \'etale topology: if there exists
an \'etale surjective map $u:X'\to X$ such that $u^*(C)$
is compact in $\Der(X'_\et,R)$, then $C$ is compact.

Let $C$ be an object of $\Der^b_\ctf(X_\et,R)$.
To prove that $C$ is compact, it is sufficient to prove that
there exists a stratification of $X$ by locally
closed subsets $X_i$ such that the restriction
$C_i=C|_{X_i}$ is compact for any $i$.
Moreover, it is sufficient to check that 
each $C_i$ is compact after we pull it back
along an \'etale surjective map $X'_i\to X_i$.
By virtue of \cite[Rapport, Lemma 4.5.1
and Proposition 4.6]{SGA4D}, we thus may assume that
there exists a perfect complex of $R$-modules $M$
such that $C$ is isomorphic in $\Der(X_\et,R)$
to the constant sheaf $M_X$ associated to $M$.
On the other hand,
the functor $M\mapsto M_X$ being exact, the
complexes of $R$-modules $M$ such that $M_X$
is compact form a thick subcategory of the derived
category $\Der(R)$ of the category of $R$-modules.
But the category of perfect complexes of $R$-modules
is the smallest thick subcategory of $\Der(R)$
which contains $R$ (seen as a complex of $R$-modules
concentrated in degree zero). Therefore, we may assume
that $C=R_X$, which is compact. This proves the equality
$\Der(X_\et,R)_c=\Der^b_\ctf(X_\et,R)$.

As equivalences of categories preserve
compact objects, the last assertion readily follows from
Theorem \ref{thm:fcohdimlconst}.
\end{proof}

\begin{thm}\label{thm:DbctfDMhlctorsion}
Let $X$ be a noetherian
scheme of finite dimension,
and consider a noetherian ring of coefficients $R$, of
positive characteristic prime to the residue characteristics
of $X$.
Then the canonical equivalence of triangulated
categories $\Der(X_\et,R)\simeq\DM_\h(X,R)$
restricts to an equivalence of triangulated categories
$$\Der^b_\ctf(X_\et,R)\simeq\DM_{\h,\lc}(X,R)\, .$$
\end{thm}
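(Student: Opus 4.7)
Let $F\colon\Der(X_\et,R)\xrightarrow\sim\DM_\h(X,R)$ denote the equivalence from Corollary \ref{cor:DMh_Det}, which is functorial with respect to pullback along morphisms of noetherian $\ZZ[1/n]$-schemes. My plan is to establish the two inclusions separately. The easy direction is $F(\DM_{\h,\lc}(X,R))\subseteq\Der^b_\ctf(X_\et,R)$: given $M\in\DM_{\h,\lc}(X,R)$ with an \'etale cover $\{u_i\colon X_i\to X\}$ such that $u_i^*(M)\in\DM_{\h,c}(X_i,R)$, the last assertion of Corollary \ref{cor:DMh_Det} places $F^{-1}(u_i^*(M))$ inside $\Der^b_\ctf(X_{i,\et},R)$; since the three defining conditions of $\Der^b_\ctf$ (boundedness, constructible cohomology, finite Tor-dimension) are \'etale local, in view of \cite[Rapport, Prop.~4.6]{SGA4D}, this yields $F^{-1}(M)\in\Der^b_\ctf(X_\et,R)$.

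For the reverse inclusion, I would reduce to a strictly henselian base via geometric points. Let $C\in\Der^b_\ctf(X_\et,R)$ and set $M=F(C)$. For any geometric point $\xi$ of $X$, let $X_\xi$ be the strict henselization, which is noetherian of finite dimension $\le\dim X$. Gabber's Theorem \ref{thm:localetalefinitecd}, applied to $X_\xi$ and using that $R$ has characteristic prime to the residue characteristics, guarantees that the residue fields of $X_\xi$ are uniformly of finite \'etale cohomological dimension for $R$-coefficients, so Proposition \ref{prop:Dctf=compact} and Theorem \ref{thm:fcohdimlconst} give $\Der^b_\ctf(X_{\xi,\et},R)\simeq\DM_{\h,c}(X_\xi,R)=\DM_{\h,\lc}(X_\xi,R)$. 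Since $C|_{X_\xi}\in\Der^b_\ctf(X_{\xi,\et},R)$, we obtain $M|_{X_\xi}\in\DM_{\h,c}(X_\xi,R)$.

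The main task is then to descend this constructibility to an \'etale neighborhood of $\xi$ in $X$. Writing $X_\xi=\varprojlim_j X_j$ for the cofiltered system of \'etale neighborhoods of $\xi$, the continuity Theorem \ref{thm:contlocconstruct} produces an index $j_0$ and some $N_{j_0}\in\DM_{\h,c}(X_{j_0},R)$ with $N_{j_0}|_{X_\xi}\simeq M|_{X_\xi}$. At this point I cannot directly invoke Proposition \ref{prop:continuityDMh} to lift the isomorphism, because $M|_{X_j}$ is not yet known to be locally constructible; instead I would appeal to Remark \ref{rem:continuityDMh}, whose hypotheses are satisfied because $X_\xi$ is strictly henselian (so every separated \'etale $X_\xi$-scheme of finite type is of finite \'etale cohomological dimension with $R$-coefficients) and the transition maps $X_{j'}\to X_j$ are \'etale. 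This yields
\[
\Hom_{\DM_\h(X_\xi,R)}(N_{j_0}|_{X_\xi},M|_{X_\xi})\simeq
\varinjlim_{j\ge j_0}\Hom_{\DM_\h(X_j,R)}(N_{j_0}|_{X_j},M|_{X_j}),
\]
and similarly in the other direction, so the given isomorphism and its inverse lift to morphisms at some finite stage $j$; after enlarging $j$ once more, their compositions become identities, so $M|_{X_j}\simeq N_{j_0}|_{X_j}$ is constructible.

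Performing this construction at every geometric point of $X$ produces an \'etale cover of $X$ on which $M$ is constructible, showing $M\in\DM_{\h,\lc}(X,R)$. The main technical obstacle is precisely the descent step: one needs a continuity statement that allows the second argument of the Hom group to be an arbitrary $\h$-motive, not necessarily locally constructible, which is exactly what Remark \ref{rem:continuityDMh} supplies thanks to the finite \'etale cohomological dimension at strict henselizations. Without this refinement, the circular flavor (needing local constructibility of $M|_{X_j}$ to prove local constructibility of $M|_{X_j}$) would prevent the argument from closing.
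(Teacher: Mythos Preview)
Your overall strategy is the paper's: pass to the strict henselisation $X_\xi$, where Proposition \ref{prop:Dctf=compact} identifies $\Der^b_\ctf$ with $\DM_{\h,c}$, and then descend to a finite \'etale neighbourhood by continuity. The first inclusion (you mean $F^{-1}$ rather than $F$, but your intent is clear) and the computation at $X_\xi$ are fine.

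The descent step, however, has a real gap. Remark \ref{rem:continuityDMh} relaxes only the hypothesis on the \emph{second} argument of Proposition \ref{prop:continuityDMh}: the first argument must still be locally constructible, since the proof of that proposition first reduces to the case where the source is $R(U)$ for $U$ smooth, and only then computes hypercohomology of the target (it is this last computation, not the reduction, that Lemma \ref{proetaleinvimagerightQuillen} replaces). So you may lift $\phi\colon N_{j_0}|_{X_\xi}\to M|_{X_\xi}$ to a finite stage, but ``similarly in the other direction'' is not justified: for $\Hom(M|_{X_j},N_{j_0}|_{X_j})$ the object $M$ sits in the first slot, and that is exactly what you do not yet know to be locally constructible. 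Lifting only $\phi$ and arguing with its cone does not help either, for the same reason.

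The repair is to use the one finiteness property of $M$ you have not exploited: $F^{-1}(M)=C$ lies in $\Der^b_\ctf(X_\et,R)$, and \'etale pullback preserves $\Der^b_\ctf$, so $C|_{X_j}\in\Der^b_\ctf(X_{j,\et},R)$ for every $j$; likewise $F^{-1}(N_{j_0})\in\Der^b_\ctf$ by Corollary \ref{cor:DMh_Det}. Proposition \ref{prop:contDbctfR} then gives continuity of $\Hom$ in both variables on the $\Der^b_\ctf$ side, and the isomorphism on $X_\xi$ descends to a finite stage. This is precisely the paper's argument, packaged there as the commutative triangle of $2$-colimits of $\DM_{\h,c}$, $\DM_{\h,\lc}$ and $\Der^b_\ctf$ over the \'etale neighbourhoods of $\xi$, all three being identified with the common value at $X_\xi$ via Theorems \ref{thm:localetalefinitecd} and \ref{thm:contlocconstruct} together with Propositions \ref{prop:Dctf=compact} and \ref{prop:contDbctfR}. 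Once you substitute Proposition \ref{prop:contDbctfR} for Remark \ref{rem:continuityDMh}, your proof coincides with it, and the auxiliary object $N_{j_0}$ becomes unnecessary.
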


\begin{proof}
The equivalence of categories $\Der(X_\et,R)\simeq\DM_\h(X,R)$
are compatible with the six operations
and thus induce fully faithful functors
$$\DM_{\h,\lc}(X,R)\to\Der^b_\ctf(X_\et,R)$$
which are compatible with pullback
functors; see Corollary \ref{cor:DMh_Det}.
It is sufficient to prove the essential surjectivity
in the \'etale neighborhood of each geometric point $x$
of $X$. On the other hand, by virtue of
Theorems \ref{thm:localetalefinitecd} and
\ref{thm:contlocconstruct} and of
Propositions \ref{prop:Dctf=compact}
and \ref{prop:contDbctfR},
all the functors in the obvious commutative diagram
below, in which $V$ runs
over the \'etale neighborhoods of $x$,
$$\xymatrix{
2\text{-}\varinjlim_V\DM_{\h,c}(V,R)\ar[rr]\ar[dr]&&
2\text{-}\varinjlim_V\DM_{\h,\lc}(V,R)\ar[dl]\\
&2\text{-}\varinjlim_V\Der^b_{\ctf}(V,R)&
}$$
are equivalences of categories, which implies
our assertion.
\end{proof}

We can now complete Theorem \ref{thm:contlocconstruct}
as follows.

\begin{thm}\label{thm:contlocconstructfinal}
Let $R$ be a good enough ring of coefficients.
All the schemes below are assumed to be noetherian
and of finite dimension.
Assume that the scheme $X$ is the limit of a projective
system of schemes $\{X_i\}_{i\in I}$ with affine
transition maps.
Then the canonical functor
\begin{equation}\label{eq:thm:continuityDMhfinal}
2\text{-}\varinjlim_i\DM_{\h,\lc}(X_i,R)\to\DM_{\h,\lc}(X,R)
\end{equation}
is an equivalence of categories.
\end{thm}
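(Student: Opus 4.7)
The fully faithfulness part of the theorem follows immediately from Proposition \ref{prop:continuityDMh}, so only essential surjectivity has to be proved. The plan is to reduce to Theorem \ref{thm:contlocconstruct} by means of \'etale descent.

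Fix $M \in \DM_{\h,\lc}(X,R)$. By definition there is a finite \'etale cover $u \colon X' \to X$, which we may take with $X'$ affine, such that $u^{*}M$ lies in $\DM_{\h,c}(X',R)$. Denote by $X^{\prime(n)} = X' \times_X \cdots \times_X X'$ the iterated fiber product ($n$ factors). By standard noetherian approximation \cite[Th.~8.10.5 and Prop.~17.7.8]{EGA4}, for some index $i_0$ the cover $u$ is the base change of a finite \'etale cover $u_{i_0} \colon X'_{i_0} \to X_{i_0}$; for $i \geq i_0$, setting $X^{\prime(n)}_i = X'_i \times_{X_i} \cdots \times_{X_i} X'_i$, the projective system $\{X^{\prime(n)}_i\}_{i \geq i_0}$ has affine transition maps and limit $X^{\prime(n)}$. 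Applying Theorem \ref{thm:contlocconstruct} to these systems for $n = 1, 2, 3$, the plan is first to descend $u^{*}M$ to a constructible object $M'_{i_1}$ on some $X'_{i_1}$, then to descend the canonical descent isomorphism of $u^{*}M$ over $X^{\prime(2)}$ to an isomorphism $\varphi_{i_2}$ over $X^{\prime(2)}_{i_2}$ between the two pullbacks of $M'_{i_2}$, and finally to verify, using the fully faithfulness part of the same theorem, that the cocycle identity on $X^{\prime(3)}$ is already valid at some $X^{\prime(3)}_{i_3}$.

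It remains to glue the resulting effective \'etale descent data at level $i_3$ into an object $M_{i_3} \in \DM_\h(X_{i_3},R)$; by construction it will be locally constructible, since $u_{i_3}^{*}M_{i_3} \simeq M'_{i_3}$ is constructible, and a further application of Proposition \ref{prop:continuityDMh} will identify its pullback to $X$ with $M$. The main obstacle lies precisely in this gluing step: effective \'etale descent is not formal at the purely triangulated level. However, $\DM_\h(-,R)$ is obtained as the homotopy category of a stable model category of Tate spectra of $h$-sheaves, and so it carries effective $h$-descent---hence in particular effective \'etale descent---at the underlying model-categorical (or, equivalently, $(\infty,1)$-categorical) level; the required gluing can therefore be produced by totalization along the \v{C}ech nerve of the finite \'etale cover $u_{i_3}$.
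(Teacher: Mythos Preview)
Your approach is conceptually attractive --- reduce local constructibility to constructibility by \'etale descent and then apply Theorem~\ref{thm:contlocconstruct} --- but the gluing step contains a genuine gap.

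The data you produce at level $i_3$, namely the object $M'_{i_3}$, the isomorphism $\varphi_{i_2}$, and the cocycle identity, all live in the \emph{homotopy category}: they were obtained from Theorem~\ref{thm:contlocconstruct} and Proposition~\ref{prop:continuityDMh}, which are statements about Hom-sets in the triangulated categories. Effective \'etale descent for $\DM_\h$, on the other hand, is a statement about the underlying stable $\infty$-categories: the totalization along the \v{C}ech nerve of $u_{i_3}$ takes as input a full cosimplicial object in the $\infty$-category, i.e., a coherent system of higher homotopies at \emph{all} levels $X'^{(n)}_{i_3}$, not merely the $2$-truncated data you have built. Invoking the model-categorical structure does not help here, because you have not produced a point of $\lim_{[n]\in\Delta}\DM^\infty_\h(X'^{(n)}_{i_3},R)$; you have only produced its image in the homotopy $2$-category. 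One could try to lift this to full $\infty$-data, or equivalently to descend the genuine $\infty$-categorical descent datum of $M$ on $X'^{(\bullet)}$ down to a finite level, but that amounts to commuting the cosimplicial totalization with the filtered $2$-colimit over $i$ --- and such a commutation is not formal (the \v{C}ech totalization is an infinite limit even for a finite \'etale cover).

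The paper avoids this difficulty entirely by a coefficient-reduction strategy rather than descent. Using Propositions~\ref{prop:tensQloccontstruct} and~\ref{prop:localconstruct}, one localizes at a prime $p$ and treats separately the cases where $R$ is a $\QQ$-algebra (where $\DM_{\h,\lc}=\DM_{\h,c}$ by Proposition~\ref{prop:fcohdimlconst}, so Theorem~\ref{thm:contlocconstruct} applies directly) and where $R$ has positive characteristic (where the comparison with $\Der^b_\ctf$ in Theorem~\ref{thm:DbctfDMhlctorsion} and the classical continuity of Proposition~\ref{prop:contDbctfR} do the job). The mixed-characteristic case is then handled by a cone argument: for any $M\in\DM_{\h,\lc}(X,R)$ one finds a constructible $N$ and a map $M\oplus M'\to N$ which becomes an isomorphism after $\otimes\,\QQ$; its cone $C$ then satisfies $C\otimes\QQ=0$, hence is a direct factor of $C/p^\nu$ for $\nu\gg 0$ and therefore lies in the essential image by the torsion case, whence $M\oplus M'$ and finally $M$ do as well.
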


\begin{proof}
We already know that this functor is fully faithful.
Therefore, the left hand side of \eqref{eq:thm:continuityDMhfinal}
can be seen as a thick subcategory
of the right hand side.
As all the categories involved here are idempotent complete,
using Proposition \ref{prop:tensQloccontstruct}
together with Proposition \ref{prop:localconstruct} from the Appendix,
we see that we may assume $R$ to be a $\ZZ_{(p)}$-algebra.
This also means that, to prove that an object of $\DM_{\h,\lc}(X,R)$
is in the essential image of this functor, it is sufficient to prove that
it is a direct factor of an object in the essential image.

Henceforth, all integers prime to $p$ are supposed to be invertible in $R$.
If $R$ is a $\QQ$-algebra, Proposition  \ref{prop:fcohdimlconst},
together with Theorem \ref{thm:contlocconstruct}, show that
the functor \eqref{eq:thm:continuityDMhfinal} is an equivalence of
categories.
If $R$ is of positive characteristic, we easily deduce from
Theorem \ref{thm:DbctfDMhlctorsion} and Proposition \ref{prop:et+htp&torsion}
that, for any scheme $V$, we have canonical equivalences of triangulated categories
$$\Der^b_\ctf(W_\et,R)\simeq\DM_{\h,\lc}(W,R)\simeq\DM_{\h,\lc}(V,R)$$
where $W=V\times\Spec{(\ZZ[1/p])}$.
The fact that the functor \eqref{eq:thm:continuityDMhfinal}
is an equivalence of categories whenever $R$ is of positive
characteristic is now a reformulation
of Proposition \ref{prop:contDbctfR}
and of Theorem \ref{thm:DbctfDMhlctorsion}.

It remains to consider the case where $R$ is a good enough
$\ZZ_{(p)}$-algebra $R$ of characteristic zero.
Using Proposition \ref{prop:tensQloccontstruct} (with $M=N$),
what precedes
implies that any object $M$ of $\DM_{\h,\lc}(X,R)$
such that $M\otimes\QQ=0$ in $\DM_\h(X,R)$ belongs to the
essential image of the functor \eqref{eq:thm:continuityDMhfinal}:
indeed, this implies that, for $\nu\geq 0$ big enough, $M$ is a direct
factor of $M\otimes^\derL \ZZ/p^\nu\ZZ$, which belongs to the essential
image, as it comes from $\DM_{\h,\lc}(X,R\otimes\ZZ/p^\nu\ZZ)$.
On the other hand, one can interpret the conjunction
of Propositions  \ref{prop:fcohdimlconst} and \ref{prop:tensQloccontstruct}
as follows: the triangulated category $\DM_{\h,\lc}(X,R\otimes\QQ)$ is the
idempotent completion of the triangulated category
$\DM_{\h,c}(X,R)\otimes\QQ$. This means that, for any object $M$
of $\DM_{\h,\lc}(X,R)$,
there exists $M'_0$ in $\DM_{\h,\lc}(X,R\otimes\QQ)$ and $N$ in $\DM_{\h,c}(X,R)$
as well as an isomorphism
$$M\otimes\QQ\oplus M'_0\simeq N\otimes\QQ$$
in $\DM_\h(X,R\otimes\QQ)$.
But Proposition \ref{prop:fcohdimlconst}
also tells us that the corresponding embedding $M\otimes\QQ\to N\otimes\QQ$
is defined over $R$: there is a map $\lambda: M\to N$ in $\DM_{\h}(X,R)$
which identifies $M\otimes\QQ$ with a direct factor of $N\otimes\QQ$.
If $M'$ denotes a cone of this map $\lambda$, there exists
an isomorphism $M'\otimes\QQ\simeq M'_0$.
Hence, again by Proposition \ref{prop:tensQloccontstruct}, there is a morphism
$$\varphi:M\oplus M'\to N$$
in $\DM_{\h,\lc}(X,R)$ such that $\varphi\otimes\QQ$ is invertible.
Let $C$ be a cone of $\varphi$. Then $C\otimes\QQ=0$.
Therefore, the locally constructible $\h$-motive $C$ is in the essential image of
the functor \eqref{eq:thm:continuityDMhfinal}. But the $\h$-motive $N$
has the same property (because it is constructible, using
the first part of Theorem \ref{thm:contlocconstruct}). Hence $M\oplus M'$
is in the essential image of the functor \eqref{eq:thm:continuityDMhfinal}
 and this concludes as explained in the beginning of this proof.
%The left hand side of the fully faithful functor
%\eqref{eq:thm:continuityDMhfinal} being idempotent
%complete, this proves that $M$ is in the essential image as well, and this
%ends the proof.
\end{proof}

\begin{prop}\label{prop:locconstructrelbase}
Let $p:X\to S$ be a morphism of finite type between
noetherian schemes of finite dimension.
Consider a good enough ring of coefficients $R$.
Then, for $R$-linear $\h$-motives over $X$,
the property of
local constructibility is local over $S$ with respect to the
\'etale topology. In other words, for any object $M$
of $\DM_{\h,\lc}(X,R)$, there exists a Cartesian square
$$\xymatrix{
X'\ar[r]^u\ar[d]_{p'}&X\ar[d]^p\\
S'\ar[r]^v&S
}$$
with $v$ \'etale surjective and such that $u^*(M)$ belongs
to $\DM_{\h,c}(X',R)$.
\end{prop}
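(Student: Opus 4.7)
The plan is to check the desired constructibility after pulling back along an adequate \'etale cover of $S$ by a two-step spreading-out argument: first from an arbitrary geometric point of $S$ up to its strict henselization (where local constructibility collapses to constructibility), then back down to an \'etale neighborhood. Fix a geometric point $\bar s$ of $S$ and write $S_{\bar s} = \Spec(\cO^{sh}_{S,\bar s})$; let $\{V_i\}_{i \in I}$ be the cofiltered system of affine \'etale neighborhoods of $\bar s$, so that $S_{\bar s} = \varprojlim_i V_i$ with affine transition maps, and set $X_i = X \times_S V_i$, $X_{\bar s} = X \times_S S_{\bar s} = \varprojlim_i X_i$.

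The first step is the observation that, local constructibility being obviously preserved by pullback, $M|_{X_{\bar s}}$ lies in $\DM_{\h,\lc}(X_{\bar s},R)$. Since $X_{\bar s}$ is of finite type over the strictly henselian noetherian scheme $S_{\bar s}$, its residue fields are uniformly of finite \'etale cohomological dimension by Theorem \ref{thm:localetalefinitecd}; the second equivalence in Theorem \ref{thm:contlocconstruct} then yields $\DM_{\h,\lc}(X_{\bar s},R) = \DM_{\h,c}(X_{\bar s},R)$, so that $M|_{X_{\bar s}}$ is actually constructible. This collapse is really the crux of the proof.

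Next I would descend this constructibility from the limit to some $X_i$. The first part of Theorem \ref{thm:contlocconstruct}, applied to $\{X_i\}_{i \in I}$, produces an index $i_0$ and a constructible object $C \in \DM_{\h,c}(X_{i_0},R)$ with $C|_{X_{\bar s}} \simeq M|_{X_{\bar s}}$. Then $M|_{X_{i_0}}$ and $C$ are two locally constructible objects on $X_{i_0}$ whose pullbacks to $X_{\bar s}$ agree, so by the full faithfulness half of Theorem \ref{thm:contlocconstructfinal} (which uses that $R$ is good enough) this isomorphism already descends to some $X_j$ with $j \to i_0$ in $I$. Hence $M|_{X_j} \simeq C|_{X_j}$ is constructible, and I obtain an \'etale morphism $V_j \to S$ whose image contains the image of $\bar s$ and over which the pullback of $M$ is constructible.

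To conclude, I would vary $\bar s$: the images in $S$ of the \'etale morphisms $V_j \to S$ so constructed form an open covering of $S$, from which, $S$ being noetherian, one extracts a finite subcover. Taking $S'$ to be the disjoint union of these finitely many \'etale neighborhoods and $v: S' \to S$ the induced morphism gives a Cartesian square as in the statement with $u^*(M)$ constructible on $X' = X \times_S S'$. The only real obstacle in this plan is assembling the continuity theorems \ref{thm:contlocconstruct} and \ref{thm:contlocconstructfinal} at the right places; everything else is a routine projective-limit manipulation.
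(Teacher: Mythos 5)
Your proof is correct and follows essentially the same route as the paper: the paper condenses your steps (3)--(6) into a single invocation of the essential surjectivity of the functor $2\text{-}\varinjlim_W\DM_{\h,c}(W\times_S X,R)\to 2\text{-}\varinjlim_W\DM_{\h,\lc}(W\times_S X,R)$, which by Theorems \ref{thm:contlocconstruct} and \ref{thm:localetalefinitecd} is an equivalence of categories. Your more explicit spreading-out-and-descent presentation is a faithful unpacking of that argument.
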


\begin{proof}
For each geometric point $s$ of $S$,
we must find an \'etale neighborhood $w:W\to S$ of $s$ such that
the pullback of $M$ along the first projection of $W\times_S X$
on $X$ is constructible. But Theorems \ref{thm:contlocconstruct}
and \ref{thm:localetalefinitecd} imply that
we have a canonical equivalence of categories
$$2\text{-}\varinjlim_W\DM_{\h,c}(W\times_S X,R)\simeq
2\text{-}\varinjlim_W\DM_{\h,\lc}(W\times_S X,R)\, ,$$
%%\DM_{\h,\lc}(\Spec(\cO^{\mathit{sh}}_{S,s})\times_S X,R)\, ,$$
where $W$ runs over the \'etale neighborhoods of $s$.
The essential surjectivity of this functor precisely expresses
what we seek.
\end{proof}

\begin{cor}\label{cor:DMhlcimdirsupppropre}
Let $f:X\to S$ be a separated morphism of finite type
with $S$ noetherian of finite dimension, and assume that
the ring $R$ is good enough.
Then the functor $f_!:\DM_{\h}(X,R)\to\DM_{\h}(S,R)$
preserves locally constructible objects.
\end{cor}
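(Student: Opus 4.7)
The plan is to combine Proposition \ref{prop:locconstructrelbase} (which trivializes local constructibility \'etale-locally over the base $S$) with the proper base change formula and with the already-known fact that $f_!$ preserves constructible objects of geometric origin (see \cite[Cor.~4.2.12]{CD3}, quoted in the proof of Corollary \ref{cor:6oppreserveconstruct}).

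First, fix an object $M$ of $\DM_{\h,\lc}(X,R)$. Since $f$ is of finite type and $S$ is noetherian of finite dimension, Proposition \ref{prop:locconstructrelbase} applies to the morphism $p=f$ and yields a Cartesian square
$$\xymatrix{
X'\ar[r]^u\ar[d]_{f'}&X\ar[d]^f\\
S'\ar[r]^v&S
}$$
with $v$ \'etale and surjective, such that $u^*(M)$ lies in $\DM_{\h,c}(X',R)$. By construction $f'$ is again separated of finite type, so by \cite[Cor.~4.2.12]{CD3} we get $f'_!u^*(M)\in\DM_{\h,c}(S',R)$.

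Next, I would invoke the proper base change isomorphism provided by the six functors formalism for $\h$-motives (Theorem \ref{thm:DMh_6functors}, cf. Definition \ref{df:recall_6_functors}(4)) for the square above: since $v$ is \'etale, one has a canonical isomorphism
$$v^*f_!(M)\simeq f'_!u^*(M).$$
The right-hand side is constructible, hence $v^*f_!(M)$ is constructible in $\DM_\h(S',R)$. As $v:S'\to S$ is an \'etale cover, this means exactly, by Definition \ref{df:locconstruct}, that $f_!(M)$ is locally constructible in $\DM_\h(S,R)$.

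There is essentially no obstacle beyond citing the correct results; the only point worth being careful about is that Proposition \ref{prop:locconstructrelbase} requires $R$ to be good enough and $S$ (hence $X$) noetherian of finite dimension, assumptions which are precisely those of the corollary, and that the proper base change isomorphism is available for arbitrary $\h$-motives (not just constructible ones), which is ensured by Theorem \ref{thm:DMh_6functors}.
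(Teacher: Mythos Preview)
Your proof is correct and follows essentially the same route as the paper: apply Proposition~\ref{prop:locconstructrelbase} to trivialize local constructibility \'etale-locally over $S$, then use the base change isomorphism $v^*f_!(M)\simeq f'_!u^*(M)$ together with the known stability of constructible objects under $f'_!$. The only cosmetic difference is that the paper cites \cite[Corollary~4.3.12]{CD3} at the last step rather than \cite[Cor.~4.2.12]{CD3}.
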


\begin{proof}
Let $M$ be a locally constructible object of $\DM_{\h,\lc}(X,R)$.
Then, by virtue of the preceding proposition,
one can form a Cartesian square of schemes$$\xymatrix{
X'\ar[r]^u\ar[d]_{g}&X\ar[d]^f\\
S'\ar[r]^v&S
}$$
in which $v$ is a surjective separated \'etale morphism of finite
type, such that $u^*(M)$ is constructible.
The base change isomorphism $v^*\, f_!(M)\simeq g_!\, u^*(M)$ 
thus shows that it is sufficient to know that the functor $g_!$
preserves constructible objects. This is then a well known consequence
of the formalism of the six operations (which makes sense here
by Theorem \ref{thm:DMh_6functors}); see \cite[Corollary 4.3.12]{CD3}.
\end{proof}

\begin{cor}\label{cor:sixopDMhlc}
Let $R$ be a good enough ring. The subcategories $\DM_{\h,\lc}(X,R)$
are closed under the six operations in
$\DM_\h(X,R)$ for quasi-excellent noetherian
schemes of finite dimension.

Furthermore, consider an excellent scheme $B$ of dimension $\leq 2$
as well as a regular separated $B$-scheme of finite type $S$,
endowed with a locally constructible and $\otimes$-invertible
object $U$ in $\DM_{\h}(S,R)$.
For any separated morphism of finite type $f:X\to S$,
define the duality functor $D_X$ by the formula
$D_X(M)=\derR\uHom_R(M,f^!(U))$. Then, for
any locally constructible object $M$ in $\DM_\h(X,R)$, the
canonical map
$$M\to D_X(D_X(M))$$
is an isomorphism. 
\end{cor}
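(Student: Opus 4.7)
The plan is to reduce both assertions to their constructible counterparts by working étale-locally, the key tool being Proposition \ref{prop:locconstructrelbase}, which lets one trivialize any given locally constructible object over an étale cover of the base.

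For stability under the six operations, I would dispatch the easy cases first: stability under $f^*$ is immediate (pull back the trivializing étale cover), and stability under $f_!$ is precisely Corollary \ref{cor:DMhlcimdirsupppropre}. For $\otimes_R^\derL$ and $\derR\uHom_R$ of two locally constructible objects $M,N$ on some $X$, I would pick a common refinement of trivializing étale covers $u\colon X'\to X$, observe that $u^*$ commutes with $\otimes_R^\derL$ trivially and with $\derR\uHom_R$ on constructible arguments (this follows from the smooth projection formula applied to $u$ in the motivic category $\DM_\h(-,R)$), and then invoke Corollary \ref{cor:6oppreserveconstruct}(a) to conclude constructibility over $X'$. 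For the remaining cases $f_*$ and $f^!$, with $f\colon X\to S$ separated of finite type, I would apply Proposition \ref{prop:locconstructrelbase} to the locally constructible input to produce a cartesian square with $v\colon S'\to S$ étale surjective and $u\colon X'\to X$ its pullback, trivializing constructibility, and use smooth base change (built into Theorem \ref{thm:DMh_6functors}) to write $v^*\, f_*\simeq f'_*\, u^*$ and $u^*\, f^!\simeq f'^!\, v^*$; constructibility over the trivializing covers then follows from Theorem \ref{thm:constructible_f_*} and Corollary \ref{cor:6oppreserveconstruct}(b) respectively.

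For the biduality statement, note first that $f^!(U)$ is locally constructible on $X$ by the stability just proved, so $D_X$ already maps $\DM_{\h,\lc}(X,R)$ to itself. The map $M\to D_X D_X(M)$ can be tested after pullback along a surjective étale morphism, by conservativity. I would apply Proposition \ref{prop:locconstructrelbase} to $U$ to get $v\colon S'\to S$ étale surjective with $v^*U$ constructible; since étale maps preserve regularity and $\otimes$-invertibility, Theorem \ref{thm:duality}(a) ensures $v^*U$ is dualizing on $S'$, whence for the base change $f'\colon X'\to S'$ of $f$, Theorem \ref{thm:duality}(b) applied to $f'$ yields that $u^*f^!(U)\simeq f'^!(v^*U)$ is dualizing on $X'$. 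Refining $u$ further by an étale surjection $w\colon Y\to X'$ with $(uw)^*M$ constructible (again via Proposition \ref{prop:locconstructrelbase}), étale pullback commutes with $D$ on locally constructible arguments, so $(uw)^*D_X(M)\simeq D_Y((uw)^*M)$, where $D_Y$ is taken with respect to the dualizing object $(uw)^*f^!(U)$; biduality for $(uw)^*M$ is then the very definition of a dualizing object, and conservativity of $uw$ finishes the argument. The only mildly subtle point, which is not really an obstacle, is the compatibility of étale pullback with $\derR\uHom_R$ on (locally) constructible sources and with the property of being dualizing; both are formal consequences of the six functors formalism of Theorem \ref{thm:DMh_6functors} via the smooth projection formula together with Theorem \ref{thm:duality}(b).
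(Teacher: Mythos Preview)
Your proposal is correct and follows essentially the same approach as the paper: reduce to the constructible case via Proposition~\ref{prop:locconstructrelbase} and \'etale base change, invoking Corollary~\ref{cor:6oppreserveconstruct} and Theorem~\ref{thm:duality} after trivializing. You are in fact slightly more thorough than the paper's write-up, which handles $f^*$, $\otimes$, $\uHom$, $f_!$, and $f_*$ explicitly and leaves $f^!$ implicit, and which compresses the duality argument into a single sentence referring to Theorem~\ref{thm:duality} and Proposition~\ref{prop:locconstructrelbase}; your explicit treatment of $f^!$ and the two-step refinement in the biduality argument is exactly what is meant there.
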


\begin{proof}
Consider the first assertion.
We already know it is true in the case
for the subcategories $\DM_{\h,c}(X,R)$
(Corollary \ref{cor:6oppreserveconstruct}).
This will imply our claim as follows.
The stability by operations $f^*$ for any
morphism $f$ is obvious.
If $u:X'\to X$ is a surjective
separated \'etale morphism of finite type, the
functor $u^*$ is conservative.
As it is monoidal, this implies the
stability of $\DM_{\h,\lc}(X,R)$ by
the derived tensor product $\otimes^\derL_R$.
As $u^*$ commutes with the formation of the
derived internal Hom
$$u^*\uHom_R(A,B)\simeq\uHom_R(u^*A,u^*B)\, ,$$
we easily get the stability by the bifunctor $\uHom_R$.
The stability by the operation $f_!$ for $f$ separated and of
finite type has already been considered in,
Corollary \ref{cor:DMhlcimdirsupppropre}, and the stability
by the operation $f_*$ for any morphism of finite type $f$
is proved similarly.

The last assertion about duality follows from
Theorem \ref{thm:duality} and Proposition \ref{prop:locconstructrelbase},
using again the stability of local constructibility
 by pullbacks and derived internal $\Hom$.
\end{proof}

\begin{prop}\label{prop:erseqhmot}
Let $p:X\to S$ be a surjective, integral and radicial
morphism between noetherian schemes of
finite dimension. The pullback functor
$$p^*:\DM_\h(S,R)\to\DM_\h(X,R)$$
is an equivalence of triangulated categories,
and it restricts to an equivalence of categories
$$\DM_{\h,c}(S,R)\simeq\DM_{\h,c}(X,R)\, .$$
In particular, its right adjoint $p_*$ preserves
constructible objects.
\end{prop}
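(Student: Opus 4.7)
The plan is to exploit the fact that a surjective, integral, radicial morphism $p$ is a universal homeomorphism, and to reduce the statement to known invariance results by changing the ring of coefficients. First, I would reduce to the case $R=\ZZ$: by Corollary \ref{cor:changeofcoefoneforall}, $p_*$ commutes with change of coefficients along any ring map $\ZZ\to R$, while $p^*$ is obviously compatible with coefficient changes, so the general result for $R$ follows from the result for $\ZZ$. Then Proposition \ref{prop:conservativity_coef_DMh} tells me it is enough to show that the unit $\mathrm{Id}\to p_*p^*$ and counit $p^*p_*\to\mathrm{Id}$ become invertible after applying $\rho^*$ (rationalization) and $\rho_\ell^*$ (reduction modulo $\ell$, for each prime $\ell$). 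Both change-of-coefficients functors are compatible with $p^*$ (formally) and with $p_*$ (again by Corollary \ref{cor:changeofcoefoneforall}), so this amounts to proving the proposition in the two universal cases $R=\QQ$ and $R=\FF_\ell$.

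For the rational case I would invoke the comparison with Beilinson motives, Theorem \ref{thm:DMB&DM_h_rational_recall}, reducing to the statement that $p^*:\DMB(S)\to\DMB(X)$ is an equivalence. This is an instance of the invariance of Beilinson motives under universal homeomorphisms, which ultimately stems from the invariance of rational algebraic $K$-theory under universal homeomorphisms (through the identification \eqref{eq:DMB&Kth}; or, equivalently, via the embedding of $\DMB$ into the $\QQ$-localization of $\SH(-)$, which is itself invariant under universal homeomorphisms).

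For the torsion case $R=\FF_\ell$, I would split into two subcases. If $\ell$ is invertible in $\cO_S$, then Corollary \ref{cor:DMh_Det} gives canonical equivalences $\DM_\h(-,\FF_\ell)\simeq\Der((-)_\et,\FF_\ell)$ on both $S$ and $X$, intertwining the two versions of $p^*$, and the result follows from the classical theorem of SGA 4 (Expos\'e VIII, Th\'eor\`eme 1.1) that a universal homeomorphism induces an equivalence of small \'etale topoi. If $\ell$ is not invertible in $\cO_S$, then Proposition \ref{prop:et+htp&torsion} identifies $\DM_\h(S,\FF_\ell)$ with $\DM_\h(S[1/\ell],\FF_\ell)$ and similarly for $X$, compatibly with $p^*$; since $p[1/\ell]:X[1/\ell]\to S[1/\ell]$ is again surjective, integral and radicial, with $\ell$ now invertible on the base, we are reduced to the previous subcase.

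For the claim about constructibles, $p^*$ obviously preserves constructibility, since it sends generators $R(Y)(n)$ with $Y$ smooth over $S$ to $R(Y\times_S X)(n)$. It remains to show that its quasi-inverse $p_*$ also preserves constructibility. When $p$ is finite, this is part of the six-functor formalism: $p_*=p_!$ preserves constructible objects (see Corollary \ref{cor:6oppreserveconstruct}). In general, an integral morphism $p$ can be written as an inverse limit $X=\varprojlim_i X_i$ of finite surjective radicial $S$-schemes with affine transition maps. Each $p_i^*$ is then an equivalence $\DM_{\h,c}(S,R)\simeq\DM_{\h,c}(X_i,R)$, and the continuity theorem \ref{thm:contlocconstruct} gives $\DM_{\h,c}(X,R)\simeq 2\text{-}\varinjlim_i\DM_{\h,c}(X_i,R)$; passing to the colimit yields the desired equivalence. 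The main obstacle I expect is the rational case: checking that universal homeomorphisms induce equivalences of rational Beilinson motives in full generality (including mixed characteristic) requires invoking a non-trivial invariance statement for rational algebraic $K$-theory, which is the one place where the proof genuinely rests on deep external input rather than on formal manipulations.
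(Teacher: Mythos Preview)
Your argument is correct but takes a longer route than the paper's. The key difference is in how you handle the case of a \emph{finite} surjective radicial morphism $p$. You reduce to $R=\QQ$ and $R=\FF_\ell$ via Proposition~\ref{prop:conservativity_coef_DMh} and then invoke two separate comparison theorems (Beilinson motives plus $K$-theory invariance on one side, rigidity plus topological invariance of the \'etale site on the other). The paper instead observes that a finite surjective radicial morphism is an $\h$-cover \cite[Proposition 3.2.5]{V1}, so $p^*$ is conservative on $\uDM_\h$ by $\h$-descent; since $\DM_\h(-,R)$ is a motivic triangulated category (Theorem~\ref{thm:DMh_6functors}), the formal criterion \cite[Proposition 2.1.9]{CD3} then gives directly that $p^*$ is an equivalence. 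This completely bypasses the ``deep external input'' you were worried about in the rational case: the $\h$-descent is built into the definition of $\DM_\h$, and no $K$-theory invariance is needed.

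For the passage from finite to integral $p$ via continuity (Theorem~\ref{thm:contlocconstruct}), and for the reduction of general $R$ to $R=\ZZ$ via Corollary~\ref{cor:changeofcoefoneforall} and Proposition~\ref{prop:directimagecocontinuous}, your outline matches the paper's proof. The paper also makes explicit the step from ``$p^*$ is an equivalence on constructibles'' to ``$p^*$ is an equivalence on all of $\DM_\h$'' in the integral case: since $p^*$ and $p_*$ both preserve small sums, full faithfulness on constructibles propagates to all objects, and then the essential image is a localizing subcategory containing the generators.
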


\begin{proof}
It is sufficient to prove this proposition when
$R$ is good enough.
Indeed, if $p^*$ is an equivalence with integral coefficients
and restricts to an equivalence on constructible objects, then
to prove that the unit and co-unit
$$M\to p_*p^*(M)\quad\text{and}\quad p^*p_*(N)\to N$$
are invertible for any $M$ and $N$, as both functors $p^*$ and $p_*$
preserve small sums (see Proposition \ref{prop:directimagecocontinuous}
for the second one), it is sufficient to prove it when $M$ and $N$
run over a generating family of $\DM_\h(S,R)$ and of $\DM_\h(X,R)$,
respectively. This means that we may assume that both $M$ and $N$
are $R$-linearization of integral $\h$-motives, and we conclude
with Corollary \ref{cor:changeofcoefoneforall}. The same kind
of arguments show that $p_*$ preserves constructible objects.

Henceforth, we will thus assume that $R$ is good enough.
Let us first consider the particular case where $p$
is of finite type (and thus finite).
For any finite surjective and radicial morphism of noetherian
schemes $g:Y'\to Y$, the functor
$$g^*:\uDM_\h(Y,R)\to\uDM_\h(Y',R)$$
is conservative (by $\h$-descent, because $g$ is
a covering for the $\h$ topology; see \cite[Proposition 3.2.5]{V1}).
This implies that the functor
$$p^*:\DM_\h(S,R)\to\DM_\h(X,R)$$
is an equivalence of categories (see \cite[Proposition 2.1.9]{CD3}).
Its restriction
$$p^*:\DM_{\h,c}(S,R)\to\DM_{\h,c}(X,R)$$
is an equivalence of categories as well, for
its right adjoint $p_!=p_*$ preserves constructible objects.

If $p$ is not of finite type, it is still affine and thus
one can describe $X$ as a limit of a projective system of affine
$Y$-schemes $X_i$ such that the structural maps $X_i\to S$
are finite, surjective and radicial.
By continuity (Theorem \ref{thm:contlocconstruct}), we see that the functor
$$p^*:\DM_{\h,c}(S,R)\to\DM_{\h,c}(X,R)$$
is an equivalence of categories as a filtered $2$-colimit of such things.
As both functors $p^*$ and $p_*$ commute with small sums, this implies
that $p^*$ is fully faithful on the whole category $\DM_\h(S)$.
This ends the proof, as what precedes exhibits the essential image of $\DM_\h(S)$
in $\DM_\h(X)$ as a localizing subcategory containing a generating
family of $\DM_\h(X)$.
\end{proof}

\begin{cor}\label{cor:equivextpurinsepDMhlocconst}
Under the assumptions of the preceding proposition,
the functor $p_*$ preserves locally constructible
objects, and the functor $p^*$ defines an equivalence of
triangulated categories
$$\DM_{\h,\lc}(S,R)\simeq\DM_{\h,\lc}(X,R)\, .$$
\end{cor}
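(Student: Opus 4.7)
The plan is to bootstrap the already-established equivalence $p^*:\DM_\h(S,R)\simeq\DM_\h(X,R)$ of Proposition~\ref{prop:erseqhmot} (whose quasi-inverse is $p_*$) to the subcategories of locally constructible objects by an étale-local argument. Since both containments to prove ($p^*$ sends $\DM_{\h,\lc}(S,R)$ into $\DM_{\h,\lc}(X,R)$, and $p_*$ sends $\DM_{\h,\lc}(X,R)$ into $\DM_{\h,\lc}(S,R)$) will use the same mechanism, I expect the proof to be fairly symmetric.

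For the direction $p^*$, I would start from $M\in\DM_{\h,\lc}(S,R)$ and pick an étale cover $\{u_i:S_i\to S\}$ with $u_i^*(M)$ constructible. The pulled back family $\{u'_i:X_i=S_i\times_S X\to X\}$ is then an étale cover of $X$, and fits in Cartesian squares in which the vertical maps $p_i:X_i\to S_i$ are again surjective, integral and radicial (as base changes of $p$). Using the étale base change isomorphism $u'^{\,*}_i\,p^*\simeq p_i^*\,u_i^*$ and the constructible version of Proposition~\ref{prop:erseqhmot} applied to $p_i$, the object $u'^{\,*}_i(p^*M)$ will be constructible on $X_i$, proving $p^*(M)\in\DM_{\h,\lc}(X,R)$.

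The reverse direction is where the geometric input enters. Given $N\in\DM_{\h,\lc}(X,R)$, choose an étale cover $\{v_j:X_j\to X\}$ trivialising constructibility of $N$; I need to produce an étale cover $\{w_j:S_j\to S\}$ of $S$ such that each $v_j$ is a base change of $w_j$. The main obstacle is precisely this: the topological invariance of the small étale site for $p$, i.e., the statement that pullback along $p$ induces an equivalence between the categories of étale $S$-schemes of finite presentation and of étale $X$-schemes of finite presentation (and, because $p$ is surjective, sends covers to covers). For finite radicial surjective morphisms this is classical (SGA~1, or \cite[Expos\'e~IX]{SGA4}); for the integral case, I would write $X=\varprojlim_\alpha X_\alpha$ with $X_\alpha\to S$ finite surjective radicial, use that any étale $X$-scheme of finite presentation descends to some $X_\alpha$ by the limit results of \cite[Th.~8.10.5, Prop.~17.7.8]{EGA4}, and then invoke the finite case. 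Once this is in hand, I apply étale base change to obtain $w_j^*\,p_*(N)\simeq p_{j,*}\,v_j^*(N)$ where $p_j:X_j\to S_j$ is again surjective, integral and radicial, and use Proposition~\ref{prop:erseqhmot} for $p_j$ to conclude that $w_j^*\,p_*(N)$ is constructible, whence $p_*(N)\in\DM_{\h,\lc}(S,R)$.

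Putting these two together gives both the preservation statement for $p_*$ and, since we already know $p^*$ is an equivalence of the ambient categories with inverse $p_*$, the desired equivalence $\DM_{\h,\lc}(S,R)\simeq\DM_{\h,\lc}(X,R)$. The argument applies uniformly to any ring $R$, since Proposition~\ref{prop:erseqhmot} itself was proved for arbitrary $R$ (by reduction to the good-enough case via Corollary~\ref{cor:changeofcoefoneforall}), and no further continuity input on locally constructible motives is needed beyond the geometric identification of the small étale sites.
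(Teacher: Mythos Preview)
Your proposal is correct and follows essentially the same route as the paper. The paper is just more terse: it only spells out the $p_*$ direction (the $p^*$ direction being immediate, since pullbacks preserve constructible objects and \'etale covers) and, for the topological invariance of the small \'etale site along $p$, it invokes \cite[Exp.~IX, Cor.~4.11]{SGA1} directly rather than sketching the reduction to the finite case via limits.
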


\begin{proof}
Let $M$ be an object of $\DM_{\h,\lc}(X,R)$.
We want to prove that $N=p_*(M)$ is locally constructible.
By virtue of \cite[Exp.~IX, Cor.~4.11]{SGA1}, any surjective \'etale map $u:X'\to X$
is isomorphic to the pullback of a surjective \'etale map $v:S'\to S$ along $p$.
Therefore, there exists an \'etale surjective morphism of finite type $v:S'\to S$
such that the pullback of $M$ along the second projection $u:X'=S'\times_S X\to X$
is constructible. If $q:X'\to S'$ denotes the first projection, the
base change map $v^*(N)=v^*\, p_*(M)\to q_*\, u^*(M)$ is invertible.
Finally, the morphism $q$ is also surjective, integral and radicial, so that
the functor $q_*$ preserves constructible objects (\ref{prop:erseqhmot});
this proves that $N$ is locally constructible.
\end{proof}

\begin{prop}\label{prop:locconstrsurjtop}
Let $R$ be a good enough ring of coefficients,
and consider a surjective morphism of finite type
between noetherian schemes of finite dimension $f:X\to S$.
Then pulling back along $f$ detects locally constructible
motives: if an object $M$ of $\DM_\h(S,R)$ has the property
that $f^*(M)$ is locally constructible, then it is
locally constructible. If, furthermore, the scheme $S$ is
quasi-excellent and if the morphism $f$ is separated,
then one can replace the functor $f^*$ by $f^!$:
the local constructibility of $f^!(M)$ implies the same property
for $M$.
\end{prop}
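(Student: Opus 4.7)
The strategy is Noetherian induction on $S$, reduced via a spreading-out argument to the production of a dense open $V \subset S$ over which $M|_V$ is locally constructible. Since local constructibility is \'etale-local on $S$ (Proposition \ref{prop:locconstructrelbase}), we may assume $S$ affine integral with generic point $\eta$, and (by choosing a preimage of $\eta$ in an affine open of $X$ and shrinking $S$ to the image of that open) that $f$ is separated.

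For the inductive step, given a dense open $V \subset S$ with $M|_V$ locally constructible, let $j : V \hookrightarrow S$ and $i : Z \hookrightarrow S$ denote the inclusion of $V$ and its closed complement. In the $f^*$ case, the localization triangle $j_!\, j^*M \to M \to i_*\, i^*M$, combined with Corollary \ref{cor:DMhlcimdirsupppropre} (preservation of local constructibility by $j_!$ and by $i_* = i_!$) and the inductive hypothesis applied to $f|_{f^{-1}(Z)} : f^{-1}(Z) \to Z$ (which still satisfies the hypothesis, since $i'^* f^*(M) = (f|_{f^{-1}(Z)})^*\, i^*(M)$ is locally constructible), yields $M$ locally constructible. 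In the $f^!$ case we instead use the dual localization triangle $i_*\, i^!M \to M \to j_*\, j^*M$: the base-change identity $i'^!\, f^! = (f|_{f^{-1}(Z)})^!\, i^!$ combined with Corollary \ref{cor:sixopDMhlc} (preservation of local constructibility by $i'^!$) shows that the inductive hypothesis gives $i^!(M)$ locally constructible, and preservation of local constructibility by $i_*$ and $j_*$ (on quasi-excellent $S$, again by Corollary \ref{cor:sixopDMhlc}) concludes.

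To produce the dense open $V$: since $f$ is surjective, choose a closed point $x$ of $X_\eta$, with $\kappa(x)/\kappa(\eta)$ finite, and factor it as $\kappa(\eta) \subset L \subset \kappa(x)$ with $L/\kappa(\eta)$ finite separable and $\kappa(x)/L$ purely inseparable. By Nagata compactification, write $f = \bar f \circ j_0$ with $\bar f : \bar X \to S$ proper and $j_0 : X \hookrightarrow \bar X$ a dense open immersion. The closure $\bar Y$ of $\{x\}$ in $\bar X$ is proper over $S$ with generic fiber $\Spec \kappa(x)$; over a dense open $V \subset S$, the morphism $\bar Y_V \to V$ is finite, and shrinking $V$ further (using that $x \in X$ and that $X \subset \bar X$ is open), we may assume that $Y := \bar Y_V$ is a closed subscheme of $X_V$. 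After a last shrinking, the finite surjective morphism $\pi := f|_Y : Y \to V$ admits a factorization $\pi = v \circ r$ with $r : Y \to L_V$ finite radicial surjective (spreading out $\kappa(x)/L$) and $v : L_V \to V$ finite \'etale surjective (spreading out $L/\kappa(\eta)$).

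For the $f^*$ case, the $\h$-motive $\pi^*(M|_V) = i_Y^*(f^*(M)|_{X_V})$ (where $i_Y : Y \hookrightarrow X_V$) is locally constructible on $Y$. Applying Corollary \ref{cor:equivextpurinsepDMhlocconst} to $r$, $v^*(M|_V)$ is locally constructible on $L_V$; applying Proposition \ref{prop:locconstructrelbase} to the \'etale surjection $v$ then yields $M|_V$ locally constructible. For the $f^!$ case, the crucial observation is that $\pi^! = \pi^*$: indeed $v^! = v^*$ for the finite \'etale morphism $v$, while for $r$ finite radicial surjective, $r^*$ is an equivalence by Proposition \ref{prop:erseqhmot}, $r_! = r_*$ (as $r$ is proper) is its quasi-inverse, and hence the right adjoint $r^!$ of $r_!$ must also equal that same quasi-inverse, so $r^! = r^*$. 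Consequently $\pi^!(M|_V) = i_Y^!(f^!(M)|_{X_V}) = \pi^*(M|_V)$ is locally constructible on $Y$ (by Corollary \ref{cor:sixopDMhlc}), and the conclusion follows exactly as in the $f^*$ case. The main technical obstacle is the arrangement, after shrinking $V$, that $\bar Y_V \subset X_V$ (rather than only $\bar X_V$), which is what makes the $f^!$ argument usable with $f^!$ in place of $\bar f^!$; this is achieved using that $x \in X$ and $X \subset \bar X$ is open.
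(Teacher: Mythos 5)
Your proof is correct and follows essentially the same strategy as the paper's: reduce via a stratification/Noetherian induction (using the localization triangles together with Corollaries \ref{cor:DMhlcimdirsupppropre} and \ref{cor:sixopDMhlc}) to producing a dense open $V\subset S$ over which $M$ becomes locally constructible, and obtain this by exhibiting a finite surjective quasi-section $Y\hookrightarrow X_V$ over $V$, factoring $Y\to V$ as a finite radicial surjection $r$ followed by a finite surjective \'etale map $v$, and then concluding from $r^!=r^*$ and $v^!=v^*$, Corollary \ref{cor:equivextpurinsepDMhlocconst}, and \'etale-locality of local constructibility. The one substantive difference is that where the paper simply invokes \cite[17.16.4]{EGA4} to produce the quasi-section, you build it directly by Nagata compactification and taking the closure of a chosen closed point of the generic fiber $X_\eta$; your observation that $\bar Y_V$ can and must be arranged to lie inside $X_V$ (not merely inside the compactification $\bar X_V$) is precisely what makes the $f^!$ case honest, since one only controls $f^!(M)$ on $X$ and not $\bar f^!(M)$ on $\bar X$ --- a point left implicit in the paper's terse citation. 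Two small remarks: the step ``$v^*(M|_V)$ locally constructible implies $M|_V$ locally constructible'' follows directly from Definition \ref{df:locconstruct} (composing \'etale covers), so the appeal to Proposition \ref{prop:locconstructrelbase} there is not the tightest reference; and the reduction to $f$ separated is only actually needed for the $f^!$ half, though doing it uniformly is harmless.
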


\begin{proof}
Assume that $f^*(M)$ ($f^!(M)$, respectively) is locally constructible
(with $S$ quasi-excellent in the respective case).
It is harmless to assume that
$f^*(M)$ (resp. $f^!(M)$) is constructible
(in the respective case, we use that $u^*=u^!$ for any
separated \'etale morphism of finite type).

As both constructible and locally constructible objects are
stable under the operations $f_!$ and $f^*$ (resp. $f_*$ and $f^!$)
for any separated morphism of finite type,
using the localization triangles
\begin{align*}
j_!j^*(M)\to M\to i_!i^*(M)\to j_!j^*(M)[1] & \\
(i_*i^!(M)\to M\to j_*j^!(M)\to i_*i^!(M)[1]\, , & \quad \text{respectively})\, ,
\end{align*}
for any closed immersion $i$ with open complement $j$,
we see that it is sufficient to prove that there exists a stratification
$\{S_i\}$ of $S$ such that, if we denote by $j_i:S_i\to S$
the embedding of each strata, each restriction $j^*_i(M)$
(resp. $j^!_i(M)$) is constructible. By virtue of \cite[17.16.4]{EGA4}, we may thus
assume that $f=hg$ is the composition of a finite, faithfully flat and radicial
morphism $g$ with a finite surjective \'etale map $h$.
But the functor $g^*$ is an equivalence of categories
with right adjoint $g_!\simeq g_*$ (Proposition \ref{prop:erseqhmot}),
so that we get an isomorphism of functors $g^*\simeq g^!$.
This means that the $\h$-motive $g^*(M)\simeq g^!(M)$ is locally constructible, and
we conclude with Corollary \ref{cor:equivextpurinsepDMhlocconst}.
\end{proof}

\begin{num}
Recall that an object $M$ of a closed symmetric monoidal
category $\mathcal{C}$ is \emph{rigid} if there exists an object $M^\vee$
of $\mathcal{C}$ such that tensoring by $M^\vee$ is a right adjoint
of the functor $A\mapsto A\otimes M$. One checks easily that an object $M$
of $\mathcal{C}$ is rigid if and only if, for any other object $N$,
the canonical map
$$\uHom(M,\un)\otimes N\to\uHom(M,N)$$
is an isomorphism, in which case we have a canonical isomorphism
$$M^\vee\simeq\uHom(M,\un)\, .$$
The latter characterization implies that, whenever $\mathcal{C}$ is
a triangulated category, its rigid objects form a thick subcategory.
Moreover, if ever the unit object of $\mathcal{C}$ is compact, then
all the rigid objects are compact in $\mathcal{C}$. For instance, given
any ring $R$, the rigid objects of the unbounded derived category of $R$-modules are precisely
 the perfect complexes of $R$-modules (up to isomorphism in $\Der(R)$).
\end{num}

\begin{lm}\label{lm:localrigid}
The property of being rigid in $\DM_\h(X,R)$ is
local for the \'etale topology: for an object $M$ of $\DM_\h(X,R)$,
if there exists a surjective \'etale morphism $u:X'\to X$ such that
$u^*(M)$ is a rigid object of $\DM_\h(X',R)$, then $M$ is rigid.
\end{lm}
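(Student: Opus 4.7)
The plan is to check rigidity via the standard characterization: an object $M$ of $\DM_\h(X,R)$ is rigid if and only if, for every object $N$, the canonical natural transformation
$$\varphi_{M,N}:\uHom_R(M,\un_X)\otimes^\derL_R N\longrightarrow \uHom_R(M,N)$$
is an isomorphism. So the task is to show that $\varphi_{M,N}$ is invertible for every $N$, assuming that $u^*(M)$ is rigid in $\DM_\h(X',R)$ for some surjective étale morphism $u:X'\to X$.

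First, I would note that $u^*$ is conservative on $\DM_\h(-,R)$: since $u$ is a cover for the étale topology (hence for the $\h$-topology), and since $\DM_\h(-,R)$ satisfies $\h$-descent by construction, the pullback functor $u^*$ is conservative (alternatively, this is a special case of \cite[Proposition~3.2.8]{CD3}). Thus it is enough to show that $u^*(\varphi_{M,N})$ is an isomorphism for every $N$ in $\DM_\h(X,R)$.

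Next, the functor $u^*$ is symmetric monoidal and, because $u$ is étale, admits a left adjoint $u_\sharp$ satisfying the projection formula $u_\sharp(u^*(A)\otimes B)\simeq A\otimes u_\sharp(B)$ (this is part of the premotivic structure on $\DM_\h(-,R)$; see Definition~\ref{df:recall_premotivic_cat}). A formal consequence of the projection formula is that $u^*$ commutes with the internal Hom, i.e.\ the natural map
$$u^*\uHom_R(A,B)\longrightarrow \uHom_R(u^*A,u^*B)$$
is an isomorphism for every $A,B$ in $\DM_\h(X,R)$. Applying this, together with the monoidality of $u^*$, identifies the morphism $u^*(\varphi_{M,N})$ with the analogous canonical map
$$\varphi_{u^*M,u^*N}:\uHom_R(u^*M,\un_{X'})\otimes^\derL_R u^*N\longrightarrow \uHom_R(u^*M,u^*N).$$

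Finally, since $u^*(M)$ is assumed rigid in $\DM_\h(X',R)$, the map $\varphi_{u^*M,P}$ is an isomorphism for every object $P$ of $\DM_\h(X',R)$; in particular for $P=u^*(N)$. Hence $u^*(\varphi_{M,N})$ is an isomorphism, and by conservativity of $u^*$ so is $\varphi_{M,N}$. This shows $M$ is rigid. The only non-formal ingredient is the compatibility of $u^*$ with internal Hom, which is the one point one needs to verify with care; everything else is a straightforward manipulation of adjunctions.
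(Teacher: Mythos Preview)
Your proof is correct and follows essentially the same approach as the paper: both use the characterization of rigidity via the canonical map $\uHom_R(M,\un)\otimes N\to\uHom_R(M,N)$, together with the two facts that $u^*$ is conservative and commutes with internal Hom. You have simply spelled out in detail what the paper states in one sentence.
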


\begin{proof}
As the formation of the internal Hom in $\DM_\h$
commutes with the functor $u^*$,
this follows right away from the fact that the functor $u^*$
is conservative.
\end{proof}

A source of rigid objects is provided by the following proposition.

\begin{prop}\label{prop:smoothproperrigid}
Let $f:X\to S$ be a morphism between noetherian schemes of finite
dimension. Assume that $f$ is the composition of a surjective finite radicial morphism
$g:T\to S$ with a smooth and proper morphism $p:X\to T$.
Then, for any integer $n\in\ZZ$,
the $\h$-motive $f_*(R_X)(n)$ is a rigid object in $\DM_\h(S,R)$.
\end{prop}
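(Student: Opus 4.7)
The plan is to first reduce to the case where $g$ is the identity, i.e.\ to the case of a smooth proper morphism, using that $g$ is radicial, and then to prove rigidity by an explicit computation of the internal Hom using smooth purity and the projection formula.

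For the reduction, recall from Proposition~\ref{prop:erseqhmot} that $g^*\colon\DM_\h(S,R)\to\DM_\h(T,R)$ is an equivalence of symmetric monoidal triangulated categories, with quasi-inverse given by $g_*\simeq g_!$. A symmetric monoidal equivalence preserves and reflects rigidity (for any $M$ in $\DM_\h(S,R)$ and $N$ in $\DM_\h(S,R)$ the canonical comparison map $\uHom(M,\un)\otimes N\to\uHom(M,N)$ in $\DM_\h(S,R)$ is invertible if and only if its image under $g^*$ is, since $g^*$ is a conservative symmetric monoidal functor commuting with the formation of internal Homs). Using that $g^*g_*\simeq \mathrm{id}$, we have
$$g^*\bigl(f_*(R_X)(n)\bigr)=g^*g_*\bigl(p_*(R_X)(n)\bigr)\simeq p_*(R_X)(n)\, ,$$
so it suffices to prove that $p_*(R_X)(n)$ is rigid in $\DM_\h(T,R)$. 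Since rigid objects form a full triangulated subcategory stable under tensoring with an invertible object, we may assume $n=0$ and prove that $p_*(R_X)$ is rigid.

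For the main step, let $d$ denote the relative dimension of the smooth proper morphism $p\colon X\to T$. Since $p$ is proper we have $p_*=p_!$, and since $p$ is smooth we have, by the relative purity part of the six functor formalism (Theorem \ref{thm:DMh_6functors}), a canonical isomorphism $p^!\simeq p^*(d)[2d]$. For any object $N$ of $\DM_\h(T,R)$, the $(p_!,p^!)$-adjunction and the projection formula $p_!(A\otimes p^*B)\simeq p_!(A)\otimes B$ give
$$\uHom_R(p_*(R_X),N)\simeq p_*\,\uHom_R(R_X,p^!(N))\simeq p_*(p^*(N)(d)[2d])\simeq p_*(R_X)(d)[2d]\otimes N\, .$$
Specializing to $N=\un_T$ identifies the dual $\uHom_R(p_*(R_X),\un_T)$ with $p_*(R_X)(d)[2d]$, and, chasing the construction of this chain of isomorphisms, the resulting map
$$\uHom_R(p_*(R_X),\un_T)\otimes N\longrightarrow\uHom_R(p_*(R_X),N)$$
is precisely the canonical comparison morphism. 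Hence $p_*(R_X)$ is rigid in $\DM_\h(T,R)$, and the conclusion follows from the reduction step above.

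The only delicate point is the verification that the isomorphism produced by smooth purity plus projection formula really coincides, up to canonical identifications, with the structural comparison map $\uHom(M,\un)\otimes N\to\uHom(M,N)$; this is a standard coherence check in the six functor formalism, following the naturality of the adjunctions and of the projection formula in $N$.
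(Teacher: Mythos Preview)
Your proof is correct and follows essentially the same approach as the paper: reduce via Proposition~\ref{prop:erseqhmot} to the smooth proper case, then invoke rigidity of $p_*(R_X)$ from the six-functor formalism. The only difference is that the paper simply cites \cite[Proposition~2.4.31]{CD3} for the second step, whereas you spell out the purity/projection-formula argument explicitly.
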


\begin{proof}
By virtue of Proposition \ref{prop:erseqhmot},
the symmetric monoidal functor $g^*$ is an equivalence of categories,
with quasi-inverse $g_*$.
It is thus sufficient to prove that $p_*(R_X)$ is a rigid
object of $\DM_\h(T,R)$, which follows from the general
formalism of the six operations: see \cite[Proposition 2.4.31]{CD3}.
\end{proof}
%
%\begin{rem}\label{rem:smoothproperrigid}
%More generally, under the same hypothesis as above, we have
%that $f_*(M)$ is rigid for any rigid object $M$ of $\DM_\h(X)$.
%Moreover, the dual of $f_*(M)$ is canonically isomorphic to
%$f_*(M^\vee)(d)[2d]$, where $d$ is the relative dimension of $X/S$.
%Indeed, for any object $N$ of $\DM_\h(S,R)$, we have
%\begin{align*}
%\uHom_R(f_*(M),N)
%&\simeq f_*\uHom_R(M,f^!(N))\\
%&\simeq f_*\uHom_R(M,f^*(N))(d)[2d]\\
%&\simeq f_*(M^\vee\otimes^\derL_R f^*(N)(d)[2d]\\
%&\simeq f_*(M^\vee)(d)[2d]\otimes^\derL_R N\, .
%\end{align*}
%This is an abstract version of the well known
%fact that the higher direct images of a locally constant sheaf
%by a smooth and proper map are locally constant sheaves.
%\end{rem}

\begin{df}\label{df:rigidgeom}
Let $S$ be a noetherian scheme.
An object of $\DM_\h(S,R)$ is said to be \emph{strictly smooth}
if it belongs to the smallest thick subcategory generated
by objects of the form $f_*(R_X)(n)$ for $f$
as in Proposition \ref{prop:smoothproperrigid} and $n\in\ZZ$.
An object $M$ of $\DM_\h(S,R)$ is \emph{smooth}
if there exists a surjective \'etale morphism $u:T\to S$ such that
$u^*(M)$ is strictly smooth.
\end{df}

\begin{lm}\label{lm:fieldfinitecohdimgeomriggenerate}
Let $S$ be the spectrum of a field $k$ with finite
\'etale cohomological dimension.
Then the category of locally constructible object of $\DM_\h(S,R)$
is the thick subcategory generated by objects
of the form $f_*(R_X)(n)$ with $X$ smooth and projective
over a purely inseparable finite extension of $k$, with
structural map $f:X\to S$, and $n\in\ZZ$.
\end{lm}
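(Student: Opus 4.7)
The plan is to combine Theorem~\ref{thm:fcohdimlconst}---which, under our hypothesis on $k$, identifies $\DM_{\h,\lc}(S,R)$ with $\DM_{\h,c}(S,R)$---with Ayoub's Proposition~\ref{prop:projectivegenerateconstruct}, which generates $\DM_{\h,c}(S,R)$ as a thick subcategory by objects $f_*(R_{X'})(n)$ for projective $f:X'\to S$ and $n\in\ZZ$. Writing $\mathcal{T}$ for the thick subcategory of the lemma, it is therefore enough to prove that $f_*(R_{X'})\in\mathcal{T}$ for every projective $k$-scheme $X'$. I would argue this by Noetherian induction on $\dim X'$. Proposition~\ref{prop:erseqhmot} applied to $X'_{\mathrm{red}}\hookrightarrow X'$ allows one to assume $X'$ reduced; an iterated cdh (hence $\h$-) Mayer--Vietoris triangle on the irreducible components then reduces to the case where $X'$ is integral. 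The base case $\dim X'=0$ is immediate: for $X'=\Spec(L)$ with $L/k$ a finite field extension, the projection $\Spec(L)\to\Spec(L_0)$ onto the maximal purely inseparable subextension $L_0\subset L$ is \'etale, so $X'$ is smooth projective over $\Spec(L_0)$.

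For the inductive step, fix an integral projective $X'$ of dimension $d\geq 1$. Applying the abstract Proposition~\ref{prop:localconstruct} from the Appendix to $T=\DM_{\h,c}(S,R)$ and $U=\mathcal{T}$, it suffices to show, for every prime $\ell$, that the image of $f_*(R_{X'})$ in $\bigl(\DM_{\h,c}(S,R)_{(\ell)}\bigr)^\sharp$ lies in the image of $\mathcal{T}$. The key geometric input is Gabber's refined form of the de~Jong alteration theorem: one obtains a projective alteration $\bar\pi:\tilde X\to X'$ of generic degree $n_\ell$ prime to $\ell$, with $\tilde X$ smooth and projective over a purely inseparable finite extension $k'/k$; in particular $\tilde f_*(R_{\tilde X})\in\mathcal{T}$, where $\tilde f:\tilde X\to S$. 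Choose a dense open $j:U\hookrightarrow X'$ on which $\bar\pi$ restricts to a finite flat morphism $\pi:\tilde U\to U$ of degree $n_\ell$, with closed complements $i:Z\hookrightarrow X'$ and $\tilde i:\tilde Z\hookrightarrow\tilde X$; both $Z$ and $\tilde Z$ are projective over $k$ of dimension $<d$.

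Applying $f_*=f_!$ and its analogue on $\tilde X$ to the localization triangles $j_!(R_U)\to R_{X'}\to i_*(R_Z)$ and $\tilde j_!(R_{\tilde U})\to R_{\tilde X}\to\tilde i_*(R_{\tilde Z})$, the inductive hypothesis disposes of $(fi)_*(R_Z)$ and $(\tilde f\tilde i)_*(R_{\tilde Z})$, so that $(\tilde f\tilde j)_!(R_{\tilde U})\in\mathcal{T}$, and we are reduced to controlling $(fj)_!(R_U)$. Here the trace map is decisive: since $\pi$ is finite flat of constant degree $n_\ell$, Proposition~\ref{prop:basic_traces_DMh}(4) shows the composition $R_U\to\pi_*(R_{\tilde U})\xrightarrow{\Tr_\pi}R_U$ equals $n_\ell\cdot\mathrm{id}$, exhibiting $R_U$ as a direct summand of $\pi_*(R_{\tilde U})$ after inverting $n_\ell$. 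Applying $(fj)_!$ and using the identification $(\tilde f\tilde j)_!=(fj)_!\pi_*$ (valid since $\pi$ is proper) realizes $(fj)_!(R_U)$ as a direct summand of $(\tilde f\tilde j)_!(R_{\tilde U})\in\mathcal{T}$ in the $(\ell)$-localized idempotent completion, completing the induction. The principal obstacle is geometric: obtaining Gabber's alteration theorem in the precise form producing smooth projective models over a \emph{purely inseparable} finite extension of $k$ (rather than merely a finite extension); granted this, the proof is a standard trace-and-induction argument in the spirit of Gabber's Lemma~\ref{bisthmfiniteness0}.
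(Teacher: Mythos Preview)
Your approach is structurally sound but genuinely different from the paper's, and the point you flag as the ``principal obstacle'' is a real gap rather than a matter of locating the right reference. Gabber's $\ell'$-alteration theorem yields $\tilde X$ \emph{regular}; this does not imply $\tilde X$ is smooth over a purely inseparable extension of $k$, because regularity is not preserved under purely inseparable base change (for odd $p$ the regular affine curve $y^2=x^p-t$ over $\FF_p(t)$ becomes singular at $(t^{1/p},0)$ after base change to $\FF_p(t^{1/p})$). The remedy is to reduce to $k$ perfect \emph{before} alterating: by Proposition~\ref{prop:erseqhmot} the pullback along the integral radicial surjection $\Spec(k^{\mathrm{perf}})\to S$ is an equivalence $\DM_{\h,c}(S,R)\simeq\DM_{\h,c}(\Spec k^{\mathrm{perf}},R)$, and continuity (Theorem~\ref{thm:contlocconstruct}) shows it identifies $\mathcal T$ with its analogue over $k^{\mathrm{perf}}$, where regular coincides with smooth and de~Jong/Gabber apply directly. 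Two smaller repairs are needed as well: your base case is incorrect as stated, since a finite extension $L/k$ need not factor as purely-inseparable-then-\'etale (use the separable-then-radicial factorisation together with Proposition~\ref{prop:erseqhmot} instead); and for $\ell=\mathrm{char}(k)$ Gabber's prime-to-$\ell$ alteration is unavailable, but Proposition~\ref{prop:constructcarpnoptors} collapses that case to $\ell=0$, where ordinary de~Jong suffices.

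The paper's route is quite different and avoids alterations altogether. After identifying locally constructible with compact (Theorem~\ref{thm:fcohdimlconst}), it argues by orthogonality: reducing to $R=\ZZ$ via Corollary~\ref{cor:changeofcoefoneforall}, one shows that any $M$ right-orthogonal to the family of the lemma has $M\otimes\QQ=0$ (citing \cite[Cor.~4.4.3]{CD3}) and $M/p=0$ for every prime $p$ (via Corollary~\ref{cor:DMh_Det} and the fact that Galois covers generate $\Der(S_\et,\ZZ/p)$, or via Corollary~\ref{cor:etale_descent&p-torsion} when $p=\mathrm{char}\,k$), whence $M=0$ by Proposition~\ref{prop:conservativity_coef_DMh}. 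Your constructive induction has the merit of exhibiting explicitly how each projective motive is built from the generators, at the cost of the extra reduction step above; the paper trades this for brevity by outsourcing the rational and torsion cases to results already established.
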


\begin{proof}
Since the (locally) constructible $\h$-motives over $S$ precisely are the
compact objects of $\DM_\h(S,R)$ (see Theorem \ref{thm:fcohdimlconst}),
it is sufficient to prove that
the family of compact objects of the form $f_*(R_X)(n)$,
for $f:X\to S$ projective, $X$ regular, and $n\in\ZZ$,
form a generating family of $\DM_\h(S,R)$.
Corollary \ref{cor:changeofcoefoneforall} implies that it is sufficient
to consider the case of $R=\ZZ$.
Let $M$ be an object of $\DM_\h(S,\ZZ)$ such that, for
any $f$ and $n$ as above, we have
$$\derR\Hom(f_*(\ZZ_X)(n),M)=0\, .$$
We want to prove that $M=0$. But then, we also have
$$\derR\Hom(f_*(\ZZ_X)(n),M\otimes\QQ)=\derR\Hom(f_*(\ZZ_X)(n),M)\otimes\QQ=0\, .$$
Since the property we seek
is known for $\QQ$-linear coefficients (see \cite[Corollary 4.4.3]{CD3}),
we see that $M\otimes\QQ=0$.
It is thus sufficient to prove that $M/p=M\otimes^\derL\ZZ/p\ZZ$
vanishes in $\DM_\h(X,\ZZ/p\ZZ)$ for any prime number $p$.
If $p$ is the characteristic of $K$, we conclude with
Corollary \ref{cor:etale_descent&p-torsion}. Otherwise, Corollary
\ref{cor:DMh_Det} implies that $M=0$, because the objects
of the form $\ZZ(X)/p\simeq f_*(\ZZ_X)/p$, for $f:X\to S$  any Galois covering,
do form a generating family of $\Der(S_\et,\ZZ/p\ZZ)$.
\end{proof}

\begin{df}\label{df:genericp}
A property $\mathbf{P}$ of $R$-linear $\h$-motives is said to be
\emph{generic} if it satisfies the following conditions.
\begin{itemize}
\item[(g1)] Given any noetherian scheme of finite dimension $X$,
the objects of $\DM_\h(X,R)$
which have property $\mathbf{P}$ form a thick subcategory, which
we will denote by $\mathbf{P}(X)$.
\item[(g2)] For any morphism between noetherian schemes of finite
dimension $f:X\to Y$,
the pullback functor sends $\mathbf{P}(Y)$ in $\mathbf{P}(X)$.
\item[(g3)] If $S$ is the spectrum of a separably closed field,
then any object of $\mathbf{P}(S)$ is locally constructible.
\item[(g4)] For any integral noetherian scheme of finite dimension $X$
with generic point $\eta$,
if $M$ and $N$ are two objects of $\mathbf{P}(X)$,
then the canonical map
\begin{equation}\label{eq:df:genericp}
\varinjlim_{v:V\to X}\Hom_{\DM_\h(V,R)}(v^*(M),v^*(N))\to\Hom_{\DM_\h(\bar\eta,R)}(u^*(M),u^*(N))
\end{equation}
is an isomorphism of $R$-modules,
where $v:V\to X$ runs over the \'etale neighborhoods
of $\eta$, while $u:\bar\eta\to X$ denotes a geometric point
associated to $\eta$.
\item[(g5)] Any strictly smooth object has property $\mathbf{P}$
(over noetherian schemes of finite dimension).
\end{itemize}
\end{df}

\begin{lm}\label{lm:constructgeomrig}
Let $X$ be a noetherian scheme of finite dimension and $R$
a good enough ring of coefficients. Assume that a generic property
$\mathbf{P}$ is defined. For any object $M$ of
$\DM_\h(X,R)$ which has property $\mathbf{P}$, there exists
a dense open immersion $j:U\to X$ such that the
restriction $M_{|_U}=j^*(M)$ is smooth.
\end{lm}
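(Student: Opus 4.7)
\medskip

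\noindent\emph{Proof plan for Lemma \ref{lm:constructgeomrig}.}
The plan is to reduce to a single integral scheme, use property (g3) plus Lemma \ref{lm:fieldfinitecohdimgeomriggenerate} to describe $M$ at the geometric generic point as strictly smooth, and then to descend this description to an \'etale neighborhood via continuity and property (g4). By noetherian induction and stability of $\mathbf{P}$ under pullback (g2), I would first reduce to the case where $X$ is integral: it suffices to find, for every generic point $\eta$ of an irreducible component, a dense open neighborhood $U$ of $\eta$ in $X$ with $M|_U$ smooth. Pick a geometric point $u:\bar\eta\to X$ over $\eta$. The field $\kappa(\bar\eta)$ is separably closed, hence of \'etale cohomological dimension zero, so (g3) yields that $u^*(M)$ is locally constructible in $\DM_\h(\bar\eta,R)$, and Lemma \ref{lm:fieldfinitecohdimgeomriggenerate} then expresses $u^*(M)$ as an object of the thick triangulated subcategory generated by objects of the form $f_*(R_Y)(n)$ with $Y$ smooth projective over a purely inseparable finite extension of $\kappa(\bar\eta)$. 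By the very definition of strictly smooth (Def.~\ref{df:rigidgeom}), this means $u^*(M)$ is strictly smooth.

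Next I would descend this structure to an \'etale neighborhood of $\eta$. Write $u^*(M)$ as a direct summand (via idempotent $e_{\bar\eta}$ and splitting $s_{\bar\eta},p_{\bar\eta}$ with $p_{\bar\eta}s_{\bar\eta}=\mathrm{id}$) of an object $N_{\bar\eta}$ obtained as a finite iterated cone on shifts of generators $P_i=f_{i,*}(R_{Y_i})(n_i)$, with $f_i:Y_i\to k_i\to\bar\eta$ the composite of a smooth projective morphism and a surjective finite radicial one. Using EGA~IV (Th.~8.8.2, 8.10.5, 11.2.6, Prop.~17.7.8), each piece of data $(Y_i\to k_i\to\bar\eta)$ descends to a composition $Y_i^V\to k_i^V\to V$ of the same type over a sufficiently small \'etale neighborhood $V$ of $\eta$, giving strictly smooth objects $\widetilde P_i=\tilde f_{i,*}(R_{Y_i^V})(n_i)$ on $V$ which pull back to $P_i$ on $\bar\eta$. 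The morphisms used to assemble $N_{\bar\eta}$ from the $P_i$ live in Hom-groups between the $P_i$'s (which are constructible), so by the continuity theorem~\ref{thm:contlocconstruct} applied to the projective system of \'etale neighborhoods of $\eta$ (whose limit is the strict henselization), these morphisms descend, after possibly shrinking $V$, to morphisms between the $\widetilde P_i$'s. Iteratively forming cones, we obtain a strictly smooth object $\widetilde N$ on $V$ with $u^*\widetilde N\simeq N_{\bar\eta}$.

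It remains to descend the splitting. Both $M$ and $\widetilde N$ have property $\mathbf{P}$ on any \'etale neighborhood of $\eta$: for $M$ by (g2), for $\widetilde N$ by (g1) and (g5) since it is strictly smooth. Property (g4) therefore applies to the pair $(M,\widetilde N)$ and to the pair $(\widetilde N,M)$: the isomorphism \eqref{eq:df:genericp} allows us to lift $s_{\bar\eta}$ and $p_{\bar\eta}$ to morphisms $s,p$ between $M|_{V'}$ and $\widetilde N|_{V'}$ on a smaller \'etale neighborhood $V'$, and after a further shrinking to some $V''$, the relation $p\circ s=\mathrm{id}_{M|_{V''}}$ holds (again by applying \eqref{eq:df:genericp} to $(M,M)$). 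Thus $M|_{V''}$ is a direct summand of a strictly smooth object, hence strictly smooth; taking $U\subseteq X$ to be a dense open contained in the image of the \'etale map $V''\to X$ (which contains $\eta$ and is open), we obtain a surjective \'etale map $V''\to U$ along which $M$ becomes strictly smooth, i.e., $M|_U$ is smooth. The main obstacle is Step 3 above: one must carry out all descents (of the smooth projective pieces, of the finite radicial pieces, of every morphism appearing in the iterated cone, and finally of the idempotent splitting) on a common \'etale neighborhood, which is done by repeated shrinking and relies crucially on property (g4), continuity~\ref{thm:contlocconstruct}, and the fact that strictly smooth objects, being defined by smooth proper and finite radicial data, are visibly stable under \'etale base change.
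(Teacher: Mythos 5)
Your proof is correct and uses the same essential ingredients as the paper's proof (reduction to integral $X$, conditions (g2)--(g5), Lemma~\ref{lm:fieldfinitecohdimgeomriggenerate}, and the EGA~IV limit theorems), so the approach is essentially the same. The only difference is one of packaging: the paper observes that the inclusion $2\text{-}\varinjlim_V\DM_{\h,\mathit{ss}}(V,R)\to 2\text{-}\varinjlim_V\mathbf{P}(V)$ sits in a commutative triangle over $\DM_{\h,\lc}(\bar\eta,R)$, with both legs fully faithful (the vertical one by (g4)) and the diagonal an equivalence (by Lemma~\ref{lm:fieldfinitecohdimgeomriggenerate} and the limit arguments), from which essential surjectivity of the horizontal functor follows formally, whereas you descend the iterated cone, the attaching maps, and the idempotent splitting by hand -- for which, incidentally, (g4) alone suffices in place of Theorem~\ref{thm:contlocconstruct}, since every object involved already lies in $\mathbf{P}$; and in the final step the surjective \'etale map over $U$ should be taken to be $V''\times_X U\to U$ rather than $V''\to U$, a harmless imprecision.
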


\begin{proof}
We may always assume that $X$ is reduced, and replace $X$
by any dense open subscheme at will. It is thus sufficient to
consider the case where $X$ is integral.
For a noetherian scheme of finite dimension $Y$, let us write
$\DM_{\h,\mathit{ss}}(Y,R)$ for the thick subcategory of
strictly smooth objects in $\DM_\h(Y,R)$.
Conditions (g1), (g2), (g3) and (g5) of Definition \ref{df:genericp} then
imply that we have the commutative triangle of triangulated
functors below, in which $v:V\to X$ runs over the \'etale neighborhoods
of $\eta$, while $u:\bar\eta\to X$ denotes a generic geometric point
(i.e. a separable closure of the field of functions on $X$).
$$\xymatrix{
2\text{-}\varinjlim_V\DM_{\h,\mathit{ss}}(V,R)\ar[rr]^{(1)}\ar[rd]_{(3)}&&
2\text{-}\varinjlim_V\mathbf{P}(V)\ar[ld]^{(2)}\\
&\DM_{\h,\lc}(\bar\eta,R)&
}$$
Condition (g4) ensures that (2) is fully faithful.
As (1) obviously has the same property, (3)=(2)$\circ$(1)
must be fully faithful as well.
Using standard limit arguments
\cite[Th.~8.10.5, Prop.~17.7.8]{EGA4}
together with
Lemma \ref{lm:fieldfinitecohdimgeomriggenerate},
we see that the thick subcategory generated by the essential
image of (3) is the whole category $\DM_{\h,\lc}(\bar\eta,R)$.
Hence the functor (3) is an equivalence of categories.
Therefore, all the functors
in the commutative triangle above are equivalences of categories.
In particular, the essential surjectivity of (1) tells us that,
for any object $M$ of $\mathbf{P}(X)$, there exists a dense open
subscheme $U\subset X$ and a surjective \'etale morphism $v:V\to U$
such that $v^*(M_{|_{U}})$ is strictly smooth.
\end{proof}

\begin{thm}\label{thm:DMhrigidobj}
Let $X$ be a noetherian scheme of finite dimension,
and $R$ a good enough ring of coefficients.
For an object $M$ of $\DM_\h(X,R)$, the following conditions
are equivalent.
\begin{itemize}
\item[(i)] The $\h$-motive $M$ is locally constructible in $\DM_\h(X,R)$.
\item[(ii)] There exists a stratification $\{X_i\}_{i\in I}$
by locally closed subschemes
of $X$, such that the restriction $M_{|_{X_i}}$
is smooth in $\DM_\h(X_i,R)$ for all $i\in I$.
\item[(iii)] There exists a stratification $\{X_i\}_{i\in I}$
by locally closed subschemes
of $X$, such that the restriction $M_{|_{X_i}}$
is rigid in $\DM_\h(X_i,R)$ for all $i\in I$.
\end{itemize}
\end{thm}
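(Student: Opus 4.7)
I prove the cycle (ii)$\Rightarrow$(iii)$\Rightarrow$(i)$\Rightarrow$(ii). For (ii)$\Rightarrow$(iii): the generators $f_*(R_{X'})(n)$ of the thick subcategory of strictly smooth objects (Definition~\ref{df:rigidgeom}) are rigid by Proposition~\ref{prop:smoothproperrigid}; rigid objects form a thick subcategory of $\DM_\h(X,R)$, so every strictly smooth object is rigid; by Lemma~\ref{lm:localrigid} rigidity is local for the étale topology, so any smooth object is rigid. A stratification along which $M$ restricts to smooth objects is therefore \emph{a fortiori} one along which $M$ restricts to rigid objects.

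\emph{Step} (iii)$\Rightarrow$(i) \emph{is the heart of the argument.} The localization triangles $j_!\,j^*M\to M\to i_*\,i^*M\to j_!\,j^*M[1]$, available by Theorem~\ref{thm:DMh_6functors}, combined with the stability of local constructibility under the six operations (Corollary~\ref{cor:sixopDMhlc}), reduce by induction on the (finitely many) strata to the claim: \emph{if $M\in\DM_\h(X,R)$ is rigid, then $M$ is locally constructible.} Fix a geometric point $\bar{x}$ of $X$ and set $X_{\bar{x}}=\Spec(\cO^{sh}_{X,\bar{x}})$. By Theorem~\ref{thm:localetalefinitecd}, the residue fields of $X_{\bar{x}}$ are uniformly of finite étale cohomological dimension, so Theorem~\ref{thm:fcohdimlconst} identifies constructible, locally constructible and compact objects inside $\DM_\h(X_{\bar{x}},R)$. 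The pullback $M_{\bar{x}}$ to $X_{\bar{x}}$ is rigid (pullback is monoidal), and the identity $\uHom_R(M_{\bar{x}},-)\simeq M_{\bar{x}}^{\vee}\otimes_R(-)$ shows that $\uHom_R(M_{\bar{x}},-)$ commutes with arbitrary sums; condition (d) of Theorem~\ref{thm:fcohdimlconst} is thus fulfilled and $M_{\bar{x}}$ is constructible on $X_{\bar{x}}$. Writing $X_{\bar{x}}$ as the cofiltered limit of affine étale neighborhoods $V$ of $\bar{x}$ (with affine transition maps), Theorem~\ref{thm:contlocconstruct} produces some such $V$ on which $M$ restricts to a constructible object, showing that $M$ is locally constructible at $\bar{x}$.

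\emph{Step} (i)$\Rightarrow$(ii) \emph{proceeds by verifying genericity and then iterating.} I check that local constructibility is a generic property in the sense of Definition~\ref{df:genericp}. Conditions (g1) and (g2) follow from Corollary~\ref{cor:sixopDMhlc}; (g3) holds because a separably closed field has étale cohomological dimension zero, so Theorem~\ref{thm:fcohdimlconst} forces local constructibility to agree with constructibility there; and (g5) holds because each generator $f_*(R_{X'})(n)$ is constructible ($R_{X'}$ is constructible, $p_*$ preserves constructibility for $p$ smooth and proper, and $g_*=g_!$ preserves it for $g$ finite). The critical condition (g4) is Proposition~\ref{prop:continuityDMh} applied to the cofiltered system of affine étale neighborhoods of the generic point $\eta$ of the integral scheme $X$: the limit of this system is $\Spec(\cO^{sh}_{X,\bar{\eta}})=\bar{\eta}$, since the local ring at the generic point of an integral scheme is already a field. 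Lemma~\ref{lm:constructgeomrig} then produces, for any locally constructible $M$, a dense open $U\subset X$ on which $M_{|U}$ is smooth, and Noetherian induction applied to the reduced complement $X\setminus U$ (which is of strictly smaller dimension) delivers the desired stratification.

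\emph{Main obstacle.} The substantive difficulty is (iii)$\Rightarrow$(i): rigid $\h$-motives are \emph{not} locally constructible over a general base (cf. Remark~\ref{rem:contreex1}), so one genuinely needs the pointwise passage to strict henselizations, where Gabber's uniform bound (Theorem~\ref{thm:localetalefinitecd}) forces rigidity, compactness and constructibility to coincide, after which continuity (Theorem~\ref{thm:contlocconstruct}) descends constructibility back to an honest étale neighborhood. A secondary technical point is the verification of (g4), which relies crucially on the fact that the strict henselization at a generic point of an integral scheme is the separable closure of its function field.
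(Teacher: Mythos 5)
Your steps (i)$\Rightarrow$(ii) and (ii)$\Rightarrow$(iii) are essentially the paper's arguments and are fine. The problem is in (iii)$\Rightarrow$(i), at the descent from the strict henselization back to an honest \'etale neighborhood. You correctly reduce to showing that any rigid $M$ is locally constructible, and you correctly deduce (via rigidity and Theorem~\ref{thm:fcohdimlconst}) that $M_{\bar x}$ is constructible over $X_{\bar x}=\Spec(\cO^{\mathit{sh}}_{X,\bar x})$. But the claim that ``Theorem~\ref{thm:contlocconstruct} produces some such $V$ on which $M$ restricts to a constructible object'' does not follow from that theorem. Essential surjectivity of the $2$-colimit $2\text{-}\varinjlim_V\DM_{\h,\lc}(V,R)\to\DM_{\h,c}(X_{\bar x},R)$ gives a locally constructible $N$ over some $V$ with $N|_{X_{\bar x}}\simeq M_{\bar x}$, but there is no \emph{a priori} reason for $N$ to agree with $M|_V$ after shrinking $V$: the fully faithful part of continuity applies to pairs of locally constructible objects, and $M|_V$ is not yet known to be one.

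To identify $N$ with $M|_V$ one needs two further inputs. First, Remark~\ref{rem:continuityDMh} (continuity over a strictly henselian limit with the contravariant argument locally constructible and the covariant one arbitrary) lifts the isomorphism $N|_{X_{\bar x}}\to M_{\bar x}$ to a morphism $N|_W\to M|_W$ over some neighborhood $W$. Second, the rigidity of $M$ --- via $\Hom(M|_W,-)\simeq\Hom(R_W, M|_W^{\vee}\otimes_R -)$ so that continuity also applies with $M$ in the contravariant slot --- lifts the inverse and shows that the two composites are identities after further shrinking. This is precisely the computation the paper carries out when verifying condition (g4) of Definition~\ref{df:genericp} for the property ``rigid''; the paper packages it as the genericity of rigidity and then applies Lemma~\ref{lm:constructgeomrig} with noetherian induction, whereas your approach is pointwise, but the rigidity-plus-continuity step is indispensable in either packaging, and it is the step your write-up elides. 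You invoke rigidity to get compactness over $X_{\bar x}$, but then drop it for the descent, which is exactly where it does the real work.
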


\begin{proof}
The property of being locally constructible is generic
(conditions (g1), (g2) and (g3)
of Definition \ref{df:genericp}
are obvious, while conditions (g4) and (g5)
follow right away from Proposition \ref{prop:continuityDMh},
and Corollary \ref{cor:DMhlcimdirsupppropre}, respectively).
Therefore, a suitable noetherian induction,
together with Lemma \ref{lm:constructgeomrig},
shows that (i)$\Rightarrow$(ii).
The implication (ii)$\Rightarrow$(i) follows
 from \ref{prop:locconstrsurjtop}.
 After Lemma \ref{lm:localrigid}
 and Proposition \ref{prop:smoothproperrigid},
it is obvious that (ii)$\Rightarrow$(iii).

It remains to prove that (iii)$\Rightarrow$(i).
By virtue of Proposition \ref{prop:locconstrsurjtop},
this amounts to prove that any rigid $R$-linear $\h$-motive
is locally constructible.
Note that rigid objects are stable by inverse image functors
of the form $f^*$, because symmetric monoidal functors
always preserve rigid objects. Hence, using noetherian induction together with
Lemma \ref{lm:constructgeomrig}, we see that it is sufficient
to prove that the property of being rigid is generic.
We already know that condition (g1) of Definition \ref{df:genericp} holds,
and we have just seen why condition (g2) holds.
To prove condition (g3), we
remark that, if $S$ is the spectrum of a separably closed field, then
the locally constructible objects of $\DM_\h(S,R)$ are precisely
the compact objects (by Theorems \ref{thm:localetalefinitecd}
and \ref{thm:fcohdimlconst}). 
Therefore, it is sufficient to prove that any rigid object is
compact in $\DM_\h(S,R)$, which readily follows from the fact that
the unit object $R_{S}$ is compact.
Since condition (g5) is already known
(\ref{lm:localrigid} and \ref{prop:smoothproperrigid}),
it remains to prove condition (g4).
We will prove a slightly better property.
Let $M$ and $N$ be two rigid objects of $\DM_\h(X,R)$,
and pick a point $x$ in $X$. If we let $v:V\to X$
run over the family of \'etale neighborhoods of $x$, and
if we let $u:S=\Spec(\cO^{\mathit{sh}}_{X,x})\to X$ denote the
strict henselization at $x$, then the canonical map
\begin{equation}\label{eq:henselHom}
\varinjlim_{v:V\to X}\Hom(v^*(M),v^*(N))\to
\Hom(u^*(M),u^*(N))
\end{equation}
is an isomorphism. Indeed, we have the canonical isomorphisms below.
\begin{align*}
\varinjlim_{v:V\to X}\Hom(v^*(M),v^*(N))
&\simeq\varinjlim_{v:V\to X}\Hom(v^*(R_X),v^*(M)^\vee\otimes^\derL_R v^*(N))\\
&\simeq\varinjlim_{v:V\to X}\Hom(v^*(R_X),v^*(M^\vee\otimes^\derL_R N))\\
&\simeq\Hom(u^*(R_X),u^*(M^\vee\otimes^\derL_R N))
\quad\text{(see Remark \ref{rem:continuityDMh})}\\
&\simeq\Hom(R_{S},u^*(M)^\vee\otimes^\derL_Ru^*(N))\\
&\simeq\Hom(u^*(M),u^*(N))
\end{align*}
This shows the invertibility of the
map \eqref{eq:df:genericp} in the case where $X$ is integral and $x$
is its generic point.
\end{proof}

\begin{rem}\label{rem:rigid&perfect}
Assume finally that $R$ is of positive characteristic
invertible in $\cO_X$. Then $\Der(X_\et,R)\simeq\DM_\h(X,R)$ (\ref{cor:DMh_Det}),
and this implies that any $R$-linear rigid $\h$-motive is smooth.
This is because any rigid object of $\Der(X_\et,R)$ is
locally isomorphic to a constant sheaf of complexes
associated to a perfect complex of $R$-modules.
Although this certainly is a folkloric result, we include
a proof here. If $S$ is a strictly henselian scheme with closed point $s$,
then taking the fiber of sheaves of $R$-modules at $s$ is the same thing as
taking the global sections. For two rigid objects $M$ and $N$
of $\Der(S_\et,R)$, we thus have
$$\derR\Hom_{\Der(S_\et,R)}(M,N)\simeq(M^\vee\otimes^\derL_R N)_s\simeq\derR\Hom_{\Der(R)}(M_s,N_s)\, ,$$
from which we get:
$$\Hom_{\Der(S_\et,R)}(M,N)\simeq\Hom_{\Der(R)}(M_s,N_s)\, .$$
For a geometric point $x$ of $X$, the constant sheaf of complexes associated
to a perfect complex of $R$-modules is obviously rigid,
while taking the fiber at $x$ defines a symmetric monoidal
functor and thus
sends rigid objects to perfect complexes of $R$-modules
(because the latter are the rigid objects of $\Der(R)$).
Hence we deduce from what precedes and
from the isomorphism \eqref{eq:henselHom}
that taking the fiber at $x$ defines an equivalence of triangulated categories
$$2\text{-}\varinjlim_V\Der_{\mathit{rig}}(V_\et,R)\simeq\Der_{\mathit{perf}}(R)\, ,$$
where $V$ runs over the family of \'etale neighborhoods of $x$,
$\Der_{\mathit{rig}}(V_\et,R)$ denotes the thick subcategory of
rigid objects in $\Der(V_\et,R)$, and $\Der_{\mathit{perf}}(R)$
is the triangulated category of perfect complexes of $R$-modules.
In particular, if two rigid objects
$M$ and $N$ in $\Der(X_\et,R)$ have isomorphic fibers at $x$ in $\Der(R)$,
then there exists an \'etale neighborhood $v:V\to X$ of $x$ such that $v^*(M)$
and $v^*(N)$ are isomorphic in $\Der(V_\et,R)$.
This applies to any rigid object $M$, with $N$
the constant sheaf associated to the fiber of $M$ at $x$.

We do not know if,
 for a general ring of coefficients $R$, any rigid $\h$-motive
 is smooth or not
 (except in the very particular situation of
  Lemma \ref{lm:fieldfinitecohdimgeomriggenerate}).
\end{rem}

\section{Applications}\label{sec:Applications}

\subsection{Algebraic cycles in \'etale motivic cohomology}\label{section:cycles}

\begin{num}
Let us fix an integer $n \geq 0$.

Consider a smooth $k$-scheme $X$ of finite type.
We let $z^n_X$ be the presheaf on $X_\et$ which to an \'etale
 $X$-scheme $U$ associates Bloch cycle complex
 $z^n(U,*)[-2n]$ (as in \cite[sec. 2.2]{GL1}).
On the other hand, let $\ZZ_{SV}(n)$ be Suslin-Voevodsky's motivic
 complex of Nisnevich sheaves on $\sm_k$.
According to \cite[Th. 1]{allagree},
 there is a canonical quasi-isomorphism
 of complexes of Nisnevich sheaves on the site of \'etale $X$-schemes:
$$
z^n_X \xrightarrow{\ \sim\ } (\ZZ_{SV}(n))|_{X_{\et}}.
$$
Recall also that by definition,
$$
\Hom_{\DMe_\et(k,\ZZ)}(M(X),\ZZ(n)[i])
 \simeq H^i_\et\big(X,L_{\AA^1}(\ZZ_{SV}(n)_\et)\big)
$$
where $L_{\AA^1}$ is the $\AA^1$-localization functor
 of effective \'etale motivic complexes.
Thus, we deduce from the previous corollary a canonical
 map:
$$
\rho_X^{i,n}:H^i_\et(X,z^n_X)
 \rightarrow \Hom_{\DM_\h(k,\ZZ)}(\ZZ(X),\ZZ(n)[i])
$$
which is, up to the isomorphisms described previously,
is induced by the canonical map:
$$
\ZZ_{SV}(n)_\et \rightarrow L_{\AA^1}\big(\ZZ_{SV}(n)_\et\big).
$$
We recall the following theorem.
\end{num}
\begin{thm} \label{thm:mot_L_coh}
Consider the above notations
 and let $p$ be the characteristic exponent of $k$.
 then $\rho_X^{n,i}$ induce an isomorphism after tensorization
 by $\ZZ[1/p]$.
\end{thm}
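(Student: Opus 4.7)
The plan is to use the fully faithful embedding of \'etale motives into $\h$-motives and then reduce to two comparison statements (rational and mod $\ell$) that are essentially classical.

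First, by Corollary \ref{cor:compDMetDmh}(2), the composition
$$\DMe_\et(k,\ZZ) \xrightarrow{\Sigma^\infty} \DM_\et(k,\ZZ) \xrightarrow{\derL\psi_!} \DM_\h(k,\ZZ)$$
is fully faithful, so the target of $\rho_X^{i,n}$ is canonically
$$\Hom_{\DMe_\et(k,\ZZ)}(M(X),\ZZ(n)[i]) \simeq H^i_\et\bigl(X,L_{\AA^1}(\ZZ_{SV}(n)_\et)\bigr),$$
and the map $\rho_X^{i,n}$ is induced by the $\AA^1$-localization map $\ZZ_{SV}(n)_\et \to L_{\AA^1}(\ZZ_{SV}(n)_\et)$. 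It suffices to show that this localization map is a quasi-isomorphism of complexes of \'etale sheaves with transfers after inverting $p$, i.e., that $\ZZ_{SV}(n)_\et[1/p]$ is already $\AA^1$-local in $\DMe_\et(k,\ZZ[1/p])$. By the analog over $\DMe_\et$ of Proposition \ref{prop:conservativity_coef_DMh} (or directly by checking on the conservative family $(-)\otimes \QQ$, $(-)\otimes \ZZ/\ell\ZZ$ with $\ell$ running over primes $\neq p$), this reduces to checking that $\ZZ_{SV}(n)_\et\otimes\QQ$ and $\ZZ_{SV}(n)_\et\otimes\ZZ/\ell\ZZ$ (for $\ell\neq p$) are $\AA^1$-local.

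Second, for the rational case, I would argue as follows. With $\QQ$-coefficients, the \'etale and Nisnevich topologies induce the same category of sheaves with transfers (this is the rational analog exploited in the proof of Proposition \ref{prop:etoublitransA1eq} and in \cite[chap.~5, 4.1.12]{FSV} as recalled earlier), so $\ZZ_{SV}(n)_\et\otimes\QQ$ coincides with its Nisnevich counterpart. Voevodsky's theorem identifying Suslin-Voevodsky's motivic complex with Bloch's higher Chow complex in the Nisnevich topology ensures that the Nisnevich version is $\AA^1$-local; this forces the same property for the \'etale version, and matches the right hand side of Theorem \ref{thm:DMB&DM_h_rational_recall} (formula \eqref{eq:DMB&Kth}) with Bloch's higher Chow groups of $X$ via the classical identification $\mathit{Gr}^n_\gamma K_{2n-i}(X)\otimes\QQ \simeq CH^n(X,2n-i)\otimes\QQ$.

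Third, for the $\ell$-torsion case with $\ell$ prime to $p$ (hence invertible on $X$), I would invoke the Suslin-Voevodsky rigidity theorem for cycle complexes: the natural map
$$\ZZ_{SV}(n)_\et\otimes^\derL\ZZ/\ell\ZZ \longrightarrow \mu_\ell^{\otimes n}$$
is a quasi-isomorphism of complexes of \'etale sheaves with transfers (with the locally constant sheaf $\mu_\ell^{\otimes n}$ endowed with the unique transfer structure compatible with its underlying \'etale sheaf structure, by Corollary \ref{cor:loc_cst_sheaf&transfers}). Since $\mu_\ell^{\otimes n}$ is locally constant and $\ell$ is invertible in $k$, Theorem \ref{A1invarianceetalesheaves} implies that it is $\AA^1$-local as a complex of \'etale sheaves; by Proposition \ref{prop:D^b_c->DMet_pleinement_fidele}, it is therefore $\AA^1$-local as an \'etale sheaf with transfers. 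This yields the $\ell$-torsion case and completes the proof after verifying that the change of coefficients $(-)\otimes^\derL\ZZ/\ell\ZZ$ is compatible with $\AA^1$-localization (which follows from Proposition \ref{prop:etoublitransA1eq} together with the fact that change of coefficients preserves $\AA^1$-equivalences, cf.~Lemma \ref{lm:derived_rho_*} in the $\h$-setting). The delicate step is the verification that the Suslin-Voevodsky torsion rigidity for cycles is available in our base-change framework; once available (as a classical input), everything else is a formal reduction.
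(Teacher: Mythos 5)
Your proof is correct and follows essentially the same strategy as the paper's: reduce modulo $p$-inversion to the cases of $\QQ$-coefficients and $\ZZ/\ell\ZZ$-coefficients with $\ell\neq p$; settle the rational case via the equivalence of \'etale and Nisnevich sheaves with transfers together with Voevodsky's comparison theorem \cite{allagree} and the full faithfulness from Corollary~\ref{cor:compDMetDmh}; and settle the torsion case via Suslin--Voevodsky rigidity and the $\AA^1$-locality of locally constant complexes. The only cosmetic difference is that in the torsion step the paper invokes Corollary~\ref{A1locequivsmalletalesite} together with the fact (from \cite[Thm.~4.4]{susvoesing}) that the cohomology sheaves of $\ZZ_{SV}(n)_\et\otimes^\derL\ZZ/\ell\ZZ$ are locally constant, whereas you quote the sharper explicit identification with $\mu_\ell^{\otimes n}$ from \cite[Thm.~7.20]{MVW} and then use Theorem~\ref{A1invarianceetalesheaves}; these come down to the same thing.
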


\begin{proof}
We want to show that the map
$$\derR\Gamma(X_\et,z^n_X)[1/p]\simeq
\derR\Gamma(X,\ZZ_{SV}(n)_\et)[1/p]
\to\derR\Hom_{\DM_\h(k,\ZZ)}(\ZZ(X),\ZZ(n))$$
is an isomorphism in the derived category of abelian groups.
It is sufficient to check that it induces an isomorphism after we apply
the functor $C\mapsto C\otimes^\derL R$ for $R=\QQ$ or $R=\ZZ/\ell\ZZ$
for prime numbers $\ell\neq p$. For $R=\QQ$, this readily follows
from Voevodsky's comparison theorem \cite{allagree}
(using Corollary \ref{cor:compDMetDmh}(3), as well as the
equivalence $\DMe(k,\QQ)\simeq\DMe_\et(k,\QQ)$).
For $R=\ZZ/\ell\ZZ$, it is sufficient to check that
the map
$$\ZZ_{SV}(n)_\et\otimes^\derL\ZZ/\ell\ZZ \rightarrow
L_{\AA^1}\big(\ZZ^{SV}(n)_\et\big)\otimes^\derL\ZZ/\ell\ZZ$$
is a quasi-isomorphism.
But this map is an $\AA^1$-equivalence with $\AA^1$-local
codomain. It is thus sufficient to check that the left hand side
is $\AA^1$-local as well. By virtue of Corollary \ref{A1locequivsmalletalesite},
it is sufficient to prove that the cohomology sheaves of the tensor product
$\ZZ_{SV}(n)_\et\otimes^\derL\ZZ/\ell\ZZ$ are locally constant.
But this readily follows from the rigidity theorem of Suslin and
Voevodsky \cite[Theorem 4.4]{susvoesing} (see \cite[Theorem 7.20]{MVW}).
\end{proof}

\begin{rem}
The preceding theorem and its proof are well known.
For instance, using Voevodsky's comparison theorem \cite{allagree},
one can find them  in \cite[10.2 and 14.27]{MVW} under the assumption
that the field $k$ is of finite cohomological dimension
(the later assumption being used to prove Corollary \ref{A1locequivsmalletalesite}
in the case of $X=\Spec(k)$).
\end{rem}

\begin{rem}
The source of $\rho_X^{n,i}$ is an important invariant.
Let us mention in particular the result \cite[Th. 8.3]{GL2}:
 if $p>0$, $z^n_X/p^r$ is isomorphic to the
 logarithmic De Rham Witt sheaf $\nu^n_r$ placed in degree $n$.
This fact alone explains the failure of homotopy invariance
 of the cohomology $H^*_\et(X,z^n_X)$, equivalent to
 the failure injectivity for $\rho_X^{n,i}$.\footnote{Compare
  this with the general fact \ref{cor:etale_descent&p-torsion}.}
This can also be explained by saying that the \'etale sheafification
 functor, which goes from Nisnevich complexes
 to \'etale complexes of sheaves on $\sm_k$, does not
 preserve $\AA^1$-local objects -- in fact,
 in characteristic $p>0$, this functor does not even preserves
 $\AA^1$-invariant sheaves because of the Artin-Shreier
 \'etale covers of the affine line.
 
We will now explain the strong relationship of classical Chow groups
with \'etale motivic cohomology in weight $n$ and degree $2n$
for regular schemes, by combining
the absolute purity theorem for \'etale motives and the fact that the
Bloch-Kato conjecture is true; see Theorem \ref{thm:embedChowetalemotcoh} below.
\end{rem}

\begin{num}
The coniveau filtration and its associated spectral sequence
 is very well documented in the literature, under an axiomatic
 treatment. However, the authors
 usually require a base field in their axioms.\footnote{The reason
 for doing so is that at this moment we do not know if Gersten 
 conjecture holds for all regular schemes of unequal characteristics,
 either for K-theory or torsion \'etale cohomology.}
 It is clearly not necessary so let us quickly recall
 the construction
 of this spectral sequence in the case of \'etale
 motivic cohomology,
 and more precisely its version with support:
$$
\Het^{r,n}(X,Z)=\Hom_{\DM_h(X)}(i_*(\un_Z),\un_X(n)[r]).
$$
where $i_:Z \rightarrow X$ is closed immersion.

First, one defines a \emph{flag} on $X$ has
a decreasing sequence $(Z^p)_{p \in \ZZ}$ 
of closed subschemes of $X$ such that:
\begin{itemize}
\item for all integer $p\geq 0$,
$Z^p$ is of codimension greater or equal to $p$ in $X$,
\item for $p<0$, $Z^p=X$.
\end{itemize}
We let $\mathcal D(X)$ be the set of flags of $X$, 
ordered by term-wise inclusion.
It is an easy fact it is right filtering.

Given such a flag $Z_*$, and a fixed integer $n \in \ZZ$, 
 we define an exact couple, denoted by
 $(D(Z^*,n),E_1(Z ^*,n))$
 (with cohomological conventions, see \cite[th. 2.8]{McC}),
 as follows:
$$
\xymatrix@R=10pt@C=2pt{
D^{p-1,q}(Z^*,n)\ar@{=}[d]\ar[r]
 & E_1^{p,q}(Z^*,n)\ar@{=}[d]\ar[r]
 & D^{p,q}(Z^*,n)\ar@{=}[d]\ar[r]
 & D^{p-1,q+1}(Z^*,n)\ar@{=}[d] \\
\Het^{p+q-1,n}(X-Z^{p})
 & \Het^{p+q,n}(X-Z^{p+1},Z^p-Z^{p+1})
 & \Het^{p+q,n}(X-Z^{p+1})
 & \Het^{p+q,n}(X-Z^{p})
}
$$
where the morphisms are given by localization long exact sequence
 of cohomology with support associated with the closed
 immersion: $(Z^p-Z^{p+1}) \rightarrow (X-Z^{p+1})$.\footnote{This
 sequence is induced by the corresponding localization
 triangle in $\DM_h$, which exists according to \ref{thm:DMh_6functors}.}
This exact couple is obviously contravariantly functorial
 in $Z^*$ as follows from the 6 functors formalism
  (more precisely, we need the proper base change theorem with respect
   to functor $i_*$, $i$ a closed immersion).

The coniveau exact couple associated with $X$
 is obtained by taking the colimit of these exact
 couples as $Z^*$ runs in the set of flags of $X$:
$$
\left(D(X,n),E_1(X,n)\right)=\ilim_{Z^* \in \mathcal D(X)}
 \left(D(Z^*,n),E_1(Z^*,n)\right).
$$
Before stating the main result, we need a final notation.
 Let $x \in X$ be any point, and $Z$ be its reduced closure
 in $X$. Then we will consider
 the following cohomology groups:
\begin{align*}
\hHet^{r,n}\left(X_{(x)},x\right)&=\ilim_U \Het^{r,n}\left(U,Z \cap U\right), \\
\hHet^{r,n}\left(\kappa(x)\right)&=\ilim_U \Het^{r,n}\left(Z \cap U\right),
\end{align*}
where $U$ runs over the open neighborhood of $x$ in $X$.
\end{num}
\begin{prop}
Consider the notations above
 and assume that $X$ is excellent and regular. 

Then for any integers $p,q \in \ZZ$,
 there exists canonical isomorphisms:
$$
E_1^{p,q}(X,n)
 \underset{(1)} \simeq \bigoplus_{x \in X^{(p)}} \hHet^{p+q,n}\left(X_{(x)},x\right)
 \underset{(2)} \simeq \bigoplus_{x \in X^{(p)}} \hHet^{q-p,n-p}\left(\kappa(x)\right)
 \underset{(3)} \simeq \bigoplus_{x \in X^{(p)}} \Het^{q-p,n-p}\left(\kappa(x)\right)
$$
\end{prop}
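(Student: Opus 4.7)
My plan is to prove the three isomorphisms separately, with isomorphism $(2)$ being the central one and the only place where the excellence/regularity hypotheses on $X$ really enter.

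For $(1)$, I will use the standard fact that the ordered set $\mathcal D(X)$ of flags of $X$ is filtered, and that the subsets of flags concentrated at codimension exactly $p$ form a cofinal system once we decompose $Z^p - Z^{p+1}$ into its irreducible components, which are the closures of its generic points. Concretely, given a flag $Z^*$, the scheme $Z^p - Z^{p+1}$ is a disjoint union of closed subschemes, each having a unique generic point of codimension $p$ in $X$. Taking the colimit over flags and using that in $\DM_\h(-,\ZZ)$ small coproducts of closed immersions split the cohomology with support, I get a direct sum indexed by $X^{(p)}$; the term attached to $x \in X^{(p)}$ becomes $\varinjlim_U \Het^{p+q,n}(U, \bar{\{x\}} \cap U)$ where $U$ runs over open neighborhoods of $x$ in $X$, which is exactly $\hHet^{p+q,n}(X_{(x)},x)$.

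For $(2)$, I will invoke the absolute purity theorem for $\h$-motives with integral coefficients, Theorem~\ref{thm:DMh_6functors}. Since $X$ is excellent and regular, for any point $x\in X^{(p)}$ the reduced closure $\overline{\{x\}}$ is excellent, hence its regular locus is open and contains $x$ itself (the generic point of that component). Shrinking $U$ if necessary, I may assume that both $U$ and $Z_U := \overline{\{x\}} \cap U$ are regular and that $i \colon Z_U \hookrightarrow U$ is a regular closed immersion of pure codimension $p$. Absolute purity then gives a canonical isomorphism $i^!(\un_U) \simeq \un_{Z_U}(-p)[-2p]$ in $\DM_\h(U,\ZZ)$, which by adjunction $(i_*,i^!)$ yields
\[
\Het^{p+q,n}(U,Z_U) = \Hom_{\DM_\h(U)}(i_*\un_{Z_U},\un_U(n)[p+q]) \simeq \Het^{q-p,n-p}(Z_U).
\]
Passing to the colimit over the cofinal system of such $U$'s produces the desired isomorphism with $\hHet^{q-p,n-p}(\kappa(x))$ on the right.

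For $(3)$, the identification $\hHet^{q-p,n-p}(\kappa(x)) \simeq \Het^{q-p,n-p}(\kappa(x))$ is a direct application of the continuity statement for $\h$-motives, Proposition~\ref{prop:continuityDMh} (combined with Proposition~\ref{prop:locconstrsurjtop} applied to the constructible motive $\un(n-p)$): the filtering projective limit of the affine $\overline{\{x\}} \cap U$ as $U$ shrinks over open neighborhoods of $x$ in $X$ is $\Spec \kappa(x)$, so cohomology commutes with the colimit.

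The main obstacle, and the only genuine content beyond formalities, is step $(2)$: I must be careful that the shrinking necessary to apply absolute purity is compatible with the indexing filtered colimit, which is why excellence is crucial (it guarantees that the non-regular locus of $\overline{\{x\}}$ is closed and disjoint from the generic point $x$, so the open neighborhoods $U$ over which $Z_U$ is regular are cofinal in all open neighborhoods of $x$). Everything else is formal from the six-functor formalism established in Theorem~\ref{thm:DMh_6functors}.
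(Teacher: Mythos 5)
Your proof is correct and follows essentially the same three-step route as the paper: additivity of cohomology with support for $(1)$, absolute purity combined with the density of the regular locus of $\overline{\{x\}}$ (an excellence argument) for $(2)$, and continuity of $\h$-motives for $(3)$. One small slip: in step $(3)$ the citation of Proposition~\ref{prop:locconstrsurjtop} is out of place — that result is about detecting local constructibility by pullback along surjective morphisms, not about continuity; what you actually need is simply that $\un(n-p)$ is a constructible object (which it is by definition), so that Proposition~\ref{prop:continuityDMh} (or, as the paper prefers, Theorem~\ref{thm:contlocconstruct}) applies.
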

In particular, we get the usual form of the \emph{coniveau spectral sequence},
 associated with the above exact couple:
\begin{equation}\label{eq:coniveau}
E_1^{p,q}(X,n)=\bigoplus_{x \in X^{(p)}} \Het^{q-p,n-p}\left(\kappa(x)\right)
 \Rightarrow \Het^{p+q,n}(X).
\end{equation}
\begin{proof}
The isomorphism $(1)$ only uses the additivity in $Z$ of cohomology
 with support: $\Het^{**}(X,Z \sqcup Z') \simeq
 \Het^{**}(X,Z) \oplus \Het^{**}(X,Z')$ which is obvious according
 to our definition.

The isomorphism $(2)$ uses the absolute purity property
 for $\DM_h$ (Th. \ref{thm:DMh_6functors}) together
 with the fact that any integral closed subscheme $Z \subset X$
 has a dense regular locus (cf. \cite[7.8.6]{SGA4}).

Finally, the isomorphism $(3)$ uses the continuity property
 of $\DM_{\h,c}$ (see Theorem \ref{thm:contlocconstruct}).
\end{proof}

\begin{num}
Let $x$ be any point of $X$.
 Let us denote by $p_x$ the exponential characteristic of $\kappa(x)$.
 Then we get the following canonical isomorphisms:
 Thus, according to Proposition \ref{prop:artin-schreier},
  Corollary \ref{cor:compDMetDmh},
	 and the Bloch-Kato conjecture (more precisely \cite[Th. 6.17]{VBK}),
	we get the following isomorphisms for any integers $r,n$ 
	such that $r\leq n+1$:
$$
\Het^{r,n}\left(\kappa(x)\right)
 \simeq \Het^{r,n}\left(\kappa(x)\right)\lbrack p_x^{-1} \rbrack
 \simeq H^{r,n}(\kappa(x))\lbrack p_x^{-1} \rbrack
$$
where the right hand side denotes the motivic cohomology
 groups of the field $\kappa(x)$ with $\ZZ[p_x^{-1}]$-coefficients.
 Recall the later groups are zero if in addition
 $n<0$ or $r>n$. 
\end{num}
\begin{cor}
Under the assumptions of the previous proposition
 and with the above notations, one gets
 for any integers $p,q$:
$$
E_1^{p,q}(X,n)=
\begin{cases}
0 & \text{if } q=n+1,  (q<n+1,p>n), (q<n,p=n), \\
\underset{x \in X^{(p)}} \bigoplus K^M_{n-p}(\kappa(x))[p_x^{-1}]
  & \text{if } q=n,
\end{cases}
$$
where $K_*^M$ denotes Milnor K-theory.
\end{cor}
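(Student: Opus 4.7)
The plan is to feed the expression for $E_1^{p,q}(X,n)$ from the preceding proposition into the isomorphism with ordinary motivic cohomology displayed just before the corollary, and then quote the standard vanishing results for motivic cohomology of a field. Concretely, by the proposition we have
$$E_1^{p,q}(X,n) \simeq \bigoplus_{x\in X^{(p)}} \Het^{q-p,n-p}(\kappa(x)),$$
and the discussion preceding the corollary gives, for any integers $r,m$ with $r\leq m+1$,
$$\Het^{r,m}(\kappa(x)) \simeq H^{r,m}(\kappa(x))[p_x^{-1}],$$
where the right hand side is ordinary motivic cohomology with $\ZZ[p_x^{-1}]$-coefficients.

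First I would check the range hypothesis $r=q-p\leq (n-p)+1$ in each of the cases listed. For $q=n+1$ the inequality is the equality $r=m+1$, so the isomorphism applies and the target vanishes because $r>m$ and $m\geq 0$ forces $H^{m+1,m}(\kappa(x))=0$ (this is the classical vanishing $H^{r,m}(k)=0$ for $r>m$). For $q<n+1$ and $p>n$ the weight satisfies $m=n-p<0$, so $H^{r,m}(\kappa(x))=0$ (motivic cohomology is zero in negative weight), again after noting $r\leq m+1$ is automatic from $q\leq n$. For $q<n$ and $p=n$ we have $m=0$ and $r=q-n<0$, and motivic cohomology in weight $0$ is the constant sheaf $\ZZ$ concentrated in degree $0$, so the group vanishes.

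For the remaining case $q=n$ one has $r=m=n-p$, so the range condition is $r\leq m+1$ and the identification gives
$$E_1^{p,n}(X,n)\simeq\bigoplus_{x\in X^{(p)}} H^{n-p,n-p}(\kappa(x))[p_x^{-1}].$$
The final step is to identify the diagonal motivic cohomology of a field with Milnor $K$-theory, that is,
$$H^{n-p,n-p}(\kappa(x))\simeq K^M_{n-p}(\kappa(x)),$$
which is the theorem of Nesterenko-Suslin and Totaro. This yields the required formula.

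There is no serious obstacle here; the entire argument is a bookkeeping exercise once the coniveau proposition has reduced us to \'etale cohomology of residue fields, and once the Bloch-Kato conjecture and the absolute purity of $\DM_\h(-,\ZZ)$ (already invoked in the proposition) have been used to pass from \'etale motivic cohomology to Nisnevich motivic cohomology in the allowed range $r\leq m+1$. The only subtlety is to verify that in each of the three vanishing cases the bound $r\leq m+1$ is satisfied so that the comparison isomorphism actually applies, which I did above.
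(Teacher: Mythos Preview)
Your argument is correct and matches the paper's intended reasoning: the corollary is stated without proof in the paper, as an immediate consequence of the coniveau proposition together with the comparison $\Het^{r,m}(\kappa(x))\simeq H^{r,m}(\kappa(x))[p_x^{-1}]$ for $r\leq m+1$ and the standard vanishing of motivic cohomology of a field. Your case-by-case verification that $r\leq m+1$ holds in each situation, and your appeal to Nesterenko--Suslin/Totaro for the identification $H^{m,m}(k)\simeq K^M_m(k)$, are exactly what is needed; the only remark is that in the case $(q<n,\,p=n)$ you correctly invoke the additional elementary fact $H^{r,0}(k)=0$ for $r\neq 0$, which is not among the two vanishing conditions the paper explicitly recalls but is of course immediate from $\ZZ(0)=\ZZ$.
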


\begin{num}
Thus, from the coniveau spectral sequence,
 one deduces the following maps:
\begin{equation}\label{eq:edge_coniveau}
E_2^{n,n}(X,n) \xrightarrow{b} E_\infty^{n,n}(X,n)
 \xrightarrow{a} \Het^{2n,n}(X,n).
\end{equation}
where $b$ is an epimorphism, and an isomorphism
 if $n\leq 2$, while $a$ is always a monomorphism.
 Moreover, we get a short exact sequence:
\begin{equation}\label{eq:Gersten_et}
\bigoplus_{y \in X^{(n-1)}} \kappa(y)^\times[1/p_y]
 \xrightarrow{\ d_1\ }
 \bigoplus_{x \in X^{(n)}} \ZZ[1/p_x]
 \xrightarrow{c} E_2^{n,n}(X,n) \rightarrow 0,
\end{equation}
where $d_1$ is the differential of the $E_1$-page
 of the coniveau spectral sequence with source
 the $(n-1,n)$-term. \\
Let $N$ be the set of  made of the exponential
 characteristics of the residue fields of $X$.
 Then,
 if we tensor the above short exact sequence with $\ZZ[N^{-1}]$,
 the middle term becomes the group of $n$-codimensional cycles
 with $\ZZ[N^{-1}]$-coefficients.
To finish our study of the coniveau spectral sequence,
 we notice the following critical point
 (analog of \cite[Prop. 5.14]{quillen}):
\end{num}
%
%\begin{num}
%Let $N$ the set made of the exponential
%characteristics of the residue fields of $X$.
%We will denote with a subscript $N$ the tensor product
 %of an abelian group with $\ZZ[N^{-1}]$.
%Thus, as a consequence of the Bloch-Kato conjecture
 %(more precisely \cite[Th. 6.17]{VBK}),
 %together with Theorem \ref{thm:mot_L_coh} and the continuity
 %property of $\DM_{\h,c}$ (Theorem \ref{thm:contlocconstruct}), we get that for any
 %integers $r \leq n+1$,
%$$
%\Het^{r,n}\left(\kappa(x)\right)_N=H^{r,n}(\kappa(x))_N
%$$
%where the right hand side denotes the motivic cohomology
 %groups of the field $\kappa(x)$ with $\ZZ[N^{-1}]$-coefficients.
 %Recall the later groups are zero if in addition
 %$n<0$ or $r>n$.
%\end{num}
%\begin{cor}
%Under the assumptions of the previous proposition,
 %for any integer $p,q$, one gets:
%$$
%E_1^{p,q}(X,n)_N=
%\begin{cases}
%0 & \text{if } q=n+1, (q<n+1,p>n), (q<n,p=n), \\
%\underset{x \in X^{(p)}} \bigoplus K_{n-p}(\kappa(x))_N
  %& \text{if } q=n. 
%\end{cases}
%$$
%\end{cor}
%
%In particular, the $E_1$-term in degree $(n,n)$
 %of this spectral sequence is the usual group
 %$Z^n(X)_N$ of $n$-codimensional cycles in $X$.
%We will finally use the following critical point
 %(analog of \cite[Prop. 5.14]{quillen}):
\begin{prop}
Consider the notations above.
Then the differential of the coniveau spectral sequence:
$$
d_1:\bigoplus_{y \in X^{(n-1)}}  \kappa(y)^\times[p_y^{-1}]=E_1^{n-1,n}(X,n)
 \rightarrow E_1^{n,n}(X,n)=\bigoplus_{x \in X^{(n)}} \ZZ[1/p_x]
$$
is the usual divisor class map:
 given $(y,x) \in X^{(n-1)} \times X^{(n)}$
  such that $y \in Z^{(1)}$ where $Z$
  is the reduced closure of $x$ in $X$,
 the component $d_1)_x^y$ is the order function
 of the local 1-dimensional excellent ring
 $\cO_{Z,y}$ up to the denominators indicated.
\end{prop}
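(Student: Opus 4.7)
The plan is to identify $(d_1)_x^y$ with the order function of $\cO_{\bar{\{y\}}, x}$ by first reducing to a local computation in a $1$-dimensional excellent local ring, and then identifying the boundary in the localization long exact sequence with the classical valuation.

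First, by the naturality of the coniveau exact couple under étale pullback and localization, combined with the continuity of $\DM_\h$ (Theorem~\ref{thm:contlocconstruct}), we may replace $X$ by $\Spec \cO_{X,x}$. Let $\mathfrak{p}$ be the height-$(n-1)$ prime of $\cO_{X,x}$ corresponding to $y$, and set $B = \cO_{X,x}/\mathfrak{p}$: a $1$-dimensional excellent local ring with fraction field $\kappa(y)$ and residue field $\kappa(x)$. Using the absolute purity property for $\DM_\h$ (Theorem~\ref{thm:DMh_6functors}) to identify the relevant $E_1$-terms with étale cohomology of residue fields, a direct diagram chase in the construction of the coniveau exact couple shows that $(d_1)_x^y$ is identified with the composition of the boundary
\begin{equation*}
\partial\colon \Het^{1,1}(\kappa(y))[1/p_y] \longrightarrow \Het^{2,1}_{\{x\}}(\Spec B)[1/p_x]
\end{equation*}
in the localization long exact sequence attached to the closed immersion $\{x\} \hookrightarrow \Spec B$, with the identification of the target with $\ZZ[1/p_x]$.

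If $B$ is a DVR, then $\{x\} \hookrightarrow \Spec B$ is a regular closed immersion of codimension $1$ between regular schemes, and absolute purity yields $\Het^{2,1}_{\{x\}}(\Spec B) \simeq \Het^{0,0}(\kappa(x)) = \ZZ$ after inverting $p_x$. By Proposition~\ref{prop:comput_Tate_etale_normal}, the weight-one Tate motive satisfies $\ZZ(1) \simeq \GG_m[-1]$ on regular bases, so $\partial$ is identified, after inverting residue characteristics, with the classical connecting map
\begin{equation*}
\kappa(y)^\times = H^1_\et(\kappa(y), \GG_m) \xrightarrow{\ \partial\ } H^2_{\{x\}}(\Spec B, \GG_m) = \ZZ,
\end{equation*}
which coincides with the valuation of $B$, as follows from the Kummer sequence together with Hilbert~$90$.

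For general $B$ (not necessarily regular), let $\nu\colon \Spec \tilde B \to \Spec B$ be the normalization, which is finite (since $B$ is excellent) and finite $\Lambda$-universal by Remark~\ref{rem:ex_universal}. The ring $\tilde B$ is a semilocal principal ideal domain; its closed points $\tilde x_i$ give rise to the DVR valuations $v_i$ on $\kappa(y)$, and the order function of $B$ is defined as $\ord_B = \sum_i [\kappa(\tilde x_i) : \kappa(x)] \cdot v_i$. The trace map $\Tr_\nu\colon \nu_*\nu^*(\un) \to \un$ of Definition~\ref{df:trace_map}, combined with its base-change compatibility with the closed immersion $\{x\} \hookrightarrow \Spec B$ and the degree formula (Proposition~\ref{prop:basic_traces_DMh}, parts (3)--(4)), gives a commutative diagram expressing $\partial$ over $\Spec B$ as the weighted sum of the DVR boundaries over each $\Spec \tilde B_i$; combined with the previous step, this yields exactly $\ord_B$.

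The hardest part will be this last step: producing the commutative diagram relating the coniveau boundary over $\Spec B$ to those over the DVRs $\Spec \tilde B_i$ via the trace of $\nu$, and checking that the residue-degree coefficients appear correctly. This amounts to a careful interplay between the base-change property of $\Tr_\nu$, the proper base change formula for the finite morphism $\nu$, and the boundary maps in localization triangles, all of which are available within the six-functor formalism of $\DM_\h$ (Theorem~\ref{thm:DMh_6functors}).
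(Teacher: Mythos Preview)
Your overall strategy of reducing to a $1$-dimensional local ring and then to its normalization is the same as the paper's, but the implementation goes wrong once you leave the regular world. The central gap is that you work on $\Spec B$ with $B=\cO_{X,x}/\mathfrak p$, which need not be regular or even geometrically unibranch, and then invoke the trace $\Tr_\nu$ for the normalization $\nu\colon\Spec\tilde B\to\Spec B$. None of the cases in Remark~\ref{rem:ex_universal} covers this map: it is generally not flat, its target is generally neither regular nor geometrically unibranch (think of a nodal point), so $\nu$ is not $\Lambda$-universal and Definition~\ref{df:trace_map} does not supply a trace. Relatedly, your intermediate target $\Het^{2,1}_{\{x\}}(\Spec B)$ has no purity identification with $\ZZ[1/p_x]$, so the sentence ``with the identification of the target with $\ZZ[1/p_x]$'' has no content outside the DVR case. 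The paper avoids both problems by never leaving regular schemes: it uses the finite morphism $\Spec\tilde B\to\Spec\cO_{X,x}$ between \emph{regular} schemes, for which Gysin maps exist (see \cite[sec.~6]{Deg12}) and are compatible with residue maps; this compatibility square is exactly what transports the computation to the DVR $\tilde B$ and produces the residue-degree weights.

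A secondary issue is the DVR step. Writing $\kappa(y)^\times=H^1_\et(\kappa(y),\GG_m)$ is a degree error (that group vanishes by Hilbert~90); via $\un(1)\simeq\GG_m[-1]$ one has $\Het^{1,1}(\kappa(y))=H^0_\et(\kappa(y),\GG_m)$. More importantly, ``Kummer plus Hilbert~90'' only tells you that the boundary in the $\GG_m$-localization sequence identifies $K^\times/A^\times$ with $\ZZ$; it does not verify that this identification agrees with the purity isomorphism used in the coniveau spectral sequence. The paper closes exactly this point by an explicit check that $d_1(\pi)=1$ using the definition of the purity isomorphism via the deformation space (Theorem~\ref{thm:carac_abs_purity}); you should either reproduce that computation or cite it.
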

\begin{proof}
The first step is to reduce to the
 case where $X$ is local regular of dimension
 $1$, $y$ being its closed point.
 
This reduction works as in \cite[1.16]{Deg11}.
 Though this proof is written for $k$-schemes,
 it works equally fine if one uses the fact that
 \'etale motivic cohomology admits Gysin maps
  between regular schemes for finite morphisms (see \cite[sec. 6]{Deg12}) 
 and the fact these Gysin maps commute with residue morphisms:
 more precisely, given any Cartesian square
$$
\xymatrix@=10pt{
Z'\ar[d]\ar[r] & T'\ar^f[d] \\
Z\ar^i[r] & T
}
$$
of regular schemes,
 such that $f$ is finite and $i$ is a (codimension $1$)
 closed immersion, the following diagram commutes:
$$
\xymatrix@=14pt@C=22pt{
\Het^{**}(T'-Z')\ar^-{\partial_{T',Z'}}[r]\ar_{h_*}[d]
 & \Het^{**}(Z')\ar^{f_*}[d] \\
\Het^{**}(T-Z)\ar^-{\partial_{T,Z}}[r] & \Het^{**}(Z)
}
$$
where $f_*$ (resp. $h_*$) is the Gysin morphism mentioned above
 and $\partial_{T,Z}$ is obtained from the canonical
 (boundary) map
$$
\Het^{**}(T-Z) \rightarrow \Het^{**}(T,Z)
$$
using the purity isomorphism:
 $\Het^{**}(T,Z) \simeq \Het^{**}(Z)$.
 Over a field, this commutativity has
 been proved in \cite[5.15]{Deg8}.
 The absolute case considered here is
  treated likewise using the absolute purity property.

To treat the remaining case, $X=\Spec(A)$
 with $A$ a discrete valuation ring,
 we thus have to prove that $d_1$ is the valuation
 map of $A$. In this case $d_1$, is the residue
 map $\Het^{1,1}(X-Z) \rightarrow \Het^{0,0}(Z)$,
 $Z$ being the closed point of $X$.
 Thus $d_1$ obviously sends units to $0$, and because 
 it is additive, we have only to prove that
 $d_1(\pi)=1$ where $\pi$ is a uniformizing
 parameter of $A$. This last property
 follows from the definition of the absolute
 purity isomorphism (\emph{cf.} appendix and
 especially Theorem \ref{thm:carac_abs_purity})
 and a careful computation with the deformation space
 (see the proof of \cite[2.6.5]{Deg5}).
\end{proof}

One can summarize the informations obtained
 from the above proposition and its preceding paragraph
 by the following commutative diagram:
$$
\xymatrix@=20pt{
\bigoplus_{y \in X^{(n-1)}} \kappa(y)^\times\ar^-{\mathrm{div}}[r]\ar@{^(->}[d]
 & Z^n(X)\ar@{^(->}[d]
 & & \\
\bigoplus_{y \in X^{(n-1)}} \kappa(y)^\times[1/p_y]\ar^-{d_1}[r]
 & \bigoplus_{x \in X^{(n)}} \ZZ[1/p_x]\ar^-{abc}[r] & \Het^{2n,n}(X)
}
$$
where the maps $a$, $b$ and $c$ are those of 
 \eqref{eq:edge_coniveau} and \eqref{eq:Gersten_et}
 and the map $\mathrm{div}$ is the usual divisor class map
 with values the $n$-codimensional algebraic cycles of $X$.
 Thus taking care of the previous study,
 together with Theorem \ref{thm:DMB&DM_h_rational_recall},
 one gets the following result:
\begin{thm}\label{thm:embedChowetalemotcoh}
Let $X$ be a regular excellent scheme
 and $N$ be the set of integers made by
 the exponential characteristics of all the
 residue fields of $X$.

Then for any integer $n \geq 0$,
 the above diagram induces a canonical morphism of abelian groups:
$$
\sigma^n:CH^n(X) \rightarrow \Het^{2n,n}(X)
$$
which satisfies moreover the following properties:
\begin{enumerate}
\item $\sigma^n \otimes \QQ$ is an isomorphism;
\item $\sigma^1 \otimes \ZZ[N^{-1}]$ is an isomorphism;
\item there exists a short exact sequence:
$$
0 \rightarrow CH^2(X)[N^{-1}]
 \xrightarrow{\ \sigma^2\ } \Het^{4,2}(X)[N^{-1}]
 \rightarrow \Hetnr^{4,2}(X)[N^{-1}] \rightarrow 0
$$
where $\Hetnr^{4,2}(X)$ is the kernel of the differential 
$$
d_1^{4,2}:\Het^{4,2}\big(\kappa(X)\big) \rightarrow
 \bigoplus_{x\in X^{(1)}} \Het^{3,1}\big(\kappa(x)\big),
$$
in the spectral sequence \eqref{eq:coniveau}.
\end{enumerate}
\end{thm}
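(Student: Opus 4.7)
The plan is as follows. First I construct $\sigma^n$ directly from the commutative square preceding the theorem: that square shows that the inclusion $Z^n(X)\hookrightarrow\bigoplus_{x\in X^{(n)}}\ZZ[p_x^{-1}]$ intertwines the classical divisor class map with $d_1^{n-1,n}$. Since the exact sequence \eqref{eq:Gersten_et} gives $c\circ d_1 = 0$, the composite
\[
Z^n(X)\hookrightarrow\bigoplus_{x\in X^{(n)}}\ZZ[p_x^{-1}]\xrightarrow{a\circ b\circ c}\Het^{2n,n}(X)
\]
kills $\mathrm{image}(\mathrm{div})$ and so factors through a canonical morphism $\sigma^n\colon CH^n(X)\to\Het^{2n,n}(X)$. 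For property (1), I invoke Theorem \ref{thm:DMB&DM_h_rational_recall}: the equivalence $\DM_\h(X,\QQ)\simeq\DMB(X)$ together with \eqref{eq:DMB&Kth} (applied at $p=n$, $q=2n$) gives $\Het^{2n,n}(X)\otimes\QQ\simeq\mathit{Gr}^n_\gamma K_0(X)\otimes\QQ\simeq CH^n(X)\otimes\QQ$, the last isomorphism being classical for regular schemes; compatibility with $\sigma^n\otimes\QQ$ comes from the standard identification of the edge map of the coniveau spectral sequence with the cycle class map, together with Bloch's formula and the Chern character comparison of $K$-theoretic and $\h$-motivic coniveau spectral sequences.

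For properties (2) and (3), I analyze the convergence of the coniveau spectral sequence \eqref{eq:coniveau} in total degree $2n$ after inverting $N$, using the explicit description of the $E_1$-page in the preceding corollary. For $n=1$, the only non-vanishing $E_1^{p,2-p}(X,1)$ is $E_1^{1,1}(X,1)=\bigoplus_{x\in X^{(1)}}\ZZ[p_x^{-1}]$ (since $E_1^{0,2}=0$ because $q=n+1$, and $E_1^{p,2-p}=0$ for $p\geq 2$ because $p>n$); all higher differentials vanish trivially since their source or target has $p<0$ or $p>n$, so both edge maps $a$ and $b$ are isomorphisms and $\Het^{2,1}(X)[N^{-1}]\simeq E_2^{1,1}(X,1)=CH^1(X)[N^{-1}]$, which identifies with $\sigma^1\otimes\ZZ[N^{-1}]$ by the proposition identifying $d_1$ with the divisor class map. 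For $n=2$ in total degree $4$, the only non-zero $E_1$-terms are $E_1^{0,4}=\Het^{4,2}(\kappa(X))[N^{-1}]$ and $E_1^{2,2}=\bigoplus_{x\in X^{(2)}}\ZZ[p_x^{-1}]$; the intermediate $E_1^{1,3}$ vanishes because $q=n+1=3$, and $E_1^{p,4-p}=0$ for $p\geq 3$ because $p>n$. The $d_1$-differentials then give $E_2^{0,4}=\Hetnr^{4,2}(X)[N^{-1}]$ by the very definition stated in the theorem, and $E_2^{2,2}=CH^2(X)[N^{-1}]$ by the proposition on $d_1$. I must then check that all higher differentials vanish: those out of $E_r^{0,4}$ land in $E_r^{r,5-r}$, with $E_1^{2,3}=0$ (case $q=n+1$), $E_1^{3,2}=0$ (case $p>n$), and with vanishing target for $r\geq 4$; those into $E_r^{2,2}$ come from $E_r^{2-r,1+r}$ with $E_1^{0,3}(X,2)=0$ (case $q=n+1$) and trivially vanish for $r\geq 3$. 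This yields a two-step filtration on $\Het^{4,2}(X)[N^{-1}]$ and hence the short exact sequence, with $\sigma^2\otimes\ZZ[N^{-1}]$ identifying $CH^2(X)[N^{-1}]$ with the bottom piece and $\Hetnr^{4,2}(X)[N^{-1}]$ with the top quotient.

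The main obstacle in the whole scheme is the compatibility check needed for property (1): both sides of the isomorphism arise from edge maps of coniveau spectral sequences (in $\h$-motivic cohomology and in algebraic $K$-theory respectively), and identifying the two edge morphisms rests on the Chern character, Bloch's formula $CH^n(X)_\QQ\simeq H^n_{\zar}(X,\mathcal{K}^M_n)_\QQ$, and ultimately on the Gersten conjecture for $K$-theory of regular schemes (essential here). By contrast, properties (2) and (3) are entirely a matter of spectral-sequence arithmetic controlled by the preceding corollary and the proposition on $d_1$.
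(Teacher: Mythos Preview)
Your analysis of properties~(2) and~(3) is correct and is exactly the spectral-sequence argument the paper intends: once the corollary preceding the theorem gives the vanishing of $E_1^{p,q}(X,n)$ for $q=n+1$ and for $(q<n+1,\,p>n)$, and once the proposition identifies $d_1^{n-1,n}$ with the divisor class map, the filtration on $\Het^{2n,n}(X)$ for $n\in\{1,2\}$ is read off exactly as you describe.

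The gap is in your treatment of property~(1). You route the argument through $K$-theory, and you yourself flag that the compatibility of $\sigma^n\otimes\QQ$ with the Chern-character isomorphism $CH^n(X)_\QQ\simeq\mathit{Gr}^n_\gamma K_0(X)_\QQ\simeq\Het^{2n,n}(X)_\QQ$ ``ultimately rests on the Gersten conjecture for $K$-theory of regular schemes (essential here)''. But Gersten for $K$-theory (and Bloch's formula) is \emph{not} known for general regular excellent schemes in mixed characteristic; the paper itself says so in the footnote introducing the coniveau spectral sequence. So this step is not available, and your proof of~(1) is incomplete as written.

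The fix is much simpler and uses the very same mechanism you used for~(2) and~(3). Theorem~\ref{thm:DMB&DM_h_rational_recall} is to be applied not to $X$ but to the residue fields $\kappa(x)$: it gives $\Het^{r,m}(\kappa(x))\otimes\QQ\simeq H^{r,m}(\kappa(x))\otimes\QQ$ for all $r,m$, and the right-hand side vanishes for $r>m$. Hence $E_1^{p,q}(X,n)\otimes\QQ=0$ whenever $q>n$ (not only for $q=n+1$ as in the integral corollary). With this extra rational vanishing, on the antidiagonal $p+q=2n$ only the term $(p,q)=(n,n)$ survives, all higher differentials in and out of it vanish, and the maps $a\otimes\QQ$ and $b\otimes\QQ$ are isomorphisms for every $n$. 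Together with the identification $E_2^{n,n}(X,n)\otimes\QQ\simeq CH^n(X)\otimes\QQ$ coming from the proposition on $d_1$, this shows directly that $\sigma^n\otimes\QQ$ is an isomorphism, with no appeal to Gersten or to any external compatibility.
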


\begin{rem}
\begin{enumerate}
\item The map $\sigma^n$ is the \emph{\'etale cycle class} map.
 The new information here is that it exists with
  integral coefficients and, if one inverts the exponential characteristics
	of $X$, is an isomorphism for $n=1$ and a monomorphism for $n=2$.

Note that the method gives the following explicit way 
 to determine the \'etale class of a cycle in $X$:
 take a reduced closed subscheme $Z \subset X$;
 there exists an open subscheme $U \subset X$
 such that $Z \cap U$ is regular and dense in $Z$.
 Then the closed immersion $i:Z \cap U \rightarrow U$
 induces a Gysin map:
$$
i_*:\Het^{**}(Z \cap U) \rightarrow \Het^{**}(U)
$$
and the restriction to $U$ of $\sigma^*(\langle Z\rangle)$
 equals $i_*(1)$ - the latter is usually called
 the fundamental class of $Z \cap U$ in $U$.
\item The previous method gives back the construction
 of the cycle class in torsion \'etale cohomology
 (\emph{cf.} \cite{SGA4D}). The construction used here
 is more direct but it uses the absolute purity
 property.
%\item Interestingly, the graded abelian group
 %$\Het^{2*,*}(X)$ has a ring structure.
%
%On the other hand, it is still unknown if the graded abelian
 %group $CH^*(X)$ has a ring structure, though it is
 %known since \cite{SGA6} that it holds rationally.
%The map
%$$
%\sigma^*:CH^*(X) \rightarrow \Het^{2*,*}(X),
%$$
%is an isomorphism rationally, and one knows
 %this isomorphism is compatible with products.
%Possibly after inverting $N$,
 %it makes sense to ask whether the product of
 %two algebraic classes in $\Het^{2*,*}(X)[N^{-1}]$
 %can be lifted along $\sigma^*$.
\item Along the lines of the equal characteristics case,
 one can show that $\sigma^*$ is compatible with
 push-forwards with respect to projective
 maps between regular schemes, where on the left
 hand side one considers the usual functoriality
 of Chow groups and on the right hand side the Gysin
 morphisms of \cite{Deg12}
 -- this is a Riemann-Roch formula where,
  because the oriented theories $CH^*$ and $\Het^{**}$ 
  have an additive formal group law,
  the Todd class is equal to $1$.
\item It is possible to extend the previous result
 to the case of a singular scheme $X$ which
 is separated of finite type over a regular scheme $S$.
 Given $f:X \rightarrow S$ the corresponding structural
 morphism, one defines the Borel-Moore motivic \'etale
 cohomology of $X/S$ as:
$$
\HetBM_{r,n}(X/S)=
\Hom_{\DM_h(X)}\big(\un_X(n)[r],f^!(\un_S)\big).
$$
The niveau spectral sequence for this Borel-Moore homology
 is defined as in the case of coniveau but replacing
 the indexing by codimension with the one by dimension.
 One then gets, using similar arguments, a cycle map:
$$
\sigma_*:CH_*(X) \rightarrow \HetBM_{**}(X/S).
$$
The only remark to be done is that one has to take
 care of the dimension of $S$ which will appear
 in the computation of the $E_1$-term of the niveau
 spectral sequence through absolute purity.
\end{enumerate}
\end{rem}

\subsection{Completion and $\ell$-adic realization} \label{sec:completion&real}

In this section, we fix a discrete valuation ring $R$
with local parameter $\ell$. We will write $R/\ell^r$
for the quotient ring $R/(\ell^r)$, $r\geq 0$.
Until paragraph \ref{num:comp:classical}, there is not any constraint on the
characteristic of the field $R/(\ell)$; only at this point,
the characteristic will be positive.

\begin{df}\label{df:l-complete_h-motives}
Let $X$ be a noetherian scheme.

%Let $M$ be an $\h$-motive in $\DM_\h(X,R)$.
%We say that $M$ is $\ell$-torsion (resp. $\ell$-local)
% if $l.1_M$ is zero (resp. isomorphic).

We denote by $\DM_h(X,\hat R_\ell)$ the localizing
 subcategory of $\DM_h(X,R)$ generated by the objects
 of the form $M/\ell=R/\ell\otimes^\derL_R M$, for any constructible object $M$
 of $\DM_\h(X,R)$.
\end{df}
\begin{num}
Recall from section \ref{sec:change_coef} the following
 adjunctions of triangulated categories, expressing various
 change of coefficients:
\begin{align*}
\derL \rho_\ell^*:\DM_h(X,R) &\rightleftarrows \DM_h(X,R/\ell):\rho_{\ell*}, \\
\derL \rho^*:\DM_h(X,R) &\rightleftarrows \DM_h(X,R[\ell^{-1}]):\rho_{*}\, ,
\end{align*}
where $\rho_\ell^*(M)=M/\ell$ and $\rho^*(M)=R[\ell^{-1}]\otimes M$.
Note that, for any $h$-motive $M$ in $\DM_h(X,R)$,
 the $h$-motive $R[\ell^{-1}]\otimes M$ is the homotopy
 colimit of the tower:
$$
M \xrightarrow{\ \ell .1_M\ } M \xrightarrow{\ \ell .1_M\ } M\rightarrow
\cdots \rightarrow M \xrightarrow{\ \ell .1_M\ } M \rightarrow \cdots
$$
Moreover, the functor $\rho_*$ is fully faithful, and identifies
$\DM_h(X,R[\ell^{-1}])$ with the full subcategory of $\DM_h(X,R)$
whose objects are those on which the multiplication by $\ell$
is invertible. Such an object will be said \emph{uniquely $\ell$-divisible}.
\end{num}
\begin{lm}
For an object $M$ of $\DM_\h(X,R)$, the following
conditions are equivalent:
\begin{itemize}
\item[(i)] $M$ is uniquely $\ell$-divisible;
\item[(ii)] $M/\ell\simeq 0$;
\item[(iii)] for any constructible object $C$ of $\DM_\h(X,R)$,
any map $C/\ell\rightarrow M$ is zero;
\item[(iv)] for any object $C$ of $\DM_h(X,\hat R_\ell)$,
any map from $C$ to $M$ is zero.
\end{itemize}
\end{lm}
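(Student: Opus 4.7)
The plan is to establish the cycle of implications (i)$\Leftrightarrow$(ii)$\Rightarrow$(iii)$\Rightarrow$(iv)$\Rightarrow$(ii), noting that (iv)$\Rightarrow$(iii) is automatic since $C/\ell$ lies in $\DM_h(X,\hat R_\ell)$ for $C$ constructible.

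First I would handle (i)$\Leftrightarrow$(ii) directly from the distinguished triangle $M\xrightarrow{\ell}M\to M/\ell\to M[1]$ of \eqref{eq:triangle_red_mod_n_DMh}: the multiplication $\ell\cdot 1_M$ is invertible if and only if its cofiber vanishes. For (ii)$\Rightarrow$(iii) the key observation is that the action of $\ell$ on $R/\ell$ is the zero morphism of $R$-modules, so by $R$-linearity of the monoidal structure on $\DM_h(X,R)$ the endomorphism $\ell\cdot 1_{C/\ell}=1_C\otimes(\ell\cdot 1_{R/\ell})$ vanishes for every $C$. Given any $f:C/\ell\to M$ we then get $\ell_M\circ f=f\circ\ell_{C/\ell}=0$, and since (ii) forces $\ell_M$ to be invertible, $f=0$.

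For (iii)$\Rightarrow$(iv), I would observe that constructible objects are stable under shifts, so (iii) amounts to $\Hom_{\DM_h(X,R)}(C/\ell,M[n])=0$ for every constructible $C$ and every $n\in\ZZ$. The full subcategory of $\DM_h(X,R)$ consisting of objects $N$ with $\Hom(N,M[n])=0$ for all $n$ is closed under shifts, arbitrary small sums (a product of zeros is zero), and cofibres (by the long exact Hom sequences); it is therefore a localizing subcategory of $\DM_h(X,R)$. Since by hypothesis it contains every $C/\ell$ for $C$ constructible, it contains the localizing subcategory they generate, namely $\DM_h(X,\hat R_\ell)$, which is precisely condition (iv).

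Finally, for (iv)$\Rightarrow$(ii), the point to check is that $M/\ell$ itself lies in $\DM_h(X,\hat R_\ell)$ for any $M\in\DM_h(X,R)$: by Definition \ref{df:hmotives&constructible_hmotives}, the whole of $\DM_h(X,R)$ is the localizing subcategory generated by constructible objects, and since $(-)\otimes^\derL_R R/\ell$ is exact and commutes with small sums, it sends this generating family into the generating family of $\DM_h(X,\hat R_\ell)$, so $M/\ell$ lies there. Applying (iv) to $N=M/\ell$ yields $\Hom(M/\ell,M[n])=0$ for all $n$; then applying $\Hom(M/\ell,-)$ to the triangle $M\xrightarrow{\ell}M\to M/\ell\to M[1]$ gives $\Hom(M/\ell,M/\ell)=0$, so $1_{M/\ell}=0$ and hence $M/\ell\simeq 0$. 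No step looks difficult; the only subtlety to keep in mind is the $R$-linearity argument showing that $\ell$ acts as zero on $C/\ell$, which is what drives the whole equivalence.
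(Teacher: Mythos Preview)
Your proof is correct and follows essentially the same route as the paper. The paper organizes the equivalences as (i)$\Leftrightarrow$(ii), (iii)$\Leftrightarrow$(iv) (``by definition of $\DM_h(X,\hat R_\ell)$'', i.e.\ exactly your localizing-subcategory argument), and (ii)$\Leftrightarrow$(iii) via the observation that the objects $C/\ell$ for $C$ constructible form a generating family of $\DM_\h(X,R/\ell)$; your cycle (ii)$\Rightarrow$(iii)$\Rightarrow$(iv)$\Rightarrow$(ii) is a harmless rearrangement of the same ideas. The only genuine difference is your (ii)$\Rightarrow$(iii): rather than invoking the generating family of $\DM_\h(X,R/\ell)$ and the adjunction $\rho_{\ell*}\dashv\rho_\ell^!$, you argue directly that $\ell$ acts by zero on $C/\ell$, which is slightly more elementary and avoids any appeal to change-of-coefficients functors.
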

\begin{proof}
The equivalence between conditions (i) and (ii) is trivial
(in view of the distinguished triangle \eqref{eq:triangle_red_mod_n_DMh}),
and the equivalence between conditions (iii) and (iv) is true
by definition of $\DM_h(X,\hat R_\ell)$.
The equivalence between conditions (ii) and (iii) comes from
the fact that the objects of the form $C/\ell$, with $C$ constructible
in $\DM_\h(X,R)$, form a generating family of the triangulated
category $\DM_\h(X,\ZZ/\ell\ZZ)$.
\end{proof}

\begin{num} \label{num:l-completion&6gluing}
 We are thus in the \emph{situation
 of the six gluing functors} as defined in \cite[9.2.1]{Nee1}.
 This means that we have six functors:
\begin{equation}
\label{eq:l-completion&6gluing}
\xymatrix@=40pt{
\DM_\h(X,\hat R_\ell)\ar@<5pt>[r]^{\hat \rho_{\ell!}}\ar@<-5pt>[r]_{\hat \rho_{\ell*}}
 & \DM_\h(X,R)\ar[l]|{\hat \rho_\ell^*} \ar@<5pt>[r]^-{\derL \rho^*}\ar@<-5pt>[r]_-{\rho^!}
 & \DM_\h(X,R[\ell^{-1}]) \ar[l]|-{\rho_*}\, ,
}
\end{equation}
where $\hat \rho_{\ell!}$ denotes the inclusion functor,
and that,  for any $h$-motive in $\DM_h(X,R)$ we have
functorial distinguished triangles
\begin{align}
\label{eq:l-completion_triangle1}
\hat \rho_{\ell!}\hat \rho^*_\ell(M)
& \xrightarrow{ad(\hat \rho_{\ell!},\hat \rho^*_\ell)} M
 \xrightarrow{ad'(\derL \rho^*,\rho_*)} \rho_* \derL \rho^*(M)
 \rightarrow M[1], \\
\label{eq:l-completion_triangle2}
\rho_{*} \rho^!(M)
& \xrightarrow{ad(\rho_{*},\rho^!)} M
 \xrightarrow{ad'(\hat \rho_\ell^*,\hat \rho_{l*})} \hat \rho_{\ell*} \hat \rho^*_\ell(M)
 \rightarrow M[1].
\end{align}
Consider the obvious exact sequence of $R$-modules:
$$
0 \rightarrow R \rightarrow R[\ell^{-1}] \rightarrow R[\ell^{-1}]/R \rightarrow 0.
$$
It induces the following distinguished triangle in $\DM_h(X,R)$:
$$
M \otimes^\derL (R[\ell^{-1}]/R)[-1]
 \longrightarrow M \longrightarrow M \otimes^\derL R[\ell^{-1}]
  \longrightarrow M \otimes^\derL (R[\ell^{-1}]/R)
$$
which is isomorphic to the triangle \eqref{eq:l-completion_triangle1}.
In other words, we have  the formulas:
$$\hat \rho_{\ell!}\hat \rho^*_\ell(M)=M \otimes^\derL (R[\ell^{-1}]/R)[-1]
\quad\text{and}\quad \rho_* \derL \rho^*(M)=M[\ell^{-1}]=M\otimes\ZZ[\ell^{-1}]\, .$$
\end{num}

\begin{num}
Let $M$ be a cofibrant object in the model category underlying $\DM_\h(X,R)$.
The $h$-motive $M/\ell^r$ is then represented by the complex
 of Tate spectra:
$$
\mathrm{Coker}(M \xrightarrow{\ell^r.1_M} M).
$$
Thus, we get a tower:
\begin{equation}\label{eq:tower_M/l^r}
\begin{split}
\xymatrix{
M\ar_\ell[d]\ar^\ell[r] & M\ar^{\ell^2}[d]\ar[r] & \cdots\ar[r]
 & M\ar_{\ell^r}[d]\ar^\ell[r] & M\ar^{\ell^{r+1}}[d]\ar[r] &  \cdots \\
M\ar@{=}[r] & M\ar@{=}[r] & \cdots\ar@{=}[r]
 & M\ar@{=}[r] & M\ar@{=}[r] & \cdots
}
\end{split}
\end{equation}
which defines a projective system $(M/\ell^r)_{r \in \NN}$, and it makes sense
to take its derived limit. This construction defines a triangulated
functor
$$\DM_\h(X,R)\rightarrow\DM_\h(X,R)\ , \quad M\mapsto\derR\varprojlim_r M/\ell^r\, .$$
Furthermore, the towers \eqref{eq:tower_M/l^r} define a natural transformation
\begin{equation}
\epsilon_\ell^M:M \rightarrow \derR \plim_{r \in \NN} M/\ell^r.
\end{equation}
\end{num}

\begin{lm} \label{lm:compute_l-completion}
For any $h$-motive $M$ in $\DM_h(X,R)$,
 we have a canonical isomorphism:
$$
\derR \uHom_R(R[\ell^{-1}]/R,M)[1] \simeq \derR \plim_{r \in \NN} M/\ell^r.
$$
\end{lm}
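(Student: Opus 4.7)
The plan is to realize $R[\ell^{-1}]/R$ as a filtered colimit of the $R$-modules $R/\ell^r$ and then to exploit the fact that $\derR\uHom_R(-, M)$ turns colimits into limits. The identification $\ell^{-r}R/R \simeq R/\ell^r$, sending $\ell^{-r}$ to $1$, transforms the inclusion $\ell^{-r}R/R \hookrightarrow \ell^{-(r+1)}R/R$ into multiplication by $\ell\colon R/\ell^r \to R/\ell^{r+1}$, yielding an isomorphism
\[
R[\ell^{-1}]/R \simeq \varinjlim_r R/\ell^r
\]
in the category of $R$-modules. Since filtered colimits are exact in $R\text{-}\mathrm{Mod}$, this is also a homotopy colimit in $\Der(R)$.

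I would then compute $\derR\uHom_R(R/\ell^r, M)$ using the free resolution $R \xrightarrow{\ell^r} R$ of $R/\ell^r$. Applying $\derR\uHom_R(-, M)$ to the distinguished triangle $R \xrightarrow{\ell^r} R \to R/\ell^r \to R[1]$ produces a distinguished triangle in $\DM_\h(X, R)$ of the form
\[
\derR\uHom_R(R/\ell^r, M) \to M \xrightarrow{\ell^r} M \to \derR\uHom_R(R/\ell^r, M)[1],
\]
which, compared with the defining triangle of $M/\ell^r = \mathrm{cofib}(\ell^r\colon M \to M)$, gives an identification $\derR\uHom_R(R/\ell^r, M) \simeq M/\ell^r[-1]$. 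Since $\derR\uHom_R(-, M)$ is formally a right adjoint (to tensoring with~$M$) and hence converts homotopy colimits into homotopy limits, combining these observations yields
\[
\derR\uHom_R(R[\ell^{-1}]/R, M) \simeq \derR\plim_r \derR\uHom_R(R/\ell^r, M) \simeq \bigl(\derR\plim_r M/\ell^r\bigr)[-1],
\]
from which the desired formula follows after a shift by~$1$.

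The one point that genuinely requires checking, rather than being purely formal, is that the transition maps match up on the nose. The multiplication map $\ell\colon R/\ell^r \to R/\ell^{r+1}$ lifts to the chain map of resolutions with identity on the source copy of $R$ and multiplication by $\ell$ on the target copy; applying $\derR\uHom_R(-, M)$ and shifting, one recovers precisely the transition map $M/\ell^{r+1} \to M/\ell^r$ of the tower \eqref{eq:tower_M/l^r}. This straightforward diagram chase is the only non-formal step, and I expect it to be the mildest of obstacles.
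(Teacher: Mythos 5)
Your proof is correct and matches the paper's argument, which likewise realizes $R[\ell^{-1}]/R$ as the filtered (hence homotopy) colimit of the $R/\ell^r$ and then appeals to $\derR\uHom_R(R/\ell^r,M)[1]\simeq M/\ell^r$ together with the fact that $\derR\uHom_R(-,M)$ carries homotopy colimits to homotopy limits. Your extra verification that the transition maps induced by contravariant $\uHom$ agree with those of the tower \eqref{eq:tower_M/l^r} is a sound and worthwhile check that the paper leaves implicit.
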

\begin{proof}
We have $R[\ell^{-1}]/R=\varinjlim_r R/\ell^r$.
As this colimit is filtering, this is in fact an homotopy
colimit, and we conclude from the isomorphisms
$\derR\uHom(R/\ell^r,M)[1]\simeq M/\ell^r$.
\end{proof}

\begin{df}
For any $h$-motive $M$ in $\DM_h(X,R)$, we define
 the \emph{$\ell$-completion} of $M$ as the $h$-motive:
$$
\hat M_\ell=\derR \plim_{n \in \NN} M/\ell^r.
$$

We say that $M$ is \emph{$\ell$-complete} if the map 
 $\epsilon_l^M:M \rightarrow \hat M_\ell$
 defined above is an isomorphism.
\end{df}

According to Lemma \ref{lm:compute_l-completion}
 and Paragraph \ref{num:l-completion&6gluing},
 the triangle \eqref{eq:l-completion_triangle2} can be identified
 to the triangle:
$$
\derR \uHom(R[\ell^{-1}],M) \longrightarrow M \xrightarrow{\ \epsilon_\ell^M\ } \hat M_\ell
 \xrightarrow{\ +1\ }
$$
Note in particular the following well known fact (see for instance \cite{DG1}).
\begin{prop}\label{prop:caraclcompl}
Let $M$ be an $h$-motive in $\DM_\h(X,R)$.
Then the following conditions are equivalent:
\begin{enumerate}
\item[(i)] $M$ belongs to the essential image of
$\hat{\rho}_{\ell *}:\DM_\h(X,\hat R_\ell)\to\DM_\h(X,R)$.
\item[(ii)] $M$ is $\ell$-complete.
\item[(iii)] $M$ is left orthogonal to uniquely $\ell$-divisible
objects in $\DM_\h(X,R)$.
\end{enumerate}
\end{prop}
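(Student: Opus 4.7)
The plan is to observe that, after the identifications made in paragraph \ref{num:l-completion&6gluing}, the proposition is essentially a formal consequence of the six gluing functor situation \eqref{eq:l-completion&6gluing} combined with Lemma \ref{lm:compute_l-completion}. Indeed, the lemma, together with the identification of the triangle \eqref{eq:l-completion_triangle2} given just after \ref{num:l-completion&6gluing}, tells us that there is a natural isomorphism $\hat{\rho}_{\ell*}\hat{\rho}_\ell^*(M)\simeq\hat M_\ell$ under which the unit of the adjunction $(\hat{\rho}_\ell^*,\hat{\rho}_{\ell*})$ corresponds to the completion map $\epsilon_\ell^M:M\to\hat M_\ell$, and the fiber of $\epsilon_\ell^M$ is naturally identified with $\rho_*\rho^!(M)\simeq \derR\uHom_R(R[\ell^{-1}],M)$.

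For the implication (i)$\Leftrightarrow$(ii), I will use the standard fact that, since $\hat{\rho}_{\ell*}$ is fully faithful (as the second right adjoint in the six gluing situation \eqref{eq:l-completion&6gluing}), an object $M$ of $\DM_\h(X,R)$ lies in its essential image if and only if the unit map $M\to\hat{\rho}_{\ell*}\hat{\rho}_\ell^*(M)$ is invertible. Under the identification above, this unit is precisely $\epsilon_\ell^M$, so this says exactly that $M$ is $\ell$-complete.

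For (ii)$\Leftrightarrow$(iii), the triangle rewritten as
\[
\rho_*\rho^!(M)\to M\xrightarrow{\epsilon_\ell^M}\hat M_\ell\to \rho_*\rho^!(M)[1]
\]
shows that $M$ is $\ell$-complete if and only if $\rho_*\rho^!(M)=0$. Since $\rho_*$ is fully faithful (hence conservative), this is equivalent to $\rho^!(M)=0$. By the adjunction $\rho_*\dashv\rho^!$, one has $\rho^!(M)=0$ if and only if $\derR\Hom_{\DM_\h(X,R)}(\rho_*(N),M)=0$ for every $N$ in $\DM_\h(X,R[\ell^{-1}])$; but by the preceding lemma the essential image of $\rho_*$ is exactly the full subcategory of uniquely $\ell$-divisible objects of $\DM_\h(X,R)$, giving the orthogonality characterization of (iii).

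There is no genuine obstacle here; the entire argument is a formal unwinding of the recollement \eqref{eq:l-completion&6gluing}, and all the non-formal content (the identification of $\hat{\rho}_{\ell*}\hat{\rho}_\ell^*$ with the derived limit $\derR\plim_r M/\ell^r$, and the description of uniquely $\ell$-divisible objects as the image of $\rho_*$) has already been established in Lemma \ref{lm:compute_l-completion} and the lemma preceding the proposition. The only point to insist on in the write-up is the fully faithfulness of $\hat{\rho}_{\ell*}$ and of $\rho_*$, which is built into the fact that \eqref{eq:l-completion&6gluing} is a situation of six gluing functors in the sense of \cite[9.2.1]{Nee1}.
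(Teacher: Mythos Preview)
The paper does not actually give a proof of this proposition; it merely records it as a ``well known fact (see for instance \cite{DG1})''. Your argument is correct and is exactly the intended unwinding: the paper has already set up the recollement \eqref{eq:l-completion&6gluing}, identified the triangle \eqref{eq:l-completion_triangle2} with $\derR\uHom(R[\ell^{-1}],M)\to M\to\hat M_\ell\to$, and recorded that $\rho_*$ is fully faithful with essential image the uniquely $\ell$-divisible objects, so nothing beyond the formal consequences you draw is needed.
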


Lemma \ref{lm:compute_l-completion} readily implies the following
computation, which means (at least when $R/(\ell)$ is 
of characteristic prime to the residue characteristics of $X$),
in view of the equivalences $\DM_\h(X,R/\ell^r)\simeq\Der(X_\et,R/\ell^r)$,
that the category $\DM_\h(X,\hat R_\ell)$ is a categorical incarnation of
continuous \'etale cohomology in the sense of Jannsen \cite{jannsen}.

\begin{prop}\label{prop:jannsen}
For any objects $M$ and $N$ in $\DM_\h(X,\hat R_\ell)$, we have
$$\derR\Hom_{\DM_\h(X,\hat R_\ell)}(M,N)\simeq
\derR\varprojlim_r\derR\Hom_{\DM_\h(X,R/\ell^r)}(M/\ell^r,N/\ell^r)\, .$$
\end{prop}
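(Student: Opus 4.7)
The strategy is to reduce everything, via the characterization of $\ell$-complete objects from Proposition \ref{prop:caraclcompl}, to the computation of a single mapping space in $\DM_\h(X,R)$, and then to apply the change-of-scalars adjunction level by level.

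First I would record that, since $\hat\rho_{\ell*}:\DM_\h(X,\hat R_\ell)\to\DM_\h(X,R)$ is the inclusion of a full triangulated subcategory,
$$\derR\Hom_{\DM_\h(X,\hat R_\ell)}(M,N)\simeq\derR\Hom_{\DM_\h(X,R)}(M,N)\, ,$$
so it suffices to compute the right-hand side.
Because $N$ is $\ell$-complete (Proposition \ref{prop:caraclcompl}), the natural map
$\epsilon_\ell^N:N\to\hat N_\ell=\derR\varprojlim_r N/\ell^r$ is an isomorphism in $\DM_\h(X,R)$. Substituting this into the Hom above, I would then use the fact that $\derR\Hom_{\DM_\h(X,R)}(M,-)$, being a right adjoint (to $M\otimes^\derL_R-$, taken in the underlying dg/stable $\infty$-categorical enhancement), commutes with small homotopy limits in the target; in particular with the derived limit of the tower $\{N/\ell^r\}_r$. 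This yields
$$\derR\Hom_{\DM_\h(X,R)}(M,N)\simeq \derR\varprojlim_{r}\derR\Hom_{\DM_\h(X,R)}(M,N/\ell^r)\, .$$

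Next I would rewrite each term of this tower via the adjunction $(\derL\rho_r^*,\rho_{r*})$ coming from the change of coefficients $R\to R/\ell^r$ discussed in Paragraph \ref{num:extend_forget_scalars}. Proposition \ref{prop:properties_rho_*} identifies $\rho_{r*}\, \derL\rho_r^*(N)$ with $N\otimes^\derL_R R/\ell^r=N/\ell^r$, and the analogous identification holds for $M$; since $\rho_{r*}$ is fully faithful, the adjunction delivers
$$\derR\Hom_{\DM_\h(X,R)}(M,N/\ell^r)\simeq
\derR\Hom_{\DM_\h(X,R/\ell^r)}(M/\ell^r,N/\ell^r)\, ,$$
naturally in $r$ with respect to the reduction maps $R/\ell^{r+1}\to R/\ell^r$. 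Combining this with the previous isomorphism gives the desired formula.

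The only subtle points to spell out carefully will be the two naturality statements: that the projection system on the right-hand side arising from the adjunction matches the projection system $\{N/\ell^{r+1}\to N/\ell^r\}_r$ used to define $\hat N_\ell$, and that $\derR\Hom_{\DM_\h(X,R)}(M,-)$ really commutes with the derived limit $\derR\varprojlim_r$. The second point is where one must actually invoke the dg-enhancement of $\DM_\h$ (which we already used to define $\derR\Hom$ in Paragraph \ref{num:rho_n&6_functors}): in a stable homotopy-coherent setting, the internal Hom is a right adjoint, hence preserves homotopy limits; concretely this amounts to a Milnor-type short exact sequence relating the naive limit and the $\varprojlim^1$-term, which is what makes Jannsen's continuous \'etale cohomology visible here. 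I do not expect any other obstruction.
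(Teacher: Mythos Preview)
Your approach is correct and coincides with the paper's: the paper merely remarks that Lemma~\ref{lm:compute_l-completion} (i.e.\ the identification $\hat N_\ell\simeq\derR\varprojlim_r N/\ell^r$) readily implies the formula, and your unpacking---replace $N$ by $\derR\varprojlim_r N/\ell^r$, pull the limit out of $\derR\Hom$, then use the change-of-coefficients adjunction termwise---is exactly how one does this.

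One small slip: $\rho_{r*}$ is \emph{not} fully faithful in general (already for $R=\ZZ$, $R/\ell=\ZZ/\ell\ZZ$ one has $\ZZ/\ell\ZZ\otimes^\derL_\ZZ\ZZ/\ell\ZZ\not\simeq\ZZ/\ell\ZZ$). Fortunately you do not need this. The adjunction $(\derL\rho_r^*,\rho_{r*})$ alone gives
\[
\derR\Hom_{\DM_\h(X,R)}(M,\rho_{r*}K)\simeq\derR\Hom_{\DM_\h(X,R/\ell^r)}(M/\ell^r,K)
\]
for any $K$ in $\DM_\h(X,R/\ell^r)$; taking $K=\derL\rho_r^*(N)$ and using $\rho_{r*}\derL\rho_r^*(N)=N/\ell^r$ (Proposition~\ref{prop:properties_rho_*}) is all that is required.
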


\begin{num}
The right adjoints $\derR f_*$, $\derR \uHom$
 commute with homotopy limits in  $\DM_h(-,R)$. 
 Moreover, Proposition \ref{prop:rho_n&6_functors} shows they preserve
 $\ell$-complete objects.

On the other hand, for any morphism of scheme $f:Y \rightarrow X$,
 and smooth morphism $p:X \rightarrow S$
 and any $\ell$-complete $h$-motives $M$, $N$, we put:
$$
\hat f^{*}(M)=\widehat{\derL f^*(M)_\ell} \ , \quad
\hat p_{\sharp}(M)=\widehat{\derL p_\sharp(M)_\ell} \ , \quad
M \hat \otimes N=\widehat{(M \otimes^\derL N)_\ell} \ .
$$
This defines a structure of a premotivic triangulated category
on $\DM_\h(-,\hat R_\ell)$, the right adjoints being induced
by their counterparts in $\DM_\h(-,R)$. 

According to these definitions, we get a premotivic adjunction:
\begin{equation} \label{eq:integral_realization}
\hat \rho^*_\ell:\DM_h(-,R)
 \rightleftarrows \DM_h(-,\hat R_\ell):\hat \rho_{\ell*}.
\end{equation}
The functor $\hat \rho^*_\ell$ will be called
 the \emph{$\ell$-adic realization functor}.
Moreover, $\hat \rho^*_\ell$ obviously commutes with $f_*$ and $\uHom$.

Taking into account
Theorem \ref{thm:DMh_6functors},
Corollary \ref{cor:rho_rational&6_functors},
Proposition \ref{cor:6oppreserveconstruct},
 as well as Lemma \ref{lm:compute_l-completion}, we thus obtain:
\end{num}
\begin{thm}\label{thm:l-adicreal}
The triangulated premotivic category $\DM_h(-,\hat R_\ell)$
 satisfies the Gro\-then\-dieck six functors formalism
 (Def. \ref{df:recall_6_functors})
 and the absolute purity property
  (Def. \ref{df:absolute_purity}) over
  noetherian schemes
  of finite dimension. The premotivic morphism $\hat \rho_\ell^*$ defined above
 commutes with the six operations
  (Def. \ref{df:recall_morphisms&6op}).
\end{thm}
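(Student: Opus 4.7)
The plan is to exploit the reflective localization $\hat\rho_\ell^*:\DM_\h(-,R)\rightleftarrows \DM_\h(-,\hat R_\ell):\hat\rho_{\ell*}$ and deduce every statement from its $R$-linear counterpart in Theorem \ref{thm:DMh_6functors}. The crucial fact, essentially already recorded in paragraph \ref{num:l-completion&6gluing} together with Proposition \ref{prop:rho_n&6_functors}, is that the right adjoints $f_*$, $f^!$ and $\uHom$ commute with $\ell$-completion, or equivalently preserve $\ell$-complete objects; this follows from the formula $\hat M_\ell=\derR\plim_r M/\ell^r$ (Lemma \ref{lm:compute_l-completion}), from the fact that any right adjoint commutes with $\derR\plim$, and from the isomorphisms $f_*(N)/\ell\simeq f_*(N/\ell)$ etc. provided by Proposition \ref{prop:rho_n&6_functors}.

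First I would define the six operations on $\DM_\h(-,\hat R_\ell)$: the right adjoints are the restrictions of their $R$-linear counterparts, while the left adjoints $\hat f^*$, $\hat f_\sharp$, $\hat\otimes$, $\hat f_!$ are defined as the composition of the corresponding $R$-linear functors with $\hat\rho_\ell^*$. All required adjunctions then follow formally from the reflective structure. Next I would verify smooth and proper base change, the projection formulas, and the localization property one by one. In each case, Theorem \ref{thm:DMh_6functors} gives the corresponding isomorphism in $\DM_\h(-,R)$; applying the triangulated functor $\hat\rho_\ell^*$ transports it to $\DM_\h(-,\hat R_\ell)$, and using that $\hat\rho_\ell^*$ commutes with the left adjoints by construction and with the right adjoints because the latter preserve $\ell$-complete objects, one identifies the result with the expected isomorphism involving the hatted operations. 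By Theorem \ref{thm:recall_carac_6functors}, stability, orientation, homotopy invariance, and localization (already inherited from $\DM_\h(-,R)$) suffice to conclude that $\DM_\h(-,\hat R_\ell)$ verifies Grothendieck's six functors formalism.

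Concerning absolute purity, Theorem \ref{thm:DMh_6functors} provides for any closed immersion $i:Z\to X$ of codimension $c$ between regular schemes a purity isomorphism $\eta_X(Z):\un_Z\xrightarrow{\sim} i^!(\un_X)(c)[2c]$ in $\DM_\h(Z,R)$. Applying the symmetric monoidal left adjoint $\hat\rho_\ell^*$ and invoking the compatibility of $i^!$ with $\ell$-completion just discussed produces the analogous isomorphism in $\DM_\h(Z,\hat R_\ell)$, which, through the characterization recalled in Theorem \ref{thm:carac_abs_purity}, is the absolute purity statement for the $\ell$-adic category.

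Finally, for the compatibility of $\hat\rho_\ell^*$ with the six operations: for the left adjoints this is true by the very definition of $\hat f^*$, $\hat f_\sharp$, $\hat\otimes$, $\hat f_!$; for the right adjoints, one must exhibit exchange isomorphisms of the form $\hat\rho_\ell^* f_*(M)\xrightarrow{\sim} f_*\,\hat\rho_\ell^*(M)$ (and analogously for $f^!$ and $\uHom$). These follow from the chain
\[
\widehat{f_*(M)_\ell}\simeq \derR\plim_r f_*(M)/\ell^r\simeq \derR\plim_r f_*(M/\ell^r)\simeq f_*\,\derR\plim_r M/\ell^r\simeq f_*\,\hat M_\ell,
\]
where the second isomorphism is Proposition \ref{prop:rho_n&6_functors}(1) and the third one uses that $f_*$ commutes with homotopy limits. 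The hardest part of the argument is not conceptual but bookkeeping: one must consistently track that each structural isomorphism of the six functors formalism is sent by $\hat\rho_\ell^*$ to the corresponding hatted isomorphism, which amounts to checking commutativity of diagrams involving the $\ell$-completion functor and each of the six operations, all ultimately reduced to Proposition \ref{prop:rho_n&6_functors} and the compactness/continuity properties established earlier in the paper.
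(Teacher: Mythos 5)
Your proposal is correct and follows essentially the same route as the paper, which in fact compresses the argument into the paragraph preceding the theorem statement (defining the hatted operations by $\ell$-completion of their $R$-linear counterparts and observing that the right adjoints preserve $\ell$-complete objects) followed by a one-line citation of Theorem \ref{thm:DMh_6functors}, Corollary \ref{cor:rho_rational&6_functors}, Corollary \ref{cor:6oppreserveconstruct} and Lemma \ref{lm:compute_l-completion}. Your write-up makes explicit what the paper leaves implicit — the chain of isomorphisms witnessing that $\hat\rho^*_\ell$ commutes with $f_*$ via Proposition \ref{prop:rho_n&6_functors}(1), Corollary \ref{cor:exceptimfunctcommutessmallsums} for $f^!$, and the transport of the absolute purity isomorphism along $\hat\rho^*_\ell$ — so it is a faithful unpacking of the paper's intended proof rather than a different argument.
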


\begin{rem}
Note that, if $R/(\ell)$ is of positive characteristic,
by virtue of Theorem \ref{thm:comparison_torsion_etale-h_motives},
if we perform this $\ell$-completion procedure to
$\DMe_\et(X,R)$ or $\DMe_\h(X,R)$,
this leads to the same category $\DM_\h(-,\hat R_\ell)$.
%In particular, the triangulated category
%$\DM_h(S,R)$ is generated by $h$-motives of the form
%$$
%\hat R_S^h(X):=
%\widehat{\uR_S^h(X)}=\hat p_{\sharp}(\un_X)
%$$
%for a smooth morphism $p:X \rightarrow S$ (there is
%no need to consider Tate twists).
\end{rem}

\begin{df}\label{df:finiteness_ell_h-motives}
 Let $X$ be any scheme. One defines the category $\DM_{\h,\gm}(X,\hat R_\ell)$ of
 \emph{geometric $\ell$-adic $\h$-motives}
 as the thick triangulated subcategory of $\DM_\h(X,\hat R_\ell)$
 generated by $h$-motives of the form $\widehat {R(X)}_\ell(n)$
 for $X/S$ smooth and $n\in\ZZ$.
 An object $M$ of $\DM_\h(X,\hat R_\ell)$ is said to be
 \emph{constructible} if, $M/\ell$ is
 locally constructible in
 $\DM_\h(X,R/\ell)$ (see \ref{df:locconstruct}).
 We write $\DM_{\h,c}(X,\hat R_\ell)$
 for the thick subcategory of the triangulated category $\DM_\h(X,\hat R_\ell)$
 generated by constructible $\ell$-adic motives. We thus have
 a natural inclusion
 $$\DM_{\h,\gm}(X,\hat R_\ell)\subset\DM_{\h,c}(X,\hat R_\ell)\, .$$
\end{df}

\begin{rem}
The notion of constructible $\ell$-adic motive corresponds
to what is usually called (bounded complex of)
constructible $\ell$-adic sheaves, while
geometric $\ell$-adic $\h$-motives correspond to
(bounded complex of)
constructible $\ell$-adic sheaves \emph{of geometric
origin}.
\end{rem}

\begin{rem}\label{rm:ladic6opconstr}
It is clear that $\DM_{\h,c}(X,\hat R_\ell)$
is closed under the six operations in $\DM_{\h}(X,\hat R_\ell)$:
this readily follows from Corollary \ref{cor:sixopDMhlc}
in the case of $R/\ell$-linear coefficients: indeed, the
functor
$$\DM_{\h}(X,\hat R_\ell)\rightarrow\DM_\h(X,R/\ell) \ , \quad M\mapsto M/\ell$$
is conservative and preserves the six operations as well as constructible
objects (by definition). Note also that
an object $M$ of $\DM_\h(X,\hat R_\ell)$ is constructible
if and only if $M/\ell^r$ is constructible in $\DM_\h(X,R/\ell^r)$
for any $r\geq 1$.
\end{rem}

\begin{thm}\label{thm:ladic6opconstr}
The $\ell$-adic realization functor of Theorem \ref{thm:l-adicreal}
sends constructible objects to geometric ones (locally
constructible objects to constructible ones, respectively).
Moreover, the six operations
preserve geometric objects (constructible objects, respectively)
in $\DM_\h(X,\hat R_\ell)$ for quasi-excellent noetherian
schemes of finite dimension.
\end{thm}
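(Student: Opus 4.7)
The statement decomposes into four assertions that I would treat in turn:
(1) $\hat\rho_\ell^*$ maps $\DM_{\h,c}(X,R)$ into $\DM_{\h,\gm}(X,\hat R_\ell)$;
(2) $\hat\rho_\ell^*$ maps $\DM_{\h,\lc}(X,R)$ into $\DM_{\h,c}(X,\hat R_\ell)$;
(3) the six operations preserve $\DM_{\h,\gm}(-,\hat R_\ell)$ over quasi-excellent noetherian schemes of finite dimension;
(4) the same for $\DM_{\h,c}(-,\hat R_\ell)$.

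Assertion (4) is already essentially Remark \ref{rm:ladic6opconstr}, and I would simply recall the argument: the reduction-modulo-$\ell$ functor $M\mapsto M/\ell$ on $\DM_\h(X,\hat R_\ell)$ is conservative (since every object is $\ell$-complete by Proposition \ref{prop:caraclcompl}), commutes with the six operations, and sends constructible $\ell$-adic motives to locally constructible ones by the very definition of constructibility; so Corollary \ref{cor:sixopDMhlc} applied in $\DM_\h(-,R/\ell)$ yields (4).

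For (1), I would use that $\DM_{\h,\gm}(X,\hat R_\ell)$ is by definition the thick subcategory generated by the objects $\widehat{R(Y)}_\ell(n)=\hat\rho_\ell^*(R(Y)(n))$ for $Y/X$ smooth and $n\in\ZZ$, whereas $\DM_{\h,c}(X,R)$ is the thick subcategory generated by the same $R(Y)(n)$; since $\hat\rho_\ell^*$ is triangulated, (1) follows. For (2), the factorization $R\to \hat R_\ell\to R/\ell$ of change of coefficients gives a canonical isomorphism $\hat\rho_\ell^*(M)/\ell\simeq M/\ell$ in $\DM_\h(X,R/\ell)$. Since tensor product preserves locally constructible objects (Corollary \ref{cor:sixopDMhlc}), $M/\ell$ is locally constructible as soon as $M$ is, whence $\hat\rho_\ell^*(M)$ is constructible by Definition \ref{df:finiteness_ell_h-motives}.

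The bulk of the proof, and what I view as the main content, is (3). Each operation on $\DM_\h(-,\hat R_\ell)$ being triangulated, it is enough to verify it on the generating family $\hat\rho_\ell^*(R(Y)(n))$ (for $\uHom$ one checks this on pairs of generators). For $f^*$, $\hat\otimes$ and $f_\sharp$ (with $f$ smooth) this is immediate: these operations send the generators to objects of the same form. For $f_*$, $f_!$, $f^!$ and $\uHom$, I would combine two inputs: the commutation of $\hat\rho_\ell^*$ with the six operations (Theorem \ref{thm:l-adicreal}), and the stability of $\DM_{\h,c}(-,R)$ under the six operations over quasi-excellent noetherian schemes of finite dimension (Corollary \ref{cor:6oppreserveconstruct}). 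For instance, for a separated morphism of finite type $f\colon X\to S$,
\[
f_!\bigl(\hat\rho_\ell^*(R(Y)(n))\bigr)\simeq \hat\rho_\ell^*\bigl(f_!(R(Y)(n))\bigr),
\]
and $f_!(R(Y)(n))$ lies in $\DM_{\h,c}(S,R)$, so by (1) the left-hand side lies in $\DM_{\h,\gm}(S,\hat R_\ell)$. The cases of $f_*$, $f^!$ and $\uHom$ are entirely parallel.

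The hard work has already been done in Sections \ref{sec:consthmot} and \ref{sec:completion&real}: once the stability of constructible $h$-motives under the six operations and the commutation of $\hat\rho_\ell^*$ with them are in hand, the theorem is formal. The only point requiring minor care is the tautological identification $\hat\rho_\ell^*(M)/\ell\simeq M/\ell$ used in (2), which unwinds directly from the six-gluing diagram \eqref{eq:l-completion&6gluing}.
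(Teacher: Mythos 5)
Your decomposition tracks the paper's own proof closely: the paper likewise treats the first sentence as ``obvious,'' checks stability of $\DM_{\h,\gm}$ on the generating family $\hat M_\ell$ (for $M$ constructible in $\DM_\h(X,R)$) by combining the commutation of $\hat\rho_\ell^*$ with the six operations (Theorem \ref{thm:l-adicreal}) and Corollary \ref{cor:6oppreserveconstruct}, and derives the constructible $\ell$-adic case from Remark \ref{rm:ladic6opconstr} via conservativity of $M\mapsto M/\ell$ and Corollary \ref{cor:sixopDMhlc}. So the overall route is the same; you have merely made the ``obvious'' part explicit.

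One small inaccuracy worth flagging: in your step (2) you invoke Corollary \ref{cor:sixopDMhlc} to say that ``tensor product preserves locally constructible objects,'' so as to conclude that $M/\ell$ is locally constructible in $\DM_\h(X,R/\ell)$ when $M$ is locally constructible in $\DM_\h(X,R)$. That corollary concerns the internal operations within a fixed coefficient ring over \emph{quasi-excellent} schemes, whereas what you need here is the much more elementary statement that the change-of-coefficients functor $\rho_\ell^*(-)=R/\ell\otimes^\derL_R(-)$ preserves (local) constructibility: it is a triangulated, monoidal functor commuting with any $u^*$ and sending the generators $R(Y)(n)$ to $R/\ell(Y)(n)$. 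Since the first assertion of the theorem is stated without a quasi-excellence hypothesis (and the paper calls it ``obvious''), the direct argument is both necessary and sufficient; the citation of Corollary \ref{cor:sixopDMhlc} would silently import an assumption that the statement does not make. With that citation replaced by the elementary observation, your proof is correct.
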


\begin{proof}
The first assertion is obvious.
To prove that the subcategory $\DM_{\h,\gm}(X,\hat R_\ell)$ is closed under the
six operations in $\DM_{\h}(X,\hat R_\ell)$, it is sufficient
check what happens on objects of the form $\hat M_\ell$ with
$M$ constructible in $\DM_\h(X,R)$. But then, the fact that the $\ell$-adic
realization functor preserves the six operations on the nose
means that they preserve the class of these objects in $\DM_{\h}(X,\hat R_\ell)$.
The stability of constructible objects under the six operations
readily follows from the stability of locally constructible objects
for torsion coefficients (Corollary \ref{cor:sixopDMhlc}).
\end{proof}

\begin{rem}\label{rem:compclassical_ekedahl}
The triangulated categories $\DM_{\h,c}(X,\hat R_\ell)$
make sense for any sche\-me, whe\-ther or not the characteristic of $R/\ell$
is invertible in $\cO_X$. Moreover, as we will see now,
in the case where $R/\ell$ is of positive
characteristic invertible in $\cO_X$,
they are equivalent to their classical analogues, whenever that
makes sense: the construction of Beilinson, Bernstein and Deligne \cite{BBD},
 or the one of  Ekedahl \cite{Eke};
see Propositions \ref{prop:comp:classical} and \ref{prop:compekedahl},
respectively.
\end{rem}
 
\begin{num}\label{num:comp:classical}
Let us assume that $R/\ell$ is of positive characteristic.
Consider a noetherian scheme $S$ with residue characteristics prime to the
characteristic of $R/\ell$, and assume that, for any constructible
sheaf of $R/\ell$-modules $F$ on $S_\et$, the
cohomology groups $H^i_\et(S,F)$ are finite (e.g. $R/\ell$
is finite and $S$ is strictly local or the spectrum of a finite
field). Then, for any $S$-scheme of finite type $X$, one can define,
following Beilinson, Bernstein and Deligne \cite[Par.~2.2.14 and Prop.~2.2.15]{BBD},
the \emph{triangulated category of
constructible $\ell$-adic sheaves} as the following $2$-limit of
derived categories of constructible sheaves:
$$\Der^b_c(X,R_\ell)=2\text{-}\varprojlim_r\Der^b_{ctf}(X_\et,R/\ell^r)\, .$$
On the other hand, we have an obvious family of triangulated
functors
$$\DM_{\h,c}(X,\hat R_\ell)\rightarrow\DM_{\h,\lc}(X,R/\ell^r) \ ,
\quad M\mapsto M/\ell^r$$
which, together with the equivalences of categories given by Theorem \ref{thm:DbctfDMhlctorsion},
$$\Der^b_{\ctf}(X,R/\ell^r)\simeq\DM_{\h,\lc}(X,R/\ell^r)\, ,$$
induce a triangulated functor
\begin{equation}\label{eq:comp:classical}
\DM_{\h,c}(X,\hat R_\ell)\rightarrow\Der^b_c(X,R_\ell)
\end{equation}
\end{num}

\begin{prop}\label{prop:comp:classical}
Under the assumptions of \ref{num:comp:classical}, the
functor \eqref{eq:comp:classical} is an equivalence
of categories.
\end{prop}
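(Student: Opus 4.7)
The plan is to establish fully faithfulness and essential surjectivity separately. For fully faithfulness, I would combine Proposition~\ref{prop:jannsen}, which gives
\[
\derR\Hom_{\DM_\h(X,\hat R_\ell)}(M,N)\simeq \derR\varprojlim_r \derR\Hom_{\DM_\h(X,R/\ell^r)}(M/\ell^r, N/\ell^r),
\]
with the Milnor short exact sequence for the derived limit of a tower of abelian groups. Taking degree-zero cohomology yields
\[
0\to {\varprojlim_r}^{\!1}\Hom^{-1}(M/\ell^r, N/\ell^r)\to \Hom(M,N)\to \varprojlim_r \Hom(M/\ell^r, N/\ell^r)\to 0,
\]
while by the very definition of the $2$-limit recalled in~\ref{num:comp:classical}, the Hom group between two compatible systems in $\Der^b_c(X,R_\ell)$ is exactly $\varprojlim_r \Hom(K_r,L_r)$ with $K_r, L_r$ the images of $M/\ell^r, N/\ell^r$ under Theorem~\ref{thm:DbctfDMhlctorsion}. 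Fully faithfulness therefore reduces to the vanishing of the ${\varprojlim}^{\!1}$-term, which I would deduce from Mittag-Leffler: the finiteness hypothesis on $S$ in~\ref{num:comp:classical}, combined with the Leray spectral sequence for the structural morphism $X\to S$, forces every Hom-group between bounded complexes of constructible sheaves of $R/\ell^r$-modules on $X_\et$ to be finite.

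For essential surjectivity, given a compatible system $(K_r)_r$ in $\Der^b_c(X,R_\ell)$, I would transport it via Theorem~\ref{thm:DbctfDMhlctorsion} to a compatible system $(M_r)_r$ with $M_r\in\DM_{\h,\lc}(X,R/\ell^r)$ and coherent isomorphisms $M_{r+1}\otimes^\derL_{R/\ell^{r+1}} R/\ell^r\simeq M_r$. Each $M_r$ being $\ell^r$-torsion, its image $\tilde M_r$ in $\DM_\h(X,R)$ under the forgetful functor is automatically $\ell$-complete by Proposition~\ref{prop:caraclcompl} and therefore lies in $\DM_\h(X,\hat R_\ell)$; the compatibility data supplies transition maps $\tilde M_{r+1}\to \tilde M_r$. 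I would then set $M := \derR\varprojlim_r \tilde M_r$, which is again $\ell$-complete as a derived limit of $\ell$-complete objects, and verify that $M/\ell^r\simeq M_r$ for every $r$. Specialised to $r=1$, this ensures that $M$ is constructible in the sense of Definition~\ref{df:finiteness_ell_h-motives}, and an unwinding of the compatibility shows that the image of $M$ under the comparison functor is the original system.

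The hard part will be the verification that $M/\ell^r\simeq M_r$. Since $R/\ell^r$ is perfect over $R$, the functor $(-)\otimes^\derL_R R/\ell^r$ commutes with derived limits, reducing the claim to the computation of $\derR\varprojlim_s(\tilde M_s\otimes^\derL_R R/\ell^r)$. For $s\ge r$, however, this absolute tensor product carries a spurious $\Tor_1^R$-contribution in degree $-1$ besides the expected $M_r$ in degree~$0$, precisely because the compatibility isomorphisms of the system are phrased over the quotient rings $R/\ell^s$ rather than over~$R$. The key computation I plan to carry out explicitly (as already visible in the test case of the constant system $M_r=R/\ell^r$) is that on the tower $s\mapsto \Tor_1^R(\tilde M_s, R/\ell^r)$ the transition maps factor through multiplication by $\ell$, so that after $r$ steps the transitions vanish. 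Both $\varprojlim$ and ${\varprojlim}^{\!1}$ of this tower therefore vanish, and the derived limit collapses to $M_r$, as required.
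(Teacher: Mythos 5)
Your full faithfulness argument is essentially the paper's: both combine the $\derR\varprojlim$ description of Hom (Proposition~\ref{prop:jannsen}) with the Milnor exact sequence and use the finiteness hypothesis of~\ref{num:comp:classical} to kill the ${\varprojlim}^{1}$ term via Mittag--Leffler. You spell out the deduction of the needed finiteness of $\Hom(M/\ell^r,N/\ell^r)$ (Leray plus d\'evissage to $R/\ell$) which the paper leaves as ``by assumption''.

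Your treatment of essential surjectivity is genuinely different in its mechanics. The paper lifts the system $(F_r,u_r)$ to a \emph{strict} projective system $(E_r,v_r)$ of degreewise flat complexes of sheaves on $X_\et$, passes to Tate spectra via $\Sigma^\infty\alpha^*$, and takes the derived inverse limit at the level of the underlying model category; it then \emph{asserts} the key identity $E/\ell^r\simeq\Sigma^\infty(\alpha^*(E_r))$ without verification. You instead work entirely at the triangulated level: push each $M_r$ down to $\DM_\h(X,R)$, observe that the resulting $\tilde M_r$ is $\ell$-complete (correct: $\ell^r\cdot 1_{\tilde M_r}=0$ forces left orthogonality to uniquely $\ell$-divisible objects, by Proposition~\ref{prop:caraclcompl}), assemble a tower from the transition data, and take $\derR\varprojlim$ there. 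Your ``hard part'' --- that the tower $s\mapsto\Tor_1^R(\tilde M_s,R/\ell^r)$ is pro-zero because the transition maps factor through multiplication by $\ell$ --- is exactly the verification the paper omits, and your computation is correct (on the standard $R$-free resolution of $R/\ell^s$ the lift of $R/\ell^{s+1}\to R/\ell^s$ acts by $\ell$ in degree~$1$, whence the transitions on $\Tor_1\otimes R/\ell^r$ are multiplication by~$\ell$). So your route buys an explicit proof of the step that the paper waves at.

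Two small caveats. First, forming $M:=\derR\varprojlim_r\tilde M_r$ from transition maps that the $2$-limit only specifies in the homotopy category deserves a word: for a sequential tower in a triangulated category with countable products this is fine (Bousfield--Kan), but it is worth remarking, since it is precisely the point where the paper's lift to strict complexes does real work. Second, you should also say explicitly that the derived limit of $\ell$-complete objects is $\ell$-complete (so $M$ lands in $\DM_\h(X,\hat R_\ell)$), e.g.\ because $\ell$-completion is $\derR\uHom_R(R[\ell^{-1}]/R,-)[1]$ and hence commutes with derived limits. With those remarks added, the argument is complete.
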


\begin{proof}
Let $M$ and $N$ be two objects of $\DM_{\h,c}(X,\hat R_\ell)$.
By virtue of Proposition \ref{prop:caraclcompl}, we have
$$N=\derR\varprojlim_r N/\ell^r\, .$$
Moreover, by assumption, for any $r\geq 1$, the groups
$\Hom(M/\ell^r,N/\ell^r)$ are finite, and thus, for any integer $i$, we have
$$\Hom(M,N[i])=H^i(\derR\varprojlim_r\derR\Hom(M,N/\ell^r))
\simeq\varprojlim_r\Hom(M,N/\ell^r[i])\, .$$
The fully faithfulness of the functor \eqref{eq:comp:classical}
readily follows from this computation.
Let $F$ be an object of $\Der^b_c(X,R_\ell)$, that is
a collection of objects $F_r$
in $\Der^b_{\ctf}(X,R/\ell^r)$,
together with isomorphisms
$$u_r:R/\ell^r\otimes^\derL_{R/\ell^{r+1}} F_{r+1}\simeq F_r$$
for each $r\geq 1$. Such data can be lifted into a collection $(E_r,v_r)$,
where $E_r$ is a complex of sheaves of $R/\ell^r$-modules on $X_\et$,
and
$$v_r:R/\ell^r\otimes_{R/\ell^{r+1}} E_{r+1}\rightarrow E_r$$
is a $R/(\ell^r)$-linear morphism of complexes of sheaves for each $r\geq 1$,
such that $E_r\simeq F_r$ in $\Der^b_{\ctf}(X,R/\ell^r)$, and such that the
canonical map
$$R/\ell^r\otimes^\derL_{R/\ell^{r+1}} E_{r+1}\rightarrow
R/\ell^r\otimes_{R/\ell^{r+1}} E_{r+1}\rightarrow E_r$$
coincides with the given isomorphism $u_r$ under these identifications.
Applying the functor $\alpha^*$ \eqref{embedsmalletalesheavesintohsheaves1},
this defines similar data $(\alpha^*(E_r),\alpha^*(v_r))$ in the category
of complexes of sheaves over the $\h$-site of $X$.
We may assume that each sheaf $E_r$ if flat over $R/\ell^r$
(by choosing them cofibrant for the projective model structure, for instance),
in which case the maps $v_r$ already are quasi-isomorphisms.
Applying the infinite suspension functor $\Sigma^\infty$, finally leads to
a diagram of Tate spectra, and we can define
$$E=\derR\varprojlim_r \Sigma^\infty(\alpha^*(E_r))\, .$$
Note that, for any integer $r\geq 1$, we have
$E/\ell^r\simeq \Sigma^\infty(\alpha^*(E_r))$ in $\DM_{\h,c}(X,R/\ell^r)$.
We thus see through the equivalences
$$\Der^b_{\ctf}(X,R/\ell^r)
\simeq\DM_{\h,\lc}(X,R/\ell^r)
\quad\text{and}\quad
\DMe_{\h}(X,R/\ell^r)\simeq\DM_{\h}(X,R/\ell^r)$$
that the functor \eqref{eq:comp:classical} sends $E$ to an object
isomorphic to $F$.
\end{proof}

\begin{num}\label{num:ekedahl}
More generally,
assume now that $R$ is noetherian and that
the characteristic of the field $R/\ell$ is
invertible in $\cO_X$. Recall that T. Ekedahl has constructed
a triangulated monoidal category $\Der(X,R_\ell)$
of $\ell$-adic systems;
see \cite[Definition 2.5]{Eke}.\footnote{Ekedahl's notation
for this category is $\Der(X_\et-R)$, where $X_\et$ denotes the topos
of sheaves on the small \'etale site of $X$.}.
We denote by $\Der^b_c(X,R_\ell)$ the full subcategory
of $\Der(X,R_\ell)$ spanned $\ell$-adic constructible systems.
By virtue of (the proof of) \cite[Theorem 6.3]{Eke}, $\Der^b_c(X,R_\ell)$
stable under the six operations (whenever this property holds for the categories
$\Der^b_{\ctf}(X,R/\ell)$, which is the case for $X$ is
 noetherian and quasi-excellent by Gabber's theorem
 \cite[XIII, Theorem 1.1.1]{gabber3}).
\end{num}

\begin{prop}\label{prop:compekedahl}
Under the assumptions of \ref{num:ekedahl},
there is a canonical equivalence of categories
$$
\Der(X,R_\ell)\simeq\DM_{h}(X,\hat R_\ell)
$$
which is compatible with the six operations.
This equivalence restricts to an equivalence
of triangulated categories
$$
\Der^b_c(X,R_\ell)\simeq\DM_{h,c}(X,\hat R_\ell)
$$
\end{prop}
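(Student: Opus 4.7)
The plan is to construct an equivalence $\Phi:\Der(X,R_\ell)\simeq\DM_\h(X,\hat R_\ell)$ by assembling, coherently in $r$, the equivalences $\Der(X_\et,R/\ell^r)\simeq\DM_\h(X,R/\ell^r)$ provided by Corollary \ref{cor:DMh_Det} and then passing to the derived limit over $r$. An object of Ekedahl's category is, up to his normalization procedure, a compatible system $(F_r)_{r\geq 1}$ with $F_r\in\Der(X_\et,R/\ell^r)$ together with transition isomorphisms $R/\ell^r\otimes^\derL_{R/\ell^{r+1}}F_{r+1}\simeq F_r$. Through Corollary \ref{cor:DMh_Det}, such a system corresponds to a compatible system $(M_r)$ with $M_r\in\DM_\h(X,R/\ell^r)$ and $M_r\simeq M_{r+1}/\ell^r$, and one sets $\Phi((F_r)):=\derR\varprojlim_r M_r$, the limit being taken in $\DM_\h(X,R)$. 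Proposition \ref{prop:caraclcompl} ensures this lands in $\DM_\h(X,\hat R_\ell)$, as each $M_r$ is $\ell^r$-torsion and thus $\ell$-complete.

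Full faithfulness would be obtained by comparing Proposition \ref{prop:jannsen}, which expresses $\derR\Hom$ in $\DM_\h(X,\hat R_\ell)$ as $\derR\varprojlim_r\derR\Hom(M/\ell^r,N/\ell^r)$, with the analogous description of $\Hom$-complexes in $\Der(X,R_\ell)$ which is built into Ekedahl's construction via the canonical $t$-structure on $\ell$-adic systems (see \cite[\textsection 2]{Eke}). Essential surjectivity then reduces to the generators: the objects of the form $\hat M_\ell$ for $M$ constructible in $\DM_\h(X,R)$ generate $\DM_\h(X,\hat R_\ell)$ as a localizing subcategory, and each such object is visibly in the image of $\Phi$ applied to the system $(M/\ell^r)_r$.

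For the constructible case, the key point is that constructibility of an $\ell$-adic system in Ekedahl's sense is tested by constructibility of its reduction modulo $\ell$ in $\Der^b_\ctf(X_\et,R/\ell)$, while constructibility of an object of $\DM_\h(X,\hat R_\ell)$ is defined (Definition \ref{df:finiteness_ell_h-motives}) by locally constructibility of its reduction modulo $\ell$ in $\DM_\h(X,R/\ell)$. Under Theorem \ref{thm:DbctfDMhlctorsion}, these two conditions match, so the equivalence $\Phi$ restricts as claimed. Compatibility with the six operations then follows from the fact that both the $\ell$-completion functor in $\DM_\h$ (Theorem \ref{thm:l-adicreal}) and Ekedahl's six operations on $\Der(X,R_\ell)$ are defined level-wise via the corresponding operations in $\Der(X_\et,R/\ell^r)\simeq\DM_\h(X,R/\ell^r)$, followed by taking $\derR\varprojlim_r$; the equivalences at finite level being compatible with the six operations by Corollary \ref{cor:DMh_Det} (and Theorem \ref{thm:DMh_6functors}), the compatibility transfers to the limit.

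The main obstacle is the coherence issue behind the naive construction of $\Phi$: Ekedahl's category $\Der(X,R_\ell)$ is not literally the naive $2$-limit of the $\Der(X_\et,R/\ell^r)$, but a Verdier localization of a category of systems of complexes, and one must upgrade the pointwise equivalences of Corollary \ref{cor:DMh_Det} into a diagram of equivalences of the corresponding model or $\infty$-categories in order to take a well-defined derived limit. Concretely, one needs the equivalences $\Der(X_\et,R/\ell^r)\simeq\DM_\h(X,R/\ell^r)$ to be induced by a Quillen equivalence (or dg-equivalence) compatible with the restriction-of-scalars functors $R/\ell^r\otimes_{R/\ell^{r+1}}(-)$, so that normalized Ekedahl systems correspond to compatible systems of $\ell$-power torsion $h$-motives. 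Once this model-level compatibility is set up (which is in fact already implicit in the proof of Theorem \ref{thm:comparison_torsion_etale-h_motives}), the rest of the argument is formal from Proposition \ref{prop:jannsen} and Proposition \ref{prop:caraclcompl}.
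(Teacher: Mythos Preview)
Your plan is viable in outline, but the paper takes a genuinely different route that sidesteps precisely the coherence obstacle you flag at the end. Rather than assembling the level-wise equivalences $\Der(X_\et,R/\ell^r)\simeq\DM_\h(X,R/\ell^r)$ into a coherent diagram and passing to a derived limit, the paper introduces a single intermediate category: the full subcategory $\Der(X_\et,R)_\ell\subset\Der(X_\et,R)$ of $\ell$-complete objects in the unbounded derived category of sheaves of $R$-modules on the small \'etale site of $X$. On one side, the adjunction $\derL\pi^*:\Der(X_\et,R)\rightleftarrows\Der(X_\et^{\mathbf N},R_\bullet):\derR\pi_*$ with the derived category of inverse systems exhibits $\Der(X_\et,R)_\ell$ as the Verdier quotient that defines Ekedahl's $\Der(X,R_\ell)$ (via the unbounded form of \cite[Prop.~2.2, Lemma~2.3]{Eke}). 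On the other side, the honest premotivic adjunction $\Sigma^\infty\alpha^*:\Der(X_\et,R)\rightleftarrows\DM_\h(X,R):\derR\alpha_*\derR\Omega^\infty$ restricts, after $\ell$-completion, to an adjunction between $\Der(X_\et,R)_\ell$ and $\DM_\h(X,\hat R_\ell)$; since both adjoints commute with $C\mapsto C/\ell$, it suffices to check unit and counit are invertible modulo $\ell$, which is exactly Corollary~\ref{cor:DMh_Det}. The payoff is that every functor in sight is a genuine derived functor of a Quillen adjunction, so no diagram-level lifting is needed; your approach would work too, but making the ``upgrade to model categories'' precise is roughly as much work, and less transparent, than factoring through $\Der(X_\et,R)_\ell$. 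The endgame---the constructible comparison via Theorem~\ref{thm:DbctfDMhlctorsion}, and compatibility with the six operations---is handled the same way in both approaches.
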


\begin{proof}
Note that the second equivalence of categories
readily follows from the first, using
Theorem \ref{thm:DbctfDMhlctorsion}.
We will thus ignore finiteness hypotheses.
We may assume that $R$ is a complete discrete
valuation ring.
Before going further, we should emphasize that, in
Ekedahl's article, there are restrictions about boundedness of
complexes or about finite tor-dimension: we will ignore
them completely because the reason for these is that, at that time,
it was not known how to derive the tensor product for unbounded
complexes. In particular, \cite[Proposition 2.2 Lemma 2.3]{Eke}
are true for unbounded complexes (and the proof does not change).
We will try to remain close to the notations of Ekedahl's article.
The obvious morphism of ringed topoi $\pi:X_\et^{\mathbf{N}}\to X_\et$
induces an adjunction
$$\derL\pi^*:\Der(X_\et ,R)\rightleftarrows\Der(X_\et^{\mathbf{N}},R_{\bullet}):\derR\pi_*$$
where $\Der(X_\et^{\mathbf{N}},R_{\bullet})$ is the derived category
of the category of $R_\bullet$-modules on the topos $X_\et^{\mathbf{N}}$
of inverse systems of sheaves on the small \'etale site of $X$ (with $R_\bullet$
the sheaf of rings on $X_\et^{\mathbf{N}}$ defined by the sequence
$R/\ell^{n+1}\to R/\ell^n$), while $\Der(X_\et,R)$ is the
derived category of sheaves of $R$-modules on the small \'etale
site of $X$. An object $C$ of $\Der(X_\et,R)$ will be said \emph{$\ell$-complete}
is the canonical map
$$C\rightarrow\derR\varprojlim_n C/\ell^n$$
is an isomorphism (remark that the analogue of Proposition \ref{prop:caraclcompl}
holds, with the same proofs). We denote by $\Der(X_\et,R)_\ell$
the full subcategory of $\Der(X_\et,R)$ which consists of $\ell$-complete
objects. We notice first that there are natural isomorphisms
$$\derR\pi_*(C)\simeq\derR\varprojlim_n C_n\, ,$$
Therefore, we have isomorphisms
$$\derR\pi_*\derL\pi^*(C)\simeq\derR\varprojlim_n C/\ell^n$$
and we obtain an adjunction
$$\derL\pi^*:\Der(X_\et,R)_\ell\rightleftarrows\Der(X_\et^{\mathbf{N}},R_{\bullet}):\derR\pi_*\, .$$
By definition of $\Der(X_\et,R)_\ell$, the functor $\derL\pi^*$
is now fully faithful, so that the functor $\derR\pi_*$ identifies
$\Der(X_\et,R)_\ell$ as a Verdier quotient of $\Der(X_\et^{\mathbf{N}},R_{\bullet})$.
But we have the identifications
$\derR\pi_*(C)/\ell^n\simeq\derR\pi_*(C/\ell^n)$,
so that (the unbounded version of) \cite[Proposition 2.2 and Lemma 2.3]{Eke},
together with Corollary \ref{cor:DMh_Det},
express precisely that this Verdier quotient is Ekedahl's
category $\Der(X,R_\ell)$. In other words, we have proved that there is
a canonical equivalence of triangulated categories
$$\Der(X,R_\ell)\simeq\Der(X_\et,R)_\ell\, .$$
We are thus reduced to prove that
we have an equivalence
$$\Der(X_\et,R)_\ell\simeq\DM_{h}(X,\hat R_\ell)\, .$$
Considering the canonical adjunction
$$\Sigma^\infty\alpha^*:\Der(X_\et,R)\rightleftarrows\DM_{h}(X,R):\derR\alpha_*\derR\Omega^\infty\, ,$$
we obtain an adjunction
$$\Sigma^\infty\alpha^*(-)_\ell:\Der(X_\et,R)_\ell
\rightleftarrows\DM_{h}(X,\hat R_\ell):\derR\alpha_*\derR\Omega^\infty\, ,$$
where $\Sigma^\infty\alpha^*(C)_\ell$ denotes the $\ell$-completion of
$\Sigma^\infty\alpha^*(C)$.
As these two adjoint functors commute with the operation $C\mapsto C/\ell$,
it is sufficient to check that the co-unit and unit of this adjunction
are invertible modulo $\ell$ (i.e. are invertible when applied to
objects of the form $C/\ell$),
which is a reformulation of Corollary \ref{cor:DMh_Det}.
\end{proof}

\begin{cor}\label{cor:ladictstruct}
Under the assumptions of \ref{num:ekedahl}, the category
$\DM_{h,c}(X,\hat R_\ell)$ has a canonical bounded $t$-structure whose heart
is equivalent to the abelian category of constructible
$\ell$-adic sheaves in the sense of \cite[Exp. V, 3.1.1]{SGA5}.
\end{cor}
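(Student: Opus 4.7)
The plan is to transport everything through the equivalence
$$\Der^b_c(X,R_\ell)\simeq\DM_{h,c}(X,\hat R_\ell)$$
supplied by Proposition \ref{prop:compekedahl}, and to import a $t$-structure from the world of torsion coefficients using Corollary \ref{cor:DMh_Det}, which gives a canonical equivalence $\DM_h(X,R/\ell)\simeq\Der(X_\et,R/\ell)$ endowed with its standard $t$-structure. Concretely, I would define
\begin{align*}
\DM_{h,c}(X,\hat R_\ell)^{\leq 0}&=\{M\mid M/\ell\in\Der^{\leq 0}(X_\et,R/\ell)\},\\
\DM_{h,c}(X,\hat R_\ell)^{\geq 0}&=\{M\mid M/\ell\in\Der^{\geq 0}(X_\et,R/\ell)\},
\end{align*}
and check the axioms of a bounded $t$-structure directly on this definition, exploiting the fact that for every constructible object $M$ the sheaf $M/\ell$ is bounded (this is what constructibility in Definition \ref{df:finiteness_ell_h-motives} amounts to, via Theorem \ref{thm:DbctfDMhlctorsion}).

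The orthogonality $\Hom(A,B[-1])=0$ for $A\in\DM_{h,c}(X,\hat R_\ell)^{\leq 0}$ and $B\in\DM_{h,c}(X,\hat R_\ell)^{\geq 0}$ is the step I would treat first. By Proposition \ref{prop:jannsen}, these Homs are computed as a derived limit of the groups $\Hom_{\DM_h(X,R/\ell^r)}(A/\ell^r,B/\ell^r[-1])$; an induction on $r$ via the triangle $A/\ell\to A/\ell^{r+1}\to A/\ell^r\to A/\ell[1]$ reduces this to the analogous vanishing modulo $\ell$, which is the standard $t$-structure orthogonality in $\Der(X_\et,R/\ell)$. For the existence of truncation triangles, I would construct $\tau^{\leq 0}M$ as an object of the Ekedahl model $\Der(X_\et,R)_\ell$ (as described in the proof of Proposition \ref{prop:compekedahl}) by truncating the compatible system $(\tau^{\leq 0}(M/\ell^r))_r$ in the topos $X_\et^{\mathbf{N}}$ and then $\ell$-completing; boundedness is automatic because constructible objects $M/\ell^r$ live in $\Der^b_\ctf(X_\et,R/\ell^r)$.

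The identification of the heart is then a direct unwinding. An object $M$ lies in the heart precisely when $M/\ell$ is concentrated in degree zero as an \'etale sheaf; by the description of $\Der(X,R_\ell)$ recalled in \ref{num:ekedahl} and recovered in the proof of Proposition \ref{prop:compekedahl}, such an $M$ corresponds to a normalized inverse system $(F_r)_{r\geq 1}$ in which each $F_r$ is a constructible sheaf of $R/\ell^r$-modules and the transition maps induce isomorphisms $F_r\otimes_{R/\ell^r}R/\ell^{r-1}\xrightarrow{\sim}F_{r-1}$ — which is exactly the definition of a constructible $\ell$-adic sheaf given in \cite[Exp.~V, 3.1.1]{SGA5}. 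Functoriality of this correspondence is clear, and exactness of the heart follows from the construction of truncations.

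The main technical obstacle is the construction of truncation triangles: one has to produce a genuine object of $\DM_{h,c}(X,\hat R_\ell)$ (not just a compatible system of torsion motives) and verify that the resulting triangle is distinguished. The cleanest way is to work inside Ekedahl's category $\Der(X_\et,R)_\ell$, where the functor $M\mapsto(M/\ell^r)_r$ is conservative, truncation is performed on the inverse system in the topos $X_\et^{\mathbf{N}}$, and $\ell$-completeness is preserved by the truncation functors — so that the transport along Proposition \ref{prop:compekedahl} yields the desired $t$-structure on $\DM_{h,c}(X,\hat R_\ell)$.
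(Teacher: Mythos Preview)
The paper's proof is a single sentence: Proposition \ref{prop:compekedahl} identifies $\DM_{h,c}(X,\hat R_\ell)$ with Ekedahl's $\Der^b_c(X,R_\ell)$, and \cite[Theorem 6.3~i)]{Eke} already equips the latter with the desired bounded $t$-structure (this is precisely where the hypothesis that $R$ is noetherian and regular is used). Transporting along an equivalence is automatic. Your approach, by contrast, attempts to rebuild Ekedahl's theorem from scratch inside $\DM_\h$.

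More seriously, your definition of the aisle and coaisle is incorrect. The functor $M\mapsto M/\ell=M\otimes^\derL_R R/\ell$ has Tor-amplitude $[-1,0]$, so it is only right $t$-exact for the standard $t$-structure, not $t$-exact. Take $M=R/\ell$, the constructible $\ell$-adic sheaf given by the constant inverse system $(R/\ell)_r$; under the equivalence of Proposition \ref{prop:compekedahl} this is the $\ell$-complete object $R/\ell$ concentrated in degree $0$. Then
\[
M/\ell=R/\ell\otimes^\derL_R R/\ell \simeq R/\ell\oplus R/\ell[1]
\]
has nonvanishing $H^{-1}=\Tor_1^R(R/\ell,R/\ell)=R/\ell$. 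Hence, with your definition, $M\notin\DM_{h,c}(X,\hat R_\ell)^{\geq 0}$, and $M$ is excluded from your heart --- contradicting the identification you claim with constructible $\ell$-adic sheaves in the sense of SGA5.

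In fact your construction of truncations (levelwise in $X_\et^{\mathbf{N}}$, then $\ell$-completing) is the correct one and produces Ekedahl's $t$-structure, but it does not match the pair you defined: for $M=R/\ell[-1]$ one computes $\tau^{\leq 0}(M/\ell^r)=H^0(M/\ell^r)=R/\ell$ with zero transition maps, so your $\tau^{\leq 0}M=0$ and $\tau^{\geq 1}M=M$; yet $M/\ell\in\Der^{[0,1]}$ has $H^0\neq 0$, so $M$ lies outside your $\DM^{\geq 1}$. Fixing this amounts to proving that the levelwise $t$-structure on $\Der(X_\et^{\mathbf{N}},R_\bullet)$ descends to $\Der(X,R_\ell)$, which is exactly the content of Ekedahl's theorem and where the regularity of $R$ enters.
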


\begin{proof}
This follows from Proposition \ref{prop:compekedahl}
and, since the ring $R$ is noetherian and regular,
from \cite[Theorem 6.3~i)]{Eke}.
\end{proof}

\begin{num}
Let $Q$ be the field of fractions of $R$, and assume
furthermore that $R$ is of mixed characteristic.
For a noetherian scheme $X$, we define
the category of (constructible) $Q_\ell$-sheaves over $X$
$$\Der^b_c(X,Q_\ell)=\DM_{h,c}(X,\hat R_\ell)\otimes_R Q$$
as the $Q$-linearization of
the $R$-linear triangulated category $\DM_{h,c}(X,\hat R_\ell)$.\footnote{Under
 the assumption of \ref{num:ekedahl}, and according to
 Prop. \ref{prop:compekedahl},
 this category is Ekedahl's derived category of $\ell$-adic sheaves.
 Our definition
 has the advantage of having all the good properties without
 assuming any restriction on the residue characteristics of $X$.}
Then $\Der^b_c(-,Q_\ell)$ is a motivic
category which satisfies the absolute purity property (at least when
restricted to quasi-excellent noetherian schemes of finite dimension).

As a final result,
taking into account the fact the $Q$-localization functor
is well behaved for $\h$-motives (Corollary \ref{cor:rho_rational&6_functors}),
we have a canonical identification, for any noetherian
scheme of finite dimension:
$$\big(\DM_{\h,c}(X,R)\otimes Q\big)^\sharp\simeq\DM_{\h,c}(X,Q)\, ,$$
where the left hand side denotes the pseudo-abelian completion of
the $Q$-linearization of the $R$-linear triangulated category $\DM_{\h,c}(X,R)$;
see Appendix \ref{app:idempotent}.
Note finally that, since the category $\DM_{h,c}(X,\hat R_\ell)$
has a bounded $t$-structure (using Prop.~\ref{prop:et+htp&torsion}, we may assume that the
characteristic of the field $R/\ell$ is invertible in $\cO_X$,
and then apply Cor.~\ref{cor:ladictstruct}),
the category $\Der^b_c(X,Q_\ell)$ is
pseudo-abelian, by Corollary \ref{cor:Qlinearpseudoabtstruct}).
\end{num}

\begin{thm}
The functor $\hat\rho_\ell^*$ \eqref{eq:integral_realization}
together with the equivalence of categories of Proposition \ref{prop:compekedahl}
induce, for any noetherian scheme of finite dimension $X$,
a $Q$-linear triangulated monoidal functor:
$$
\DM_{\h,c}(X,Q) \rightarrow\Der^b_c(X,Q_\ell)
$$
(again, \emph{the $\ell$-adic realization functor}).\\
It is compatible with the 6 operations (when one restricts
our attention to quasi-excellent noetherian schemes of finite dimension
 and morphisms of finite type between them).
\end{thm}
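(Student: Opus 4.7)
The plan is to assemble the functor in three steps and then read off the compatibility with the six operations from earlier results. First, by Theorem \ref{thm:ladic6opconstr}, the $\ell$-adic realization functor $\hat\rho_\ell^*$ of \eqref{eq:integral_realization} sends constructible $h$-motives to constructible $\ell$-adic $h$-motives, so it restricts to a triangulated symmetric monoidal $R$-linear functor
$$\hat\rho_\ell^*:\DM_{\h,c}(X,R)\longrightarrow\DM_{\h,c}(X,\hat R_\ell).$$
Applying the (abstract) $Q$-linearization construction from Appendix \ref{app:idempotent} to source and target, one obtains a triangulated symmetric monoidal $Q$-linear functor
$$\DM_{\h,c}(X,R)\otimes Q\longrightarrow\DM_{\h,c}(X,\hat R_\ell)\otimes_R Q=\Der^b_c(X,Q_\ell),$$
the right hand side being our very definition of $\Der^b_c(X,Q_\ell)$.

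Next I would pass to the idempotent completion of the source. By the observation made just before the statement, we have a canonical identification $\bigl(\DM_{\h,c}(X,R)\otimes Q\bigr)^\sharp\simeq\DM_{\h,c}(X,Q)$ (this uses Corollary \ref{cor:rho_rational&6_functors} to compare the abstract $Q$-linearization with the change of coefficients functor $\rho^*$). On the other hand, Corollary \ref{cor:ladictstruct} provides a bounded $t$-structure on $\DM_{\h,c}(X,\hat R_\ell)$, so by Corollary \ref{cor:Qlinearpseudoabtstruct} the $Q$-linearized category $\Der^b_c(X,Q_\ell)$ is already pseudo-abelian. The universal property of the pseudo-abelian completion therefore provides a unique triangulated symmetric monoidal $Q$-linear extension
$$\DM_{\h,c}(X,Q)\simeq\bigl(\DM_{\h,c}(X,R)\otimes Q\bigr)^\sharp\longrightarrow\Der^b_c(X,Q_\ell),$$
which is the desired $\ell$-adic realization functor.

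For compatibility with the six operations (restricted to quasi-excellent noetherian schemes of finite dimension and morphisms of finite type between them) I would argue by ``naturality along each of the three steps''. First, on constructible objects with $R$-coefficients, both categories are closed under the six operations (Corollary \ref{cor:6oppreserveconstruct}, Theorem \ref{thm:ladic6opconstr}), and $\hat\rho_\ell^*$ commutes with all six by Theorem \ref{thm:l-adicreal}. Second, the $Q$-linearization step preserves this compatibility: on the $\DM_\h$-side this is precisely Corollary \ref{cor:rho_rational&6_functors}, while on the $\ell$-adic side the analogous statement follows by the same formal arguments (all six operations preserve constructible $\ell$-adic objects by Remark \ref{rm:ladic6opconstr} and Theorem \ref{thm:ladic6opconstr}, and they commute with $(-)\otimes_R Q$ for constructible inputs by Corollary \ref{cor:rho_rational&6_functors} applied after the equivalence of Proposition \ref{prop:compekedahl}). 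Third, the idempotent completion step is purely formal: any triangulated symmetric monoidal functor into a pseudo-abelian category extends uniquely from a category to its pseudo-abelian completion, and this extension automatically intertwines the six operations (which themselves are inherited on the pseudo-abelian completion), since the natural exchange transformations already become isomorphisms before idempotent completion.

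The only place where some care is needed is the second step, where one must verify that $Q$-linearization, as performed in the abstract sense of Appendix \ref{app:idempotent}, really is compatible with the proper pushforward and extraordinary inverse image functors on $\ell$-adic $h$-motives; this is where I expect the main (still mild) obstacle to lie, and it is settled by invoking Corollary \ref{cor:rho_rational&6_functors} together with the constructibility statements of Theorem \ref{thm:ladic6opconstr} to ensure that the relevant exchange transformations between $f_*$, $f_!$, $f^*$, $f^!$, $\otimes$, $\uHom$ and the localization functor $(-)\otimes_R Q$ are isomorphisms on the constructible subcategory.
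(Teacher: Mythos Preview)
Your argument is correct and follows exactly the route the paper takes: the paper does not give a separate proof but assembles the functor in the paragraph preceding the theorem via precisely your three steps (restriction of $\hat\rho_\ell^*$ to constructible objects, abstract $Q$-linearization, then pseudo-abelian extension using $(\DM_{\h,c}(X,R)\otimes Q)^\sharp\simeq\DM_{\h,c}(X,Q)$ and the pseudo-abelianity of $\Der^b_c(X,Q_\ell)$ from Corollary~\ref{cor:Qlinearpseudoabtstruct}), and then reads off compatibility with the six operations from Theorem~\ref{thm:l-adicreal}. One small point: in your second step the invocation of Corollary~\ref{cor:rho_rational&6_functors} ``after the equivalence of Proposition~\ref{prop:compekedahl}'' is unnecessary---the abstract $Q$-linearization $T\mapsto T\otimes_R Q$ of Appendix~\ref{app:idempotent} is functorial in $R$-linear triangulated functors and preserves isomorphisms, so the exchange transformations for $\hat\rho_\ell^*$ (already isomorphisms by Theorem~\ref{thm:l-adicreal}) remain isomorphisms after tensoring with $Q$ purely formally.
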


\begin{rem}
As $Q$ is a $\QQ$-algebra,
 and taking into account Theorem \ref{thm:DMB&DM_h_rational_recall},
 we have defined a morphism of premotivic categories
$$
\hat\rho_\ell^*:\DMBc \rightarrow \Der^b_c(-,Q_\ell)
$$
which commutes with all of the six operations.
Given formula \eqref{eq:DMB&Kth} we see that
 this morphism induces in particular a cycle class
 in $\ell$-adic \'etale cohomology
 -- and even a higher cycle class.
The compatibility of this realization with the 6 operations
 gives us all the required functoriality properties of
 this (higher) cycle class.
 
We like to think of $\hat\rho_\ell^*$ as
 a kind of \emph{categorical cycle class} for $\ell$-adic complexes.
% indeed $\DMB$ is the universal motivic homotopy triangulated
% category which is $\QQ$-linear and orientable (see \cite{CD3}).
 
The interest of the above theorem is to present the
 universal premotivic adjunction $\hat\rho_\ell^*$ as a
 \emph{homotopy $\ell$-adic completion} -- which implies the non trivial fact
 that it commutes with all of the 6 operations
 (\emph{i.e.} with the right adjoint functors).
\end{rem}

\begin{rem}\label{rem:weights}
In the case where $\ell$ is a prime number invertible in the
residue characteristics of the scheme $X$, in
the triangulated categories $\Der^b_c(X,\QQ_\ell)$,
there can be non trivial extensions between
objects of the form $p_!(\QQ_\ell)(n)[2n]$,
for $p:Y\to X$ proper and $Y$ is regular, with $n\in\ZZ$.
Indeed, in the case where $X$ is the spectrum of an algebraically
closed field $k$, this means for instance that the
cohomology of smooth and proper $k$-schemes
can be non trivial in degree $1$.
In the case where $X$ is the complement of
a finite set of points in the spectrum of a ring of integers,
examples are provided by Jannsen in \cite[Remarks 6.8.4]{jannsenhab}.

%%Assume that we are under the assumptions of \ref{num:comp:classical}
%%for $R=\ZZ_{(\ell)}$.
Let us consider two (locally) constructible
objects $M$ and $N$ in $\DM_\h(X,\ZZ)$, and assume that
\begin{equation}\label{eq:rem:weights1}
\Hom(M,N[i])=0\quad\text{for $i>0$.}
\end{equation}
%%For sake of completeness, we will prove that condition \eqref{eq:rem:weights1}
%%implies condition \eqref{eq:rem:weights2}.
%%Condition \eqref{eq:rem:weights1}
This readily implies that
$$\Hom(M,N[i])\otimes\ZZ/\ell^\nu\simeq\Hom(M,N/\ell^\nu[i])$$
for any non-negative integers $\nu$ and $i$.
We thus have a Milnor short exact sequence
$$0\to{\varprojlim_\nu}^1\Hom(M,N)\otimes\ZZ/\ell^\nu
\to\Hom(M_\ell,N_\ell[1])\to\varprojlim_\nu\Hom(M,N[1])\otimes\ZZ/\ell^\nu\to 0\, .$$
This proves:
\begin{equation}\label{eq:rem:weights2}
\Hom(M_\ell,N_\ell[1])=0\, .
\end{equation}
In other words, if ever $\DM_\h(X,\ZZ)$ has a
suitable weight structure
in the sense of Bondarko, there cannot be non-trivial
extensions between $\ell$-adic realizations of pure
$\h$-motives over $X$ with integral coefficients.
This shows that there is no hope to define
a weight structure on $\DM_\h(X,\ZZ)$ such that
that objects of the form $p_!(\ZZ)(n)[2n]$ are pure
for $p:Y\to X$ proper, $Y$ is regular, and with $n\in\ZZ$,
at least when $X$ is a separably closed field, or the complement
of a finite set of points in the spectrum of a ring of integers.
Using the properties of continuity and of localization, it
is a nice exercise to deduce from there that
finite extensions of primary fields must be avoided as well.
Remark that, in contrast, $\DM_\h(X,\QQ)$ carries a perfectly
well behaved theory of weights with a great level of generality;
see \cite{hebert,bondarko}.
\end{rem}

%\newpage

\begin{appendix}
\section{Recall and complement on premotivic categories}

\subsection{Premotivic categories and morphisms}
\label{sec:premotivic_recall}

The following definition is a summary of the definitions
 in \cite[sec. 1]{CD1}. In this presentation, $\sch$
 is an arbitrary category of schemes.
\begin{df} \label{df:recall_premotivic_cat}
Let $\Pmor$ be one of the classes: $\Et$, $\sm$, $\sft$.

A \emph{triangulated ({\rm resp.} abelian) $\Pmor$-premotivic category}
 $\mathcal M$ is
 a fibred category over $\sch$
 satisfying the following properties:
\begin{enumerate}
\item For any scheme $S$,
 $\mathcal M_S$ is a well generated triangulated
  (resp. Grothendieck abelian) category with a closed monoidal structure.\footnote{
  In the triangulated case,
  we require that the bifunctor $\otimes$ is triangulated in each variable.}
\item For any morphism of schemes $f$,
 the functor $f^*$ is triangulated (resp. additive),
 monoidal and admits a right adjoint denoted by $f_*$.
\item For any morphism $p$ in $\Pmor$,
 the functor $p^*$ admits a left adjoint denoted by $p_\sharp$.
\item \textit{$\Pmor$-base change}:
For any cartesian square
$$
\xymatrix@=14pt{
Y\ar_g[d]\ar^q[r]\ar@{}|\Delta[rd] & X\ar^f[d] \\
T\ar_p[r] & S
}
$$
there exists a canonical isomorphism:
$Ex(\Delta_\sharp^*):q_\sharp g^* \rightarrow f^* p_\sharp$.
\item \textit{$\Pmor$-projection formula}:
For any morphism $p:T \rightarrow S$ in $\Pmor$,
 and any object $(M,N)$ of $\mathcal M_T \times \mathcal M_S$,
 there exists a canonical isomorphism:
$$
Ex(p_\sharp^*,\otimes):p_\sharp(M \otimes_T p^*(N))
 \rightarrow p_\sharp(M) \otimes_S N\,.
$$
\end{enumerate}
When $\Pmor=\sm$,
 we say simply \emph{premotivic} instead of $\sm$-premotivic.
 Objects of $\mathcal M$ are generically called \emph{premotives}.
\end{df}

\begin{rem}
The isomorphisms appearing in properties (4) and (5) are
 particular instances of what is generically called
 an \emph{exchange transformation} in \cite{CD1}.
\end{rem}

\begin{ex} \label{ex:etale_and_premotivic}
Let $\Pmor$ be one of the classes: $\Et$, $\sm$, $\sft$.

Then the categories $\sh_\et(\Pmor_S,R)$
 (resp. $\mathrm{Psh}(\Pmor_S,R)$) of \'etale sheaves (resp. pre\-sheaves)
 of $R$-modules over $\Pmor_S$ for various base schemes $S$
 form the fibers of an abelian premotivic category
  (see \cite[Ex. 5.1.1]{CD3}).
 
Moreover, the derived categories $\Der(\sh_\et(\Pmor_S,R))$
 (resp. $\Der(\mathrm{Psh}(\Pmor_S,R))$) for various schemes $S$
 form the fibers of a canonical triangulated premotivic category
 (see \cite[Def. 5.1.17]{CD3}).
\end{ex}

\begin{num}
Consider a premotivic triangulated category $\T$.

Given any smooth morphism $p:X \rightarrow S$,
 we define following Voevodsky the \emph{(homological) premotive}
 associated with $X/S$ as the object: $M_S(X):=p_\sharp(\un_X)$.
 Then $M_S$ is a covariant functor.

Let $p:\PP^1_S \rightarrow S$ be the canonical projection.
We define the \emph{Tate premotive} as the kernel
 of the map $p_*:M_S(\PP^1_S) \rightarrow \un_S$ shifted by $-2$.
 Given an integer $n$ and an object $M$ of $\T$,
  we define the \emph{$n$-th Tate twist} $M(n)$ of $M$ as the $n$-th tensor
 power of $M$ by the object $\un(1)$ -- allowing negative $n$ if $\un(1)$
 is $\otimes$-invertible.

We associate to $\T$ a bigraded cohomology theory on $\sch$:
$$
H^{i,n}_\T(S):=\Hom_\T(\un_S,\un_S(n)[i]).
$$
One can isolate the following basic properties of $\T$
 (see \cite{CD3}).
\end{num}
\begin{df} \label{df:recall_premotivic_basic}
Consider the notations above. One introduces
 the following properties of the premotivic triangulated category
 $\T$:
\begin{enumerate}
\item \textit{Homotopy property}.-- For any scheme $S$,
 the canonical projection
 of the affine line over $S$
 induces an isomorphism $M_S(\AA^1_S) \rightarrow \un_S$.
\item \textit{Stability property}.-- The Tate premotive
 $\un(1)$ is $\otimes$-invertible.
\item \textit{Orientation}.-- An \emph{orientation of $\T$}
 is natural transformation of contravariant functors
$$
c_1:\Pic \rightarrow H^{2,1}
$$
(not necessarily additive).\footnote{However,
 the orientations
 which appear in this article are always additive.}

When $\T$ is equipped with an orientation one says
 $\T$ is \emph{oriented}.
\end{enumerate}
\end{df}

\begin{num}
Recall that a cartesian functor $\varphi^*:\T \rightarrow \T'$
 between fibred categories over $\sch$ is the following data:
\begin{itemize}
\item for any base scheme $S$ in $\sch$,
 a functor $\varphi^*_S:\T(S) \rightarrow \T'(S)$.
\item for any morphism $f:T \rightarrow S$ in $\sch$,
 a natural isomorphism $c_f:f^*\varphi_S^* \xrightarrow{\sim} \varphi_T^*f^*$
 satisfying the cocycle condition.
\end{itemize}
The following definition is a particular case of
 \cite[Def. 1.4.6]{CD3}:
\end{num}
\begin{df} \label{df:recall_premotivic_adjunction}
Let $\Pmor$ be one of the classes: $\Et$, $\sm$, $\sft$.

A morphism $\varphi^*:\mathcal M \rightarrow \mathcal M'$
 of \emph{triangulated ({\rm resp.} abelian) $\Pmor$-premotivic categories}
  is a cartesian functor satisfying the following properties:
\begin{enumerate}
\item For any scheme $S$,
 $\varphi^*_S$ is triangulated (resp. additive), monoidal
  and admits a right adjoint denoted by $\varphi_{S*}$.
\item For any morphism $p:T \rightarrow S$ in $\Pmor$,
there exists a canonical isomorphism:
$Ex(p_\sharp,\varphi^*):p_\sharp \varphi^*_T \rightarrow \varphi_S^* p_\sharp$.
\end{enumerate}
Sometimes, we refer to such a morphism as the \emph{premotivic adjunction}:
$$
\varphi^*:\mathcal M \rightleftarrows \mathcal M':\varphi_*.
$$
A \emph{sub-$\Pmor$-premotivic triangulated ({\rm resp.} abelian) category}
 $\mathcal M_0$ of $\mathcal M$
 is a full triangulated (resp. additive) sub-category of $\mathcal M$ equipped
 with a $\Pmor$-premotivic structure
  such that the inclusion $\mathcal M_0 \rightarrow \mathcal M$ is a morphism
 of $\Pmor$-premotivic categories.
\end{df}

\begin{rem} \label{rem:morphisms&orientation}
Given a morphism of triangulated premotivic categories
$$
\varphi^*:\T \rightarrow \T',
$$
any orientation of $\T$ induces a canonical orientation of $\T'$.
Indeed, we deduce from the preceding definitions that for any scheme $X$,
 the functor $\varphi^*_X$ induces a morphism
$$
H^{2,1}_\T(X) \rightarrow H^{2,1}_{\T'}(X)
$$
contravariantly natural in $X$.
\end{rem}

\begin{ex} \label{ex:etale_and_A^1_premotivic}
Consider the notations of Example \ref{ex:etale_and_premotivic}

 Recall from \cite[Def. 5.2.16]{CD3}
 the $\AA^1$-localization $\Der^{\mathit{eff}}_{\AA^1}(\sh_\et(\Pmor,R))$
 of triangulated category $\Der(\sh_\et(\Pmor,R))$,
 which is a $\Pmor$-fibred category equipped with a localization morphism
$$
\Der(\sh_\et(\Pmor,R)) \rightarrow \Der^{\mathit{eff}}_{\AA^1}(\sh_\et(\Pmor,R))
$$
and satisfying the homotopy property.

When $\Pmor=\sm$, we will put:
 $\DMtee(S,R)=\Der^{\mathit{eff}}_{\AA^1}(\sh_\et(\sm_S,R))$.
%
%We have also defined in \cite[]{CD1}
% the \emph{stable $\AA^1$-derived premotivic category}
% $\Der^{eff}_{\AA^1}(\sh_\et(\Pmor,R))$
% which is the universal triangulated premotivic category equipped
% with a morphism
%$$
%\Der(\sh_\et(\Pmor,R)) \rightarrow \Der_{\AA^1}(\sh_\et(\Pmor,R))
%$$
%and satisfying the homotopy and stability properties.
\end{ex}

The main properties of a triangulated premotivic category
 can be summarized in the so called Grothendieck 6 functors formalism:
\begin{df} \label{df:recall_6_functors}
A triangulated premotivic category $\T$ which is oriented
 satisfies \emph{Grothendieck 6 functors formalism}
 if it satisfies the stability property
 and for any separated morphism of finite type $f:Y \rightarrow X$ in $\sch$,
 there exists a pair of adjoint functors
$$
f_!:\T(Y) \rightleftarrows \T(X):f^!
$$
such that:
\begin{enumerate}
\item There exists a structure of a covariant (resp. contravariant) 
 $2$-functor on $f \mapsto f_!$ (resp. $f \mapsto f^!$).
\item There exists a natural transformation $\alpha_f:f_! \rightarrow f_*$
 which is an isomorphism when $f$ is proper.
 Moreover, $\alpha$ is a morphism of $2$-functors.
\item For any smooth morphism $f:X \rightarrow S$ in $\sch$
 of relative dimension $d$,
 there are canonical natural isomorphisms
\begin{align*}
\pur_f:f_\sharp & \longrightarrow f_!(d)[2d] \\
\pur'_f:f^* & \longrightarrow f^!(-d)[-2d]
\end{align*}
which are dual to each other.
\item For any cartesian square in $\sch$:
$$
\xymatrix@=16pt{
Y'\ar^{f'}[r]\ar_{g'}[d]\ar@{}|\Delta[rd] & X'\ar^g[d] \\
Y\ar_f[r] & X,
}
$$
such that $f$ is separated of finite type,
there exist natural isomorphisms
\begin{align*}
%%Ex(\Delta^*_!)&:
g^*f_! \xrightarrow\sim f'_!{g'}^*\, , \\
%%Ex(\Delta_*^!)&:
g'_*{f'}^! \xrightarrow\sim  f^!g_*\, .
\end{align*}
\item For any separated morphism of finite type $f:Y \rightarrow X$,
 there exist natural isomorphisms
\begin{align*}
Ex(f_!^*,\otimes):
(f_!K) \otimes_X L &\xrightarrow{\ \sim\ } f_!(K \otimes_Y f^*L)\, ,\ \\
  \uHom_X(f_!(L),K) & \xrightarrow{\ \sim\ } f_* \uHom_Y(L,f^!(K))\, ,\ \\
  f^! \uHom_X(L,M)& \xrightarrow{\ \sim\ } \uHom_Y(f^*(L),f^!(M))\, .
\end{align*}
\item For any closed immersion $i:Z \rightarrow S$
 with complementary open immersion $j$,
 there exists distinguished triangles of natural transformations as follows:
\begin{align*}
j_!j^! &\xrightarrow{\ \alpha'_j\ } 1 \xrightarrow{\ \alpha_i\ } i_*i^*
 \xrightarrow{\ \partial_i\ } j_!j^![1] \\
i_!i^! &\xrightarrow{\ \alpha'_i\ } 1 \xrightarrow{\ \alpha_j\ } j_*j^*
 \xrightarrow{\ \tilde \partial_i\ } i_!i^![1]
\end{align*}
where $\alpha'_?$ (resp. $\alpha_?$) denotes the counit (resp. unit)
 of the relevant adjunction.
\end{enumerate}
\end{df}

\begin{num} \label{num::recall_loc&pur_premotivic}
In \cite{CD3}, we have studied some of these properties axiomatically,
 introducing the following definitions:
\begin{itemize}
\item Given a closed immersion $i$,
 the fact $i_*$ is conservative and the existence of the first triangle in (6)
 is called the \emph{localization property with respect to $i$}.

\item The conjunction of properties (2) and (3) gives,
 for a smooth proper morphism $f$, an isomorphism $\pur_f:f_\sharp \rightarrow f_*(d)[2d]$.
 Under the stability and weak localization properties,
  when such an isomorphism exists,
  we say that $f$ is \emph{$\T$-pure}
   (or simply \emph{pure} when $\T$ is clear).\footnote{
   In fact, the isomorphism $\pur_f$ is canonical up to the choice
    of an orientation of $\T$.
   Moreover, we will define explicitly this isomorphism in the case
    where we need it -- see \eqref{df:oriented_purity_iso}.}
\end{itemize}
\end{num}
\begin{df} \label{df:recall_loc&pur_premotivic}
Consider the notations an assumptions above.

We say that $\T$ satisfies the \emph{localization property}
 (resp. \emph{weak localization property}) if it satisfies the
 localization property with respect to any closed immersion $i$
 (resp. which admits a smooth retraction).
 
We say that $\T$ satisfies the \emph{purity property}
 (resp. \emph{weak purity property}) if for any smooth proper morphism $f$
 (resp. for any scheme $S$ and integer $n>0$,
  the projection $p:\PP^n_S \rightarrow S$) is $\T$-pure.
\end{df}

Building on the construction of Deligne of $f_!$
 and on the work of Ayoub on cross functors,
 we have obtained in \cite[th.~2.4.50]{CD3} the following theorem
 which is little variation on a theorem of Ayoub:
\begin{thm} \label{thm:recall_carac_6functors}
Assume that $\sch$ is an adequate category of schemes
 in the sense of \cite[2.0]{CD3}.\footnote{Examples of 
 an adequate category: noetherian
  (resp. and/or finite dimensional, quasi-excellent, excellent)
	 schemes
	 (resp. $\Sigma$-schemes, eventually of finite type,
	  for a noetherian base scheme $\Sigma$).}
The following conditions on a well generated triangulated
 premotivic category $\T$
 equipped with an orientation and satisfying the homotopy property
 are equivalent:
\begin{enumerate}
\item[(i)] $\T$ satisfies Grothendieck 6 functors formalism.
\item[(ii)] $\T$ satisfies the stability and localization properties.
\end{enumerate}
\end{thm}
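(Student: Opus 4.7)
The plan is to treat the two directions asymmetrically, since (i)$\Rightarrow$(ii) is formal while (ii)$\Rightarrow$(i) is the substance of Ayoub's theorem.

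For (i)$\Rightarrow$(ii), stability is explicit in Definition \ref{df:recall_6_functors}, and localization follows directly from item (6) of that definition: the first recollement triangle $j_!j^!\to 1\to i_*i^*\to j_!j^![1]$, combined with the adjunction $(i^*,i_*)$ and the identity $i^*j_!=0$ (a consequence of $j^*i_*=0$ via base change), yields both the conservativity of the pair $(i^*,j^*)$ and the required homotopy exact sequence, which is one of the equivalent forms of localization in Proposition \ref{prop:equivalent_conditions_localization}.

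For (ii)$\Rightarrow$(i), I would follow Ayoub's strategy in roughly five steps. First, upgrade weak purity to full purity: use the localization property together with deformation to the normal cone to reduce the purity isomorphism for an arbitrary smooth proper morphism $f$ to the purity of a vector bundle over the base, and from there, by the orientation and homotopy, to the purity of a projective bundle, which is the weak purity already assumed (through the stability property and the projective bundle formula, which one establishes from the orientation and a Mayer--Vietoris computation on $\PP^n$). Second, define the pair $(f_!,f^!)$ on elementary building blocks: $j_!=j_\sharp$ and $j^!=j^*$ for open immersions $j$, and $i_!=i_*$ and $i^!=$ the right adjoint of $i_*$ (whose existence comes from Brown representability in the well generated setting together with localization, which forces $i_*$ to commute with small sums). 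Third, factor a general separated morphism of finite type $f$ by Nagata compactification as $f=p\circ j$ with $p$ proper and $j$ an open immersion, and set $f_!=p_*\circ j_\sharp$. Fourth, verify that this construction is independent of the chosen factorization, and that it extends to a pseudo-functor: this is the technical heart, handled via Ayoub's theory of cross functors, where the key coherence datum is built out of the purity isomorphism $\pur_f$ of Step~1 and the localization exact triangles. Fifth, deduce the base change and projection formulas from their counterparts on the building blocks (base change for $p_*$ in the proper case, which follows from localization and purity via a reduction to the smooth or closed case, together with the trivial base change for $j_\sharp$).

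The main obstacle is Step~4: proving independence of the compactification and constructing the $2$-functoriality coherently. The subtlety is that two Nagata compactifications of $f$ are related by a diagram of proper morphisms, and comparing the resulting composites $p_*j_\sharp$ requires a natural isomorphism between certain direct and exceptional image functors on a proper morphism between compactifications, which in turn comes down to the purity isomorphism for smooth proper maps plus proper base change. Ayoub's cross-functor formalism packages exactly these compatibilities, and once set up, the remaining verifications in Steps~2--5 are formal manipulations within the $6$-functor yoga. I would also note that the hypothesis that $\sch$ is adequate enters precisely in Step~3 to guarantee that Nagata compactifications exist in the chosen category of schemes, while the well generatedness of $\T$ is used throughout Step~2 (and for constructing $f^!$ in Step~3) through systematic use of Brown representability.
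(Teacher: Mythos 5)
The paper does not actually prove this theorem: it is stated as a recall of \cite[Th.~2.4.50]{CD3}, itself a variation on Ayoub's theorem, and the paper's contribution at this point is only to indicate the strategy (Deligne's construction of $f_!$ by compactification, plus Ayoub's cross-functor formalism). So there is no paper proof to compare your proposal against line by line; what I can do is check your sketch against the cited argument, and on that score it is essentially the right outline: (i)$\Rightarrow$(ii) is formal as you say, and for (ii)$\Rightarrow$(i) your five steps (purity for smooth proper maps via deformation to the normal cone; $f_!$ for open immersions and closed immersions; Nagata factorization; Ayoub's cross functors for coherence and independence of compactification; descent of base-change and projection formulas from the building blocks) correctly reproduce the structure of \cite[\S2.4]{CD3} and of Ayoub's original argument, and you correctly locate where the hypotheses of adequacy and well-generation enter.

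One small inaccuracy worth flagging: you write that the purity of projective bundles "is the weak purity already assumed". Weak purity is \emph{not} among the hypotheses of condition (ii); the only inputs are the orientation, the homotopy property, stability, and localization. Weak purity for $\PP^n_S/S$ has to be \emph{derived} — as you in fact indicate in the parenthetical that follows — by first establishing the projective bundle formula from the orientation (this is where Ayoub's Thom-space version differs, as the paper's remark after the theorem points out) and then combining it with the local purity isomorphism for closed immersions admitting a smooth retraction, which is itself a consequence of the localization hypothesis together with the deformation-to-the-normal-cone argument (this is the content of Propositions~\ref{prop:purity_isomorphism_sm_closed_pair} and \ref{prop:Thom_class} in the abstract setting). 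So the chain in your Step~1 should read: localization $\Rightarrow$ local purity for closed immersions with smooth retraction; orientation + homotopy $\Rightarrow$ projective bundle formula; the two together $\Rightarrow$ strong duality of $\PP^n_S$ (hence weak purity); and only then deformation + localization upgrade this to purity of arbitrary smooth proper $f$. With that correction the sketch is faithful to the cited proof.
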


\begin{rem}
In fact, J.~Ayoub in \cite{ayoub} proves this result
 with the following notable differences:
\begin{itemize}
\item One has to restrict to a category of quasi-projective schemes
 over a scheme which admits an ample line bundle.
\item The questions of orientation are not treated in \emph{op. cit.}:
 this means one has to replace the Tate twist in property (3) above
 by the tensor product with a \emph{Thom space}.
\item The theorem of Ayoub is more general in the sense that it does
 not require an orientation on the category $\T$.
 In particular, it applies to the stable homotopy category of schemes,
  which does not admit an orientation.
\end{itemize}
\end{rem}

Recall the following definition from \cite{CD3}:
\begin{df} \label{df:recall_motivic_tri_cat}
A triangulated premotivic category $\T$
 which satisfies the stability and localization properties,
 and in which the functor $f^!$ exists for any
 proper morphism $f$ in $\sch$,
 is called a triangulated motivic category.
\end{df}

\begin{num}
Consider an adjunction
$$
\varphi^*:\T \rightleftarrows \T':\varphi_*
$$
of triangulated premotivic categories which satisfies Grothendieck
 6 functors formalism.
Then it is proved in \cite{CD3} that $\varphi^*$ commutes with 
 $f_!$ for $f$ separated of finite type. In fact, $\varphi^*$ commutes
 with the left adjoint of the 6 functors formalism while $\varphi_*$
 commutes with the right adjoint functors.

On the other hand, there are canonical exchange transformations:
\begin{equation} \label{eq:fine_premotivic_morphism}
\begin{split}
&\varphi^*f_* \rightarrow f_*\varphi^*, f \text{ morphism in } \sch, \\
&\varphi^*f^! \rightarrow f^!\varphi^*, f \text{ separated morphism of finite type in } \sch, \\
&\lbrack\varphi^* \uHom(-,-)\rbrack
 \longrightarrow \lbrack\uHom(\varphi^*(-),\varphi^*(-))\rbrack.
\end{split}
\end{equation}
\end{num}
\begin{df} \label{df:recall_morphisms&6op}
In the above assumptions,
 one says the morphism $\varphi^*$ \emph{commutes with the 6 operations}
 if the exchange transformations \eqref{eq:fine_premotivic_morphism}
 are all isomorphisms.

If $\T$ is a sub-premotivic triangulated category of $\T'$,
 one simply says $\T$ is \emph{stable by the 6 operations} if the inclusion
 commutes with the 6 operations.
\end{df}
For example, if $\varphi^*$ is an equivalence
 of premotivic triangulated categories, then it commutes
 with the 6 operations.

\subsection{Complement: the absolute purity property}

In this section,
 we consider a triangulated premotivic category $\T$
 which satisfies the hypothesis and equivalent conditions
 of Theorem \ref{thm:recall_carac_6functors}.
 We assume in addition that the motives of the form
 $M_S(X)(i)$ for a smooth $S$-scheme $X$ and a Tate twist $i \in \ZZ$
 form a family of generators of the category $\T(S)$.

\begin{num} \label{num:recall_closed_pairs}
As usual, a closed pair is a pair of schemes $(X,Z)$
 such that $Z$ is a closed subscheme of $X$.
 We will consider abusively that
  to give such a closed pair is equivalent
  to give a closed immersion $i:Z \rightarrow X$.
 We will say $(X,Z)$ is regular when $i$ is regular.

A (cartesian) morphism of closed pairs $(f,g):(Y,T) \rightarrow (X,Z)$
 is a cartesian square of schemes:
\begin{equation} \label{eq:morph_closed_pair}
\xymatrix@=12pt{
T\ar_g[d]\ar@{^(->}^k[r] & Y\ar^f[d] \\
Z\ar@{^(->}^i[r] & X
}
\end{equation}
We will usually denote it by $f$ instead of $(f,g)$.

Note the preceding diagram induces a unique map $C_TY \rightarrow g^{-1}(C_ZX)$
 on the underlying normal cones. 
We say $f$ (or the above square) is \emph{transversal}
  when this map is an isomorphism.
\end{num}
\begin{df}
Let $(X,Z)$ be a closed pairs
 and $i:Z \rightarrow X$ be the canonical inclusion.
For any pair of integers $(n,m)$,
 we define the cohomology of $X$ with support in $Z$ as:
$$
H_Z^{n,m}(X):=\Hom_{\T(S)}(i_*(\un_Z),\un_S(m)[n]).
$$
\end{df}
Equivalently,
\begin{equation} \label{eq:df:supp_coh}
H_Z^{n,m}(X)=\Hom_{\T(Z)}(\un_Z,i^!(\un_S)(m)[n]).
\end{equation}
Moreover, using the first localization triangle for $\T$ with respect to $i$
 (point (6), Def. \ref{df:recall_6_functors}), we obtain it is
 contravariantly functorial with respect to morphism of closed pairs.

\begin{rem}
\begin{enumerate}
\item Using this localization triangle,
 this cohomology can be inserted in the usual localization long
 exact sequence (the twist $m$ being the same for each group).
\item Consider a morphism of closed pairs $f:(Y,T) \rightarrow (X,Z)$
 defined by a cartesian square of the form \eqref{eq:morph_closed_pair}.
 Using point (4) of Definition \ref{df:recall_6_functors}
 applied to this square,
 we can define the following \emph{exchange transformation}:
\begin{equation} \label{eq:ex!*}
Ex^{*!}:g^{*}i^! \xrightarrow{ad(f_*,f^*)}
 g^{*}i^!f_*f^* \xrightarrow{\sim}
 g^{*}g_*k^{!}f^*
 \xrightarrow{ad'(g_*,g^{*})} k^{!}f^*.
\end{equation}
One can check that the functoriality property of $H_Z^{**}(X)$
 is given by associating to a morphism
  $\rho:\un_Z \rightarrow i^!(\un_Z)(i)[n]$
  the composite map:
$$
\un_T \xrightarrow{g^*(\rho)} g^*i^!(\un_Z)(i)[n]
 \xrightarrow{Ex^{*!}} k^!(\un_T)(i)[n]
$$
through the identification \eqref{eq:df:supp_coh}.
\end{enumerate}
\end{rem}

According to formula \eqref{eq:df:supp_coh},
 the bigraded cohomology group $H^{**}(X)$ admits a structure
 of a bigraded module over the cohomology ring $H^{**}(Z)$.
 According to the preceding remark,
  this module structure in compatible with pullbacks.
\begin{df}
Let $(X,Z)$ be a regular closed pair of codimension $c$.
A \emph{fundamental class} of $Z$ in $X$ is an element 
$$
\eta_X(Z) \in H^{2c,c}_Z(X)
$$
which is a base of the $H^{**}(Z)$-module $H^{**}_Z(X)$.
\end{df}
In other words,
 the canonical map:
\begin{equation} \label{eq:purity_iso_rel_coh}
H^{**}(Z) \rightarrow H^{**}_Z(X) \ , \quad \lambda \mapsto \lambda.\eta_X(Z)
\end{equation}
is an isomorphism. Note that if such a fundamental class exists,
 it is unique up to an invertible element of $H^{00}(Z)$.

\begin{prop} \label{prop:carac_fdl_class}
Consider a regular closed immersion $i:Z \rightarrow X$
 of codimension $c$ and a morphism in $\T(Z)$:
$$
\eta_X(Z):\un_Z \rightarrow i^!(\un_X)(c)[2c].
$$
The following conditions are equivalent:
\begin{enumerate}
\item[(i)] The map $\eta_X(Z)$ is an isomorphism.
\item[(ii)] For any smooth morphism $f:Y \rightarrow X$,
 the cohomology class $f^*(\eta_X(Z))$,
  in the group $H^{2c,c}_{f^{-1}(T)}(Y)$, is a fundamental class.
\end{enumerate}
\end{prop}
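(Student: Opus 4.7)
The plan is to handle both implications by exploiting the smooth base change isomorphism $Ex^{*!}\colon g^*i^! \to k^!f^*$ associated to a Cartesian square
\[
\xymatrix@=12pt{T\ar^k[r]\ar_g[d]& Y\ar^f[d]\\Z\ar^i[r]& X}
\]
with $f$ smooth of relative dimension $d$. The first task is to check that this exchange transformation is invertible: the weak purity property of Definition~\ref{df:recall_6_functors}(3) supplies isomorphisms $f^! \simeq f^*(d)[2d]$ and $g^! \simeq g^*(d)[2d]$ (the base change $g$ being smooth of the same relative dimension $d$), while the 2-functoriality of $(-)^!$ yields $k^!f^! \simeq (fk)^! = (ig)^! \simeq g^!i^!$, and combining these gives $k^!f^* \simeq g^*i^!$. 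The implication (i)~$\Rightarrow$~(ii) is then essentially formal: applying the exact functor $g^*$ to the isomorphism $\eta_X(Z)$ and post-composing with $Ex^{*!}$ produces an isomorphism $\un_T \to k^!(\un_Y)(c)[2c]$ in $\T(T)$ which, by the functoriality of cohomology with support described after~\eqref{eq:ex!*}, is precisely the morphism corresponding to $f^*(\eta_X(Z))$ under the identification~\eqref{eq:df:supp_coh}. Post-composition on Homs then shows that $f^*(\eta_X(Z))$ is a fundamental class of $T$ in $Y$.

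For (ii)~$\Rightarrow$~(i), the goal is to show that $\eta_X(Z)$ is an isomorphism in $\T(Z)$. By the generating hypothesis on $\T$, this reduces to checking that, for every smooth $p\colon T\to Z$ and every $(n,m)\in\ZZ^2$, the map
\[
\Hom_{\T(Z)}(p_\sharp\un_T(n)[m],\,\un_Z) \longrightarrow \Hom_{\T(Z)}(p_\sharp\un_T(n)[m],\,i^!(\un_X)(c)[2c])
\]
induced by $\eta_X(Z)$ is bijective. The $(p_\sharp,p^*)$-adjunction rewrites this as a map between Hom groups in $\T(T)$. Whenever $p$ arises as the pullback of a smooth $f\colon Y \to X$, the exchange isomorphism $Ex^{*!}$ identifies the target with $H^{**}_T(Y)$ (up to twist and shift) and the resulting map with multiplication by $f^*(\eta_X(Z))$, which is an isomorphism by hypothesis~(ii).

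The remaining, and main, obstacle is to extend this argument to smooth $Z$-schemes $T$ which do not globally arise as the pullback of a smooth $X$-scheme. The strategy is to exploit that the condition in~(i) is Zariski-local on $Z$ (and correspondingly on $X$, after shrinking $X$ to an open neighborhood of $Z$), using the localization property bundled with the 6 functors formalism. Locally one can lift smooth morphisms along the closed immersion $i$: étale-locally on $T$ one writes $T\to\AA^n_Z$ as an étale map, lifts $\AA^n_Z$ trivially to $\AA^n_X$, and lifts the étale presentation along the Henselian closed immersion $i$ (cf.\ EGA~IV$_4$, 18.1.2), producing a Zariski cover of $T$ by opens of the form $Y\times_X Z$ with $Y/X$ smooth. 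A Mayer--Vietoris descent argument then glues the local bijections into the desired global one. The delicate point is verifying that the various exchange transformations are compatible with the boundary maps of this Mayer--Vietoris sequence, which is standard but requires some care in the premotivic setting.
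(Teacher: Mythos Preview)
Your treatment of (i)$\Rightarrow$(ii) via the smooth base change isomorphism $Ex^{*!}$ is fine and matches what the paper does implicitly.

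For (ii)$\Rightarrow$(i), however, you have introduced a difficulty that is not there, and your proposed resolution of it does not work. You test $\eta_X(Z)$ against the generators $M_Z(T)(n)$ of $\T(Z)$ for \emph{arbitrary} smooth $T/Z$, and then face the problem of lifting such a $T$ to a smooth $X$-scheme. Your proposed lifting argument invokes EGA~IV$_4$~18.1.2, which requires the pair $(X,Z)$ (or rather $(\AA^n_X,\AA^n_Z)$) to be Henselian; but a regular closed immersion has no reason to be Henselian, so this step fails as written. The Mayer--Vietoris patching would also need the compatibility checks you flag as ``delicate,'' but the argument never gets that far.

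The paper avoids this entirely. The point you are missing is that the localization property (part of the six-functors package assumed throughout this appendix) forces $i^*:\T(X)\to\T(Z)$ to be essentially surjective. Hence the images under $i^*$ of a generating family of $\T(X)$ already generate $\T(Z)$: the objects $M_Z(Y\times_X Z)(n)$ for $Y/X$ smooth and $n\in\ZZ$ suffice. For these particular generators your own computation applies directly (they come from smooth $X$-schemes by construction), and no lifting or gluing is needed. So the correct fix is a one-line observation about generators, not a local lifting argument.
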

\begin{proof}
We first remark that
 for any smooth $X$-scheme $Y$, $T=Y \times_X Z$,
 and for any couple of integers $(n,r) \in \ZZ^2$,
 the map induced by $\eta_X(Z)$:
$$
\Hom(M_Z(T)(-r)[-n],\un_Z)
 \rightarrow \Hom(M_Z(T)(-r)[-n],i^!(\un_X)(c)[2c])
$$
is isomorphic to the map
$$
H^{n,r}(T) \rightarrow H^{n,r}_T(Y), \lambda \mapsto \lambda.\eta_T(Y).
$$
Then the equivalence between (i) and (ii) follows from the fact
 the family of motives of the form $M_Z(Y \times_Z X)(-r)[-n]$ 
 generates the category $\T(Z)$ because:
\begin{itemize}
\item We have assumed $\T$ it is generated by Tate twist
 as a triangulated premotivic category.
\item $i^*$ is essentially surjective according to the localization property.
\end{itemize}
\end{proof}

Using the arguments\footnote{In fact,
 if $\T$ is equipped with a premotivic morphism $\Der(\mathrm{PSh}(-,R)) \rightarrow \T$,
 one can readily apply all the results of \cite{Deg8} to the category $\T(S)$
 for any fixed base scheme $S$. All the premotivic triangulated categories
 considered in this paper will satisfy this hypothesis.} of \cite{Deg8}, 
 one obtains that the orientation $c_1:\Pic \rightarrow H^{2,1}$
 can be extended canonically to a full theory of Chern classes
 and deduced the projective bundle formula. 
 One gets in particular, following Paragraph 4.4 of \emph{loc. cit.}:
\begin{prop} \label{prop:Thom_class}
Let $E$ be a vector bundle over a scheme $X$,
 $s:X \rightarrow E$ the zero section.
Then $s$ admits a canonical\footnote{Depending only on the orientation $c_1$ of $\T$.}
 fundamental class.
\end{prop}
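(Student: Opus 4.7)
The plan is to construct the fundamental class of the zero section via the projective completion of $E$ and the projective bundle formula, then verify the characterization of Proposition~\ref{prop:carac_fdl_class}. Write $r$ for the rank of $E$ (which is a locally constant function on $X$; we treat $r$ as constant by working on each connected component). Consider the projective completion $\bar E := \PP(E \oplus \cO_X) \xrightarrow{\bar p} X$, together with the open immersion $j : E \hookrightarrow \bar E$ whose complement is the hyperplane at infinity $i_\infty : \PP(E) \hookrightarrow \bar E$. The zero section $s : X \to E$ composes with $j$ to give a regular closed immersion $\bar s : X \to \bar E$ of codimension $r$, and by excision (the localization triangle of Definition~\ref{df:recall_6_functors}(6) applied to $i_\infty$) the pullback induces an isomorphism $H^{**}_X(\bar E) \xrightarrow{\sim} H^{**}_X(E)$, so constructing $\eta_E(X)$ is the same as constructing $\eta_{\bar E}(X)$.

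Next, I would invoke the projective bundle formula, which, as recalled just before the statement, follows from the orientation $c_1$ of $\T$ by the arguments of Paragraph~4.4 of \cite{Deg8}. Letting $\lambda$ denote the tautological line bundle on $\bar E$ and $\xi := c_1(\lambda) \in H^{2,1}(\bar E)$, this formula expresses $H^{**}(\bar E)$ as a free $H^{**}(X)$-module with basis $1, \xi, \dots, \xi^r$. From this I would extract the canonical class
\[
\eta := \sum_{i=0}^{r} (-1)^i \bar p^*(c_{r-i}(E)) \cdot \xi^i \in H^{2r,r}(\bar E),
\]
and then observe that its restriction to $\PP(E) = V(i_\infty)$ vanishes: indeed, on $\PP(E)$ the restriction of $\lambda$ is the tautological line bundle of $\PP(E)$ and the Whitney sum formula together with the defining relation of the Chern classes on a projective bundle force $i_\infty^*(\eta) = 0$. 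Consequently, the long exact sequence of cohomology with support for the pair $(\bar E, \PP(E))$ lifts $\eta$ uniquely to a class $\eta_{\bar E}(X) \in H^{2r,r}_X(\bar E)$, and, via the excision isomorphism above, to the desired class $\eta_E(X) \in H^{2r,r}_X(E)$. By naturality of Chern classes and of the projective bundle formula, this construction depends only on the orientation $c_1$ and commutes with pullbacks of vector bundles along arbitrary morphisms.

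Finally, to conclude that $\eta_E(X)$ is a fundamental class in the sense of our definition, I would verify condition~(ii) of Proposition~\ref{prop:carac_fdl_class}. Given a smooth morphism $f : Y \to X$, the pullback vector bundle $E_Y := f^*E$ has zero section $s_Y : Y \to E_Y$ with $s_Y = f^*(s)$, and the projective completion $\bar E_Y$ together with its tautological line bundle is the pullback of $(\bar E, \lambda)$. Thus $f^*(\eta_E(X))$ is obtained from the analogous construction for $E_Y$, so it suffices to check that for \emph{every} base $X$ and every rank $r$ vector bundle $E/X$, the class $\eta_E(X)$ is a free generator of $H^{**}_X(E)$ as an $H^{**}(X)$-module. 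This last step is a direct computation: combining the excision isomorphism $H^{**}_X(E) \simeq H^{**}_X(\bar E)$ with the long exact sequence
\[
\cdots \to H^{**}_X(\bar E) \to H^{**}(\bar E) \to H^{**}(\PP(E)) \to \cdots
\]
and the two instances of the projective bundle formula (for $\bar E$ and for $\PP(E)$), one sees that $H^{**}_X(\bar E)$ is free of rank one over $H^{**}(X)$, generated by the unique class lifting $\eta$, and the $H^{**}(X)$-action agrees with multiplication via $\bar p^*$. The main obstacle is the bookkeeping of this last step: one must check that the comparison of the three modules through the projective bundle formula is compatible with the module structure provided by \eqref{eq:purity_iso_rel_coh}, which amounts to a careful application of the projection formula for $\bar p$ and a verification that the class $\eta$ does lift to a generator (rather than to a multiple of one). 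Once this verification is done, Proposition~\ref{prop:carac_fdl_class} yields the desired isomorphism $\eta_E(X) : \un_X \to s^!(\un_E)(r)[2r]$ in $\T(X)$, depending canonically on $c_1$.
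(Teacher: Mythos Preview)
Your argument is correct and is precisely the construction the paper invokes: the paper gives no proof of its own here, only a pointer to Paragraph~4.4 of \cite{Deg8}, where the Thom class is built exactly as you describe, via the projective bundle formula on $\bar E=\PP(E\oplus\cO_X)$ and the class $\sum_i(-1)^i\,\bar p^*c_{r-i}(E)\cdot\xi^i$. One point of phrasing to tighten: the lift of $\eta$ to $H^{2r,r}_X(\bar E)$ uses the localization sequence for the closed pair $(\bar E,X)$ (with $X$ embedded via the zero section $\bar s$), not for $(\bar E,\PP(E))$; the connection with $\PP(E)$ is that the open complement $\bar E\setminus\bar s(X)$ is an $\AA^1$-bundle over $\PP(E)$, so by homotopy invariance restriction to that complement identifies with $i_\infty^*$, and surjectivity of $i_\infty^*$ (immediate from comparing the two projective bundle formulas for $\bar E$ and $\PP(E)$) then yields both existence and uniqueness of the lift. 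Your appeal to condition~(ii) of Proposition~\ref{prop:carac_fdl_class} with $f$ smooth over the base $X$ rather than over the total space $E$ is also fine: since $s^*p^*=\mathrm{id}$, the motives $M_X(W)(r)$ for $W\to X$ smooth already generate $\T(X)$, which is exactly what the proof of that proposition requires.
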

This is the \emph{Thom class} defined in \emph{loc. cit.}
In what follows we will denote it by $\mathrm{th}(E)$,
 as an element of $H^{2c,c}_X(E)$.

\begin{num}
Let $(X,Z)$ be a closed pair with inclusion $i:Z \rightarrow X$.
Assume $i$ is a regular closed immersion of codimension $c$.

Following the classical construction,
 one define the \emph{deformation space} $D_ZX$ attached to $(X,Z)$
 as the complement of the blow-up $B_Z(X)$ in $B_Z(\AA^1_X)$.
 Note it contains $\AA^1_Z$ as a closed subscheme.

This space is fibered over $\AA^1$, with fiber over $1$ (resp. $0$) being
 the scheme $X$ (resp. the normal bundle $N_ZX$).
 In particular, we get morphisms of closed pairs:
\begin{equation} \label{eq:recall_def_space}
(X,Z) \xrightarrow{d_1} (D_ZX,\AA^1_Z) \xleftarrow{d_0} (N_ZX,Z)
\end{equation}
where $d_0$ (resp. $d_1$) means inclusion of the fiber over $0$
 (resp. $1$). It is important to note that $d_0$ and $d_1$ are transversal.

For the next statement,
 we denote by $\mathscr P_\mathrm{reg}$ the class of closed pairs
 $(X,Z)$ in $\sch$ such that $X$ and $Z$ are regular.
\end{num}
\begin{thm} \label{thm:carac_abs_purity}
The following conditions are equivalent:
\begin{enumerate}
\item[(i)] There exists a family
$$
\big(\eta_X(Z)\big)_{(X,Z) \in \mathscr P_\mathrm{reg}}
$$
such that:
\begin{itemize}
\item For any closed pair $(X,Z)$,
 $\eta_X(Z)$ is a fundamental class of $(X,Z)$.
\item For any transversal morphism $f:(Y,T) \rightarrow (X,Z)$
 of closed pairs in $\mathscr P_\mathrm{reg}$,
 $f^*\eta_X(Z)=\eta_Y(T)$.
\end{itemize}
\item[(ii)] For any closed pair $(X,Z)$ in $\mathcal P_\mathrm{reg}$,
 the deformation diagram \eqref{eq:recall_def_space}
 induces isomorphisms of bigraded cohomology groups:
$$
H_Z^{**}(X) \xleftarrow{d_1^*} H_{\AA^1_Z}^{**}(D_ZX)
 \xrightarrow{d_0^*} H_{Z}^{**}(N_ZX)
$$
%\item[(iii)]
%For any closed pairs $(X,Z)$ and $(Y,T)$ in $\mathcal P_\mathrm{reg}$,
% for any transversal morphism $(Y,T) \rightarrow (X,Z)$
% corresponding to a square of the form \eqref{eq:morph_closed_pair},
% the exchange transformation \eqref{eq:ex!*}
% induces an isomorphism:
%$$
%g^*i^!(\un_Z) \xrightarrow{Ex^{*!}} k^!(\un_Y).
%$$
\end{enumerate}
\end{thm}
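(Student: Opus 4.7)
The plan is to establish both implications using the deformation to the normal cone in tandem with the canonical Thom class of Proposition \ref{prop:Thom_class}: the Thom class provides the only concrete source of fundamental classes at our disposal, and the deformation diagram \eqref{eq:recall_def_space} allows us to transport it to arbitrary regular closed pairs.

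For the direction (i) $\Rightarrow$ (ii), the key observation is that whenever $\eta_X(Z)$ is a fundamental class, \eqref{eq:purity_iso_rel_coh} identifies $H^{**}_Z(X)$ with $H^{**}(Z)$ as an $H^{**}(Z)$-module. Applying this to each of the three regular closed pairs $(X,Z)$, $(D_ZX, \AA^1_Z)$, $(N_ZX, Z)$ appearing in \eqref{eq:recall_def_space}, and combining with the assumed compatibility $d_1^*\eta_{D_ZX}(\AA^1_Z) = \eta_X(Z)$ and $d_0^*\eta_{D_ZX}(\AA^1_Z) = \eta_{N_ZX}(Z)$ under the transversal morphisms $d_0, d_1$, one obtains commutative squares identifying the maps $d_0^*, d_1^*$ on cohomology with support with the restriction maps $d_0^*, d_1^*: H^{**}(\AA^1_Z) \to H^{**}(Z)$ on the ambient cohomology of the closed subschemes. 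Since $d_0$ and $d_1$ are sections of the canonical projection $\AA^1_Z \to Z$, the homotopy property asserts that these latter maps are isomorphisms, and (ii) follows.

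For the converse (ii) $\Rightarrow$ (i), the candidate fundamental class is
$$\eta_X(Z) := d_1^* \bigl( (d_0^*)^{-1}\,\mathrm{th}(N_ZX) \bigr) \in H^{2c,c}_Z(X),$$
where $\mathrm{th}(N_ZX)$ is the canonical Thom class from Proposition \ref{prop:Thom_class}. The strategy is to reverse the previous reasoning: show that the deformation isomorphisms $d_0^*$ and $d_1^*$ are $H^{**}(Z)$-linear, using homotopy invariance to reconcile the $H^{**}(\AA^1_Z)$-action on the middle term with the $H^{**}(Z)$-action on the two extremes. Granting this, the $H^{**}(Z)$-linear isomorphism $\lambda \mapsto \lambda \cdot \mathrm{th}(N_ZX)$ from $H^{**}(Z)$ to $H^{**}_Z(N_ZX)$ transports through the zig-zag $d_1^* (d_0^*)^{-1}$ to the map $\lambda \mapsto \lambda \cdot \eta_X(Z)$ from $H^{**}(Z)$ to $H^{**}_Z(X)$, which is therefore an isomorphism, meaning $\eta_X(Z)$ is a fundamental class.

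Compatibility of the family $\bigl(\eta_X(Z)\bigr)_{\mathscr P_\mathrm{reg}}$ with transversal morphisms $f:(Y,T) \to (X,Z)$ of regular closed pairs will then follow from two naturality facts: the deformation-to-the-normal-cone construction is functorial on cartesian squares and sends transversal morphisms to transversal morphisms (so that the whole diagram \eqref{eq:recall_def_space} for $(Y,T)$ maps to that of $(X,Z)$ over $f$), and the Thom class is natural under the canonical isomorphism of vector bundles $N_T Y \simeq g^*(N_Z X)$ provided by transversality. The hard part will be the verification of the $H^{**}(Z)$-linearity of the zig-zag $d_1^*\,(d_0^*)^{-1}$: this requires carefully tracking, via the exchange transformation \eqref{eq:ex!*}, how the module action on each of the three cohomology-with-support groups transforms under the pullbacks $d_0, d_1$, and then matching up the relevant module structures using the homotopy equivalences $H^{**}(\AA^1_Z) \cong H^{**}(Z)$ on the cohomology of the closed subschemes.
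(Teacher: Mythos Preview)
Your proposal is correct and follows essentially the same route as the paper: for (i)$\Rightarrow$(ii) one uses the isomorphisms \eqref{eq:purity_iso_rel_coh} for each of the three closed pairs together with transversality of $d_0,d_1$ and the homotopy property; for (ii)$\Rightarrow$(i) one sets $\eta_X(Z)=d_1^*(d_0^*)^{-1}\mathrm{th}(N_ZX)$, checks it is a fundamental class via homotopy invariance, and deduces transversal-base-change compatibility from the functoriality of the deformation space. One remark: the $H^{**}(Z)$-linearity of $d_0^*$ and $d_1^*$ that you flag as ``the hard part'' is actually immediate from the general compatibility of the module structure on $H^{**}_Z(X)$ with pullbacks (noted just before the definition of fundamental class) together with the homotopy isomorphism $H^{**}(\AA^1_Z)\simeq H^{**}(Z)$; no delicate analysis of the exchange transformation \eqref{eq:ex!*} is required.
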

\begin{proof}
The fact (i) implies (ii) follows from the homotopy property of $\T$,
 using the isomorphism of type \eqref{eq:purity_iso_rel_coh}
 and the fact the morphisms of closed pairs $d_0$ and $d_1$
 are transversal.

Reciprocally, given the isomorphisms which appear in (ii),
 one can put $\eta_X(Z)=d_1^*(d_0^*)^{-1}(\mathrm{th}(N_ZX))$,
  using Proposition \ref{prop:Thom_class}.
 This is a fundamental class for $(X,Z)$ using once again
 the homotopy property for $\T$.
 The fact these classes are stable by transversal base change follows
 from the functoriality of the deformation diagram \eqref{eq:recall_def_space}
 with respect to transversal morphisms.  
\end{proof}

\begin{df} \label{df:absolute_purity}
We will say that $\T$ satisfies the \emph{absolute purity property}
 if the equivalent properties
 of the preceding propositions are satisfied.
\end{df}

\begin{ex}
\begin{enumerate}
\item The motivic category of Beilinson motives $\DMB$
 satisfies the absolute purity property according
 to \cite[Th. 14.4.1]{CD3}.
\item According to the theorem of Gabber \cite{Fuj},
 the motivic category defined by the derived
 categories of \'etale sheaves of $\Lambda$-modules
 $X\mapsto\Der(X_\et,\Lambda)$ satisfies the
 absolute purity property for any quasi-excellent
 scheme, with $\Lambda$ a finite ring of order prime
 to the residue characteristics of $X$.
\end{enumerate}
\end{ex}

\subsection{Torsion, homotopy and \'etale descent}

Recall the following result, essentially proved in \cite{V1},
 but formulated in the premotivic triangulated category
 of Example~\ref{ex:etale_and_A^1_premotivic}:
\begin{prop}\label{prop:artin-schreier}
For any scheme $S$ of characteristic $p>0$,
 the category $\DMtee(S,\ZZ)$ is $\ZZ[1/p]$-linear.
\end{prop}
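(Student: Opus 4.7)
The plan is to exploit the Artin--Schreier short exact sequence of étale sheaves of abelian groups on $\sm_{S,\et}$,
$$0\to(\ZZ/p)_S\to\Ga\xrightarrow{F-1}\Ga\to 0,$$
which exists precisely because $S$ has characteristic $p$; here $\Ga(X)=\cO_X(X)$ is the additive group sheaf and $F$ is the absolute Frobenius $x\mapsto x^p$. Exactness follows from the fact that $F-1$ is an étale surjection (it is even an étale $(\ZZ/p)$-Galois cover) with kernel the constant sheaf $(\ZZ/p)_S$. This yields a distinguished triangle in $\Der(\sh_\et(\sm_S,\ZZ))$, and hence in $\DMtee(S,\ZZ)$:
$$(\ZZ/p)_S\to\Ga\xrightarrow{F-1}\Ga\xrightarrow{+1}.$$

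The key step is to show that $F-1$ becomes an isomorphism in $\DMtee(S,\ZZ)$; granted this, the triangle forces $(\ZZ/p)_S\simeq 0$, and then the universal distinguished triangle $\un_S\xrightarrow{p}\un_S\to(\ZZ/p)_S\xrightarrow{+1}$ shows that $p\cdot\mathrm{id}_{\un_S}$ is invertible. Since $\un_S$ is the tensor unit and $\Hom(M,N)$ is naturally a module over $\End(\un_S)$, multiplication by $p$ becomes invertible on every object, so $\DMtee(S,\ZZ)$ is $\ZZ[1/p]$-linear.

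To prove that $F-1$ is an $\AA^1$-equivalence, I would follow Voevodsky's strategy in \cite{V1}. The natural homotopy $\AA^1\times\Ga\to\Ga$, $(t,x)\mapsto tx$, is $\ZZ$-bilinear in both arguments and is realized by the quotient map $\ZZ[h_{\AA^1}]\twoheadrightarrow\Ga$; together with the $\AA^1$-equivalence $\ZZ[h_{\AA^1}]\simeq\un_S$, it identifies the two sections $i_0,i_1\colon\un_S\to\Ga$ classifying $0,1\in\Ga(S)$ inside $\DMtee(S,\ZZ)$. Post-composing with the scheme-theoretic multiplication $\mu\colon\Ga\otimes_\ZZ\Ga\to\Ga$ yields the equality $0=\mathrm{id}_\Ga$ in $\DMtee(S,\ZZ)$, i.e. $\Ga\simeq 0$, and the conclusion about $F-1$ then follows a fortiori.

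The main obstacle is this last step: verifying that the $\AA^1$-homotopy argument on $\Ga$ genuinely collapses $\Ga$ to zero, rather than some subtle failure in the derived setting (notably the interplay between $\Ga\otimes_\ZZ\Ga$ and $\Ga\otimes^\derL_\ZZ\Ga$, given that $\Ga$ is killed by $p$). The characteristic-$p$ hypothesis enters the argument at exactly one place — the availability of the Artin--Schreier cover — which is what converts the vanishing of $\Ga$ in $\DMtee$ into the vanishing of $(\ZZ/p)_S$ and thus into $\ZZ[1/p]$-linearity.
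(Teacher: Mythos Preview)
Your approach is essentially the same as the paper's: use the Artin--Schreier short exact sequence to get a distinguished triangle, observe that $\Ga$ vanishes in $\DMtee(S,\ZZ)$, conclude $(\ZZ/p)_S\simeq 0$, and hence that $p$ acts invertibly. The paper simply asserts that ``$\Ga$ is a strongly contractible sheaf'' and moves on, whereas you spell out a justification.

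Your justification is correct but more convoluted than necessary, and the worry you flag is not a real obstacle. You do not need to pass through $\Ga\otimes_\ZZ\Ga$ (or its derived version) at all. The multiplication map $(t,x)\mapsto tx$ is $\ZZ$-linear in $x$ for each $t$, so it directly defines a morphism of abelian sheaves $\ZZ_S(\AA^1_S)\otimes_\ZZ\Ga\to\Ga$ (here $\ZZ_S(\AA^1_S)$ is the free abelian sheaf on the representable sheaf of $\AA^1_S$, which is termwise free over $\ZZ$, so no Tor issue arises). Restricting along $i_0,i_1:\un_S\to\ZZ_S(\AA^1_S)$ gives the zero map and the identity on $\Ga$, respectively; since $\ZZ_S(\AA^1_S)\to\un_S$ is an $\AA^1$-equivalence, these two restrictions coincide in $\DMtee(S,\ZZ)$, whence $\Ga\simeq 0$. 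This is exactly the standard ``strong contractibility'' of $\Ga$, and it bypasses the $\otimes$ versus $\otimes^\derL$ concern entirely.
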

\begin{proof}
The Artin-Schreier exact sequence (\cite[IX, 3.5]{SGA4})
 can be written as an exact sequence of sheaves in $\sh_\et(X,\ZZ)$:
$$
0 \rightarrow (\ZZ/p\ZZ)_S \rightarrow
 \Ga \xrightarrow{F-1} \Ga \rightarrow 0
$$
where $F$ is the Frobenius morphism.
But $\Ga$ is a strongly contractible sheaf,
 thus $F-1$ induces an isomorphism in the $\AA^1$-localized derived
 category $\DMtee(S,\ZZ)$. This implies $(\ZZ/p\ZZ)_S=0$ in the
 latter category which in turn implies $p.Id$ is an isomorphism,
 as required.
\end{proof}

\begin{num}
Let $\T$ be a triangulated premotivic category.
If $\T$ is obtained by a localization of
 the derived category of an abelian premotivic category,
 it comes with a canonical premotivic adjunction
$$
D(\mathrm{PSh}(S,\ZZ)) \rightleftarrows \T.
$$
Then, the fact $\T$ satisfies the homotopy and the
 \'etale descent properties is equivalent
 to the fact that the previous adjunction induces
 a premotivic adjunction of the form:
\begin{equation} \label{eq:DMee_initial}
\DMtee(-,\ZZ) \rightleftarrows \T
\end{equation}
-- see \cite[5.1.2, 5.2.10, 5.2.19, and 5.3.23]{CD3}.
\end{num}
\begin{cor}\label{cor:etale_descent&p-torsion}
Let $\T$ be a premotivic triangulated category
equipped with an adjunction of the form \eqref{eq:DMee_initial}.
Then for any scheme $S$ of characteristic $p>0$,
 $\T(S)$ is $\ZZ[1/p]$-linear.
\end{cor}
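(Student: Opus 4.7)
The plan is to transport the $\ZZ[1/p]$-linearity of $\DMtee(S,\ZZ)$, established in Proposition~\ref{prop:artin-schreier}, to $\T(S)$ via the left adjoint $\varphi^*:\DMtee(S,\ZZ)\to\T(S)$ of the adjunction \eqref{eq:DMee_initial}. The key observation is that $\varphi^*$ is a triangulated monoidal functor by the definition of a morphism of premotivic categories (Definition~\ref{df:recall_premotivic_adjunction}), so it sends the unit object of $\DMtee(S,\ZZ)$ to the unit object of $\T(S)$.

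First I would recall that $\T(S)$ being $\ZZ[1/p]$-linear simply means that multiplication by $p$ is invertible on every object, which by a standard trick reduces to proving that $p\cdot\id_{\un_{\T(S)}}$ is an isomorphism in $\T(S)$: for any $M\in\T(S)$, the endomorphism $p\cdot\id_M$ identifies under the unit isomorphism $M\simeq\un_{\T(S)}\otimes M$ with $(p\cdot\id_{\un_{\T(S)}})\otimes\id_M$, and tensor product preserves isomorphisms.

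Next I would invoke Proposition~\ref{prop:artin-schreier} to conclude that $p\cdot\id_{\un_{\DMtee(S,\ZZ)}}$ is an isomorphism in $\DMtee(S,\ZZ)$. Applying the monoidal triangulated functor $\varphi^*$ yields the isomorphism $p\cdot\id_{\varphi^*(\un_{\DMtee(S,\ZZ)})}=p\cdot\id_{\un_{\T(S)}}$ in $\T(S)$, which combined with the previous paragraph proves the corollary.

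There is essentially no obstacle here, since the work has already been done in Proposition~\ref{prop:artin-schreier}; the only thing to verify carefully is that $\varphi^*$ preserves the unit object, which is part of the data of a monoidal functor and hence automatic from Definition~\ref{df:recall_premotivic_adjunction}.
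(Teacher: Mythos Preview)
Your argument is correct and is exactly the intended one: the paper states this corollary without proof, as it follows immediately from Proposition~\ref{prop:artin-schreier} by applying the monoidal functor $\varphi^*$ to the unit object. Your reduction to invertibility of $p$ on the unit and the observation that $\varphi^*$ preserves the unit make the implicit step explicit.
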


\begin{prop} \label{prop:et+htp&torsion}
Let $p$ be a prime number and $n=p^a$ be a power of $p$.
Let $\T$ be a premotivic triangulated category equipped with
 a premotivic adjunction of the form:
$$
t^*:\DMtee(-,\ZZ/n\ZZ) \rightleftarrows \T:t_*.
$$
Let $S$ be a scheme. We put $S[1/p]=S \times \Spec(\ZZ[1/p])$
and consider the canonical open immersion $j:S[1/p] \rightarrow S$.
Then the functor
$$
j^*:\T(S) \rightarrow \T(S[1/p])
$$
is an equivalence of categories.
\end{prop}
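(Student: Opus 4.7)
The closed complement of $j\colon S[1/p]\to S$ is a closed immersion $i\colon Z\to S$ with $Z$ the closed subscheme cut out by $p$; in particular $Z$ is a scheme of characteristic $p$. My strategy is to prove the analogous statement in $\DMtee(-,\ZZ/n\ZZ)$ by showing that $\DMtee(Z,\ZZ/n\ZZ)$ is the zero category and invoking the localization property of Theorem~\ref{thm:locetalewithouttransfers}, and then to transport the resulting unit isomorphism across $t^*$ and extend it from $\un_S$ to arbitrary objects by the smooth projection formula.

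\textbf{Step 1: vanishing.} I would first show $\DMtee(Z,\ZZ/n\ZZ)=0$. By Proposition~\ref{prop:artin-schreier}, applied to $Z$ which is of characteristic $p$, the category $\DMtee(Z,\ZZ)$ is $\ZZ[1/p]$-linear, so multiplication by $p$ is invertible on the unit object. The reduction-of-coefficients functor $\DMtee(-,\ZZ)\to\DMtee(-,\ZZ/n\ZZ)$ is a monoidal premotivic morphism which sends unit to unit, so $p$ still acts invertibly on $\un_Z\in\DMtee(Z,\ZZ/n\ZZ)$. But this category is $\ZZ/n\ZZ$-linear with $n=p^a$, so $p^a\cdot\un_Z=0$; combined with the invertibility of $p$ this forces $\un_Z=0$, and as every object is a module over the unit, the whole category vanishes.

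\textbf{Step 2: localization transfer.} By Theorem~\ref{thm:locetalewithouttransfers} applied with coefficients $\ZZ/n\ZZ$, the premotivic category $\DMtee(-,\ZZ/n\ZZ)$ satisfies localization with respect to $i$. Applied to $\un_S$, the localization triangle reads
\[
j_\sharp j^*\un_S\;\longrightarrow\;\un_S\;\longrightarrow\;i_*i^*\un_S\;\longrightarrow\; j_\sharp j^*\un_S[1],
\]
and Step~1 gives $i^*\un_S=\un_Z=0$, so the counit $j_\sharp j^*\un_S\to\un_S$ is an isomorphism in $\DMtee(S,\ZZ/n\ZZ)$. Since $t^*$ is a premotivic morphism it commutes with $j_\sharp$ and $j^*$ (as $j$ is smooth) and sends unit to unit, so applying $t^*$ yields the isomorphism $j_\sharp j^*\un_S\xrightarrow{\sim}\un_S$ in $\T(S)$.

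\textbf{Step 3: conclusion.} For arbitrary $M\in\T(S)$, the smooth projection formula for $j$ (part of the premotivic structure) gives
\[
j_\sharp j^*M\;\simeq\;j_\sharp(j^*\un_S\otimes j^*M)\;\simeq\;j_\sharp j^*\un_S\otimes M\;\simeq\;\un_S\otimes M\;\simeq\;M,
\]
so the counit of $(j_\sharp,j^*)$ is an isomorphism on every object and $j^*$ is fully faithful. The left adjoint $j_\sharp$ is fully faithful in any premotivic category for an open immersion (a direct consequence of $\sm$-base change applied to the Cartesian square with both legs equal to $j$), so the unit of $(j_\sharp,j^*)$ is an isomorphism as well, and $j^*$ is an equivalence. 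The only real content is the vanishing in Step~1, which combines the two orthogonal torsion phenomena ($p$-invertibility from Artin--Schreier and $p^a$-annihilation from the coefficients); everything else is formal manipulation of premotivic adjunctions.
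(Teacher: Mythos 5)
Your proof is correct and follows essentially the same route as the paper's: vanish $\DMtee(Z,\ZZ/n\ZZ)$ by combining $\ZZ[1/p]$-linearity (Artin--Schreier) with $\ZZ/p^a\ZZ$-linearity, deduce via localization that the counit $j_\sharp j^*\un_S\to\un_S$ is an isomorphism in $\DMtee(S,\ZZ/n\ZZ)$, transport across $t^*$, and extend to all of $\T(S)$ with the smooth projection formula. The only surface difference is that the paper invokes Corollary \ref{cor:etale_descent&p-torsion} directly where you re-derive it from Proposition \ref{prop:artin-schreier} and the reduction-of-coefficients functor.
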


\begin{proof}
Note that the proposition is obvious when $\T=\DMtee(-,\ZZ/n\ZZ)$
by the previous corollary and the localization property.
In particular, for any object of the form $E=t^*(M)$ with $M$
in $\DMtee(-,\ZZ/n\ZZ)$,
we have $j_\sharp j^*(E)\simeq E$.
In particular, we have $j_\sharp j^*(\un_S)\simeq\un_S$.
Therefore, for any object $E$ of $\T(S)$, one has
$$j_\sharp j^*(E)\simeq j_\sharp(j^*(\un_S)\otimes E)\simeq
j_\sharp j^*(\un_S)\otimes E\simeq \un_S\otimes E\, .$$
As the functor $j_\sharp$ is fully faithful, this readily implies
the proposition.
\end{proof}

\section{Idempotents}\label{app:idempotent}
\subsection{Idempotents and localizations}
\begin{paragr}
In this section, we give some complements on localization of abstract
 triangulated categories.

For a triangulated category $T$, we shall denote by
$T^\sharp$ its idempotent completion (with its
canonical triangulated structure; see \cite{balsch}).
\end{paragr}

\begin{prop}\label{prop:idempotentcompl}
Let $T$ be a triangulated category and
$S\subset T$ a thick subcategory of $T$.
Then $U^\sharp$ is a thick subcategory of $T^\sharp$ and
the natural triangulated functor
$$\big(T/U)^\sharp\to
\big (T^\sharp/U^\sharp\big)^\sharp$$
is an equivalence of categories.
\end{prop}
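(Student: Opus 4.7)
The plan is to prove the two assertions in turn, and the serious content lies in the second one. The first claim (that $U^\sharp$ is thick in $T^\sharp$) is essentially formal: an object of $T^\sharp$ belongs to $U^\sharp$ if and only if it is a direct summand of (the image in $T^\sharp$ of) an object of $U$. Stability under retracts is built into the definition, and stability under triangles follows from thickness of $U$ in $T$ together with the fact that $T^\sharp$ has the same distinguished triangles up to direct summands; combining these gives thickness of $U^\sharp$ in $T^\sharp$.

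For the equivalence, the natural composition $T/U \to T^\sharp/U^\sharp \to (T^\sharp/U^\sharp)^\sharp$ targets an idempotent complete triangulated category, so by the universal property of idempotent completion it factors uniquely through $(T/U)^\sharp$, producing the functor $\Phi:(T/U)^\sharp \to (T^\sharp/U^\sharp)^\sharp$ that we must show is an equivalence.

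The first step toward this is to show that $T/U \to T^\sharp/U^\sharp$ is fully faithful. Given $X,Y \in T$, a morphism in $T^\sharp/U^\sharp$ between them is represented by a roof $X \xleftarrow{s} Z \to Y$ in $T^\sharp$ whose cone belongs to $U^\sharp$. The key point is that $Z$ is a direct summand of some $\tilde Z \in T$, and by adding back the complementary summand one can replace this roof by one in $T$ whose cone differs from the original by an object of $U^\sharp$, hence lies in $U$ up to a summand that is killed after further composition with a map whose cone lies in $U$. This kind of argument is a standard Thomason--Balmer--Schlichting manipulation, and full faithfulness of $\Phi$ follows by passing to idempotent completions (idempotent completion preserves fully faithful embeddings).

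For essential surjectivity of $\Phi$, note that every object of $(T^\sharp/U^\sharp)^\sharp$ is a direct summand of (the image of) an object of $T^\sharp$, which is itself a direct summand of an object of $T$; hence every object of $(T^\sharp/U^\sharp)^\sharp$ is a direct summand of the image of an object of $T/U$, and therefore lies in the essential image of $\Phi$. The main obstacle is the careful verification of full faithfulness of $T/U \to T^\sharp/U^\sharp$: although morally clear, it requires checking at the level of Verdier's calculus of fractions that roofs in the completed quotient can be straightened out, using that any idempotent in $U^\sharp$ is built from a genuine object of $U$. Once this is done, the proposition follows formally.
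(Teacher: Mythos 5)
Your overall plan (factor through $T/U \to T^\sharp/U^\sharp$, prove that this functor is fully faithful, then observe essential surjectivity of $\Phi$ on idempotent completions) is sound, and it is genuinely different from the paper's proof, which is a one-line universal-property argument: both $T\to(T/U)^\sharp$ and $T\to(T^\sharp/U^\sharp)^\sharp$ are checked to be the universal triangulated functor from $T$ to an idempotent-complete triangulated category killing $U$, hence they coincide. Your direct approach via the calculus of fractions is more work but does illuminate the structure; the difficulty, as you anticipate, is indeed concentrated in full faithfulness of $T/U\to T^\sharp/U^\sharp$.

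However, your sketch of that step is not correct as written. You propose, given a roof $X\xleftarrow{s}Z\to Y$ in $T^\sharp$ with $\mathrm{Cone}(s)\in U^\sharp$, to ``add back'' the complementary summand $W$ so that $\tilde Z = Z\oplus W\in T$, and replace the roof by $X\leftarrow\tilde Z\to Y$. But the cone of the new left leg $\tilde Z\to X$ is $\mathrm{Cone}(s)\oplus W[1]$, and there is no reason for $W[1]$ to lie in $U^\sharp$ (it is the complement of $Z$ in an arbitrary object of $T$, with nothing to do with $U$). So the enlarged roof is simply not a valid fraction for $T^\sharp/U^\sharp$; there is no ``further composition with a map whose cone lies in $U$'' that repairs this, because that would force $W\in U^\sharp$, which is exactly what is unavailable. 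The correct manoeuvre is to go through the cone, not the source: set $C=\mathrm{Cone}(s)\in U^\sharp$, write $C$ as a direct summand of some $D\in U$, lift the map $g\colon X\to C$ along the retraction $D\to C$ to a morphism $g'\colon X\to D$ (this lives in $T$ since $T\hookrightarrow T^\sharp$ is full), and let $Z'\in T$ be the fibre of $g'$. Then $\mathrm{Cone}(Z'\to X)=D\in U$, and the comparison of the two fibre sequences over $\pi\colon D\to C$ produces a morphism $Z'\to Z$ through which the original roof can be pulled back, giving an equivalent roof $X\leftarrow Z'\to Y$ entirely in $T$ with denominator in $U$. A similar use of ``maps from $T$ into $U^\sharp$ factor through $U$'' handles faithfulness. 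With that correction your argument is complete; but note how much shorter the paper's universal-property proof is.
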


\begin{proof}
Both functors $T\to\big(T/U)^\sharp$ and
$T\to\big (T^\sharp/U^\sharp\big)^\sharp$ share the same
universal property, namely of being the universal functor from $T$ to an idempotent complete triangulated category in which
any object of $U$ becomes null.
\end{proof}

\begin{cor}\label{cor:cor:thick}
Given a triangulated category $T$ and a thick
subcategory $U$ of $T$, an object of $T$ belongs
to $U$ if and only if its image is isomorphic
to zero in the triangulated category
$\big (T^\sharp/U^\sharp\big)^\sharp$.
\end{cor}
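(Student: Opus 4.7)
The plan is to deduce this corollary almost immediately from Proposition \ref{prop:idempotentcompl} together with two standard facts about Verdier localizations and idempotent completions. The forward direction is trivial: if $X$ lies in $U$, then the image of $X$ in $T/U$ is zero by the very definition of the Verdier quotient, and this remains true after applying any triangulated functor, in particular the composition $T\to T/U\to (T/U)^\sharp\simeq (T^\sharp/U^\sharp)^\sharp$ provided by Proposition \ref{prop:idempotentcompl}.

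For the converse, suppose that the image of $X$ in $(T^\sharp/U^\sharp)^\sharp$ is zero. First I would use the equivalence of categories furnished by Proposition \ref{prop:idempotentcompl} to transport this vanishing to $(T/U)^\sharp$. Next, I would invoke the fact that the canonical functor $T/U\to (T/U)^\sharp$ from a triangulated category into its idempotent completion is fully faithful (this is part of the very construction of $(-)^\sharp$, see \cite{balsch}); in particular it is conservative on the property of being a zero object. This yields that $X$ has zero image in $T/U$. Finally, I would appeal to the basic property of Verdier quotients by a \emph{thick} subcategory: the kernel of the localization functor $T\to T/U$ is precisely $U$ (this is essentially the defining property of thickness, as in the standard treatment in Neeman's book \cite{Nee1}). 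Hence $X$ belongs to $U$, which completes the proof.

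There is no real obstacle here; the entire content is already packaged in Proposition \ref{prop:idempotentcompl}, and the corollary just records the useful detection criterion that follows by combining it with the fully faithfulness of $T/U\hookrightarrow (T/U)^\sharp$ and the thickness of $U$ in $T$.
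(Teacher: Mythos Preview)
Your proof is correct and follows essentially the same approach as the paper's: both use that $U$ thick means the kernel of $T\to T/U$ is exactly $U$, and that Proposition~\ref{prop:idempotentcompl} makes the functor $T/U\to (T^\sharp/U^\sharp)^\sharp$ fully faithful (via the factorization through $(T/U)^\sharp$). You have simply spelled out the full faithfulness step in slightly more detail than the paper does.
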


\begin{proof}
As $U$ is thick in $T$, an object of $T$ is in $U$
if and only if its image in the Verdier quotient $T/U$
is trivial. On the other hand,
the preceding proposition implies in particular that
the natural functor
$$T/U\to \big (T^\sharp/U^\sharp\big)^\sharp$$
is fully faithful, which implies the assertion.
\end{proof}

\begin{paragr}\label{paragr:defSloc}
We fix a commutative ring $A$ and
a multiplicative system $S\subset A$.
Let $T$ be an $A$-linear triangulated category.
We define a new triangulated category $T\otimes_A S^{-1}A$
as follows. The objects of $T\otimes_A S^{-1}A$ are those of
$T$, and morphisms from $X$ to $Y$ are given by the formula
$$\Hom_{T\otimes_AS^{-1}A}(X,Y)
=\Hom_T(X,Y)\otimes_AS^{-1}A$$
(with the obvious composition law. We have an obvious
triangulated functor
\begin{equation}\label{eq:Slocfunct}
T\to T\otimes_A S^{-1}A
\end{equation}
which is the identity on objects and which is
defined by the canonical maps
$$\Hom(X,Y)\to \Hom_T(X,Y)\otimes_AS^{-1}A$$
on arrows. The distinguished triangles of $T\otimes_A S^{-1}A$
are the triangles which are isomorphic to some
image of a distinguished triangle of $T$ by the functor
\eqref{eq:Slocfunct}.

Given an object $X$ of $T$ and an element $f\in S$,
we write $f:X\to X$ for the map $f.1_X$, and we shall write
$X/f$ for some choice of its cone. We write
$T_{S\text{-}\mathit{tors}}$ for the smallest thick subcategory
of $T$ which contains the cones of the form $X/f$ for
any object $X$ and any $f$ in $S$, the objects of which
will be called \emph{$S$-torsion objects of $T$}.
The functor \eqref{eq:Slocfunct} clearly sends $S$-torsion
objects to zero, and thus induces a canonical triangulated
functor
\begin{equation}\label{eq:altSlocfunct}
T/T_{S\text{-}\mathit{tors}}\to
T\otimes_A S^{-1}A\, .
\end{equation}
\end{paragr}

\begin{prop}\label{prop:altSloc}
The functor \eqref{eq:altSlocfunct} is an equivalence
of categories.
\end{prop}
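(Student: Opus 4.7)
The plan is to check that the functor \eqref{eq:altSlocfunct} is bijective on objects (trivial, by definition) and fully faithful; since both categories draw their distinguished triangles from images of distinguished triangles of $T$ and the functor is $T$-linear, this will give an equivalence of triangulated categories.

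I will first establish the key lemma: for every $S$-torsion object $Z$ and every object $X$ of $T$, the $A$-module $\Hom_T(X,Z)$ is $S$-torsion, meaning each element is annihilated by some $s\in S$. The class of such $Z$'s is stable under shifts, extensions and direct summands (for extensions, tensoring the associated long exact sequence with $S^{-1}A$ makes the outer terms vanish, forcing the middle one to be $S$-torsion), so it is a thick triangulated subcategory of $T$. It therefore suffices to treat the case $Z=Y/f$ with $f\in S$, and in that case the long exact sequence obtained by applying $\Hom_T(X,-)$ to the triangle $Y\xrightarrow{f}Y\to Y/f\to Y[1]$ displays $\Hom_T(X,Y/f)$ as an extension of an $f$-torsion subgroup of $\Hom_T(X,Y[1])$ by a quotient of $\Hom_T(X,Y)$ modulo $f$, hence is annihilated by $f^2$.

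It follows that a morphism $g:X\to Y$ in $T$ factors through an object of $T_{S\text{-tors}}$ if and only if $sg=0$ for some $s\in S$. Next, given any morphism in $\Hom_{T/T_{S\text{-tors}}}(X,Y)$ represented by a roof $X\xleftarrow{q} X'\xrightarrow{g} Y$ with $\mathrm{cone}(q)$ in $T_{S\text{-tors}}$, applying the lemma to the distinguished triangle $X'\xrightarrow{q}X\to\mathrm{cone}(q)\to X'[1]$ and the identity $1_X\in\Hom_T(X,X)$ provides $s\in S$ and $\tilde s:X\to X'$ with $q\tilde s=s\cdot 1_X$; the original roof is thus equivalent to $X\xleftarrow{s} X\xrightarrow{g\tilde s}Y$. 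Hence every morphism in the quotient can be written in the form $h/s$ for some $h\in\Hom_T(X,Y)$ and $s\in S$.

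Two such expressions $h/s$ and $h'/s'$ represent the same morphism in $T/T_{S\text{-tors}}$ precisely when $s'h-sh'$ factors through $T_{S\text{-tors}}$, which by the lemma amounts to the existence of $t\in S$ with $t(s'h-sh')=0$ in $\Hom_T(X,Y)$. This is exactly the equivalence relation defining elements of $\Hom_T(X,Y)\otimes_A S^{-1}A$, so the natural map $\Hom_T(X,Y)\otimes_A S^{-1}A\to\Hom_{T/T_{S\text{-tors}}}(X,Y)$ is bijective, and unwinding shows this bijection is exactly the one induced by the inverse of \eqref{eq:altSlocfunct}. The only technical obstacle in this program is the torsion lemma; everything afterward is a routine application of the calculus of fractions.
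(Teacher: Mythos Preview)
Your argument is correct, but it takes a genuinely different route from the paper. The paper's proof is a one-line universal-property argument: it observes that an $A$-linear triangulated category is $S^{-1}A$-linear precisely when $T_{S\text{-}\mathit{tors}}\simeq 0$, so both $T\to T/T_{S\text{-}\mathit{tors}}$ and $T\to T\otimes_A S^{-1}A$ are initial among $A$-linear triangulated functors from $T$ to $S^{-1}A$-linear triangulated categories, whence the two targets are canonically equivalent. Your approach instead computes the Hom-sets of the Verdier quotient directly: the torsion lemma lets you straighten every roof to one of the form $h/s$, and then identifies the equivalence relation on such fractions with the one defining $\Hom_T(X,Y)\otimes_A S^{-1}A$. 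The paper's argument is shorter and more conceptual; yours is more hands-on and yields, as a byproduct, the explicit description of morphisms in the quotient (every class has a representative $h/s$ with $s\in S$), which can be useful in practice even if it is not needed for the bare equivalence statement.
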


\begin{proof}
One readily checks that $T$ is $S^{-1}A$-linear
if and only if $T_{S\text{-}\mathit{tors}}\simeq 0$.
Therefore, both functors $T\to T/T_{S\text{-}\mathit{tors}}$
and \eqref{eq:Slocfunct} share the same universal
property: these are the universal $A$-linear triangulated
functors from $T$ to an $S^{-1}A$-linear triangulated
category.
\end{proof}

\begin{cor}\label{cor:Slocidempotentcompl}
We have a canonical equivalence of $A$-linear
triangulated categories
$$(T\otimes_A S^{-1}A)^\sharp\simeq
(T^\sharp \otimes_A S^{-1}A)^\sharp\, .$$
\end{cor}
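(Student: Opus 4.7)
The plan is to reduce the statement to a direct comparison of two thick subcategories of $T^\sharp$ by combining the two preceding results. First I would apply Proposition~\ref{prop:altSloc} twice, once to $T$ and once to $T^\sharp$, to identify both sides of the desired equivalence with pseudo-abelian envelopes of Verdier quotients:
\[
T\otimes_A S^{-1}A\simeq T/T_{S\text{-}\mathit{tors}}
\quad\text{and}\quad
T^\sharp\otimes_A S^{-1}A\simeq T^\sharp/(T^\sharp)_{S\text{-}\mathit{tors}}\, .
\]
Writing $U=T_{S\text{-}\mathit{tors}}$ and $U'=(T^\sharp)_{S\text{-}\mathit{tors}}$, it then suffices to produce a canonical equivalence $(T/U)^\sharp\simeq(T^\sharp/U')^\sharp$.

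Next I would invoke Proposition~\ref{prop:idempotentcompl} for the thick subcategory $U\subset T$, which already provides a canonical equivalence $(T/U)^\sharp\simeq(T^\sharp/U^\sharp)^\sharp$. The remaining (and central) point is then the identification $U^\sharp=U'$ as thick subcategories of $T^\sharp$. The inclusion $U^\sharp\subseteq U'$ is essentially formal: the generators $X/f$ of $U$ ($X\in T$, $f\in S$) are visibly $S$-torsion in $T^\sharp$, so $U\subseteq U'$, and $U'$ is idempotent complete because it is thick in the idempotent complete category $T^\sharp$.

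The main — but still mild — obstacle is the reverse inclusion $U'\subseteq U^\sharp$. To handle it, I would observe that $U'$ is the thick subcategory of $T^\sharp$ generated by the objects $X/f$ with $X\in T^\sharp$ and $f\in S$, and that any such $X$ is a direct factor of some $Y\in T$. Choosing a splitting $Y\simeq X\oplus Z$ in $T^\sharp$ and noting that $f\cdot\mathrm{id}_Y$ respects this decomposition yields $Y/f\simeq X/f\oplus Z/f$; since $Y/f\in U\subseteq U^\sharp$ and $U^\sharp$ is idempotent complete, we get $X/f\in U^\sharp$. As $U^\sharp$ is thick in $T^\sharp$ (again by Proposition~\ref{prop:idempotentcompl}) and contains the generators of $U'$, this gives $U'\subseteq U^\sharp$. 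Combining these identifications produces the desired canonical $A$-linear triangulated equivalence.
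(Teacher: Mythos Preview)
Your argument is correct, but it takes a more hands-on route than the paper. The paper dispatches the corollary in one line by a universal property argument: both $(T\otimes_A S^{-1}A)^\sharp$ and $(T^\sharp\otimes_A S^{-1}A)^\sharp$ are identified, via Propositions~\ref{prop:idempotentcompl} and~\ref{prop:altSloc}, as the universal $A$-linear idempotent complete triangulated category under $T$ which is $S^{-1}A$-linear (equivalently, in which $S$-torsion objects vanish); since the order in which one imposes idempotent completeness and $S^{-1}A$-linearity does not affect this universal property among functors into categories having both, the two constructions agree. Your approach instead unwinds this by explicitly proving the equality $(T_{S\text{-}\mathit{tors}})^\sharp=(T^\sharp)_{S\text{-}\mathit{tors}}$ inside $T^\sharp$, which is exactly the concrete content hidden in the universal property comparison. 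The paper's version is shorter and more conceptual; yours has the advantage of making the identification of thick subcategories explicit, which can be useful in its own right.
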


\begin{proof}
This follows again from the fact that,
by virtue of Propositions \ref{prop:idempotentcompl}
and \ref{prop:altSloc}, these two categories are the
universal $A$-linear idempotent complete
triangulated categories under $T$ in which the
$S$-torsion objects are trivial.
\end{proof}

\begin{prop}\label{prop:localconstruct}
Let $T$ be an $A$-linear triangulated category and
$U$ a thick subcategory of $T$. Given
a prime ideal $\mathfrak{p}$ in $A$, we write
$T_\mathfrak{p}=T\otimes_A A_\mathfrak{p}$.
For an object $X$ of $T$, the following
conditions are equivalent.
\begin{itemize}
\item[(i)] The object $X$ belongs to $U$.
\item[(ii)] For any maximal ideal $\mathfrak{m}$ in $A$,
the image of $X$ in $(T/U)_\mathfrak{m}$ is trivial.
\item[(iii)] For any maximal ideal $\mathfrak{m}$ of $A$,
the image of $X$ in $(T^\sharp_\mathfrak{m}/U^\sharp_\mathfrak{m})^\sharp$
is trivial.
\end{itemize}
\end{prop}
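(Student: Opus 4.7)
The implications (i) $\Rightarrow$ (ii) and (i) $\Rightarrow$ (iii) are immediate, since if $X$ belongs to $U$ then its image is zero in $T/U$ and therefore in every category obtained from $T/U$ or $T^\sharp/U^\sharp$ by localization and/or idempotent completion. So the content of the statement lies in proving that (ii) $\Leftrightarrow$ (iii) and that one of (ii) or (iii) implies (i).

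For (ii) $\Leftrightarrow$ (iii), I would simply assemble the universal properties already proved in the two previous results. By Proposition \ref{prop:idempotentcompl} there is a canonical equivalence $(T/U)^\sharp \simeq (T^\sharp/U^\sharp)^\sharp$, and by Corollary \ref{cor:Slocidempotentcompl} idempotent completion commutes with $A$-linear localization up to further idempotent completion. Combining these, one obtains a natural equivalence
\[
\bigl((T/U)_\mathfrak{m}\bigr)^\sharp \;\simeq\; \bigl(T^\sharp_\mathfrak{m}/U^\sharp_\mathfrak{m}\bigr)^\sharp
\]
which is compatible with the functors from $T$. Since the canonical embedding of any triangulated category into its idempotent completion is fully faithful (hence detects zero objects), triviality in $(T/U)_\mathfrak{m}$ is equivalent to triviality in $((T/U)_\mathfrak{m})^\sharp$, and the equivalence (ii) $\Leftrightarrow$ (iii) follows.

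The main point, then, is to prove (iii) $\Rightarrow$ (i). Set $V = T^\sharp/U^\sharp$; by Corollary \ref{cor:cor:thick}, it is enough to show that the image $\bar X$ of $X$ in $V^\sharp$ (equivalently, in $V$) is zero. I would argue with the endomorphism ring: $E := \End_V(\bar X)$ is an $A$-module, and since by Proposition \ref{prop:altSloc} we have $V_\mathfrak{m} = V \otimes_A A_\mathfrak{m}$ with $\Hom_{V_\mathfrak{m}}(\bar X, \bar X) = E \otimes_A A_\mathfrak{m}$, the hypothesis says that, for every maximal ideal $\mathfrak{m}$, the identity $1_{\bar X}$ vanishes in $E \otimes_A A_\mathfrak{m}$. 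This precisely means that for every maximal $\mathfrak{m}$ there exists $s \in A \setminus \mathfrak{m}$ with $s \cdot 1_{\bar X} = 0$ in $E$.

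The set $I = \{a \in A \mid a \cdot 1_{\bar X} = 0 \text{ in } E\}$ is an ideal of $A$. The previous paragraph says that $I$ is not contained in any maximal ideal of $A$, hence $I = A$, so $1_{\bar X} = 0$, that is $\bar X \simeq 0$ in $V$ and a fortiori in $V^\sharp$. Corollary \ref{cor:cor:thick} then yields $X \in U$. The slightly delicate step is the clean identification of the $\Hom$-sets in $V_\mathfrak{m}^\sharp$ with the tensor product $E \otimes_A A_\mathfrak{m}$, but this is built into the very definition of the localized category given in paragraph \ref{paragr:defSloc} together with the fact that the embedding into the idempotent completion is fully faithful.
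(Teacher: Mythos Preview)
Your argument is correct and follows essentially the same route as the paper. For (ii) $\Leftrightarrow$ (iii) the paper simply cites Corollaries \ref{cor:cor:thick} and \ref{cor:Slocidempotentcompl}, which is exactly the chain of equivalences you assemble; for (i) $\Leftrightarrow$ (ii) the paper phrases the same endomorphism-ring argument more tersely as ``the localizations $A_\mathfrak{m}$ form a covering for the flat topology and the Yoneda lemma,'' which unwinds to precisely your annihilator-ideal computation.
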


\begin{proof}
The equivalence between conditions (ii) and (iii)
readily follows from Corollaries \ref{cor:cor:thick}
and \ref{cor:Slocidempotentcompl}.
The equivalence between conditions (i) and (ii)
comes from the fact that the localizations $A_\mathfrak{m}$
form a covering for the flat topology and from the
Yoneda lemma.
\end{proof}

% \begin{ex}\label{ex:p-local_motivic}
% Let $\T$ be a triangulated motivic category
%  and $\mathfrak p$ a prime ideal of $\ZZ$.
%  Then, with the notations of the above proof,
%  the localisation $\T_{\mathfrak p}$ is a triangulated motivic 
%  category whose fiber over a scheme $S$ is equal
%  to $\T(S) \otimes_\ZZ \ZZ_{\mathfrak p}$: the existence
%  of the structural functors of a premotivic category
%  follows from the universality of the construction
%  as well as the properties of a motivic category.
% 
% The same is true for for the idempotent completion
%  $\T^\sharp_{\mathfrak p}$ of $\T_{\mathfrak p}$.
% \end{ex}

\subsection{Idempotents and $t$-structures}
\begin{prop}\label{prop:tstructpseudoab}
Any triangulated category endowed with a bounded $t$-structure.
is idempotent complete.
\end{prop}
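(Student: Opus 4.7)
The plan is to prove that every idempotent $e : X \to X$ in $T$ splits, by induction on the \emph{amplitude} of $X$, i.e., the number of integers $i$ for which the cohomology object $H^i(X) \in T^\heartsuit$ is nonzero (this is finite by the boundedness hypothesis on the $t$-structure).

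If $X = 0$ there is nothing to do. If $X$ has amplitude one, then $X \simeq H^n(X)[-n]$ for some integer $n$, and $e$ corresponds to an idempotent in $T^\heartsuit$; since the heart of a $t$-structure is an abelian category, and any abelian category is idempotent complete, $e$ splits in $T^\heartsuit$, hence in $T$.

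For the inductive step, choose an integer $n$ so that both $A := \tau_{\leq n} X$ and $B := \tau_{> n} X$ are nonzero and consider the canonical truncation triangle
\[
A \to X \to B \xrightarrow{\ \delta\ } A[1]\, .
\]
Both $A$ and $B$ have strictly smaller amplitude than $X$. By functoriality of the truncation functors, the endomorphism $e$ induces endomorphisms $e_A$ on $A$ and $e_B$ on $B$ which are themselves idempotent and which fit into a morphism of distinguished triangles with middle component $e$. The induction hypothesis provides splittings $A = A_1 \oplus A_2$ and $B = B_1 \oplus B_2$ so that $e_A, e_B$ become the projectors onto $A_1, B_1$ respectively. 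Writing $\delta$ as a matrix $(\delta_{ij})$ with $\delta_{ij} : B_j \to A_i[1]$, the relation $e_A[1]\circ\delta = \delta\circ e_B$ forces $\delta_{12} = \delta_{21} = 0$. The triangle therefore decomposes as the direct sum of the two triangles $A_i \to X_i \to B_i \xrightarrow{\delta_{ii}} A_i[1]$ for $i = 1, 2$, which yields a decomposition $X \simeq X_1 \oplus X_2$ compatible with $e$ and hence a splitting of $e$.

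The main technical point, and in a sense the only nontrivial ingredient, is the vanishing of the off-diagonal entries of $\delta$ and the resulting splitting of the triangle into a direct sum; once this matrix argument is made rigorous, the induction closes. The lift of $e$ to a morphism of truncation triangles is harmless because $\tau_{\leq n}$ and $\tau_{> n}$ are functors, so the two induced idempotents on the outer terms are canonical and genuinely compatible with $\delta$.
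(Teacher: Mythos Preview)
Your argument has a genuine gap at the final step. You correctly show that $\delta$ is block-diagonal and hence that $X \simeq X_1 \oplus X_2$, but you then assert that this decomposition is ``compatible with $e$'' and therefore splits $e$. This is not automatic: under any isomorphism $X \simeq X_1 \oplus X_2$ compatible with the triangle, the idempotent $e$ is carried to some idempotent $e'$ on $X_1 \oplus X_2$ whose truncations agree with those of the projection $p_1$ onto $X_1$; but the middle map in a morphism of truncation triangles is \emph{not} determined by the outer maps unless $\Hom(B,A)=0$, and there is no reason for $\Hom(\tau_{>n}X,\tau_{\leq n}X)$ to vanish. So $e'$ need not equal $p_1$, and you have not yet produced an object representing the image of $e$. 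Your closing paragraph identifies the diagonality of $\delta$ as ``the only nontrivial ingredient'', but this second point is equally nontrivial.

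The gap can be repaired without changing your overall strategy. The difference $\nu=e'-p_1$ satisfies $\nu\alpha=0$ and $\beta\nu=0$, hence factors both as $g\beta$ and as $\alpha h$; since $\beta\alpha=0$ one gets $\nu^2=0$, and then the relation $p_1\nu+\nu p_1=\nu$ (from $e'^2=e'$) lets one check that $u=e'p_1+(1-e')(1-p_1)=1+(p_2\nu p_1 - p_1\nu p_2)$ is a unit conjugating $e'$ to $p_1$. Alternatively --- and this is what the paper does --- one can sidestep the issue entirely: pass to the idempotent completion $T^\sharp$ (triangulated by Balmer--Schlichting), extend the $t$-structure to $T^\sharp$ by taking direct factors, and run the induction on the truncation triangle of the \emph{image} $Y$ of $e$, which now exists in $T^\sharp$. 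The outer terms $\tau^{<n}(Y)$ and $H^n(Y)[-n]$ are direct summands of truncations of $X$, hence lie in $T$ by induction, and then $Y$ lies in $T$ as a cone of a map in $T$. This avoids any comparison between $e$ and a specific projection.
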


\begin{proof}
Let $\T$ be a triangulated category endowed with a bounded $t$-structure
given by the pair $(\T^{\leq 0},\T^{\geq 0})$.
We denote by $\T^\sharp$ the pseudo-abelianization of $\T$.
By virtue of a result of Balmer and Schlichting \cite[Theorem 1.12]{balsch},
the additive category $\T^\sharp$ is naturally endowed with the structure of
a triangulated category: distinguished triangles of $\T^\sharp$ as those isomorphic
to direct factors of distinguished triangles of $\T$. By definition,
the embedding functor $\T\to\T^\sharp$ is then exact.
Furthermore, one can define a $t$-structure $(\T^{\sharp\leq 0},\T^{\sharp\geq 0})$
on $\T^\sharp$ as follows: an object of $\T^\sharp$ belongs to $\T^{\sharp\leq 0}$
(to $\T^{\sharp\geq 0}$) if it is a direct factor of an object of $\T^{\sharp\leq 0}$
(of $\T^{\sharp\geq 0}$, respectively). The truncation functors of
the $t$-structure $(\T^{\leq 0},\T^{\geq 0})$ extend uniquely to truncation
functors for this $t$-structure on $\T^\sharp$. The embedding functor
$\T\to\T^\sharp$ now is a $t$-exact functor.
Let $X$ be an object of $\T$ and $p:X\to X$ a projector with image $Y$ in $\T^\sharp$.
We will prove that $Y$ belongs to $\T$ (by which we mean that it is isomorphic
to an object of $\T$), by induction on the amplitude of $X$.
We may assume that $X$ belongs to $\T^{\geq 0}$. Let $n$ be the smallest
non-negative integer such that $X$ belongs to $\T^{\leq n}$.
If $n=0$, then $X$ belongs to the heart of the $t$-structure of $\T$, and any
abelian category being in particular pseudo-abelian, this implies that the image
of $p$, namely $Y$, is representable in $\T$. If $n>0$,
we then have a canonical distinguished triangle of the following form.
$$\tau^{<n}(Y)\to Y\to H^n(Y)[-n]\to \tau^{<n}(Y)[1]$$
We already know that $H^n(Y)[-n]$ belongs to $\T$, and, by induction,
so does the truncation $\tau^{<n}(Y)$. Therefore, the object $Y$ belongs to $\T$ as well.
In other words, we have an equivalence of categories $\T\simeq\T^\sharp$, and the property
of being idempotent complete being closed under equivalences of categories, this proves the
proposition.
\end{proof}

\begin{prop}\label{prop:loctstruct}
Let $A$ be a commutative ring and $S\subset A$ a multiplicative system.
Consider an $A$-linear triangulated category $\T$ endowed with a
$t$-structure. Then there is a unique $t$-structure on the $S$-localization
$S^{-1}\T=T\otimes_A S^{-1}A$ such that the canonical functor $\T\to S^{-1}\T$
is $t$-exact. In particular, if the $t$-structure of $\T$ is bounded, so is
the $t$-structure of $S^{-1}\T$.
\end{prop}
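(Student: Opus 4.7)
The plan is to define a candidate $t$-structure on $S^{-1}\T$ by taking $(S^{-1}\T)^{\leq 0}$ (resp.\ $(S^{-1}\T)^{\geq 0}$) to be the full subcategory of objects $X$ that are isomorphic in $S^{-1}\T$ to the image $L(Y)$ of some $Y\in\T^{\leq 0}$ (resp.\ $Y\in\T^{\geq 0}$), where $L\colon\T\to S^{-1}\T$ is the canonical functor from paragraph \ref{paragr:defSloc}. With this definition $L$ is manifestly $t$-exact, so the real work is verifying the three $t$-structure axioms and then uniqueness.

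For the orthogonality axiom, I would argue that if $X\in\T^{\leq 0}$ and $Y\in\T^{\geq 1}$, then $\Hom_\T(X,Y)=0$, so
\[
\Hom_{S^{-1}\T}(X,Y)=\Hom_\T(X,Y)\otimes_A S^{-1}A=0
\]
by the very construction of the Hom groups in $S^{-1}\T$; this extends to arbitrary objects isomorphic in $S^{-1}\T$ to such $X,Y$ by fullness of the defining subcategories. The shift compatibility $(S^{-1}\T)^{\leq 0}\subset(S^{-1}\T)^{\leq 1}$ is immediate from the corresponding property in $\T$. For the existence of truncation triangles, I would exploit the fact that $L$ is the identity on objects: any object $X$ of $S^{-1}\T$ is literally an object of $\T$, and the image under $L$ of its $t$-structure triangle $\tau^{\leq 0}X\to X\to\tau^{\geq 1}X\to (\tau^{\leq 0}X)[1]$ is a distinguished triangle in $S^{-1}\T$ (by the definition of distinguished triangles in $S^{-1}\T$ recalled in \ref{paragr:defSloc}) whose outer terms lie, by construction, in $(S^{-1}\T)^{\leq 0}$ and $(S^{-1}\T)^{\geq 1}$ respectively.

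For uniqueness, suppose $((S^{-1}\T)^{\leq 0}_\flat,(S^{-1}\T)^{\geq 0}_\flat)$ is any $t$-structure on $S^{-1}\T$ making $L$ $t$-exact. Then $L(\T^{\leq 0})\subset(S^{-1}\T)^{\leq 0}_\flat$ and $L(\T^{\geq 1})\subset(S^{-1}\T)^{\geq 1}_\flat$; for any $X\in S^{-1}\T$, the localized truncation triangle exhibits $X$ as an extension of an object in $L(\T^{\geq 1})$ by one in $L(\T^{\leq 0})$, so by uniqueness of truncation the $\flat$-truncations must equal $L\tau^{\leq 0}$ and $L\tau^{\geq 1}$, forcing $(S^{-1}\T)^{\leq 0}_\flat$ and $(S^{-1}\T)^{\geq 0}_\flat$ to coincide with the subcategories defined above. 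Finally, boundedness is transported automatically: if every object of $\T$ lies in some $\T^{[a,b]}$, then the same bounds apply to its image in $S^{-1}\T$, and since $L$ is the identity on objects this exhausts $S^{-1}\T$. I do not anticipate a serious obstacle here; the statement is essentially formal, the point being that localization of Hom groups is tautologically compatible with both the vanishing condition defining the $t$-structure and the descent of distinguished triangles.
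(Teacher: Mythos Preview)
Your proposal is correct and follows essentially the same approach as the paper: you define the aisles of $S^{-1}\T$ as the isomorphism-closures of the images of $\T^{\leq 0}$ and $\T^{\geq 0}$, verify orthogonality via $\Hom\otimes_A S^{-1}A$, and obtain truncation triangles as images of those in $\T$. In fact the paper gives only a sketch and leaves the verification of the axioms as an exercise, so your write-up is more detailed than the original; your uniqueness argument via comparison of truncation triangles is also a slight elaboration of the paper's one-line appeal to essential surjectivity of $L$.
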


\begin{proof}[Sketch of proof]
We will consider the canonical functor $\T\to S^{-1}\T$ as the identity on objects.
Let $(\T^{\leq 0},\T^{\geq 0})$ be the given $t$-structure on $\T$.
We define $(S^{-1}\T^{\leq 0},S^{-1}\T^{\geq 0})$ as follows: a, object of
$S^{-1}\T$ belongs to $S^{-1}\T^{\leq 0}$ (to $S^{-1}\T^{\geq 0}$) if it is
isomorphic in $S^{-1}\T$ to the image of an object of $\T^{\leq 0}$
(of $\T^{\geq 0}$, respectively). For objects $X$ and $Y$ in
$\T^{\leq 0}$ and $\T^{\geq 0}$, respectively, we have
$$\Hom_{S^{-1}\T}(X[i],Y)=S^{-1}\Hom_\T(X[i],Y)=0$$
for $i>0$. We leave the task of checking the axioms for a $t$-structure
on $S^{-1}\T$ as an exercise for the reader. Once we know it is well defined,
it is obvious that this $t$-structure on the $S$-localization is the unique
one such that the canonical functor $\T\to S^{-1}\T$ is $t$-exact (because this
functor is essentially surjective). For the same reason,
it is also clear that, if the $t$-structure of $\T$ is bounded, so is the
corresponding one on $S^{-1}\T$.
\end{proof}

The preceding two propositions thus give:
\begin{cor}\label{cor:Qlinearpseudoabtstruct}
Let $A$ be a commutative ring and
consider an $A$-linear triangulated category $\T$, and suppose that there exists
a bounded $t$-structure on $\T$.
Then, for any multiplicative system $S\subset A$, the $S$-localization
$S^{-1}\T=T\otimes_A S^{-1}A$ is idempotent complete.
\end{cor}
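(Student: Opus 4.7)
The plan is to combine the two preceding propositions in the obvious way, so the proof should be essentially a one-liner. First I would invoke Proposition \ref{prop:loctstruct} applied to the bounded $t$-structure on $\T$: this produces a bounded $t$-structure on the $S$-localization $S^{-1}\T = \T \otimes_A S^{-1}A$, whose aisles are the essential images of $\T^{\leq 0}$ and $\T^{\geq 0}$ under the localization functor. The boundedness is preserved by the final sentence of Proposition \ref{prop:loctstruct}.

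Then I would apply Proposition \ref{prop:tstructpseudoab} directly to the triangulated category $S^{-1}\T$ equipped with this bounded $t$-structure, concluding that $S^{-1}\T$ is idempotent complete.

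There is really no obstacle here: both inputs are already established in the text, and the corollary is the immediate composition. The only thing worth noting is that Proposition \ref{prop:loctstruct} is stated for any $t$-structure (and separately records that boundedness is inherited), so one must be careful to remember to invoke the boundedness clause in order to match the hypothesis of Proposition \ref{prop:tstructpseudoab}. No further verification, no ingredient about the ring $A$ or the multiplicative system $S$, and no compatibility with the $A$-linear structure beyond what is already built into the definition of $S^{-1}\T$ in \ref{paragr:defSloc} is needed.
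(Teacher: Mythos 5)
Your proposal is exactly the paper's argument: the paper prefaces the corollary with the phrase ``The preceding two propositions thus give,'' which is precisely the composition you describe---Proposition \ref{prop:loctstruct} transports the bounded $t$-structure to $S^{-1}\T$, and Proposition \ref{prop:tstructpseudoab} then yields idempotent completeness. Nothing to add.
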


\end{appendix}
\bibliographystyle{amsalpha}
\bibliography{DMet}

\end{document}